\documentclass[a4paper,10pt]{article}
\usepackage{amssymb}
\usepackage{filecontents}
\usepackage{latexsym}
\usepackage{amsmath}
\usepackage{amsthm}
\usepackage{amsfonts}
\usepackage{amssymb}
\usepackage{amsmath,amscd}
\usepackage{graphicx}
\usepackage{xcolor}
\usepackage{hyperref}

\usepackage{mathtools}

\newtheorem{Th}{Theorem}
\newtheorem{Prop}{Proposition}

\newtheorem{Lm}{Lemma}
\newtheorem{Lma}{Lemma}[section]

\newtheorem{Rm}{Remark}
\newtheorem{Ope}{Open Problem}
\newcommand{\be}{\begin{equation}}
\newcommand{\ee}{\end{equation}}
\newcommand{\bes}{\begin{equation*}}
\newcommand{\ees}{\end{equation*}}

\newcommand{\R}{\mathbb{R}}
\newcommand{\N}{\mathbb{N}}
\newcommand{\C}{\mathbb{C}}
\newcommand{\Z}{\mathbb{Z}}

\newcommand\res{\mathop{\hbox{\vrule height 7pt width .5pt depth 0pt
\vrule height .5pt width 6pt depth 0pt}}\nolimits}

\newcommand{\reset}{\setcounter{equation}{0}\setcounter{Th}{0}\setcounter{Prop}{0}\setcounter{Co}{0}
\setcounter{Lm}{0}\setcounter{Rm}{0}}

\def\Xint#1{\mathchoice
{\XXint\displaystyle\textstyle{#1}}%
{\XXint\textstyle\scriptstyle{#1}}%
{\XXint\scriptstyle\scriptscriptstyle{#1}}%
{\XXint\scriptscriptstyle\scriptscriptstyle{#1}}%
\!\int}
\def\XXint#1#2#3{{\setbox0=\hbox{$#1{#2#3}{\int}$}
\vcenter{\hbox{$#2#3$}}\kern-.5\wd0}}
\def\dashint{\Xint-}

\def\La{\Lambda}
\def\La{\Lambda}
\def\ti{\tilde}
\def\lf{\left}
\def\rg{\right}

\def\al{\alpha}
\def\la{\lambda}

\def\ep{\varepsilon}

\def\ds{\displaystyle}
\def\ov{\overline}
\def\Om{\Omega}
\def\om{\omega}
\def\p{\partial}

\def\res{\mathop{\hbox{\vrule height 7pt width .5pt 
depth 0pt\vrule height .5pt width 6pt depth 0pt}}\nolimits}
\def\B{\mathbb B}
\def\S{\mathbb S}
\def\tie{\tilde{\frak e}}

\DeclareMathOperator{\dist}{dist}

\makeatletter
\newcommand{\tpitchfork}{%
  \vbox{
    \baselineskip\z@skip
    \lineskip-.52ex
    \lineskiplimit\maxdimen
    \m@th
    \ialign{##\crcr\hidewidth\smash{$-$}\hidewidth\crcr$\pitchfork$\crcr}
  }%
}
\makeatother

\numberwithin{equation}{section}
\numberwithin{Th}{section}
\numberwithin{Prop}{section}
\numberwithin{Lm}{section}
\numberwithin{Co}{section}
\numberwithin{Rm}{section}

\begin{document}

\title{ 
Gauge Symmetry Breaking in the Asymptotic Analysis \\of  Self Dual Yang-Mills-Higgs $SU(2)$ Monopoles}
\author{ Tristan Rivi\`ere\footnote{Department of Mathematics, ETH Zentrum,
CH-8093 Z\"urich, Switzerland.}}

\maketitle

{\bf Abstract :} {\it We consider the $SU(2)$ Self-Dual Yang Mills Higgs Lagrangian in 3 dimension. By adding a ''Gauge Mass'' term to this YMH Lagrangian in the form of $L^2$ norm of the connection  we break the gauge invariance and critical points are automatically fulfilling globally the Coulomb condition. We study the so called ``large mass asymptotic'', which has the effect of ''squeezing'' the monopoles. For any unit Higgs field data at the boundary  we  prove that minimizers of this Coulomb-Yang-Mills-Higgs Functional converge to  harmonic maps into $S^2$ extending this data. This asymptotic  moreover is subject to concentration conpactness phenomena and the convergence is strong away from a 1 dimensional rectifiable closed concentration set. Then we prove that, having chosen a large enough coupling constant, the limiting minimal energy is converging towards the minimal Brezis-Coron-Lieb relaxed harmonic map energy for this boundary data. In the second part of the paper we examine a different asymptotic regime characterised by overloading monopoles.
In this regime we prove that asymptotically, the magnetic field becomes exclusively longitudinal with a $U(1)$ abelian component along the  Higgs Field while the Higgs field itself converges to a smooth absolute minimizer of a relaxation of the Faddeev-Skyrme functional of maps from $\B^3$ into $\S^2$. 
In the third part of the paper we study the behaviour of these configurations when the parameter in front of the Fadeev-Skyrme component respectively goes to zero and $+\infty$. In the first case one recovers the Brezis Coron Lieb relaxed energy  at the limit while in the second case the minimal limiting energy is converging towards the minimal Dirichlet energy of maps into ${\S}^3$ whose projection by the Hopf fibration is equal to the fixed boundary data.
}

\bigskip

{\bf Key words :} {\it Yang-Mills-Higgs Lagrangian, Gauge Symmetry Breaking,  Local and global Coulomb gauges, Critical Schr\"odinger Systems with Anti-Symmetric Potentials, Harmonic Maps, Relaxed Brezis-Coron-Lieb Energy, Strongly approximable maps, Lavrentiev Gap, Epiperimetric Inequality, Monotonicity Formula, Ginzburg-Landau Lagrangian, London Equation, Magnetic Field,  Skyrme Functional, Fadeev Model, Hopf Fibration.}

\bigskip

\noindent{\bf MSC : 58E15, 58E20, 70S15, 53C43, 35A15, 35J20}
\section{Introduction}

The XXth Hilbert Problem for the Dirichlet Energy of Sphere valued maps on the 3-dimensional euclidian ball is asking about the existence or not of a smooth critical point of the Dirichlet energy extending an arbitrary
data $\phi\ :\ \p\B^3_1(0)\ \rightarrow\ \S^n$. Such a critical point is called harmonic map. For $n=2$ and for this dimension only (since $\pi_2(\S^n)\ne 0$ if and only if $n=2$) a topological obstruction has to be taken into consideration and the degree of $\phi$ as a map from $\p \B^3_1(0)$ into $\S^2$ has to be assumed to be zero. In order to solve this Hilbert problem the most natural approach consists in minimizing the Dirichlet energy itself among maps equal to $\phi$ at the boundary. This approach has been initiated by R.Schoen and K.Uhlenbeck in the early 80's and for $n\ne 2$ they have proved that any smooth boundary data into $\S^n$ admits a smooth $\S^n$ valued minimizing harmonic extension (\cite{SU3}).
In the case $n=2$ however, it has been discovered that some smooth zero degree maps from $\p\B^3_1(0)$ into $\S^2$ admits no smooth minimizing harmonic extension. More precisely R.Hardt and F.H.Lin discovered in \cite{HL1} the following
``Lavrentiev phenomenon'' :
\be
\label{Lav}
\begin{array}{l}
\ds\exists\,\phi\in C^\infty(\p\B_1^3(0),\S^2)\quad\mbox{ s. t. }\quad\mbox{deg}\,\phi=0\qquad\\[5mm]
\ds\inf\lf\{E(u):= \frac{1}{2}\int_{\B_1(0)}|\nabla u|^2\ dx^3\ ;\ u\in C^1(\B_1(0),{\S}^2)\quad\mbox{ and }\quad u=\phi\quad\mbox{ on }\p\B^3_1(0) \rg\} \\[5mm]
\ds\quad>\min\lf\{E(u):= \frac{1}{2}\int_{\B_1(0)}|\nabla u|^2\ dx^3\ ;\ u\in W^{1,2}(\B_1(0),{\S}^2)\quad\mbox{ and }\quad u=\phi\quad\mbox{ on }\p\B^3_1(0) \rg\} 
\end{array}
\ee
where $W^{1,2}(\B_1(0),{\S}^2)$ is the natural class of minimization made of maps with weak derivatives in $L^2(\B_1(0))$. As a matter of fact, Hardt and Lin observed that in order to minimize the Dirichlet energy for some boundary data, it is preferable to allow for the existence of point singularities with non zero topological degrees. This theorem was then establishing the optimality of the  regularity established few years before by Schoen and Uhlenbeck who proved that any Dirichlet energy minimizer among maps in $W^{1,2}(\B_1(0),{\S}^2)$ equal to $\phi$ at the boundary have at most isolated singularities which do not accumulate at the boundary, hence there are at most finitely many of them\footnote{The number of these singularities is controlled by a constant times the Dirichlet Energy of $\phi$ \cite{AL}} and they all have $\pm 1$ topological degree (\cite{SU1}, \cite{SU2}).

In the same period of time, H.Brezis, J.-M. Coron and E.Lieb analysed the Lavrentiev gap (\ref{Lav}) and proved that for any map $u$ with isolated singularities
\[
\inf\lf\{\limsup_{k\rightarrow+\infty}E(v_k)\ ;\ v_k\in C^1(\B_1(0),{\S}^2)\, \quad\mbox{ and }\quad v_k\rightharpoonup u\in {\mathcal D}'(\B)\rg\}=E(u)+4\pi\,L(u)
\]
where $L(u)$ is the {\it minimal connection} of $u$ that is the minimal mass among the 1-dimensional currents bounding the topological singularities\footnote{The notion of topological singularities for maps in $W^{1,2}(\B^3,S^2)$    can be given a rigorous mathematical definition. This identifies with the distributional 3-form
\[
d(u^\ast\om_{S^2})
\]
It measures exactly the obstruction to  approximate the map $u$ by smooth maps in $C^\infty(\B^3,\S^2)$ strongly in $W^{1,2}$ (see \cite{Be1}).} of $u$ taking into account their multiplicities (see \cite{BCL}). They also gave an ``analytic'' formulation of $L(u)$ based on the fact that area minimizing curent of dimension 1 are calibrated (H. Federer \cite{Fed} section 5) fact for which they gave an independent proof.
\[
L(u):=\frac{1}{4\pi}\sup\lf\{\int_{\B_1(0)}  d\xi\wedge u^\ast\om_{\S^2}-\int_{\p\B_1(0)} \xi \,\phi^\ast\om_{\S^2}\ ;\ \|d\xi\|_\infty\le 1\rg\}
\]
where $\om_{\S^2}$ is the volume form on $\S^2$.  More or less simultaneously and independently from \cite{BCL}
M.Giaquinta, G.Modica and J.Soucek found a Geometric Measure Theory characterisation of $4\pi\, L(u)$ as being the minimal mass of the ``vertical part'' needed to complete the graph of $u$ seen as a rectifiable 3-dimensional current in $\B^3_1(0)\times \S^2$ in order to make it ''boundary free'' in its interior. With this interpretation at hand the  following so called {\it relaxed energy} was proposed  for any $u\in W^{1,2}_\phi(\B_1(0),{\S}^2)$
\[
F(u)=E(u)+4\pi\,L(u)
\]
for which the GMT interpretation gives  the lower semi-continuity in $ W^{1,2}_\phi(\B_1(0),{\S}^2)$ (see \cite{GMS-1}, \cite{GMS-2} and \cite{GMS-3}). A direct more analytic proof not using GMT was independently given by F.Bethuel, H.Brezis and J.M.Coron in \cite{BBC}.
Hence there exists a minimizer to $F$ in $W^{1,2}_\phi(\B_1(0),{\S}^2)$ and there holds
\[
\begin{array}{l}
\ds\inf\lf\{E(u):= \frac{1}{2}\int_{\B_1(0)}|\nabla u|^2\ dx^3\ ;\ u\in C^1(\B_1(0),{\S}^2)\quad\mbox{ and }\quad u=\phi\quad\mbox{ on }\p\B^3_1(0) \rg\} \\[5mm]
\ds\quad=\min\lf\{F(u):= \frac{1}{2}\int_{\B_1(0)}|\nabla u|^2\ dx^3+4\pi\,L(u)\ ;\ u\in W^{1,2}(\B_1(0),{\S}^2)\quad\mbox{ and }\quad u=\phi\quad\mbox{ on }\p\B^3_1(0) \rg\} 
\end{array}
\]
The rather ``exact'' penalisation of topological singularities obtained by adding the \underbar{null Lagrangian} $4\pi \, L(u)$ (which does not affect the Euler Lagrange Equation) to the Dirichlet energy together with the lower semi-continuity of $F$ was giving the hope that by simply minimizing $F$ one would solve the Hilbert XXth problem for the Dirichlet energy of $\S^2$ valued maps. Unfortunately the best known  regularity result at this stage for the {\it relaxed energy} is telling that the singular set of any minimizer of $F$ is at most one dimensional (\cite{GMS-2}). This is clearly not as good as  the Schoen Uhlenbeck regularity result for minimizers of $E$. In fact, in an unexpected and ironic way, the relaxed energy was the main tool used by the author  in his  PhD thesis to prove the existence of everywhere discontinuous $\S^2-$valued harmonic extension to any non constant $\phi$ (\cite{Riv95}). The question to know whether or not minimizers of the relaxed energy $F$ have topological singularities that is
\[
d(u^\ast\om_{\S^2})\qquad\mbox{ is equal to zero or not }
\]
is still an open problem (one of the problems from Brezis's ``legacy paper'' \cite{Brez}).

Some applications in the calculus of variations aiming at constructing new harmonic maps require to regularise the Dirichlet energy of sphere valued maps in order to call upon the classical deformation theory in infinite dimensional spaces. For instance the author proposed in \cite{Rivom} a minmax operation that should establish the minimality of the Hopf fibration for the 3-energy among all maps from $S^3$ into $S^2$ which are not homotopic to a constant that would answer a conjecture posed in \cite{Riv}. The most natural and handful  regularisation which is both Palais-Smale and that fulfils a nice monotonicity formula is the Ginzburg Landau regularisation 
\[
E_\ep(u):=\frac{1}{2}\int_{\B_1(0)}|\nabla u|^2+\frac{1}{2\ep^2}(1-|u|^2)^2\ dx^3\ .
\]
This regularisation is defined on the whole Hilbert Sobolev space $W^{1,2}(\B^3_1(0),{\R}^3)$ and is  releasing the constraint $u\in {\S}^2$, replacing it by a penalisation term in the Lagrangian, $(1-|u|^2)^2$  wich is the a so called ``Ginzburg-Landau'' type potential and preceded
by a ``very large'' parameter $1/\ep^2$.
The study of its critical points and the asymptotic analysis is very reminiscent to the one of stationary harmonic maps\footnote{ The asymtotic analysis of the non gauge invariant Ginzburg-Landau energy $E_\ep$ has been initiated first for maps into ${\R}^2$ by F.Bethuel, H.Brezis and F.H\'elein in \cite{BBH}. The situation in the ${\R}^2$ valued case differs significantly from the ${\R}^3$ valued problem we are considering in this work and offers different challenges. The first one is related to the fact that in the ${\R}^2$-valued case the minimizing energy might blow-up  depending on the boundary datas chosen in $H^{1/2}(\p\B^m,\S^1)$.  In the $\R^n$ case $n>2$ instead, for any boundary data $\phi\in H^{1/2}(\p\B^m,\S^{n-1})$ the minimum of $E_\ep$ is uniformly bounded.} (see \cite{LW1},\cite{LW2} and \cite{Gia} in the context of minmax). The drawback of using this energy is that at the limit nothing prevents to converge to the subclass of  harmonic maps which have topological singularities and are not  approximable by smooth maps. This regularisation does not ``see'' topological singularity.

In the present work we aim at proposing a regularisation of the Dirichlet enery which is both penalising the topological singularities and to which simultaneously Palais Smale theory of deformation in infinite dimensional spaces can be applied.
To that aim we will embed the harmonic map problem into gauge theory and instead of considering solely  the Dirichlet energy of a map $u\in W^{1,2}(\B^3_1(0),\S^2)$ to which a penalisation of the form of a Ginzburg-Landau type potential preceded by a large coefficient $\ep^{-2}$ is added we will also include a $\frak{su}(2)$ connection one form $A$. The role of this connection will be to ``overload'' each of the possible topological singularities of the limiting map $u^\infty$ by  a Self Dual $SU(2)-$Yang-Mills-Higgs monopoles. In that way, by increasing the coupling constant we will ``clearout'' every possible topological singularity for $u$ and obtain a strongly approximable map. The Lagrangian we propose is the self dual $SU(2)-$Yang-Mills-Higgs Energy to which a ``gauge breaking'' term which is going to converge to the Dirichlet energy of $u$  is added. This term is simply the $L^2$ energy of the connection. Percisely, for $\la\ge1$, $\mu\ge 1$ and $0<\ep<1$ we consider
\[
CYMH_{\ep}^{\la,\mu}(u,A):=\frac{1}{2}\int_{\B_1(0)}|A|^2 \ dx^3 +\mu\,\int_{\B_1(0)}\frac{|d_Au|^2}{\ep}+\ep\,|F_A|^2+\frac{\la}{2\ep^3}(1-|u|^2)^2\ dx^3
\]
where $u\in W^{1,2}_\phi(\B^3_1(0),{\S}^2)$ is the so called ``Higgs field'' and $A\in L^2(\Om^1(\B^3_1(0))\otimes\frak{su}(2))$ is an $L^2$ connection one form taking values into $\frak{su}(2)$ the Lie algebra of $SU(2)$. Moreover we adopt the conventional notation in gauge theory that is $d_Au$ is the covariant derivative of $u$ with respect to the connection one form $A$ given by
\[
d_Au:=du+[A,u]
\]
where $[\cdot,\cdot]$ is the usual Lie bracket operation in the Lie Algebra $\frak{su}(2)$ and $F_A$ is the curvature of $A$
\[
F_A:=dA+\frac{1}{2}\,[A\,\wedge\,A]=dA+A\wedge A\ .
\]
Observe that our Lagrangian can be decomposed in the following way
\[
CYMH_{\ep}^{\la,\mu}(u,A):=\frac{1}{2}\int_{\B_1(0)}|A|^2 \ dx^3 +\mu\,YMH_\ep^\la(u,A)
\]
where $YMH_\ep^\la(u,A)$ is the classical Gauge invariant Self Dual Yang-Mills-Higgs energy whose critical points are called $SU(2)$ Monopoles. The asymptotic analysis of $G-$Monopoles  started in the work of A.Jaffe and C.Taubes \cite{JT} in 3 real dimensions and is going through a recent intense and thorough development on higher dmensional manifolds with special holonomy : $G=SU(2)$ on Calabi Yau 3-folds (Calabi-Yau Monopoles) and $G=G_2$ on $G_2-$manifolds $G_2$ monopoles. A very special attention is given  in particular in the so called {\it Large Mass limit } $\ep\rightarrow 0$ which has the effect of ``squeezing'' monopoles and to  ``abelianise'' the connection and the structure of the bundle away from the concentration set (see in particular the works by D.Fadel, A.Nagy and G.Oliveira \cite{FNO} or even more recently the impressive  contributions by  D.Parise,  A.Pigati and D.Stern \cite{ PPS} or by Y. Li \cite{YL}). We would like to insist on the fact that our analysis below, though we are considering in the first part the large mass limit, differs substantially for the previous works du to the presence of the ``gauge breaking'' term $\int |A|^2\ dx^3$.

\medskip

The gauge transformation associated to any choice of 
map $g\in W^{1,2}(\B^3_1(0),SU(2))$  is the following mapping
\[
(u,A)\quad\longrightarrow\quad (u^g,A^g):=(g^{-1}\,u\,g, g^{-1} A\,g+g^{-1}\,dg)\ .
\]
By gauge invariance for the Yang-Mills-Higgs Lagrangian it is meant that
\[
\forall \,g\in W^{1,2}(\B^3_1(0),SU(2))\qquad YMH_\ep^\la(u^g,A^g)=YMH_\ep^\la(u,A)
\]
The first term however, the $L^2$ norm of the connection is not gauge invariant and is used to produce {\it Coulomb Gauge}. Precisely, the gauge class of $(u,A)$ being fixed and the energy  $YMH_\ep^\la(u,A)$ being fixed, critical points to
$CYMH_{\ep}^{\la,\mu}(u,A)$ under gauge changes within a fixed gauge class will automatically give a critical point of the map
\[
g\quad\longrightarrow\quad\frac{1}{2}\int_{\B^3_1(0)}\lf|g^{-1} A\,g+g^{-1}\,dg\rg|^2\ dx^3
\]
which are known to be characterised by the {\it Coulomb Condition}
\[
d^\ast A^g=0\ .
\]
 Hence looking at critical points to $CYMH$ leads naturally to a {\bf global Coulomb Gauge}\footnote{The search for global Coulomb gauge in optimal spaces is a challenging question considered in particular by the author in collaboration with M.Petrache in \cite{PeR}. } (see proposition~\ref{pr-coul} below). This is why we call the Lagrangian $CYMH_{\ep}^{\la,\mu}(u,A)$ the {\it Coulomb-Yang-Mills-Higgs} Lagrangian.In sections II  we prove that for any choice of $\phi\in H^{1/2}(\p\B_1(0),\S^2)$ there exists a minimizer of $CYMH$ under the constraint $u=\phi$ on $\p\B_1(0)$ (see proposition~\ref{pr-II.1}). Then in section III we establish the smoothness of minimizers of (see theorem~\ref{th-reg}). Our third main result in the present work is the following theorem which describe the asymptotics of the minimal energy for fixed $\la$ and $\mu$ large enough as $\ep\rightarrow 0$.
\begin{Th}  
\label{th-clearout}{\bf [Asymptotical Value of the minimal Energy]}
Let $\phi\in C^2(\p\B^3,\S^2)$ of zero degree. There exists $\La_\phi>0$ such that for any choice of $1\le\la\le \mu$ and $\la\mu\ge \La_\phi$ there holds
\[
\begin{array}{l}
\ds \lim_{\ep\rightarrow 0}\min\lf\{CYMH_\ep^{\la,\mu}(u,A)\ ;\ (u,A)\in {\mathcal E}_\phi\rg\}\\[5mm]
\ds=\min\lf\{ \frac{1}{8}\,\int_{\B_1(0)}|du|^2\ dx^3+\pi\, L_\phi(u) \  ;\  u\in W_\phi^{1,2}(\B^3,\S^2)\rg\}
\end{array}
\]
where 
\[
L_\phi(u):=\frac{1}{4\pi}\sup\lf\{ \int_{\B^3}d\xi\wedge u^\ast\om_{S^2} -\int_{\p\B^3}\xi\ \phi^\ast \om_{S^2}\ ;\ \|d\xi\|_\infty\le 1\rg\}\ .
\]
\hfill $\Box$
\end{Th}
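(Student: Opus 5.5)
The plan is to prove the two inequalities separately, after rewriting the right-hand side as
\[
\frac{1}{8}\int_{\B_1(0)}|du|^2\,dx^3+\pi\,L_\phi(u)=\frac14\Big(\frac12\int_{\B_1(0)}|du|^2\,dx^3+4\pi\,L_\phi(u)\Big)=\frac14\,F(u).
\]
So the claim is that the minimal $CYMH$ energy converges to one quarter of the minimal Brezis--Coron--Lieb relaxed energy. The factor $1/4$ comes from the pointwise algebraic fact that, for $|u|=1$, the smallest $|A|^2$ compatible with $[A,u]=-du$ equals $|du|^2/4$. I normalize the Lie bracket so that $|[a,u]|=2|a|$ for $a\perp u$ and $|u|=1$; I have not checked this against the paper's conventions.

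\emph{Upper bound.} This step is essentially explicit.
\begin{enumerate}
\item Take a minimizer $u$ of $F$ in $W^{1,2}_\phi(\B^3,\S^2)$; it exists by the lower semicontinuity results of \cite{GMS-2} and \cite{BBC}.
\item By the Bethuel/BCL relaxation identity recalled in the introduction, pick maps $v_k\in C^1(\B_1(0),\S^2)$ with $v_k=\phi$ on the boundary and $\frac12\int|dv_k|^2\to F(u)$. Since $\phi\in C^2$, the boundary condition can be kept.
\item For each $k$ set $A_k:=\frac14[v_k,dv_k]$, the $u$-orthogonal connection. Then $d_{A_k}v_k=0$, the potential term vanishes, and $F_{A_k}$ is bounded in $L^2$ independently of $\ep$.
\item Therefore
\[
CYMH^{\la,\mu}_\ep(v_k,A_k)=\frac18\int|dv_k|^2+\mu\,\ep\int|F_{A_k}|^2 .
\]
Letting $\ep\to0$ and then $k\to\infty$ gives $\limsup_{\ep\to0}\min CYMH\le \frac14 F(u)$. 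No condition on $\la,\mu$ is needed here.
\end{enumerate}

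\emph{Lower bound.} Let $(u_\ep,A_\ep)$ be minimizers given by proposition~\ref{pr-II.1}; they are smooth by theorem~\ref{th-reg}. By the upper bound their energy is uniformly bounded.
\begin{enumerate}
\item The uniform bound gives
\[
\int|d_{A_\ep}u_\ep|^2\le C\ep/\mu,\qquad \int(1-|u_\ep|^2)^2\le C\ep^3/(\la\mu),\qquad \|A_\ep\|_{L^2}\le C .
\]
Hence $d u_\ep=-[A_\ep,u_\ep]+o(1)$ in $L^2$, so $u_\ep$ is bounded in $W^{1,2}$ and converges weakly to some $u_\infty\in W^{1,2}_\phi(\B^3,\S^2)$.
\item Split $A_\ep=A_\ep^\perp+\alpha_\ep\,u_\ep/|u_\ep|$ into its parts orthogonal and parallel to $u_\ep$. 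Pointwise, where $|u_\ep|\simeq1$,
\[
\tfrac12|A_\ep|^2\ge\tfrac12|A_\ep^\perp|^2\ge\tfrac18|du_\ep|^2-o(1)\ \text{in } L^1 .
\]
This yields $\liminf \frac12\int|A_\ep|^2\ge\frac18\int|du_\infty|^2+\nu(\overline{\B})$, where $\nu$ is the defect measure of the weak convergence of $\frac18|du_\ep|^2$.
\item It remains to show $\nu(\overline{\B})\ge\pi L_\phi(u_\infty)$, in the calibrated form
\[
\int d\xi\wedge u_\infty^*\om_{\S^2}-\int_{\p\B}\xi\,\phi^*\om_{\S^2}\le\frac4\pi\cdot\pi\,\nu(\overline{\B})
\]
for every $\|d\xi\|_\infty\le1$. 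My plan is to prove it by slicing. Apply coarea to the level sets $\{\xi=t\}$. The left side equals $\int_t \big(\int_{\{\xi=t\}}u_\infty^*\om-\text{bdry}\big)dt$. On a generic slice, the defect in $\int u_\ep^*\om$ versus $\int u_\infty^*\om$ is an integer multiple of $4\pi$: the topological charge carried by the monopoles that are being squeezed.
\item A degree-$d$ covering of $\S^2$ by a $2$-dimensional slice costs at least $\frac12\int|du|^2\ge4\pi|d|$. Hence it costs at least $\pi|d|$ of $\frac18\int|du_\ep|^2$, and so at least $\pi|d|$ of $\frac12\int|A_\ep|^2$. This is conformal invariance in two dimensions, which makes the bound survive concentration. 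Integrating over $t$ with $|d\xi|\le1$ gives the desired inequality.
\end{enumerate}

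\emph{Main obstacle.} The hardest point is justifying the slice statement. The charge defect must be realized as genuine $\S^2$-covering by $u_\ep$ with $|u_\ep|\approx1$, so that it is paid for by $|A_\ep|^2$. The alternative is that $u_\ep$ escapes through its zeros, where the bound $\tfrac12|A^\perp|^2\ge\tfrac18|du|^2$ fails. This is where the hypotheses $\la\le\mu$ and $\la\mu\ge\La_\phi$ enter.
\begin{itemize}
\item First I would show that the zero set of $u_\ep$ has a cost. Near each zero, the Bogomolny inequality $\mu\int\ep^{-1}|d_Au|^2+\ep|F_A|^2\ge2\mu\int d\langle u,F_A\rangle$ and the local monopole mass give a cost of order $\mu\cdot c(\la)$ per unit length of the zero set. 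A clearing-out/monotonicity estimate gives the lower bound on $c(\la)$.
\item Choosing $\la\mu\ge\La_\phi$ then makes this cost exceed $\pi$ per unit length. Hence, on generic slices, the minimizers must replace zeros by smooth coverings, which the $|A|^2$ term charges at exactly $\pi$.
\end{itemize}
I would first try to obtain this using an $\ep$-regularity/clearing-out lemma for $YMH_\ep^\la$ together with the monotonicity formula, applied on cylinders around the concentration set.
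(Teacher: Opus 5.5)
Your upper bound is the paper's argument. One detail: with the paper's bracket $[\mathbf{i},\mathbf{j}]=2\mathbf{k}$ you need $A=-\frac14[v,dv]$, not $+\frac14[v,dv]$, to get $d_Av=0$. The lower bound, however, is not a proof. Two of its steps are heuristic and unproved: the quantized slice defect in your step 3, and the claim in step 4 that it is paid for at rate $\pi$ by $\frac12|A_\ep|^2$. Moreover, the step where the hypotheses $\la\le\mu$, $\la\mu\ge\La_\phi$ must enter is left as a plan resting on the wrong picture. In three dimensions the zeros of $u_\ep$ are isolated monopole cores of size $\sim\ep$, not curves, so there is no ``cost per unit length of the zero set''. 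Also, the Bogomolny bound only charges the topological charge of a core, not the mere presence of a zero.

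The missing idea is a global clearing-out: in this regime minimizers have no zeros at all. Lemma~\ref{lm-linfty-b} gives $\|\nabla_{A_\ep}u_\ep\|_{L^\infty}\le C_\phi\,\ep^{-1}(1+\sqrt{\la/\mu})\le 2C_\phi\,\ep^{-1}$, and this is where $\la\le\mu$ is used. By Kato's inequality, if $|u_\ep(x_0)|<1/2$ then $|u_\ep|<3/4$ on $\B_{c\ep}(x_0)$ with $c=c(\phi)>0$. The potential term $\frac{\la\mu}{2\ep^3}\int(1-|u_\ep|^2)^2$ alone then contributes at least $c'\la\mu$. This exceeds the uniform energy bound of Lemma~\ref{lm-bd} once $\la\mu\ge\La_\phi$. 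That is Lemma~\ref{lm-zero-set} with $Q<1$, and it gives $|u_\ep|\ge 1/2$ on $\overline{\B_1(0)}$. Hence $\hat u_\ep:=u_\ep/|u_\ep|\in W^{1,2}_\phi(\B^3,\S^2)$ is smooth inside (Theorem~\ref{th-reg}) and has no topological singularities, so $\frac18\int|d\hat u_\ep|^2\ge\inf_{v\in C^1_\phi}\frac18\int|dv|^2$. Now write $[A_\ep,\hat u_\ep]=d_{A_\ep}\hat u_\ep-d\hat u_\ep$, and use $|[A,\hat u]|=2|A^T|$, $|d_A\hat u|\le 2|d_Au|$ and Young's inequality. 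This gives, pointwise,
\[
\frac12|A_\ep|^2+\frac{\mu}{2\ep}|d_{A_\ep}u_\ep|^2\ge\frac18\Big(1-\frac{\ep}{\mu}\Big)|d\hat u_\ep|^2 .
\]
Therefore $\min CYMH_\ep^{\la,\mu}\ge(1-\ep/\mu)\inf_{v\in C^1_\phi}\frac18\int|dv|^2$ at every small $\ep$. The relaxation identity you already use for the upper bound identifies the right-hand side with the BCL minimum. No weak limit, defect measure or slicing is needed: your steps 1--4 would amount to re-proving the Bethuel--Brezis--Coron lower semicontinuity, which that identity already encodes.
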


The proof of theorem~\ref{th-clearout} is making use of the following concentration compactness theorem for sequences of minimizers which is one of the main contribution of the present work. 
\begin{Th}
\label{th-cons-comp} {\bf[Concentration compactness]}
Let $\phi$ in $C^2(\p\B_1(0),{\S}^2)$ having  zero degree, let $\ep_k\rightarrow 0$ and let $(u^k,A^k)$ be a sequence of minimizers of $CYMH_{\ep_k}^{\la,\mu}$ where $\la\ge 1$ and $\mu\ge 1$ are fixed. Then there exists a subsequence, a one dimensional rectifiable relatively closed set $K\subset \B_1(0)$ and a weakly harmonic map $u^\infty\in W^{1,2}_\phi(\B_1(0),{\S}^2)$ such that
\[
A^k\longrightarrow -\frac{1}{4}\,[u^\infty, d u^\infty]\qquad\mbox{ strongly in }L^\infty_{loc}(\B_1(0)\setminus K)
\] 
moreover
\[
u^\infty\in C^\infty(\B_1(0)\setminus K)\ ,
\]
and
\[
\begin{array}{l}
\ds \lf(|{A}^k|^2+\mu\,\frac{|d_{{A}^k} {u}^k|^2}{\ep}+\mu\,\ep\,|F_{{A^k}}|^2\ +\frac{\la\mu}{2\,\ep^3}(1-|{u}^k|^2)^2\rg)\ dx^3\\[5mm]
\ds\rightharpoonup \ \frac{1}{4}|du^\infty|^2\ dx^3+f\ {\mathcal H}^1\res K \qquad\mbox{ in }\quad{\mathcal M}_{loc}(\B^3_1(0))\ ,
\end{array}
\]
where ${\mathcal M}(\B^3_1(0))$ is the space of Radon measures in $\B^3_1(0)$ and $f\in L^1_{{\mathcal H}^1}(K,{\R}_+)$. Finally we have for any $r_1<1$
\[
{\mathcal H}^1(K\res {\B}_{r_1}(0))<+\infty\ .
\]
\hfill $\Box$
\end{Th}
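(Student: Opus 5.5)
The plan is to treat the problem as a Ginzburg–Landau/harmonic-map type asymptotic analysis after reducing the coupled system to a critical elliptic system with antisymmetric structure. The first step is a uniform energy bound. I would compare with a competitor $(u,A)$ built from a fixed $W^{1,2}_\phi$ extension $v$ with finitely many point singularities, taking $A=-\frac14[v,dv]$ away from small balls around the singularities and inserting rescaled BPS monopoles of size $\ep$ near them. This gives $\sup_k CYMH_{\ep_k}^{\la,\mu}(u^k,A^k)<+\infty$, and then weak limits $A^k\rightharpoonup A^\infty$ in $L^2$, together with convergence of the energy densities to a Radon measure $\nu$.

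Next I would use the Euler–Lagrange equations, which include the Coulomb condition $d^\ast A^k=0$ (Proposition~\ref{pr-coul}), and the smoothness of minimizers (Theorem~\ref{th-reg}). From these I would derive two tools.

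\emph{A monotonicity formula.} Using inner variations (domain deformations) with the scaling weights $\ep^{-1},\ep,\ep^{-3}$, I expect $r^{-1}\nu_k(\B_r(x))$ to be almost monotone, up to an error controlled by the $|A|^2$ term, which scales like $r^3$ in density and so is harmless.

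\emph{An $\ep$-regularity statement.} There is $\eta>0$ such that if $r^{-1}\nu_k(\B_r(x))<\eta$, then on $\B_{r/2}(x)$ we have $|u^k|\ge 1/2$ and, after passing to the unit-normalised Higgs field, the algebraic relation $d_{A^k}u^k\approx0$ in the $\ep\to0$ regime. Minimization should then force $A^k$ to be close to the "abelian" connection $-\frac14[u,du]+(\text{component along }u)$. The Coulomb gauge term then kills the component along $u$ and pins it to $-\frac14[u,du]$.

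This gives the structure of the limit. The remaining quantities are exponentially small there, because $1-|u|$ and $F_A$ decay in the massive regime, so $u^k$ satisfies a critical system $-\Delta u=\Omega\cdot\nabla u$ with $\Omega$ antisymmetric. Uniform $C^{1,\alpha}$ and then $C^\infty$ bounds should follow from Rivière-type Schrödinger-system estimates. The gradient and Higgs potential contributions vanish in the limit. The limit energy density is $|A^\infty|^2=\frac1{16}|[u,du]|^2$, which I expect to equal $\frac14|du|^2$ with the paper's normalisation of the bracket; the expected Euler–Lagrange equation is $\Delta u+|\nabla u|^2u=0$. I would carry this out as follows.

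\begin{enumerate}
\item Define $K=\{x:\ \liminf_{r\to0} r^{-1}\nu(\B_r(x))\ge\eta\}$. The monotonicity formula makes $\Theta^1(\nu,x)$ exist and bounds it below by $\eta$ on $K$, so $K$ is relatively closed and ${\mathcal H}^1(K\cap\B_{r_1})\le C\eta^{-1}\nu(\B_{r_1'})<\infty$ by a covering argument.
\item Away from $K$, the $\ep$-regularity gives strong $L^\infty_{loc}$ convergence of $A^k$ to $-\frac14[u^\infty,du^\infty]$ and smoothness of $u^\infty$.
\item On $K$, Preiss-type or Ambrosio–Soner/Lin-type arguments give the decomposition $\nu=\frac14|du^\infty|^2dx+f\,{\mathcal H}^1\res K$ with $f\ge\eta$, and the 1-rectifiability of $K$ follows via Marstrand–Mattila from the existence of positive finite densities.
\item Weak harmonicity of $u^\infty$ across $K$ holds because $K$ has zero capacity for $W^{1,2}$ test functions: its ${\mathcal H}^1$ measure is locally finite in 3D.
\end{enumerate}

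The main obstacle is the $\ep$-regularity step, specifically showing that small scaled energy excludes zeros of $u^k$ and forces the connection's longitudinal part to vanish in the limit. Here the non-gauge-invariant term breaks the usual gauge-fixing trick, so I would first try to use the global Coulomb condition directly in the Schrödinger-system estimates. A second serious obstacle is making the monotonicity error from $\frac12\int|A|^2$ genuinely lower order uniformly in $\ep$.
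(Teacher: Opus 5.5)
The gap is your monotonicity step, and most of the rest of the plan depends on it. It is what would supply the Morrey-type smallness (all centres, all smaller radii) that any $\ep$-regularity for the antisymmetric system needs as input. It is also what would make $\Theta^1(\nu,x)$ exist for Marstrand--Mattila (the paper uses Eilenberg--Harrold instead), and give the upper density bound you need for $\nu\res K\ll{\mathcal H}^1\res K$.

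You have also misidentified the obstruction. $A$ is a connection $1$-form, so $\frac1r\int_{\B_r}|A|^2$ is dilation invariant, with exactly the homogeneity of the $\frac{|d_Au|^2}{\ep}$ term. It is not a lower-order mass term: away from $K$ it carries the whole limit energy $\frac14|du^\infty|^2$. In the inner-variation identity (\ref{III.1}) it enters with the good sign. The bad term is the $2$-form energy, since Lemma~\ref{lm-mono} only gives
\[
\begin{array}{l}
\ds\frac{d}{dr}\Big(\frac{1}{2r}\int_{\B_r(x_0)}|A|^2+\mu\frac{|d_Au|^2}{\ep}+\frac{3\la\mu}{2\ep^3}(1-|u|^2)^2\,dx^3\Big)\\[5mm]
\ds\qquad\ge-\frac{\mu}{2r^2}\int_{\B_r(x_0)}\ep\,|F_A|^2\,dx^3-\frac{\mu}{2r}\int_{\p\B_r(x_0)}\ep\,|F_A|^2\,dvol_{\p\B_r(x_0)}\ .
\end{array}
\]
So the only monotone quantity is (\ref{trumon}), in which the curvature carries the weight $\frac{2r}{|x-x_0|}-1$. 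Getting almost-monotonicity of $r^{-1}\nu_k(\B_r(x_0))$ from it requires $\int_{\B_r(x_0)}\frac{\ep|F_A|^2}{|x-x_0|}\,dx^3$ to be small. Stationarity gives no such bound uniformly in $\ep$. If part of the line energy sits in $\ep|F_A|^2$ inside a tube of width $w\ll r$ around a line through $x_0$, this term is of the order of the density times $\log(r/w)$. This sign competition between $1$-form and $2$-form energies is exactly the difficulty described in the introduction. Your ``second obstacle'' therefore targets the wrong term, and the first tool of your plan is not available.

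The missing idea is to use minimality rather than stationarity, in two regimes. For $r>R\sqrt{\ep}$ (Lemma~\ref{lm-sca}) the paper compares with a competitor built from the data on $\p\B_r(x_0)$. It consists of the $\S^2$-harmonic extension of $u/|u|$ obeying the epiperimetric inequality (\ref{A-04}) of Lemma~\ref{lm-ext}, the harmonic extension of $|u|$, and the connection $-\frac14[\check u,d\check u]+\eta^T+d^\ast G\,\check u$, where $G$ and $\eta$ solve massive boundary value problems. With $Y(r)$ the energy in $\B_r(x_0)$, this yields $Y(r)\le(1-\nu+CR^{-1}+C\delta^{1/8})\,rY'(r)$ whenever $Y'(r)\le\delta$, hence density decay down to scale $R\sqrt{\ep}$.

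Below $\sqrt{\ep}$ the paper combines the Bochner--Weitzenb\"ock inequalities of Propositions~\ref{pr-bw} and \ref{pr-bwt}, Harnack for Kato-class potentials and Lorentz-space estimates. These bound $\int_{\B_{\sqrt{\ep}}(x_0)}\frac{\ep|F_A|^2}{|x-x_0|}$ by $C\delta$ (see (\ref{moncurerr})). Only then does (\ref{trumon}) give control at all scales (Lemma~\ref{lm-sca-wh}), which is the hypothesis of the $\delta$-regularity Lemma~\ref{lm-deltareg}.

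Two smaller remarks. First, the limit points $a_i$ of the sets $\{|u^k|<1-\delta_0\}$ (Lemma~\ref{lm-zero-set}) must be set aside, as the paper does by working in $\B_{r_1}(0)\setminus\bigcup_i\B_\sigma(a_i)$. At such a point the potential term alone concentrates an amount of energy bounded below in an $\ep_k$-ball. Second, no monopoles are needed for the energy bound. Since $\deg\phi=0$ there is a smooth $\S^2$-valued extension $u$, and $A=-\frac14[u,du]$ makes $d_Au=0$ (Lemma~\ref{lm-bd}). Your capacity argument for weak harmonicity across $K$ is fine.
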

\begin{Rm}
\label{rm-longconnexion}
One of the striking fact of this convergence, which was not a-priori obvious, is that the longitudinal part of the connection form is converging weakly to zero as $k\rightarrow +\infty$
\[
A^k\cdot u^k\ \longrightarrow \ 0\qquad\mbox{ strongly in }L^\infty_{loc}(\B_1(0)\setminus K)\ .
\]
As a consequence one passes from a problematic of $SU(2)$ valued map (in small scales) to $S^2-$valued map
\hfill $\Box$
\end{Rm}
\begin{Rm}
\label{rm-conccom} The asymptotic analysis of $CYMH^{\mu,\la}_\ep$ as $\ep\rightarrow 0$ (for $\mu$ and $\la$ fixed) is made particularly delicate due to the ``interference'' of two concentration compactness effects : one is related to the formation of $SU(2)$ Monopoles at a smaller and smaller scale and is $0$ dimensional, the other one is related to the formation of line concentration converging to minimal rectifiable currents connecting the topological singularities of the limiting map $u^\infty$ and which is then one dimensional. This competition between the two scales is ``materialised'' by sign distributions in the monotonicity formula which makes it a-priori impossible to use as such without further thorough analysis of the influence of each of the terms in this formula (see the comments in the next paragraphs of the introduction). Observe that these two concentration effects, zero and one dimensional are a-priori ``decorelated'' in space. The limiting location of the monopoles at points $a_i$ whose limiting charge is given by
\[
\lim_{r\rightarrow 0}\lim_{k\rightarrow +\infty}\frac{1}{4\pi}\int_{\B_r(a_i)}\mbox{Tr}\lf(F_{A^k}\wedge d_{A^k}u^k\rg)
\]
contribute to the topological singularities of $u^\infty$ but, for low values of the parameters $\mu$ and $\la$, it is not excluded that the line concentration will not connect them at all.
\hfill $\Box$
\end{Rm}
\begin{Rm}
\label{rm-thresh}
It seems that by decreasing the coupling constant $\mu$ from  $\La_\phi/\la$ to $0$ one penalises less and less the presence of $SU(2)-$monopoles and the formation of topological singularities. We then expect  that, below some critical value of $\mu$, the limiting map $u^\infty$ will be an absolute minimizer of the Dirichlet energy as given by the Schoen-Uhlenbeck analysis and no line concentration happens. In other words, the moving of the parameter $\mu$ is realising an interpolation between the minimization of the Dirichlet energy and the minimization of Brezis-Coron-Lieb relaxed energy.\hfill $\Box$
\end{Rm}
The proof of theorem~\ref{th-cons-comp} is based on a $\delta-$regularity lemma independent of $\ep$. This $\delta-$regularity result is made particularly challenging and is taking a substantial number of pages of the present work due to the {\bf absence of obvious Monotonicity Formula}. 

This absence can be explained in the following way. Consider for instance a much simpler linear framework and we look for a one form $B$ solving
\be
\label{linea}
\lf\{
\begin{array}{l}
d^\ast dB+B=0\qquad\mbox{ in }\B^3\\[5mm]
d^\ast B=0\qquad\mbox{ in }\B^3
\end{array}
\rg.
\ee
The scaling invariant terms associated respectively to the $1-$form $B$ and the $2-$form $dB$ are for any $x_0\in\B^3_1(0)$ and $r>0$
\be
\label{scalrul}
\frac{1}{r}\int_{\B_r(x_0)}|B|^2\ dx^3\qquad\mbox{ and }\qquad r\int_{\B_r(x_0)}|dB|^2\ dx^3
\ee
Thus naturally, by following the stationarity condition satisfied by solutions to (\ref{linea}) critical points of the Lagrangian $\int\ |dB|^2+|B|^2\ dx^3$ among co-closed forms is giving
\be
\label{monfail}
\begin{array}{l}
\ds-\int_{\B_r(x_0)}{|B|^2}\ dx^3+{r}\,\int_{\p \B_r(x_0)}|B|^2\ dvol_{\p \B_\rho(x_0)}\\[5mm]
\ds+\int_{\B_r(x_0)}{|dB|^2}\ dx^3+r\,\int_{\p \B_r(x_0)}{|dB|^2}\ dvol_{\p \B_\rho(x_0)}\\[5mm]
\ds=2\,{r}\int_{\p \B_r(x_0)}\frac{|B\res \p_r|^2}{\ep}\ dvol_{\p \B_r(x_0)}+2\, r\int_{\p \B_r(x_0)}{|dB\res\p_r|^2}\ dvol_{\p \B_r(x_0)}
\end{array}
\ee
The distribution of signs
\[
\begin{array}{cc}
-& +\\[3mm]
+ & +
\end{array}
\]
 in the l.h.s.of (\ref{monfail}) are faithful to the scaling rules (\ref{scalrul}) of respectively 1-forms and 2-forms and this unfortunately leads to a non conclusive formula regarding monotonicity.
 The fact that the stationary equation is not leading to a monotonicity formula nevertheless does not exclude monotonicity to hold but it has to be obtained following a different argument. In the linear case (\ref{linea}) one would compute the {\it Bochner-Weitzenb\"ock} formula
 \[
 \Delta(|dB|^2+|B|^2)=2\,|\nabla dB|^2+2|\nabla B|^2\ge 0
 \]
 from which we deduce that
 \[
 \frac{d}{dr}\lf(\frac{1}{r^3}\int_{\B_r(x_0)}|dB|^2+|B|^2\ dx^3\rg)\ge 0
 \]

 Coming back to the Coulomb-Yang-Mills-Higgs system we will have in particular the common scaling rules of the 1-forms $A$ and $d_Au$ being different from the one of the two form $F_A$. This makes the stationary argument non conclusive as described in the above discussion.
 
 In the abelian case for the self dual Yang-Mills-Higgs energy A.Pigati and D.Stern used the {\bf Bochner-Weitzenb\"ock} formula to obtain a pointwize control of the norm of the curvature by the Ginzburg-Landau potential part (see \cite{PiSt} and \cite{JT} Theorem 8.1 is chapter III for the two dimensional counterpart of this argument). In the non abelian case with a different scaling on the potential and with the Coulomb term in addition this argument does not seem to be adaptable. Our approach instead consist in using the minimality of the solution combined with an {\bf epiperimetric inequality} for special solutions to the harmonic map equation to establish monotonicity. Inspired by the terminology of minimal surface theory, the {\it epiperimetric inequality} for $S^2-$valued harmonic maps is saying that the minimal Dirichlet energy of a radial extension on a ball is larger than a strict portion of the minimal harmonic energy (see lemma~\ref{lm-ext})
 \[
 \frac{1}{r}\,\int_{\B_r(x_0)}|\nabla u|^2\ dx^3\le (1-\nu)\,\int_{\p \B_r(x_0)}|\nabla_Tu|^2\ dvol_{\p\B_r(x_0)}\ ,
 \]
 where $u$ is a smooth harmonic extension with sall energy, $\nabla_T$ is the tangential derivative and $0<\nu<1$ is a \underbar{universal constant}. This argument permit to prove a monotonicity formula for a minimizing configuration but only between the scales 1 and $\sqrt{\ep}$. Between the scales $\sqrt{\ep}$ and $\ep$ we argue quite differently. Using the monotonicity up to $r=\sqrt{\ep}$, we prove that the $L^{3/2}$ norm of the curvature is small enough in such a way that we can extract a controlled Coulomb gauge using Uhlenbeck Coulomb Gauge extraction method. An adapted use of this controlled Coulomb gauge permits to ``push'' monotonicity between $\sqrt{\ep}$ and $\ep$.

 \bigskip

In the second part of the paper we study the limit $\ep\rightarrow 0$, $\la=\la_0$ and $\mu=\tau/\ep$ where $\tau$ is an arbitrary positive fixed number. Our first main result in this asymptotic process is the following.
\begin{Th}
\label{th-harmflu}
Let $\phi\in C^2(\B^3,\S^2)$ having zero degree, $\tau\in(0,1)$, $\la\ge 1$ and let $\ep_k\rightarrow 0$. Let $(u^k,A^k)$ be a sequence of minimizers of $CYMH_{\ep^k}^{\la,\tau/\ep_k}$ in ${\mathcal E}_\phi$. Then modulo extraction of a subsequence
\[
A^k\rightarrow A^\infty\qquad\mbox{ and }\qquad u^k\rightarrow u^\infty\qquad\mbox{ strongly in }C^l_{loc}(\B^3)
\]
for any $l\in{\N}$ moreover $(u,A)$ satisfy the following conditions : There exists a smooth closed 2-form $G$ such that
\be
\label{lond}
\lf\{
\begin{array}{l}
\ds -d\ast\lf(u\wedge d u\rg)=\tau\, d\ast G\wedge du\qquad\mbox{ in }\B_1(0)\\[5mm]
\ds \tau\,d\,d^\ast G+G=\,u^\ast\om_{S^2}\qquad\mbox{ in }\B_1(0)\\[5mm]
\ds dG=0\qquad\mbox{ in }\B_1(0)\\[5mm]
\ds\iota^\ast_{\p\B_1(0)}\ast G=0
\end{array}
\rg.
\ee
and $F_{A}=-2^{-1}\,G\ u$ and
\be
\label{conscon}
A(u,\eta)=-\frac{1}{4}[u,du]+\frac{\tau}{2}\, d^\ast G\ u
\ee
Moreover, let $\eta$ given by
\[
\eta:=\tau\,d^\ast G\qquad\mbox{ in such a way that }\qquad F_A=2^{-1}\,(d\eta-u^\ast\om_{\S^2})\ u
\]
The pair $(u,\eta)$ is an absolute minimizer among smooth maps and forms of the following  Lagrangian\footnote{The Euler-Lagrange Equations associated to the Lagrangian
\[
YMP(A):=\frac{1}{2}\int_{\B^3} m^2\,|A|^2+\frac{1}{2}\,|F_{A}|^2\ dx^3
\]
where $A$ is a general connection form (unlike (\ref{conscon} where $A$ is constarined) are known under the name of {\bf Yang-Mills-Proca} Equations and were introduced by  N.G.Marchuk nd D.S.Shirokov in \cite{MaS}. The system is used to study nonlinear field configurations and solutions that differ from standard, massless Yang-Mills theories (like QCD). They are often used to model scenarios where gauge bosons acquire mass without relying strictly on the Higgs mechanism (see also the comments at the end of the introduction). }

\be
\label{mathR}
{\mathcal R}_\tau(u,\eta):=\frac{1}{8}\int_{\B^3_1(0)}|du|^2+|\eta|^2+\tau\,|d\eta-u^\ast\om_{\S^2}|^2\ dx^3=\frac{1}{2}\int_{\B^3}|A(u,\eta)|^2+\tau\,|F_{A(u,\eta)}|^2\ dx^3
\ee
under the solely constraint $u=\phi$ on $\p\B_1(0)$.

\hfill $\Box$
\end{Th}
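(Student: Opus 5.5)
We work in the regime $\mu=\tau/\ep$, so that
\[
CYMH_\ep^{\la,\tau/\ep}(u,A)=\frac12\int|A|^2+\tau\int\frac{|d_Au|^2}{\ep^2}+|F_A|^2+\frac{\la}{2\ep^4}(1-|u|^2)^2 .
\]
The plan has four steps: a uniform energy bound, a decomposition of $A$ along $u$, smooth convergence, and identification of the limit as a minimizer of ${\mathcal R}_\tau$.

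\textbf{Step 1: uniform bound by a competitor.} Take a smooth extension $u_0$ of $\phi$ with values in $\S^2$; zero degree is what allows this. Put $A_0=-\frac14[u_0,du_0]$. Then $d_{A_0}u_0=0$ for the normalisation of the bracket that makes the constant $-\frac14$ correct. The energy of $(u_0,A_0)$ is independent of $\ep$, since the Higgs terms vanish. Hence the minimal energies are bounded uniformly in $\ep$. It follows that
\[
\int|d_Au|^2\le C\ep^2,\qquad \int(1-|u|^2)^2\le C\ep^4,
\]
while $\|A\|_{L^2}$ and $\|F_A\|_{L^2}$ stay bounded.

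\textbf{Step 2: decomposing $A$ and bounding $du$.} Write
\[
A=-\frac14[u,du]+\frac12\,\eta\,u+R,
\]
where $\eta$ is the longitudinal component of $A$ and the remainder $R$ is controlled by $d_Au$ and $1-|u|^2$. Since $[u,u]=0$ on $\S^2$, the identity $d_Au=du+[A,u]$ gives $du\approx-[A_\perp,u]$. Therefore $\|du\|_{L^2}\lesssim\|A\|_{L^2}+O(\ep)$, so $u^k$ is bounded in $W^{1,2}$, and $\eta^k$ is bounded in $L^2$.

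\textbf{Step 3: smooth convergence.} Using the Coulomb condition $d^\ast A=0$ from proposition~\ref{pr-coul}, the Euler--Lagrange system for $A$ reads
\[
A+\tau\, d_A^\ast F_A=-\frac{\tau}{\ep^2}[u,d_Au].
\]
The key point is to show that $\ep^{-2}d_Au$ (the "current") stays bounded in a good norm. It should converge to the London-type term $\frac\tau2 d^\ast G\,u$, with $F_A\to-\frac12 G\,u$. For this I would adapt the regularity theory of theorem~\ref{th-reg}. The $\delta$-regularity of Theorem~\ref{th-cons-comp} is now simpler: the $|F_A|^2$ term carries the weight $\tau$ without a small factor, so $F_A$ is controlled in $L^2$ at unit scale. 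An Uhlenbeck Coulomb gauge then gives $A\in W^{1,2}\subset L^6$, and bootstrapping on the elliptic system yields local $C^l$ bounds uniform in $\ep$. Arzel\`a--Ascoli then provides convergence in $C^l_{loc}$.

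\textbf{Step 4: limit equations and minimality.} Passing to the limit in the equations gives (\ref{lond}). Writing $F_A$ for $A$ of the form (\ref{conscon}), one finds
\[
F_A=\frac12\,(d\eta-u^\ast\om_{\S^2})\,u .
\]
Substituting into the energy gives ${\mathcal R}_\tau$, and the boundary condition $\iota^\ast\ast G=0$ comes from the natural boundary condition on $A$. Minimality follows by $\Gamma$-convergence:
\begin{itemize}
\item Lower bound: lower semicontinuity of $\liminf CYMH$ along the sequence, which gives ${\mathcal R}_\tau(u^\infty,\eta^\infty)\le\liminf CYMH$.
\item Recovery: any smooth pair $(u,\eta)$ with $u=\phi$ on the boundary produces a competitor $(u,A(u,\eta))$ with $d_Au=0$ and energy exactly ${\mathcal R}_\tau(u,\eta)$.
\end{itemize}

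The main obstacle is the uniform-in-$\ep$ bound on $\ep^{-2}[u,d_Au]$ needed for Step 3. In particular one must rule out concentration of the Higgs energy $\tau\ep^{-2}|d_Au|^2$ at monopole cores. The $\tau|F_A|^2$ term alone does not forbid degree-carrying bubbles at scale $\ep$. My first attempt would be a blow-up at scale $\ep$: a nontrivial limit would be a finite-energy BPS monopole of fixed positive cost. On the other hand, that cost is $O(\ep)$ smaller relative to the weights here, so a Bochner/Kato-type pointwise estimate, of the kind in \cite{PiSt} and adapted to the Coulomb term, would be used to exclude such concentration.
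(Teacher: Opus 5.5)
Your Step~3 has a genuine gap. Uhlenbeck's theorem controls a gauge-equivalent connection $A^g=g^{-1}dg+g^{-1}A\,g$, not $A$ itself. Your $A^k$ is the global Coulomb representative selected by the non-gauge-invariant term $\frac12\int|A|^2$. So on a small ball $A^k=g_k\,(A^k)^{g_k}\,g_k^{-1}+g_k\,dg_k^{-1}$, and the pure-gauge part $g_k\,dg_k^{-1}$ is a priori bounded only in $L^2$. The condition $d^\ast A^k=0$ only says that $g_k$ solves a harmonic-map-type equation into $SU(2)\simeq\S^3$, with right-hand side built from the good Coulomb piece, and such maps can concentrate. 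For instance, take $g(x)=x/|x|$ viewed as a unit imaginary quaternion. Then $A=g^{-1}dg$ is flat, satisfies $d^\ast A=0$, lies in $L^2$, and has $\rho^{-1}\int_{\B_\rho(0)}|A|^2\,dx^3=8\pi$ for every $\rho$, so it is not in $W^{1,2}$. Hence $L^2$ curvature bounds, the Coulomb condition and elliptic bootstrapping cannot by themselves give bounds on $A^k$ uniform in $k$.

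The paper closes this with two steps your proposal lacks. Lemma~\ref{lm-delreg} is an $\ep$-independent $\delta$-regularity result. Smallness of $\frac{1}{2r}\int_{\B_r(x_0)}|A|^2+\tau\ep^{-2}|\nabla_Au|^2+\frac{3\la\tau}{2\ep^4}(1-|u|^2)^2$ gives Morrey decay of $dg$ and a $W^{1,2}$ bound on $A^k$. This uses the monotonicity formula, whose curvature correction is harmless because $F_{A^k}$ is bounded in $L^6_{loc}$, together with the argument of Theorem~\ref{th-delreg}. Lemma~\ref{lm-0dens} then excludes a point of positive density. Blowing up $g_k$ there and using Luckhaus' lemma to get strong convergence yields a nonconstant $0$-homogeneous harmonic map into $\S^3$. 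Because a gauge change of a minimizer only alters $\frac12\int|A|^2$, this tangent map is energy minimizing, hence constant by Schoen--Uhlenbeck, a contradiction. This is exactly where minimality of $(u^k,A^k)$ under gauge changes enters, and no elliptic estimate replaces it.

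Separately, the obstacle you single out is not the real one, and your scaling for it is backwards. With $\mu=\tau/\ep$, a monopole at scale $\ep$ has self-dual energy $YMH^\la_\ep$ of order one, so it costs about $\tau/\ep\to+\infty$ and your Step~1 bound already excludes it. The paper makes this quantitative in Lemma~\ref{lm-apestinf}. Since $\tau\int|F_A|^2\le M$, an Uhlenbeck gauge exists on balls of fixed radius $\alpha\simeq\tau/M$. Elliptic estimates at scale $\ep$ then give $\|1-|u^k|\|_{L^\infty}\le C\sqrt{\ep\la M/\tau}$ and $\|\nabla_{A^k}u^k\|_{L^\infty}\le C\la\sqrt{M/(\ep\tau)}$.

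The bound on the current that you rightly call the key point, (\ref{decnab}), comes from the Bochner--Weitzenb\"ock inequalities, and you give no mechanism for it. First, (\ref{bwt}) makes the transverse curvature $[F_A,u]$ small in $L^{(3/2,1)}$ on interior balls. The negative part of the potential in the Schr\"odinger inequality for $|\nabla_Au|$ is then in the Kato class. A Harnack inequality of Aizenman--Simon type then gives $\int_{\B_{r_1}(0)}\ep^{-4}|\nabla_Au|^2\,dx^3=O(1)$.

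Your Steps~1 and~2 and both halves of Step~4 agree with the paper once Step~3 is available. That includes the lower semicontinuity and the competitor $A(u,\eta)=-\frac14[u,du]+\frac{\eta}{2}\,u$ with $d_Au=0$ and energy exactly ${\mathcal R}_\tau(u,\eta)$.
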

\begin{Rm}
\label{rm-Rel}
The Lagrangian ${\mathcal R}_\tau$ is a {\bf local} aproximation of the highly {\bf non-local} Brezis-Coron-Lieb Relaxed Energy. The addition of the variable $\eta$ follows an approach inspired by   convexification methods where various 
constraints  (here the constraint of staying within strongly approximable map) in variational problems can be implemented by adding a new variable and the dependance with respect to this variable happens to be linear (this idea has been  developed  in the works of Y.Brenier in the last decades  for Euler Equation, Monge Ampere and Optimal Incompressible transport Problems (see \cite{Bren}) or by R.Strichartz for sub-riemannian geodesics (\cite{Stri})...).
\end{Rm}
\begin{Rm}
\label{rm-lond}
Observe that the equation satisfied by $G$ in (\ref{lond}) is a {\bf London} equation with {\bf magnetic current} given $u^\ast\om$  satisfied by the limiting {\bf magnetic fields} (or the abelian curvature) for the Ginzburg Landau equations in the strongly repulsive limit (see \cite{BeRi} in 2D and \cite{RivU1} in 3D) .\hfill $\Box$
\end{Rm}
\begin{Rm}
\label{rm-FS}
Observe that by taking $\eta\equiv 0$, for any smooth $u$ one has
\[
{\mathcal R}_\tau(u,0):=FS_\tau(u)=\frac{1}{8}\int_{\B^3_1(0)}|du|^2+\tau\,|u^\ast\om|^2\ dx^3\ ,
\]
which is the {\bf Fadeev-Skyrme} Lagrangian of maps from $\B^3$ into $\S^2$.
\end{Rm}
\begin{Rm}
\label{rm-tauinf}
The Lagrangian ${\mathcal R}_\tau(u,\eta)$ can be interpreted  as follows assuming $d\eta=u^\ast\om_{\S^2}$ then ${\mathcal R}_\tau(u,\eta):=\frac{1}{8}\int |du|^2+|\eta|^2$ is the Dirichlet energy of a lift of $u$ by the Hopf fibration (see Lemma~\ref{lm-hopf}).\hfill $\Box$
\end{Rm}

Finally our last main results are about the description of the  asymptotic analysis for minimizers of ${\mathcal R}_\tau$ respectively as $\tau\rightarrow 0$ and as $\tau\rightarrow +\infty$.
\begin{Th}
\label{th-asmatcalR}
Let $\phi\in C^2(\B^3,\S^2)$ having zero degree. Let $\tau_k\rightarrow 0$ and $(u^k,\eta^k)\in C^1(\B^3,\S^2)\times C^1(\Om_1(\B^3))$ be a sequence of smooth minimizers of ${\mathcal R}_{\tau_k}$ under the only constraint $u^k=\phi$. Then modulo extraction of a subsequence, there exists a closed rectifiable 1-dimensional set $K$ such that for any $l\in {\N}$
\[
u^k\rightarrow u^\infty\qquad\mbox{ in }C^l_{loc}(\B_1(0)\setminus K,\S^2)\ .
\]
Moreover $u^\infty$ is a minimizer of the Brezis-Coron-Lieb relaxed energy 
\[
F_\phi(u):=\frac{1}{2}\int_{\B_1(0)}|du|^2\ dx^3+\pi\, L_\phi(u) 
\]
among maps in $W^{1,2}(\B^3,{\S}^2)$ equal to $\phi$ at the boundary. Finally we have
\[
\lim_{k\rightarrow +\infty}{\mathcal R}_{\tau_k}(u^k,\eta^k)=\min\lf\{ \frac{1}{8}\,\int_{\B_1(0)}|du|^2\ dx^3+\pi\, L_\phi(u) \  ;\  u\in W_\phi^{1,2}(\B^3,\S^2)\rg\}\ .
\]\hfill $\Box$
\end{Th}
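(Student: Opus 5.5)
The plan is a $\Gamma$-convergence argument as $\tau\to0$, in three parts: an upper bound built from a recovery sequence, a matching lower bound by duality with the calibration formula for $L_\phi$, and regularity of the limit away from the concentration set. Throughout, $\frac18\int|du|^2+\pi L_\phi(u)$ is the same as $\frac14 F_\phi$ with the paper's normalisation; I only use the displayed minimum.

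\textbf{Upper bound.} Fix a minimizer $\bar u$ of the relaxed energy. By Bethuel's density result and the Brezis--Coron--Lieb / Bethuel--Brezis--Coron relaxation theory, there are smooth maps $v_j$ equal to $\phi$ on $\p\B^3$, with $v_j\rightharpoonup \bar u$ and $\frac18\int|dv_j|^2\to \frac18\int|d\bar u|^2+\pi L_\phi(\bar u)$. Since $\deg\phi=0$ and $v_j$ is smooth, $v_j^\ast\om_{\S^2}$ is exact, so we can write $v_j^\ast\om_{\S^2}=d\eta_j$ with $\eta_j$ a co-closed primitive (for instance a Hodge/Coulomb primitive). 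Then
\[
{\mathcal R}_\tau(v_j,\eta_j)=\frac18\int|dv_j|^2+|\eta_j|^2\ dx^3 .
\]
This is not small: $\eta_j$ plays the role of the Hopf lift connection of Remark~\ref{rm-tauinf}, and $\int|\eta_j|^2$ does not vanish.

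The fix is to exploit $\tau\to0$ and choose $\eta$ differently. Take $\eta$ to be a mollification of $\eta_j$ that is very small in $L^2$. The price is $\tau\int|d\eta-v_j^\ast\om|^2$, which for fixed $j$ is bounded by $\tau\, C_j$. In particular $\eta=0$ already gives
\[
{\mathcal R}_\tau(v_j,0)\le \frac18\int|dv_j|^2+\tau\int|v_j^\ast\om|^2 .
\]
Letting $\tau\to0$ first and then $j\to\infty$ gives $\limsup_k\min{\mathcal R}_{\tau_k}\le$ the relaxed minimum.

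\textbf{Lower bound.} Take $\xi$ with $\|d\xi\|_\infty\le1$. Test the minimizers against $d\xi$, using $d\eta^k-u^{k\ast}\om=:h^k$ and integrating by parts with the boundary condition $u^k=\phi$:
\[
\int d\xi\wedge u^{k\ast}\om-\int_{\p\B}\xi\,\phi^\ast\om=-\int d\xi\wedge h^k+\int d\xi\wedge d\eta^k-\int_{\p}\xi\,\phi^\ast\om .
\]
The boundary terms cancel up to $\int_{\p}\xi\, d\eta^k$-type terms, which must be controlled through the natural boundary condition coming from minimality. The first term is bounded by $\|h^k\|_{L^1}$.

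The key estimate is an AM--GM split which is optimal at the correct weights:
\[
\frac18\int|\eta|^2+\tau|h|^2\ \ge\ \frac{\sqrt\tau}{4}\int|\eta||h|.
\]
This alone does not give an $L^1$ bound on $h$. Instead I would use the Euler--Lagrange equation $\tau\, d^\ast h=-\eta$ (co-closed). This yields $\int|\eta|^2=\tau\int h\cdot d\eta$, and combined with a calibration argument it gives
\[
\pi\cdot 4\pi L_\phi(u^\infty)/(4\pi)\le \liminf \Bigl(\frac18\int|\eta^k|^2+\tau|h^k|^2\Bigr)+o(1) .
\]
The matching of constants, namely the $\pi$ versus $\frac18$, I expect to come from a one-dimensional cross-section computation: a vortex tube carrying flux $4\pi$ has minimal energy $\frac18(\|\eta\|^2+\tau\|d\eta\|^2)\approx\pi$ per unit length in the limit. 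This London-type optimization, as in Remark~\ref{rm-lond}, is the main obstacle, and I would prove it by slicing along the calibration $\xi$ and coarea. The Dirichlet part passes to the limit by weak lower semicontinuity, and the two parts combine because the line concentration of $h$ is singular with respect to $|du^\infty|^2dx$.

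\textbf{Regularity and convergence.} Away from the set $K$ where the measures $|\eta^k|^2+\tau_k|h^k|^2+|du^k|^2$ concentrate at the one-dimensional scale, I would prove an $\ep$-regularity statement. On small-energy balls the equation $-d\ast(u\wedge du)=\tau\, d\ast G\wedge du$ is a harmonic map equation with a small antisymmetric perturbation. Rivière's antisymmetric-potential theory then gives $C^{0,\alpha}$ and bootstrapped $C^l$ bounds. The rectifiability and closedness of $K$ follow from a monotonicity/density lower bound via the epiperimetric inequality of lemma~\ref{lm-ext}, as in Theorem~\ref{th-cons-comp}. Finally, minimality of $u^\infty$ for $F_\phi$ follows from the matching upper and lower bounds.
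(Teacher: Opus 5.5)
Your upper bound is correct and is the one the paper uses: take $\eta=0$ with smooth extensions of $\phi$, then apply (\ref{BCLGMS}).

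\textbf{The gap is in the lower bound.} You try to extract $\pi L_\phi(u^\infty)$ from the $\eta$--$h$ part of the energy, but that part in fact tends to zero. Each $u^k$ is smooth, so $u^{k\ast}\omega$ is closed. Hence your calibration expression $\int d\xi\wedge u^{k\ast}\omega-\int_{\partial}\xi\,\phi^\ast\omega$ vanishes identically, i.e.\ $L_\phi(u^k)=0$. Therefore
\[
\mathcal{R}_{\tau_k}(u^k,\eta^k)\ \ge\ \frac18\int_{\mathbb{B}^3}|du^k|^2\,dx^3=\frac18\int_{\mathbb{B}^3}|du^k|^2\,dx^3+\pi L_\phi(u^k)\ \ge\ m:=\min\Bigl\{\frac18\int|du|^2+\pi L_\phi(u)\Bigr\}.
\]
Combined with your upper bound, this gives $\int|\eta^k|^2+\tau_k|d\eta^k-u^{k\ast}\omega|^2\to 0$.

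Consequently, the inequality $\pi L_\phi(u^\infty)\le\liminf\frac18\int(|\eta^k|^2+\tau_k|h^k|^2)+o(1)$ that you propose would force $L_\phi(u^\infty)=0$. That says the limit has no topological singularities, which is exactly what Open Problem~\ref{open-regrel} leaves open, and nothing in your sketch proves it.

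The vortex-tube heuristic is misattributed. The $\pi$ per unit length is the Dirichlet energy $\frac18\cdot 8\pi$ of a degree-one bubble of $u^k$ in the cross-section. The $\eta$--$h$ cost of such a tube of radius $\rho$ is only $O(\tau\rho^{-2})$, already with $\eta=0$. Correspondingly, ``the Dirichlet part passes to the limit by weak lower semicontinuity'' throws away precisely the line-concentrated Dirichlet energy that carries $\pi L_\phi(u^\infty)$.

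\textbf{The fix.} The lower bound, hence the energy identity, is immediate from the display above; this is exactly the argument in the proof of Theorem~\ref{th-clearout}. Minimality of $u^\infty$ then follows from the weak $W^{1,2}$ lower semicontinuity of the relaxed energy (Giaquinta--Modica--Sou\v{c}ek, Bethuel--Brezis--Coron) along $u^k\rightharpoonup u^\infty$:
\[
\frac18\int|du^\infty|^2+\pi L_\phi(u^\infty)\le\liminf_{k}\frac18\int|du^k|^2\le\lim_k\mathcal{R}_{\tau_k}(u^k,\eta^k)=m .
\]
That lower semicontinuity is where a calibration argument belongs. There the defect of $u^{k\ast}\omega$ is controlled through $|u^\ast\omega|\le\frac12|du|^2$, i.e.\ by the concentration of $|du^k|^2$, not by $h^k$.

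\textbf{Regularity away from $K$.} Your $\epsilon$-regularity outline is in the right spirit, but the real difficulty is smallness of the density at all scales uniformly in $\tau$; the identity of Lemma~\ref{lm-moncalR} alone is not conclusive. The paper obtains this smallness from Lemma~\ref{lm-montau}, which gives epiperimetric decay for $r>R\sqrt{\tau}$. It handles scales below $\sqrt{\tau}$ with Lemma~\ref{lm-mont}, using elliptic estimates for $G$ in the stationarity identity. Only then does it apply Theorem~\ref{th-delregt} to $\ast du=u\times dB$.
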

As $\tau\rightarrow +\infty$ the minimal energy ${\mathcal R}(u,\eta)$ instead is converging to the minimal Dirichlet energy among any ${\S}^3$ valued extension of any lift of $\phi$. 
\begin{Th}
\label{th-hopfcv}
Let $\phi\in C^2(\B^3,\S^2)$ having zero degree. Let $\tau_k\rightarrow +\infty$ and $(u^k,\eta^k)\in C^1(\B^3,\S^2)\times C^1(\Om_1(\B^3))$ be a sequence of smooth minimizers of ${\mathcal R}_{\tau_k}$ under the only constraint $u^k=\phi$. Then, modulo extraction of a subsequence $u^k$ and $\eta^k$ strongly converge in any $C^l$ norm and
\[
\lim_{k\rightarrow +\infty}{\mathcal R}_{\tau_k}(u^k,\eta^k)=\min\lf\{  \frac{1}{2}\int_{\B^3}|dg|^2\ dx^3\ ;\ \frak{h}\circ g=\phi\quad\mbox{ on }\p\B^3_1(0)\rg\}
\]
where $\frak{h}$ is the {\bf Hopf Fibration} from $\S^3$ into $\S^2$.\hfill $\Box$
\end{Th}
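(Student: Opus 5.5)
The plan is to squeeze $\min{\mathcal R}_\tau$ between matching upper and lower bounds, using Lemma~\ref{lm-hopf} in both directions. The lemma identifies, for a lift $g:\B^3\rightarrow\S^3$ with $u=\frak{h}\circ g$, the connection form $\eta_g$ (the pull-back by $g$ of the contact form of the Hopf fibration, suitably normalised). It gives
\[
d\eta_g=u^\ast\om_{\S^2}\qquad\mbox{ and }\qquad \frac{1}{2}|dg|^2=\frac{1}{8}\lf(|du|^2+|\eta_g|^2\rg)\ .
\]
Hence for every lift the Fadeev--Skyrme defect term vanishes and ${\mathcal R}_\tau(\frak{h}\circ g,\eta_g)=\frac12\int_{\B^3}|dg|^2\,dx^3$ for every $\tau$. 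Since $\tau\mapsto{\mathcal R}_\tau(u,\eta)$ is nondecreasing, $m_\tau:=\min{\mathcal R}_\tau$ is nondecreasing in $\tau$, so its limit exists.

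\textbf{Upper bound.} Since $\deg\phi=0$, $\phi$ lifts to $\p\B^3\rightarrow\S^3$, so the class $\{g\ ;\ \frak{h}\circ g=\phi \mbox{ on }\p\B^3\}$ is non-empty. Because $\pi_2(\S^3)=0$, maps in $W^{1,2}(\B^3,\S^3)$ are strongly approximable by smooth maps (Bethuel). I would carry this out keeping the boundary value $\phi$ fixed by gluing a thin collar, so that the infimum of the Dirichlet energy over smooth lifts equals the minimum over $W^{1,2}$ lifts. Plugging any smooth lift into ${\mathcal R}_\tau$ then gives $m_\tau\le M:=\min\{\frac12\int|dg|^2\}$ for all $\tau$.

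\textbf{Lower bound.} Let $(u^k,\eta^k)$ be the minimizers. By the upper bound,
\[
\int|du^k|^2+|\eta^k|^2\le 8M\qquad\mbox{ and }\qquad \|d\eta^k-u^{k\ast}\om_{\S^2}\|_{L^2}^2\le 8M/\tau_k\rightarrow 0\ .
\]
Since $u^k$ is smooth, $u^{k\ast}\om_{\S^2}-d\eta^k$ is a closed, hence exact, $L^2$ 2-form on the ball. By Hodge decomposition with tangential boundary condition (Gaffney--Friedrichs), I solve $d\beta^k=u^{k\ast}\om_{\S^2}-d\eta^k$, $d^\ast\beta^k=0$, with $\|\beta^k\|_{L^2}\le C\tau_k^{-1/2}$. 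Then $\tilde\eta^k:=\eta^k+\beta^k$ satisfies $d\tilde\eta^k=u^{k\ast}\om_{\S^2}$ exactly. By the converse part of Lemma~\ref{lm-hopf}, this produces a lift $g^k$ of $u^k$, obtained by solving for the fibre phase, which is possible on the simply connected ball. It satisfies
\[
\frac12\int|dg^k|^2=\frac18\int|du^k|^2+|\tilde\eta^k|^2\le{\mathcal R}_{\tau_k}(u^k,\eta^k)+o(1)\ .
\]
Its boundary value is automatically some lift of $\phi$, so $M\le\liminf m_{\tau_k}$, which proves the energy identity.

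\textbf{Main obstacle.} The step I expect to be hardest is the asserted strong $C^l$ convergence, which needs estimates uniform in $\tau$. I would try first the Euler--Lagrange system written as in (\ref{lond}) with $\eta=\tau d^\ast G$ and $\tau dd^\ast G+G=u^\ast\om$. By the above, $g^k$ is an almost-minimizing sequence of lifts, so it converges strongly in $W^{1,2}$ to a minimizing lift $g^\infty$. I would then aim for an $\ep$-regularity statement for the pair $(u^k,\eta^k)$, uniform in $\tau$, obtained by transferring small-energy regularity of the (almost) harmonic $\S^3$-valued maps $g^k$ back to $(u^k,\eta^k)$. Bootstrapping on the elliptic system would then give $C^l$ bounds. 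The delicate point is controlling the defect $\tau|d\eta-u^\ast\om|^2$ pointwise, and possible energy concentration at isolated points, where minimizing $\S^3$-valued lifts could a priori be singular.
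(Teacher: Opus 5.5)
The energy identity is provable, but not with the right-hand side you use. Your lower bound is sound. The upper bound, however, rests on a false claim: that Bethuel's density theorem plus a thin collar makes the infimum of $\frac12\int|dg|^2$ over smooth lifts equal to the minimum over $W^{1,2}$ lifts.

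The trace of a $W^{1,2}$ lift is $\lambda\psi$, with $\psi$ a fixed smooth lift of $\phi$ and $\lambda\in H^{1/2}(\p\B^3,\S^1)$. Such a $\lambda$ may carry point vortices. Because the distributional Jacobian is continuous in $H^{1/2}$, these are not $H^{1/2}$-limits of smooth circle-valued maps. So no collar can restore $\frak{h}\circ g=\phi$ while converging strongly. This produces a real energy gap, as the following example shows.

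On $\{t>0\}$ write $x=(z,t)\in\C\times\R\subset\C^2$ and $c=\frac{\pi}{2}\chi(|x|)$, with $\chi$ increasing from $0$ to $1$ on $[0,1]$. Set
\[
G=\frac{v}{|v|}\ ,\qquad v=\cos c\,\frac{x}{|x|}+\sin c\,(0,1)\ ,\qquad |v|^2=1+\sin(2c)\,\frac{t}{|x|}\ge 1\ .
\]
This map has the following properties:
\begin{itemize}
\item $|\nabla G|\le C|x|^{-1}$, and $G\equiv(0,1)$ for $|x|\ge1$.
\item On $\{t=0\}$, $G=(\cos c\,\frac{z}{|z|},\sin c)$ winds once along the fibre at $z=0$.
\item On $\{t=0\}$, $\frak{h}\circ G=(\sin 2c\,\frac{z}{|z|},\cos 2c)$ is a smooth degree-one bubble.
\end{itemize}
Transplant $G(\cdot/r)$ to $e_3$, and its conjugate ($z\mapsto\bar z$) to $-e_3$, in boundary-flattening charts, and set $g\equiv(0,1)$ elsewhere. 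This gives a smooth degree-zero $\phi:=\frak{h}\circ g|_{\p\B^3}$ together with a $W^{1,2}$ lift $g$ of it satisfying $\frac12\int|dg|^2\le Cr$.

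Now take any $u$ that is $C^1$ (or just $W^{1,3}$) up to $\p\B^3$ with $u=\phi$. Then $u^\ast\om_{\S^2}$ is closed and has flux $\pm4\pi$ through every slice $\{x_3=s\}$ with $|s|<1-Cr^2$. Hence
\[
\frac18\int_{\B^3}|du|^2\ge\frac14\int_{-1}^{1}\Bigl|\int_{\{x_3=s\}\cap\B^3}u^\ast\om_{\S^2}\Bigr|\,ds\ge 2\pi(1-Cr^2)\ ,
\]
so $m_\tau\ge 2\pi(1-Cr^2)$ for every $\tau$. With the $W^{1,2}$ reading of the minimum, the identity you aim for is therefore false for small $r$. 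This holds in exactly the class you work in, which is also the class Lemma~\ref{lm-hopf} needs for your lower bound.

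The right-hand side must therefore be read as the infimum over lifts that are smooth up to $\p\B^3$. With that reading your upper bound follows immediately from the pointwise identity, and no density argument is needed. Your lower bound also goes through, once you note that the lift produced by Lemma~\ref{lm-hopf} has the form $e^{-i\varphi}\tilde u$. Here $\tilde u$ is a smooth lift of $u^k$ and $\varphi$ is real valued, so smoothing $\varphi$ gives smooth lifts with the same energy up to $o(1)$.

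What remains unproved is the strong $C^l$ convergence, for which you give only a plan. The paper does not help here: it contains no proof of this theorem. Section VI offers only Lemma~\ref{lm-hopf}, Remark~\ref{rm-tauinf} and Theorem~\ref{th-delregt}, and the $\delta$-regularity there is uniform only for $\tau\in(0,1)$. As $\tau\to+\infty$, the system $\tau\,dd^\ast G+G=u^\ast\om_{\S^2}$, $\eta=\tau\,d^\ast G$ degenerates, so you would need genuinely new estimates that are uniform in large $\tau$.

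Your intermediate claim that $g^k$ converges strongly to a minimizing lift also needs an argument. Lifts that are regular up to the boundary do not form a weakly closed class, because smooth boundary phases can converge weakly in $H^{1/2}$ to phases with vortices. So neither that convergence nor the existence of the minimum written in the statement is automatic.
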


\noindent{\bf Open Questions} In the course of the paper we are meeting several open problems that we are now listing.

\begin{Ope}
\label{open-crit} Extend the above asymptotic analysis for minimizers to general critical points either for $CYMH$ or for ${\mathcal R}_\tau$. In particular, for stationary critical points of $CYMH^{\la,\mu}_\ep$ as $\ep\rightarrow 0$ and $\mu$ large enough (depending on $\phi$) it could be useful to extend the $\delta-$regularity theorem~\ref{lm-deltareg} we are proving for minimizers to {\bf general stationary critical points}. Similarly extend the $\delta-$regularity theorem~\ref{th-delregt} for smooth (or even stationary) critical points of ${\mathcal R}_\tau$. This would probably enable to implement the scheme proposed in \cite{Riv2} for proving the optimality of the Hopf Map from $\S^3$ into $\S^2$ for the 3-energy as conjectured by the author in \cite{Riv}. While $CYMH$ itself is not satisfying the requirements for applying Palais-Smale deformation theory, the  following \underbar{linear} perturbation 
\[
\frac{1}{2}\int_{\B_1(0)}|A|^2+\sigma^2\,|\nabla A|^2 \ dx^3 +\mu\,YMH_\ep^\la(u,A)
\]
where we are adding to $CYMH$ the Dirichlet energy of the connection preceded by the viscous parameter $\sigma$ fulfils the requirements for calling upon this theory and the viscosity method introduced by the author originally in \cite{RivV} could be applied to produce solutions to the minmax problems proposed in \cite{Riv2} with Morse Index control and where the topological singularities are penalised (unlike what is happening with the Ginzburg-Landau perturbation).
  \hfill $\Box$
\end{Ope}

\begin{Ope}
\label{open-gamlim} Instead of considering either minimizers or critical points it would be interesting to study the {\bf Gamma-limits} of the Coulomb-Yang-Mills-Higgs Lagrangian or the ${\mathcal R}_\tau$ Energy in the various regimes we are considering in the present work.\hfill $\Box$
\end{Ope}

\begin{Ope}
\label{open-force}
It could be interesting to study the asymptotics of the minimizers of $CYMH_\ep^{1,1}$ energy to which an ``external Field'' is applied. More precisely a fixed real valued 2-form $H_{ext}$ being fixed one could add to $CYMH$ the Gauge invariant term
\[
\int_{\B^3}|F_A\cdot u-H_{ext}|^2\ dx^3
\]
It could have the effect of forcing the level set of $u/|u|$ (the vortex filaments), that is $u^{-1}(p)$ for $p\in{\S^2}$ to ``adopt'' the shape imposed by $H$. In the Abelian case and in the strongly repulsive regime, a regime which differs from the self-dual regime\footnote{ See \cite{RivB} for comments on these different regimes. } we are considering here, there has been an intensive study of the effect of imposed external magnetic fields since the breakthrough works by S.Serfaty and  E.Sandier and S.Serfaty (\cite{Ser1}, \cite{Ser2}, \cite{Ser3}, \cite{SaSe})
\end{Ope}

\begin{Ope} 
\label{open-regrel} Does a careful asymptotic of ${\mathcal R}_\tau$ minimizers giving any improvement in the bound of the singular set of the Minimizers of the Brezis-Coron-Lieb Relaxed Energy obtained in this way ? In particular can the limit $u^\infty$ in
 theorem~\ref{th-asmatcalR} have topological singularities or is it approximable by smooth maps ? Studying carefully sequences of smooth minimizers to ${\mathcal R}_\tau$ as $\tau\rightarrow 0$ is giving a specific and concrete strategy for addressing the  list of open problems posed by H.Brezis in \cite{Brez} section 4.  \hfill $\Box$
\end{Ope}

\begin{Ope}
\label{open-appFS}  Another asymptotical analysis could be interesting to study : it consists in penalising the longitudinal part of the connection and consider
\[
CYMH_{\ep,\sigma}^{\la,\mu}(u,A):=\frac{1}{2}\int_{\B_1(0)}|A|^2+\frac{1}{\sigma^2}|A\cdot u|^2 \ dx^3 +\mu\,YMH_\ep^\la(u,A)
\]
Considering first the limit $\ep\rightarrow 0$ and $\mu=1/\ep$ where $\sigma$ is fixed in the first place, following word by word the arguments of section IV, one obtains a smooth pair $(u,\eta)$ minimizing
\[
{\mathcal R}^\sigma(u,\eta):=\frac{1}{8}\int_{\B^3_1(0)}|du|^2+\lf(1+\frac{1}{\sigma^2}\rg)\,|\eta|^2+|d\eta-u^\ast\om_{\S^2}|^2\ dx^3\ .
\]
Then it is expected that by passing to the limit $\sigma\rightarrow 0$ one should obtain a minimizer of the relaxed Faddeev-Skyrme energy
\[
\overline{FS}(u):=\inf\lf\{\frac{1}{8}\int_{\B^3_1(0)}|du|^2+\frac{1}{4}\,|du\wedge du|^2\ dx^3\ ;\, u\in C^1(\B^3,\S^2)\ ,\ u=\phi\quad\mbox{ on }\p\B^3_1(0)\rg\}\ .
\]
\hfill $\Box$
\end{Ope}
In \cite{Est} M.Esteban has introduced the class (P) of maps $u\in W^{1,2}(\B^3,\S^3)$ such that there exists $u_k\in C^1(\B^3,S^3)$ such that
\[
\nabla u_k\quad\longrightarrow\quad\nabla u \qquad\mbox{ and }\qquad u_k^\ast\om \quad\longrightarrow \quad  u^\ast\om\qquad\mbox{ strongly in }L^2(\B^3) \
\]
for any choice of smooth $2-$forms $\om$ in ${\R}^4$. While the characterisation of this class (P) remains up to now a largely open problem, the asymptotic analysis of the present work is motivating a slightly ``milder'' question to solve.
\begin{Ope}
\label{open-appFS-2}
Let $u\in W^{1,2}(\B^3,\S^2)$ such that $u^\ast\om_{\S^2}\in L^2(\Om_2(\B^3))$ what is the obstruction for the existence of $u^k\in C^1(\B^3,\S^2)$ such that
\[
u^k\ \rightharpoonup u\qquad\mbox{ weakly in }W^{1,2}(\B^3,\S^2)
\]
and
\[
\limsup_{k\rightarrow+\infty}\int_{\B^3_1(0)}|du^k|^2+\frac{1}{4}\,|du^k\wedge du^k|^2\ dx^3<+\infty\ ?
\]
The weak limit of the minimizers of ${\mathcal R}^\sigma$ are expected to be in this class.
\hfill $\Box$
\end{Ope}
A more accessible question but related to the previous one is the following
\begin{Ope}
\label{ope-lift}
Let $u\in W^{1,2}(\B_1^3(0),\S^2)$ such that there exists $\eta\in L^2(\Om_1(\B_1^3(0))$ with
\[
d\eta=u^\ast\om
\]
Does there exists $u_k\in C^\infty(\B^3_1,\S^2)$ and $\eta_k\in C^\infty(\Om_1(\B^3))$ such that $d\eta_k=u_k^\ast\om$
\[
u_k\ \longrightarrow u\quad\mbox{ strongly in } W^{1,2}(\B^3(0),\S^2)\qquad\mbox{ and }\qquad\eta_k\longrightarrow \eta\quad\mbox{ strongly in } L^2(\Om_1(\B_1^3(0))\ .
\]
\hfill $\Box$
\end{Ope}
The above question is motivated by lemma~\ref{lm-hopf} below which itself leads naturally to the following open problem.
\begin{Ope}
\label{ope-lift0}
Let $u\in W^{1,2}(\B^3,\S^2)$ such that there exists $\eta\in L^2(\Om_1(\B_1^3(0))$ with
\[
d\eta=u^\ast\om
\]
Does there exists a lift $\check{u}\in W^{1,2}(\B^3_1(0),\S^3)$ such that $u$ is the composition of $\check{u}$ with the Hopf fibration ? 
\hfill $\Box$
\end{Ope}
Observe that in two dimension, for any map $u\in W^{1,2}({\B^2},\S^2)$, there exists $\eta\in L^2(\Om_1(\B^2))$ such that $d\eta=u^\ast\om_{\S^2}$ (\cite{Hel} Theorem 5.2.1). However no  control of the $L^2$ norm of $\eta$ in term of the $L^2$ norm of $du$ has to be expected\footnote{Nevertheless, obviously, for any $u\in W^{1,2}({\B^2},\S^2)$ there exists $\eta\in L^{2,\infty}(\Om_1(\B^2))$ such that
\[
\|\eta\|_{L^{2,\infty}(\B^2)}\le C\, \|d u\|_{L^2(\B^2)}\ .
\]
} unless it is below some threshold (The abelian counterpart in 2-dimension of Proposition III.1  \cite{Riv-Tia} gives a counter-example). 
\bigskip

\noindent{\bf Organisation of the paper.} { In section II we introduce the notation and establish some fundamental computations regarding the $CYMH$ Lagrangian. Section III is devoted to the proof of the regularity of $CYMH$ minimizers. In section IV we perform the analysis $\ep\rightarrow 0$ for $\mu$ and $\la$ fixed
(i.e. proof of theorem~\ref{th-clearout} and theorem~\ref{th-cons-comp}). In section V we study the asymptotic of $CYMH$ minimizers in the limit $\ep\rightarrow 0$ for $\mu=\tau/\ep$ and  both $\tau$ and $\la$ are fixed (proof of theorem~\ref{th-harmflu}). In section VI we study the asymptotic of the  Lagrangian ${\mathcal R}_\tau$ as $\tau\rightarrow 0$ (proof of theorem~\ref{th-asmatcalR}) and $\tau\rightarrow+\infty$ (proof of theorem~\ref{th-hopfcv}). Finally the appendix is devoted to the construction of smooth harmonic map extensions under small energy assumptions using a continuity method ``\`a la Uhlenbeck''.}

\bigskip

\noindent{\bf General comments} { Gauge symmetry breaking appears in physics in the so called {\bf Higgs Mechanism} for the standard model in the study of fluctuations around vacuum (when the vacuum expectation is going to infinity which consequently breaks the symmetry). The study of this asymptotic is generating a {\it mass term} for the field $A$ which mathematically materialises as a contribution
\[
\frac{1}{2}\,\int m^2\, |A|^2\ dx^3
\]
to the Lagrangian. This contribution  is interpreted in physics as  a massive Boson and the coefficient $m$ is the mass of this Gauge Field. In the expression (\ref{mathR}) for the ${\mathcal R}_\tau$ asymptotic Lagrangian the mass corresponds to $1/\sqrt{\tau}$.
We then prove in theorem~\ref{th-asmatcalR} that in the infinite Gauge Field mass limit ($\tau \rightarrow 0$) the Lagrangian converges to the Brezis-Coron-Lieb relaxed energy while in the zero mass limit it converges to the minimal Dirichlet energy among all Hopf-Fibration lifts of the Higgs field.

Finally let just mention that in Abelian Gauge Theory, V.A. Rubakov and A.N. Tavkhelidze introduced a non gauge invariant Chern Simon Theory (i.e. Gauge Anomaly) involving $L^2$ norm of the gauge Field $A$ in the static energy functional \cite{RuTa} (see also \cite{ScSh}).
}
\section{Preliminaries}
\reset
\subsection{The Coulomb Yang-Mills Higgs Lagrangian}
\subsubsection{ Fundamental Identities : Jacobi, Leibnitz, Bianchi...}
We shall be using classical differential forms notations. We denote $\Om^k(\B^3)$ the smooth $k$-forms on $\B^3$ and by $\Om_0^k(\B^3)$ the compactly supported smooth $k$-forms on $\B^3$. For any classical Banach function space $E$ we denote $E(\Om^k(\B^3))$ the space of $k$-forms with coefficients in $E$.
If $V$ is a finite dimensional vector space we denote by $E(\Om^k(\B^3)\otimes V)$ the space of $k$-forms with coefficients in $E$ and taking values in $V$.
\[
\lf\{
\begin{array}{l}
\ds\forall v\in L^2(\B^3)\qquad d v:=\sum_{l=1}^3\p_{x_l}v\ d{x_l}\\[5mm]
\ds\forall B\in L^2(\Om^1(\B^3))\qquad d B:=\sum_{l,m=1}^3\p_{x_l}B_m\ d{x_l}\wedge dx_m\\[5mm]
\ds\forall F\in L^2(\Om^2(\B^3)\qquad d F:=\sum_{k=1}^3\sum_{l<m=1}^3\p_{x_k}F_{lm}\ dx_k\wedge d{x_l}\wedge dx_m
\end{array}
\rg.
\]
We recall the Hodge operator (using ${\Z}_3$ indexation notation so that  $i+2=i-1$)
\[
\ast\, dx_i=dx_{i+1}\wedge dx_{i-1}\ ,\quad\ast \,dx_{i+1}\wedge dx_{i-1}=dx_i\quad\ast 1=dx_1\wedge dx_2\wedge dx_3=dx^3
\]
so that
\[
\ast\ast=id_{\Om^k(\B^3)}\quad\mbox{ on }\quad\Om^k(\B^3)\qquad\mbox{ for }k=0,1,2,3
\]
We denote by $\cdot$ the standard scalar product on $k$-forms on $\B^3$. The co-differential operator $d^\ast$ on $\Om^k(\B^3)$ for $k\ge 1$ is defined by
\[
\forall\, \al\in \Om_0^{k-1}(\B^3)\ ,\ \forall\, \beta\in \Om_0^k(\B^3)\qquad\int_{\B^3}d\al\cdot\beta\ dx^3=\int_{\B^3}\al\cdot d^\ast\beta\ dx^3
\]
This gives
\[
d^\ast=(-1)^k\,\ast\,d\ast
\]
Indeed
\[
\int_{\B^3}\al\cdot d^\ast\beta\ dx^3=(-1)^k\int_{\B^3}\al\wedge d\ast\beta=-\int_{\B^3}d(\al\wedge \ast\beta)+\int_{\B^3}d\al\wedge \ast\beta=\int_{\B^3}d\al\cdot\beta\ dx^3\ .
\]
We have in particular
\[
\begin{array}{l}
\ds\forall B\in L^2(\Om^1(\B^3))\qquad d^\ast B:=-\sum_{l=1}^3\p_{x_l}B_l\\[5mm]
\ds\forall F\in L^2(\Om^2(\B^3))\qquad d^\ast F:=\sum_{j=1}^3 \ast d\lf( F_{j+1,j-1}\ dx_j   \rg)\\[5mm]
\ds\qquad=\sum_{j=1}^3 \p_{x_{j+1}}F_{j+1,j-1}\, \ast dx_{j+1}\wedge dx_{j}+\p_{x_{j-1}}F_{j+1,j-1}\, \ast dx_{j-1}\wedge dx_{j}\\[5mm]
\ds\qquad=-\sum_{l,m=1}^3 \p_{x_l}F_{lm}\ dx_m
\end{array}
\]
Let $A$ be an $L^2$ connection 1-form on ${\mathbb B}^3$ where $\B^3$ is denoting the unit ball in ${\R}^3$.
$$A=\sum_{l=1}^3A_l \,dx_l\quad\mbox{ where }\quad A_l\in \frak{su}(2)\simeq \Im m(\mathbb H)$$
Following the notation we just introduced, the space of $L^q$ k-forms on $\B^3$ taking values into $\frak{su}(2)$ is denoted $$L^q(\Omega^k(\B^3)\otimes \frak{su}(2))\ .$$  On $\frak{su}(2)\simeq \Im m(\mathbb H)$ we define the bracket operation\footnote{While considering a vector $x\in \frak{su}(2)$ we sometimes use the boldface notation$\bf{x}$ to stress the fact that we are using its quaternionic counterpart in $ \Im m(\mathbb H)$ and the multiplication properties attached to it.}
\[
[\bf{i},\bf{j}]= \bf{i}\,\bf{j}-\bf{j}\,\bf{i}=2\,\bf{k}\quad,\quad[\bf{j},\bf{k}]= \bf{j}\,\bf{k}-\bf{k}\,\bf{j}=2\,\bf{i}\quad\mbox{ and }\quad [\bf{k},\bf{i}]= \bf{k}\,\bf{i}-\bf{i}\,\bf{k}=2\,\bf{j}\ .
\]
Let $g\in SU(2)$ and $(u,v)\in \frak{su}(2)$. We denote $u^g:=g^{-1}\,u\,g$ and $v^g:=g^{-1}\,v\,g$ and we have
\[
[u^g,v^g]=[g^{-1}\,u\,g,g^{-1}\,u\,g]={\bf g}^{-1}{\bf u}{\bf g}\, {\bf g}^{-1}{\bf v}{\bf g}-{\bf g}^{-1}{\bf u}{\bf g}\,{\bf g}^{-1}{\bf v}{\bf g}=g^{-1}[u,v]g=[u,v]^g
\]
The scalar product in $ \frak{su}(2)$ is given by $y\cdot z=\Re\lf({\bf y}^\ast {\bf z}\rg)=-\Re\lf({\bf y} {\bf z}\rg)=-\Re\lf({\bf z} {\bf y}\rg)$ where we identify elements in $\frak{su}(2)$ with the corresponding imaginary quaternions (recall that for imaginary quaternions there holds ${\bf y}^\ast=-{\bf y}$). We also sometimes denote this scalar product $<y,z>$. We have in particular
\be
\label{bra-sca}
\begin{array}{l}
\ds<a,[b,c]>=\Re\lf({\bf a}^\ast ({\bf b}\,{\bf c}- {\bf c}\,{\bf b}\rg)=-\Re\lf({\bf a}({\bf b}\,{\bf c}- {\bf c}\,{\bf b})\rg)\\[5mm]
\ds=\Re\lf({\bf a}^\ast {\bf c}^\ast\,{\bf b}-{\bf c}^\ast\,{\bf a}^\ast {\bf b}\rg)=\Re\lf(({\bf c}\,{\bf a}-{\bf a}\,{\bf c})^\ast\,{\bf b}\rg)\\[5mm]
\ds=<[c,a],b>=\cdots=<[a,b],c>
\end{array}
\ee
We take this occasion to recall the {\it Jacobi identity} we are going to use at several occasions : 
\be
\label{jacob}
\forall\, a,b,c\in \frak{su}(2)\qquad [a,[b,c]]+[c,[a,b]]+[b,[c,a]]=0\ .
\ee
For any $u\in W^{1,4/3}(\B^3,\frak{su}(2))$ we denote respectively
\[
d_A u=\sum_{l=1}^3\lf(\p_{x_l}u+[A_l,u]\rg)\ d{x_l}\quad\mbox{ and }\quad\nabla_A u=\sum_{l=1}^3\lf(\p_{x_l}u+[A_l,u]\rg)\ \p_{x_l}\
\]
the covariant derivative of $u$ with respect to $A$ (either as a 1-form or as  a vector-field). We shall denote
\[
(\nabla_A)_lu:=\p_{x_l}u+[A_l,u]
\]
with this notation we then have $\nabla_A u=\sum_{l=1}^3(\nabla_A)_lu\ \p_{x_l}$.

\medskip
There will be two important identities we shall which are {\it Leibnitz Rules} for covariant derivatives with respect to scalar products and Lie Brackets. For any pair $(u,v)$ in $L^4\cap W^{1,4/3}(\B^3,\frak{su}(2))$ and $A\in L^2(\Omega_1(\B^3)\otimes \frak{su}(2))$, thanks to (\ref{bra-sca})
\[
\begin{array}{l}
\ds\p_{x_l}<u,v>=<\p_{x_l}u,v>+<u,\p_{x_l}v>\\[5mm]
\ds\quad=<\p_{x_l}u+[A_l,u],v>+<u,\p_{x_l}v+[A_l,v]>-<[A_l,u],v>-<u,[A_l,v]>\\[5mm]
\ds\quad=<\p_{x_l}u+[A_l,u],v>+<u,\p_{x_l}v+[A_l,v]>\ .
\end{array}
\]
Hence
\be
\label{leib-sc}
\nabla<u,v>=<\nabla_Au,v>+<u,\nabla_Av>\ .
\ee
Using Jacobi identity we have also
\be
\label{com-cov}
\begin{array}{rl}
\ds(\nabla_A)_n[v,w]&=\p_{x_n}([v,w])+[A_n,[v,w]]\\[5mm]
\ds\quad&=[\p_{x_n}v,w]+[v,\p_{x_n}w]+[[A_n,v],w]+[v,[A_n,w]]\\[5mm]
\ds\quad&=[(\nabla_A)_nv,w]+[v,(\nabla_A)_nw]
\end{array}
\ee
The curvature of $A$ on $\B^3$ is the following $\frak{su}(2)$ valued 2-form
\[
\forall\, X,Y\in T_{x}\B^3\qquad F_A(X,Y):=dA(X,Y)+[A(X),A(Y)]
\]
This implies in particular
\[
F_A=\sum_{l<m} (\p_{x_l}A_m-\p_{x_m}A_l+[A_l,A_m])\ dx_l\wedge dx_m
\]
For any pair of $1$-forms $(A,B)$ in $\Omega_1(\B^3)\otimes \frak{su}(2)$ we define their bracket which is an element in $\Omega_2\B^3\otimes \frak{su}(2)$ as follows
\[
\forall\, X,Y\in T_{x}\B^3\qquad [A\,\wedge\, B](X,Y):=[A(X), B(Y)]-[A(Y),B(X)]
\]
With this notation we then have\footnote{The product $\frac{1}{2}[A\wedge A]$ is often denoted $A\wedge A$ in the literature. We shall adopt this notation as well.}
\[
F_A=dA+\frac{1}{2}[A\wedge A]
\]
Observe that we have using Jacobi identity for any $v\in L^4\cap W^{1,4/3}(\B^3,\frak{su}(2))$ and $A\in L^2(\Omega_1(\B^3)\otimes \frak{su}(2))$ and $l,m\in\{1,2,3\}$
\[
\begin{array}{l}
\ds(\nabla_A)_l((\nabla_A)_m v)=\p_{x_l}(\p_{x_m} v+[A_m,v])+[A_l,\p_{x_m} v+[A_m,v]]\\[5mm]
\ds=\p_{x_m}(\p_{x_l} v+[A_l,v])+[A_m,\p_{x_l} v+[A_l,v]]\\[5mm]
\ds+\p_{x_l}([A_m,v])-\p_{x_m}([A_l,v])+[A_l,\p_{x_m} v+[A_m,v]]-[A_m,\p_{x_l} v+[A_l,v]]\\[5mm]
\ds=(\nabla_A)_m((\nabla_A)_l v)+[\p_{x_l}A_m-\p_{x_m}A_l+[A_l,A_m], v]=(\nabla_A)_m((\nabla_A)_l v)+[F_{lm},v]
\end{array}
\]
Hence we have
\be
\label{cov-ex}
\forall\,l,m\in\{1,2,3\}\qquad(\nabla_A)_l((\nabla_A)_m v)-(\nabla_A)_m((\nabla_A)_l v)=[F_{lm},v]
\ee
We have also the following useful identity
\be
\label{curcovder}
\begin{array}{rl}
\ds F_{lm}&\ds=\p_{x_l}A_m-\p_{x_m}A_l+[A_l,A_m]=   (\nabla_A)_lA_m-[A_l,A_m]- (\nabla_A)_mA_l+[A_m,A_l]+[A_l,A_m]\\[5mm]
&\ds= (\nabla_A)_l A_m-(\nabla_A)_m A_l+ [A_m,A_l]
\end{array}
\ee

\medskip

We denote by $d_A$ the covariant exterior derivative on $0,1,2$ forms as follows\footnote{It is important to observe that due to the $1/2$ factor we have
\[
d_AA=dA+[A\wedge A]\ne F_A=dA+\frac{1}{2}[A\wedge A]\ .
\]
}
\[
\lf\{
\begin{array}{l}
\ds\forall v\in L^2(\B^3, \frak{su}(2))\qquad d_A v:=\sum_{l=1}^3\lf(\p_{x_l}v+[A_l,v]\rg)\ d{x_l}\\[5mm]
\ds\forall B\in L^2(\Om^1(\B^3)\otimes \frak{su}(2))\qquad d_A B:=\sum_{l,m=1}^3\lf(\p_{x_l}B_m+[A_l,B_m]\rg)\ d{x_l}\wedge dx_m\\[5mm]
\ds\forall F\in L^2(\Om^2(\B^3)\otimes \frak{su}(2))\qquad d_A F:=\sum_{k=1}^3\sum_{l<m=1}^3\lf(\p_{x_k}F_{lm}+[A_k,B_{lm}]\rg)\ dx_k\wedge d{x_l}\wedge dx_m
\end{array}
\rg.
\]
We have in particular using again {\it Jacobi identity}
\[
\begin{array}{l}
\ds (d_AF_A)_{123}=\p_{x_1}(F_A)_{23}+[A_1,(F_A)_{23}]+\p_{x_2}(F_A)_{31}+[A_2,(F_A)_{31}]+\p_{x_3}(F_A)_{12}+[A_3,(F_A)_{12}]\\[5mm]
\ds =\p_{x_1}\lf(\p_{x_2}A_3-\p_{x_3}A_2+[A_2,A_3]\rg)+\p_{x_2}\lf(\p_{x_3}A_1-\p_{x_1}A_3+[A_3,A_1]\rg)\\[5mm]
\ds\ +\p_{x_3}\lf(\p_{x_1}A_2-\p_{x_2}A_1+[A_1,A_2]\rg)+[A_1,\p_{x_2}A_3-\p_{x_3}A_2+[A_2,A_3]]\\[5mm]
+[A_2,\p_{x_3}A_1-\p_{x_1}A_3+[A_3,A_1]]+[A_3, \p_{x_1}A_2-\p_{x_2}A_1+[A_1,A_2]]
\ds\ =...=0
\end{array}
\]
Hence we have
\be
\label{bian}
d_AF_A=0\ ,
\ee
which is the famous {\it Bianchi identity}.

\medskip 

Let $u$ be a map from $\B^3$ into $\frak{su}(2)$. We have
\[
\begin{array}{l}
\ds d_A(d_Au)=d_A(\sum_{l=1}^3\p_{x_l} u+[A_l,u]\ dx_l)=\sum_{l,m=1}^3\lf(\p_{x_m}(\p_{x_l} u+[A_l,u])+[A_m,\p_{x_l} u+[A_l,u]]\rg)\ dx_m\wedge dx_l\\[5mm]
\ds=\sum_{l,m=1}^3\lf([\p_{x_m}A_l,u]+[A_l,\p_{x_m}u]+[A_m,\p_{x_l} u]+[A_m,[A_l,u]]\rg)\ dx_m\wedge dx_l\\[5mm]
\ds=\sum_{l<m}[\p_{x_m}A_l-\p_{x_l}A_m,u]\ dx_m\wedge dx_l+\sum_{l<m}[A_m,[A_l,u]]-[A_l,[A_m,u]]\ dx_m\wedge dx_l\
\end{array}
\]
Observe that, thanks to Jacobi
\[
[A_m,[A_l,u]]-[A_l,[A_m,u]]=[A_m,[A_l,u]]+[A_l,[u,A_m]]=-[u,[A_m,A_l]]
\]
This finally gives that the curvature is the defect for $d_A\circ d_A$ to be zero
\be
\label{dAsqu}
\begin{array}{l}
\ds d_A(d_Au)=[F_A,u]
\end{array}
\ee

\medskip

Finally we define the  covariant exterior co-differential with respect to the connection 1-form $A$ on $\Om^k(\B^3\otimes \frak{su}(2))$ for $k\ge 1$ by
\[
\forall\, \eta\in \Om_0^{k-1}(\B^3\otimes \frak{su}(2)))\ ,\ \forall\, \zeta\in \Om_0^k(\B^3\otimes \frak{su}(2)))\qquad\int_{\B^3}d_A\eta\cdot\zeta\ dx^3=\int_{\B^3}\eta\cdot d_A^\ast\zeta\ dx^3
\]
This gives first for $k=1$ for any choice of $v\in C_0^\infty(\B^3,\frak{su}(2))$ and $B\in \Om_0^1(\B^3)\otimes\frak{su}(2)$
\[
\begin{array}{l}
\ds\int_{\B^3}v\cdot d_A^\ast B\ dx^3=\sum_{l=1}^3\int_{\B^3}\lf(\p_{x_l}v+[A_l,v]\rg)\cdot B_l\ dx^3\\[5mm]
\ds=\sum_{l=1}^3\int_{\B^3}\lf(\p_{x_l}v+[A_l,v]\rg)\cdot B_l\ dx^3\\[5mm]
\ds=\sum_{l=1}^3\int_{\B^3}v\cdot (-\p_{x_l}B_l -[A_l,B_l] )\ dx^3
\end{array}
\]
This gives first for $k=2$ for any choice of $B\in \Om^1_0(\B^3)\otimes\frak{su}(2)$ and $F\in \Om^2_0(\B^3)\otimes\frak{su}(2)$ 
\[
\begin{array}{l}
\ds\int_{\B^3}B\cdot d_A^\ast F\ dx^3= \int_{\B^3}d_AB\cdot  F\ dx^3=\int_{\B^3} \sum_{l,m=1}^3\lf(\p_{x_l}B_m+[A_l,B_m]\rg) \cdot F_{lm} \ dx^3\\[5mm]
\ds=\int_{\B^3} \sum_{l,m=1}^3B_m\cdot \lf(-\p_{x_l}F_{lm}-[A_l,F_{lm}]\rg)  \ dx^3
\end{array}
\]
Finally for $k=3$ we have for any choice of $F\in \Om^2_0(\B^3)\otimes\frak{su}(2)$ and $\om=v\, dx_1\wedge dx_2\wedge dx_3\in \Om^3_0(\B^3)\otimes\frak{su}(2)$
\[
\begin{array}{l}
\ds\int_{\B^3}F\cdot d_A^\ast \om\ dx^3= \int_{\B^3}d_AF\cdot  \om\ dx^3\\[5mm]
\ds =\int_{\B^3}\lf(\p_{x_1}F_{23}+[A_1,F_{23}]+\p_{x_2}F_{31}+[A_2,F_{31}]+\p_{x_3}F_{12}+[A_3,F_{12}]\rg)\cdot v\ dx^3\\[5mm]
\ds=\int_{\B^3} F_{23}\cdot(-\p_{x_1}v-[A_1,v])+F_{31}\cdot(-\p_{x_2}v - [ A_2,v])+F_{12}\cdot(-\p_{x_3}v-[A_3,v]  )\ dx^3
\end{array}
\]
To summarise we have for any connection 1-form $A \in L^2(\Om^1(\B^3)\otimes \frak{su}(2))$
\[
\lf\{
\begin{array}{l}
\ds\forall B\in L^2(\Om^1(\B^3)\otimes \frak{su}(2))\qquad d^\ast_A B:=\sum_{l,m=1}^3\lf(-\p_{x_l}B_l-[A_l,B_l]\rg)\\[5mm]
\ds\forall F\in L^2(\Om^2(\B^3)\otimes \frak{su}(2))\qquad d^\ast_A F:=\sum_{l,m=1}^3\lf(-\p_{x_l}F_{lm}-[A_l,F_{lm}]\rg)\ dx_m\\[5mm]
\ds\forall \om=\ast\ v\in L^2(\Om^3(\B^3)\otimes \frak{su}(2))\qquad d^\ast_A \om:=\sum_{j=1}^3\lf(-\p_{x_j}v-[A_j,v]\rg)\ dx_{j+1}\wedge dx_{j-1}
\end{array}
\rg.
\]
\subsubsection{Longitudinal and Transverse Components} A unit vector $e\in \S^2\subset\frak{su}(2)$  being given then for any element $B\in \frak{su}(2)$ we define the Longitudinal and transverse projections of $B$ along $e$ in the following way
\[
B^L:=B\cdot e\ e\qquad\mbox{ and }\qquad B^T:=B-B\cdot e\ e\ .
\]
Using the Lie Bracket we observe that
\be
\label{LoTra}
B^T=-\frac{1}{4}[e,[e,B]]\ .
\ee
\subsubsection{Gauge Transformations}
For any choice of $A\in L^q(\Omega_1(\B^3)\otimes \frak{su}(2))$ and $g\in W^{1,q}(\B^3,SU(2))$ we adopt the following notation
for  the change of gauge
\[
A^g:=g^{-1}dg+g^{-1}\, A\,g\ .
\]
We say that $A$ and $A^g$ are Gauge equivalent.
Let $A\in L^2(\Omega_1\B^3\otimes \frak{su}(2))$ and $g\in W^{1,2}(\B^3,SU(2))$ a classical computation  gives
\[
\begin{array}{l}
\ds F_{A^g}=d\lf(g^{-1}dg+g^{-1}\, A\,g\rg)+\frac{1}{2}[g^{-1}dg+g^{-1}\, A\,g,g^{-1}dg+g^{-1}\, A\,g]\\[5mm]

\ds=dg^{-1}\wedge dg+dg^{-1}g\wedge g^{-1}Ag+g^{-1}\, dA\,g-g^{-1}\,A\,g\wedge g^{-1}dg+g^{-1}\,dg\wedge g^{-1}\,dg\\[5mm]
\ds+\frac{1}{2}[g^{-1}dg\wedge g^{-1}\, A\,g]+\frac{1}{2}[g^{-1}\, A\,g\wedge g^{-1}dg]+\frac{1}{2}\,g^{-1}[A,A]\,g

\end{array}
\]
Observe that
\[
dg^{-1}\wedge dg=dg^{-1}g\wedge g^{-1}dg=-g^{-1}dg\wedge g^{-1}dg
\]
we have also
\[
\begin{array}{l}
\ds dg^{-1}g\wedge g^{-1}Ag(X,Y)-g^{-1}\,A\,g\wedge g^{-1}dg(X,Y)\\[5mm]
\ds=-g^{-1}dg(X)\, g^{-1}A(Y)g+g^{-1}dg(Y)\,  g^{-1}A(X)g\\[5mm]
\ds -g^{-1}\,A(X)\,g\, g^{-1}dg(Y)+g^{-1}\,A(Y)\,g\, g^{-1}dg(X)
\end{array}
\]
and
\[
\begin{array}{l}
\ds\frac{1}{2}[g^{-1}dg\wedge g^{-1}\, A\,g](X,Y)+\frac{1}{2}[g^{-1}\, A\,g\wedge g^{-1}dg](X,Y)\\[5mm]
\ds=\frac{1}{2}[g^{-1}dg(X), g^{-1}\, A(Y)\,g]-\frac{1}{2}[g^{-1}dg(Y), g^{-1}\, A(X)\,g]\\[5mm]
\ds+\frac{1}{2}[g^{-1}\, A(X)\,g, g^{-1}dg(Y)]-\frac{1}{2}[g^{-1}\, A(Y)\,g, g^{-1}dg(X)]\\[5mm]
\ds=g^{-1}dg(X) g^{-1}\, A(Y)\,g-g^{-1}\, A(Y)\,g\,g^{-1}dg(X)+g^{-1}\, A(X)\,g\, g^{-1}dg(Y)-g^{-1}dg(Y)\,g^{-1}\, A(X)\,g
\end{array}
\]
Hence finally we have
\be
\label{gau-cu}
F_{A^g}=g^{-1}\,F_A \,g\ .
\ee
For $u\in W^{1,4/3}(\B^3,\mathfrak{ su}(2))$ and $g\in W^{1,2}(\B^3,SU(2))$ we denote $u^g=g^{-1}\,u\,g$ and we have
\[
\begin{array}{l}
\ds d_{A^g}u^g=\sum_{l=1}^3(\p_{x_l}(g^{-1}u g)+[(A^g)_l,g^{-1}u g])\ dx_l\\[5mm]
\ds=\sum_{l=1}^3\p_{x_l}g^{-1}\,ug+g^{-1}\,\p_{x_l}u\,g+g^{-1}\,u\,\p_{x_l}g+[g^{-1}\p_{x_l}g+g^{-1}A_l g,g^{-1}u g]\ dx_l\\[5mm]
\ds=\sum_{l=1}^3\lf(-[g^{-1}\p_{x_l}g,u^g]+[g^{-1}\p_{x_l}g,u^g]+ g^{-1}(\p_{x_l}u+[A_l,u])\,g\rg)\ dx_l=g^{-1}\,d_Au\,g
\end{array}
\]
Let $F\in L^2(\Om^2(\B^3)\otimes \frak{su}(2))$, we have
\[
\begin{array}{l}
\ds d^\ast_{A^g}(g^{-1}\,F\,g)=\sum_{l,m=1}^3\lf(-\p_{x_l}(g^{-1}\,F_{lm}\,g)-[(A^g)_l,g^{-1}\,F_{lm}\,g]\rg)\ dx_m\\[5mm]
\ds =\sum_{l,m=1}^3\lf(-\p_{x_l}g^{-1}g\,g^{-1}\,F_{lm}\,g-g^{-1}\,\p_{x_l}F_{lm}\,g - g^{-1}\,F_{lm}\,g\, g^{-1}\p_{x_l}g\rg.\\[5mm]
\ds\qquad\lf.-g^{-1}[A_l,F_{lm}]\,g-[g^{-1}\,\p_{x_l}g,g^{-1}\,F_{lm}\,g]\rg)\ dx_m\\[5mm]
\ds=\sum_{l,m=1}^3g^{-1}\lf(-\p_{x_l}F_{lm}-[A_l,F_{lm}]\rg)\,g\ dx_m\ .
\end{array}
\]
Hence
\be
\label{gau-ch}
d^\ast_{A^g}(g^{-1}\,F\,g)=g^{-1}\,d^\ast_AF\,g\ .
\ee
\subsubsection{Uhlenbeck's Coulomb Gauge Extraction}
We shall consider the following space of Sobolev connections for $p\ge 3/2$.
\[
\mathfrak{ a}^p_{\B^3}:=\lf\{
\begin{array}{c}
\ds A\in L^2(\Omega_1\B^3\otimes \frak{su}(2))\ ;\ F_A \in L^p(\Omega^2(\B^3)\otimes \frak{su}(2))\\[5mm]
\ds \forall x_0\in \B^3\ \exists\,r>0\ \mbox{ and }\ g\in W^{1,2}(\B_r(x_0),SU(2))\ \mbox{ s. t. }A^g\in W^{1,p}(\B_r(x_0))
\end{array} \rg\}
\]
We recall an important result of K.Uhlenbeck analytical Gauge theory 
\begin{Th}
\label{th-Uhl}\cite{Uh} Let $A\in \mathfrak{ a}^p_{\B^3}$ for $p\ge 3/2$. There exists $\delta_{p}>0$  and $C_p>0$ such that if
\be
\label{II.0}
\|F_{A}\|_{L^{3/2}(\B^3)}<\delta_p
\ee
then there exists $g\in W^{1,2}(\B^3,SU(2))$ such that
\begin{itemize}
\item[i)]
 \be
 \label{II.1}
 \|A^g\|_{W^{1,p}(\B^3)}\le\, C_p\ \|F_{A}\|_{L^{p}(\B^3)}
\ee
\item[ii)]
\be
\label{II.2}
d^\ast A^g=\sum_{l=1}^3\p_{x_l}A^g_l=0\qquad\mbox{ in }{\mathcal D}'(\B^3)
\ee
\item[iii]
\be
\label{II.3}
A^g(\p_r)=\sum_{l=1}^3 x_l\,A^g_l=0\qquad\mbox{ in }{\mathcal D}'(\p\B^3)
\ee
\end{itemize}
where $A_l^g$ denotes the $l-$th coordinate of $A^g$ in the  canonical basis $(dx_l)_{l=1,2,3}$ :
\[
A^g=\sum_{l=1}^3 A_l^g\ dx_l\ .
\]
\hfill $\Box$
\end{Th}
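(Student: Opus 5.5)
This is Uhlenbeck's Coulomb gauge theorem, and I would prove it by the continuity method, first for smooth connections and then for a general $A\in\mathfrak{a}^p_{\B^3}$ by approximation. The whole argument rests on the following a priori estimate. Suppose $B$ satisfies $d^\ast B=0$ in $\B^3$ and $B(\p_r)=0$ on $\p\B^3$. Then the Gaffney--Hodge estimate gives
\[
\|B\|_{W^{1,q}(\B^3)}\le C_q\,\|dB\|_{L^q(\B^3)} .
\]
Combining this with $dB=F_B-\frac12[B\wedge B]$, Hölder and the Sobolev embedding $W^{1,3/2}\hookrightarrow L^3$, we get, for $q=3/2$,
\[
\|B\|_{W^{1,3/2}}\le C\|F_B\|_{L^{3/2}}+C\,\|B\|_{L^3}^2\le C\|F_B\|_{L^{3/2}}+C'\|B\|_{W^{1,3/2}}^2 .
\]
Hence if $\|B\|_{W^{1,3/2}}\le \kappa$ for a small universal $\kappa$, the quadratic term is absorbed and $\|B\|_{W^{1,3/2}}\le 2C\|F_B\|_{L^{3/2}}$. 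This is strictly better than $\kappa$ once $\|F_B\|_{L^{3/2}}<\delta$ with $\delta$ small. For general $p\ge 3/2$ I would bootstrap in the same way:
\[
\|[B\wedge B]\|_{L^p}\le \|B\|_{L^3}\,\|B\|_{L^{3p/(3-p)}}\le C\|B\|_{W^{1,3/2}}\,\|B\|_{W^{1,p}} \quad (p<3).
\]
The factor $\|B\|_{W^{1,3/2}}$ is small, so it is absorbed again and yields (\ref{II.1}). For $p\ge3$ I would pass through an intermediate exponent.

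For smooth $A$ I would run the continuity method on the rescaled family $A_t(x):=t\,A(tx)$, $t\in[0,1]$. The map $t\mapsto A_t$ is continuous in $W^{1,p}$, $A_0=0$, and by scale invariance $\|F_{A_t}\|_{L^{3/2}(\B^3)}=\|F_A\|_{L^{3/2}(\B_t)}<\delta$. Let $S$ be the set of $t$ for which there is a gauge $g_t$ with $A_t^{g_t}$ satisfying (\ref{II.2}), (\ref{II.3}) and $\|A_t^{g_t}\|_{W^{1,3/2}}\le\kappa$. Clearly $0\in S$.

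Closedness: for $t_n\to t$, the relation $g_n^{-1}dg_n=A_{t_n}^{g_n}-g_n^{-1}A_{t_n}g_n$ bounds $g_n$ in $W^{1,3/2}\cap L^\infty$. By the a priori estimate, $A_{t_n}^{g_n}$ is bounded by $2C\delta<\kappa$ in $W^{1,3/2}$. Weak limits preserve the Coulomb and boundary conditions and the bound, and the gauge relation passes to the limit by Rellich compactness.

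Openness is where the real work is, and I expect it to be the main obstacle. I would use the implicit function theorem. Write $g=e^{v}g_t$, with $v$ an $\frak{su}(2)$-valued function. The map
\[
v\mapsto \Big(d^\ast\big(A_s^{e^vg_t}\big),\ A_s^{e^vg_t}(\p_r)\big|_{\p\B^3}\Big)
\]
is a smooth map between suitable Sobolev spaces. Its linearization at $(s,v)=(t,0)$ is
\[
v\mapsto \big(d^\ast d_{B}v,\ \p_r v+[B(\p_r),v]\big),\qquad B=A_t^{g_t}.
\]
This is a Neumann Laplacian perturbed by terms controlled by $\|B\|_{W^{1,3/2}}\le\kappa$. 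It is therefore invertible on mean-zero functions, modulo constants which act by conjugation. The delicate point is to choose function spaces in which both the nonlinear map is $C^1$ and this invertibility holds. I would first work with smooth data and higher norms (say $W^{2,q}$, $q$ large, on $v$). Once a solution is found for $s$ near $t$, the strict improvement $2C\delta<\kappa$ from the a priori estimate, together with continuity, keeps the bound $\le\kappa$. Thus $S=[0,1]$.

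Finally, for a general $A\in\mathfrak{a}^p_{\B^3}$ I would use the local gauges from the definition of $\mathfrak{a}^p$ to approximate $A$ by smooth connections $A_n$ (after gauge changes) with $F_{A_n}\to F_A$ in $L^{3/2}\cap L^p$. Applying the smooth result to each $A_n$ and passing to the limit exactly as in the closedness step gives $g\in W^{1,2}$ satisfying (\ref{II.1})--(\ref{II.3}).
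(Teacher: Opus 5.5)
The paper gives no proof of this statement; it quotes it from Uhlenbeck. Your outline is exactly Uhlenbeck's continuity-method proof, and it is correct in substance: the Gaffney estimate for $d^\ast B=0$, $B(\p_r)=0$ with absorption of $[B\wedge B]$, the rescaled path $A_t(x)=t\,A(tx)$, closedness by weak compactness, openness by the implicit function theorem for the gauged Neumann problem, then approximation. Three details need tightening. First, build the higher regularity of $A_t^{g_t}$ into the definition of $S$ and use the $W^{1,p}$ a priori bound in the closedness step, so the implicit function theorem can be applied at limit points. Second, take as target of the linearization the subspace $\{(f,h)\,;\,\int_{\B^3}f+\int_{\p\B^3}h=0\}$, where the nonlinear map always lands by the divergence theorem. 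Third, for a general $A\in\mathfrak{a}^p_{\B^3}$, you must patch the local $W^{1,p}$ gauges from the definition into a global one, smoothing the transition functions, before approximating; your last paragraph passes over this quickly.
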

The condition (\ref{II.2}) for the gauge $A^g$ is called the {\it Coulomb Condition}. In \cite{Riv-Tia} the author proved that the same result holds replacing (\ref{II.0}) by the weaker hypothesis
\[
\|F_{A}\|_{L^{(3/2,\infty)}(\B^3)}<\delta_p\ ,
\]
where $L^{(3/2,\infty)}(\B^3)$ is the weak $L^{3/2}$ norm on $\B^3$. We have
\[
\|F\|_{L^{(3/2,\infty)}(\B^3)}\simeq \sup_{t>0}t\,\lf|\lf\{ x\in \B^3\ ;\ |f(x)|>t\rg\}\rg|^{2/3}
\]
where $|\cdot|$ denotes the Lebesgue measure on $\B^3$.

\subsubsection{The $YMH$ and $CYMH$ Functionals}

We denote
\be
\label{matE}
{\mathcal E}:=\lf\{ 
\begin{array}{c}
\ds (u,A)\in W^{1,3/2}(\B^3,\frak{su}(2))\times \mathfrak{ a}^2_{\B^3}\ \\[5mm]
\mbox{s. t. }d_Au\in L^2(\Omega_1(\B^3)\otimes \frak{su}(2))\
\end{array} \rg\}
\ee
Observe that by Kato inequality one has $|\nabla|u||\le |\nabla_Au|$. Since for any element $(u,A)\in \mathcal E$ we have $\nabla_Au\in L^2({\B}^3)$, by Sobolev inequality there holds $u\in L^6(\B^3_1(0),{\R}^3)$ and then, since $A\in L^2(\Om_1(\B^3)\otimes\frak{su}(2))$ we deduce that $du\in L^{3/2}(\Om_1(\B^3_1(0))\otimes\frak{su}(2))$.

\medskip

For $(\ep,\la,\mu)\in ({\R}_+)^3$ we introduce the ``Coulomb Yang-Mills Higgs'' Lagrangian  
\be
\label{II.4}
CYMH^{\la,\mu}_{\ep}(u,A):=\frac{1}{2}\int_{\B^3}|A|^2\ dx^3+\mu \, YMH^{\la}_{\ep}(u,A)
\ee
and $YMH^{\la}_{\ep}(u,A)$ is the classical Yang-Mills-Higgs Lagrangian in the {\it self-dual scaling} introduced initially in \cite{JT}
\be
\label{II.5}
YMH^{\la}_{\ep}(u,A)=\frac{1}{2}\,\int_{B^3}\frac{|d_A u|^2}{\ep}+\ep\,|F_A|^2\ dx^3+\frac{\la}{2\,\ep^3}(1-|u|^2)^2\ dx^3
\ee
\subsubsection{Density of Smooth Higgs Fields/Connections Forms in ${\mathcal E}$.}
\begin{Prop}
\label{lm-dens}
Let $(u,A)$ be an arbitrary element of ${\mathcal E}$. There exists a sequence of smooth pairs $(u_k,A_k)\in {\mathcal E}$ such that
\be
\label{dens-uA}
\lf\{
\begin{array}{l}
\ds A_k\ \longrightarrow\ A\qquad\mbox{ strongly in }L^2(\Om^1(\B^3)\otimes \frak{su}(2))\\[5mm]
\ds d_{A_k}u_k\ \longrightarrow\ d_{A}u\qquad\mbox{ strongly in }L^{3/2}(\Om^1(\B^3)\otimes \frak{su}(2))\\[5mm]
\ds F_{A_k}\ \longrightarrow\ F_A\qquad\mbox{ strongly in }L^2(\Om^2(\B^3)\otimes \frak{su}(2))\\[5mm]
\ds u_k\ \longrightarrow\ u\qquad\mbox{strongly in }L^6(\B^3,\frak{su}(2))
\end{array}
\rg.
\ee
\hfill$\Box$
\end{Prop}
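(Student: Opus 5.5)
Our plan is to localise with a partition of unity, use Uhlenbeck's local gauges from the definition of $\mathfrak{a}^2_{\B^3}$ to mollify in a regular gauge, and then return to the original gauge through smooth approximations of the gauge transformations. Fix $(u,A)\in{\mathcal E}$.

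\textbf{Step 1 (local regular gauges).} By definition of $\mathfrak{a}^2_{\B^3}$ and compactness of $\overline{\B^3}$ (after a slight dilation so that boundary points are handled as interior points of a larger ball), cover $\B^3$ by finitely many balls $\B_{r_i}(x_i)$. On each ball there is $g_i\in W^{1,2}(\B_{r_i}(x_i),SU(2))$ with $A_i:=A^{g_i}\in W^{1,2}\subset L^6$. We may moreover assume, by shrinking balls and using Theorem~\ref{th-Uhl}, that $\|F_A\|_{L^{3/2}}$ is small on each ball, so that $\|A_i\|_{W^{1,2}}\le C\|F_A\|_{L^2}$. In this gauge set $u_i:=u^{g_i}$, so $d_{A_i}u_i=g_i^{-1}d_Au\,g_i\in L^2$. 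Since $u_i\in L^6$ and $A_i\in L^6$, we get $du_i=d_{A_i}u_i-[A_i,u_i]\in L^2$, hence $u_i\in W^{1,2}$.

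\textbf{Step 2 (mollification in the regular gauge).} Mollify to obtain $A_i^\delta=A_i\ast\rho_\delta$ and $u_i^\delta=u_i\ast\rho_\delta$. Then $A_i^\delta\to A_i$ in $W^{1,2}$, hence in $L^6$, and $u_i^\delta\to u_i$ in $W^{1,2}\cap L^6$. Consequently:
\begin{itemize}
\item $F_{A_i^\delta}=dA_i^\delta+A_i^\delta\wedge A_i^\delta\to F_{A_i}$ in $L^2$ (products of $L^4$ terms);
\item $d_{A_i^\delta}u_i^\delta\to d_{A_i}u_i$ in $L^2$ ($L^6\cdot L^6\subset L^3$).
\end{itemize}

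\textbf{Step 3 (approximating the gauges; main obstacle).} We need smooth $h_i^\delta\in C^\infty(SU(2))$ with $h_i^\delta\to g_i$ in $W^{1,2}$. Since $SU(2)\simeq\S^3$ and $\pi_2(\S^3)=0$, Bethuel's density theorem (or Schoen--Uhlenbeck for $W^{1,2}$ maps from 3-dimensional domains into $\S^3$) provides these. Then set
\[
\tilde A_i^\delta:=(A_i^\delta)^{(h_i^\delta)^{-1}}=h_i^\delta A_i^\delta (h_i^\delta)^{-1}-dh_i^\delta\,(h_i^\delta)^{-1},\qquad \tilde u_i^\delta:=h_i^\delta u_i^\delta(h_i^\delta)^{-1}.
\]
The $L^2$ convergence $\tilde A_i^\delta\to A$ uses $dh\to dg$ in $L^2$, together with $A_i^\delta\to A_i$ in $L^2$ and $h\to g$ a.e. with $|h|=1$. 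Covariant derivatives and curvature transform by conjugation: $d_{\tilde A}\tilde u=h\,d_{A^\delta}u^\delta\,h^{-1}$ and $F_{\tilde A}=hF_{A^\delta}h^{-1}$. Conjugation by uniformly bounded a.e.-convergent matrices preserves strong $L^p$ convergence (dominated convergence plus $L^p$ convergence of the inner factor). This yields the local versions of (\ref{dens-uA}).

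\textbf{Step 4 (gluing; main obstacle).} Gluing is the delicate point. On each ball we now have smooth local approximations of the same pair $(u,A)$ in its original gauge, so we patch them with a partition of unity $\sum\chi_i=1$:
\[
A_k=\sum_i\chi_i\tilde A_i,\qquad u_k=\sum_i\chi_i\tilde u_i.
\]
For the connection, $A_k\to A$ in $L^2$ trivially. However,
\[
F_{A_k}=\sum_i d\chi_i\wedge\tilde A_i+\sum_i\chi_i\, d\tilde A_i+A_k\wedge A_k,
\]
and $\sum_i d\chi_i\wedge\tilde A_i=\sum_i d\chi_i\wedge(\tilde A_i-A)$ (using $\sum d\chi_i=0$) tends to zero only in $L^2$. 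Moreover $d\tilde A_i$ itself is not controlled in $L^2$, only $F_{\tilde A_i}$ is. I would therefore write
\[
F_{A_k}=\sum_i\chi_iF_{\tilde A_i}+\sum_i d\chi_i\wedge(\tilde A_i-A)+\Big(A_k\wedge A_k-\sum_i\chi_i\tilde A_i\wedge\tilde A_i\Big).
\]
The last bracket equals $-\frac12\sum_{i,j}\chi_i\chi_j[(\tilde A_i-\tilde A_j)\wedge(\tilde A_i-\tilde A_j)]$. The differences $\tilde A_i-\tilde A_j$ tend to zero, but controlling them in $L^4$ is the real difficulty. To handle it I would choose the approximations compatibly on overlaps: construct them inductively over the cover, using on $\B_i\cap\B_j$ the single transition $g_i^{-1}g_j\in W^{1,2}$ approximated smoothly. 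Equivalently, one can take the $\mathfrak{a}^2$ regular gauge on a finer cover via Uhlenbeck with $L^{3/2}$-small curvature, so that $A_i^\delta-A_j^\delta$ converges in $W^{1,2}$ after the common smooth transition. The $L^{3/2}$ convergence of $d_{A_k}u_k$ is handled the same way with weaker exponents ($L^2\cdot L^6\subset L^{3/2}$), and $u_k\to u$ in $L^6$ follows directly.
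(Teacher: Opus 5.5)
\textbf{Verdict: there is a genuine gap, at Step~4.} Steps 1--3 are sound: they are the local version of the paper's argument, and they use Bethuel's density theorem for $W^{1,2}(\B^3,SU(2))$ in the same way. The $u$-part is also harmless, and could even be done by mollifying $u$ directly in the original gauge, since $A_k\to A$ in $L^2$ and $u_k\to u$ in $W^{1,3/2}\cap L^6$ already give $[A_k,u_k]\to[A,u]$ in $L^{3/2}$. Step~4 is where the whole difficulty of the proposition sits; you identify it but do not resolve it.

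\textbf{Why the cross term does not go to zero.} If the smooth approximations $h_i\to g_i$ are chosen independently on each ball, then on an overlap
\[
\tilde A_i^\delta-\tilde A_j^\delta=h_j\Big[(A_i^\delta)^{h_i^{-1}h_j}-A_j^\delta\Big]h_j^{-1},
\]
and $d(h_i^{-1}h_j)$ tends to $d(g_i^{-1}g_j)$ only in $L^2$. The cancellation between the rough parts of $dg_i$ and $dg_j$, which makes $g_i^{-1}g_j$ regular, is not inherited by the approximations. Hence $\tilde A_i^\delta-\tilde A_j^\delta\to0$ only in $L^2$, your cross term tends to zero only in $L^1$, and you do not obtain $F_{A_k}\to F_A$ in $L^2$.

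\textbf{What your proposed remedy presupposes.} ``Compatible approximations'', ``inductively over the cover'' and ``the common smooth transition'' all assume the object that actually has to be built. You need smooth transition functions that are close to $g_{ij}=g_i^{-1}g_j$ in $W^{2,2}$ and still satisfy $g_{ij}g_{jl}=g_{il}$ on triple overlaps. The relevant space is $W^{2,2}$, not $W^{1,2}$ as you write: from $dg_{ij}=g_{ij}A_j-A_ig_{ij}$ with $A_i,A_j\in W^{1,2}$ one gets $g_{ij}\in W^{2,2}\subset C^0$, and this extra regularity is indispensable. You also need corrections $c_i$, close to the identity in $W^{2,2}$, such that the transitions of $g_ic_i$ are exactly these smooth functions. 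Approximating each $g_{ij}$ separately, or ball by ball, does not preserve the cocycle identity.

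\textbf{How the paper closes the gap.} It smooths the $W^{2,p}$ cocycle after a slight shrinking of the cover, and corrects the local Coulomb gauges using Uhlenbeck's Proposition~3.2. The resulting smooth bundle over the ball is trivial, so $k_{ij}=f_i^{-1}f_j$ with $f_i$ smooth. Then $\xi:=g_ic_if_i^{-1}$ is a single global gauge in $W^{1,2}(\B^3,SU(2))$ with $A^\xi\in W^{1,p}$ on the whole ball. The choice $12/7<p<2$ ensures that $A^\xi\wedge A^\xi$, and hence $dA^\xi$, lies in $L^2$.

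After that nothing has to be glued. You mollify $A^\xi$ globally, gauge back with one smooth approximation $\xi_s$ of $\xi$, and conclude by conjugation, dominated convergence and a diagonal argument; this is your Steps 2--3 on a single domain.

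Note also what any choice making your Step~4 work would require: smooth $h_i$ with $h_i^{-1}h_j=k_{ij}$ exactly. Such $h_i$ satisfy $h_if_i^{-1}=h_jf_j^{-1}$, so they already patch to one global smooth gauge, and the partition of unity becomes superfluous.
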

\noindent{\bf Proof of Proposition~\ref{lm-dens}.} Let $3/2<p<2$ to be fixed later on.
\[
\int_{\B_r(x_0)}|F_A|^{p}\ dx^3\le C\, r^{3 -3p/2}\  \lf(\int_{\B_r(x_0)}|F_A|^{2}\ dx^3\rg)^{p/2}
\]
Then for any $\delta<\delta_{p}$ ( where $\delta_{p}>0$ is given by theorem~\ref{th-Uhl})there exist $r_\delta$ such that
\[
\sup_{x\in \ov{\B}_1^3(0)}\int_{\B_{r_\delta}(x)}|F_A|^{p}\ dx^3\le \delta
\]
We consider a good covering\footnote{A good covering refers to the property according to which intersections of elements of the covering are homeomorphic to balls.} of $ \ov{\B}_1^3(0)$ by ball $(\B_{r_\delta}(x_i))_{i\in \{1\cdots Q \}}$ such that each point of $\ov{\B}_1^3(0)$ is covered at most by a universal number $N$ of balls $(\B_{r_\delta}(x))_{i\in \{1\cdots Q\}}$ where $N$ is independent of $\delta$ and $Q$, . We consider on each ball $\B_{r_\delta}(x_i)$ a controlled Coulomb gauge  $A^{g_i}:=g_i^{-1}\,dg_i+g_i^{-1}\,A\,g_i$ in $W^{1,p}(\B_{r_\delta}(x)$ given by  theorem~\ref{th-Uhl}. The family $(g_i )_{i\in \{1\cdots Q\}}$ generates a family of transition functions $g_{ij}:=g_{i}^{-1}\,g_j\in W^{2,p}(\B_{r_\delta}(x_i)\cap \B_{r_\delta}(x_j))$ satisfying obviously the cocycle condition
\[
\forall\,i,j,l\in\{1\cdots Q\}\qquad g_{ij}\,g_{jl}=g_{il}\qquad\mbox{on }\B_{r_\delta}(x_i)\cap \B_{r_\delta}(x_j)\cap \B_{r_\delta}(x_l)
\]
Using \cite{Sil} theorem 1, up to a slight shrinking of the covering by replacing $r_\delta$ by a strictly smaller radius $r^1_\delta$, there exists $(g^\ep_{ij})_{ij\in\{1\cdots Q\}})$ such that
\[
g^\ep_{ij}\in C^\infty(\B_{r^1_\delta}(x_i)\cap \B_{r^1_\delta}(x_j),SU(2))\qquad\mbox{ moreover }\qquad g^\ep_{ij}\rightarrow g_{ij}\qquad\mbox{ in } W^{2,p}(\B_{r_\delta}(x_i)\cap \B_{r_\delta}(x_j),SU(2))\ .
\]
Using now proposition 3.2 of \cite{Uh} for $\ep>0$ small enough there exists $h^\ep_i  \in W^{2,p}(\B_{r_\delta}(x_i),SU(2))$ such that
\[
g^\ep_{ij}=(h^{\ep}_i)^{-1}\, g_{ij}\,h^{\ep}_j=(h^{\ep}_i)^{-1}\, g_{i}^{-1}\,g_j\,h^{\ep}_j\qquad\mbox{ in }B_{r^1_\delta}(x_i)\cap \B_{r^1_\delta}(x_j)
\]
We fix now $\ep$. Let $k_i^\ep:= \,g_i\,h^{\ep}_i$ on $B_{r^1_\delta}(x_i)$ then we have that 
\[
A^{k_i^\ep}:=(k_i^\ep)^{-1}\,dk_i^\ep+(k_i^\ep)^{-1}\,A\, k_i^\ep\in W^{1,p}(\Om_1(B_{r^1_\delta}(x_i))\otimes \frak{su}(2))
\]
moreover the transition function $k_{ij}^\ep:=(k_i^\ep)^{-1}\,k_j^\ep$ are smooth. Since the bundle these transition functions are defining has $\B^3_1(0)$ as a base it is trivial and there exists
$f_i^\ep \in C^\infty(B_{r^1_\delta}(x_i), SU(2))$ such that 
\[
k_{ij}^\ep= (f_i^\ep)^{-1}\,f_j^\ep
\]
Let $\xi^\ep_i:=k_i^\ep\, (f_i^\ep)^{-1}$ there holds $\xi^\ep_i=\xi_j^\ep$ on $B_{r^1_\delta}(x_i)\cap \B_{r^1_\delta}(x_j)$ moreover, denoting simply $\xi^\ep:=\xi^\ep_i$ on $B_{r^1_\delta}(x_i)$ there holds
\[
A^{\xi^\ep}=(A^{k_i^\ep})^{(f_i^\ep)^{-1}}\in W^{1,p}(\Om_1(B_{r^1_\delta}(x_i)\otimes\frak{su}(2))
\]
We choose $12/7<p<2$ so that $A^{\xi^\ep}\wedge A^{\xi^\ep}\in L^2(\Om_2(\B^3_1(0))\otimes\frak{su}(2))$ and we deduce $dA^{\xi^\ep}\in L^2(\Om_2(\B^3_1(0))\otimes\frak{su}(2))$. Let $\chi\in C^\infty_c(\R^3)$ such that $\int_{\R^3}\chi(x)\ dx^3=1$. Denote $\chi_t:=t^{-n}\,\chi_t$. There holds
\[
\chi_t\star A^{\xi^\ep}:=\sum_{i=1}^3\chi_t\star A_i^{\xi^\ep}\ dx_i\quad\longrightarrow\quad A^{\xi^\ep}\quad\mbox{ in }W^{1,p}(\Om_1(\B^3_1(0))\otimes\frak{su}(2))
\] 
and 
\[
d\lf(\chi_t\star A^{\xi^\ep}\rg)\quad\longrightarrow\quad dA^{\xi^\ep}\quad\mbox{ in } L^2(\Om_2(\B^3_1(0))\otimes\frak{su}(2))
\]
Thus
\be
\label{appcur}
F_{\chi_t\star A^{\xi^\ep}}\quad\longrightarrow \quad F_{A^{\xi^\ep}}=(\xi^\ep)^{-1}\,F_A\, \xi^\ep\quad\mbox{ in } L^2(\Om_2(\B^3_1(0))\otimes\frak{su}(2))
\ee
Next, since $\xi^\ep\in W^{1,2}(\B^3_1(0), SU(2))$ and since $\pi_2(SU(2))=0$ using the main result of \cite{Bet} there exists $\xi_s\in C^{\infty}(\B^3_1(0), SU(2))$ such that
\[
\xi_s\quad\longrightarrow\quad \xi^\ep\mbox{ in } W^{1,2}(\B^3_1(0), SU(2))\qquad\mbox{ as }s\rightarrow 0\ .
\]
We consider then
\[
A_{t,s}:=\lf(\chi_t\star A^{\xi^\ep}\rg)^{\xi_s^{-1}}\quad .
\]
Using dominated convergence we have
\be
\label{appcur-2}
F_{A_{t,s}}\quad\longrightarrow \quad \xi^{\ep}\, F_{\chi_t\star A^{\xi^\ep}}\,(\xi^\ep)^{-1}\quad\mbox{ in } L^2(\Om_2(\B^3_1(0))\otimes\frak{su}(2))\quad\mbox{ as }s\rightarrow 0\ .
\ee
We have also
\be
\label{appcur-3}
A_{t,s}\quad\longrightarrow\quad \lf(\chi_t\star A^{\xi^\ep}\rg)^{(\xi^\ep)^{-1}}\quad\mbox{ in } L^2(\Om_1(\B^3_1(0))\otimes\frak{su}(2))\quad\mbox{ as }s\rightarrow 0\ .
\ee
Using a diagonal argument we then construct $A_k\in C^\infty(\Om_1(\B^3_1(0))\otimes\frak{su}(2))$ satisfying 
\[
\lf\{
\begin{array}{l}
\ds A_k\ \longrightarrow\ A\qquad\mbox{ strongly in }L^2(\Om^1(\B^3)\otimes \frak{su}(2))\\[5mm]
\ds F_{A_k}\ \longrightarrow\ F_A\qquad\mbox{ strongly in }L^2(\Om^2(\B^3)\otimes \frak{su}(2))
\end{array}
\rg.
\]
Regarding $u$ we have that $d_Au\in L^2$ together with $A\in L^2$ is implying $du\in L^{3/2}$. We have then  $u\star\chi_t\ \longrightarrow\ u$ in $W^{1,3/2}\cap L^6$ and this concludes the proof of proposition~\ref{lm-dens}.\hfill $\Box$

\subsection{ Existence of a minimizer}
We have the following proposition
\begin{Prop}
\label{pr-II.1}
Let $\phi\in L^\infty\cap H^{1/2}(\B^3,\frak{su}(2))$ and $(\ep,\la,\mu)\in ({\R}^\ast_+)^3$ then there exists a minimizer of $CYMH^{\la,\mu}_{\ep}(u,A)$ in ${\mathcal E}$ among maps $u$ such that
\[
u=\phi\qquad\mbox{ on }\ \p \B^3
\]
\hfill $\Box$
\end{Prop}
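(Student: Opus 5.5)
We argue by the direct method of the calculus of variations. The admissible class is nonempty and has finite energy: take $A\equiv0$ and $u$ the harmonic extension of $\phi$. Since $\phi\in L^\infty$, the maximum principle gives $u\in L^\infty$, and $\phi\in H^{1/2}$ gives $u\in W^{1,2}$. All terms of $CYMH$, including $(1-|u|^2)^2$, are then finite. Let $(u_k,A_k)\in{\mathcal E}$ be a minimizing sequence with $u_k=\phi$ on $\p\B^3$.

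The energy bound gives uniform control on $\|A_k\|_{L^2}$, $\|d_{A_k}u_k\|_{L^2}$, $\|F_{A_k}\|_{L^2}$ and $\|1-|u_k|^2\|_{L^2}$. By Kato's inequality $|\nabla|u_k||\le|\nabla_{A_k}u_k|$, so $|u_k|$ is bounded in $W^{1,2}$ and hence $u_k$ is bounded in $L^6$. Writing $du_k=d_{A_k}u_k-[A_k,u_k]$ and using Hölder ($L^2\cdot L^6\subset L^{3/2}$), we get $u_k$ bounded in $W^{1,3/2}$. Extracting subsequences, $A_k\rightharpoonup A$ weakly in $L^2$, $u_k\rightharpoonup u$ weakly in $W^{1,3/2}$, and $u_k\to u$ strongly in $L^q$ for $q<3$ and a.e. The trace is preserved, so $u=\phi$ on $\p\B^3$.

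The main obstacle is passing to the limit in the nonlinear terms $[A_k,u_k]$ and $A_k\wedge A_k$, because $A_k$ converges only weakly in $L^2$ and this alone does not suffice. We use the $L^2$ bound on the curvature through Uhlenbeck's theorem~\ref{th-Uhl}, but only on small balls, and we never pass to the limit in a fixed gauge. Fix $p=2$ and cover $\ov\B^3$ by balls $\B_r(x_i)$ on which $\|F_{A_k}\|_{L^{3/2}}<\delta_2$ uniformly in $k$; this is possible because $\|F\|_{L^{3/2}(\B_r)}\le Cr^{1/2}\|F\|_{L^2}$. On each ball we obtain gauges $g_{k,i}$ with $A_k^{g_{k,i}}$ bounded in $W^{1,2}$, hence strongly precompact in $L^q$ for $q<6$. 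The gauges themselves satisfy $dg_{k,i}=g_{k,i}A_k^{g_{k,i}}-A_kg_{k,i}$, so they are bounded in $W^{1,2}$ and converge strongly in every $L^q$, $q<\infty$. In each local gauge, $A_k^{g}\to A^{g}_{loc}$ strongly in $L^q$ with $q<6$ and $F_{A^g_k}\rightharpoonup F_{A^g_{loc}}$ weakly in $L^2$. Moreover $u_k^g$ converges strongly in $L^q$ for $q<6$: it is bounded in $W^{1,3/2}$ and $L^6$, and its covariant derivative is bounded in $L^2$. Hence $d_{A^g_k}u^g_k\rightharpoonup d_{A^g_{loc}}u^g$ in ${\mathcal D}'$ and weakly in $L^2$. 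Since all these objects are gauge covariant, we undo the limiting gauge $g_i=\lim g_{k,i}$. One then checks that $A^{g_i^{-1}}_{loc}=A$ on each ball: $g_{k,i}^{-1}dg_{k,i}$ converges to $g_i^{-1}dg_i$ in ${\mathcal D}'$ because a strong $L^q$ factor multiplies a weak $L^2$ factor, and $g_{k,i}^{-1}A_kg_{k,i}\rightharpoonup g_i^{-1}Ag_i$ for the same reason. Consequently $F_{A_k}\rightharpoonup F_A$, $d_{A_k}u_k\rightharpoonup d_Au$ weakly in $L^2$, and $A\in\mathfrak a^2_{\B^3}$.

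Lower semicontinuity of the norms under weak convergence, combined with Fatou's lemma for the potential term (using a.e. convergence of $u_k$), gives $CYMH(u,A)\le\liminf CYMH(u_k,A_k)$. Thus $(u,A)\in{\mathcal E}$ is a minimizer with $u=\phi$ on $\p\B^3$. If the minimizing sequence is merely in ${\mathcal E}$ rather than smooth, Proposition~\ref{lm-dens} lets us assume it is smooth whenever needed, for instance to justify the gauge computations.
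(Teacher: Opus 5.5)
Your proposal is correct and follows essentially the same route as the paper: the direct method, weak $L^2$ compactness of $A_k$, $d_{A_k}u_k$ and $F_{A_k}$, Uhlenbeck Coulomb gauges on small balls (with $g_k$ bounded in $W^{1,2}$) to identify the weak limit of $F_{A_k}$ as $F_A$ and to obtain $A\in\mathfrak{a}^2_{\B^3}$, and then lower semicontinuity. The differences are minor. You bound $u_k$ in $W^{1,3/2}\cap L^6$ via Kato's inequality, whereas the paper bounds it in $W^{1,4/3}\cap L^4$ via the potential term. You also route $d_{A_k}u_k$ through the local gauges. This detour is unnecessary: contrary to your remark that weak convergence of $A_k$ does not suffice there, the paper gets $[A_k,u_k]\rightharpoonup[A,u]$ directly, since $A_k$ converges weakly in $L^2$ and $u_k$ converges strongly in $L^q$ for some $q\geq 2$.
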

\noindent{\bf Proof of Proposition~\ref{pr-II.1}.} We denote by ${\mathcal E}_\phi$ the subspace of $(u,A)\in {\mathcal E}$ such that $u=\phi$ on $\p \B^3$. Let $(u^k,A^k)$ be a minimizing sequence. Modulo extraction of a subsequence one can assume that the following convergences hold weakly in $L^2(\B^3)$
\[
\lf\{
\begin{array}{l}
\ds A_k\ \rightharpoonup \ A_\infty\\[5mm]
\ds d_{A_k}u_k\ \rightharpoonup\ B_\infty\\[5mm]
\ds F_{A_k}\ \rightharpoonup\ F_\infty
\end{array}
\rg.
\]
Observe that, since $u_k$ is uniformly bounded in $L^4(\B^3)$ there holds
\[
\limsup_{k\rightarrow +\infty}\|du_k\|_{L^{4/3}(\B^3)}\le \limsup \|d_{A_k}u_k\|_{L^{4/3}(\B^3)}+\|A_k\|_{L^{2}(\B^3)}\, \|u_k\|_{L^4(\B^3)}<+\infty
\]
Hence, modulo extraction of a subsequence, we have
\[
u_k\ \rightharpoonup\ u_\infty\ \mbox{ weakly in }W^{1,4/3}(\B^3)\ .
\] 
Because of the continuous embedding $W^{1,4/3}(\B^3)\ \hookrightarrow\ W^{1/4,4/3}(\p B^3)$ there holds
\be
\label{II.6}
u_\infty=\phi\quad\mbox{ on }\p B^3\ .
\ee
Using Rellich Kondrachov we have that
\[
u_k\ \longrightarrow\ u_\infty \qquad\mbox{ strongly in }L^p(\B^3)\qquad 1\le p<12/5\ .
\]
Using the fact that $u_k$ is uniformly bounded in $L^4(\B^3)$ we deduce
\[
u_k\ \longrightarrow\ u_\infty \qquad\mbox{ strongly in }L^p(\B^3)\qquad 1\le p<4\ .
\]
Hence, ``weak times strong'' convergences passes to the limit and we get
\[
[A_k,u_k]\ \rightharpoonup\ [A_\infty,u_\infty]\quad\mbox{ in }{\mathcal D}'(\B^3)
\]
Since $[A_k,u_k]$ is uniformly bounded in $L^{4/3}(\B^3)$ the previous convergence is a weak convergence in $L^{4/3}(\B^3)$. We deduce
\[
d_{A_k}u_k\ \rightharpoonup\ B_\infty=d_{A_\infty}u_\infty\qquad\mbox{ weakly in }L^2(\B^3)\ .
\]
Observe that for any $x_0\in \B^3$ and $r>0$ such that $\B_r(x_0)\subset \B^3$
\[
\int_{\B_r(x_0)}|F_{A_k}|^{3/2}\ dx^3\le C\ r^{3/4}\, \lf(\int_{B_r(x_0)} |F_{A_k}|^{2}\rg)^{3/4}
\]
Hence there exists $r_\delta>0$ such that
\[
\sup_{k\in {\N}}\sup_{x_0\in \B^3, r<r_\delta}\|F_{A_k}\|_{L^{3/2}(\B_r(x_0))}\le \delta
\]
For any $x_0\in \B^3$ we consider $r<r_\delta$ such that $\B_r(x_0)\subset \B^3$ and we apply Uhlenbeck Coulomb Gauge extraction theorem~\ref{th-Uhl} and we obtain the existence of a gauge change $g_k\in W^{1,2}(\B_r(x_0),SU(2))$ such that
\[
\limsup_{k\rightarrow+\infty}\|A_k^g\|_{W^{1,2}(\B_r(x_0))}<+\infty
\]
This implies in particular
\[
\limsup_{k\rightarrow +\infty}\|dg_k\|_{L^2(\B_r(x_0))}\le \limsup_{k\rightarrow+\infty}\|A_k^g\|_{L^2(\B_r(x_0))}+\|A_k\|_{L^2(\B_r(x_0))}<+\infty\ .
\]
Thus, modulo extraction of a subsequence (that depends on $x_0$ obviously) there holds
\[
g_k\rightharpoonup g_\infty\qquad\mbox{ weakly in }W^{1,2}(\B_r(x_0))\ ,
\]
and, thanks to Rellich Konderachov we have that $g_\infty\in SU(2)$ almost everywhere and $g_\infty\in W^{1,2}(\B_r(x_0),SU(2))$. We also have
\[
A^{g_k}_k=g_k^{-1}\,A_k\, g_k+g_k^{-1}\,dg_k\ \rightharpoonup\ g_\infty^{-1}\,A_\infty\, g_\infty+g_\infty^{-1}\,dg_\infty=A_\infty^{g_\infty}\qquad\mbox{ weakly in }W^{1,2}(\B_r(x_0))
\]
We first deduce that $A_\infty$ is locally gauge equivalent to $W^{1,2}$ connection form . We have also
This implies that, since the embedding $W^{1,2}(\B_r(x_0))\ \hookrightarrow\ L^4(\B_r(x_0))$ is compact in 3 dimensions, using again Rellich Kondrachov 
\[
F_{A_k}=g_k\, F_{A_k^{g_k}}\, g_k^{-1}=g_k\,\lf(dA_k^{g_k}+A^{g_k}_k\wedge  A^{g_k}_k \rg)\,g_k^{-1}\ \rightharpoonup\ g_\infty\,\lf(dA_\infty^{g_\infty}+A^{g_\infty}_\infty\wedge  A^{g_\infty}_\infty \rg)\,g_\infty^{-1}=F_{A_\infty}
\]
weakly in $L^2(\B_r(x_0))$. Thus we deduce  $F_{A_\infty}=F\in L^2(\Omega^2(\B^3)\otimes \frak{su}(2))$. Together with the fact that  $A_\infty$ is everywhere locally gauge equivalent to a $W^{1,2}$ connection form, we, using also (\ref{II.6}) we  deduce that
\[
(u_\infty,A_\infty)\in{\mathcal E}_\phi
\]
Observe that, using Rellich Kondrachov we have $u_k\ \rightarrow \ u_\infty$ strongly in $L^p(\B^3)$.
Using the lower-semicontinuity of the norms we deduce
\[
CYMH^{\la,\mu}_\ep(u_\infty,A_\infty)\le \lim_{k\rightarrow +\infty}CYMH^{\la,\mu}_\ep(u_k,A_k)=\inf_{(u,A)\in{\mathcal E}_\phi}CYMH^{\la,\mu}_\ep(u,A)
\]
Hence $(u_\infty,A_\infty)$ is a minimiser of the Coulomb Yang-Mills Higgs functional among $u$ such that $u=\phi$ on $\p B^3$ and proposition~\ref{pr-II.1} is proved.\hfill $\Box$

\medskip

\subsection{Uniform Bound on the minimal value of $CYMH^{\la,\mu}_\ep(u,A)$}

\begin{Lm}
\label{lm-bd}
Let $\phi\in C^1(\p\B^3_1(0), {\S}^2)$ such that deg$(\phi)=0$. Then  for any $\la>0$, $0<\ep<1$ and $\mu\, \ep< M$ there exists  $C_{\phi,M}<+\infty$ such that
\be
\label{bd}
\inf_{(u,A)\in {\mathcal E}_\phi} CYMH^{\la,\mu}_\ep(u,A)\le C_{\phi,M}
\ee
\hfill $\Box$
\end{Lm}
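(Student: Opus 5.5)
The bound will come from one explicit competitor $(u,A)\in{\mathcal E}_\phi$ whose energy is controlled independently of $\ep$, as long as $\mu\ep<M$. The natural choice is $|u|\equiv 1$, which kills the potential term. We then take $A$ to be the ``harmonic map connection'' $A=-\frac14[u,du]$ suggested by Theorem~\ref{th-cons-comp}, which makes $d_Au$ vanish. The only term left to control is then $\mu\ep\int|F_A|^2\le M\int|F_A|^2$, and it suffices to find a smooth unit extension $u$ for which $F_{A}$ is in $L^2$.

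\textbf{Step 1: a smooth extension.} Since $\deg\phi=0$ and $\phi\in C^1$, $\phi$ is homotopic to a constant. We extend it to a map $u\in C^1(\ov{\B^3_1(0)},\S^2)$, for instance by gluing a $C^1$ homotopy to a constant on the annulus $1/2\le|x|\le 1$ and setting $u$ constant inside. The $C^1$ regularity of the homotopy can be arranged by smoothing, since $\phi$ is $C^1$. This $u$ depends only on $\phi$.

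\textbf{Step 2: the connection kills the covariant derivative.} Set $A:=-\frac14[u,du]$. Since $|u|=1$ we have $u\cdot du=0$, so $du$ is transverse to $u$. Using (\ref{LoTra}),
\[
[A,u]=-\tfrac14[[u,du],u]=\tfrac14[u,[u,du]]=-du^T=-du ,
\]
hence $d_Au=0$. It remains to check the sign convention for the bracket $[u,[u,\cdot]]$ with the normalisation $[\mathbf i,\mathbf j]=2\mathbf k$: one has $[u,[u,v]]=-4v^T$ for $|u|=1$, which is consistent with (\ref{LoTra}). This check is routine. Note also that $A$ is $C^0$ with $dA$ bounded, so $F_A=dA+\frac12[A\wedge A]$ is bounded, and $(u,A)\in{\mathcal E}_\phi$.

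\textbf{Step 3: energy estimate.} With these choices,
\[
CYMH^{\la,\mu}_\ep(u,A)=\frac12\int|A|^2+\frac{\mu\ep}{2}\int|F_A|^2\le \frac18\int|du|^2+\frac M2\int|F_A|^2=:C_{\phi,M},
\]
and this constant is independent of $\la$ and $\ep$. In fact $F_A$ can be computed: it is the longitudinal 2-form $-\frac12\, u^\ast\om_{\S^2}\, u$, up to a constant. Since we only need boundedness, the explicit formula is not required.

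\textbf{Main obstacle.} The only genuinely non-routine point is Step 1, namely the existence of a $C^1$ (or merely $W^{1,4}$, which is enough for $F_A\in L^2$) $\S^2$-valued extension of $\phi$. This is exactly where $\deg\phi=0$ is used. If the degree were nonzero, no continuous extension would exist. One would then need a point singularity, and near it $|F_A|^2\sim|x|^{-4}$ fails to be integrable, unless the singularity is smoothed out by a monopole at scale $\ep$, which costs energy of order $\mu$ and would not give a bound uniform in $\ep$.
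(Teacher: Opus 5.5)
Your proposal is correct and is essentially the paper's proof. It uses the same competitor: $u$ is a $C^1$ extension of $\phi$ (which exists because $\deg\phi=0$), and $A=-\frac14[u,du]$, so that $d_Au=0$, the potential term vanishes, $\frac12\int|A|^2=\frac18\int|du|^2$, and the curvature term is controlled using $\mu\ep<M$. The only difference is that the paper computes $F_A$ explicitly as a multiple of $[du\wedge du]$, whereas you only use that $F_A$ is bounded, which is all the lemma needs.
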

\noindent{\bf Proof of Lemma~\ref{lm-bd}.} Since $\phi$ has degree zero from $\p\B^3_1(0)$ into $\S^2$ it admits a $C^1$ extension $u$ from $\B^3_1(0)$ into $\S^2$. Let
\[
A:=-4^{-1}\,[u,du]\qquad\mbox{in }\B^3_1(0)\ .
\]
Observe that
\[
[A,u]=-4^{-1}\,[[u,du],u]=4^{-1}\,[u,[u,du]]=-du\ .
\]
Hence $d_Au=0$. Next we compute
\[
\begin{array}{l}
\ds dA=-\,2^{-1}\,d\lf(  (u^{\bf i}\, du^{\bf j}-u^{\bf j}\, du^{\bf i})\, {\bf k}+\,(u^{\bf j}\, du^{\bf k}-u^{\bf k}\, du^{\bf j})\, {\bf i}+\,(u^{\bf k}\, du^{\bf i}-u^{\bf i}\, du^{\bf k})\, {\bf j}  \rg)\\[5mm]
\ds=du^{\bf i}\wedge du^{\bf j}\, {\bf k}+du^{\bf j}\wedge du^{\bf k}\, {\bf i}+du^{\bf k}\wedge du^{\bf i}\, {\bf j} 
\end{array}
\]
and
\[
\begin{array}{l}
\ds 2^{-1}\,[A\wedge A]=2^{-2}\,(u^{\bf i}\, du^{\bf j}-u^{\bf j}\, du^{\bf i})\wedge(u^{\bf j}\, du^{\bf k}-u^{\bf k}\, du^{\bf j})\, [{\bf k},{\bf i}]\\[5mm]
\ds+2^{-2}\,(u^{\bf i}\, du^{\bf j}-u^{\bf j}\, du^{\bf i})\wedge(u^{\bf k}\, du^{\bf i}-u^{\bf i}\, du^{\bf k})\,[{\bf k},{\bf j}]\\[5mm]
\ds+2^{-2}\,((u^{\bf j}\, du^{\bf k}-u^{\bf k}\, du^{\bf j})\wedge(u^{\bf k}\, du^{\bf i}-u^{\bf i}\, du^{\bf k})\, [{\bf i},{\bf j}]\\[5mm]
\ds=2^{-1}\lf(u^{\bf i}\, du^{\bf j}\wedge u^{\bf j}\, du^{\bf k}+u^{\bf j}\, du^{\bf k}\wedge u^{\bf j}\, du^{\bf i}+u^{\bf j}\, du^{\bf i}\wedge u^{\bf k}\, du^{\bf j}\rg)\,{\bf j}\\[5mm]
\ds-2^{-1}\,\lf(  u^{\bf i}\, du^{\bf j}\wedge u^{\bf k}\, du^{\bf i}+ u^{\bf i}\, du^{\bf k}\wedge  u^{\bf i}\, du^{\bf j}+u^{\bf j}\, du^{\bf i}\wedge u^{\bf i}\, du^{\bf k}\rg)   {\bf i}\\[5mm]
\ds +2^{-1}\,\lf( u^{\bf j}\, du^{\bf k}\wedge u^{\bf k}\, du^{\bf i}+ u^{\bf k}\, du^{\bf i}\wedge u^{\bf k}\, du^{\bf j}+ u^{\bf k}\, du^{\bf j} \wedge u^{\bf i}\, du^{\bf k}\rg)\ {\bf k}
\end{array}
\]
Using $u^{\bf i}\, du^{\bf i}+u^{\bf j}\, du^{\bf j}+u^{\bf k}\, du^{\bf k}=0$ we have respectively
\[
\lf\{
\begin{array}{l}
\ds u^{\bf i}\, du^{\bf j}\wedge u^{\bf j}\, du^{\bf k}=-(u^{\bf i})^2\,du^{\bf i}\wedge du^{\bf k}\\[5mm]
\ds u^{\bf j}\, du^{\bf i}\wedge u^{\bf k}\, du^{\bf j}=-(u^{\bf k})^2\,du^{\bf i}\wedge du^{\bf k}\\[5mm]
\ds  u^{\bf i}\, du^{\bf j}\wedge u^{\bf k}\, du^{\bf i}=-(u^{\bf k})^2\,du^{\bf j}\wedge d u^{\bf k}\\[5mm]
\ds u^{\bf j}\, du^{\bf i}\wedge u^{\bf i}\, du^{\bf k}=-(u^{\bf j})^2\,du^{\bf j}\wedge d u^{\bf k}\\[5mm]
\ds u^{\bf j}\, du^{\bf k}\wedge u^{\bf k}\, du^{\bf i}=-(u^{\bf j})^2\,\,du^{\bf j}\wedge d u^{\bf i}\\[5mm]
\ds u^{\bf k}\, du^{\bf j} \wedge u^{\bf i}\, du^{\bf k}=-(u^{\bf i})^2\,du^{\bf j}\wedge d u^{\bf i}
\end{array}
\rg.
\]
Thus
\[
2^{-1}\,[A\wedge A]=2^{-1}\, du^{\bf k}\wedge du^{\bf i}\ \,{\bf j}+2^{-1}\,du^{\bf j}\wedge du^{\bf k}\ \,{\bf i}+2^{-1}\,du^{\bf i}\wedge du^{\bf j}\ \,{\bf k}
\]
This gives
\[
F_A=dA+2^{-1}\,[A\wedge A]=\frac{3}{2}\,\lf(du^{\bf i}\wedge du^{\bf j}\, {\bf k}+du^{\bf j}\wedge du^{\bf k}\, {\bf i}+du^{\bf k}\wedge du^{\bf i}\, {\bf j} \rg)
\]
Observe that
\[
\begin{array}{rl}
\ds [du\wedge du]&=[ \lf(du^{\bf i}\,{\bf i}+du^{\bf j}\,{\bf j}+du^{\bf k}\,{\bf k}\rg),\lf(du^{\bf i}\,{\bf i}+du^{\bf j}\,{\bf j}+du^{\bf k}\,{\bf k}\rg)]\\[5mm]
&\ds=4\, du^{\bf i}\wedge du^{\bf j}\ \,{\bf k}+4\,du^{\bf k}\wedge du^{\bf i}\ \,{\bf j}+4\, \,du^{\bf j}\wedge du^{\bf k}\ \,{\bf i}
\end{array}
\]
This finally gives
\be
\label{curv}
F_A=\frac{3}{8}\,[du\wedge du]
\ee
Hence for this choice of $(u,A)$ we have
\be
\label{fadeev}
CYMH^{\la,\mu}_\ep(u,A)=\frac{1}{8}\int_{\B^3}|du|^2+\frac{9}{16}\,\mu\,\ep\, |[du\wedge du]|^2\ dx^3\ .
\ee
This concludes the proof of the Lemma~\ref{lm-bd}.\hfill $\Box$

\begin{Rm} Observe that for this choice of $(u,A)$ there holds
\[
u\cdot F_A=\frac{3}{2}\,\lf(u^{\bf k}\,du^{\bf i}\wedge du^{\bf j}+u^{\bf i}\,du^{\bf j}\wedge du^{\bf k}+u^{\bf j}\,du^{\bf k}\wedge du^{\bf i}\,  \rg)=\frac{3}{2}\,u^\ast\om
\]
We have moreover
\[
\begin{array}{l}
[u,F_A]=0
\end{array}
\]
Observe  moreover that under these constraints the Lagrangian $CYMH^{\la,\mu}_\ep(u,A)$ given by (\ref{fadeev}) coincides with the one of the {\bf Fadeev Model} .
\hfill $\Box$
\end{Rm}

\subsection{The Coulomb Yang-Mills Higgs System}
Consider a variation $(u_t,A_t):=(u+tv,A+ta)$ where $v\in C^\infty_0(B^3,{\R}^3)$ and $a\in C^\infty_0(\Om^1(\B^3)\otimes \frak{su}(2))$ then we have
\[
\begin{array}{l}
\ds\lf.\frac{d}{dt}CYMH(u_t,A_t)\rg|_{t=0}=\int_{\B^3}\, A\cdot a+\frac{\mu}{\ep}d_Au\cdot\frac{d}{dt}(d_{A_t}u_t)+\mu\,\ep\,F_A\cdot \frac{d}{dt}(F_{A_t})\\[5mm]
\ds\qquad-\frac{\mu\,\la}{\ep^3}\int_{\B^3}\, v\cdot u\, (1-|u|^2)\ dx^3
\end{array}
\]
We have
\[
\frac{d}{dt}(d_{A_t}u_t)=dv+[A,v]+[a,u]\ 
\]
and
\[
\frac{d}{dt}(F_{A_t})(X,Y)=da(X,Y)+[a(X),A(Y)]+[A(X),a(Y)]
\]
If we denote
\[
[A\wedge a](X,Y):=[a(X),A(Y)]+[A(X),a(Y)]=[A(X),a(Y)]-[A(Y),a(X)]
\]
we then have
\[
\frac{d}{dt}(F_{A_t})=da+[A\wedge a]=:d_{A}a
\]
Thus
\[
\begin{array}{l}
\ds\lf.\frac{d}{dt}CYMH(u_t,A_t)\rg|_{t=0}=\int_{B^3}\, A\cdot a-\frac{\mu}{\ep} \mbox{div}(\nabla_Au)\cdot v+\frac{\mu}{\ep}<\nabla_Au,[A,v]>\ dx^3\\[5mm]
\ds+\int_{B^3}\,\frac{\mu}{\ep}<\nabla_Au,[a,u]>+\ep\,\mu\, F_A\cdot( d a+[A\wedge a])\ dx^3-\int_{B^3}\frac{\mu\,\lambda}{\ep^3}\, v\cdot u\, (1-|u|^2)\ dx^3
\end{array}
\]
We have
\[
<\nabla_Au,[A,v]>=-<[A\res \nabla_Au],v>\qquad\mbox{ where }\qquad [A\res \nabla_Au]:=\sum_{l=1}^3[A_l,\p_{x_l}u+[A_l\,u]]
\]
and 
\[
<d_Au,[a,u]>=<[u,d_Au], a>\qquad
\]
and
\[
\begin{array}{l}
\ds<F_A;[A\wedge a]>=\sum_{l<m} <F_{lm},[A_l,a_m]-[A_m,a_l]>\\[5mm]
\ds=\sum_{l<m} <[A_m,F_{lm}],a_l>-<[A_l,F_{lm}],a_m>=\sum_{l\ne m}<[A_m,F_{lm}],a_l>
\end{array}
\]
We denote
\[
[A\res F_A]:= \sum_{l\ne m}[A_m,F_{lm}]\ dx_l
\]
Finally we obtain
\[
\begin{array}{l}
\ds\lf.\frac{d}{dt}YMH(u_t,A_t)\rg|_{t=0}=\int_{B^3}\,A\cdot a-\frac{\mu}{\ep} \mbox{div}(\nabla_Au)\cdot v-\frac{\mu}{\ep}<[A,d_Au],v>\ dx^3\\[5mm]
\ds+\int_{B^3}\,\frac{\mu}{\ep} <[u,d_Au],a>  +\mu\,\ep\, (d^\ast F_A+[A\res F_A])\cdot a+\frac{\la\,\mu}{\ep^3}\, u\cdot v\,(1-|u|^2)\ dx^3
\end{array}
\]
Hence $(u,A)$ is a critical point of $CYMH$ if and only if 
\be
\label{EL}
\lf\{
\begin{array}{l}
\ds - \mbox{div}(\nabla_Au)-[A\res\nabla_Au]=\,\frac{\la}{\ep^2}\, u\,(1-|u|^2)\\[5mm]
\ds\ep d^\ast_A F_A+\frac{1}{\ep}[u,d_Au]+\frac{A}{\mu}=0
\end{array}
\rg.
\ee
written using exterior covariant differentiation the Coulomb Yang Mills Higgs System becomes
\be
\label{CYMHS}
\lf\{
\begin{array}{l}
\ds d^\ast_Ad_Au=\,\frac{\la}{\ep^2}\, u\,(1-|u|^2)\\[5mm]
\ds\ep \,d^\ast_AF_A+\frac{1}{\ep}[u,d_Au]+\frac{A}{\mu}=0
\end{array}
\rg.
\ee
We can rewrite this system using coordinates in the following form
\be
\label{CYMHS-co}
\lf\{
\begin{array}{l}
\ds -\sum_{l=1}^3(\nabla_A)_l(\nabla_A)_lu=\,\frac{\la}{\ep^2}\, u\,(1-|u|^2)\\[5mm]
\ds\forall m=1,2,3\qquad \mu^{-1}\,A_m=\ep \,\sum_{l=1}^3(\nabla_A)_lF_{lm}+\ep^{-1}\, [(\nabla_A)_mu,u]
\end{array}
\rg.
\ee
While considering critical points of $CYMH^{\la,\mu}_\ep$ under the constraints $u=\phi$ and $\p\B_1(0)$ the Euler Lagrange equation becomes
\be
\label{CYMHS-bdy}
\lf\{
\begin{array}{l}
\ds d^\ast_Ad_Au=\,\frac{\la}{\ep^2}\, u\,(1-|u|^2)\qquad\mbox{ in }\B_1(0)\\[5mm]
\ds\ep \,d^\ast_AF_A+\frac{1}{\ep}[u,d_Au]+\frac{A}{\mu}=0\qquad\mbox{ in }\B_1(0)\\[5mm]
\ds u=\phi\qquad\mbox{ on }\p \B_1(0)\\[5mm]
\ds\iota_{\p \B_1(0)}^\ast \ast F_A=0
\end{array}
\rg.
\ee
\subsection{$CYMH\Longrightarrow$ Coulomb}
The following proposition justifies the denomination ``Coulomb Yang Mill Higgs''.
\begin{Prop}
\label{pr-coul}
Let $(u,A)\in {\mathcal E}$ be a solution to the Coulomb Yang Mills Higgs System then the following condition holds
\be
\label{coul}
d^\ast A=0\qquad\mbox{ in }\B^3\ .
\ee
\hfill $\Box$
\end{Prop}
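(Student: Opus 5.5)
The plan is to apply the covariant co-differential $d_A^\ast$ to the second equation of (\ref{CYMHS}) and show that the two curvature/Higgs terms cancel. What survives is $d_A^\ast A$, and in this case it equals $d^\ast A$.

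First, $d_A^\ast d_A^\ast F_A=0$ for any connection. In coordinates, $d_A^\ast d_A^\ast F_A=\sum_{l,m}(\nabla_A)_m(\nabla_A)_lF_{lm}$. Antisymmetrising in $(l,m)$ and using (\ref{cov-ex}) gives
\[
\frac12\sum_{l,m}[F_{ml},F_{lm}]=0 .
\]
This is the adjoint counterpart of (\ref{dAsqu}).

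Second, compute $d_A^\ast[u,d_Au]$. By (\ref{com-cov}),
\[
\sum_m(\nabla_A)_m[u,(\nabla_A)_mu]=\sum_m\big([(\nabla_A)_mu,(\nabla_A)_mu]+[u,(\nabla_A)_m(\nabla_A)_mu]\big)=-[u,d_A^\ast d_Au].
\]
The first equation of (\ref{CYMHS}) says $d_A^\ast d_Au=\la\ep^{-2}u(1-|u|^2)$, which is parallel to $u$, so this bracket vanishes.

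Third, $d_A^\ast A=d^\ast A-\sum_l[A_l,A_l]=d^\ast A$. Applying $d_A^\ast$ to the second equation therefore yields $\mu^{-1}d^\ast A=0$ in the interior, which is (\ref{coul}).

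The main obstacle is justifying these manipulations in the weak class $\mathcal E$, since the formal computation uses third derivatives of $A$. Two routes are available.

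\emph{Weak route.} Test the second equation against $d_A\varphi$ with $\varphi\in C^\infty_0(\B^3,\frak{su}(2))$. Then:
\begin{itemize}
\item the term $\int F_A\cdot d_Ad_A\varphi=\int F_A\cdot[F_A,\varphi]$ vanishes pointwise by (\ref{bra-sca});
\item the term $\int [u,d_Au]\cdot d_A\varphi=\int\langle d_Au,[d_A\varphi,u]\rangle$ can be rewritten through the weak form of the first equation tested with $[\varphi,u]$, and it vanishes using $d_A[\varphi,u]=[d_A\varphi,u]+[\varphi,d_Au]$ and $\langle d_Au,[\varphi,d_Au]\rangle=0$.
\end{itemize}
The integrability of these test functions, for instance $[\varphi,u]\in W^{1,3/2}\cap L^6$ paired with $d_Au\in L^2$, needs to be checked. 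If it fails, approximate using Proposition~\ref{lm-dens}.

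\emph{Variational route.} Use the gauge-variation argument from the introduction. For a minimizer, or a critical point, the curve $g_t=\exp(t\varphi)$ leaves $YMH$ invariant, so the derivative of $\frac12\int|A^{g_t}|^2$ at $t=0$ must vanish. That derivative is $\int A\cdot d_A\varphi=\int A\cdot d\varphi$, using $\langle A,[A,\varphi]\rangle=0$. Hence $d^\ast A=0$ weakly. This is cleaner, and it is what I would write as the main argument.
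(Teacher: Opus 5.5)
Your core computation is exactly the paper's proof, which is also carried out only at this formal level. That computation applies $d_A^\ast$ to the second equation of (\ref{CYMHS}), kills $d_A^\ast d_A^\ast F_A$ with (\ref{cov-ex}), kills $d_A^\ast[u,d_Au]=[u,d_A^\ast d_Au]$ with the Higgs equation, and uses $d_A^\ast A=d^\ast A$.

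Where you genuinely depart from the paper is in making the gauge-orbit argument the main proof; the paper mentions it only heuristically in the introduction. For a minimizer this is a real gain. Since $g_t=\exp(t\varphi)$ is smooth and equal to $1$ near $\partial\B^3$, the pair $(u^{g_t},A^{g_t})$ stays in $\mathcal E_\phi$ and $YMH$ is pointwise invariant. Moreover $|A^{g_t}|=|A+dg_t\,g_t^{-1}|$ depends smoothly on $t$, so minimality gives $\int A\cdot d\varphi=0$ with no regularity whatsoever. This matters because the paper needs the proposition in Section~III before any regularity is available, and the pairs to which it is ultimately applied there are minimizers. By contrast, its own computation is only formal in $\mathcal E$: for instance $d_A^\ast d_A^\ast F_A$ contains $[A_m,[A_l,F_{lm}]]$, which need not be in $L^1$ when $A,F_A\in L^2$.

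For a general, non-minimizing weak solution, however, your phrase ``or a critical point'' is too quick. Criticality is only tested on smooth compactly supported $(v,a)$, while the tangent to the gauge orbit is $([u,\varphi],d_A\varphi)$. The two cross terms $\int\langle d_Au,d_A[u,\varphi]\rangle$ and $\int\langle d_Au,[d_A\varphi,u]\rangle$ each contain $[u,[A,\varphi]]\in L^{3/2}$ paired with $d_Au\in L^2$, so they need not be finite separately. Only their sum, $\int\langle d_Au,[d_Au,\varphi]\rangle=0$, is well defined. In that case the variational route therefore reduces to exactly the extension problem you left open in the weak route. Proposition~\ref{lm-dens} does not obviously close it, since the resulting error terms (e.g.\ $[u,[A-A_k,\varphi]]$) only tend to zero in $L^{3/2}$.
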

\noindent{\bf Proof of proposition~\ref{pr-coul}} We apply $d^\ast$ to the second equation of (\ref{CYMHS}). Observe that
\[
d^\ast_AA=-\sum_{l=1}^3\p_{x_l}A_l+[A_l,A_l]=-\sum_{l=1}^3\p_{x_l}A_l=d^\ast A\ .
\]This gives
\[
\mu^{-1}\,{d^\ast A}=-\ep\, d^\ast_Ad^\ast_AF_A-\frac{1}{\ep}d^\ast_A[u,d_Au]
\]
We have in one hand using {\it Jacobi identity} and the first equation in the CYMS system (\ref{CYMHS})
\[
\begin{array}{rl}
\ds-\frac{1}{\ep}d^\ast_A[u,d_Au]&\ds=\frac{1}{\ep}\sum_{l=1}^3\p_{x_l}[u,(\nabla_A)_lu]+[A_l,[u,(\nabla_A)_lu]]\\[5mm]
\ds&\ds=\frac{1}{\ep}\sum_{l=1}^3[\p_{x_l}u,(\nabla_A)_lu]+[u, \p_{x_l}((\nabla_A)_lu)]+[u,[A_l,(\nabla_A)_lu]]+[(\nabla_A)_lu,[u,A_l]]\\[5mm]
\ds&\ds=-\frac{1}{\ep}[u,d^\ast_Ad_Au]+\frac{1}{\ep}\sum_{l=1}^3[(\nabla_A)_lu,(\nabla_A)_lu]=0
\end{array}
\]
We have using (\ref{cov-ex})
\[
\begin{array}{l}
\ds d^\ast_Ad^\ast_AF_A=-\,d^\ast_A\lf(\sum_{l,m=1}^3(\nabla_A)_lF_{lm}\ dx_m\rg)\\[5mm]
\ds=\sum_{l,m=1}^3(\nabla_A)_m(\nabla_A)_l F_{lm}=\sum_{l<m}\lf((\nabla_A)_m(\nabla_A)_l -(\nabla_A)_l(\nabla_A)_m\rg)F_{lm}\\[5mm]
\ds=\sum_{l<m}[F_{ml},F_{lm}]=0
\end{array}
\]
Combining the previous identities we get (\ref{coul}) and proposition~\ref{pr-coul} is proved. \hfill $\Box$
\subsection{The Monotonicity Formula}
Recall that {\it stationary critical points} of a Lagrangian are critical points which are also critical for variations in the domain. This is the case in particular for minimizers. The following result holds
\begin{Lm}
\label{lm-mono}
Let $(u,A)$ be a stationary critical point of $CYMH^{\la,\mu}_\ep$ then for any $x_0\in \B^3$ and $r>0$ such that $\B_r(x_0)\subset\B^3$ there holds
\be
\label{III.1}
\begin{array}{l}
\ds\frac{d}{dr}\lf(   \frac{1}{2r}\int_{\B_r(x_0)}|A|^2+\mu\,\frac{|\nabla_Au|^2}{\ep}+\frac{3\mu\la}{2\ep^3}(1-|u|^2)^2\ dx^3\rg)\\[5mm]
\ds\quad+\frac{1}{2\,r^2}\int_{\B_r(x_0)}\,\ep\,\mu\,|F_A|^2\ dx^3 +\frac{1}{2\,r}\,\int_{\p \B_r(x_0)}\ep\,\mu\,{|F_A|^2}\ dx^3\\[5mm]
\ds=\frac{1}{r}\,\int_{\p \B_r(x_0)}|A_r|^2+\mu\frac{|(\nabla_Au)_r|^2}{\ep}+\frac{\mu\,\la}{2\ep^3}(1-|u|^2)^2+\mu\,\ep\,|F_A\res\p_r|^2\ dvol_{\p \B_r(x_0)}
\end{array}
\ee
where $(\nabla_Au)_r:=d_Au(\p_r)$ and $F_A\res\p_r$ is the contraction between the curvature and the radial vector-field $\p_r$. This contraction is given by
\[
F_A\res\p_r:=\sum_{l<m} F_{lm}\, \lf(\frac{(x-x_0)_l}{|x-x_0|}\ dx_m-\frac{(x-x_0)_m}{|x-x_0|}\ dx_l\rg)=\sum_{l,m=1}^3 F_{lm}\, \frac{(x-x_0)_l}{|x-x_0|}\ dx_m\ .
\]
\end{Lm}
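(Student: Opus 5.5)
The plan is to derive (\ref{III.1}) from stationarity with respect to inner (domain) variations, in the usual Pohozaev/stress–energy way, and then rearrange the resulting integral identity into a derivative in $r$. Fix $x_0$ and $r$, and let $\varphi_\delta\in C^\infty_c([0,r+\delta))$ be a cut-off with $\varphi_\delta\equiv1$ on $[0,r]$ and $\varphi_\delta\rightarrow \mathbf 1_{[0,r]}$. Consider the vector field $X(x):=\varphi_\delta(|x-x_0|)\,(x-x_0)$ and its flow $\Phi_t$. We deform the pair by pull-back:
\[
(u_t,A_t):=(u\circ\Phi_t,\ \Phi_t^\ast A).
\]
The key structural point is that pull-back commutes with the covariant operations, so that
\[
d_{A_t}u_t=\Phi_t^\ast(d_Au),\qquad F_{A_t}=\Phi_t^\ast F_A .
\]
Moreover $u_t=\phi$ on $\p\B^3$ as long as the support of $X$ stays inside the ball. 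Stationarity (which holds for minimizers) gives $\frac{d}{dt}CYMH^{\la,\mu}_\ep(u_t,A_t)|_{t=0}=0$. The non gauge invariant term $\frac12\int|A|^2$ creates no difficulty here, because $A$ is simply transported as a $1$-form.

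Next I compute the first variation term by term, using the standard formulas for the energy of a pulled-back form:
\[
\frac{d}{dt}\int|\Phi_t^\ast B|^2=\int|B|^2\,\mathrm{div}X-2\sum_{l,m}\langle B_l,B_m\rangle\,\p_lX_m
\]
for a $1$-form $B$, and
\[
\frac{d}{dt}\int|\Phi_t^\ast F|^2=\int|F|^2\,\mathrm{div}X-2\sum_{l,m,k}\langle F_{lk},F_{mk}\rangle\,\p_lX_m
\]
for a $2$-form $F$. For the scalar potential the variation is $\int W\,\mathrm{div}X$, where $W=\frac{\mu\la}{2\ep^3}(1-|u|^2)^2$. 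With $\rho=|x-x_0|$ and $\nu=(x-x_0)/\rho$ we have
\[
\p_lX_m=\varphi_\delta\,\delta_{lm}+\rho\,\varphi_\delta'\,\nu_l\nu_m,\qquad \mathrm{div}X=3\varphi_\delta+\rho\varphi_\delta'.
\]
The bulk $\varphi_\delta$-terms then carry the coefficient $3-2=1$ for the $1$-form densities $|A|^2$ and $\mu|d_Au|^2/\ep$, the coefficient $3-4=-1$ for $\mu\ep|F_A|^2$, and $3$ for $W$. The $\rho\varphi_\delta'$ terms produce the full densities minus twice their radial parts, namely $|A_r|^2$, $|(\nabla_Au)_r|^2$ and $|F_A\res\p_r|^2$.

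Letting $\delta\rightarrow0$ turns the $\varphi_\delta'$ terms into integrals over $\p\B_r(x_0)$ multiplied by $r$. This gives, for a.e. $r$ (and for every $r$ once we use continuity of both sides),
\[
\frac12\int_{\B_r}\Big(|A|^2+\mu\frac{|\nabla_Au|^2}{\ep}+\frac{3\mu\la}{2\ep^3}(1-|u|^2)^2-\mu\ep|F_A|^2\Big)
=\frac r2\int_{\p\B_r}\big(\text{all densities}\big)-r\int_{\p\B_r}\Big(|A_r|^2+\mu\frac{|(\nabla_Au)_r|^2}{\ep}+\mu\ep|F_A\res\p_r|^2\Big).
\]
Finally I divide by $r^2$ and use
\[
\frac{d}{dr}\Big(\frac1{2r}\int_{\B_r}f\Big)=-\frac1{2r^2}\int_{\B_r}f+\frac1{2r}\int_{\p\B_r}f,
\]
applied with $f=|A|^2+\mu|\nabla_Au|^2/\ep+\frac{3\mu\la}{2\ep^3}(1-|u|^2)^2$. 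I move the $F_A$ bulk and boundary terms to the left-hand side. The potential's boundary contribution then appears on the right with coefficient $\frac{\mu\la}{2\ep^3}$, because $\frac32-1=\frac12$ relative to the bulk coefficient. This should reproduce (\ref{III.1}) exactly, and I will check the constants carefully against the stated coefficients.

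The main obstacle is justifying the inner variation in the regularity class $\mathcal E$. I expect to handle it either by invoking the smoothness of minimizers from Section III, or by directly noting that $A\in L^2$, $d_Au\in L^2$, $F_A\in L^2$ and $W\in L^1$ make every term of the first variation well defined and continuous in $t$. A secondary bookkeeping issue is the normalization of $|F|^2$ (sum over $l<m$) versus the contraction $F_A\res\p_r$. This has to be kept consistent so that the radial part enters with exactly the factor $2$ claimed.
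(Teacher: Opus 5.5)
Your proof is correct and follows the paper's argument: an inner variation along $\varphi_\delta(|x-x_0|)(x-x_0)$, the term-by-term first variation, $\delta\rightarrow 0$, and division by $r^2$. Your intermediate identity coincides with the paper's (\ref{premon}). The three first-variation formulas you quote are the negatives of $\frac{d}{dt}$ for $(u\circ\Phi_t,\Phi_t^\ast A)$, but this is harmless since the sign is common to all terms and the total is set to zero.

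For the justification keep only your second option, for two reasons. Invoking the smoothness of minimizers from Section III would be circular, since Lemma~\ref{lm-smal} and Theorem~\ref{th-reg} are proved using (\ref{III.1}). The direct route is also cleaner than the paper's appeal to Proposition~\ref{lm-dens}: the change-of-variables form of the first variation already makes sense for $A,\ d_Au,\ F_A\in L^2$ and $(1-|u|^2)^2\in L^1$. It gives (\ref{III.1}) for a.e.\ $r$, not for every $r$ by continuity.
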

\begin{Rm}\label{rm-mon}
Observe that the formula (\ref{III.1}) is equivalent to the following monotonicity
\be
\label{trumon}
\begin{array}{l}
\ds\frac{d}{dr}\lf(   \frac{1}{2r}\int_{\B_r(x_0)}|A|^2+\mu\,\frac{|\nabla_Au|^2}{\ep}+\frac{3\la\mu}{2\ep^3}(1-|u|^2)^2+\lf(\frac{2r}{|x-x_0|}-1\rg)\mu\,\ep\,|F_A|^2\ dx^3 \rg)\\[5mm]
\ds=\frac{1}{r}\,\int_{\p \B_r(x_0)}|A_r|^2+\mu\frac{|(\nabla_Au)_r|^2}{\ep}+\frac{\la\,\mu}{2\ep^3}(1-|u|^2)^2+\,\ep\,\mu\,|F_A\res\p_r|^2\ dvol_{\p \B_r(x_0)}
\end{array}
\ee
\end{Rm}
\noindent{\bf Proof of Lemma~\ref{lm-mono}.} Since there is no possible confusion on the position of the various parameters, we present the proof for $\ep=\mu=\la=1$. Let $X\in C^\infty_0(\B^3,{\R}^3)$ and $\phi_t(x):=x+t\,X$. Assume first $(u,A)$ is smooth which shall give more flexibility in the computations. Then we will conclude using an approximation argument. We have respectively
\[
\frac{d}{dt}\lf(\phi_t^\ast A\rg)=\frac{d}{dt}\lf(\phi_t^\ast \lf(\sum_{l=1}^3 A_l\ dx_l\rg)\rg)=\sum_{k=1}^3\sum_{l=1}^3\frac{\p A_l}{\p x_k}\ X_k\ dx_l+\sum_{k=1}^3\sum_{l=1}^3A_l\frac{\p X_l}{\p x_k}\ dx_k
\]
moreover
\[
\begin{array}{l}
\ds\frac{d}{dt}\lf(\phi_t^\ast d_Au\rg)=\frac{d}{dt}\lf(\phi_t^\ast \lf(\sum_{l=1}^3\p_{x_l}u+[A_l,u]) dx_l\rg)\rg)\\[5mm]
\ds=\sum_{k=1}^3\sum_{l=1}^3\frac{\p}{\p x_k}\lf(\p_{x_l}u+[A_l,u]\rg)\ X_k\ dx_l+\sum_{k=1}^3\sum_{l=1}^3(\p_{x_l}u+[A_l,u]) \frac{\p X_l}{\p x_k}\ dx_k
\end{array}
\]
and
\[
\begin{array}{l}
\ds\frac{d}{dt}\lf(\phi_t^\ast F_A\rg)=\frac{d}{dt}\lf(\phi_t^\ast\sum_{l<m} (\p_{x_l}A_m-\p_{x_m}A_l+[A_l,A_m])\ dx_l\wedge dx_m\rg)\\[5mm]
\ds=\sum_{k=1}^3\sum_{l<m} \frac{\p}{\p x_k}(\p_{x_l}A_m-\p_{x_m}A_l+[A_l,A_m])\ X_k\ dx_l\wedge dx_m\\[5mm]
\ds\quad+\sum_{k=1}^3\sum_{l<m} F_{lm}\, \lf(\frac{\p X_l}{\p x_k}\ dx_k\wedge dx_m+\frac{\p X_m}{\p x_k}\ dx_l\wedge dx_k\rg)
\end{array}
\]
This gives in particular respectively
\[
\begin{array}{l}
\ds\lf.\frac{1}{2}\frac{d}{dt}\int_{B^3}|\phi_t^\ast A|^2\ dx^3\rg|_{t=0}=\frac{1}{2}\int_{\B^3} \sum_{k=1}^3\frac{\p}{\p x_k}|A|^2\, X_k\ dx^3+\int_{B^3}\sum_{k=1}^3\sum_{l=1}^3 \frac{\p X_l}{\p x_k}\ A_l\cdot A_k\ dx^3

\end{array}
\]
moreover
\[
\begin{array}{l}
\ds\lf.\frac{1}{2}\frac{d}{dt}\int_{\B^3}|\phi_t^\ast d_Au|^2\ dx^3\rg|_{t=0}\\[5mm]
\ds=\frac{1}{2}\int_{\B^3} \sum_{k=1}^3\frac{\p}{\p x_k}|\nabla_A u|^2\, X_k\ dx^3+\int_{\B^3}\sum_{k=1}^3\sum_{l=1}^3 \frac{\p X_l}{\p x_k}\ (\p_{x_l}u+[A_l,u])\cdot(\p_{x_k}u+[A_k,u])\ dx^3

\end{array}
\]
and
\[
\begin{array}{l}
\ds\lf.\frac{1}{2}\frac{d}{dt}\int_{\B^3}|\phi_t^\ast F_A|^2\ dx^3\rg|_{t=0}=\int_{B^3}\sum_{k=1}^3\sum_{l<m} \frac{\p F_{lm}}{\p x_k}\cdot F_{lm}\ dx^3+\int_{\B^3}\sum_{k=1}^3\sum_{l<m} F_{lm}\cdot F_{km}\ \frac{\p X_l}{\p x_k}\ dx^3\\[5mm]
\ds+\int_{\B^3}\sum_{k=1}^3\sum_{l<m} F_{lm}\cdot F_{lk}\ \frac{\p X_m}{\p x_k}\ dx^3\ .
\end{array}
\]
We have also
\[
\begin{array}{l}
\ds\lf.\frac{1}{4}\frac{d}{dt}\int_{B^3}\phi_t^\ast\lf((1-|u|^2)^2\rg)\ dx^3\rg|_{t=0}=\lf.\frac{1}{4}\frac{d}{dt}\int_{B^3}(1-|u|^2)^2\circ\phi_t\ dx^3\rg|_{t=0}\\[5mm]
\ds\quad=\frac{1}{4}\int_{B^3}\frac{\p(1-|u|^2)^2}{\p x_k}\,X_k\ dx^3

\end{array}
\]
Thus, denoting  $(u_t,A_t):=\phi_t^\ast (u,A)$ we have
\[
\begin{array}{l}
\ds \lf.\frac{d}{dt}CYMH(u_t,A_t)\rg|_{t=0}=\frac{1}{2}\int_{\B^3} \sum_{k=1}^3\frac{\p}{\p x_k}|A|^2\, X_k\ dx^3+\int_{\B^3}\sum_{k=1}^3\sum_{l=1}^3 \frac{\p X_l}{\p x_k}\ A_l\cdot A_k\ dx^3\\[5mm]
\ds+\frac{1}{2}\int_{\B^3} \sum_{k=1}^3\frac{\p}{\p x_k}|\nabla_A u|^2\, X_k\ dx^3+\int_{\B^3}\sum_{k=1}^3\sum_{l=1}^3 \frac{\p X_l}{\p x_k}\ (\p_{x_l}u+[A_l,u])\cdot(\p_{x_k}u+[A_k,u])\ dx^3\\[5mm]
\ds+\frac{1}{2}\int_{\B^3}\frac{\p}{\p x_k}|F_A|^2\ X_k\ dx^3+\int_{\B^3}\sum_{k=1}^3\sum_{l<m} F_{lm}\cdot F_{km}\, \frac{\p X_l}{\p x_k}+F_{lm}\cdot F_{lk}\, \frac{\p X_m}{\p x_k}\ \ dx^3\\[5mm]
\ds+\frac{1}{4}\int_{\B^3}\sum_{k=1}^3\frac{\p(1-|u|^2)^2}{\p x_k}\,X_k\ dx^3
\end{array}
\]
To simplify the notations we take $x_0=0$  and we choose $X^\delta= \sum_{k=1}^3 x_k \eta_\delta(|x|)$ where $\eta_\delta$ is a mollification of the characteristic function of $[-r,r]$. We have in particular as $\delta\rightarrow 0$
\[
\begin{array}{l}
\ds\frac{1}{2}\int_{\B^3} \sum_{k=1}^3\frac{\p}{\p x_k}|A|^2\, X^\delta_k\ dx^3+\frac{1}{2\ep}\int_{\B^3} \sum_{k=1}^3\frac{\p}{\p x_k}|\nabla_A u|^2\, X^\delta_k\ dx^3+\frac{1}{2}\int_{\B^3}\frac{\p}{\p x_k}|F_A|^2\ X^\delta_k\ dx^3\\[5mm]
\ds\rightarrow\frac{1}{2}\int_{\B_r}\sum_{k=1}^3x_k\,\lf(\frac{\p}{\p x_k}|A|^2+\frac{1}{\ep}\frac{\p}{\p x_k}|\nabla_A u|^2+\ep \, \frac{\p}{\p x_k}|F_A|^2\rg)
\end{array}
\]
We have
\[
\begin{array}{l}
\ds\int_{\B^3}\sum_{k=1}^3\sum_{l=1}^3 \frac{\p X^\delta_l}{\p x_k}\ A_l\cdot A_k\ dx^3=\int_{\B^3}\eta_\delta(|x|)\ |A|^2\ dx^3\\[5mm]
\ds+\int_{\B^3}\sum_{k=1}^3\sum_{l=1}^3 x_l \frac{x_k}{|x|}\,(\eta_\delta)'(|x|)\ A_l\cdot\ A_k\ dx^3\\[5mm]
\ds\longrightarrow \int_{\B_r}|A|^2\ dx^3-r\,\int_{\p \B_r}\lf|A_r\rg|^2\ dvol_{\p B_r}
\end{array}
\]
moreover
\[
\begin{array}{l}
\ds \int_{\B^3}\sum_{k=1}^3\sum_{l=1}^3 \frac{\p X_l}{\p x_k}\ (\p_{x_l}u+[A_l,u])\cdot(\p_{x_k}u+[A_k,u])\ dx^3=\int_{\B^3} \eta_\delta(|x|)\ |\nabla_A u|^2\ dx^3\\[5mm]
\ds+\int_{\B^3}\sum_{k=1}^3\sum_{l=1}^3 x_l \frac{x_k}{|x|}\, \,(\eta_\delta)'(|x|)\ (\p_{x_l}u+[A_l,u])\cdot(\p_{x_k}u+[A_k,u])\ dx^3\\[5mm]
\ds\longrightarrow \,\int_{\B_r}|\nabla_A u|^2\ dx^3-r\,\int_{\p \B_r}\lf|(\nabla_A u)_r\rg|^2\ dvol_{\p \B_r}\ .
\end{array}
\]
Finally
\[
\begin{array}{l}
\ds\int_{\B^3}\sum_{k=1}^3\sum_{l<m} F_{lm}\cdot F_{km}\, \frac{\p X_l}{\p x_k}+F_{lm}\cdot F_{lk}\, \frac{\p X_m}{\p x_k}\ \ dx^3\\[5mm]
\ds={2}\int_{\B^3}\eta_\delta(|x|)\ |F_A|^2\ dx^3+\int_{\B^3}\sum_{k=1}^3\sum_{l<m} F_{lm}\cdot F_{km}\,  x_l \frac{x_k}{|x|}\, \,\eta_\delta'(|x|)\ dx^3 \\[5mm]
\ds+\int_{\B^3}\sum_{k=1}^3\sum_{l<m} \,  F_{lm}\cdot F_{lk}\, x_m \frac{x_k}{|x|}\, \,\eta_\delta'(|x|)\ dx^3\\[5mm]
\ds={2}\int_{\B^3}\eta_\delta(|x|)\ |F_A|^2\ dx^3+\int_{\B^3}\sum_{k=1}^3\sum_{l<m} F_{lm}\cdot F_{km}\,  x_l \frac{x_k}{|x|}\, \,\eta_\delta'(|x|)\ dx^3\\[5mm]
\ds+\int_{\B^3}\sum_{k=1}^3\sum_{m<l} \,  F_{ml}\cdot F_{mk}\, x_l \frac{x_k}{|x|}\, \,\eta_\delta'(|x|)\ dx^3\\[5mm]
\ds={2}\int_{\B^3}\eta_\delta(|x|)\ |F_A|^2\ dx^3+r\,\int_{\B^3}\eta_\delta'(|x|)\ \sum_{m=1}^3 \lf|\sum_{l=1}^3F_{ml}\,\frac{x_l}{|x|}\rg|^2\ dx^3
\end{array}
\]
Thus
\[
\begin{array}{l}
\ds\int_{\B^3}\sum_{k=1}^3\sum_{l<m} F_{lm}\cdot F_{km}\, \frac{\p X_l}{\p x_k}+F_{lm}\cdot F_{lk}\, \frac{\p X_m}{\p x_k}\ \ dx^3\\[5mm]
\ds\longrightarrow \ {2}\int_{\B_r}|F_A|^2\ dx^3-\, r\,\int_{\B_r}\lf|F_A\res\p_r\rg|^2\ dx^3
\end{array}
\]
We have
\[
\begin{array}{l}
\ds\frac{1}{4}\int_{\B^3}\sum_{k=1}^3\frac{\p(1-|u|^2)^2}{\p x_k}\,X_k\ dx^3=\frac{1}{4}\int_{\B^3}\sum_{k=1}^3\frac{\p(1-|u|^2)^2}{\p x_k}\,  x_k \eta_\delta(|x|)\ dx^3\\[5mm]
\ds=-\frac{3}{4}\int_{\B^3}  \eta_\delta(|x|)\ (1-|u|^2)^2\ dx^3 -\frac{1}{4}\int_{\B^3}\sum_{k=1}^3\frac{x_k^2}{|x|} \,(\eta_\delta)'(|x|)\,(1-|u|^2)^2\ dx^3
\end{array}
\]
Finally we obtain for any smooth pair $(u,A)$ 
\[
\begin{array}{l}
\ds \lf.\frac{d}{dt}CYMH(u_t,A_t)\rg|_{t=0}=\frac{1}{2}\int_{\B_r}\sum_{k=1}^3x_k\,\lf(\frac{\p}{\p x_k}|A|^2+\frac{\p}{\p x_k}|\nabla_A u|^2+\frac{\p}{\p x_k}|F_A|^2\rg)\\[5mm]
\ds+\int_{\B_r}|A|^2\ dx^3-r\,\int_{\p \B_r}\lf|A_r\rg|^2\ dvol_{\p \B_r}+\int_{\B_r}|\nabla_A u|^2\ dx^3-r\,\int_{\p \B_r}\lf|(\nabla_A u)_r\rg|^2\ dvol_{\p \B_r}\\[5mm]
\ds+{2}\int_{\B_r}|F_A|^2\ dx^3-r\,  \int_{\B_r}\lf|F_A\res\p_r\rg|^2\ dvol_{\p B_r}\\[5mm]
\ds-\frac{3}{4}\int_{\B_r}   (1-|u|^2)^2\ dx^3         +\frac{r}{4}\int_{\p\B_r}(1-|u|^2)^2\ dvol_{\p B_r}
\end{array}
\]
Let $B\in L^2(\Om^1(\B^3)\otimes\frak{su}(2))$ observe that for the sequence of diffeomorphisms above, the map 
\[
\begin{array}{l}
\ds t\rightarrow \ \int_{\B^3}|\phi_t^\ast B|^2\ dx^3=\sum_{k=1}^3\int_{\B^3}\lf|\sum_{l=1}^3B_l(\phi_t(x))\ \p_{x_k}\phi_t^l\rg|^2\ dx^3 \\[5mm]
\ds=\sum_{k=1}^3\int_{\B^3}\lf|\sum_{l=1}^3B_l(y)\ \p_{x_k}\phi_t^l\circ\phi_t^{-1}\rg|^2\ \mbox{det}(\nabla \phi_t^{-1})\ dy^3 
\end{array}
\]
is obviously $C^1$ and the derivative at 0 is a continuous function of $B$ in $L^2(\Om^1(\B^3)\otimes\frak{su}(2))$. The same hold for any choice of $F\in  L^2(\Om^1(\B^3)\otimes\frak{su}(2))$, the map
\[
\begin{array}{l}
\ds t\longrightarrow \ \int_{\B^3}|\phi_t^\ast F|^2\ dx^3
\end{array}
\]
is obviously $C^1$ and the derivative at 0 is a continuous function of $F$ in $L^2(\Om^2(\B^3)\otimes\frak{su}(2))$. Hence using the density result lemma~\ref{lm-dens}.

For an arbitrary stationary point there holds for $(u_t,A_t):=\phi_t^\ast (u,A)$ integration by part is finally giving
\be
\label{premon}
\begin{array}{l}
\ds-\frac{1}{2}\int_{\B_r}|A|^2+|\nabla_A u|^2\ dx^3+\frac{r}{2}\int_{\p \B_r}|A|^2+|\nabla_A u|^2\ dvol_{\p \B_r}\\[5mm]
\ds+\frac{1}{2}\int_{\B_r}|F_A|^2\ dx^3+\frac{r}{2}\int_{\p \B_r}|F_A|^2\ dvol_{\p \B_r}\\[5mm]
\ds-\frac{3}{4}\int_{\B_r}   (1-|u|^2)^2\ dx^3         +r\,\frac{1}{4}\int_{\p\B_r}(1-|u|^2)^2\ dvol_{\p \B_r}\\[5mm]
\ds=r\,\int_{\p \B_r}\lf|A_r\rg|^2\ dvol_{\p B_r}+r\,\int_{\p \B_r}\lf|(\nabla_A u)_r\rg|^2\ dvol_{\p \B_r}+r\, \int_{\B_r}\lf|F_A\res\p_r\rg|^2\ dvol_{\p \B_r}
\end{array}
\ee
This gives (\ref{III.1}) and lemma~\ref{lm-mono} is proved.\hfill $\Box$.
\subsection{Bochner -Weitzenb\"ock Identities}
\subsubsection{Bochner -Weitzenb\"ock for the complete CYMH energy densitiy}
\begin{Prop}{\bf[Bochner -Weitzenb\"ock for CYMH]}
\label{pr-bw}
Let $(u,A)$ be a smooth critical point of 
\[
CYMH(u,A):=\frac{1}{2}\int_{B^3}\lf(|A|^2\ dx^3+\frac{\mu}{\ep}|d_A u|^2+\mu\,\ep\,|F_A|^2+\frac{\la\,\mu}{2\ep^3}(1-|u|^2)^2\rg)\ dx^3
\]
then the following identities hold
\be
\label{bw-1}
\begin{array}{l}
\ds\,\Delta \frac{|\nabla_A u|^2}{2\,\ep}=\frac{|(\nabla_A)(\nabla_A u)|^2}{\ep}+\frac{1}{\ep^3} |[u,(\nabla_A)u]|^2\\[5mm]
\ds+ \frac{2}{\ep}\sum_{l,m=1}^3[(\nabla_A)_lu,(\nabla_A)_mu]\cdot F_{lm}+\frac{1}{\mu\,\ep^2}\,\sum_{m=1}^3[u,(\nabla_A)_mu]\cdot A_m\\[5mm]
\ds-\,\frac{\la}{\ep^3}\, |\nabla_A u|^2\,(1-|u|^2)+\frac{\,\la}{2\ep^3}\,|\nabla|u|^2|^2\ ,
\end{array}
\ee
and
\be
\label{bw-2}
\begin{array}{l}
\ds2^{-1}\,\Delta (\ep|F_A|^2)=\ep\,|(\nabla_A) F|^2+\frac{1}{\ep}|[F,u]|^2+\frac{2}{\mu}\,|F_A|^2\\[5mm]
\ds+\frac{2}{\ep}\,\sum_{l,m=1}^3[(\nabla_A)_lu,(\nabla_A)_mu]\cdot F_{lm}+2\,\ep\,\sum_{l,m,n=1}^3[F_{nl},F_{nm}]\cdot F_{lm}\\[5mm]
\ds+\frac{1}{\mu}\sum_{l,m=1}^3[A_l,A_m]\cdot F_{lm}\ ,
\end{array}
\ee
\hfill $\Box$
\end{Prop}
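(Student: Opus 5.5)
We prove both identities by direct Bochner--Weitzenb\"ock computations. We combine the commutation formula (\ref{cov-ex}), the Leibnitz rules (\ref{leib-sc}) and (\ref{com-cov}), the Bianchi identity (\ref{bian}), and the Euler--Lagrange system (\ref{CYMHS-co}), written in coordinates.

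For (\ref{bw-1}) we start from (\ref{leib-sc}) applied twice:
\[
\tfrac12\Delta|\nabla_Au|^2=\sum_{l,m}|(\nabla_A)_l(\nabla_A)_mu|^2+\sum_{l,m}(\nabla_A)_mu\cdot(\nabla_A)_l(\nabla_A)_l(\nabla_A)_mu .
\]
In the last term we commute the derivatives using (\ref{cov-ex}) twice. First,
\[
(\nabla_A)_l(\nabla_A)_m u=(\nabla_A)_m(\nabla_A)_l u+[F_{lm},u].
\]
Then
\[
(\nabla_A)_l(\nabla_A)_m(\nabla_A)_l u=(\nabla_A)_m(\nabla_A)_l(\nabla_A)_l u+[F_{lm},(\nabla_A)_lu].
\]
This gives three contributions:
\begin{itemize}
\item[(a)] $(\nabla_A)_m$ applied to the first equation of (\ref{CYMHS-co}). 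Here $-\sum_l(\nabla_A)_l(\nabla_A)_lu=\la\ep^{-2}u(1-|u|^2)$, and differentiating produces the terms $-\frac{\la}{\ep^3}|\nabla_Au|^2(1-|u|^2)+\frac{\la}{2\ep^3}|\nabla|u|^2|^2$ after using $u\cdot(\nabla_A)_mu=\frac12\p_m|u|^2$.
\item[(b)] Terms of the form $[F_{lm},(\nabla_A)_lu]\cdot(\nabla_A)_mu$. By (\ref{bra-sca}) these become $F_{lm}\cdot[(\nabla_A)_lu,(\nabla_A)_mu]$.
\item[(c)] The term $(\nabla_A)_l[F_{lm},u]$. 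Expanding it with (\ref{com-cov}) gives another bracket term of type (b), plus $[\sum_l(\nabla_A)_lF_{lm},u]$.
\end{itemize}
The divergence $\sum_l(\nabla_A)_lF_{lm}$ in (c) is replaced using the second equation of (\ref{CYMHS-co}):
\[
\ep\sum_l(\nabla_A)_lF_{lm}=\mu^{-1}A_m-\ep^{-1}[(\nabla_A)_mu,u].
\]
Pairing with $(\nabla_A)_mu$ and using (\ref{bra-sca}), the second piece gives $\ep^{-2}|[u,(\nabla_A)_mu]|^2$ and the first gives $\frac{1}{\mu\ep}[u,(\nabla_A)_mu]\cdot A_m$. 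Dividing by $\ep$ then yields (\ref{bw-1}). The factor $2$ in front of the $F\cdot[\nabla_Au,\nabla_Au]$ term comes from adding the (b) and (c) contributions.

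For (\ref{bw-2}) we argue in the same way: $\frac12\Delta|F|^2=|\nabla_AF|^2+\sum F_{lm}\cdot\sum_n(\nabla_A)_n(\nabla_A)_nF_{lm}$. The Bianchi identity (\ref{bian}) gives
\[
(\nabla_A)_nF_{lm}=(\nabla_A)_lF_{nm}+(\nabla_A)_mF_{ln},
\]
and we then commute $(\nabla_A)_n$ past $(\nabla_A)_l$ using (\ref{cov-ex}) applied to the $\frak{su}(2)$-valued components $F_{nm}$. This yields the cubic term $2\ep\sum[F_{nl},F_{nm}]\cdot F_{lm}$ (up to normalisation of the sum over ordered pairs), together with terms of the form $(\nabla_A)_l\big(\sum_n(\nabla_A)_nF_{nm}\big)$. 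In these we substitute the second equation of (\ref{CYMHS-co}):
\begin{itemize}
\item[(i)] The Higgs piece. We differentiate $-\ep^{-1}[(\nabla_A)_mu,u]$ with (\ref{com-cov}). The part $[(\nabla_A)_m u,(\nabla_A)_l u]$ produces, after pairing with $F_{lm}$, the term $\frac2\ep[(\nabla_A)_lu,(\nabla_A)_mu]\cdot F_{lm}$. The part $[(\nabla_A)_l(\nabla_A)_mu-(\nabla_A)_m(\nabla_A)_lu,\,u]=[[F_{lm},u],u]$ produces, by (\ref{bra-sca}), $\frac1\ep|[F,u]|^2$.
\item[(ii)] The mass piece. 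We differentiate $\mu^{-1}A_m$ and antisymmetrise in $l,m$. Using (\ref{curcovder}), $(\nabla_A)_lA_m-(\nabla_A)_mA_l=F_{lm}-[A_m,A_l]$. Pairing with $F_{lm}$ gives $\frac2\mu|F_A|^2+\frac1\mu[A_l,A_m]\cdot F_{lm}$.
\end{itemize}
The formula must also be scaled by $\ep$.

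The main obstacle is purely bookkeeping: keeping track of the signs of the terms $[A_l,A_m]\cdot F_{lm}$ and $[u,\nabla_Au]\cdot A$, and of the combinatorial factors coming from summing over all ordered pairs $(l,m)$ versus $l<m$ and the cyclic Bianchi sum. I would fix the convention $|F|^2=\sum_{l<m}|F_{lm}|^2$, work with full sums and antisymmetry, and check every coefficient on the flat case $A=0$. No analytic difficulty arises since $(u,A)$ is assumed smooth.
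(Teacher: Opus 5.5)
Your derivation of (\ref{bw-1}) is the paper's own computation: expand $\frac12\Delta|\nabla_Au|^2$, commute twice with (\ref{cov-ex}), and substitute both field equations. Your terms (a)--(c) are right.

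For (\ref{bw-2}) you take a genuinely different and much shorter route. The paper never cites (\ref{bian}) there. Instead it rewrites $F_{lm}=(\nabla_A)_lA_m-(\nabla_A)_mA_l+[A_m,A_l]$ via (\ref{curcovder}) and commutes covariant derivatives acting on the components of $A$ itself. Besides $(\nabla_A)_l(\nabla_A)_nF_{nm}-(\nabla_A)_m(\nabla_A)_nF_{nl}+2[F_{nl},F_{nm}]$, this generates non-covariant terms such as $[(\nabla_A)_nF_{nl},A_m]$, $[F_{nl},(\nabla_A)_nA_m]$ and $(\nabla_A)_n[A_n,[A_l,A_m]]$. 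The paper then substitutes the field equation a second time inside these terms and uses Jacobi to show two things: the $\ep^{-2}[[u,(\nabla_A)_lu],A_m]\cdot F_{lm}$ contributions cancel, and the coefficient of $[A_l,A_m]\cdot F_{lm}$ drops from $3/(\mu\ep)$ to $1/(\mu\ep)$. Your Bianchi identity $(\nabla_A)_nF_{lm}=(\nabla_A)_lF_{nm}+(\nabla_A)_mF_{ln}$, followed by one commutation, gives directly
\[
\sum_n(\nabla_A)_n(\nabla_A)_nF_{lm}=(\nabla_A)_lJ_m-(\nabla_A)_mJ_l+2\sum_n[F_{nl},F_{nm}],\qquad J_m:=\sum_n(\nabla_A)_nF_{nm}.
\]
So $A$ enters only through the mass term of the field equation, and none of those cancellations needs checking. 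Your (i) and (ii) then reproduce exactly the paper's final line. The paper's route avoids quoting $d_AF_A=0$, since it effectively rederives it, but at the price of a long and error-prone chain; yours is the standard Weitzenb\"ock argument.

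On the constants, which is the bookkeeping you flagged, there is a factor-two issue. Take the field equation (\ref{CYMHS-co}) as written. The pairing $\sum F_{lm}\cdot\sum_n(\nabla_A)_n(\nabla_A)_nF_{lm}$ runs over the same index set as $|F_A|^2$ on the left. Hence, whatever normalisation you pick, the mass piece contributes $\frac1\mu|F_A|^2$ to $\frac12\Delta(\ep|F_A|^2)$, not $\frac2\mu|F_A|^2$. The paper's own last line agrees: it has $\frac1{\mu\ep}|F_{lm}|^2$ before multiplying by $\ep$. In (ii) you obtain $\frac2\mu$ only by summing the pairing over all ordered pairs while measuring $|F_A|^2$ over $l<m$.

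Moreover, the normalisation that reproduces the remaining printed coefficients ($\ep|(\nabla_A)F|^2$, $\frac1\ep|[F,u]|^2$, and the explicit sums $\frac2\ep\sum_{l,m}$, $2\ep\sum_{l,m,n}$, $\frac1\mu\sum_{l,m}$) is the full sum $|F_A|^2=\sum_{l,m}|F_{lm}|^2$ used in the paper's proof. It is not the $l<m$ convention you propose to fix. So the printed $\frac2\mu|F_A|^2$ is a factor-two slip in the statement, not a defect of your method; record it rather than forcing agreement.

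Finally, $A=0$ is useless as a test case: it forces $F_A=0$, so (\ref{bw-2}) becomes vacuous.
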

\noindent{\bf Proof of Proposition~\ref{pr-bw}.}
\[
\begin{array}{l}
\ds 2^{-1}\,\p_{x_l}|\nabla_A u|^2=\nabla_A u\cdot (\nabla_A)_l(\nabla_A u)
\end{array}
\]
and
\[
2^{-1}\,\Delta |\nabla_A u|^2=\sum_{l=1}^3(\nabla_A)_l(\nabla_A u)\cdot (\nabla_A)_l(\nabla_A u)+\nabla_A u\cdot \sum_{l=1}^3(\nabla_A)_l((\nabla_A)_l(\nabla_A u))
\]
Recall the identity (using Jacobi identity)
\[
\begin{array}{l}
\ds(\nabla_A)_l((\nabla_A)_m v)=(\nabla_A)_m((\nabla_A)_l v)+[F_{lm},v]
\end{array}
\]
Thus
\[
\begin{array}{l}
\ds \sum_{l=1}^3(\nabla_A)_l((\nabla_A)_l(\nabla_A u))=\sum_{l,m=1}^3(\nabla_A)_l((\nabla_A)_l((\nabla_A)_m u))\ \p_{x_m}\\[5mm]
\ds=\sum_{l,m=1}^3\lf((\nabla_A)_l((\nabla_A)_m((\nabla_A)_l u))+(\nabla_A)_l([F_{lm},u])\rg)\, \p_{x_m}\\[5mm]
\ds=\sum_{l,m=1}^3\lf((\nabla_A)_m((\nabla_A)_l((\nabla_A)_l u))+(\nabla_A)_l([F_{lm},u])+[F_{lm},(\nabla_A)_l u]\rg)\, \p_{x_m}\\[5mm]
\end{array}
\]
We have using again Jacobi
\[
\begin{array}{l}
\ds(\nabla_A)_l[F_{lm},u]=[(\nabla_A)_l F_{lm},u]+[F_{lm},(\nabla_A)_lu]
\end{array}
\]
Hence
\be
\label{X.1}
\begin{array}{l}
\ds  \sum_{l=1}^3(\nabla_A)_l((\nabla_A)_l(\nabla_A u))= (\nabla_A)\lf(\sum_{l,m=1}^3\lf((\nabla_A)_l((\nabla_A)_l u))\rg)\rg)+\\[5mm]
\ds+\sum_{l,m=1}^3\,[(\nabla_A)_l F_{lm},(\nabla_A)_lu]+2\,[F_{lm},(\nabla_A)_lu]\, \p_{x_m}
\end{array}
\ee
This implies
\[
\begin{array}{l}
\ds\nabla_A u\cdot \sum_{l=1}^3(\nabla_A)_l((\nabla_A)_l(\nabla_A u))=\sum_{l,m=1}^3(\nabla_A)_mu\cdot (\nabla_A)_l((\nabla_A)_l((\nabla_A)_m u))\\[5mm]
\ds=\sum_{l,m=1}^3(\nabla_A)_mu\cdot (\nabla_A)_l((\nabla_A)_m((\nabla_A)_l u))+(\nabla_A)_mu\cdot (\nabla_A)_l([F_{lm},u])\\[5mm]
\ds=\sum_{l,m=1}^3(\nabla_A)_mu\cdot (\nabla_A)_m((\nabla_A)_l((\nabla_A)_l u))+2\, \sum_{l,m=1}^3(\nabla_A)_mu\cdot [F_{lm}, (\nabla_A)_l u] \\[5mm]
 \ds +\sum_{l,m=1}^3(\nabla_A)_mu\cdot [(\nabla_A)_lF_{lm},u])\
\end{array}
\]
Assuming now that $(u,A)$ is a critical point of CYMH, there holds for every $m=1,2,3$
\[
\ds\mu\,\sum_{l=1}^3(\nabla_A)_lF_{lm}=\mu\,\sum_{l=1}^3\p_{x_l}F_{lm}+[A_l,F_{lm}]=\frac{\mu}{\ep^2}[u,(\nabla_A)_mu]+\frac{1}{\ep}A_m
\]
Thus
\[
\begin{array}{l}
\ds 2^{-1}\,\Delta |\nabla_A u|^2=\sum_{l=1}^3(\nabla_A)_l(\nabla_A u)\cdot (\nabla_A)_l(\nabla_A u)+\frac{1}{\ep^2} \sum_{l,m=1}^3|[u,(\nabla_A)_mu]|^2\\[5mm]
\ds+\sum_{l,m=1}^3(\nabla_A)_mu\cdot (\nabla_A)_m((\nabla_A)_l((\nabla_A)_l u))+ 2\sum_{l,m=1}^3(\nabla_A)_mu\cdot [F_{lm}, (\nabla_A)_l u] \\[5mm]
\ds+\frac{1}{\mu\,\ep}\,\sum_{l,m=1}^3(\nabla_A)_mu\cdot [A_m,u]
\end{array}
\]
The Euler Lagrange equation is implying
\[
\sum_{l=1}^3(\nabla_A)_l((\nabla_A)_l u)=-\,\frac{\la}{\ep^2}\, u\, (1-|u|^2)
\]
This gives
\[
\begin{array}{l}
\ds\sum_{l,m=1}^3(\nabla_A)_mu\cdot (\nabla_A)_m((\nabla_A)_l((\nabla_A)_l u))=-\la\,\sum_{m=1}^3(\nabla_A)_mu\cdot (\nabla_A)_m( u\, (1-|u|^2))\\[5mm]
\ds=-\,\frac{\la}{\ep^2}\, |\nabla_A u|^2\,(1-|u|^2)+\frac{\la}{2\ep^2}\, \sum_{m=1}^3|\nabla|u|^2|^2
\end{array}
\]
Hence
\[
\begin{array}{l}
\ds 2^{-1}\,\Delta |\nabla_A u|^2=\sum_{l=1}^3(\nabla_A)_l(\nabla_A u)\cdot (\nabla_A)_l(\nabla_A u)+\frac{1}{\ep^2} \sum_{l,m=1}^3|[u,(\nabla_A)_mu]|^2\\[5mm]
\ds+ 2\sum_{l,m=1}^3(\nabla_A)_mu\cdot [F_{lm}, (\nabla_A)_l u] +\frac{1}{\mu\,\ep}\,\sum_{l,m=1}^3[u,(\nabla_A)_mu]\cdot A_m\\[5mm]
\ds-\,\frac{\la}{\ep^2}\, |\nabla_A u|^2\,(1-|u|^2)+\frac{\la}{2\ep^2}\, \sum_{m=1}^3|\nabla|u|^2|^2
\end{array}
\]
Now we compute $2^{-1}\,\Delta |F_A|^2$. We have
\[
2^{-1}\,\p_{x_n}|F_A|^2=2^{-1}\sum_{l,m=1}^3\p_{x_n}(F_{lm}\cdot F_{lm})= \sum_{l,m=1}^3(\nabla_A)_n F_{lm}\cdot F_{lm}
\]
Hence
\[
2^{-1}\,\Delta |F_A|^2=\sum_{l,m,n=1}^3(\nabla_A)_n F_{lm}\cdot (\nabla_A)_n F_{lm}+\sum_{l,m,n=1}^3(\nabla_A)_n(\nabla_A)_n F_{lm}\cdot F_{lm}
\]
We have
\[
F_{lm}=(\nabla_A)_l A_m-(\nabla_A)_m A_l+[A_m,A_l]
\]
Recall
\[
\lf\{
\begin{array}{l}
\ds(\nabla A)_n(\nabla A)_l A_m=(\nabla A)_l(\nabla A)_n A_m+[F_{nl},A_m]\\[5mm]
\ds-(\nabla A)_n(\nabla A)_m A_l=-(\nabla A)_m(\nabla A)_n A_l-[F_{nm},A_l]
\end{array}
\rg.
\]
Hence
\[
\begin{array}{l}
\ds(\nabla_A)_n(\nabla_A)_n F_{lm}=(\nabla_A)_n(\nabla_{A})_l(\nabla_A)_n A_m+(\nabla_A)_n([F_{nl},A_m])\\[5mm]
\ds-(\nabla_A)_n(\nabla_{A})_m(\nabla_A)_n A_l-(\nabla_A)_n([F_{nm},A_l])+(\nabla_A)_n(\nabla_A)_n ([A_m,A_l])
\end{array}
\]
We write respectively
\[
\lf\{
\begin{array}{l}
\ds (\nabla_{A})_l(\nabla_A)_n A_m=(\nabla_{A})_lF_{nm}+(\nabla_{A})_l(\nabla_A)_m A_n+(\nabla_{A})_l([A_n,A_m]\\[5mm]
\ds-(\nabla_{A})_m(\nabla_A)_n A_l=-(\nabla_{A})_mF_{nl}-(\nabla_{A})_m(\nabla_A)_l A_n-(\nabla_{A})_m([A_n,A_l]
\end{array}
\rg.
\]
Thus
\[
\begin{array}{l}
\ds(\nabla_A)_n(\nabla_A)_n F_{lm}=(\nabla_A)_n (\nabla_{A})_l F_{nm}+(\nabla_A)_n(\nabla_{A})_l(\nabla_A)_m A_n+(\nabla_A)_n(\nabla_{A})_l([A_n,A_m]\\[5mm]
-(\nabla_A)_n (\nabla_{A})_m F_{nl}-(\nabla_A)_n(\nabla_{A})_m(\nabla_A)_l A_n-(\nabla_A)_n(\nabla_{A})_m([A_n,A_l]\\[5mm]
+(\nabla_A)_n([F_{nl},A_m])-(\nabla_A)_n([F_{nm},A_l])+(\nabla_A)_n(\nabla_A)_n ([A_m,A_l])\\[5mm]
\end{array}
\]
We have respectively
\[
\lf\{
\begin{array}{l}
\ds(\nabla_A)_n (\nabla_{A})_l F_{nm}=(\nabla_A)_l (\nabla_{A})_n F_{nm}+[F_{nl},F_{nm}]\\[5mm]
\ds-(\nabla_A)_n (\nabla_{A})_m F_{nl}=-(\nabla_A)_m (\nabla_{A})_n F_{nl}-[F_{nm},F_{nl}]\\[5mm]
\ds(\nabla_A)_n(\nabla_{A})_l(\nabla_A)_m A_n-(\nabla_A)_n(\nabla_{A})_m(\nabla_A)_l A_n=(\nabla_A)_n([F_{lm},A_n])
\end{array}
\rg.
\]
Thus
\[
\begin{array}{l}
\ds\ds(\nabla_A)_n(\nabla_A)_n F_{lm}=(\nabla_A)_l (\nabla_{A})_n F_{nm}-(\nabla_A)_m (\nabla_{A})_n F_{nl}\\[5mm]
\ds+2\,[F_{nl},F_{nm}]+(\nabla_A)_n([F_{lm},A_n])+(\nabla_A)_n([F_{nl},A_m])-(\nabla_A)_n([F_{nm},A_l])\\[5mm]
\ds+(\nabla_A)_n(\nabla_{A})_l([A_n,A_m])-(\nabla_A)_n(\nabla_{A})_m([A_n,A_l])+(\nabla_A)_n(\nabla_A)_n ([A_m,A_l])
\end{array}
\]
Recall that using Jacobi identity we have for any $w\in{\frak su}(2)$
\[
\begin{array}{l}
\ds(\nabla_A)_n[v,w]=[(\nabla_A)_nv,w]+[v,(\nabla_A)_nw]
\end{array}
\]
Thus
\[
\begin{array}{l}
\ds(\nabla_A)_n(\nabla_A)_n F_{lm}=(\nabla_A)_l (\nabla_{A})_n F_{nm}-(\nabla_A)_m (\nabla_{A})_n F_{nl}\\[5mm]
\ds+2\,[F_{nl},F_{nm}]+[(\nabla_A)_nF_{lm},A_n]+[F_{lm},(\nabla_A)_nA_n]\\[5mm]
\ds+[(\nabla_A)_nF_{nl},A_m]+[F_{nl},(\nabla_A)_nA_m]\\[5mm]
\ds-[(\nabla_A)_nF_{nm},A_l]-[F_{nm},(\nabla_A)_nA_l]\\[5mm]
\ds+(\nabla_A)_n([(\nabla_{A})_lA_n,A_m])+(\nabla_A)_n([A_n,(\nabla_{A})_lA_m])\\[5mm]
\ds-(\nabla_A)_n([(\nabla_{A})_mA_n,A_l])-(\nabla_A)_n([A_n,(\nabla_{A})_mA_l])\\[5mm]
+(\nabla_A)_n[(\nabla_A)_nA_m,A_l])+(\nabla_A)_n[A_m,(\nabla_A)_nA_l])
\end{array}
\]
Recall from the Euler Lagrange equation that
\[
\lf\{
\begin{array}{l}
\ds\mu\sum_{n=1}^3 (\nabla_{A})_n F_{nm}=\frac{\mu}{\ep^2}[u,(\nabla_A)_mu]+\frac{1}{\ep}A_m\\[5mm]
\ds\mu\sum_{n=1}^3 (\nabla_{A})_n F_{nl}=\frac{\mu}{\ep^2}[u,(\nabla_A)_lu]+\frac{1}{\ep}A_l
\end{array}
\rg.
\]
Hence
\[
\begin{array}{l}
\ds\sum_{n=1}^3 \lf((\nabla_A)_l (\nabla_{A})_n F_{nm}-(\nabla_A)_m (\nabla_{A})_n F_{nl}\rg)\cdot F_{lm}\\[5mm]
\ds=(\nabla_A)_l \lf(\frac{1}{\ep^2}[u,(\nabla_A)_mu]+\frac{1}{\mu\ep}A_m\rg)\cdot F_{lm}-(\nabla_A)_m \lf(\frac{1}{\ep^2}[u,(\nabla_A)_lu]+\frac{1}{\mu\ep}A_l\rg)\cdot F_{lm}\\[5mm]
\ds=\frac{2}{\ep^2}\,[(\nabla_A)_lu,(\nabla_A)_mu]\cdot F_{lm}+\frac{1}{\ep^2}[u, (\nabla_A)_l(\nabla_A)_mu-(\nabla_A)_m(\nabla_A)_lu]\cdot F_{lm}\\[5mm]
\ds+\frac{1}{\mu\ep} \lf( (\nabla_A)_l A_m- (\nabla_A)_m A_l  \rg)\cdot F_{lm}\\[5mm]
\ds=\frac{2}{\ep^2}\,[(\nabla_A)_lu,(\nabla_A)_mu]\cdot F_{lm}+\frac{1}{\ep^2}|[F_{lm},u]|^2+\frac{1}{\mu\ep}|F_{lm}|^2+\frac{1}{\mu\ep}[A_l,A_m]\cdot F_{lm}\ .
\end{array}
\]
Moreover
\[
\begin{array}{l}
\ds\sum_{l,m,n=1}^3[(\nabla_A)_nF_{nl},A_m]\cdot F_{lm}-[(\nabla_A)_nF_{nm},A_l]\cdot F_{lm}\\[5mm]
\ds=\frac{2}{\ep^2}\,\sum_{l,m,n=1}^3[[u,(\nabla_A)_lu],A_m]\cdot F_{lm}+\frac{2}{\mu\ep}\sum_{l,m,n=1}^3[A_l,A_m]\cdot F_{lm}\ .
\end{array}
\]
We have
\[
\sum_{l,m,n=1}^3[F_{lm},(\nabla_A)_nA_n]\cdot F_{lm}=\sum_{l,m,n=1}^3[F_{lm},F_{lm}]\cdot (\nabla_A)_nA_n=0\ .
\]
We have also
\[
\begin{array}{l}
\ds\sum_{l,m,n=1}^3[F_{nl},(\nabla_A)_nA_m]\cdot F_{lm}-[F_{nm},(\nabla_A)_nA_l]\cdot F_{lm}=2\sum_{l,m,n=1}^3[F_{nl},(\nabla_A)_nA_m]\cdot F_{lm}\ .
\end{array}
\]
Moreover
\[
\begin{array}{l}
\ds\sum_{l,m,n=1}^3(\nabla_A)_n([(\nabla_{A})_lA_n,A_m])\cdot F_{lm}+(\nabla_A)_n([A_n,(\nabla_{A})_lA_m])\cdot F_{lm}\\[5mm]
\ds-\sum_{l,m,n=1}^3(\nabla_A)_n([(\nabla_{A})_mA_n,A_l])\cdot F_{lm}-\sum_{l,m,n=1}^3(\nabla_A)_n([A_n,(\nabla_{A})_mA_l])\cdot F_{lm}\\[5mm]
\ds+\sum_{l,m,n=1}^3(\nabla_A)_n[(\nabla_A)_nA_m,A_l])\cdot F_{lm}+\sum_{l,m,n=1}^3(\nabla_A)_n[A_m,(\nabla_A)_nA_l])\cdot F_{lm}\\[5mm]
\ds=2\,\sum_{l,m,n=1}^3(\nabla_A)_n([(\nabla_{A})_lA_n,A_m])\cdot F_{lm}+(\nabla_A)_n([A_n,(\nabla_{A})_lA_m])\cdot F_{lm}\\[5mm]
\ds+2\,\sum_{l,m,n=1}^3(\nabla_A)_n[(\nabla_A)_nA_m,A_l])\cdot F_{lm}
\end{array}
\]
We have
\[
\begin{array}{l}
\ds2\sum_{l,m,n=1}^3(\nabla_A)_n([A_n,(\nabla_{A})_lA_m])\cdot F_{lm}=\sum_{l,m,n=1}^3(\nabla_A)_n([A_n,(\nabla_{A})_lA_m-(\nabla_{A})_mA_l])\cdot F_{lm}\\[5mm]\ds=\sum_{l,m,n=1}^3(\nabla_A)_n([A_n,F_{lm}])\cdot F_{lm}+\sum_{l,m,n=1}^3(\nabla_A)_n([A_n,[A_l,A_m]])\cdot F_{lm}\\[5mm]
\ds=\sum_{l,m,n=1}^3[A_n,(\nabla_A)_nF_{lm}]\cdot F_{lm}+\sum_{l,m,n=1}^3(\nabla_A)_n([A_n,[A_l,A_m]])\cdot F_{lm}
\end{array}
\]
Hence
\[
\begin{array}{l}
\ds\sum_{l,m,n=1}^3(\nabla_A)_n(\nabla_A)_n F_{lm}\cdot F_{lm}\\[5mm]
\ds=\frac{2}{\ep^2}\,\sum_{l,m=1}^3[(\nabla_A)_lu,(\nabla_A)_mu]\cdot F_{lm}+\frac{1}{\ep^2}\sum_{l,m=1}^3|[F_{lm},u]|^2+\frac{1}{\mu\ep}|F_{lm}|^2+\frac{1}{\mu\ep}\sum_{l,m=1}^3[A_l,A_m]\cdot F_{lm}\\[5mm]
\ds+2\,\sum_{l,m,n=1}^3[F_{nl},F_{nm}]\cdot F_{lm}+\sum_{l,m,n=1}^3[(\nabla_A)_nF_{lm},A_n]\cdot F_{lm}\\[5mm]
\ds+\frac{2}{\ep^2}\,\sum_{l,m,n=1}^3[[u,(\nabla_A)_lu],A_m]\cdot F_{lm}+\frac{2}{\mu\ep}\sum_{l,m,n=1}^3[A_l,A_m]\cdot F_{lm}+2\,\sum_{l,m,n=1}^3[F_{nl},(\nabla_A)_nA_m]\cdot F_{lm}\\[5mm]
\ds+2\,\sum_{l,m,n=1}^3(\nabla_A)_n([(\nabla_{A})_lA_n,A_m])\cdot F_{lm}+\sum_{l,m,n=1}^3[A_n,(\nabla_A)_nF_{lm}]\cdot F_{lm}\\[5mm]
\ds+\sum_{l,m,n=1}^3(\nabla_A)_n([A_n,[A_l,A_m]])\cdot F_{lm}+2\,\sum_{l,m,n=1}^3(\nabla_A)_n[(\nabla_A)_nA_m,A_l])\cdot F_{lm}
\end{array}
\]
Which gives
\[
\begin{array}{l}
\ds\sum_{l,m,n=1}^3(\nabla_A)_n(\nabla_A)_n F_{lm}\cdot F_{lm}\\[5mm]
\ds=\frac{2}{\ep^2}\,\sum_{l,m=1}^3[(\nabla_A)_lu,(\nabla_A)_mu]\cdot F_{lm}+\frac{1}{\ep^2}\sum_{l,m=1}^3|[F_{lm},u]|^2+\frac{1}{\mu\ep}|F_{lm}|^2+\frac{3}{\mu\ep}\sum_{l,m=1}^3[A_l,A_m]\cdot F_{lm}\\[5mm]
\ds+2\,\sum_{l,m,n=1}^3[F_{nl},F_{nm}]\cdot F_{lm}\\[5mm]
\ds+\frac{2}{\ep^2}\,\sum_{l,m,n=1}^3[[u,(\nabla_A)_lu],A_m]\cdot F_{lm}+2\,\sum_{l,m,n=1}^3[F_{nl},(\nabla_A)_nA_m]\cdot F_{lm}\\[5mm]
\ds+\sum_{l,m,n=1}^3(\nabla_A)_n([A_n,[A_l,A_m]])\cdot F_{lm}+2\,\sum_{l,m,n=1}^3(\nabla_A)_n[(\nabla_A)_nA_m-(\nabla_{A})_mA_n,A_l])\cdot F_{lm}
\end{array}
\]
We have
\[
\begin{array}{l}
\ds 2\,\sum_{l,m,n=1}^3(\nabla_A)_n[(\nabla_A)_nA_m-(\nabla_{A})_mA_n,A_l])\cdot F_{lm}\\[5mm]
\ds= 2\,\sum_{l,m,n=1}^3(\nabla_A)_n[F_{nm},A_l]\cdot F_{lm}+ 2\,\sum_{l,m,n=1}^3(\nabla_A)_n[[A_n,A_m],A_l]\cdot F_{lm}\\[5mm]
\ds= 2\,\sum_{l,m,n=1}^3(\nabla_A)_n[F_{nm},A_l]\cdot F_{lm}\\[5mm]
\ds+\sum_{l,m,n=1}^3(\nabla_A)_n[[A_n,A_m],A_l]\cdot F_{lm}-\sum_{l,m,n=1}^3(\nabla_A)_n[[A_n,A_l],A_m]\cdot F_{lm}
\end{array}
\]
Observe that
\[
\begin{array}{l}
\ds\sum_{l,m,n=1}^3(\nabla_A)_n([A_n,[A_l,A_m]])\cdot F_{lm}+\sum_{l,m,n=1}^3(\nabla_A)_n[[A_n,A_m],A_l]\cdot F_{lm}\\[5mm]
\ds-\sum_{l,m,n=1}^3(\nabla_A)_n[[A_n,A_l],A_m]\cdot F_{lm}\\[5mm]
\ds=\sum_{l,m,n=1}^3(\nabla_A)_n\lf([[A_m,A_l],A_n]+[[A_n,A_m],A_l]+[[A_l,A_n],A_m]\rg)\cdot F_{lm}=0
\end{array}
\]
Hence
\[
\begin{array}{l}
\ds\sum_{l,m,n=1}^3(\nabla_A)_n(\nabla_A)_n F_{lm}\cdot F_{lm}\\[5mm]
\ds=\frac{2}{\ep^2}\,\sum_{l,m=1}^3[(\nabla_A)_lu,(\nabla_A)_mu]\cdot F_{lm}+\frac{1}{\ep^2}\sum_{l,m=1}^3|[F_{lm},u]|^2+\frac{1}{\mu\ep}|F_{lm}|^2+\frac{3}{\mu\ep}\sum_{l,m=1}^3[A_l,A_m]\cdot F_{lm}\\[5mm]
\ds+2\,\sum_{l,m,n=1}^3[F_{nl},F_{nm}]\cdot F_{lm}\\[5mm]
\ds+\frac{2}{\ep^2}\,\sum_{l,m,n=1}^3[[u,(\nabla_A)_lu],A_m]\cdot F_{lm}+2\,\sum_{l,m,n=1}^3[F_{nl},(\nabla_A)_nA_m]\cdot F_{lm}\\[5mm]
\ds+2\,\sum_{l,m,n=1}^3(\nabla_A)_n[F_{nm},A_l]\cdot F_{lm}\
\end{array}
\]
We write
\[
\begin{array}{l}
\ds2\,\sum_{l,m,n=1}^3[F_{nl},(\nabla_A)_nA_m]\cdot F_{lm}+2\,\sum_{l,m,n=1}^3(\nabla_A)_n[F_{nm},A_l]\cdot F_{lm}\\[5mm]
\ds=+2\,\sum_{l,m,n=1}^3[F_{nl},(\nabla_A)_nA_m]\cdot F_{lm}+2\,\sum_{l,m,n=1}^3[(\nabla_A)_nF_{nm},A_l]\cdot F_{lm}+2\,\sum_{l,m,n=1}^3[F_{nl},(\nabla_A)_nA_m]\cdot F_{ml}\\[5mm]
\ds=\frac{2}{\ep^2}\,\sum_{l,m}^3[[u,(\nabla_A)_mu],A_l]\cdot F_{lm}+\frac{2}{\mu\ep}\,\sum_{l,m}^3[A_m,A_l]\cdot F_{lm}
\end{array}
\]
Thus finally
\[
\begin{array}{l}
\ds\sum_{l,m,n=1}^3(\nabla_A)_n(\nabla_A)_n F_{lm}\cdot F_{lm}\\[5mm]
\ds=\frac{2}{\ep^2}\,\sum_{l,m=1}^3[(\nabla_A)_lu,(\nabla_A)_mu]\cdot F_{lm}+\frac{1}{\ep^2}\sum_{l,m=1}^3|[F_{lm},u]|^2+\frac{1}{\mu\ep}|F_{lm}|^2+\frac{1}{\mu\ep}\sum_{l,m=1}^3[A_l,A_m]\cdot F_{lm}\\[5mm]
\ds+2\,\sum_{l,m,n=1}^3[F_{nl},F_{nm}]\cdot F_{lm}\\[5mm]
\end{array}
\]
This implies proposition~\ref{pr-bw}.\hfill $\Box$
\subsubsection{Bochner -Weitzenb\"ock for the transversal part of the Curvature}
One of the major observation in \cite{JT} is that the transversal part of curvature of the non abelian $SU(2)-YMH$ Fields has a shorter range of propagation and enjoys an exponential decrease.
This is due to the following identities where the Bessel potentials instead of Riesz potentials are involved. We now give the $CYMH$ counterpart of this main result in \cite{JT}.
\begin{Prop}
\label{pr-bwt}
Let $(u,A)$ be a smooth critical point of $CYMH_\ep^{\la,1}$ and assume $1\ge |u|^2\ge 1-\delta$ and  $(1+\la)\,\delta\le1/2$ then we have
\be
\label{bwt}
\begin{array}{l}
\ds -\mbox{div}\lf(|u|^{-4}\,\nabla |[u,F_A]|\rg)+\frac{1}{2\ep^2}|[u,F_A]|\le 8\, |u|^{-6}\, |\nabla_A\,u|^2\,|F_A|+4\, |u|^{-4}\,|\nabla_Au|\,|\nabla(F_A\cdot u)|\\[5mm]
\ds\qquad+4\,\ep^{-2}|u|^{-4}\,|\nabla_A u|^2+2\,\mu^{-1}\,\ep^{-1}\,|u|^{-4}\,|A|^2+4\,|u|^{-4}\,|F_A|^2\ .
\end{array}
\ee
\hfill $\Box$
\end{Prop}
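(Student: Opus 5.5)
The plan follows the Jaffe–Taubes strategy. We derive a covariant Laplace (Weitzenb\"ock) identity for the transverse curvature $\Phi:=[u,F_A]$, in which the term $\ep^{-2}|[u,\Phi]|^2$ appears with the good sign and provides the mass. We then pass to the scalar $|\Phi|$ through Kato's inequality, with the weight $|u|^{-4}$ absorbing the non-unit length of $u$.

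\textbf{Step 1: the identity for $\Phi$.} Using (\ref{com-cov}), write
\[
\sum_n(\nabla_A)_n(\nabla_A)_n\Phi=[\Delta_A u,F_A]+2\sum_n[(\nabla_A)_nu,(\nabla_A)_nF_A]+[u,\Delta_AF_A].
\]
\begin{itemize}
\item The first term is replaced using the first equation of (\ref{CYMHS-co}). It equals $-\la\ep^{-2}(1-|u|^2)\Phi$, which has size at most $\la\delta\ep^{-2}|\Phi|$.
\item For the last term we use the computation of $\Delta_A F_{lm}$ made in the proof of Proposition~\ref{pr-bw}. After substituting the second Euler–Lagrange equation, the leading contribution to $[u,(\Delta_AF)_{lm}]$ is
\[
\ep^{-2}\bigl[u,(\nabla_A)_l[u,(\nabla_A)_mu]-(\nabla_A)_m[u,(\nabla_A)_lu]\bigr]
=\ep^{-2}[u,[u,[F_{lm},u]]]+\ep^{-2}\,O(|\nabla_Au|^2).
\]
Here we used (\ref{cov-ex}).
\item By (\ref{LoTra}) the triple bracket is $[u,[u,\Phi]]=-4|u|^2\Phi^{T}$, where $T$ denotes the component transverse to $u$, and $\Phi$ is already transverse. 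This yields the mass term $4|u|^2\ep^{-2}|\Phi|^2$ after pairing with $\Phi$. Keeping track of the $|u|$ normalisations is the reason for the weights $|u|^{-4}$ and $|u|^{-6}$.
\item The remaining pieces of $[u,\Delta_AF]$ are handled as follows. The terms $[F,F]$ are bounded by $|F_A|^2$. The terms $\mu^{-1}\ep^{-1}[A_l,A_m]$ and $\mu^{-1}\ep^{-1}(\nabla_A)_lA_m$ are rewritten through $F_{lm}$ via (\ref{curcovder}), which produces the $|A|^2$ term and an $|F_A|$-type term.
\item In the cross term $[(\nabla_A)u,(\nabla_A)F]$, split $F_A=(F_A\cdot u)\,u/|u|^2+F^T$. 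The longitudinal part gives $|\nabla_Au|\,|\nabla(F_A\cdot u)|$ plus $|\nabla_Au|^2|F_A|/|u|^2$. The transverse part is $-\frac{1}{4}|u|^{-2}[u,\Phi]$, whose derivative produces again $|\nabla_Au|\,|\nabla\Phi|$ and $|\nabla_Au|^2|F_A|$.
\end{itemize}

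\textbf{Step 2: Kato and division by $|\Phi|$.} Pair the identity with $\Phi$ and use
\[
\tfrac12\Delta|\Phi|^2=|\nabla_A\Phi|^2+\Phi\cdot\Delta_A\Phi,\qquad |\nabla_A\Phi|\ge|\nabla|\Phi||.
\]
This gives, in the distributional sense (regularising with $\sqrt{|\Phi|^2+\sigma^2}$ and letting $\sigma\to0$),
\[
-\Delta|\Phi|+c\,\ep^{-2}|u|^{2}|\Phi|\le C\bigl(\text{r.h.s. terms}\bigr).
\]
The hypothesis $(1+\la)\delta\le1/2$ is what keeps at least half of the mass term: the potential contribution $\la\delta\ep^{-2}|\Phi|$ and the defect $\delta\ep^{-2}$ coming from $|u|^2\ne1$ are absorbed.

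\textbf{Step 3: the weight.} Multiply by $|u|^{-4}$ and commute it with the divergence. Since $\nabla|u|^{-4}=-4|u|^{-6}\nabla|u|$ and $|\nabla|u||\le|\nabla_Au|$ by Kato, the commutator is controlled. The product $|\nabla_Au|\,|\nabla|\Phi||$ is estimated by Young: one part goes into $\ep^{-2}|\nabla_Au|^2$, the other into an $\ep^{-2}|\Phi|$ term, which is again absorbed by the mass. This produces the constants $8,4,4,2,4$ up to bookkeeping.

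\textbf{Main obstacle.} I expect the hardest part to be the term $\ep^{-2}[u,O(|\nabla_Au|^2)]$ coming from differentiating the current $[u,d_Au]$. One has to check that, after projection, it contributes only $\ep^{-2}|\nabla_Au|^2$ and $|\nabla_Au|^2|F_A|$, rather than an unweighted $\ep^{-2}|\nabla_Au|^2|\Phi|/|\Phi|$ piece that cannot be absorbed. The first thing to try is to use the antisymmetry in $(l,m)$ together with Jacobi, so that this term collapses to $\ep^{-2}[u,[(\nabla_A)_lu,(\nabla_A)_mu]]$.
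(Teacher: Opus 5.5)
Your Steps 1--2 follow essentially the paper's route. You expand $\sum_l(\nabla_A)_l(\nabla_A)_l[u,F_j]$ by Leibniz, insert the Higgs equation, and use Bianchi, the Weitzenb\"ock formula and the second Euler--Lagrange equation to extract the mass term $\varepsilon^{-2}[u,[u,[F_A,u]]]$. You then pass to $|\Phi|$ by Kato with the regularisation $\sqrt{|\Phi|^2+t^2}$.

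\textbf{Where the argument breaks.} The gap is in the first-order part of the cross term $2\sum_l[(\nabla_A)_lu,(\nabla_A)_lF_j]$, and as a result Step~3 does not work. You bound the piece containing $(\nabla_A)_l\Phi$ in absolute value by $|\nabla_Au|\,|\nabla_A\Phi|\,|\Phi|$. You then split $|\nabla_Au|\,|\nabla|\Phi||$ by Young into $\varepsilon^{-2}|\nabla_Au|^2$ and ``an $\varepsilon^{-2}|\Phi|$ term''. None of this is available:
\begin{itemize}
\item Kato's inequality $|\nabla|\Phi||\le|\nabla_A\Phi|$ goes the wrong way for replacing $|\nabla_A\Phi|$ by $|\nabla|\Phi||$ in an upper bound.
\item Young only produces $|\nabla|\Phi||^2$, and a gradient cannot be bounded pointwise by the function.
\item You cannot absorb the term into $|\nabla_A\Phi|^2$ either, because passing from $|\Phi|^2$ to $|\Phi|$ via Kato uses all of that term.
\item The commutator argument for the weight has the same defect. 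We have $-\mathrm{div}(|u|^{-4}\nabla|\Phi|)=-|u|^{-4}\Delta|\Phi|+4|u|^{-5}\nabla|u|\cdot\nabla|\Phi|$, and the last term has neither a sign nor a pointwise bound. A weighted inequality of the stated form cannot be obtained by commuting a weight into an unweighted one.
\end{itemize}

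\textbf{The missing idea.} The first-order term must be computed exactly; it is an exact drift, and the weight is its integrating factor. The paper splits $(\nabla_A)_lu$ rather than $F$, into the longitudinal part $\frac{\partial_{x_l}|u|^2}{2|u|^2}\,u$ and the transverse part $-\frac14|u|^{-2}[u,[u,(\nabla_A)_lu]]$.
\begin{itemize}
\item For the longitudinal part, use $[u,(\nabla_A)_lF_j]\cdot[u,F_j]=\frac12\partial_{x_l}|[u,F_j]|^2-[(\nabla_A)_lu,F_j]\cdot[u,F_j]$. This gives exactly $|u|^{-2}\partial_{x_l}|u|^2\,\partial_{x_l}|[u,F_j]|^2$ in $\Delta|\Phi|^2$, plus a term of size $|\nabla_Au|^2|F_A|\,|\Phi|$.
\item For the transverse part, the component of $(\nabla_A)_lF_j$ orthogonal to $u$ drops out identically. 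The bracket of two vectors orthogonal to $u$ is parallel to $u$, hence orthogonal to $[u,F_j]$.
\item Only $u\cdot(\nabla_A)_lF_j=\partial_{x_l}(u\cdot F_j)-(\nabla_A)_lu\cdot F_j$ survives. This is the source of the $|\nabla_Au|\,|\nabla(F_A\cdot u)|$ term.
\end{itemize}
Your own splitting of $F$ also works if done exactly. By (\ref{bra-sca}) and $[[u,Y],\Phi]=-4(Y\cdot\Phi)\,u$ for $\Phi\perp u$, the $\nabla_A\Phi$-piece of $[(\nabla_A)_lu,(\nabla_A)_lF_j^T]\cdot\Phi_j$ equals $\frac{1}{4|u|^2}\partial_{x_l}|u|^2\,\partial_{x_l}|\Phi_j|^2$.

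After Kato this becomes a drift $-c\,|u|^{-1}\nabla|u|\cdot\nabla|\Phi|$. Keep it on the left and combine it with $\Delta|\Phi|$ through $\Delta v-c\,|u|^{-1}\nabla|u|\cdot\nabla v=|u|^{c}\,\mathrm{div}(|u|^{-c}\nabla v)$. Track $c$ carefully, because it is the exponent of the weight. Dividing $|u|^{-2}\nabla|u|^2\cdot\nabla|\Phi|^2$ by $2|\Phi|$ gives $c=2$, whereas the paper records $c=4$.

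\textbf{Smaller points.} Your ``main obstacle'' is harmless. By (\ref{com-cov}) and (\ref{cov-ex}), $(\nabla_A)_l[u,(\nabla_A)_mu]-(\nabla_A)_m[u,(\nabla_A)_lu]=2[(\nabla_A)_lu,(\nabla_A)_mu]+[u,[F_{lm},u]]$. This contributes only $\varepsilon^{-2}|\nabla_Au|^2$ after division by $|\Phi|$. Also, the $F_A$-part of $\mu^{-1}\varepsilon^{-1}[u,d_AA]$ gives $+\mu^{-1}\varepsilon^{-1}|\Phi|^2$, which has the good sign. Drop it rather than estimate it: an $|F_A|$-type bound would leave a $\mu^{-1}\varepsilon^{-1}|F_A|$ term that does not appear in the statement.
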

\noindent{\bf Proof of Proposition~\ref{pr-bwt}.} We use ${\mathbb Z}_3$ indexation and we write
\[
F_j:=F_{j+1, j-1}
\]
We have
\[
\begin{array}{l}
\ds\Delta |[u,F_A]|^2=\sum_{l=1}^3\p^2_{x_l^2}|[u,F_A]|^2=2\,\sum_{l,j=1}^3\p_{x_l}\lf([u,F_j]\cdot(\nabla_{A})_l[u,F_j]\rg)\\[5mm]
\ds=2\, \sum_{l,j=1}^3|(\nabla_{A})_l[u,F_j]|^2+2\,\sum_{l,j=1}^3[u,F_j]\cdot(\nabla_A)_l(\nabla_{A})_l[u,F_j]
\end{array}
\]
We have
\[
\begin{array}{l}
\ds\sum_{l=1}^3(\nabla_A)_l(\nabla_{A})_l[u,F_j]=\sum_{l=1}^3[(\nabla_A)_l(\nabla_{A})_lu,F_j]+2\, \sum_{l=1}^3[(\nabla_{A})_lu,(\nabla_A)_lF_j]+\sum_{l=1}^3[u,(\nabla_A)_l(\nabla_{A})_lF_j]\\[5mm]
\ds=-\frac{\la}{\ep^2}\,(1-|u|^2)\,[u,F_j]+2\, \sum_{l=1}^3[(\nabla_{A})_lu,(\nabla_A)_lF_j]+\sum_{l=1}^3[u,(\nabla_A)_l(\nabla_{A})_lF_j]
\end{array}
\]
We have 
\[
[u,(\nabla_A)_l(\nabla_{A})_lF_j]\cdot [u,F_j]=|u|^2\, (\nabla_A)_l(\nabla_{A})_lF_j\cdot F_j-u\cdot (\nabla_A)_l(\nabla_{A})_lF_j\ (u\cdot F_j)
\]
We have seen
\[
\begin{array}{l}
\ds\sum_{l,j=1}^3(\nabla_A)_l(\nabla_A)_l F_j\cdot F_j\\[5mm]
\ds=\frac{1}{\ep^2}\,\sum_{l,m=1}^3[(\nabla_A)_{j+1}u,(\nabla_A)_{j-1}u]\cdot F_{j}+\frac{1}{\ep^2}\sum_{j=1}^3|[F_{j},u]|^2+\frac{1}{\mu\,\ep}|F_{j}|^2+\frac{1}{2\,\mu\ep}\sum_{l,m=1}^3[A_{j+1},A_{j-1}]\cdot F_{j}\\[5mm]
\ds+\,\frac{1}{2}\sum_{j,n=1}^3[F_{n,j+1},F_{n,j-1}]\cdot F_{j}\\[5mm]
\end{array}
\]
We have also
\[
\begin{array}{l}
\ds-\,u\cdot (\nabla_A)_l(\nabla_{A})_lF_j\ (u\cdot F_j)=-(u\cdot F_j)\,\Delta(u\cdot F_j)+2\,(\nabla_A)_lu\cdot(\nabla_A)_lF_j\ (u\cdot F_j)\\[5mm]
\ds+\, (\nabla_A)_l(\nabla_A)_lu\cdot F_j\ (u\cdot F_j)
\end{array}
\]
We have on a 2-form $B$
\[
(d^\ast_A d_A+d_Ad^\ast_A B)_{lm}=-\sum_{n=1}^3(\nabla_A)_n(\nabla_A)_n B_{lm}+\sum_{n=1}^3[F_{nl},B_{nm}]-[F_{nm},B_{nl}] 
\]
Hence, since
\[
d^\ast_AF_A=-\frac{[u, d_Au]}{\ep^2}- \frac{A}{\mu\ep}
\]
we have
\[
-d_A\lf(\frac{[u, d_Au]}{\ep^2}\rg)- \frac{d_AA}{\mu\ep}=(d^\ast_A d_A+d_Ad^\ast_A )F_A=\nabla_A^\ast\nabla_A F+2\, [F_{A}\res F_A]
\]
which gives
\[
\lf[u,\sum_{l=1}^3(\nabla_A)_l(\nabla_A)_l F_j\rg]=\ep^{-2}\,[u,d_A\lf( [u, d_Au]\rg)]_j+\mu^{-1}\,\ep^{-1}\,[u,d_AA]_j+2\,[u, [F_{j-1},F_{j+1}]]
\]
Observe
\[
\begin{array}{l}
\ds[u,d_A\lf( [u, d_Au]\rg)]=\lf[u, d_A\lf(\sum_{i=1}^3 [u,(\nabla_A)_iu]\ dx_i\rg)\rg]\\[5mm]
\ds=\lf[u, \sum_{i,j=1}^3 [(\nabla_A)_ju,(\nabla_A)_iu]\rg]\ dx_j\wedge dx_i+\sum_{i<j}[u,[u,(\nabla_A)_j(\nabla_A)_iu-(\nabla_A)_i(\nabla_A)_j]]\ dx_j\wedge dx_i\\[5mm]
\ds=\lf[u, \sum_{i,j=1}^3 [(\nabla_A)_ju,(\nabla_A)_iu]\rg]\ dx_j\wedge dx_i+ [u,[u,[F_A,u]]]
\end{array}
\]
and
\[
d_AA=d_A\lf(\sum_{m=1}^3 A_m\, dx_m\rg)=\sum_{l,m=1}^3(\p_{x_l}A_m+[A_l,A_m])\ dx_l\wedge dx_m=F_A+A\wedge A
\]
Hence
\[
\begin{array}{l}
\ds\lf[u,\sum_{l=1}^3(\nabla_A)_l(\nabla_A)_l F_j\rg]=2\,\ep^{-2}\, \lf[u, [(\nabla_A)_{j+1}u,(\nabla_A)_{j-1}u]\rg]\\[5mm]
\ds+\ep^{-2}\,[u,[u,[F_j,u]]]+\mu^{-1}\,\ep^{-1}\,[u, F_j]+\mu^{-1}\ep^{-1}\,[u,[A_{j+1},A_{j-1}]]+2\,[u, [F_{j-1},F_{j+1}]]
\end{array}
\]
This gives finally
\[
\begin{array}{l}
\ds\Delta |[u,F_A]|^2=2\, \sum_{l,j=1}^3|(\nabla_{A})_l[u,F_j]|^2+2\,\sum_{l,j=1}^3[u,F_j]\cdot(\nabla_A)_l(\nabla_{A})_l[u,F_j]\\[5mm]
\ds=2\, \sum_{l,j=1}^3|(\nabla_{A})_l[u,F_j]|^2-2\,\frac{\la}{\ep^2}\,(1-|u|^2)\,|[u,F_j]|^2+4\, \sum_{l,j=1}^3[(\nabla_{A})_lu,(\nabla_A)_lF_j]\cdot[u,F_j]\\[5mm]
\ds+4\,\ep^{-2}\, \sum_{j=1}^3\lf[u, [(\nabla_A)_{j+1}u,(\nabla_A)_{j-1}u]\rg]\cdot[u,F_j]+2\,\ep^{-2}\,[u,[u,[F_j,u]]]\cdot[u,F_j]\\[5mm]
\ds+2\,\mu^{-1}\,\ep^{-1}\,|[u,F_A]|^2+2\,\mu^{-1}\,\ep^{-1}\, \sum_{j=1}^3[u,[A_{j+1},A_{j-1}]]\cdot[u,F_j]+4\,\sum_{j=1}^3[u, [F_{j-1},F_{j+1}]]\cdot[u,F_j]
\end{array}
\]
Observe that we have
\[
4\, [(\nabla_{A})_lu,(\nabla_A)_lF_j]=4\,|u|^{-2}\,u\cdot\p_{x_l}u \ [u, (\nabla_A)_lF_j]  -4\,[[u,[u,(\nabla_{A})_lu]],(\nabla_A)_lF_j]
\]
Hence
\[
\begin{array}{l}
\ds 4\, [(\nabla_{A})_lu,(\nabla_A)_lF_j]\cdot[u,F_j]=4\,|u|^{-2}\,u\cdot\p_{x_l}u \ [u, (\nabla_A)_lF_j] \cdot[u,F_j]\\[5mm]
\ds -4\,[[u,[u,(\nabla_{A})_lu]],(\nabla_A)_lF_j]\cdot[u,F_j]
\end{array}
\]
We have in one hand
\[
\begin{array}{l}
\ds4\,|u|^{-2}\,u\cdot\p_{x_l}u \ [u, (\nabla_A)_lF_j] \cdot[u,F_j]\\[5mm]
\ds=|u|^{-2}\,\p_{x_l}|u|^2\,\p_{x_l}|[u,F_j]|^2-2\, |u|^{-2}\,\p_{x_l}|u|^2\,[(\nabla_A)_lu,F_j]\cdot[u,F_j]
\end{array}
\]
Thus
\[
\begin{array}{l}
\ds 4\, [(\nabla_{A})_lu,(\nabla_A)_lF_j]\cdot[u,F_j]=|u|^{-2}\,\p_{x_l}|u|^2\,\p_{x_l}|[u,F_j]|^2-2\, |u|^{-2}\,\p_{x_l}|u|^2\,[(\nabla_A)_lu,F_j]\cdot[u,F_j]\\[5mm]
\ds  -4\,[[u,[u,(\nabla_{A})_lu]],(\nabla_A)_lF_j]\cdot[u,F_j]
\end{array}
\]
This gives finally
\[
\begin{array}{l}
\ds\Delta |[u,F_A]|^2=2\, \sum_{l,j=1}^3|(\nabla_{A})_l[u,F_j]|^2+2\,\frac{1}{\ep^2}\,[|u|^2-\la(1-|u|^2)]\,|[u,F_j]|^2\\[5mm]
\ds +|u|^{-2}\,\p_{x_l}|u|^2\,\p_{x_l}|[u,F_j]|^2-2\, |u|^{-2}\,\p_{x_l}|u|^2\,[(\nabla_A)_lu,F_j]\cdot[u,F_j]\\[5mm]
\ds  -4\,[[u,[u,(\nabla_{A})_lu]],(\nabla_A)_lF_j]\cdot[u,F_j]\\[5mm]
\ds+4\,\ep^{-2}\, \sum_{j=1}^3\lf[u, [(\nabla_A)_{j+1}u,(\nabla_A)_{j-1}u]\rg]\cdot[u,F_j]\\[5mm]
\ds+2\,\mu^{-1}\,\ep^{-1}\,|[u,F_A]|^2+2\,\mu^{-1}\,\ep^{-1}\, \sum_{j=1}^3[u,[A_{j+1},A_{j-1}]]\cdot[u,F_j]\\[5mm]
\ds+4\,\sum_{j=1}^3[u, [F_{j-1},F_{j+1}]]\cdot[u,F_j]
\end{array}
\]
 We also write $((\nabla_{A})_lu)^T$ the component of $(\nabla_{A})_lu$ which is transverse to $u$ and we have
\[
\begin{array}{l}
\ds -4\,[[u,[u,(\nabla_{A})_lu]],(\nabla_A)_lF_j]\cdot[u,F_j]=4\,[((\nabla_{A})_lu)^T,u]\cdot[u,F_j]\,u\cdot(\nabla_A)_lF_j\\[5mm]
\ds-4\,\,|u|^{-2}\,[((\nabla_{A})_lu)^T,[u,[u,(\nabla_{A})_lF_j]]]\cdot [u,F_j]\\[5mm]
\ds=4\,[(\nabla_{A})_lu,u]\cdot[u,F_j]\,\p_{x_l}\lf(u\cdot F_j\rg)-4\,[(\nabla_{A})_lu,u]\cdot[u,F_j]\,(\nabla_A)_lu\cdot F_j\\[5mm]
\ds-4\,\,|u|^{-2}\,[((\nabla_{A})_lu)^T,[u,[u,(\nabla_{A})_lF_j]]]\cdot [u,F_j]\
\end{array}
\]
Observe that both $((\nabla_{A})_lu)^T$ and $[u,[u,(\nabla_{A})_lF_j]]$ are in the plane perpendicular to $u$. Hence their bracket is parallel to $u$ and we deduce
\[
\begin{array}{l}
\ds-4\,\,|u|^{-2}\,[((\nabla_{A})_lu)^T,[u,[u,(\nabla_{A})_lF_j]]]\cdot [u,F_j]=0
\end{array}
\]
This gives finally
\[
\begin{array}{l}
\ds -4\,[[u,[u,(\nabla_{A})_lu]],(\nabla_A)_lF_j]\cdot[u,F_j]\\[5mm]
\ds=4\,[(\nabla_{A})_lu,u]\cdot[u,F_j]\,\p_{x_l}\lf(u\cdot F_j\rg)-4\,[(\nabla_{A})_lu,u]\cdot[u,F_j]\,(\nabla_A)_lu\cdot F_j
\end{array}
\]
We deduce
\[
\begin{array}{l}
\ds\Delta |[u,F_A]|^2=2\, \sum_{l,j=1}^3|(\nabla_{A})_l[u,F_j]|^2+2\,\frac{1}{\ep^2}\,[|u|^2-\la(1-|u|^2)]\,|[u,F_j]|^2\\[5mm]
\ds +|u|^{-2}\,\p_{x_l}|u|^2\,\p_{x_l}|[u,F_j]|^2-2\, |u|^{-2}\,\p_{x_l}|u|^2\,[(\nabla_A)_lu,F_j]\cdot[u,F_j]\\[5mm]
\ds+4\,[(\nabla_{A})_lu,u]\cdot[u,F_j]\,\p_{x_l}\lf(u\cdot F_j\rg)-4\,[(\nabla_{A})_lu,u]\cdot[u,F_j]\,(\nabla_A)_lu\cdot F_j\\[5mm]
\ds+4\,\ep^{-2}\, \sum_{j=1}^3\lf[u, [(\nabla_A)_{j+1}u,(\nabla_A)_{j-1}u]\rg]\cdot[u,F_j]\\[5mm]
\ds+2\,\mu^{-1}\,\ep^{-1}\,|[u,F_A]|^2+2\,\mu^{-1}\,\ep^{-1}\, \sum_{j=1}^3[u,[A_{j+1},A_{j-1}]]\cdot[u,F_j]\\[5mm]
\ds+4\,\sum_{j=1}^3[u, [F_{j-1},F_{j+1}]]\cdot[u,F_j]
\end{array}
\]
We assume $1-|u|^2\le \delta$ and we choose $\delta>0$ such that $[|u|^2-\la(1-|u|^2)]\ge (1-\delta-\la\delta)>2^{-1}$ and we observe that the following general identity holds for any  function $v$ such that $v^2\in C^2$
\[
\begin{array}{l}
\ds\Delta\sqrt{v^2+t^2}=\mbox{div}\lf[ \frac{v}{\sqrt{v^2+t^2}}\nabla v\rg]= \frac{v\,\Delta v}{\sqrt{v^2+t^2}}+\frac{|\nabla v|^2}{\sqrt{v^2+t^2}}-\frac{v^2\,|\nabla v|^2}{(v^2+t^2)^{3/2}}\\[5mm]
\ds=\frac{v\,\Delta v}{\sqrt{v^2+t^2}}+\frac{v^2\,|\nabla v|^2}{(v^2+t^2)^{3/2}}\ge\frac{1}{2}\frac{\Delta v^2-2\,|\nabla v|^2}{\sqrt{v^2+t^2}}
\end{array}
\]
We apply this identity to  $v=|[u,F_A]|$ to deduce after letting $t\rightarrow 0$ we obtain
\[
\begin{array}{l}
\ds\Delta |[u,F_A]|-4\,|u|^{-1}\,\p_{x_l}|u|\,\p_{x_l}|[u,F_A]|-\frac{1}{2\ep^2}|[u,F_A]|\ge-8\, |u|^{-2}\, |\nabla_A\,u|^2\,|F_A|\\[5mm]
\ds -4\, |\nabla_Au|\,|\nabla(F_A\cdot u)|-4\,\ep^{-2}|\nabla_A u|^2-2\,\mu^{-1}\,\ep^{-1}\,|A|^2-4\,|F_A|^2
\end{array}
\]
and this concludes the proof of proposition~\ref{pr-bwt}.\hfill $\Box$

\section{The Regularity of $CYMH$ Minimizers} 
\reset
In this part we are going to work with fixed parameters and prove the regularity of minimizers to the $CYMH$ Functional. Hence, to simplify the notations we  take all the parameters $\ep,\mu,\la$ equal to one. Precisely we prove the following result.
\begin{Th}
\label{th-reg}
Let $(u,A)\in {\mathcal E}$ be a minimizer of
\[
CYMH(u,A)=\frac{1}{2}\int_{\B^3}|A|^2+|\nabla_A u|^2+|F_A|^2\ +\frac{1}{2}(1-|u|^2)^2\ dx^3
\]
under the condition $u=\phi$ on $\p B^3$ where $\phi\in H^{1/2}(\p\B_1(0),{\S}^2$ then $u$ and $A$ are $C^\infty$ in the interior of $\B^3$.
\end{Th}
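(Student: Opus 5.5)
The plan is to exploit the fact that, by Proposition~\ref{pr-coul}, a minimizer automatically satisfies the Coulomb condition $d^\ast A=0$ globally. Combined with $F_A\in L^2$, this should make $A$ a genuine $W^{1,2}$ object locally, after which the Euler--Lagrange system (\ref{CYMHS}) becomes an elliptic system. Regularity should then follow from an $\ep$-regularity/bootstrap argument.

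\textbf{Step 1: Sobolev regularity of $A$.} The minimizer lies in ${\mathcal E}$, so on small balls $\B_r(x_0)$ we have $\|F_A\|_{L^{3/2}}<\delta_2$ by absolute continuity of $\int|F_A|^2$. Theorem~\ref{th-Uhl} then gives $g$ with $A^g\in W^{1,2}$, $d^\ast A^g=0$. Writing $A=gA^gg^{-1}-dg\,g^{-1}$ and using $d^\ast A=0$ yields an equation of the form
\[
d^\ast(dg\,g^{-1})=d^\ast(gA^gg^{-1}),
\]
that is, $-\Delta g$ equals terms quadratic in $(dg, A^g)$ with div--curl/Wente-type structure. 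I would prove $g\in W^{1,p}$ for some $p>2$ and then bootstrap, concluding $A\in W^{1,2}_{loc}$, hence $A\in L^6_{loc}$. Alternatively, I would work directly with the identity $dA=F_A-A\wedge A$, $d^\ast A=0$, giving $\Delta A\approx d^\ast F_A+\mbox{quadratic}$. Both routes hinge on the small-energy quadratic term, and this is where I expect the main difficulty: $A\wedge A$ is only $L^1$ a priori, which is critical in 3D. Smallness (scale invariant quantity $\|A\|_{L^3}$ or $\|F_A\|_{L^{3/2}}$ small on small balls) must absorb it.

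\textbf{Step 2: Regularity of $u$ and the system.} In the Coulomb frame $d^\ast A=0$, the first equation of (\ref{CYMHS-co}) reads
\[
-\Delta u=2A\cdot\nabla u+[A,[A,u]]\text{-type terms}+\la u(1-|u|^2).
\]
First obtain $u\in L^\infty$ by a maximum-principle argument on $|u|^2$, using Kato and $\Delta|u|^2\ge 2|\nabla_Au|^2-2|u|^2(1-|u|^2)$, which gives $|u|\le \max(1,\|\phi\|_\infty)$. Then, since $A\in L^6$, standard $L^p$ elliptic estimates give $u\in W^{2,3/2}$ and then $W^{2,p}$ for larger $p$ iteratively.

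\textbf{Step 3: Bootstrap for $A$.} The second equation with $d^\ast A=0$ gives
\[
\Delta A - A = \text{(terms in } A\nabla A,\ A^3,\ [u,d_Au]).
\]
With $u\in W^{2,p}$ and $A\in W^{1,2}$, I would gain integrability step by step: first $A\in W^{2,q}$ for some $q>1$, then $A\in W^{1,p}$, $p>3$. Alternating with Step 2 gives $C^{k,\alpha}$ for every $k$ by Schauder theory.

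The hardest point is the initial gain in Step 1, namely passing from the critical integrability ($A\in L^2$, $F_A\in L^2$, $d^\ast A=0$) to slightly supercritical integrability. I would first try to use the local Uhlenbeck gauge to get $A^g\in W^{1,2}\subset L^6$. I would then show that $g$ solves a harmonic-map-type equation into $SU(2)$ with an $L^6\cdot L^2$ right-hand side, to which Rivière-type antisymmetric potential techniques apply. This should yield $g\in W^{2,3/2}$ and thus $A\in W^{1,3/2}\cap L^3$ with small $L^3$ norm, after which the absorption in Step 3 closes.
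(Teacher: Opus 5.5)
\textbf{Gap: Step 1 does not follow, and Steps 2--3 depend on it.}

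Uhlenbeck's theorem controls only the Coulomb-gauge representative $A^g$, which is small in $L^3$ because $\|F_A\|_{L^{3/2}}$ is small. The actual connection is $A=gA^gg^{-1}-dg\,g^{-1}$ with $g$ merely in $W^{1,2}$. The condition $d^\ast A=0$ turns into a harmonic map equation for $g$ into $SU(2)\simeq\S^3$, perturbed by a term bilinear in $dg$ and $A^g$.

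In dimension three this problem is supercritical. Smallness of $\|F_A\|_{L^{3/2}}$, or of $\int_{\B_r}|dg|^2$, says nothing about the scale-invariant quantity $\rho^{-1}\int_{\B_\rho(x)}|dg|^2$. Without smallness of that quantity, neither Wente-type absorption nor the antisymmetric-potential theory can be started: the three-dimensional version of the latter (Rivi\`ere--Struwe) requires a small Morrey norm.

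A concrete illustration: take $g(x)=x/|x|$, read as a unit imaginary quaternion. It is weakly harmonic into the totally geodesic equator $\S^2\subset\S^3$. Hence $A=g^{-1}dg$ satisfies $d^\ast A=0$, $F_A=0$ and $A\in L^2$. Yet $\rho^{-1}\int_{\B_\rho(0)}|dg|^2=8\pi$ for every $\rho$, and $A\notin L^3$, $A\notin W^{1,2}$, $g\notin W^{2,3/2}$. This pair is not a solution of the full system. It does show that the Coulomb condition plus control of $F_A$ cannot yield ``$A\in W^{1,2}_{loc}$, hence $L^6_{loc}$'' or ``$g\in W^{2,3/2}$''. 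An additional input is needed to exclude concentration of $\rho^{-1}\int_{\B_\rho}|A|^2$.

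\textbf{The missing input is minimality.} Your plan uses nothing beyond the Coulomb condition, which holds for every critical point, so it would prove regularity of arbitrary critical points. But the gauge part is governed by a harmonic-map problem into $\S^3$, where in three dimensions full regularity is available only for minimizers.

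The paper proceeds as follows.
\begin{enumerate}
\item Lemma~\ref{lm-L6} gives $d_Au,F_A\in L^6_{loc}$ and $u\in L^\infty_{loc}$. Consequently, in the monotonicity formula of Lemma~\ref{lm-mono}, every term except $\rho^{-1}\int_{\B_\rho}|A|^2$ is small at small scales.
\item Smallness of $r^{-1}\int_{\B_r}|A|^2$ therefore propagates to all smaller balls (Lemma~\ref{lm-smal}).
\item Under this Morrey smallness, Theorem~\ref{th-delreg} gives smoothness. It uses the Hodge decomposition $g^{-1}dg=d\sigma+d^\ast\zeta$, BMO smallness of $g$, a Morrey decay estimate and Adams' estimates.
\item It remains to show that $\lim_{\rho\to 0}\rho^{-1}\int_{\B_\rho(x_0)}|A|^2$ vanishes everywhere. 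At a putative bad point one blows up the gauge $g$. The limit is weakly harmonic into $\S^3$ and, by the radial term in the monotonicity formula, homogeneous of degree zero.
\item A gauge change alters only $\int|A|^2$. So minimality, together with the Luckhaus lemma, gives strong convergence of the blow-ups and shows the limit is energy minimizing.
\item Schoen--Uhlenbeck's theorem that minimizing tangent maps $\R^3\to\S^3$ are constant then contradicts the positive density.
\end{enumerate}
Some argument of this kind, ruling out concentration of $\rho^{-1}\int|A|^2$, must come before your Steps~2--3 can be run.
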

Before proving theorem~\ref{th-reg} we establish the following lemma.
\begin{Lm}
\label{lm-L6}
Let  $(u,A)$ be a stationary critical point of $CYMH$ on $\B^3$ then $d_Au$ and $F_A$ are in $L^6_{loc}(\B^3)$ and $u\in L^\infty_{loc}(\B^3)$.
\hfill $\Box$
\end{Lm}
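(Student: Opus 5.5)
The plan is to work locally and in a Coulomb gauge, which turns the Euler--Lagrange system (\ref{CYMHS}) into an elliptic system with critical-type nonlinearities. We then bootstrap in $L^p$ with the help of the Coulomb condition of Proposition~\ref{pr-coul}. The one feature that makes this harder than a pure Yang--Mills--Higgs problem is the term $A/\mu$ in the second equation, which is not gauge covariant. Because of it, changing gauge produces extra terms involving $g$ that have to be controlled.

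\textbf{Step 1: choice of gauge.} By Proposition~\ref{pr-coul} we already have $d^\ast A=0$ globally. I will keep this gauge and avoid passing to an Uhlenbeck gauge. The second equation of (\ref{CYMHS}) with $d^\ast A=0$ then reads
\[
\Delta A = d^\ast dA = -d^\ast(A\wedge A)-[A\res F_A]-[u,d_Au]-A .
\]
The right-hand side involves only $A\in L^2$, $d_Au\in L^2$, $F_A\in L^2$ and $u\in L^6$ (Kato plus Sobolev). By Hodge theory, $dA=F_A-A\wedge A$ with $A\in L^2$, so $A\wedge A\in L^1$; this is critical and too weak to start from directly.

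I therefore first upgrade $A$ through small-energy balls. On a ball where $\|F_A\|_{L^{3/2}}$ is small, Theorem~\ref{th-Uhl} gives a gauge $g$ with $\|A^g\|_{W^{1,2}}\lesssim\|F_A\|_{L^2}$, so $A^g\in L^6$. Writing $A=g\,A^g g^{-1}-dg\,g^{-1}$ together with $d^\ast A=0$ gives an equation for $g$ of the form $d^\ast(dg\,g^{-1})=d^\ast(gA^gg^{-1})$. This is a harmonic-map-type system with right-hand side in $W^{-1,6}$, and I expect it to yield $dg\in L^p$ for some $p>2$. Hence $A\in L^p_{loc}$ for some $p>2$.

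\textbf{Step 2: bootstrap.} With $A\in L^{p}$, $p>2$, every quadratic term $A\wedge A$, $[A,u]$, $[A\res F_A]$ improves its integrability. From $\Delta A=\cdots$ we get $A\in W^{1,q}$ with $q$ increasing, and in particular $A\in L^{6}$ after finitely many steps. For $u$, the first equation is
\[
-\Delta u = 2[A,du]+[d^\ast A,u]+[A,[A,u]]+u(1-|u|^2),
\]
where the term $[d^\ast A,u]$ vanishes by the Coulomb condition. The cubic term $u(1-|u|^2)$ lies only in $L^2$ when $u\in L^6$, but it carries a good sign. Testing with truncations of $u$, or using the Kato inequality $\Delta|u|\ge -\langle \nabla_A^\ast\nabla_A u,u\rangle/|u| = |u|(1-|u|^2)$, shows that $|u|$ is subharmonic where $|u|>1$. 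This gives $u\in L^\infty_{loc}$ by local sup estimates (Moser iteration on $(|u|-1)_+$).

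Once $u\in L^\infty$ and $A\in L^6$, elliptic estimates for the $u$ equation give $du\in L^6$, so $d_Au=du+[A,u]\in L^6$. Likewise $F_A=dA+A\wedge A$, where $dA\in L^6$ from $A\in W^{1,3}$ and $A\wedge A\in L^3$. To get $A\wedge A\in L^6$ one more step is needed: $A\in W^{1,3}\subset L^q$ for all $q<\infty$, which gives $F_A\in L^6$.

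\textbf{Main obstacle.} The hard part is the initial gain above the critical exponent in Step~1, i.e.\ showing $A\in L^{2+\delta}$. If the equation for $g$ is troublesome, my fallback is a Gehring/reverse-Hölder argument. Comparing the energy on small balls with that of competitors gives a Caccioppoli inequality for $(A,d_Au,F_A)$, and the minimality or stationarity (Lemma~\ref{lm-mono}) controls the scaled quantity $r^{-1}\int_{B_r}|A|^2$. This would produce higher integrability directly.
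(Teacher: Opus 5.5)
There is a genuine gap, and it is your Step~1, on which everything else rests. Nothing in the plan justifies $A\in L^p_{loc}$ for some $p>2$ in the Coulomb gauge.

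\begin{itemize}
\item The equation $d^\ast(dg\,g^{-1})=d^\ast(gA^gg^{-1})$ is a harmonic-map-type system into $SU(2)\simeq\S^3$ in the supercritical dimension three. No general higher-integrability result is available for its $W^{1,2}$ solutions unless $\rho^{-1}\int_{\B_\rho}|dg|^2$ is small on all small balls.
\item Producing that smallness is what Lemma~\ref{lm-smal}, Theorem~\ref{th-delreg} and Theorem~\ref{th-reg} are for. At possible concentration points the paper obtains it only through a blow-up and Luckhaus comparison argument that uses minimality. That is not available for a stationary critical point.
\item Your fallback is circular. Comparison with competitors again needs minimality. In the monotonicity of Remark~\ref{rm-mon} the curvature enters through $\int_{\B_r(x_0)}|F_A|^2/|x-x_0|$, which is not bounded by the scaled energy. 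Lemma~\ref{lm-smal} controls it only via $\|F_A\|_{L^6}$, $\|d_Au\|_{L^6}$ and $\|u\|_{L^\infty}$, i.e.\ via the conclusion of the present lemma. Even then it only propagates smallness to smaller scales and cannot create it at a concentration point. In the paper this lemma is an input to the regularity of $A$, not an output.
\item Step~2 would not bootstrap from $2<p<3$ anyway. The Coulomb-gauge system $\Delta A=-d^\ast(A\wedge A)-[A\res F_A]-\cdots$ is critical in $L^3$. From $A\wedge A\in L^{p/2}$ you only get $A\in L^{3p/(6-p)}$, which is worse than $L^p$ when $p<3$. Also, $A\in W^{1,3}$ gives $dA\in L^3$, not $L^6$.
\end{itemize}

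In effect you are trying to regularize the Coulomb-gauge connection itself, which is far more than the lemma asserts.

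The missing idea is that the lemma only concerns the gauge-invariant quantities $|F_A|$, $|d_Au|$ and $|u|$, so you never need to improve $A$.

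\begin{itemize}
\item On a ball where $\|F_A\|_{L^{3/2}}<\delta_2$, take the Uhlenbeck gauge of Theorem~\ref{th-Uhl}: $d^\ast A^g=0$, $A^g\in W^{1,2}$, and $\|A^g\|_{L^3}$ is small.
\item Bianchi and the conjugated second Euler--Lagrange equation give the Hodge system $dF_{A^g}=-[A^g\wedge F_{A^g}]$ and $d^\ast F_{A^g}=-[A^g\res F_{A^g}]-g^{-1}([u,d_Au]+A)g$.
\item The only non-covariant term there, $g^{-1}Ag$, is just a source with $|g^{-1}Ag|=|A|\in L^2$. The smallness of $A^g$ absorbs the bilinear terms and yields $F_{A^g}\in W^{1,2}\subset L^6$, hence $|F_A|\in L^6$.
\item Similarly, $u^g$ solves $\Delta u^g+2[A^g,\nabla u^g]+[A^g,[A^g,u^g]]=-u^g(1-|u^g|^2)$ with small $A^g$. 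Lorentz-space estimates give $u^g\in L^\infty$, then $\nabla u^g\in L^6$, so $|d_Au|=|d_{A^g}u^g|\in L^6$.
\end{itemize}

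Your subharmonicity argument for $(|u|^2-1)_+$ is a correct gauge-invariant alternative for the $L^\infty$ bound on $u$, modulo a sign slip: the right-hand side should be $|u|(|u|^2-1)$. Doing it first also makes $[u,d_Au]$ an $L^2$ source in the curvature system.
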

\noindent{\bf Proof of lemma~\ref{lm-L6}}. Let $x_0\in \B^3$ and choose $r>0$ such that
\[
\|F_A\|_{L^{3/2}(\B_r(x_0))}\le \delta_2
\]
where $\delta_2$ is given by (\ref{II.0}). Hence there exists $g\in W^{1,2}(\B_r,SU(2))$ such that (\ref{II.1}),(\ref{II.2}), (\ref{II.3}) holds for $p=2$. In particular we have
\[
\|A^g\|_{L^{(3,3/2)}(\B_r(x_0))}+\|\nabla A^g\|_{L^{3/2}(\B_r(x_0))}\le C\ \|F_A\|_{L^{3/2}(\B_r(x_0))}\le C\ \delta_2
\]
where $L^{(3,3/2)}(\B_r(x_0))$ is the Lorentz space in which $W^{1,3/2}(\B_r(x_0))$ critically embeds (see \cite{Riv-Func}) and
\[
\|A^g\|_{L^{(6,2)}(\B_r(x_0))}+\|\nabla A^g\|_{L^2(\B_r(x_0))}\le C\ \|F_A\|_{L^2(\B_r(x_0))}
\]
Combining Bianchi identity together with the gauge identity formula (\ref{gau-ch}) second equation in the $CYMH$ system \ref{CYMHS} we obtain
\[
\lf\{
\begin{array}{l}
\ds d_{A^g}F_{A_g}=0\\[5mm]
\ds g\, d^\ast_{A^g}F_{A^g}\,g^{-1}=- [u,d_Au]-A\ .
\end{array}
\rg.
\]
This gives
\[
\lf\{
\begin{array}{l}
\ds d F_{A^g}=-[A^g\wedge  F_{A_g}]\qquad\mbox{ in }\B_r(x_0)\\[5mm]
\ds d^\ast F_{A^g}=-[A^g\res  F_{A_g}]-g\,[u,d_Au]\,g^{-1}-g\,A\, g^{-1}\qquad\mbox{ in }\B_r(x_0)\ .
\end{array}
\rg.
\]
This gives
\[
\begin{array}{l}
\ds-\Delta F_{A^g}=d\,d^\ast F_{A^g}+d^\ast\,d F_{A^g}\\[5mm]
\ds =-d^\ast\lf([A^g\wedge  F_{A_g}]\rg)-d([A^g\res  F_{A_g}])-d\lf( [u,d_{A^g}u^g]+A\rg)
\end{array}
\]
For any $p>3/2$
\[
\|[A^g\res  F]\|_{L^{3p/(p+3)}(\B_r)}+\|[A^g\wedge F]\|_{L^{3p/(p+3)}(\B_r)}\le  C\,\|A^g\|_{L^3({\B}_r)}\ \|F\|_{L^p(\B_r)}\le C\, \delta_2\  \|F\|_{L^p(\B_r)}\ .
\]
In particular for $p=6$ we have
\[
\|[A^g\res  F]\|_{L^{2}(\B_r)}+\|[A^g\wedge F]\|_{L^{2}(\B_r)}\le C\, \delta_2\  \|F\|_{L^6(\B_r)}
\]
For any choice of $B\in L^2(\Om^1(\B_{r/2})\otimes \frak{su}(2))$, classical result in elliptic theory give that, for $\delta_2$ chosen small enough any $L^2$ solution $F$ of
\[
-\Delta F=-d^\ast\lf([A^g\wedge  F]\rg)-d([A^g\res  F])-dB
\]
satisfies
\[
\|F\|_{L^6((\B_{r/2}(x_0))}+\sum_{k,l,m=1}^3\|\p_{x_k} F_{lm}\|_{L^2(\B_{r/2}(x_0))}\le C\, \|B\|_{L^2(\B_{r}(x_0))}+C\,r^{-1}\,\|F\|_{L^2(\B_{r}(x_0))}\ .
\]
Hence we have that $F_{A^g}\in W^{1,2}(\Om^2(\B_{r/2})\otimes \frak{su}(2))$ and in particular $F_{A^g}\in L^6(\Om^2(\B_{r/2})\otimes \frak{su}(2))$. Moreover the following inequality holds
\be
\label{curv-l6}
\|F_{A}\|_{L^6((\B_{r/2}(x_0))}\le \, C\, \lf(\|A\|_{L^2((\B_{r}(x_0))}+\|d_Au\|_{L^2((\B_{r}(x_0))}+C\,r^{-1}\,\|F\|_{L^2(\B_{r}(x_0))}\rg)
\ee

\medskip

Concerning now the covariant derivative of $u^g$ with respect to $A^g$ we have using the fact that $d^\ast A^g=0$ 
\[
\begin{array}{l}
\ds\sum_{l=1}^3(\nabla_{A^g})_l(\nabla_{A^g})_lu^g=\sum_{l=1}^3\p_{x_l}(\p_{x_l}u^g+[A^g_l,u])+[A^g_l, \p_{x_l}u^g+[A^g_l,u^g]]\\[5mm]
\ds=\Delta u^g+2\,\sum_{l=1}^3[A_l^g,\p_{x_l}u^g]+[A_l^g[A_l^g,u^g]]
\end{array}
\]
Hence
\be
\label{Hig-EL}
\Delta u^g+2\,\sum_{l=1}^3[A_l^g,\p_{x_l}u^g]+[A_l^g[A_l^g,u^g]]=-u^g\,(1-|u^g|^2)
\ee
Since $u^g\in L^4(\B_r,\frak{su}(2))$,  and $A^g\in L^6(\Om^1(\B_r)\otimes\,\frak{su}(2))$, there holds $[A^g,u^g]\in L^{12/5}(\Om^1(\B_r)\otimes\frak{su}(2))$. Combining this fact with
$d_{A^g}u^g\in L^2(\Om^1(\B_r)\otimes\frak{su}(2))$ we obtain that $u^g\in W^{1,2}(\B_r,\frak{su}(2))$. We have also using Kato inequality
\[
|\nabla|u||\le |\nabla_Au|
\]
Thus
\be
\label{L6u}
\|u\|_{L^6({\B}_r(x_0))}\le \, \|\nabla_A u\|_{L^2(\B_r(x_0))}+\, r^{-1/4}\, \|u\|_{L^4(\B_r(x_0))}\ .
\ee
We have the a-priori estimate for any $v\in W^{1,p}(\B_r,\frak{su}(2))$ and $p>3/2$
\[
\|[A_l^g,\p_{x_l}v]\|_{L^{{3p}/(p+3)}(\B_r)}\le\,\|A^g\|_{L^3(\B_r) }\ \|\p_{x_l} v\|_{L^p(\B_r)}\le C\,\delta_2 \ \|\p_{x_l} v\|_{L^p(\B_r)}
\]
and for $3>p>3/2$
\[
\|[A_l^g[A_l^g,v]]\|_{L^{{3p}/(p+3)}(\B_r)}\le\,\|A^g\|^2_{L^3(\B_r) }\ \| v\|_{L^{{3p}/{(3-p)}}(\B_r)}\le C\,\delta_2^2\ \| v\|_{W^{1,p}(\B_r)}\ ,
\]
and  we have for any $p=3$, using Lorentz spaces estimates we have respectively
\[
\|[A_l^g,\p_{x_l}v]\|_{L^{(3/2,1)}(\B_r)}\le\,\|A^g\|_{L^{(3,3/2)}(\B_r) }\ \|\p_{x_l} v\|_{L^3(\B_r)}\le C\,\delta_2 \ \|\p_{x_l} v\|_{L^3(\B_r)}\ ,
\]
and 
\[
\|[A_l^g[A_l^g,v]]\|_{L^{(3/2,1)}(\B_r)}\le\,\|A^g\|^2_{L^{(3,2)}(\B_r) }\ \|v\|_\infty\le C\,\delta_2^2\ \| v\|_{W^{1,(3,1)}(\B_r)}\ .
\]
Hence, classical elliptic estimates give that $W^{1,2}$ solutions to the system 
\[
\Delta v+2\,\sum_{l=1}^3[A_l^g,\p_{x_l}v]+[A_l^g[A_l^g,v]]=f\ ,
\]
where $f\in L^{(3/2,1)}(\B_r\otimes \frak{su}(2))$ are in $L^\infty_{loc}(\B_r)$ and in particular satisfy
\[
\|v\|_{L^\infty(\B_{3r/4})}\le C\ \|f\|_{L^{(3/2,1)}(\B_r))}+C\,r^{-1/2}\,\|v\|_{L^6(\B_r)}\ .
\]
Hence in particular $u^g\in L^\infty(\B_{3r/4})$ and there holds
\be
\label{linftyug}
\begin{array}{l}
\ds\|u\|_{L^\infty(\B_{3r/4}(x_0))}\le C\, r^{1/2}\, \||u|^3\|_{L^2({\B}_r(x_0))}+C\, r^{-1/2}\, \|u\|_{L^6({\B}_r(x_0))}\\[5mm]
\ds\le C \,  r^{1/2}\, \lf( \|\nabla_A u\|^3_{L^2(\B_r(x_0))}+\, r^{-3/4}\, \|u\|^3_{L^4(\B_r(x_0))}\rg)\\[5mm]
\ds\qquad+C\, r^{-1/2}\, \lf( \|\nabla_A u\|_{L^2(\B_r(x_0))}+\, r^{-1/4}\, \|u\|_{L^4(\B_r(x_0))}\rg)
\end{array}
\ee
This gives that $[A_l^g[A_l^g,u^g]]\in L^2(\B_{3r/4})$ and bootstrapping further in the system (\ref{Hig-EL}), assuming $\delta_2$ is sufficiently small, one obtains that
\[
\begin{array}{l}
\ds \|\nabla u^g\|_{L^{(6,2)}(\B_{r/2}(x_0))} +\|\nabla^2 u^g\|_{L^2(\B_{r/2}(x_0))}\\[5mm]
\ds\quad\le C\, \|u^g\|_{L^\infty(\B_{3r/4}(x_0))}\, \|A^g\|^2_{L^4(\B_{3r/4}(x_0))}+ \|u^g\|_{L^\infty(\B_{3r/4}(x_0))}^3+C\,r^{-1}\, \|\nabla u^g\|_{L^{2}(\B_{r}(x_0))} \ .
\end{array}
\]
This concludes the proof of lemma~\ref{lm-L6}.\hfill $\Box$

\medskip

We shall then use the following Lemma
\begin{Lm}
\label{lm-smal}
Let  $(u,A)$ be a stationary critical point of $CYMH$ on $\B^3$. Let $r_1<1$ and let $\delta>0$. There exists $r_0>0$  depending only on 
$\|d_Au\|_{L^6(B_{r_1}(0))}$, $\|F_A\|_{L^6(B_{r_1}(0))}$ and $\|u\|_{L^\infty(B_{r_1}(0))}$  such that  for any $x_0\in \B_{r_1}(0)$ and any $r<r_0$ for which $\B_{r}(x_0)\subset \B_{r_1}(0)$  assume
\[
\frac{1}{r}\int_{\B_r(x_0)}|A|^2\ dx^3<\delta, 
\]
then, for any $y\in B_{r/2}(x_0)$ and any $\rho<r/4$
\[
\frac{1}{\rho}\int_{\B_\rho(y)}|A|^2\ dx^3<5\,\delta, 
\]
where $C$ is universal.\hfill $\Box$
\end{Lm}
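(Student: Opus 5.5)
The plan is to use the monotonicity identity of Remark~\ref{rm-mon}, in the form (\ref{trumon}) with $\ep=\mu=\la=1$, centered at the point $y$ rather than at $x_0$. Since $(u,A)$ is stationary, Lemma~\ref{lm-mono} applies at every center $y$ and radius $s$ with $\B_s(y)\subset\B^3$. The right-hand side of (\ref{trumon}) is nonnegative. Hence the function
\[
\Phi_y(s):=\frac{1}{2s}\int_{\B_s(y)}|A|^2+|\nabla_Au|^2+\frac{3}{2}(1-|u|^2)^2+\lf(\frac{2s}{|x-y|}-1\rg)|F_A|^2\ dx^3
\]
is nondecreasing in $s$. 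It is also absolutely continuous, because Lemma~\ref{lm-L6} gives $F_A\in L^6_{loc}$, which makes the weight $|x-y|^{-1}|F_A|^2$ integrable.

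For $y\in\B_{r/2}(x_0)$ we have $\B_{r/2}(y)\subset\B_r(x_0)\subset\B_{r_1}(0)$, so $\Phi_y$ is nondecreasing on $(0,r/2]$. Let $\rho<r/4$. On $\B_\rho(y)$ the weight $2\rho/|x-y|-1$ is at least $1$, so every integrand in $\Phi_y(\rho)$ is nonnegative. This gives
\[
\frac{1}{2\rho}\int_{\B_\rho(y)}|A|^2\ dx^3\le\Phi_y(\rho)\le\Phi_y(r/2).
\]

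It remains to bound $\Phi_y(r/2)$ term by term.
\begin{itemize}
\item The $|A|^2$ term satisfies $\frac1r\int_{\B_{r/2}(y)}|A|^2\le\frac1r\int_{\B_r(x_0)}|A|^2<\delta$.
\item For the covariant derivative, H\"older on a ball of volume $\sim r^3$ gives $\frac1r\int_{\B_{r/2}(y)}|\nabla_Au|^2\le C\,r^{-1}r^{2}\|d_Au\|^2_{L^6(\B_{r_1})}=C\,r\,\|d_Au\|_{L^6}^2$.
\item The potential term is bounded by $C\,r^2\,(1+\|u\|_{L^\infty(\B_{r_1})}^2)^2$.
\item For the curvature, the weight is at most $r/|x-y|$. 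Since $\||x-y|^{-1}\|_{L^{3/2}(\B_{r/2}(y))}\le C\,r$, H\"older gives
\[
\frac1r\int_{\B_{r/2}(y)}\frac{r}{|x-y|}|F_A|^2\le \|F_A\|^2_{L^6}\,\||x-y|^{-1}\|_{L^{3/2}(\B_{r/2}(y))}\le C\,r\,\|F_A\|^2_{L^6(\B_{r_1})}.
\]
\end{itemize}
Now choose $r_0$, depending on $\delta$ and on $\|d_Au\|_{L^6(\B_{r_1})}$, $\|F_A\|_{L^6(\B_{r_1})}$, $\|u\|_{L^\infty(\B_{r_1})}$, so that the sum of the last three contributions is less than $\delta/2$ whenever $r<r_0$. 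Then $\Phi_y(r/2)<\frac{3}{2}\delta$, and therefore $\frac1\rho\int_{\B_\rho(y)}|A|^2\le 2\,\Phi_y(r/2)<3\delta<5\delta$.

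The main obstacle is justifying the monotonicity in the needed generality. The identity (\ref{trumon}) must be valid at the level of regularity given by Lemma~\ref{lm-L6}, via the density argument in the proof of Lemma~\ref{lm-mono}. One must also check that the sign-indefinite curvature weight $2s/|x-y|-1$, which is negative on the annulus $s<|x-y|<2s$ only in the sense of the integrand being compared at different radii, does not spoil the inequality. This is handled because the weight is $\ge1$ inside $\B_s(y)$, which is the only region entering $\Phi_y(s)$. The singular weight $|x-y|^{-1}$ is controlled precisely through the $L^6$ bound on $F_A$, and this is why $r_0$ must depend on $\|F_A\|_{L^6}$.
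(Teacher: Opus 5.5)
Your proof is correct and follows the paper's argument. You apply the monotonicity identity (\ref{trumon}) centred at $y$, and you make the $|\nabla_Au|^2$, potential and $|x-y|^{-1}|F_A|^2$ contributions small through the $L^6$ and $L^\infty$ bounds and the choice of $r_0$. The only difference is cosmetic: you compare with the intermediate scale $r/2$ and obtain $3\delta$, whereas the paper uses $r/4$ and obtains $5\delta$.

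One remark in your closing paragraph is slightly off. The weight $2s/|x-y|-1$ is positive for $|x-y|<2s$ and only becomes negative for $|x-y|>2s$, not on $s<|x-y|<2s$. This plays no role, since only $\B_s(y)$ enters $\Phi_y(s)$.
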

\noindent{\bf Proof of lemma~\ref{lm-smal}}
 Let $B_{r}(y)\subset \B_{r_1}(0)$. There holds
\[
\frac{1}{r}\int_{ B_r(y)}{|\nabla_Au|^2}\ dx^3\le \frac{1}{r}\ (4/3\,\pi\, r^3)^{2/3}\ \|\nabla_Au\|_{L^6(B_{r_1}(0))}^2\le (4/3)^{2/3}\, {r}\, \ \|\nabla_Au\|_{L^6(B_{r_1}(0))}^2
\]
moreover
\[
\frac{1}{r}\int_{ B_r(y)}\frac{3}{2}(1-|u|^2)^2\ dx^3\le \frac{4\,\pi}{2}\, r^2\, \lf(1+\|u\|^2_{L^\infty(B_{r_1}(0))}\rg)^2
\]
and
\[
\int_{B_r}\lf(\frac{2}{|x-y|}-\frac{1}{r}\rg)\,|F_A|^2\ dx^3\le 2\,\lf(\frac{8\pi}{3}\rg)^{2/3}\,r\, \|F_A\|^2_{L^6(B_{r_1}(0))}\
\]
where $C>0$ in the  inequalities is universal. We choose $r_0>0$ such that
\be
\label{r0ch}
\lf(\frac{4}{3}\rg)^{2/3}  {r_0}\,\|\nabla_Au\|_{L^6(B_{r_1}(0))}^2+\frac{4\,\pi}{2}\, r_0^2\,\lf(1+\|u\|^2_{L^\infty(B_{r_1}(0))}\rg)^2+2\,\lf(\frac{8\pi}{3}\rg)^{2/3}r_0\, \|F_A\|^2_{L^6(B_{r_1}(0))}\le \delta
\ee
The monotonicity formula (\ref{III.1}) is implying that for any $y\in \B_1(0)$, $0<r<1-|y|$
\be
\label{mon-i}
\frac{d}{dr}\lf(\frac{1}{r}\int_{B_{r}(y)}|A|^2+{|\nabla_Au|^2}+\frac{3}{2}(1-|u|^2)^2+\lf(2\,\frac{r}{|x|}-1\rg)\,|F_A|^2\ \ dx^3\rg)\ge 0
\ee
Let  $\B_{r}(x_0)\subset \B_{r_1}(0)$ and $r<r_0$. Assume
\[
\frac{1}{r}\int_{\B_r(x_0)}|A|^2\ dx^3<\delta, 
\]
Then for any $y\in B_{r/2}(x_0)$ there holds
\[
\frac{4}{r}\int_{\B_{r/4}(y)}|A|^2(x)\ dx^3\le\, 4\delta
\]
This implies that, thanks to the fact tha $r/4<r_0$ and that $r_0$ has been chosen such that (\ref{r0ch}) holds true, we have
\[
\frac{4}{r}\int_{B_{r/4}(y)}|A|^2+{|\nabla_Au|^2}+\frac{3}{2}(1-|u|^2)^2+\lf(\frac{r}{2\,|x|}-1\rg)\,|F_A|^2\ \ dx^3\le 5\,\delta
\]
Using the monotonicity (\ref{mon-i}) we obtain in particular
\[
\forall y\in B_{r/2}(x_0)\qquad\forall\rho<\frac{r}{4} \qquad\frac{1}{\rho}\int_{B_\rho(y)}|A|^2\ dx^3<5\,\delta\ .
\]
This concludes the proof of lemma~\ref{lm-smal}.\hfill $\Box$

\medskip

We shall now prove the following lemma
\begin{Th}
\label{th-delreg}
Let  $(u,A)$ be a stationary critical point of $CYMH$ on $\B^3$. Let $r_1<1$. There exists $\delta>0$ and  $r_0>0$  depending only on 
$\|d_Au\|_{L^6(B_{r_1}(0))}$, $\|F_A\|_{L^6(B_{r_1}(0))}$ and $\|u\|_{L^\infty(B_{r_1}(0))}$ such that  for any $x_0\in \B_{r_1}(0)$ and any $r<r_0$ for which $\B_{r}(x_0)\subset \B_{r_1}(0)$  assume
\[
\frac{1}{r}\int_{\B_r(x_0)}|A|^2\ dx^3<\delta, 
\]
then $(u,A)\in C^\infty(B_{r/2}(x_0))$ .\hfill $\Box$
\end{Th}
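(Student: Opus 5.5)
By Lemma~\ref{lm-L6}, $F_A$ and $d_Au$ are already in $L^6_{loc}$ and $u$ is in $L^\infty_{loc}$. So the only quantity in $CYMH$ that can fail to be small at small scales is the connection itself. The point of the smallness assumption on $r^{-1}\int_{\B_r(x_0)}|A|^2$ is to control exactly this term. The plan is to turn this $L^2$ Morrey smallness into an $L^p$ bound on $A$ with $p>3$, and then bootstrap using the Coulomb condition of Proposition~\ref{pr-coul}. That condition lets us work in the original gauge, with no gauge change needed.

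\textbf{Step 1 (Morrey control).} Apply Lemma~\ref{lm-smal} with the same $r_0$ as in (\ref{r0ch}). It gives, for every $y\in\B_{r/2}(x_0)$ and $\rho<r/4$,
\[
\rho^{-1}\int_{\B_\rho(y)}|A|^2\le 5\delta .
\]
By the same choice of $r_0$, the quantities $\rho^{-1}\int_{\B_\rho(y)}|d_Au|^2$, $\rho^{-1}\int_{\B_\rho(y)}(1-|u|^2)^2$ and $\rho\int_{\B_\rho(y)}|F_A|^2$ are also at most $C\delta$. Hence $A$ lies in the Morrey space $L^{2,1}(\B_{r/2}(x_0))$ with small norm.

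\textbf{Step 2 (elliptic system for $A$).} Since $d^\ast A=0$, we have $\Delta A=-d^\ast dA$. Write $dA=F_A-\frac12[A\wedge A]$ and insert the second equation of (\ref{CYMHS}), $d^\ast_AF_A=-[u,d_Au]-A$. This gives
\[
-\Delta A + A = -[A\res F_A] + [u,d_Au] + d^\ast\Big(\tfrac12[A\wedge A]\Big).
\]
The first two terms on the right are in $L^{3/2}_{loc}$, since $A\in L^2$, $F_A\in L^6$, $u\in L^\infty$ and $d_Au\in L^6$. The last term has the critical structure $d^\ast(A\wedge A)$, and it is here that smallness of $A$ in the Morrey norm is decisive. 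Local Riesz potential estimates in Morrey spaces (Adams' theorem) give
\[
\|A\|_{L^{2,1}(\B_{\rho/2})}\le C\,\|A\|_{L^{2,1}(\B_\rho)}^2 + C\rho^{\alpha}+\ldots .
\]
By an absorption and iteration argument this yields decay $\rho^{-1}\int_{\B_\rho}|A|^2\le C\rho^{2\alpha}$, that is, $A$ lies in the Morrey space $L^{2,1+2\alpha}$. Improving the exponent step by step pushes $A$ into $L^{p}$ with $p>3$ on a smaller ball.

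\textbf{Step 3 (bootstrap).} Once $A\in L^p$ with $p>3$ (supercritical), the equation $\Delta A=\ldots$ gives $A\in W^{1,q}$ for some $q>3/2\cdot$ and in fact $A\in W^{1,p/2}$ with $p/2>3/2$. The Higgs equation in Coulomb form (\ref{Hig-EL}) (here with $g=\mathrm{id}$) reads
\[
\Delta u=-2[A_l,\p_l u]-[A_l,[A_l,u]]-u(1-|u|^2),
\]
and it then gives $u\in W^{2,s}$ for some $s$. From there, standard iteration of $L^p$ estimates and Schauder estimates, alternating between the $A$-equation and the $u$-equation, yields $C^{k,\alpha}$ regularity of both $u$ and $A$ for all $k$ on $\B_{r/2}(x_0)$ (after shrinking radii). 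I expect this step to be routine.

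\textbf{Main obstacle.} The hard part is Step 2, the $\epsilon$-regularity gain for the quadratic term $d^\ast[A\wedge A]$ with $A$ only small in $L^{2,1}$. This is a critical Morrey space and does not embed into $L^3$. My first attempt would use Adams' estimate, $I_1: L^{1,1}\to L^{q,\cdot}$ weak-type bounds for Morrey spaces, applied to $|A|^2\in L^{1,1}$ with small norm, together with an absorption argument. If that fails, I would instead first use $F_A\in L^6$ and the Morrey smallness to extract an Uhlenbeck gauge (Theorem~\ref{th-Uhl}) on small balls with $A^g$ bounded in $W^{1,2}$. I would then compare $A$ with $A^g$ through $g^{-1}dg$, using the fact that both are Coulomb, so that $g$ solves a harmonic-map-type equation with a small-energy $\epsilon$-regularity, in the spirit of Hélein and Rivière.
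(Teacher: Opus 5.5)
\paragraph{The gap is in Step 2.} Your Step 2 does not work, and it is the heart of the proof. The Morrey space $\{A:\ \rho^{-1}\int_{\B_\rho}|A|^2\le C\}$ is exactly critical for the term $d^\ast\big(\tfrac12[A\wedge A]\big)$. After inverting the Laplacian, the critical part of $A$ is $(-\Delta)^{-1}d^\ast\big(\tfrac12[A\wedge A]\big)$. This is an operator of order $-1$, applied to data $|A|^2$ about which you only know $\int_{\B_\rho}|A|^2\le 5\delta\rho$. At this $L^1$ endpoint, Adams' theorem only gives $I_1$ a \emph{weak}-type bound into the Morrey space. Moreover, the Riesz transform hidden in $(-\Delta)^{-1}d^\ast$ is not bounded on $L^1$-based spaces.

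So there is no estimate of the form $\|A\|_{L^{2,1}(\B_{\rho/2})}\le C\|A\|^2_{L^{2,1}(\B_\rho)}+\dots$ that you could absorb. This is not a technicality: size estimates alone cannot give regularity for critical quadratic systems. For example, $u=\log\log(1/|x|)$ solves $-\Delta u=|\nabla u|^2$ in the plane, has arbitrarily small local Dirichlet energy, and is unbounded.

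The flat case $F_A\equiv 0$ already shows what is at stake. There $A=g^{-1}dg$, and $d^\ast A=0$ says exactly that $g$ is a weakly harmonic map into $SU(2)\simeq\S^3$. Your Step 2 would then prove small-Morrey-energy regularity of weakly harmonic maps without using any of their structure. In particular, the remark ``no gauge change needed'' is where the argument goes astray: the Coulomb condition by itself does not control $d^\ast[A\wedge A]$.

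\paragraph{What the paper actually does.} What is needed is your fallback, and that is the paper's route. However, it carries all the content of the proof, and you only sketch it in one sentence. The steps are as follows.
\begin{enumerate}
\item Since $\|F_A\|_{L^{3/2}(\B_r(x_0))}$ is small (by the $L^6$ bound and $r<r_0$), Theorem~\ref{th-Uhl} gives $A=g^{-1}dg+g^{-1}d^\ast\xi\,g$, with $d^\ast\xi$ controlled by $F_A$ in $W^{1,3/2}\cap W^{1,2}$.
\item The Morrey smallness of $A$ passes to $dg$, so $\|g\|_{BMO}^2\le C\delta$.
\item One Hodge-decomposes $g^{-1}dg=d\sigma+d^\ast\zeta$, with $d\zeta=0$ and $\iota^\ast_{\p\B_r}\ast\zeta=0$.
\item The Coulomb condition of Proposition~\ref{pr-coul} becomes $\Delta\sigma=d^\ast(g^{-1}d^\ast\xi\,g)$. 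This is bilinear in $dg$ and in $d^\ast\xi$, which has better integrability.
\item The map itself satisfies $-\Delta g=d^\ast(g\,d\sigma)+dg\cdot d^\ast\zeta$. The last term is a div--curl (Jacobian) expression, estimated through $\mathcal H^1$--BMO duality against the small BMO norm of $g$.
\item Comparing $g$ and $\sigma$ with their harmonic replacements on nested balls gives the decay $\rho^{-1-\alpha}\int_{\B_\rho}|dg|^2\le C$, and hence the same decay for $A$.
\item Adams' estimates then give $A\in L^p_{loc}$ for some $p>2$. Only at this point does a bootstrap like your Step 3 take over.
\end{enumerate}
Your proposal is missing steps 3--6: the Hodge decomposition, the use of $d^\ast A=0$ as an equation for $\sigma$, the Hardy--BMO treatment of $dg\cdot d^\ast\zeta$, and the decay iteration.
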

\noindent{\bf Proof of Theorem~\ref{th-delreg}}  Let $0<\delta<\delta_2$ to be fixed later and where $\delta_2>0$ is given by (\ref{II.0}). We consider $r_0>0$ given by lemma~\ref{lm-smal} for this $\delta$. Let $x_0\in \B_{r_1}(0)$ and let $r<r_0$ such that $\B_{r}(x_0)\subset \B_{r_1}(0)$ and
\[
\frac{1}{r}\int_{\B_r(x_0)}|A|^2\ dx^3<\delta\ . 
\]
Thanks to the previous lemma we have
\be
\label{small}
\forall x\in B_{r/2}(x_0)\qquad\forall\rho< \frac{r}{4}\qquad\frac{1}{\rho}\int_{\B_\rho(x_0)}|A|^2\ dx^3<5\,\delta\
\ee
We have also
\[
\|F_A\|_{L^{3/2}(B_r(x_0))}\le \delta<\delta_2
\]
and then, thanks to proposition~\ref{pr-coul} and theorem~\ref{th-Uhl} we have
\[
d^\ast A=0\qquad\mbox{ and }\qquad \exists\, g\in W^{1,2}(B_r(x_0),SU(2))\qquad \mbox{s. t. }\quad A=g^{-1}dg+g^{-1}d^\ast\xi \,g
\]
and
\be
\label{reg-coul}
\|d^\ast\xi\|_{L^{(3,{3/2})}(\B_r(x_0))}+\|\nabla(d^\ast\xi)\|_{L^{3/2}(\B_r(x_0))}\le C\,\|F_A\|_{L^{3/2}(\B_r(x_0))}\le C\, \delta\ .
\ee
Moreover
\be
\label{L6xi}
\|d^\ast\xi\|_{L^2(\B_r(x_0))}+\|\nabla(d^\ast\xi)\|_{L^{2}(\B_r(x_0))}\le C\,\|F_A\|_{L^{2}(\B_r(x_0))}\ .
\ee
Thus from (\ref{reg-coul}) in particular, there holds
\be
\label{mo-g}
\begin{array}{l}
\ds\forall x\in \B_{r/2}(x_0)\quad\forall\rho<\frac{r}{4} \qquad\frac{1}{\rho}\int_{\B_\rho(x_0)}|dg|^2\ dx^3\le \frac{2}{\rho}\int_{\B_\rho(x_0)}|A|^2+|d^\ast\xi|^2\ dx^3\\[5mm]
\ds\qquad\le 10\,\delta+ \rho^{-1}\,\lf(\frac{4\pi}{3}\,\rho^3\rg)^{1/3}\,\|d^\ast\xi\|^2_{L^{3}(\B_r(x_0))}\le C\,\delta\ ,
\end{array}
\ee
where we used that $\rho<1$ and $C>0$ is universal. Observe that this inequality implies thanks to Poincar\'e inequality
\[
\forall x\in \B_{r/2}(x_0)\quad\forall\rho<\frac{r}{4} \qquad \dashint_{\B_\rho(x)}\lf| g-\dashint_{\B_\rho(x)} g\ dx^3  \rg|^2\ dx^3\le \frac{C}{\rho}\int_{\B_\rho(x_0)}|dg|^2\ dx^3\le C\delta
\]
from which we deduce
\be
\label{bmo}
\|g\|^2_{BMO(\B_{r/2}(x_0))}\le C\,\sup_{\{x\in \B_{r/2}(x_0)\ ;\ \rho<r/4\}}\ \dashint_{\B_\rho(x)}\lf| g-\dashint_{\B_\rho(x)} g\ dx^3  \rg|^2\ dx^3\le C\,\delta\ .
\ee

\medskip

We consider the following minimization problem among closed 2-forms $\zeta\in W^{1,2}(\wedge^2 B_r(x_0),\frak{su}(2))$
\be
\label{min}
\inf\lf\{\int_{B_r(x_0)}\lf|g^{-1}dg -d^\ast\zeta\rg|^2\ dx^3\ ;\  \mbox{s. t. }\ d\zeta=0 \quad\mbox{and}\quad\iota_{\p B_r(x_0)}^\ast\ast\zeta=0\rg\}
\ee
There exists a minimizer and it satisfies
\[
\forall\psi\,\in W^{1,2}(\wedge^2 B_r(x_0),\frak{su}(2))\quad\mbox{s.t.}\quad\iota_{\p B_r(x_0)}^\ast\ast\psi=0\quad \int_{B_r(x_0)}g^{-1}dg -d^\ast\zeta\wedge d\ast\, \psi=0\ 
\]
Indeed, let $\beta_k:=\ast \zeta_k$ be a minimizing sequence we have
\[
\limsup_{k\rightarrow +\infty}\int_{B_r(x_0)}|d\beta_k|^2\ dx^3<+\infty\ ,
 \]
moreover, there holds
\[
\begin{array}{l}
\ds|d \beta_k|^2=|d \beta_k|^2+|d^\ast\beta_k|^2=\sum_{i<j}|\p_{x_i}\beta^j_k-\p_{x_j}\beta^i_k|^2+\lf|\sum_{i=1}^3\p_{x_i} \beta_k^i\rg|^2\\[5mm]
\ds=\sum_{i\ne j}|\p_{x_i}\beta^j_k|^2+\sum_{i=1}^3|\p_{x_i}\beta_k^i|^2-2\,\sum_{i<j} \p_{x_i}\beta^j_k\cdot\p_{x_j}\beta^i_k+\sum_{i\ne j}\p_{x_i}\beta^i_k\cdot\p_{x_j}\beta^j_k\\[5mm]
\ds=|\nabla \beta_k|^2+2\,\sum_{i<j}\p_{x_i}\beta^i_k\cdot\p_{x_j}\beta^j_k-\p_{x_i}\beta^j_k\cdot\p_{x_j}\beta^i_k\\[5mm]
\ds=|\nabla \beta_k|^2+\sum_{i,j}\p_{x_i}(\beta^i_k\cdot\p_{x_j}\beta^j_k)-\p_{x_j}(\beta_k^i\cdot\p_{x_i}\beta^j_k)
\end{array}
\]
This implies
\[
\begin{array}{l}
\ds\int_{\B_r(x_0)}|d\beta_k|^2\ dx^3=\int_{\B_r(x_0)}|d\beta_k|^2+|d^\ast\beta_k|^2\, dx^3\\[5mm]
\ds=\int_{\B_r(x_0)}|\nabla \beta_k|^2\ dx^3+\int_{\p \B_r(x_0)}\beta_k(\nu)\cdot d^\ast\beta_k\ dvol_{\p \B_r(x_0)}+\sum_{i,j=1}^3\int_{\p \B_r(x_0)}\beta_k^i\,\nu_j\,\p_{x_i}\beta_k^j\ dvol_{\p \B_r(x_0)}\\[5mm]
\ds=\int_{\B_r(x_0)}|\nabla \beta_k|^2\ dx^3+\sum_{i,j=1}^3\int_{\p \B_r(x_0)}\beta_k^i\,\nu_j\,\p_{x_i}\beta_k^j\ dvol_{\p \B_r(x_0)}
\end{array}
\]
Since $\iota_{\p \B_r(x_0)}^\ast \beta_k=0$ there holds $\beta_k\wedge dr=0$, thus 
\[
\forall\, i,j=1...3\qquad\beta_k^i\,\nu_j-\beta_k^j\,\nu_i=0\ .
\]
Thus
\[
\begin{array}{l}
\ds\sum_{i,j=1}^3\int_{\p \B_r(x_0)}\beta_k^i\,\nu_j\,\p_{x_i}\beta_k^j\ dvol_{\p \B_r(x_0)}=\frac{1}{2}\int_{\p \B_r(x_0)}\p_\nu|\beta_k|^2\ dvol_{\p \B_r(x_0)}\ .
\end{array}
\]
Then
\[
\begin{array}{l}
\ds\int_{\B_r(x_0)}|d\beta_k|^2\ dx^3=\int_{\B_r(x_0)}|d\beta_k|^2+|d^\ast\beta_k|^2\, dx^3\\[5mm]
\ds=\int_{\B_r(x_0)}|\nabla \beta_k|^2\ dx^3+\frac{1}{2}\int_{\B_r(x_0)}\Delta|\beta_k|^2\ dx^3\\[5mm]
\ds=\int_{\B_r(x_0)}|\nabla \beta_k|^2\ dx^3+\int_{\B_r(x_0)}\beta_k\cdot\Delta\beta_k\ dx^3+\int_{\B_r(x_0)}\nabla\beta_k\cdot\nabla\beta_k\ dx^3\\[5mm]
\ds=2\,\int_{\B_r(x_0)}|\nabla \beta_k|^2\ dx^3-\int_{\B_r(x_0)}\beta_k\cdot d^\ast d\beta_k\ dx^3\\[5mm]
\ds=2\,\int_{\B_r(x_0)}|\nabla \beta_k|^2\ dx^3-\int_{\B_r(x_0)}\beta_k\dot{\wedge} d\ast d\beta_k\ dx^3\\[5mm]
\ds=2\,\int_{\B_r(x_0)}|\nabla \beta_k|^2\ dx^3+\int_{\B_r(x_0)}d\lf(\beta_k\dot{\wedge} \ast d\beta_k\rg)-\int_{\B_r(x_0)}d\beta_k\dot{\wedge} \ast d\beta_k
\end{array}
\]
 We finally get the {\it Gaffney type identity}
 \be
 \label{gaffney}
 \int_{\B_r(x_0)}|d\beta_k|^2\ dx^3=\int_{\B_r(x_0)}|\nabla \beta_k|^2\ dx^3\ .
 \ee
 Hence we can extract a sub-sequence that we keep denoting $\beta_k$ such that $\beta_k\rightharpoonup \beta_\infty$ weakly in $W^{1,2}(\B_r(x_0),\frak{su}(2))$ and obviously the hypothesis $\iota_{\p \B_r(x_0)}^\ast \beta_k=0$ and $d^\ast\beta_k=0$
 both pass to the limit, so that, by lower semi-continuity of the $L^2-$norm of $g^{-1}dg -\ast\,d\beta_k$ we obtain that $\zeta:=\ast\beta_\infty$ is a minimizer of (\ref{min}).

Moreover, because of the previous Gaffney identity there holds
\[
\int_{\B_r(x_0)}\sum_{i,j,k=1}^3|\p_{x_k}\zeta_{ij}|^2\ dx^3\le C\, \int_{B_r(x_0)}|dg|^2\ dx^3
\]
We deduce also that for every $\mbox{s. t. }\ d\psi=0 \quad\mbox{and}\quad\iota_{\p \B_r(x_0)}^\ast\ast\psi=0$
\[
\int_{\B_r(x_0)}d\lf(g^{-1}dg -d^\ast\zeta\rg)\wedge \ast\, \psi=0
\]
Or in other words we have
\[
\begin{array}{c}
\ds\forall\ \beta\in W^{1,2}(\Om^1(\B_r(x_0)),\frak{su}(2))\qquad\mbox{ s. t. }\quad d^\ast\beta=0\qquad\mbox{ and }\quad\iota_{\p \B_r(x_0)}^\ast\beta=0\\[5mm]

\ds\int_{B_r(x_0)}d\lf(g^{-1}dg -d^\ast\zeta\rg)\wedge \beta=0
\end{array}
\]
Let $\phi=\sum_{i=1}^3\phi_i\ dx_i\in C^\infty_0(\Om^1(\B_r(x_0)),\frak{su}(2))$. We consider the function $\varphi$ satisfying
\[
\lf\{
\begin{array}{l}
\ds d^\ast d\varphi=-\Delta \varphi=d^\ast\phi\quad\mbox{ in }\B_r(x_0)\\[5mm]
\ds\varphi=0\qquad\mbox{ on }\p \B_r(x_0)
\end{array}
\rg.
\]
Hence there exists $\beta\in C^\infty(B_r(x_0))$ such that $d^\ast\beta=0$ and $\phi=d\varphi+\beta$. Since both $\varphi\circ \iota_{\p \B_r(x_0)}$ and $\iota_{\p \B_r(x_0)}^\ast\phi$ are equal to zero we deduce that
$\iota_{\p \B_r(x_0)}^\ast\beta=0$. Then we write
\[
\int_{\B_r(x_0)}d\lf(g^{-1}dg -d^\ast\zeta\rg)\wedge \phi=\int_{\B_r(x_0)}d\lf(g^{-1}dg -d^\ast\zeta\rg)\wedge d\varphi+\int_{\B_r(x_0)}d\lf(g^{-1}dg -d^\ast\zeta\rg)\wedge \beta=0
\]
Hence
\[
d\lf(g^{-1}dg -d^\ast\zeta\rg)=0\qquad\mbox{ in } {\mathcal D}'(\B_r(x_0))\ .
\]
Let $\sigma \in W^{1,2}(\B_r(x_0))$ such that
\be
\label{hodge}
g^{-1}dg =d\sigma+d^\ast\zeta
\ee
The pair $(\sigma,\zeta)$ such that $d\zeta=0$ and $\iota_{\p \B_r(x_0)}^\ast\ast\zeta=0$ is obviously unique after fixing $$\int_{\B_{r/2}}\sigma\ dx^3=0\ ,$$ and there holds
\be
\label{est-prel-zeta-sig}
\int_{\B_r(x_0)}\sum_{i,j=1}^3|\nabla\zeta_{ij}|^2+|\nabla\sigma|^2\ dx^3\le C\, \int_{\B_r(x_0)}|dg|^2\ dx^3\ .
\ee
The Coulomb condition $d^\ast A=0$ that is  
\[
0=d^\ast A=d^\ast(g^{-1}dg+g^{-1}d^\ast\xi \,g)=d^\ast(d\sigma+d^\ast\zeta+g^{-1}d^\ast\xi \,g)=d^\ast(d\sigma+g^{-1}d^\ast\xi \,g)
\] 
is giving
\be
\label{eq-sigma}
\begin{array}{l}
\ds\Delta\sigma=d^\ast(g^{-1}\,d^\ast\xi\,g)=-\sum_{k,l=1}^3\p_{x_k}\lf(g^{-1}\p_{x_l}\xi_{lk}\,g\rg)\\[5mm]
\ds\qquad=-\sum_{k,l=1}^3\p_{x_k}\lf(g^{-1}\rg)\,g\, g^{-1}\,\p_{x_l}\xi_{lk}\,g-\sum_{k,l=1}^3 g^{-1}\p_{x_l}\xi_{lk}\,g\,g^{-1}\p_{x_k}g\\[5mm]
\ds\qquad=\sum_{k,l=1}^3[g^{-1}\p_{x_k}g,g^{-1}\,\p_{x_l}\xi_{lk}\,g]=-[g^{-1}dg\res g^{-1}\,d^\ast\xi\, g]
\end{array}
\ee
Because of (\ref{reg-coul}) and (\ref{est-prel-zeta-sig})
\[
\begin{array}{l}
\ds\|\sigma\|_{W^{2,(3/2,1)}(\B_{r/2}(x_0))}\le C\ \|dg\|_{L^2(\B_r(x_0))}\,\|d^\ast\xi\|_{L^{(6,2)}(\B_r(x_0))}+ C\,\|d\sigma\|_{L^2(\B_r(x_0))}\\[5mm]
\ds\qquad\le C\,\|dg\|_{L^2(\B_r(x_0))}\,\|F_A\|_{L^2(\B_r(x_0))}+C\,\|dg\|_{L^2(\B_r(x_0))}\,
\end{array}
\]
This implies in particular
\[
\int_{\B_{r/2}(x_0))}|\nabla \sigma|^3\ dx^3\le C\, \delta^3\quad\mbox{ and }\quad\|\sigma\|_{L^\infty(\B_{r/2}(x_0))}\le C\,\sqrt{\delta}
\]
Hence
\[
\begin{array}{l}
\ds\forall x\in \B_{r/2}(x_0)\quad\forall\rho<r/4 \\[5mm]
\ds\frac{1}{\rho}\int_{\B_\rho(x)}|d^\ast\zeta|^2\ dx^3\le \frac{2}{\rho}\int_{\B_\rho(x)}|d g|^2\ dx^3+ \frac{2}{\rho}\int_{\B_\rho(x)}|d \sigma|^2\ dx^3\le C\, \delta +C\,\|d\sigma\|_{L^3(\B_{r/2}(x_0))}^2\le C\,\delta\ .
\end{array}
\]
We have
\[
dg=g\, d\sigma+g\,d^\ast\zeta\ .
\]
Hence
\be
\label{eq-g}
\lf\{
\begin{array}{l}
\ds-\Delta g=d^\ast dg=d^\ast(g\,d\sigma)+dg\cdot d^\ast\zeta\\[5mm]
\ds -\Delta\sigma=[g^{-1}dg\res g^{-1}d^\ast\xi g]\ .
\end{array}
\rg.
\ee
On a ball $\B_\rho(x)\subset \B_{r/2}(x_0)$ we introduce $\ti{g}$ such that
\be
\label{ti-g}
\lf\{
\begin{array}{l}
\ds-\Delta \ti{g}=d^\ast dg=d^\ast\lf(g\,d\sigma+g\, d^\ast\zeta\rg)\qquad\mbox{ in }\B_\rho(x)\\[5mm]
\ds \ti{g}=0\qquad\mbox{ on }\p \B_\rho(x)\ .
\end{array}
\rg.
\ee
Since obviously by classical elliptic estimate $\ti{g}\in W^{1,2}_0(\B_\rho(x))$ there holds
\[
\|d\ti{g}\|_{L^2(\B_\rho(x))}=\sup\lf\{\int_{\B_\rho(x))}d\ti{g}\cdot d\varphi \ dx^3\ ;\ \|d\varphi\|_{L^2(\B_\rho(x))}\le 1\quad \varphi\in W^{1,2}_0(\B_\rho(x))  \rg\}
\]
Multiplying (\ref{ti-g}) by $\varphi\in W^{1,2}_0(B_\rho(x)) $ we have
\[
\begin{array}{l}
\ds\int_{\B_\rho(x)}d\ti{g}\cdot d\varphi \ dx^3=\int_{\B_\rho(x))} {g}\,d\sigma\cdot d\varphi\ dx^3+\int_{\B_\rho(x))}{g} \, d\ast\zeta\wedge d\varphi\\[5mm]
\ds\quad \le\,\|d\sigma\|_{L^2(\B_\rho(x))} +\|g\|_{BMO(\B_{r/2}(x_0))}\, \|d\ast\zeta\|_{L^2(\B_\rho(x))} \\[5mm]
\ds\quad\le \,\|d\sigma\|_{L^2(B_\rho(x))} +\sqrt{\delta}\, (\|dg\|_{L^2(B_\rho(x))}+\|d\sigma\|_{L^2(B_\rho(x))})
\end{array}
\]
where we used (\ref{bmo}).

Let $t\in(0,1)$ such that $\B_{\rho/t}(x_0)\subset \B_{r/2}(x_0)$. We also introduce $\ti{\sigma}$ such that
\[
\lf\{
\begin{array}{l}
\ds-\Delta \ti{\sigma}=[g^{-1}dg\res g^{-1}d^\ast\xi g]\qquad\mbox{ in }\B_{\rho/t}(x)\\[5mm]
\ds \ti{\sigma}=0\qquad\mbox{ on }\p \B_{\rho/t}(x)
\end{array}
\rg.
\]
Since obviously $\sigma-\ti{\sigma}$ satisfies
\[
\lf\{
\begin{array}{l}
\ds \Delta(\sigma-\ti{\sigma})=0\qquad\mbox{ in }\B_{\rho/t}(x)\\[5mm]
\ds\sigma-\ti{\sigma}=\sigma\qquad\mbox{ on }\p \B_{\rho/t}(x)\ .
\end{array}
\rg.
\]
We obtain by the maximum principle
\[
\|\ti{\sigma}\|_{L^\infty(B_{\rho/t}(x))}\le 2\sqrt{\delta}\ .
\]
Mutiplying  by $\ti{\sigma}$ and integrating over $\B_{\rho/t}(x)$ is giving
\[
\begin{array}{rl}
\ds\int_{\B_{\rho/t}(x))}|d\ti{\sigma}|^2\ dx^3&\ds=\int_{\B_{\rho/t}(x))} \ti{\sigma}\cdot [g^{-1}dg\res g^{-1}d^\ast\xi g] \ dx^3\\[5mm]
 &\ds\le 2\,\sqrt{\delta}\, \|dg\|_{L^2(\B_{\rho/t}(x))}\,\|d^\ast\zeta\|_{L^2(\B_{\rho/t}(x))}
\end{array}
\]
Since $\sigma-\ti{\sigma}$ is harmonic we have using monotonicity formula for harmonic functions and the Dirichlet Principle
\[
\ds\frac{1}{\rho^3}\int_{\B_\rho(x)}|d(\sigma-\ti{\sigma})|^2\ dx^3\le \frac{t^3}{\rho^3}\int_{\B_{\rho/t}(x)}|d(\sigma-\ti{\sigma})|^2\ dx^3\le \frac{t^3}{\rho^3}\int_{\B_{\rho/t}(x))}|d\sigma|^2\ dx^3\ .
\]
Combining the two previous inequalities is giving
\[
\begin{array}{l}
\ds\|d\sigma\|_{L^2(\B_\rho(x))}\le \|d(\sigma-\ti{\sigma})\|_{L^2(\B_\rho(x))}+\|d\ti{\sigma}\|_{L^2(\B_\rho(x))}\\[5mm]
\ds\le\sqrt{t^3\,\int_{\B_{\rho/t}(x)}|d\sigma|^2\ dx^3}+\sqrt{2\,\sqrt{\delta}\, \|dg\|_{L^2(\B_{\rho/t}(x))}\,\|d^\ast\zeta\|_{L^2(\B_{\rho/t}(x))}}\\[5mm]
\ds\le \sqrt{t^3\,\int_{B_{\rho/t}(x)}|d\sigma|^2\ dx^3}+\sqrt{2\,\sqrt{\delta}}\, \sqrt{\|dg\|^2_{L^2(\B_{\rho/t}(x))}\,+  \|dg\|_{L^2(\B_{\rho/t}(x))}\, \|d\sigma\|_{L^2(\B_{\rho/t}(x))}}\\[5mm]
\ds\le (t^{3/2}+\sqrt{2}\,\delta^{1/4})\|d\sigma\|_{L^2(B_{\rho/t}(x))}+2\,\delta^{1/4}\,\|dg\|_{L^2(B_{\rho/t}(x))}
\end{array}
\]
Since the coordinates of $g-\ti{g}$ are harmonic functions we have
\[
\frac{1}{\rho^3}\int_{B_{t\rho}(x))}|d(g-\ti{g})|^2\ dx^3\le \frac{t^3}{\rho^3}\int_{B_{\rho}(x))}|d(g-\ti{g})|^2\ dx^3\le \frac{t^3}{\rho^3}\int_{B_{\rho}(x))}|dg|^2\ dx^3
\]
Hence
\[
\begin{array}{l}
\ds\|dg\|_{L^2(\B_{t\rho}(x))}\le \|d(g-\ti{g})\|_{L^2(\B_{t\rho}(x))}+\|d\ti{g}\|_{L^2(\B_\rho(x))}\\[5mm]
\ds\le t^{3/2}\,\|dg\|_{L^2(\B_{\rho}(x))} +(1+\sqrt{\delta})\|d\sigma\|_{L^2(\B_\rho(x))}+\sqrt{\delta}\, \|dg\|_{L^2(\B_\rho(x))}\\[5mm]
\ds\le (t^{3/2}+\sqrt{\delta})\,\|dg\|_{L^2(\B_{\rho}(x))} +(1+\sqrt{\delta})(t^{3/2}+\sqrt{2\,\delta})\|d\sigma\|_{L^2(\B_{\rho/t}(x))}\\[5mm]
\ds+2\,\sqrt{\delta}\,\|dg\|_{L^2(\B_{\rho/t}(x))}
\end{array}
\]
We have then two functions $\phi(s):=\sqrt{s^{-1}}\|dg\|_{L^2(B_{s}(x))}$ and $\psi(s):=\sqrt{s^{-1}}\|d\sigma\|_{L^2(B_{s}(x))}$
\[
\lf\{
\begin{array}{l}
\ds\psi(\rho)\le (t+\sqrt{2\sqrt{\delta}\,t^{-1}})\, \psi(\rho/t)+2\,\sqrt{\sqrt{\delta}\,t^{-1}}\,\phi(\rho/t)\\[5mm]
\ds\phi(\rho)\le (t+\sqrt{\delta\,t^{-1}})\, \phi(\rho/t)+(1+\sqrt{\delta})(t^{1/2}+t^{-1}\sqrt{2\,\delta})\, \psi(\rho/t^2) +2\,\sqrt{\delta}\,t^{-1}\phi(\rho/t^2)
\end{array}
\rg.
\]
Denoting $a_j:=\phi(r\, t^j)+\psi(r\, t^j)$, choosing $t$ sufficiently small and then $\delta$ sufficiently small (independent of $j$ and $x\in B_{r}(x_0)$) we have
\[
a_{j+1}\le 4^{-1}\,a_j+8^{-1}\,a_{j-1}
\]
This gives
\[
a_{j+1}+2^{-1} a_j\le \frac{3}{4}\,(a_j+{6}^{-1}\, a_{j-1})\le \frac{3}{4}\,(a_j+ 2^{-1} a_{j-1})
\]
Hence we deduce the existence of $1>\al>0$ such that the following Morrey estimate holds
\[
\sup_{x\in B_{r/2}(x_0)\,;\,0<\rho<r/4}\rho^{-1-\alpha}\int_{B_{\rho}(x)}|dg|^2 dx^3<+\infty
\]
Recall that
\[
A=g^{-1}\,dg+g^{-1}\,d^\ast\xi\,g
\]
where in particular $d^\ast\xi\in L^6(B_r(x_0))$ (thanks to \ref{L6xi}). Thus
\[
\begin{array}{l}
\ds\sup_{\{x\in B_{r/2}(x_0)\,;\,0<\rho<r/4\}}\rho^{-2}\int_{B_{\rho}(x)}|d^\ast\xi|^2 dx^3\\[5mm]
\ds\qquad\le C\,\sup_{\{x\in B_{r/2}(x_0)\,;\,0<\rho<r/4\}} \rho^{-2}\,(\rho^3)^{2/3}\|d\ast\xi\|^2_{L^6(B_\rho(x))}\le C\, \|d\ast\xi\|^2_{L^6(B_r(x_0))}
\end{array}
\]
Hence
\[
\sup_{\{x\in B_{r/2}(x_0)\,;\,0<\rho<r/4\}}\rho^{-1-\alpha}\int_{B_{\rho}(x)}|A|^2 dx^3<+\infty\ .
\]
Since $F_{A}\in L^6(B_r(x_0))$, using (\ref{II.1}) for $p=6$ is giving that $d^\ast\xi\in W^{1,6}(B_r(x_0))$. Hence in particular $d^\ast\xi\in L^\infty$. From (\ref{eq-g}) we have $\sigma\in W^{2,2}(B_{r/2}(x_0))$.
Injecting this information in the first identity of (\ref{eq-g}) together with the fact that
\[
\sup_{\{x\in B_{r/2}(x_0)\,;\,0<\rho<r/4\}}\rho^{-1-\alpha}\int_{B_{\rho}(x)}|d^\ast\zeta|^2 dx^3<+\infty\ .
\]
is giving using Adams estimates \cite{Ada} that $\nabla g\in L^p_{loc}(B_{r/2}(x_0))$ for some $p>2$ and hence $A\in L^p_{loc}(B_{r/2}(x_0))$ for some $p>2$. Injecting this information in the r.h.s. of the second equality in the following system
\[
\lf\{
\begin{array}{l}
\ds d F_{A^g}=-[A^g\wedge  F_{A_g}]\qquad\mbox{ in }\B_r(x_0)\\[5mm]
\ds d^\ast F_{A^g}=-[A^g\res  F_{A_g}]-g\,[u,d_Au]\,g^{-1}-g\,A\, g^{-1}\qquad\mbox{ in }\B_r(x_0)\ .
\end{array}
\rg.
\]
is giving, bearing in mind that $[u,d_Au]\in L^6(B_{r_1}(0))$, that $F_{A^g}\in W^{1,p}_{loc}(B_{r/2}(x_0))$ for some $p>2$. The result follows by standard bootstrapping argument
and this concludes the proof of theorem~\ref{th-delreg}  \hfill $\Box$

\medskip

\noindent{\bf Proof of theorem~\ref{th-reg}} Let $(u,A)$ be a minimizer of $CYMH$ for its boundary data. Let $r_1<1$. Thanks to lemma~\ref{lm-L6} we know that $F_A$ and $d_Au$ are in $L^6(B_{r_1}(0))$. Assume there exists $x_0\in B_1(0)$ such that
\be
\label{hyp-d}
\lim_{\rho\rightarrow 0}\frac{1}{\rho}\int_{B_\rho(x_0)}|A|^2\ dx^3>0\ .
\ee
We are going to contradict this assumption and then by applying theorem~\ref{th-delreg} we will have proved theorem~\ref{th-reg}. Under the hypothesis (\ref{hyp-d}), because of the previous lemma we must have
\[
\lim_{\rho\rightarrow 0}\frac{1}{\rho}\int_{B_\rho(x_0)}|A|^2\ dx^3>\delta\ .
\]
We proceed to the local extraction of a controlled Coulomb gauge on a sufficiently small ball centred at $x_0$ so that (\ref{II.0}) is satisfied for $p=6$. Thus there exists a gauge change $g$ such that
\[
d^\ast\xi=A^g=g^{-1}\,dg+g^{-1}\, A\,g
\]
Since $F_{A}\in L^6(B_{r_1}(0))$, we have thanks to (\ref{II.1}) that $A^g\in W^{1,6}$ which implies $d^\ast\xi\in L^\infty$. Hence
\[
\lim_{\rho\rightarrow 0}\frac{1}{\rho}\int_{B_\rho(x_0)}|g^{-1}\,dg|^2\ dx^3=\lim_{\rho\rightarrow 0}\frac{1}{\rho}\int_{B_\rho(x_0)}|A|^2\ dx^3>\delta
\]
Let
\[
g_\rho(y):=g(\rho\,y+x_0)\ .
\]
We have that
\[
\lim_{\rho\rightarrow 0}\int_{B_1(0)}|dg_\rho|^2\ dy^3=\lim_{\rho\rightarrow 0}\frac{1}{\rho}\int_{B_\rho(x_0)}|g^{-1}\,dg|^2\ dx^3<+\infty
\]
We extract a subsequence $\rho_k\rightarrow 0$ such that $$g_k(y):=g_{\rho_k}(y)\rightharpoonup g_\infty(y)$$. 
We also denote
\[
d^\ast \xi_k:= g_k^{-1}\,dg_k+g_k^{-1}\, D^\ast_kA\,g_k\qquad\mbox{ where }\quad D_k(y):=\rho_k\,y+x_0\ .
\]
The Coulomb condition $d^\ast A=0$ (Proposition~\ref{pr-coul}) is giving
\[
0=-d^\ast(dg_k\,g_k^{-1})+d^\ast(g_k\,d^\ast\xi_k\,g_k^{-1})\ .
\]
We have
\[
\int_{B_1(0)}|d^\ast \xi_{k}|^2\ dy^3=\frac{1}{\rho_k}\int_{B_{\rho_k}(x_0)}|d^\ast\xi|^2\ dx^3\le \frac{4\pi}{3}\rho^2_k\ \|d^\ast\xi\|_\infty\longrightarrow 0
\]
Hence
\[
d^\ast(dg_\infty\,g_\infty^{-1})=0\ .
\]
Recall that a map $h$ in $W^{1,2}(B_1(0),SU(2))$ is a critical point of the Dirichlet energy 
\[
\int_{B_1(0)}|dh|^2\ dx^3
\]
among maps taking values in the Lie group $SU(2)$  if and only if, for any $w\in C^\infty_0(B_1,\frak{su}(2))$ denoting $h_t:=\exp(t\,w)\,h$ there holds
\[
\lf.\frac{d}{dt}\int_{B_1(0)}|dh_t|^2\ dx^3\rg|_{t=0}=0
\]
which is equivalent to
\[
0=\int_{B_1(0)}<d(w\,h),dh> dx^3=\int_{B_1(0)}<w\,dh,dh>+<dw\,h,dh> dx^3
\]
Observe that, since $w\in \frak{su}(2)$, for any $X\in M_2({\mathbb C})$, there holds
\[
\mbox{ tr}\lf(X\,(\ov{w\,X})^t\rg)=<w\,X,X>=\mbox{ tr}\lf(w\,X\,\ov{X}^t\rg)
\]
but
\[
\mbox{ tr}\lf(X\,(\ov{w\,X})^t\rg)=-\mbox{ tr}\lf(X\,\ov{X}^t\,w\rg)=-\mbox{ tr}\lf(w\,X\,\ov{X}^t\rg)\ .
\]
Hence $<w\,dh,dh>=0$ and $h$ is harmonic if and only if
\[
\begin{array}{l}
\ds \forall w\in C^\infty_0(B_1,\frak{su}(2))\qquad 0=\int_{B_1(0)}<dw\,h,dh> dx^3\\[5mm]
\ds\quad=\int_{B_1(0)}<dw,dh\,h^{-1}> dx^3=\int_{B_1(0)}\lf<w,d^\ast\lf(dh\,h^{-1}\rg)\rg> dx^3
\end{array}
\]
Hence the Euler Lagrange equation is $d^\ast\lf(dh\,h^{-1}\rg)=0$ and we deduce that $g_\infty$ is weakly harmonic.
From the monotonicity formula (\ref{III.1}) we have
\[
\int_{B_{\rho}(0)}\frac{|A\res\p_r|^2}{|x-x_0|}\ dx^3<+\infty
\]
This gives obviously
\[
\int_{B_{\rho_k}(x_0)}\frac{|\p_rg(x)|^2}{|x-x_0|}\ dx^3<+\infty
\]
which also implies
\[
\lim_{k\rightarrow +\infty}\int_{B_{1}(0)}\frac{|\p_rg_k(y)|^2}{|y|}\ dy^3=0
\]
Thus
\[
\p_{r}g_\infty=0
\]
and there exists $v\,:\, S^2\,\rightarrow\, S^3$ harmonic such that $g_\infty(y)=v(y/|y|)$. This implies that $g_\infty$ is smooth outside the origin. Let $y_0\in B_1(0)\setminus\{ 0\}$ and let $s>0$ to be chosen later. We can find $\sigma_k\in [s/2,s]$ such that
\[
\int_{\p B_{\sigma_k}(y_0)}|\nabla g_k|^2\ dy^3\le s^{-1}\, \int_{ B_{s}(y_0)}|\nabla g_k|^2\ dy^3
\]
Let $\la\in (0,1/2)$ to be fixed later. Using Luckhaus lemma as stated in \cite{Mos}. There exists an ``interpolation map'' $\varphi\in W^{1,2}(B_{\sigma_k}(y_0)\setminus B_{(1-\la)\,\sigma_k}(y_0),M_2({\C}))$ such that
\[
\varphi=g_k\qquad\mbox{ on }\p B_{\sigma_k}(y_0)\qquad\mbox{ and }\qquad\varphi=g_\infty\qquad\mbox{ on }\p B_{(1-\la)\,\sigma_k}(y_0)
\]
moreover
\[
\begin{array}{l}
\ds\frac{1}{\sigma_k}\int_{B_{\sigma_k}(y_0)\setminus B_{(1-\la)\sigma_k}(y_0)}|\nabla\varphi|^2\ dy^3\le C\, \la\, \int_{\p B_{\sigma_k}(y_0)}|\nabla g_k|^2\ dvol_{\p B_{\sigma_k}(y_0)}\\[5mm]
\ds\quad+C\, \la\, \int_{\p B_{(1-\la)\,\sigma_k}(y_0)}|\nabla g_\infty|^2\ dvol_{\p B_{(1-\la)\,\sigma_k}(y_0)}\\[5mm]
\ds\quad +\frac{C}{\la\,\sigma_k^2}\,\int_{\p B_{\sigma_k}(y_0)}|g_k(y)-g_\infty((1-\la)\,y)|^2\ dvol_{\p B_{\sigma_k}(y_0)}\ .
\end{array}
\]
In addition 
\[
\begin{array}{l}
\ds\|\mbox{dist}(\varphi(y), SU(2))\|_{L^\infty(B_{\sigma_k}(y_0)\setminus B_{(1-\la)\sigma_k}(y_0))}\\[5mm]
\ds\quad\le \frac{C}{\la\sigma_k}\, \sqrt{\int_{\p B_{\sigma_k}(y_0)}|\nabla g_k|^2+|\nabla(g_\infty((1-\la)\,y)|^2\ dvol_{\p B_{\sigma_k}(y_0)}}\\[5mm]
\ds\times\sqrt{ \int_{\p B_{\sigma_k}(y_0)}|g_k(y)-g_\infty((1-\la)\,y)|^2\ dvol_{\p B_{\sigma_k}(y_0)}}\\[5mm]
\ds+\frac{C}{\la^2\sigma_k^2}\, \int_{\p B_{\sigma_k}(y_0)}|g_k(y)-g_\infty((1-\la)\,y)|^2\ dvol_{\p B_{\sigma_k}(y_0)}
\end{array}
\]
We first choose $s$ such that $0\notin B_s(y_0)$
\[
\frac{8}{s}\int_{B_s(y_0)}|\nabla g_\infty|^2\ dy^3\le C\,s^2\, \|\nabla g_\infty\|^2_{L^\infty(B_s(y_0))}\le \delta\ .
\]
Then we choose $\la\in(0,1/2)$ small enough such that
\[
\begin{array}{l}
\ds C\, \la\, \int_{\p B_{\sigma_k}(y_0)}|\nabla g_k|^2\ dvol_{\p B_{\sigma_k}(y_0)}+C\, \la\, \int_{\p B_{(1-\la)\,\sigma_k}(y_0)}|\nabla g_\infty|^2\ dvol_{\p B_{(1-\la)\,\sigma_k}(y_0)}\\[5mm]
\ds \le\,C\, \frac{\la}{s}\, \int_{B_{s}(y_0)}|\nabla g_k|^2\ dy^3+C\, \frac{\la}{s}\, \int_{B_{s}(y_0)}|\nabla g_\infty|^2\ dy^3\le \frac{\delta}{8}
\end{array}
\]
and such that
\[
\|g_\infty((1-\la)y)-g(y)\|_{L^\infty(\p B_{\sigma_k}(y_0))}\le \frac{\delta^2}{64}
\]
Finally, by making $k\rightarrow +\infty$, since $g_k\rightarrow g_\infty$ strongly in $L^2(B_1(0))$, we ensure for $k$ large enough
\[
\frac{1}{\sigma_k}\int_{B_{\sigma_k}(y_0)\setminus B_{(1-\la)\sigma_k}(y_0)}|\nabla\varphi|^2\ dy^3\le\frac{\delta}{4}
\]
and
\[
\|\mbox{dist}(\varphi(y), SU(2))\|_{L^\infty(B_{\sigma_k}(y_0)\setminus B_{(1-\la)\sigma_k}(y_0))}\le \frac{\delta}{4}\ .
\]
For $\delta$ small enough there is a unique orthogonal projection of $\varphi$ onto $SU(2)$ and we denote
\[
\begin{array}{l}
\ds g_{k,\infty}:=\pi_{SU(2)}(\varphi)\quad\mbox{ in } B_{\sigma_k}(y_0)\setminus B_{(1-\la)\sigma_k}(y_0))\\[5mm]
\ds g_{k,\infty}:=g_\infty\quad\mbox{ in }  B_{(1-\la)\sigma_k}(y_0))
\end{array}
\]
in such a way that
\be
\label{dens-sma}
\frac{1}{\sigma_k}\int_{B_{\sigma_k}(y_0)}|\nabla g_{k,\infty}|^2\ dy^3\le\frac{\delta}{4}
\ee
We then introduce $\ti{g}_{k,\infty}(x):=g(x)$ in $B_1(0)\setminus B_{\rho_k\,\sigma_k}(\rho_k\,y_0+x_0)$ and $\ti{g}_{k,\infty}(x):=g_{k,\infty}(\frac{x-x_0}{\rho_k})$ in $B_{\rho_k\,\sigma_k}(\rho_k\,y_0+x_0)$
Finally we denote $h_k:=\,g\,g_{k,\infty}^{-1}$ and, $(u^{h_k},A^{h_k})$ is the gauge equivalent pair to $(u,A)$ given by
\[
u^{h_k}:=h_{k}^{-1}u\,h_k\qquad\mbox{ and }\quad A^{h_k}:=h_k^{-1} A\,h_k+h_k^{-1} dh_k
\]
By the minimality of $(u,A)$, since in the energy density only the non gauge invariant part of the energy that is $\int |A|^2$ is impacted by this gauge change we have
\[
CYMH(u,A)\le CYMH(u^{h_k},A^{h_k})\ .
\]
This gives
\[
\int_{B_{\rho_k\,\sigma_k}(\rho_k\,y_0+x_0)}|g\,dg^{-1}+g\,d^\ast\xi\,g|^2\ dx^3\le \int_{B_{\rho_k\,\sigma_k}(\rho_k\,y_0+x_0)}|\ti{g}_{k,\infty}\,d\ti{g}_{k,\infty}^{-1}+\ti{g}_{k,\infty}\,d^\ast\xi\,\ti{g}^{-1}_{k,\infty}|^2\ dx^3
\]
This implies
\[
\begin{array}{l}
\ds\|dg\|_{L^2(B_{\rho_k\,\sigma_k}(\rho_k\,y_0+x_0))}\le \|d\ti{g}_{k,\infty}\|_{L^2(B_{\rho_k\,\sigma_k}(\rho_k\,y_0+x_0))}+2\, \|d^\ast\xi\|_{L^2(B_{\rho_k\,\sigma_k}(\rho_k\,y_0+x_0))}\\[5mm]
\ds\qquad\le \|d\ti{g}_{k,\infty}\|_{L^2(B_{\rho_k\,\sigma_k}(\rho_k\,y_0+x_0))}+2\,(\rho_k\,\sigma_k)^{3/2}\, \|d^\ast\xi\|_{L^\infty(B_r(x_0))}
\end{array}
\]
We deduce from this inequality combined with (\ref{dens-sma}), for $k$ large enough, 
\[
\frac{1}{\rho_k\,\sigma_k}\int_{B_{\rho_k\,\sigma_k}(\rho_k\,y_0+x_0))}|A|^2\ dx^3\le\delta
\]
Thanks to the $\delta$ regularity we have
\[
\|\nabla A\|_{L^\infty(B_{\rho_k\,\sigma_k/2}(\rho_k\,y_0+x_0))}\le \frac{C}{(\rho_k\,\sigma_k)^2}
\]
This gives
\[
\|\nabla^2_y\,g_k\|_{L^\infty(B_{s/4}(y_0))}\le C\,s^{-2}
\]
and we deduce that $g_k$ is converging strongly to $g_\infty$ in $W^{1,2}_{loc}(B_1(0)\setminus\{0\})$. Since there exists $C>0$ such that
\[
\int_{B_s(0)}|\nabla g_k|^2\ dy^3\le C\ s\ ,
\]
we deduce
\[
g_k\longrightarrow g_\infty\qquad\mbox{strongly in}\ W^{1,2}(B_1(0))\ .
\]
We claim that $g_\infty$ is energy minimizing for its boundary data in $W^{1,2}(B_1(0),SU(2))$. Indeed, assume there is a ball $B_r(y_0)\subset B_1(0)$ and $h_\infty\ :\ B_r(y_0)\rightarrow SU(2)$
such that 
\[
h_\infty=g_\infty \quad\mbox{ on }\p B_r(y_0)\qquad\mbox{ and }\qquad \int_{B_r(y_0)}|dh_\infty|^2(y)\ dy^3<\int_{B_r(y_0)}|dg_\infty|^2(y)\ dy^3
\]
then we use again Luckhaus Lemma but this time between $g_k$ and $h_\infty$ in $B_r(y_0)$ and we obtain a contradiction with the minimality of $(u,A)$. Now using proposition 1.2 of \cite{SU3} we obtain that $g_\infty$ is constant which is a contradiction. Hence $(u,A)$ is smooth in a neighbourhood of $x_0$ and this concludes the proof of  theorem~\ref{th-reg}.
\section{ The Asymptotic when $\la=\la_0$, $\mu=\mu_0$ are fixed and $\ep\rightarrow 0$ }
\reset
In this section we study the asymptotic behaviour of a sequence of minimizers of $(u_k,A_k)$ $CYMH^{\la,\mu}_{\ep_k}$ in $\mathcal E$ under the boundary condition $u=\phi$ where $\phi\in H^{1/2}(\p \B^3,{\S}^2)$ assuming  $\la=\la_0\ge 1$, $\mu=\mu_0\ge 1$ and $\ep_k\rightarrow 0$.

To simplify notations we shall often skip to write explicitly the index $k$. 
\subsection{$L^\infty$ estimate of $\nabla_Au$ and the control of the number of Monopoles.}

We first prove the following Lemma
\begin{Lm}
\label{lm-linfty-b}
Let $\ep\in (0,1)$ and $(u,A)$ be a critical point of $CYMH^{\la,\mu}_\ep$ such that $$CYMH^{\la,\mu}_\ep(u,A)=M\ ,$$ and $u=\phi$ at $\p \B^3$ where $\phi\in C^2(\p\B^3,\S^2)$. Then, for $\ep>0$ small enough there exists $C_{M,\phi}$ depending only on $M$ and the $C^2$ norm of $\phi$ and not on $\la\ge 1$ and $\mu\ge 1$ such that
\be
\label{linf}
\|\nabla_Au\|_{L^\infty(\B_1(0))}\le  \frac{C(M,\phi)}{\ep}\ \lf(1+\sqrt{\frac{\la}{\mu}}\rg)\ .
\ee
\hfill $\Box$
\end{Lm}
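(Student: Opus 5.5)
We argue by blow-up at scale $\ep$ and contradiction, using the scale invariance of the self-dual Lagrangian. Set $\tilde u(y):=u(x_0+\ep y)$ and $\tilde A(y):=\ep\,A(x_0+\ep y)$. Then $\nabla_{\tilde A}\tilde u=\ep\,\nabla_Au$, and $YMH_\ep^\la(u,A)$ becomes $\ep^{-2}\,YMH^\la_1(\tilde u,\tilde A)$ up to the factor $\ep^2$ coming from the Jacobian. The system (\ref{CYMHS}) becomes
\[
d^\ast_{\tilde A}d_{\tilde A}\tilde u=\la\,\tilde u(1-|\tilde u|^2),\qquad d^\ast_{\tilde A}F_{\tilde A}+[\tilde u,d_{\tilde A}\tilde u]+\frac{\ep}{\mu}\,\tilde A=0 .
\]
Here the Coulomb term carries the harmless factor $\ep/\mu\le 1$. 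The claim (\ref{linf}) therefore amounts to a uniform bound $|\nabla_{\tilde A}\tilde u|\le C(1+\sqrt{\la/\mu})$ at the origin of the rescaled problem.

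\textbf{Step 1: an $L^\infty$ bound on $u$.} Since $|u|=1$ on $\p\B^3$, we apply the maximum principle to $|u|^2$. By (\ref{leib-sc}) and the first equation of (\ref{CYMHS}),
\[
\frac12\Delta|u|^2=|\nabla_Au|^2-\frac{\la}{\ep^2}|u|^2(1-|u|^2)\ge -\frac{\la}{\ep^2}|u|^2(1-|u|^2).
\]
So $|u|^2$ is subharmonic wherever $|u|>1$, which gives $|u|\le1$ everywhere. This removes any dependence of the constant on $\|u\|_\infty$.

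\textbf{Step 2: a differential inequality for $e:=|\nabla_Au|^2$.} We use the Bochner identity (\ref{bw-1}), with Kato's inequality and Young's inequality on the mixed terms. The term $\frac{2}{\ep}[\nabla_Au,\nabla_Au]\cdot F_A$ is bounded by $C(|\nabla_Au|^2|F_A|)/\ep$. The term $\frac{1}{\mu\ep^2}[u,\nabla_Au]\cdot A$ is bounded using $|A|$. The term $-\frac{\la}{\ep^3}|\nabla_Au|^2(1-|u|^2)$ is $\ge -\frac{\la}{\ep^3}e$ by Step 1. The result is a subsolution inequality of the form
\[
-\Delta e\le C\big(|F_A|+\la\ep^{-2}\big)e+C\mu^{-1}\ep^{-2}|A|^2 .
\]
In rescaled variables this reads $-\Delta\tilde e\le C(|F_{\tilde A}|+\la)\tilde e+\text{lower order}$. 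The potential $\la$ is exactly the origin of the factor $\sqrt{\la/\mu}$: balancing $\frac{\mu}{\ep}|d_Au|^2$ against $\frac{\la\mu}{\ep^3}(1-|u|^2)^2$ at scale $\ep/\sqrt\la$ gives that ratio. To close the estimate we also need $F_{\tilde A}$ bounded. For this we apply the analogous computation to (\ref{bw-2}), since $\Delta|F|^2$ is controlled by $|F|^3+|[\nabla_Au,\nabla_Au]||F|+\dots$. We then run the coupled Moser iteration on $\tilde e+|F_{\tilde A}|$.

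\textbf{Step 3: small energy and the blow-up argument (main obstacle).} Moser iteration requires smallness of the rescaled energy on unit balls, or at least a bound. A bound follows from the monotonicity (\ref{trumon}) together with $CYMH\le M$, but only at scales where the sign problem in (\ref{trumon}) is harmless. At scale $\ep$ the $F$ term enters with weight $\ep\mu(2r/|x|-1)$, so the rescaled $YMH$ energy on $\B_1$ is bounded by $C(M)$ plus the boundary contribution. We would first try the following direct argument. Suppose $\sup \ep|\nabla_Au|=:\Lambda_k\to\infty$, attained at $x_k$. Rescale at the finer scale $\ep/\Lambda_k$. The limit is a nontrivial solution of the gauge-invariant equations with $|\nabla u|\le1$, $|\nabla u(0)|=1$, zero potential and zero Coulomb mass. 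Since $|u|$ is bounded, this is a bounded-energy Yang--Mills--Higgs configuration without potential, and by Step 1 and the Liouville-type identity $\Delta|u|^2=2|\nabla_Au|^2$ it must have $\nabla_Au\equiv0$, which is a contradiction. Uhlenbeck gauges (Theorem~\ref{th-Uhl}) on unit balls, together with smallness of $\|F\|_{L^{3/2}}$ obtained from the $L^2$ bound at the new scale, give the compactness needed to pass to the limit. Near $\p\B^3$ the same argument applies after reflecting or flattening. There the $C^2$ norm of $\phi$ enters: the boundary condition $u=\phi$ and $\iota^\ast\ast F_A=0$ make the limiting half-space problem rigid in the same way. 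The delicate point is obtaining the uniform energy bound at the blow-up scale despite the non-monotone $F$ term.
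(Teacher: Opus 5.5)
Your route (Bochner/Moser on the $\ep$-rescaled problem, then a blow-up contradiction closed by a Liouville theorem) differs from the paper's. Step 3, which carries the whole argument, has a genuine gap.

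\textbf{The blow-up does not close.} Rescaling at $\ep/\Lambda_k$, the Higgs energy is only bounded by $2M\Lambda_k/\mu_k$, so the limit need not have finite energy. What does vanish is the curvature, $\int|F_{\hat A}|^2\,dy\le 2M/(\mu_k\Lambda_k)\to0$, and you never use this. More seriously, the potential coefficient becomes $\la_k/\Lambda_k^2$. Since the constant must be uniform in $\la,\mu\ge1$, the contradiction hypothesis only gives $\Lambda_k\gg1+\sqrt{\la_k/\mu_k}$. That does not force $\la_k/\Lambda_k^2\to0$ when $\mu_k\to\infty$, so ``zero potential'' is unjustified. When this coefficient blows up, the natural limit problem is (in a flat gauge) the harmonic map equation into $\S^2$. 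Harmonic maps $\R^3\to\S^2$ with $|\nabla\hat u|\le1=|\nabla\hat u(0)|$ need not be constant, e.g.\ $y\mapsto(\cos y_1,\sin y_1,0)$.

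Even in the potential-free case, $\Delta|u|^2=2|\nabla_Au|^2$ and $|u|\le1$ only make $|u|^2$ a bounded subharmonic function on $\R^3$. Such functions need not be constant in dimension three, e.g.\ $\max(-|y|^{-1},-1)$. You would need flatness of the limit connection plus Liouville for bounded harmonic functions.

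Passing $|\nabla_{\hat A}\hat u(0)|=1$ to the limit also requires uniform $C^{1,\alpha}$ bounds in a controlled gauge on unit balls. That is exactly the $\ep$-regularity estimate which proves the lemma directly.

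Step 2 alone gives the wrong dependence as well. The natural subsolution bound for $-\Delta\tilde e\le C\la\tilde e$ on unit balls costs a factor $\la^{3/2}$, giving $\ep|\nabla_Au|\lesssim\la^{3/4}\mu^{-1/2}$. This is not $O(1+\sqrt{\la/\mu})$ when $\la=\mu\to\infty$.

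\textbf{The missing idea.} The obstacle you single out is illusory. $YMH^\la_\ep$ is exactly invariant under your rescaling: $YMH^\la_\ep(u,A;\B_1)=YMH^\la_1(\tilde u,\tilde A;\B_{1/\ep})$. Hence $\int|F_{\tilde A}|^2+|d_{\tilde A}\tilde u|^2\,dy\le 2M/\mu$ globally, with no monotonicity involved. The paper's proof rests on precisely this, as follows.
\begin{itemize}
\item By H\"older, $\|F_A\|_{L^{3/2}(\B_{\al\ep}(x_0))}\le C(\al M/\mu)^{1/2}$, which is below Uhlenbeck's threshold for $\al=\al(M)$.
\item In the Coulomb gauge of Theorem~\ref{th-Uhl} on $\B_{\al\ep}(x_0)$, the elliptic system for $A^g$ gives $\|A^g\|_{L^\infty(\B_{\al\ep/2}(x_0))}\le C_M\ep^{-1}$.
\item The maximum principle gives $|u|\le1$.
\item Calder\'on--Zygmund estimates in Lorentz spaces for $\Delta u^g=-2[A^g;\nabla u^g]-[A^g,[A^g,u^g]]-\frac{\la}{\ep^2}u^g(1-|u|^2)$ then yield the bound.
\end{itemize}
The factor $\sqrt{\la/\mu}$ comes from the potential energy, through $\frac{\la}{\ep^2}\|1-|u|^2\|_{L^2(\B_{\al\ep})}\le C\sqrt{M\la/\mu}\,\ep^{-1/2}$. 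Balls centred on the boundary are handled the same way on half-balls, using the $C^2$ norm of $\phi$. No blow-up, Moser iteration or Liouville theorem is needed.
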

\noindent{\bf Proof of lemma~\ref{lm-linfty-b}}  We present the proof in the case when $\B_r(x_0)\subset \B^3$. A similar proof can be adapted to the case where $x_0\in \p\B^3$ using elliptic estimates up to the boundary and using the strong assumptions on $\phi$ (i.e. $\phi\|_{C^2(\p\B^3)}<+\infty$)
\[
\begin{array}{l}
\ds\int_{\B_r(x_0)}|F_A|^{3/2}\ dx^3\le C\, (r^3)^{1/4}\ \lf(\int_{B_r(x_0)}|F_A|^{2}\ dx^3\rg)^{3/4}\\[5mm]
\ds\le \frac{C}{\mu^{3/4}}\,\lf(\frac{r}{\ep}\rg)^{3/4}\ \lf(\mu\,\int_{B_r(x_0)}\ep\,|F_A|^{2}\ dx^3\rg)^{3/4}\le C\ \frac{M^{3/4}}{\mu^{3/4}}\,\lf(\frac{r}{\ep}\rg)^{3/4}\le  C\ {M^{3/4}}\,\lf(\frac{r}{\ep}\rg)^{3/4}
\end{array}
\]
Hence there exists $\al\in (0,1)$ independent of $\ep$ but only on $M=CYMH_\ep^{\la,\mu}(u,A)$ such that
\[
\lf(\int_{\B_{\al\ep}(x_0)}|F_A|^{3/2}\ dx^3\rg)^{2/3}<\min\{\delta_{3/2},\delta_2\}:=\delta\ .
\]
where $\delta_{3/2}$ and $\delta_2>0$ are given by (\ref{II.0}). We extract a Coulomb Gauge $A^g:=g^{-1}dg+g^{-1}\,A\, g$ on $\B_{\al\ep}(x_0)$. We apply (\ref{II.1}) for $p=3/2$ and $p=2$ and we get
\[
\lf\{
\begin{array}{l}
\|A^g\|_{L^{(3,3/2)}(\B_{\ep\al}(x_0))}+\|\nabla(A^g)\|_{L^{3/2}(\B_{\ep\al}(x_0))}\le C\ \|F_{A}\|_{L^{3/2}(\B_{\ep\al}(x_0))}\le C\, \delta\\[5mm]
\ds\|A^g\|_{L^{(6,2)}(\B_{\ep\al}(x_0))}+\|\nabla(A^g)\|_{L^2(\B_{\ep\al}(x_0))}\le C\ \|F_{A}\|_{L^2(\B_{\ep\al}(x_0))}\le C \sqrt{\frac{M}{\mu}}\ \ep^{-1/2}\le C\,\sqrt{\frac{M}{\ep}}\ .
\end{array}
\rg.
\]
We deduce in particular thanks to the generalised Littlewood inequality
\[
\begin{array}{l}
\ds\|A^g\|^2_{L^{(3,1)}(B_{\ep\al}(x_0))}\le \|A^g\|_{L^2(B_{\ep\al}(x_0))}\, \|A^g\|_{L^{(6,2)}(B_{\ep\al}(x_0))}\\[5mm]
\ds\quad\le\, C\, \sqrt{\ep\,\al} \  \|A^g\|_{L^3(B_{\ep\al}(x_0))}\, \|A^g\|_{L^{(6,2)}(B_{\ep\al}(x_0))}\le C_M\, \sqrt{\al}\, \delta^{2/3}
\end{array}
\]
Observe that $F_{A^g}$ satisfies in $B_{\ep\al}(x_0)$
\[
\lf\{
\begin{array}{l}
\ds d F_{A^g}=-A^g\wedge F_{A^g}\\[5mm]
\ds d^\ast F_{A^g}=A^g\res F_{A^g}-\, g^{-1}\frac{[u,\nabla_A u]}{\ep^2}\,g-\frac{1}{\mu\,\ep}\,g^{-1}\,A\,g\ .
\end{array}
\rg.
\]
This gives
\[
-\,\Delta A^g+d^\ast\lf(  A^g\wedge A^g  \rg)-A^g\res dA^g-A^g\res(A^g\wedge A^g)=-\, \frac{[u^g,\nabla_{A^g} u^g]}{\ep^2}-\frac{1}{\ep}\,g^{-1}\,A\,g
\]
Let $B\in W^{1,2}(\wedge^1B_{\ep\al}(x_0),\frak{su}(2))$ and $ D\in L^2(\wedge^2B_{\ep\al}(x_0),\frak{su}(2))$ solving
\[
-\,\Delta B+d^\ast\lf(  A^g\wedge B  \rg)-A^g\res dB-A^g\res(A^g\wedge B)= D
\]
Using the fact that $\|A^g\|_{L^{(3,1)}(B_{\ep\al}(x_0))}$ is small enough, classical elliptic estimates give
\[
\|B\|_{L^\infty(B_{2^{-1}\,\ep\al}(x_0))}+\|\nabla B\|_{L^{(3,1)}(B_{2^{-1}\,\ep\al}(x_0))}\le C\, \|D\|_{L^{(3/2,1)}(B_{\ep\al}(x_0))}+C\, (\ep\al)^{-1}\,\|B\|_{L^3(B_{\ep\al}(x_0))}
\]
Hence
\[
\begin{array}{l}
\ds\|A^g\|_{L^\infty(B_{2^{-1}\,\ep\al}(x_0))}+\|\nabla A^g\|_{ L^{(3,1)}(B_{2^{-1}\,\ep\al}(x_0)) }\\[5mm]
\ds\qquad\le C\, \ep^{-2}\,\|\nabla_Au\|_{L^{(3/2,1)}(B_{\ep\al}(x_0))}+ C\, \ep^{-1}\,\|A\|_{L^{(3/2,1)}(B_{\ep\al}(x_0))}+C_M\, (\ep\al)^{-1}\,\delta^{2/3}\ .
\end{array}
\]
We have using H\"older inequalities in Lorentz spaces
\[
\|\nabla_Au\|_{L^{(3/2,1)}(B_{\ep\al}(x_0))}\le \sqrt{\al\ep}\ \|\nabla_Au\|_{L^{2}(B_{\ep\al}(x_0))}
\]
and
\[
\|A\|_{L^{(3/2,1)}(B_{\ep\al}(x_0))}\le \sqrt{\al\ep}\ \|A\|_{L^{2}(B_{\ep\al}(x_0))}\ .
\]
Thus
\[
\begin{array}{l}
\ds\|A^g\|_{L^\infty(B_{2^{-1}\,\ep\al}(x_0))}+\|\nabla A^g\|_{ L^{(3,1)}(B_{2^{-1}\,\ep\al}(x_0)) }\\[5mm]
\ds\quad\le C_\al\, \ep^{-3/2}\ \|\nabla_Au\|_{L^{2}(B_{\ep\al}(x_0))}+C_\al\, \ep^{-1/2}\ \|A\|_{L^{2}(B_{\ep\al}(x_0))}+C_M\, \ep^{-1}\\[5mm]
\ds\quad\le C_M\, \ep^{-1}
\end{array}
\]
where $C_M>0$ depends only on  $M=CYMH^{\la,\mu}_\ep(u,A)$.

\medskip

The Euler Lagrange equation (\ref{CYMHS-co})
 is implying
\[
-2^{-1}\Delta(1-|u|^2)+\,\la\,\frac{|u|^2}{\ep^2}\, (1-|u|^2)=|\nabla_Au|^2\ge 0 .
\]
Since we are assuming $|u|\equiv 1$ on $\p \B_1(0)$, thanks to the maximum principle we have
\be
\label{linfty-module}
\forall x\in \B_1(0)\qquad|u|(x)\le 1\ .
\ee
We have also
\[
\begin{array}{l}
\ds\sum_{l=1}^3(\nabla_A^g)_l((\nabla_A^g)_l u^g)=\sum_{l=1}^3\p_{x_l}(\p_{x_l}u^g+[A^g_l,u])+[A^g_l, \p_{x_l}u^g+[A^g_l,u^g]]\\[5mm]
\ds=\Delta u^g+2\,\sum_{l=1}^3[A_l^g,\p_{x_l}u^g]+[A_l^g[A_l^g,u]] 
\end{array}
\]
thus
\be
\label{eq-ug}
\Delta u^g=-2\,[A^g;\nabla u^g]-[A^g[A^g,u^g]]-\frac{\la}{\ep^2}u^g\,(1-|u|^2)\ .
\ee
We deduce using classical Calderon Zygmund theory
\[
\begin{array}{l}
\ds\|\Delta u^g\|_{L^2\lf(B_{4^{-1}\,\ep\al}(x_0)\rg)}\le C\ \|A^g\|_{L^\infty(B_{2^{-1}\,\ep\al}(x_0))}\,\|\nabla u^g\|_{L^2\lf(B_{2^{-1}\,\ep\al}(x_0)\rg)}\\[5mm]
\ds\quad+C\,\|A^g\|^2_{L^\infty\lf(B_{2^{-1}\,\ep\al}(x_0)\rg)} \, \ep^{3/2}+C\,\frac{\la}{\ep^{2}}\, \lf( \int_{B_{2^{-1}\,\ep\al}(x_0)}(1-|u|^2)^2\ dx^3  \rg)^{1/2}+C\, \ep^{-1}\,\|\nabla u^g\|_{L^2\lf(B_{2^{-1}\,\ep\al}(x_0)\rg)}
\end{array}
\]
Then we bound
\[
\|\nabla u^g\|_{L^2\lf(B_{2^{-1}\,\ep\al}(x_0)\rg)}\le\lf[\|\nabla_Au\|_{L^2\lf(B_{2^{-1}\,\ep\al}(x_0)\rg)}+\|A^g\|_{L^2\lf(B_{2^{-1}\,\ep\al}(x_0)\rg)}\rg]\
\]
We have
\[
\|A^g\|_{L^2\lf(B_{2^{-1}\,\ep\al}(x_0)\rg)}\le C\,\sqrt{\ep}\, \|A^g\|_{L^3\lf(B_{2^{-1}\,\ep\al}(x_0)\rg)}\le C\,\sqrt{\ep}\ \delta^{1/3}
\]
We deduce
\[
\|\nabla u^g\|_{L^2\lf(B_{2^{-1}\,\ep\al}(x_0)\rg)}\le\, C\, \sqrt{\frac{\ep\,M}{\mu}}+C\,\sqrt{\ep}\ \delta^{1/3}\ .
\]
Hence
\[
\|\Delta u^g\|_{L^{2}\lf(B_{2^{-1}\,\ep\al}(x_0)\rg)}\le C_M\ \ep^{-1/2}+C\,\sqrt{\frac{\la}{\mu}}\ \ep^{-1/2}\,\sqrt{M}\ .
\]
This implies
\[
\|\Delta u^g\|_{L^{(3/2,1)}\lf(B_{2^{-1}\,\ep\al}(x_0)\rg)}\le C\,\sqrt{\ep}\, \|\Delta u^g\|_{L^{2}\lf(B_{2^{-1}\,\ep\al}(x_0)\rg)}\le C_M\,\lf(1+\sqrt{\frac{\la}{\mu}}\rg)\ .
\]
This gives
\[
\begin{array}{l}
\ds\|\nabla u^g\|_{L^{(3,1)}\lf(B_{4^{-1}\,\ep\al}(x_0)\rg)}\le\, C\,\|\Delta u^g\|_{L^{(3/2,1)}\lf(B_{2^{-1}\,\ep\al}(x_0)\rg)}+C\, \ep^{-1/2}\,\|\nabla u^g\|_{L^2\lf(B_{2^{-1}\,\ep\al}(x_0)\rg)}\\[5mm]
\ds\quad\le C_M\,\lf(1+\sqrt{\frac{\la}{\mu}}\rg)\ .
\end{array}
\]
Injecting this estimate in (\ref{eq-ug}) is giving
\[
\begin{array}{l}
\ds\|\Delta u^g\|_{L^{(3,1)}\lf(B_{4^{-1}\,\ep\al}(x_0)\rg)}\le C\ \|A^g\|_{L^\infty(B_{4^{-1}\,\ep\al}(x_0))}\,\|\nabla u^g\|_{L^{(3,1)}\lf(B_{4^{-1}\,\ep\al}(x_0)\rg)}\\[5mm]
\ds\quad+C\,\ep\,\|A^g\|^2_{L^\infty\lf(B_{4^{-1}\,\ep\al}(x_0)\rg)} +C\,\ep^{-1}\, 
\end{array}
\]
Thus
\[
\|\Delta u^g\|_{L^{(3,1)}\lf(B_{4^{-1}\,\ep\al}(x_0)\rg)}\le C_M\,\lf(1+\sqrt{\frac{\la}{\mu}}\rg)\ \ep^{-1}\ .
\]
Calderon Zygmund estimates implies
\[
\begin{array}{l}
\|\nabla^2  u^g\|_{L^{(3,1)}\lf(B_{8^{-1}\,\ep\al}(x_0)\rg)}\le C\ \|\Delta u^g\|_{L^{(3,1)}\lf(B_{4^{-1}\,\ep\al}(x_0)\rg)}+C\,\ep^{-1}\,\|\nabla u^g\|_{L^{(3,1)}\lf(B_{4^{-1}\,\ep\al}(x_0)\rg)}\\[5mm]
\ds\quad\le\,  C_M\,\lf(1+\sqrt{\frac{\la}{\mu}}\rg)\ \ep^{-1} \ .
\end{array}
\]
Hence
\[
\begin{array}{l}
\ds\|\nabla  u^g\|_{L^{\infty}\lf(B_{4^{-2}\,\ep\al}(x_0)\rg)}\le C\, \|\nabla^2  u^g\|_{L^{(3,1)}\lf(B_{8^{-1}\,\ep\al}(x_0)\rg)}+C\, \ep^{-1}\, \|\nabla u^g\|_{L^{(3,1)}\lf(B_{4^{-1}\,\ep\al}(x_0)\rg)}\\[5mm]
\ds\quad\le  C_M\,\lf(1+\sqrt{\frac{\la}{\mu}}\rg)\ \ep^{-1} \ .
\end{array}
\]
Since 
\[
\|A^g\|_{L^\infty(B_{2^{-1}\,\ep\al}(x_0))}\le C_M\,\ep^{-1}
\]
We deduce, using the fact that $|u|\le 1$
\[
\|\nabla_{A}u\|_{L^\infty(B_{2^{-1}\,\ep\al}(x_0))}=\|\nabla_{A^g}u^g\|_{L^\infty(B_{2^{-1}\,\ep\al}(x_0))}\le C\ \lf(1+\sqrt{\frac{\la}{\mu}}\rg)\ \ep^{-1} \ .
\]
This holds for any $x_0\in B_{r_1}(0)$ and $C$ is independent of $x_0$ and $\ep$ but only on $M$. Thus we deduce (\ref{linf}) in the $\al\ep$ interior of $\B_1(0)$ and taking then $x_0\in\p \B^3(0)$ we can implement the same argument
on the quasi half ball $B_{\la\ep}(x_0)\cap\B^3_1(0)$ taking into account the $C^2$ norm control of $u=\phi$ on $\p\B^3_1(0)\cap \B_{\al\ep}(x_0)$ and lemma~\ref{lm-linfty-b} is proved. \hfill $\Box$

\medskip

\begin{Lm}
\label{lm-zero-set}
Let $\ep\in (0,1)$ and $(u,A)$ a  critical point of $CYMH^{\la,\mu}_\ep$ such that $\la\ge 1$, $\mu\ge 1$ and $CYMH^{\la,\mu}_\ep(u,A)=M$.   Assume moreover that $u=\phi$ on $\p\B^3_1(0)$ where $\phi\in C^2(\p\B^3_1(0),\S^2)$ .  Then for any $t\in (0,1)$ there exists a number $Q\in {\N}$ satisfying
\be
\label{bd-Q}
Q\le \frac{C_{M,\phi}}{\la\mu}\  t^{-5}\ \lf(1+\sqrt{\frac{\la}{\mu}}\rg)^3\ .
\ee
where $C_{M,\phi}$ only depends on $M$ and the $C^2$ norm of $\phi$. Then there exists $Q$ balls of radius $\ep>0$  $(\B_\ep(x_i))_{i=1\cdots Q}$  
\be
\label{cov}
\lf\{x\in \B_{r_1}(0)\ ;\ |u|(x)<1-t\rg\}\subset \bigcup_{i=1}^{Q}\B_\ep(x_i)\ .
\ee
\hfill $\Box$
\end{Lm}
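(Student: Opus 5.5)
The plan is a standard "clearing-out by Lipschitz bound plus potential energy counting" argument, built on Lemma~\ref{lm-linfty-b}.

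First, by (\ref{linfty-module}) we have $|u|\le 1$. By Kato's inequality $|\nabla|u||\le|\nabla_Au|$, so Lemma~\ref{lm-linfty-b} gives the Lipschitz bound
\[
\|\nabla|u|\|_{L^\infty(\B_1(0))}\le \frac{C_{M,\phi}}{\ep}\Big(1+\sqrt{\tfrac{\la}{\mu}}\Big)=:\frac{L}{\ep}.
\]
Consequently, if $x\in\B_{r_1}(0)$ satisfies $|u|(x)<1-t$, then on the ball $\B_{s\ep}(x)$ with
\[
s:=\frac{t}{2L}\wedge\frac12
\]
we have $|u|\le 1-t/2$. Hence $(1-|u|^2)^2\ge (1-|u|)^2\ge t^2/4$ there. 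Since $r_1<1$, for $\ep$ small this ball lies in $\B_1(0)$; near the boundary we may use that the bound of Lemma~\ref{lm-linfty-b} holds up to $\p\B_1(0)$. The potential part of the energy then satisfies
\[
\int_{\B_{s\ep}(x)}\frac{\la\mu}{2\ep^3}(1-|u|^2)^2\,dx^3\ \ge\ c\,\frac{\la\mu}{\ep^3}\,t^2\,(s\ep)^3=c\,\la\mu\,t^2 s^3 .
\]

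Next comes a Vitali covering. Take a maximal family of points $x_1,\dots,x_Q$ in the set $\{|u|<1-t\}\cap\B_{r_1}(0)$ that are pairwise at distance at least $\ep$. Then the balls $\B_{\ep/2}(x_i)$ are disjoint, and the balls $\B_\ep(x_i)$ cover the set by maximality, which is exactly (\ref{cov}). Since $s\le 1/2$, the balls $\B_{s\ep}(x_i)$ are also disjoint. Summing the lower bound over them and using $CYMH^{\la,\mu}_\ep(u,A)=M$ gives
\[
Q\,c\,\la\mu\,t^2 s^3\le M,
\]
so that
\[
Q\le \frac{C M}{\la\mu}\,t^{-2}s^{-3}\le \frac{C_{M,\phi}}{\la\mu}\,t^{-5}\Big(1+\sqrt{\tfrac{\la}{\mu}}\Big)^3,
\]
which is (\ref{bd-Q}). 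When $s=1/2$ the bound is even better, since $t<1$.

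The only delicate point is the Lipschitz control of $|u|$ uniformly up to the scale $\ep$ and near $\p\B_1(0)$, which is precisely what Lemma~\ref{lm-linfty-b} supplies. The remainder is elementary counting.
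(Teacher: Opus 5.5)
Your argument is correct and follows the paper's proof. Kato's inequality together with the Lipschitz bound of Lemma~\ref{lm-linfty-b} forces $|u|\le 1-t/2$ on balls of radius $\sim t\,\ep\,(1+\sqrt{\la/\mu})^{-1}$ around each point of $\{|u|<1-t\}$, and the potential energy carried by these balls is counted against $M$. The only difference is the covering device: you use a maximal $\ep$-separated net, with disjoint small balls and radius-$\ep$ balls covering the set, whereas the paper uses a Besicovitch covering by balls of radius $\tau\ep$ with bounded overlap $N$; both give the same bound up to constants.
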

\begin{Rm}
\label{bdyassu}
In order to have Lemma~\ref{lm-linfty-b} and Lemma~\ref{lm-zero-set} to be true, we can weaken the assumption on the boundary value of $u$ by simply taking $|u|\le 1$ on $\p\B_1(0)$ and the same estimates, respectively (\ref{linf}) and (\ref{bd-Q}) and the inclusion (\ref{cov}) hold by restricting to a strictly smaller ball $\B_{r_1}(0)$ instead of $\B_1(0)$ for any $r_1<1$ and having the constant $C_{M,\phi}>$ being replaced by a constant $C_{M,r_1}>0$ depending only on $M$ and $r_1<1$.\hfill $\Box$
\end{Rm}
\noindent{\bf Proof of Lemma~\ref{lm-zero-set}.}
Using Kato inequality and the previous lemma
\[
\|\nabla|u|\|_{L^\infty(\B_{r_1}(0))}\le \|\nabla_Au\|_{L^\infty(\B_{r_1}(0))}\le \ep^{-1}\, C(M,r_1)\ \lf(1+\sqrt{\frac{\la}{\mu}}\rg)\ .
\]
Assume 
\[
|u(x_0)|<1-t
\]
then 
\[
\forall x\in B_{\tau\,\ep}(x_0)\qquad |u(x)|<1-t/2\ .
\]
where
\[
\tau:=\frac{t}{2\,C(M,r_1)\,\lf(1+\sqrt{\frac{\la}{\mu}}\rg)}
\]
This implies
\[
\int_{B_{\tau\ep}(x_0)}\frac{\la\,\mu}{\ep^3}(1-|u|^2)^2\ dx^3\ge \frac{3\pi}{4}\,\la\,\mu\, \ep^{-3}(\tau\ep)^3\ \frac{t^2}{4}\ \lf(\frac{3}{4}\rg)^2=\pi\, \la\,\mu\,\lf(\frac{3}{4}\rg)^3\ \frac{t^2}{4}\ \lf(  \frac{t}{2\,C(M,r_1)\,\lf(1+\sqrt{\frac{\la}{\mu}}\rg)} \rg)^3\ .
\]
We extract a Besicovitch covering $(B_{\tau \ep}(x))_{i\in I}$ from 
\[
(B_{\tau \ep}(x))_{x\in E}\qquad\mbox{ where }\qquad E_t:=\lf\{ x\in \B_{r_1}(0)\ ;\ |u|(x)<1-t  \rg\}
\]
so that every point in $E_t$ is covered by at most $N$ balls. Then we have
\be
\label{cardmon}
\begin{array}{l}
\ds\mbox{Card}(I)\ \pi \la\,\mu\, \frac{t^2}{4}\ \lf(\frac{3}{4}\rg)^3\,\lf(  \frac{t}{2\,C(M,r_1)\,\lf(1+\sqrt{\frac{\la}{\mu}}\rg)} \rg)^3\le \sum_{i\in I}\int_{B_{\tau\ep}(x_i)}\frac{\la\,\mu}{\ep^3}(1-|u|^2)^2\ dx^3\\[5mm]
\ds\qquad\le \int_{\B_1(0)}\sum_{i\in I}{\mathbf 1}_{B_{\tau\ep}(x_i)}(x)\ \frac{\la\,\mu}{\ep^3}(1-|u|^2)^2\ dx^3\le N\, M\ .
\end{array}
\ee
This implies Lemma~\ref{lm-zero-set}.\hfill $\Box$

\medskip

The Lemma~\ref{lm-zero-set} is telling that the number of monopoles being formed  by a critical configuration $(u,A)$ of uniformly bounded energy is uniformly bounded.



\subsection{The Energy Density Control away from the Monopoles and  above the $\sqrt{\ep}$ Scale.}

This section is devoted to the proof of the following lemma which is going to be the first important step  in our proof of the $\delta-$regularity independent of $\ep$ for minimizers. The goal of the lemma is to compensate the absence of obvious monotonicity formula for radii in the range $r\in(\sqrt{\ep},1)$.
\begin{Lm}
\label{lm-sca} Let $0<r_1<1$. There exists $\delta_0>0$ and $R>1$ independent of $\ep\in (0,1)$ but depending only on $r_1$ such that for any minimizer $(u,A)$ of $CYMH^{1,1}_\ep$ and any $\B_r(x_0)\subset\B_{r_1}(0)$ and $r>R\,\sqrt{\ep}$ if
\be
\label{hypdens}
\frac{1}{2\,r}\int_{\B_r(x_0)} |{A}|^2+\frac{|d_{{A}} {u}|^2}{\ep}+\ep\,|F_{{A}}|^2\ dx^3+\frac{1}{2\,\ep^3}(1-|{u}|^2)^2\ dx^3<\delta
\ee
then, for any $R\sqrt{\ep}<\rho<r$ there holds
\be
\label{dens}
\frac{1}{2\,\rho}\int_{\B_\rho(x_0)} |{A}|^2+\frac{|d_{{A}} {u}|^2}{\ep}+\ep\,|F_{{A}}|^2\ dx^3+\frac{1}{2\,\ep^3}(1-|{u}|^2)^2\ dx^3<2\,\delta\ .
\ee
\hfill $\Box$
\end{Lm}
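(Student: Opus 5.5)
We argue by iterating a one-step decay estimate obtained from minimality and a comparison construction, in the spirit of the epiperimetric strategy sketched in the introduction. Write
\[
\mathcal{D}(\rho):=\frac{1}{2\rho}\int_{\B_\rho(x_0)} |A|^2+\frac{|d_Au|^2}{\ep}+\ep\,|F_A|^2+\frac{1}{2\ep^3}(1-|u|^2)^2\ dx^3 .
\]
The target is the following claim: if $\mathcal{D}(\rho)<2\delta$ with $R\sqrt\ep<\rho\le r$, then for some good radius $\rho'\in(\rho/2,\rho)$ one has $\mathcal{D}(\rho'')\le (1-\nu/2)\,\mathcal{D}(\rho)+C\,\ep/\rho^2$ for all $\rho''\in(\rho/4,\rho')$, where $\nu$ is the universal epiperimetric constant of lemma~\ref{lm-ext}. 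Summing the geometric series along dyadic scales $\rho_j=2^{-j}r$ down to $R\sqrt\ep$ gives $\mathcal{D}(\rho)\le \delta+C\sum_j \ep/\rho_j^2\le\delta+C R^{-2}$. Choosing $R$ large then yields (\ref{dens}). This also explains the threshold $\sqrt\ep$: the error $\ep/\rho^2$ becomes small exactly when $\rho\gg\sqrt\ep$.

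\textbf{Step 1: good slice and control of the monopoles.} By Fubini, pick $\rho'\in(\rho/2,\rho)$ such that $\rho'$ times the boundary integral of the energy density on $\p\B_{\rho'}(x_0)$ is at most $C\,\mathcal{D}(\rho)\,\rho'$. Using lemma~\ref{lm-linfty-b} together with the smallness of the potential term, we get $|u|\ge 1-t$ on $\p\B_{\rho'}$ for a suitable choice of slice. The argument of lemma~\ref{lm-zero-set} bounds the number of monopole balls in $\B_\rho$ by $C\delta\rho/\ep\cdot\ep^{-0}$ after rescaling, and in fact smallness of $\delta$ excludes them entirely from a neighbourhood of the slice. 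On the slice set $v:=u/|u|$. The boundary energy then controls $\int_{\p\B_{\rho'}}|\nabla_T v|^2$, and also the deviation of $A$ from the canonical connection $-\frac14[v,dv]$. The latter follows from $|A|^2+\mu|d_Au|^2/\ep$ being smaller than its value at the minimising choice: pointwise, $|A|^2$ subject to $[A,v]\approx -dv$ is minimised by the transverse choice.

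\textbf{Step 2: comparison configuration.} Inside $\B_{\rho'}$, take the smooth small-energy harmonic extension $w$ of $v$ given by the appendix and lemma~\ref{lm-ext}, which satisfies
\[
\frac{1}{\rho'}\int_{\B_{\rho'}}|\nabla w|^2\le(1-\nu)\int_{\p\B_{\rho'}}|\nabla_T w|^2 .
\]
Set $\tilde A=-\frac14[w,dw]$ and $\tilde u=w$ in $\B_{(1-\kappa)\rho'}$, so that $d_{\tilde A}\tilde u=0$ and the potential vanishes there. By (\ref{fadeev}), the energy of this piece is $\frac18\int|dw|^2+C\ep\int|dw|^4$. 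In the annulus $\B_{\rho'}\setminus\B_{(1-\kappa)\rho'}$, interpolate linearly between $(u,A)$ and $(w,\tilde A)$ in a radial collar of width $\kappa\rho'\sim\ep$, gauge-fixed to match on the slice. The cost of the collar is bounded by $C\kappa\rho'$ times the boundary energy, plus $C\ep^{-3}$ times the collar volume times the $(1-|u|^2)^2$ defect, which is small by Step 1. The term $\ep\int|dw|^4$ is bounded by $C\ep\rho'^{-1}\int|dw|^2$ using the $\epsilon$-regularity of $w$, namely $|dw|\lesssim\sqrt{\mathcal D}/\rho$. This is the origin of the $\ep/\rho^2$ error.

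\textbf{Step 3: closing the inequality.} Minimality gives $\rho'\mathcal{D}(\rho')\le$ the comparison energy, hence $\mathcal{D}(\rho')\le(1-\nu)\,\rho'\!\int_{\p\B_{\rho'}}e+\text{errors}$. To obtain decay we must compare the boundary term with $\mathcal{D}$ itself. Here we use the monotonicity identity (\ref{III.1}) in its form (\ref{trumon}): the only bad sign there is the curvature term $\frac{1}{r^2}\int\ep|F_A|^2$, which becomes harmless once combined with the strict factor $(1-\nu)$. Integrating the resulting differential inequality $\frac{d}{d\rho}\mathcal{D}\ge \frac{c\nu}{\rho}\,\mathcal{D}-C\ep\rho^{-3}$ gives the claimed one-step decay.

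The main obstacle is Step 2's interpolation collar. One must glue the arbitrary $SU(2)$ connection $A$ on the slice to the abelian-type $\tilde A$ without paying more than $o(\mathcal{D})$, while keeping $|u|$ near $1$ and $d_Au$ small. I would first try Uhlenbeck gauge (theorem~\ref{th-Uhl}) on the slice neighbourhood, which is possible because $\|F_A\|_{L^{3/2}}$ there is $\lesssim(\rho/\ep)^{1/2}\sqrt{\delta}\,\ep$ and hence small. I would then apply a Luckhaus-type interpolation, as in the proof of theorem~\ref{th-reg}, separately to $v$ and to the transverse and longitudinal parts of $A$.
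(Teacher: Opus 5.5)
\textbf{The competitor.} Your overall plan is the paper's: compare, by minimality, with a competitor built on the small-energy harmonic extension of Lemma~\ref{lm-ext}, so that the epiperimetric factor $1-\nu$ beats the boundary energy up to errors that are small for $r\gg\sqrt\ep$. Your control $|u|\ge 1-C\ep^{1/4}$ on the slice is exactly (\ref{linft}). But Step~2 cannot deliver the needed inequality, for three reasons.

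\emph{(i) The longitudinal part of $A$.} In Step~1 you say the boundary energy controls the deviation of $\iota^\ast A$ from $-\frac14[v,dv]$. This holds only for the transverse part, which differs by $\frac14[v,d_Av]=O(\sqrt\ep)$ in $L^2$. The longitudinal part $A\cdot v$ on the slice is an $L^2$ function of the same size as the whole boundary energy, and nothing at this stage makes it small. Its vanishing (Remark~\ref{rm-longconnexion}) is a consequence of the $\delta$-regularity that this lemma starts, and $A$ is not a pointwise minimizer of $|A|^2$ under a constraint.

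\emph{(ii) The Jacobian term.} With $\tilde A=-\frac14[w,dw]$ the curvature is a multiple of $[dw\wedge dw]$. The cost $\ep\int|dw|^4$ is not controlled by $\int_{\p\B_{\rho'}}|\nabla_Tv|^2$: up to the boundary the extension is controlled only in $W^{1,3}$, see (\ref{A-03}). The bound $|dw|\lesssim\sqrt{\mathcal D}/\rho$ is an interior estimate and fails near $\p\B_{\rho'}$, where $w$ is as rough as the slice. The same term $\ep|v^\ast\omega_{\S^2}|^2$ reappears in the tangential curvature of any collar interpolation. Meanwhile the boundary energy only controls the combination $d(A\cdot v)-c\,v^\ast\omega_{\S^2}$ contained in $F_A\cdot v$.

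\emph{(iii) The collar width.} A collar of width $\kappa\rho'\sim\ep$ fails on its own terms. One has $\ep\int_{\mathrm{collar}}|\p_rA_t|^2\simeq\frac{\ep}{\kappa\rho'}\int_{\p\B_{\rho'}}|A-\tilde A|^2$. For width $\ep$ this is of order $\int_{\p\B_{\rho'}}|A\cdot v|^2$, which exceeds by a factor $1/\rho'$ the quantity $\rho'\int_{\p\B_{\rho'}}|A\cdot v|^2$ you must beat.

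\emph{What the paper does instead.} It uses no collar and no gauge change; note that $\int|A|^2$ is not gauge invariant, so an Uhlenbeck gauge that is not the identity on the slice alters the comparison. It takes $\tilde u=\mathfrak h\,\check u$ and $\check A=-\frac14[\check u,d\check u]+\eta^T+d^\ast G\,\check u$, with $\iota^\ast d^\ast G=\iota^\ast A\cdot\hat u$ and $\iota^\ast\eta$ equal to the transverse residual, so that $\iota^\ast\check A=\iota^\ast A$ exactly. The missing idea is the choice of $G$. It solves $dd^\ast G+G/\ep=\frac18\check u\cdot[d\check u\wedge d\check u]+d\tilde{\mathfrak e}$, $dG=0$, so the Jacobian \emph{cancels} in $F_{\check A}\cdot\check u=-G/\ep+\dots$. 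The part of the data matching the Jacobian is built from a H\'elein frame and a Wente-type estimate, and costs only a superlinear power of $Y'(r)$. Pohozaev identities for the massive problems, (\ref{monL}) and (\ref{monet}), show that extending $A\cdot\hat u$, $F_A\cdot\hat u$ and the transverse residual costs the strict fractions $\frac16$, $\frac13$, $\frac18$ of their boundary contributions, up to $C\ep/r^2$. Together these give (\ref{inY}).

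\textbf{The iteration.} Steps~1 and~3 do not close either. A Fubini slice only gives $\rho'Y'(\rho')\le 2Y(\rho)$, hence $Y(\rho')\le 2(1-\nu)Y(\rho)$ and $\mathcal D(\rho')\le 4(1-\nu)\mathcal D(\rho)$, which is no decay. The differential inequality of Step~3 needs the comparison at every radius, where the boundary energy need not be small. The paper proves (\ref{inY}) at every $r>\sqrt\ep$ with $Y'(r)<\delta$ and closes with a continuity argument. Take the maximal interval $(\rho_0,r_0)$ on which $Y(s)\le\delta s$. At any $s$ there with $sY'(s)\le Y(s)$ one automatically has $Y'(s)\le\delta$, so (\ref{inY}) (with $R$ large and $\delta$ small) gives $Y(s)<sY'(s)$, a contradiction. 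Hence $Y(s)/s$ is nondecreasing there, $Y(\rho_0)/\rho_0\le\delta/2$, and so $\rho_0=R\sqrt\ep$.

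The monotonicity identity (\ref{trumon}) plays no role. Its weight $\frac{2r}{|x-x_0|}-1$ on $\ep|F_A|^2$ is not controlled by $\mathcal D$, which is exactly why it is inconclusive above $\sqrt\ep$. Finally, the errors must be relative, of the form $C\frac{\sqrt\ep}{r}\,rY'(r)$ as in (\ref{inY}). An additive error summed to $CR^{-2}$ would force $R\gtrsim\delta^{-1/2}$, whereas $R$ should depend only on $r_1$.
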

\noindent{\bf Proof of Lemma~\ref{lm-sca}.}
Let $r_1<1$, $r>R\,\sqrt{\ep}$ and $x_0\subset \B_{r_1}(0)$ such that $B_r(x_0)\subset \B_{r_1}(0)$ where $R$ is going to be chosen at the end of the proof of the lemma. Assume
\be
\label{small}
\int_{\p \B_r(x_0)} |A|^2+\frac{|d_A u|^2}{\ep}+\ep\,|F_A|^2\ dx^3+\frac{1}{2\,\ep^3}(1-|u|^2)^2\ dx^3\le\delta\ .
\ee
The goal under this assumption is to find a competitor of lowest possible energy inside the ball $\B_r(x_0)$. A first attempt would be to consider the radial extensions :
\[
(\ti{u},\ti{A}):=(u\circ D_{r,x_0},D_{r,x_0}^\ast A)\qquad\mbox{ where }D_{r,x_0}(x):=r\,\frac{x-x_0}{|x-x_0|}+x_0\ .
\]
We would then have respectively 
\[
|\ti{A}|^2=\lf(\frac{r}{|x-x_0|}\rg)^2|\iota_{\p B_r}^\ast A|^2(D_{r,x_0}(x))\quad\mbox{ and }\quad |d_{\ti{A}}\ti{u}|^2=\lf(\frac{r}{|x-x_0|}\rg)^2|\iota_{\p B_r}^\ast d_Au|^2(D_{r,x_0}(x))\ .
\]
This implies
\[
\int_{\B_r(x_0)}|\ti{A}|^2\ dx^3=r\,\int_{\p \B_r(x_0)}|\iota_{\p \B_r}^\ast A|^2\ dvol_{\p \B_r(x_0)}
\]
 and 
 \[
 \int_{\B_r(x_0)}\frac{|d_{\ti{A}}\ti{u}|^2}{\ep}\ dx^3=r\,\int_{\p \B_r(x_0)}\frac{|\iota_{\p B_r}^\ast d_Au|^2}{\ep}\ dvol_{\p \B_r(x_0)}
\]
We have moreover
\[
\begin{array}{l}
\ds\int_{\B_r(x_0)} \frac{1}{2\,\ep^3}(1-|\ti{u}|^2)^2\ dx^3=\frac{1}{2\,\ep^3}\int_0^r\,d\rho\, \lf(\frac{\rho}{r}\rg)^2\,\int_{\p \B_r(x_0)} (1-|{u}|^2)^2\, dvol_{\p \B_r(x_0)}\\[5mm]
\ds=\frac{r}{3}\, \int_{\p \B_r(x_0)} \frac{1}{2\,\ep^3}(1-|{u}|^2)^2\, dvol_{\p \B_r(x_0)}
\end{array}
\]
The problem is coming from the curvature. Indeed we have
\[
|F_{\ti{A}}|^2=\lf(\frac{r}{|x-x_0|}\rg)^4|\iota_{\p B_r}^\ast F_A|^2(D_{r,x_0}(x))
\]
which implies
\[
\int_{\B_r(x_0)}\ep\,|F_{\ti{A}}|^2\ dx^3=\int_0^r d\rho\, \frac{r^2}{\rho^2}\,\int_{\p \B_r(x_0)}|\iota_{\p B_r}^\ast F_A|^2\ dvol_{\p \B_r(x_0)}=+\infty
\]
Hence the radial extension is offering no suitable strategy and we have to choose another alternative.

\medskip

On $\p \B_r(x_0)$ we denote $\hat{u}:=u/|u|$. We extend $\hat{u}$ by the harmonic extension given by lemma~\ref{lm-ext} and that we denote $\check{u}$. We have then in particular
\be
\label{IV.7}
\|\nabla\check{u}\|_{L^3(\B_r(x_0))}\le C\, \|\nabla\hat{u}\|_{L^2(\p\B_r(x_0))}
\ee
Since $|\nabla_A u|\le\,\ep^{-1}\, CM$, assume there exists $x_1\in \p \B_r$
such that $1-|u(x_1)|>t$ then on $\B_{\tau\ep}(x_1)\cap \p \B_r(x_0)$
\[
1-|u(x)|>t/2
\]
where $\tau:=\frac{t}{2\,C(M,r_1)}$. This implies
\[
\int_{\B_{\tau\ep}(x_1)\cap \p \B_r(x_0)}(1-|u|^2)^2\ dvol_{\p \B_r(x_0)}\ge \pi \tau^2\ep^2\, \frac{t^2}{4}=\pi \,\frac{t^4}{16\, C^2(M,r_1)}\ep^2
\]
We deduce from (\ref{small})
\[
t\le C\, \ep^{1/4}\ .
\]
Hence
\be
\label{linft}
\|1-|u|\|_{L^\infty( \p \B_r(x_0))}\le C\, \ep^{1/4}\ .
\ee
Let $\frak{h}$ be the solution of
\be
\label{frakh}
\lf\{
\begin{array}{l}
\ds-\Delta\frak{h}=0\qquad\mbox{ in }\B_r(x_0)\\[5mm]
\ds\frak{h}=|u|\qquad\mbox{ on }\p\B_r(x_0)\ .
\end{array}
\rg.
\ee
 By the maximum principle we have $0\le\frak{h}\le 1$ on $\B_r(x_0)$ moreover
\[
\|1-\frak{h}\|_{L^\infty(\B_r(x_0))}\le C\, \ep^{1/4}\ .
\]
We have 
\[
\begin{array}{l}
\ds\Delta(|\frak{h}|^2-1)^2=2\,\mbox{div}\lf((|\frak{h}|^2-1)\,\nabla|\frak{h}|^2   \rg)=2\,|\nabla|\frak{h}|^2|^2+2\,(|\frak{h}|^2-1)\ \Delta|\frak{h}|^2\\[5mm]
\ds\quad=8\,|\frak{h}|^2\,|\nabla\frak{h}|^2+4\,(|\frak{h}|^2-1)\ |\nabla\frak{h}|^2=(12\,\frak{h}^2-4)\, |\nabla\frak{h}|^2\ge 0
\end{array}
\]
Denote $\frak{h}_{x_0,r}(y):=\frak{h}(r\,y+x_0)$. We have
\[
\begin{array}{l}
\ds\frac{d}{dr}\int_{\p \B_r(x_0)}(|\frak{h}|^2-1)^2\ dvol_{\p \B_r(x_0)}=\frac{d}{dr}\lf( r^2\,\int_{\p \B_1(0)}(|\frak{h}_{x_0,r}|^2-1)^2\ dvol_{\p \B_1(0)}  \rg)\\[5mm]
\ds =2\, r^{-1}\,\int_{\p \B_r(x_0)}(|\frak{h}|^2-1)^2\ dvol_{\p \B_r(x_0)} + r^2\,\int_{\p \B_1(0)}\frac{\p}{\p r}(|\frak{h}_{x_0,r}|^2-1)^2\ dvol_{\p \B_1(0)}  \\[5mm]
\ds=2\, r^{-1}\,\int_{\p \B_r(x_0)}(|\frak{h}|^2-1)^2\ dvol_{\p \B_r(x_0)} + r^2\,\int_{\B_1(0)}\Delta(|\frak{h}_{x_0,r}|^2-1)^2\ dx^3\\[5mm]
\ds\ge 2\, r^{-1}\,\int_{\p \B_r(x_0)}(|\frak{h}|^2-1)^2\ dvol_{\p \B_r(x_0)} 
\end{array}
\]
This gives
\[
\frac{d}{d\rho}\lf[\frac{1}{\rho^2}\int_{\p \B_\rho(x_0)}(1-|\frak{h}|^2)^2\ dvol_{\p \B_\rho(x_0)}\rg]\ge 0
\]
We can deduce
\be
\label{IV.8}  
\begin{array}{l}
\ds\int_0^rd\rho\int_{\p \B_\rho(x_0)}(1-|\frak{h}|^2)^2\ dvol_{\p \B_\rho(x_0)}\le \int_0^r\frac{\rho^2}{r^2}\,d\rho \int_{\p \B_r(x_0)}(1-|u|^2)^2\ dvol_{\p B_r(x_0)}\\[5mm]
\ds\qquad\le \frac{r}{3}\int_{\p \B_r(x_0)}(1-|u|^2)^2\ dvol_{\p B_r(x_0)}
\end{array}
\ee
We then choose
\[
\ti{u}:= \frak{h}\ \check{u}\qquad\mbox{ in }\B_r(x_0)\ .
\]
This gives in particular
\be
\label{IV.9}
\int_{ \B_r(x_0)}\frac{1}{2\ep^3}(1-|\ti{u}|^2)^2\  dx^3\le\frac{r}{3}\int_{\p \B_r(x_0)}\frac{1}{2\ep^3}(1-|u|^2)^2\ dvol_{\p B_r(x_0)}\ .
\ee
Using the stationarity of $\frak{h}$ with respect to the Dirichlet energy one obtains the following identity 
\be
\label{dirmod}
\begin{array}{l}
\ds\int_{\B_r(x_0)}|\nabla\frak{h}|^2\ dx^3+ r\,\int_{\p \B_r(x_0)}|\p_r\frak{h}|^2\ dvol_{\p \B_r(x_0)}=  r\,\int_{\p \B_r(x_0)}|\nabla_T\frak{h}||^2\ dvol_{\p \B_r(x_0)}\\[5mm]
\ds\quad=r\,\int_{\p \B_r(x_0)}|\nabla_T|{u}||^2\ dvol_{\p \B_r(x_0)}
\end{array}
\ee
Let $\ov{\frak{h}}$ be the average of $\frak{h}$ (that is $|u|$) on $\p\B_r(x_0)$. Multiplying (\ref{frakh}) by $\frak{h}-\ov{\frak{h}}$ and integrating by parts is giving
\be
\label{poin}
\begin{array}{l}
\ds\int_{\B_r(x_0)}|\nabla\frak{h}|^2\ dx^3=\int_{\p\B_r(x_0)}(\frak{h}-\ov{\frak{h}})\,\p_r\frak{h}\ dvol_{\p\B_r(x_0)}\\[5mm]
\ds\le \|\frak{h}-\ov{\frak{h}}\|_{L^2(\p\B_r(x_0))}\ \|\p_r\frak{h}\|_{L^2(\p\B_r(x_0))}\le C\, r\ \|\nabla_T\frak{h}\|_{L^2(\p\B_r(x_0))}\ \|\p_r\frak{h}\|_{L^2(\p\B_r(x_0))}
\end{array}
\ee
where we have used Poincar\'e inequality in the last step. Combining (\ref{dirmod}) and (\ref{poin}) we obtain
\[
\begin{array}{l}
\ds r\,\int_{\p \B_r(x_0)}|\nabla_T|{u}||^2\ dvol_{\p \B_r(x_0)}-r\,\int_{\p \B_r(x_0)}|\p_r\frak{h}|^2\ dvol_{\p \B_r(x_0)}\le  C\, r\ \|\nabla_T\frak{h}\|_{L^2(\p\B_r(x_0))}\ \|\p_r\frak{h}\|_{L^2(\p\B_r(x_0))}\\[5mm]
\ds\le  2^{-1}\,r\,\int_{\p \B_r(x_0)}|\nabla_T|{u}||^2\ dvol_{\p \B_r(x_0)}+ 2^{-1}\, C^2\ r\,\int_{\p \B_r(x_0)}|\p_r\frak{h}|^2\ dvol_{\p \B_r(x_0)}\
\end{array}
\]
This implies
\[
r\,\int_{\p \B_r(x_0)}|\nabla_T|{u}||^2\ dvol_{\p \B_r(x_0)}\le (C^2+2)\ r\,\int_{\p \B_r(x_0)}|\p_r\frak{h}|^2\ dvol_{\p \B_r(x_0)}
\]
Injecting this inequality in (\ref{dirmod}) we obtain
\[
(C^2+2)\,\int_{\B_r(x_0)}|\nabla\frak{h}|^2\ dx^3+ r\,\int_{\p \B_r(x_0)}|\nabla_T|{u}||^2\ dvol_{\p \B_r(x_0)} \le (C^2+2)\, r\,\int_{\p \B_r(x_0)}|\nabla_T|{u}||^2\ dvol_{\p \B_r(x_0)}
\]
Choosing $1-\nu=(C^2+1)/(C^2+2)$ we have established\footnote{This inequality (\ref{epifrakh}) and the existence of $\nu\in(0,1)$ is known in the minimal surface theory under the name of ``epiperimetric inequality'' (see \cite{Rivep})  It measure the shift between the gain of area (here Dirichlet energy) while replacing a radial extension by an area minimizing (resp. Dirichlet energy minimizing) extension. This is the same phenomenon as in \ref{A-04}. }
\be
\label{epifrakh}
\int_{\B_r(x_0)}|\nabla\frak{h}|^2\ dx^3\le (1-\nu)\  r\,\int_{\p \B_r(x_0)}|\nabla_T|{u}||^2\ dvol_{\p \B_r(x_0)}\ .
\ee
On $\p \B_r(x_0)$ we write using (\ref{LoTra}) 
\[
\iota_{\p \B_r(x_0)}^\ast A=-\frac{1}{4}\lf[\hat{u}\lf[\hat{u},\iota_{\p B_r(x_0)}^\ast A\rg]\rg]+\hat{u}\cdot\iota_{\p B_r(x_0)}^\ast A\ \hat{u}
\]
Then we further decompose
\[
\iota_{\p \B_r(x_0)}^\ast A=-\frac{1}{4}\lf[\hat{u},\iota_{\p B_r(x_0)}^\ast d\hat{u}\rg]+\frac{1}{4}\lf[\hat{u},\iota_{\p B_r(x_0)}^\ast d_A\hat{u}\rg]+\hat{u}\cdot\iota_{\p B_r(x_0)}^\ast A\ \hat{u}
\]
Observe moreover that
\[
\frac{1}{4}\lf[\hat{u},\iota_{\p B_r(x_0)}^\ast d_A\hat{u}\rg]=\frac{1}{4|u|}\lf[\hat{u},\iota_{\p B_r(x_0)}^\ast d_A{u}\rg]
\]
We shall extend $\iota_{\p B_r(x_0)}^\ast A$ inside $B_r(x_0)$ by a one form $\check{A}$ defined in the following way
\[
\check{A}:=-\frac{1}{4}\lf[\check{u},d\check{u}\rg]+{\eta}^T+d^\ast G\ \check{u}
\]
where 
\begin{itemize}
\item[i)] $\check{u}$ is the harmonic map extension of $\hat{u}$ given by Lemma~\ref{lm-ext}

\item[ii)] $G$ is the real valued two form solving
\be
\label{sG}
\lf\{
\begin{array}{l}
\ds  dd^\ast G+\frac{G}{\ep}=\frac{1}{8}\,\check{u}\cdot [d\check{u}\wedge d\check{u}]+d\tie\qquad\mbox{ in }\B_r(x_0)\\[5mm]
\ds d G=0\qquad\mbox{ in }\B_r(x_0)\\[5mm]
\ds \iota_{\p \B_r(x_0)}^\ast d^\ast G= \iota_{\p \B_r(x_0)}^\ast  A\cdot\hat{u}\ .
\end{array}
\rg.
\ee
where $\tie$ is going to be a small error term.
\item[iii)]  ${\eta}$ is the one  $\frak{su}(2)$-valued form solving
\be
\label{seta}
\lf\{
\begin{array}{l}
\ds d^\ast\,d\eta+\frac{\eta}{\ep^2}=0\qquad\mbox{ in }\B_r(x_0)\\[5mm]
\ds d^\ast\eta=0\qquad\mbox{ in }\B_r(x_0)\\[5mm]
\ds\iota_{\p B_\rho(x_0)}^\ast \eta=- \frac{1}{4}[\hat{u},\iota_{\p \B_r(x_0)}^\ast d_A\hat{u}]\ .
\end{array}
\rg.
\ee
and $\eta^T$ is the transversal component of $\eta$ w.r.t. $\check{u}$ and given by
\[
\eta^T:=-\frac{1}{4}\,[\check{u},[\check{u},\eta]]
\]
\end{itemize}

We will first prove the existence of $G$ and estimates for $G$ and then we will prove the existence of $\eta$ and estimates for $\eta$. 

\medskip

In the sequel we will adopt the following notation for $\rho>0$
\[
Y(\rho):=CYMH({u},{A},\B_\rho(x_0))=\frac{1}{2}\int_{\B_\rho(x_0)} |{A}|^2+\frac{|d_{{A}} {u}|^2}{\ep}+\ep\,|F_{{A}}|^2\ dx^3+\frac{1}{2\,\ep^3}(1-|{u}|^2)^2\ dx^3
\]
Hence in particular we have
\[
Y'(r)=\frac{1}{2}\int_{\p\B_r(x_0)} |A|^2+\frac{|d_{{A}} {u}|^2}{\ep}+\ep\,|F_{{A}}|^2\ dx^3+\frac{1}{2\,\ep^3}(1-|{u}|^2)^2\ dvol_{\p\B_r(x_0)}
\]

\medskip

\noindent{\bf Existence and Estimates for $G$}

\medskip

We choose $G=\ast K+\ast L$, where $K$ and $L$ are defined as follows.

\medskip

\noindent{\bf Choice and Estimates for $K$}
 We first choose ${K}$ one form minimizing
\[
\inf\lf\{  \int_{\B_r(x_0)}|d K|^2+\frac{|K|^2}{\ep}-2\,\int_{\B_r(x_0)}dK\wedge \xi\ ;\quad d^\ast K=0\rg\}
\]
where  we now define $\xi$. First we choose $\ti{\frak{f}}$ such that 
\[
d\ti{\frak{f}}=\frac{1}{8}\,\check{u}\cdot [d\check{u}\wedge d\check{u}]
\]
We now precise a special choice of $\ti{\frak{f}}$. Since
\[
\int_{\p \B_r(x_0)}|\nabla\hat{u}|^2\ dvol_{\p \B_r(x_0)}\le \delta
\]
there exists $(e_1,e_2)$ orthonormal frame such that
\[
\int_{\p \B_r(x_0)}|\nabla e_i|^2\ dvol_{\p \B_\rho(x_0)}\le C\, \int_{\p \B_r(x_0)}|\nabla\hat{u}|^2\ dvol_{\p \B_r(x_0)}
\]
moreover $\hat{u}=e_1\wedge e_2$ (\cite{Hel}). This implies
\[
de_1\wedge de_2=\hat{u}^\ast\om
\]
We have the existence of $v\in W^{1,2}(\p \B_r(x_0))$ such that 
\[
\Delta_{S^2}v=\hat{u}^\ast\om=de_1\,\dot{\wedge}\, de_2
\]
which gives
\[
\int_{\p \B_\rho(x_0)}|\nabla v|^2\ dvol_{\p B_\rho(x_0)}\le C\, \lf(\int_{\p B_\rho(x_0)}|\nabla\hat{u}|^2\ dvol_{\p B_\rho(x_0)}\rg)^2
\]
We take $\iota_{\p \B_r(x_0)}^\ast\frak{f}:=(\ast)\, dv$. Then we extend $\frak{f}$ as follows, we consider the solution
\[
\lf\{
\begin{array}{l}
\ds d\ti{\frak{f}}=\frac{1}{8}\,\check{u}\cdot [d\check{u}\wedge d\check{u}]\\[5mm]
\ds d^\ast\ti{\frak{f}}=0\\[5mm]
\iota_{\p \B_r(x_0)}^\ast\ti{\frak{f}} =(\ast)\, dv
\end{array}
\rg.
\] 
Classical Calderon Zygmund theory is implying
\be
\label{estxi}
\begin{array}{l}
\ds\|\ti{\frak{f}}\|_{L^3(\B_r(x_0))}+\|\nabla\ti{\frak{f}}\|_{L^{3/2}(\B_r(x_0))}\le C\, \|d\check{u}\|^2_{L^3(\B_r(x_0))}+C\, \|dv\|_{L^2(\p\B_r(x_0))}\\[5mm]
\ds\qquad\le C\,\int_{\p B_r(x_0)}|\nabla_T\hat{u}|^2\ dvol_{\p \B_r(x_0)}\ .
\end{array}
\ee
Recall that on $\p\B_r(x_0)$ there holds
\[
A= -\frac{1}{4}[\hat{u},d\hat{u}]+\frac{1}{4}\lf[\hat{u}, d_A\hat{u}\rg]+A\cdot\hat{u}\,\hat{u}
\]
this gives
\[
\begin{array}{l}
\ds d(A\cdot\hat{u})=d_AA\cdot \hat{u}+A\cdot d_A\hat{u}=(dA+A\wedge A)\cdot\hat{u}+A\cdot d_A\hat{u}\\[5mm]
\ds= F_A\cdot\hat{u}+\frac{1}{2}\,[A\wedge A]\cdot\hat{u}+A^T\,\dot{\wedge}\,d_A\hat{u}
\end{array}
\]
We have
\[
\begin{array}{l}
\ds\frac{1}{2}\,[A\wedge A]\cdot\hat{u}=\frac{1}{32}\,\hat{u}\cdot[[\hat{u},d\hat{u}],[\hat{u},d\hat{u}]]+\frac{1}{32}\,\hat{u}\cdot[[\hat{u},d_A\hat{u}],[\hat{u},d_A\hat{u}]]-\frac{1}{16}\,\hat{u}\cdot[[\hat{u},d\hat{u}],[\hat{u},d_A\hat{u}]]
\end{array}
\]
Observe that
\[
[[{\bf i},{\bf j}],[{\bf i},{\bf k}]]=-\,[2\,{\bf k},2\,{\bf j}]=4\,[{\bf j},{\bf k}]
\]
Hence we have
\[
\frac{1}{32}\,\hat{u}\cdot[[\hat{u}\,d\hat{u}]\,\wedge\,[\hat{u},d\hat{u}]]=\frac{1}{8}\,\hat{u}\cdot[d\hat{u}\wedge d\hat{u}]
\]
Hence 
\[
\begin{array}{l}
\ds d(A\cdot\hat{u})= F_A\cdot\hat{u}+\frac{1}{8}\,\hat{u}\cdot[d\hat{u}\wedge d\hat{u}]+\frac{1}{32}\,\hat{u}\cdot[[\hat{u},d_A\hat{u}],[\hat{u},d_A\hat{u}]]\\[5mm]
\ds\quad-\frac{1}{16}\,\hat{u}\cdot[[\hat{u},d\hat{u}],[\hat{u},d_A\hat{u}]]+A^T\,\dot{\wedge}\,d_A\hat{u}
\end{array}
\]

We then introduce $\frak{e}$ to be a 1-form solution to
\[
\begin{array}{l}
\ds d\frak{e}:=\frac{1}{32}\,\hat{u}\cdot[[\hat{u},d_A\hat{u}]\,\wedge\,[\hat{u},d_A\hat{u}]]-\frac{1}{16}\,\hat{u}\cdot[[\hat{u},d\hat{u}]\,\wedge\,[\hat{u},d_A\hat{u}]]\\[5mm]
 \ds\quad-\frac{1}{4}[\hat{u},[\hat{u},A]]\dot{\wedge}\, d_A\hat{u}+\dashint_{\p \B_r(x_0)}F_A\cdot\hat{u}\ dvol_{\p \B_r(x_0)}\qquad\mbox{ on }\p\B_r(x_0)
 \end{array}
\]
in such a way that
\be
\label{tie}
d(A\cdot\hat{u}-\frak{f}-\frak{e})=F_A\cdot\hat{u}-\dashint_{\p \B_r(x_0)}F_A\cdot\hat{u}\ dvol_{\p \B_r(x_0)}\qquad\mbox{ on }\p \B_r(x_0)
\ee
We choose $\frak{e}$ such that $d^{(\ast)}\frak{e}=0$ on $\p \B_r(x_0)$. We denote
\[
(F_A\cdot\hat{u})_{x_0,r}:=\dashint_{\p \B_r(x_0)}F_A\cdot\hat{u}\ dvol_{\p \B_r(x_0)}=\frac{1}{4\pi r^2}\,\int_{\B_r(x_0)} d(F_A\cdot \hat{u})=\frac{1}{4\pi r^2}\,\int_{\B_r(x_0)} F_A\,\dot{\wedge}\, d_A\hat{u}\ .
\]
Hence
\[
\lf|(F_A\cdot\hat{u})_{x_0,r}\rg|\le \frac{1}{4\pi r^2}\, \|\sqrt{\ep}\,F_A\|_{L^2(\B_r(x_0))}\ \|\sqrt{\ep}^{-1}d_Au\|_{L^2(\B_r(x_0))}\ .
\]
We have also
\[
\begin{array}{l}
\ds (F_A\cdot\hat{u})_{x_0,r}:=\dashint_{\p \B_r(x_0)}F_A\cdot\hat{u}\ dvol_{\p \B_r(x_0)}=\frac{1}{4\pi\,r^2}\int_{\p \B_r(x_0)}\hat{u}\cdot dA+\frac{1}{2}\hat{u}\cdot [A\wedge A]\\[5mm]
\ds\quad=-\frac{1}{4\pi\,r^2}\int_{\p \B_r(x_0)}d\hat{u}\,\dot{\wedge}\,A+\frac{1}{8\pi\,r^2}\int_{\p \B_r(x_0)}\hat{u}\cdot [A\wedge A]
\end{array}
\]
Thus
\[
| (F_A\cdot\hat{u})_{x_0,r}|\le C\,r^{-2}\,Y'(r)
\]
We have
\be
\label{L3frake}
\begin{array}{l}
\ds\|\frak{e}\|_{L^{3}(\p\B_r(x_0))}\le C\ \lf(\|d\frak{e}\|_{L^{6/5}(\p\B_r(x_0))}+\|d^{(\ast)}\frak{e}\|_{L^{6/5}(\p\B_r(x_0))}\rg)=C\ \|d\frak{e}\|_{L^{6/5}(\p\B_r(x_0))}\\[5mm]
\ds\quad\le C\, \lf(\int_{\p\B_r(x_0)}|d_Au|^{12/5}+|A|^{6/5}\,|d_Au|^{6/5}\ dvol_{\p\B_r(x_0)}\rg)^{5/6}+ C\,\lf(r^2 \,r^{-12/5}\, (Y'(r))^{6/5}\rg)^{5/6}\\[5mm]
\ds\quad\le \lf(\int_{\p\B_r(x_0)}\ep^{3/5} \frac{|d_Au|^{2}}{\ep}\ dvol_{\p\B_r(x_0)}\rg)^{5/6}+\|A\|_{L^2(\p \B_r(x_0))}\, \|d_Au\|_{L^3(\p \B_r(x_0))} +C\,r^{-1/3}\,Y'(r)\\[5mm]
\quad\le C\, \sqrt{\ep}\,(Y'(r))^{5/6}+(Y'(r))^{1/2+1/3}+C\,r^{-1/3}\,Y'(r)
\end{array}
\ee
Classical elliptic estimate are giving 
\be
\label{frake}
\begin{array}{l}
\ds\|\frak{e}\|_{L^4(\p\B_r(x_0))}+\|\nabla\frak{e}\|_{L^{4/3}(\p \B_r(x_0))}\\[5mm]
\ds\le C\, \||d_Au|\,|d\hat{u}|\|_{L^{4/3}(\p \B_r(x_0))}+C\,\||d_Au|^2\|_{L^{4/3}(\p \B_r(x_0))}+C\,|(F_A\cdot\hat{u})_{x_0,r}|\, r^{3/2}\\[5mm]
\ds\le C\, \ep^{-1/4}\,\lf(\int_{\p \B_r(x_0)}|\nabla_T\hat{u}|^2\ dvol_{\p\B_r(x_0)}\rg)^{1/2}\, \lf(\int_{\p \B_r(x_0)}\ep^{-1}\,|\nabla_A{u}|^2\ dvol_{\p\B_r(x_0)}\rg)^{1/4}\\[5mm]
\ds+C\,\ep^{1/4}\,\lf(\int_{\p \B_r(x_0)}\ep^{-1}\,|\nabla_A{u}|^2\ dvol_{\p\B_r(x_0)}\rg)^{3/4}+C\,r^{-1/2}\,Y'(r)\\[5mm]
\ds\le C\, \ep^{-1/4}\,(Y'(r))^{3/4}+ C\,r^{-1/2}\,Y'(r)
\end{array}
\ee
Then we extend $\frak{e}$ by solving the following problem
\[
\lf\{
\begin{array}{l}
\ds d^\ast d\tie=0\qquad\mbox{ in }\B_r(x_0)\\[5mm]
\ds d^\ast\tie=0\qquad\mbox{ in }\B_r(x_0)\\[5mm]
\iota_{\p B_r(x_0)}^\ast \tie=\frak{e}\ .
\end{array}
\rg.
\]
Classical elliptic estimates is giving
\be
\label{tie}
r^{-1}\,\|\tie\|_{L^2(\B_r(x_0))}+\|d\tie\|_{L^2(\B_r(x_0))}\le C\, \|\frak{e}\|_{L^4(\p\B_r(x_0))}+C\,\|\nabla\frak{e}\|_{L^{4/3}(\p \B_r(x_0))}
\ee
Then we take
\[
\xi:=\ti{\frak{f}}+\tie
\]
We have then in particular
\[
d\xi=\frac{1}{8}\,\check{u}\cdot [d\check{u}\wedge d\check{u}]+d\tie\ .
\]
We have
\be
\label{estxi}
\begin{array}{l}
\ds\|\iota^\ast_{\p\B_r(x_0)}\xi\|_{L^2(\p\B_r(x_0))}\le \|\iota^\ast_{\p\B_r(x_0)}\frak{f}\|_{L^2(\p\B_r(x_0))}+\|\iota^\ast_{\p\B_r(x_0)}\frak{e}\|_{L^2(\p\B_r(x_0))}\\[5mm]
\ds\quad\le C\,\int_{\p\B_r(x_0)}|\nabla\hat{u}|^2\ dvol_{\p\B_r(x_0)}+ C\, r^{1/3}\, \|\iota^\ast_{\p\B_r(x_0)}\frak{e}\|_{L^3(\p\B_r(x_0))}\\[5mm]
\ds\quad\le C\, Y'(r)+r^{1/3}\,(Y'(r))^{5/6}
\end{array}
\ee
Combining (\ref{estxi}), (\ref{frake}) and (\ref{tie}) we have
\be
\label{L2xi}
\|\xi\|_{L^2(\B_r(x_0))}\le C\, r\, \ep^{-1/4}\,(Y'(r))^{3/4}+ C\,r^{1/2}\,Y'(r)+C r^{1/3}\,Y'(r)
\ee

We consider a minimizing sequence of
\[
\begin{array}{l}
\ds+\infty>\limsup_{k\rightarrow +\infty}\int_{\B_r(x_0)}|d {K}_k|^2+\frac{|{K}_k|^2}{\ep}\ dx^3-2\,d{K}_k\wedge \xi\\[5mm]
\end{array}
\]
Since $\check{u}\in W^{1,3}(\B_r(x_0))$, we have that  $\frac{1}{8}\,\ast\,\check{u}\cdot [d\check{u}\wedge d\check{u}]\in L^{3/2}(\B_r(x_0))$ we can choose  $\xi\in W^{1,/3/2}(\B_r(x_0))\ \hookrightarrow\ L^{3}(\B_r(x_0))$ and using Cauchy Schwartz there holds
\[
\limsup_{k\rightarrow +\infty}\int_{\B_r(x_0)}|d {K}_k|^2+|d^\ast{K}_k|^2+{|{K}_k|^2}<+\infty
\]
Hence modulo extraction of a subsequence we have  $K_k\rightharpoonup K_\infty$ and $dK_k\rightharpoonup dK_\infty$ as well as $0=d^\ast K_k\rightharpoonup d^\ast K_\infty=0$ weakly in $L^2$ which ensures the existence of a minimizer. For every $\Phi\in C^\infty(\B_r(x_0))$ such that $d^\ast\Phi=0$ we have
\be
\label{Kinf}
\int_{\B_r(x_0)}d {K}_\infty\cdot d\Phi+\frac{1}{\ep}K_\infty\cdot \Phi \ dx^3-\,d\Phi\wedge \xi =0
\ee
Let $\Psi\in C^\infty(\B_r(x_0))$ arbitrary. We proceed to the following unique Hodge decomposition of $\Psi$
\[
\Psi=d\varphi+d^\ast\zeta
\]
where $\varphi=0$ on $\p B_r(x_0)$. Since obviously  $\varphi$ and $d^\ast\zeta$  are smooth and $d^\ast d^\ast\zeta=0$ there holds
\[
\begin{array}{l}
\ds\int_{\B_r(x_0)}d {K}_\infty\cdot d\Phi+\frac{1}{\ep}K_\infty\cdot \Phi \ dx^3-d\Phi\wedge \xi =\int_{\B_r(x_0)}\frac{1}{\ep}K_\infty\cdot d\varphi\ dx^3\\[5mm]
\ds=\int_{\B_r(x_0)}\frac{1}{\ep}\,d\varphi\wedge\ast K_\infty=\int_{\B_r(x_0)}\frac{1}{\ep}\,d\lf(\varphi\wedge\ast K_\infty\rg)-\int_{\B_r(x_0)}\frac{1}{\ep}\,\varphi\wedge d\ast K_\infty=0
\end{array}
\]
Taking first $\Phi\in C^\infty_0(B_\rho(x_0))$ we obtain
\[
 d^\ast d {K}_\infty+\frac{{K}_\infty}{\ep}=\frac{1}{8}\,\ast\check{u}\cdot [d\check{u}\wedge d\check{u}]\qquad\mbox{ in }\B_r(x_0)
\]
Taking then for $\Phi\in C^\infty(\B_r(x_0))$ arbitrary we obtain
\[
\int_{\p \B_r(x_0)}\Phi\,\dot{\wedge}\,\lf(\ast dK_\infty- \xi\rg)=0
\]
 Then  finally $K:=K_\infty$ satisfies
\[
\lf\{
\begin{array}{l}
\ds  d^\ast d {K}+\frac{{K}}{\ep}=\frac{1}{8}\,\ast\check{u}\cdot [d\check{u}\wedge d\check{u}]+\ast d\ti{\frak{e}}=\ast d\xi\qquad\mbox{ in }\B_r(x_0)\\[5mm]
\ds d^\ast {K}=0\qquad\mbox{ in }\B_r(x_0)\\[5mm]
\ds \iota_{\p B_\rho(x_0)}^\ast \ast d{K}=\iota_{\p B_\rho(x_0)}^\ast \xi\qquad\mbox{ on }\p \B_r(x_0)
\end{array}
\rg.
\]
Multiplying the equation by $K$ and integrating on $\B_r(x_0)$ is giving
\[
\begin{array}{l}
\ds 0=\int_{\B_r(x_0)}K\cdot d^\ast d {K}+\frac{|{K}|^2}{\ep}\ dx^3-\int_{\B_r(x_0)}K\cdot \ast d\xi\ dx^3\\[5mm]
\ds=\int_{\B_r(x_0)}K\wedge d \lf(\ast dK-\xi\rg)+\int_{\B_r(x_0)}\frac{|{K}|^2}{\ep}\ dx^3
\end{array}
\]
Integrating by parts we deduce
\be
\label{estK}
\int_{\B_r(x_0)}|dK|^2+\frac{|K|^2}{\ep}\le \frac{1}{2\ep}\int_{\B_r(x_0)}|K|^2\ dx^3+\frac{\ep}{4}\int_{\B_r(x_0)}|\xi|^2\ dx^3
\ee
We deduce using (\ref{L2xi})
\be
\label{estK}
\begin{array}{l}
\ds\int_{\B_r(x_0)}|dK|^2+\frac{|K|^2}{2\ep}\le  C\, r^2\, \ep^{1/2}\,(Y'(r))^{3/2}+ C \ep\,r^{2/3}\,(Y'(r))^2
\end{array}
\ee
\medskip

\noindent{\bf Choice and Estimates for $L$}

\medskip

Now we choose $L$ to be a minimizer of the following problem
\be
\label{minl}
\inf\lf\{\int_{\B_r(x_0)}|d L|^2+\frac{|L|^2}{\ep}-2\,\int_{\p \B_r(x_0)} L\,\dot{\wedge} (\,A\cdot\hat{u}-\xi)\  ;\quad d^\ast L=0\rg\}\ .
\ee
The existence of $L$ is a consequence of the following facts. Keeping in mind that $(\hat{u},A)$  is smooth in $\B_r(x_0)$ we write
\[
\begin{array}{l}
\ds\int_{\p \B_r(x_0)} L\,\dot{\wedge} (\,A\cdot\hat{u}-\xi)=\int_{\p \B_r(x_0)} d\lf(L\,\dot{\wedge} (\,A\cdot\hat{u}-\xi)\rg)\\[5mm]
\ds=\int_{\p \B_r(x_0)} dL\,\dot{\wedge} (\,A\cdot\hat{u}-\xi)-\int_{\p \B_r(x_0)} L\,\dot{\wedge}\, d(\,A\cdot\hat{u}-\xi)
\end{array}
\]
Thus using Cauchy Schwartz we have the bound
\[
\begin{array}{l}
\ds\lf| \int_{\p \B_r(x_0)} L\,\dot{\wedge} (\,A\cdot\hat{u}-\xi) \rg|\\[5mm]
\ds\quad\le \|dL\|_{L^2(\B_r(x_0))}\, \|A\cdot\hat{u}-\xi\|_{L^2(\B_r(x_0))}+ \|L\|_{L^2(\B_r(x_0))}\, \|d(A\cdot\hat{u}-\xi)\|_{L^2(\B_r(x_0))}
\end{array}
\]
Hence, for any minimizing sequence $L_k$ we have that both $L_k$ and $dL_k$ are uniformly bounded in $L^2(\B_r(x_0))$ and we can assume, modulo extraction of a subsequence that
$L_k\rightharpoonup L_\infty$ and $dL_k\rightharpoonup dL_\infty$ as well as $0=d^\ast L_k\rightharpoonup d^\ast L_\infty=0$ weakly in $L^2$ which ensures the existence of a minimizer using the lower semi-continuity of the $L^2$ norm and the fact that
\[
\begin{array}{l}
\ds\int_{\p \B_r(x_0)} L_k\,\dot{\wedge} (\,A\cdot\hat{u}-\xi)\\[5mm]
\ds=\int_{\p \B_r(x_0)} dL_k\,\dot{\wedge} (\,A\cdot\hat{u}-\xi)-\int_{\p \B_r(x_0)} L_k\,\dot{\wedge}\, d(\,A\cdot\hat{u}-\xi)\\[5mm]
\ds\rightarrow \int_{\p \B_r(x_0)} dL_\infty\,\dot{\wedge} (\,A\cdot\hat{u}-\xi)-\int_{\p \B_r(x_0)} L_\infty\,\dot{\wedge}\, d(\,A\cdot\hat{u}-\xi)\\[5mm]
\ds=\int_{\p \B_r(x_0)} L_\infty\,\dot{\wedge} (\,A\cdot\hat{u}-\xi)
\end{array}
\]
For every $\Phi\in C^\infty(\B_r(x_0))$ such that $d^\ast\Phi=0$ we have
\be
\label{Kinf}
\int_{\B_r(x_0)}d {L}_\infty\cdot d\Phi+\frac{1}{\ep}L_\infty\cdot \Phi \ dx^3-\int_{\p \B_r(x_0)}\,\Phi\dot{\wedge} (\,A\cdot\hat{u}-\xi) =0
\ee
Let $\Psi\in C^\infty(\B_r(x_0))$ arbitrary. We proceed to the following unique Hodge decomposition of $\Psi$
\[
\Psi=d\varphi+d^\ast\zeta
\]
where $\varphi=0$ on $\p B_r(x_0)$. Since obviously  $\varphi$ and $d^\ast\zeta$  are smooth and $d^\ast d^\ast\zeta=0$ there holds
\[
\begin{array}{l}
\ds\int_{\B_r(x_0)}d {L}_\infty\cdot d\Psi+\frac{1}{\ep}L_\infty\cdot \Psi \ dx^3-\int_{\p \B_r(x_0)}\,\Psi\dot{\wedge} (\,A\cdot\hat{u}-\xi) =\\[5mm]
\ds=\int_{\B_r(x_0)}\frac{1}{\ep}L_\infty\cdot d\varphi\ dx^3=\int_{\B_r(x_0)}\frac{1}{\ep}d\varphi\wedge \ast L_\infty\\[5mm]
\ds=\int_{\B_r(x_0)}\frac{1}{\ep}d\lf(\varphi\wedge \ast L_\infty\rg)-\int_{\B_r(x_0)}\frac{1}{\ep}\varphi\wedge d\ast L_\infty=0
\end{array}
\]
where we used respectively that $d^\ast L_\infty=0$ and $\varphi=0$ on $\p\B_r(x_0)$. Taking first $\Psi\in C^\infty_0(\B_r(x_0))$ we deduce
\[
d^\ast d L_\infty+\frac{L_\infty}{\ep}=0\qquad\mbox{ in }{\mathcal D}'(\B_r(x_0))\ .
\]
Then taking $\Psi$ arbitrary we deduce
\[
\int_{\p\B_r(x_0)} \Psi\wedge \iota_{\p\B_r(x_0)}^\ast( \ast d L_\infty -A\cdot\hat{u}+\xi)=0
\]
Taking $L:=L_\infty$ we finally summarise the identities above as follows
\[
\lf\{
\begin{array}{l}
\ds  d^\ast d L+\frac{L}{\ep}=0\qquad\mbox{ in }B_\rho(x_0)\\[5mm]
\ds d^\ast L=0\qquad\mbox{ in }B_\rho(x_0)\\[5mm]
\ds \iota_{\p B_\rho(x_0)}^\ast \ast dL= \iota_{\p B_\rho(x_0)}^\ast  (A\cdot\hat{u}-\xi)\qquad\mbox{ on }\p B_\rho(x_0)
\end{array}
\rg.
\]
We consider a perturbation $\phi_t=x+t\,X$ where $X\in C^\infty_0(\B_r(x_0))$. 
We multiply the equation verified by $L$ inside $\B_r(x_0)$ by $\lf.\frac{d}{dt}\phi_t^\ast L\rg|_{t=0}$ and we integrate by parts, taking into consideration that this perturbation is compactly supported in $\B_r(x_0)$. This gives
\[
\begin{array}{l}
\ds 0=\int_{\B_r(x_0)} d\lf.\frac{d}{dt}\phi_t^\ast L\rg|_{t=0}\cdot dL\ dx^3+\frac{1}{\ep}\int_{\B_r(x_0)} \lf.\frac{d}{dt}\phi_t^\ast L\rg|_{t=0}\cdot L\ dx^3\\[5mm]
\ds=\int_{\B_r(x_0)} \lf.\frac{d}{dt}\phi_t^\ast d L\rg|_{t=0}\cdot dL\ dx^3+\frac{1}{\ep}\int_{\B_r(x_0)} \lf.\frac{d}{dt}\phi_t^\ast L\rg|_{t=0}\cdot L\ dx^3
\end{array}
\]
We have
\[
\lf.\frac{d}{dt}\phi_t^\ast L\rg|_{t=0}=\sum_{k=1}^3\sum_{l=1}^3X_k\,\p_{x_k}L_{l}\ dx_l+\sum_{k=1}^3\sum_{l=1}^3 L_l\ \p_{x_k}X_l\ dx_k\
\]
For convenience we choose coordinates so that $x_0$ becomes the origin and we take $X^\delta=\sum_{k=1}^3 x_k\, \eta_\delta(|x|)$ where $\eta_\delta$ is a smoothing of the characteristic function of $\B_\rho(0)$ where $\rho<r$. Hence for any $l=1,2,3$ we have
\[
\begin{array}{l}
\ds\sum_{k=1}^3 \p_{x_k}X^\delta_l\ dx_k=  \eta_\delta(|x|)\ dx_l+\eta_\delta'(|x|) \sum_{k=1}^3x_l\,\frac{x_k}{|x|} dx_k
\end{array}
\]
This gives
\[
\begin{array}{l}
\ds\int_{\B_r(0)} \lf.\frac{d}{dt}\phi_t^\ast L\rg|_{t=0}\cdot L\ dx^3=\frac{1}{2}\int_{\B_r(0)}\sum_{k=1}^3x_k\,\eta_\delta(|x|)\,\p_{x_k}|L|^2\ dx^3\\[5mm]
\ds+\int_{\B_r(0)}\eta_\delta(|x|)\ |L|^2\ dx^3+\frac{1}{|x|}\int_{\B_r(0)}\eta_\delta'(|x|) \lf|\sum_{k=1}^3 x_k\, L_k\rg|^2\ dx^3
\end{array}
\]
Letting $\delta$ go to zero we obtain
\[
\begin{array}{l}
\ds\lim_{\delta\rightarrow 0}\int_{\B_r(0)} \lf.\frac{d}{dt}\phi_t^\ast L\rg|_{t=0}\cdot L\ dx^3=\frac{1}{2}\int_{\B_\rho(0)}\sum_{k=1}^3x_k\,\p_{x_k}|L|^2\ dx^3\\[5mm]
\ds+\int_{\B_\rho(0)}\ |L|^2\ dx^3- r\,\int_{\p \B_\rho(0)}|L(\p_r)|^2\ dvol_{\p \B_\rho(0)}
\end{array}
\]
For the $2-$form $H=dL$ the computations go as follows
\[
\lf.\frac{d}{dt}\phi_t^\ast H\rg|_{t=0}=\sum_{k=1}^3\sum_{l<m}X_k\,\p_{x_k}H_{lm}\ dx_l\,\wedge\, dx_m+\sum_{k=1}^3\sum_{l<m} H_{lm}\ \lf(\p_{x_k}X_l\ dx_k\wedge d x_m +\p_{x_k}X_m dx_l\wedge d x_k\rg)
\]
Moreover, for the same choice of $X^\delta$ we have
\[
\begin{array}{l}
\ds\sum_{k=1}^3 \lf(\p_{x_k}X^\delta_l\ dx_k\wedge d x_m +\p_{x_k}X^\delta_m dx_l\wedge d x_k\rg)= 2\, \eta_\delta(|x|)\, dx_l\wedge dx_m\\[5mm]
\ds\quad+\eta_\delta'(|x|) \sum_{k=1}^3x_l\,\frac{x_k}{|x|} dx_k\wedge d x_m+x_m\,\frac{x_k}{|x|} dx_l\wedge d x_k
\end{array}
\]
This gives
\[
\begin{array}{l}
\ds\int_{\B_r(0)}\lf.\frac{d}{dt}\phi_t^\ast H\rg|_{t=0}\cdot H\ dx^3=\frac{1}{2}\sum_{k=1}^3\int_{\B_r(0)}\eta_\delta(|x|)\, x_k\,\p_{x_k}|H|^2\ dx^3+2\,\int_{\B_r(0)}\, \eta_\delta(|x|)\, |H|^2\, \ dx^3\\[5mm]
\ds+\sum_{k=1}^3\int_{\B_r(0)}\sum_{l<m} \eta_\delta'(|x|)\ \lf(x_l\,\frac{x_k}{|x|} H_{lm}\ H_{km}+H_{lm}\, H_{lk}\ x_m\,\frac{x_k}{|x|} \rg)\ dx^3
\end{array}
\]
Letting $\delta$ go to zero we obtain
\[
\begin{array}{l}
\ds\lim_{\delta\rightarrow 0}\int_{\B_r(0)} \lf.\frac{d}{dt}\phi_t^\ast dL\rg|_{t=0}\cdot dL\ dx^3=\frac{1}{2}\sum_{k=1}^3\int_{\B_\rho(0)}\, x_k\,\p_{x_k}|dL|^2\ dx^3+2\,\int_{\B_\rho(0)}\,  |dL|^2\, \ dx^3\\[5mm]
\ds\quad-\,\rho\,\int_{\p\B_\rho(0)}\,  |dL\res \p_r|\ dvol_{\p \B_r(x_0)}
\end{array}
\]
Hence we have by taking $\rho\rightarrow r$
\[
\begin{array}{l}
\ds\frac{1}{2\ep}\int_{\B_r(0)}\sum_{k=1}^3x_k\,\p_{x_k}|L|^2\ dx^3+\frac{1}{\ep}\,\int_{\B_r(0)}\ |L|^2\ dx^3- r\,\frac{1}{\ep}\,\int_{\p \B_r(0)}|L(\p_r)|^2\ dvol_{\p \B_r(0)}\\[5mm]
\ds+\frac{1}{2}\sum_{k=1}^3\int_{\B_r(0)}\, x_k\,\p_{x_k}|dL|^2\ dx^3+2\,\int_{\B_r(0)}\,  |dL|^2\, \ dx^3-\,r\,\int_{\p\B_r(0)}\,  |dL\res \p_r|\ dvol_{\p \B_r(x_0)}=0
\end{array}
\]
This implies
\[
\begin{array}{l}
\ds-\frac{1}{2}\int_{\B_r(x_0)}\frac{|L|^2}{\ep}\ dx^3+\frac{r}{2}\int_{\p \B_r(x_0)}\frac{|L|^2}{\ep}\ dvol_{\p \B_\rho(x_0)}\\[5mm]
\ds+\frac{1}{2}\int_{\B_r(x_0)}{|dL|^2}\ dx^3+\frac{r}{2}\int_{\p \B_r(x_0)}{|dL|^2}\ dvol_{\p \B_\rho(x_0)}\\[5mm]
\ds={r}\int_{\p \B_r(x_0)}\frac{|L(\p_r)|^2}{\ep}\ dvol_{\p \B_r(x_0)}+ r\int_{\p \B_r(x_0)}{|dL\res\p_r|^2}\ dvol_{\p \B_r(x_0)}
\end{array}
\]
then
\be
\label{monL}
\begin{array}{l}
\ds-\frac{1}{2}\int_{\B_r(x_0)}\frac{|L|^2}{\ep}\ dx^3+\frac{r}{2}\int_{\p \B_r(x_0)}\frac{|\iota^\ast_{\p \B_r(x_0)}L|^2}{\ep}\ dvol_{\p \B_r(x_0)}\\[5mm]
\ds+\frac{1}{2}\int_{\B_r(x_0)}{|dL|^2}\ dx^3+\frac{r}{2}\int_{\p \B_r(x_0)}{|\iota^\ast_{\p \B_r(x_0)}dL|^2}\ dvol_{\p B_\rho(x_0)}\\[5mm]
\ds=\frac{r}{2}\int_{\p \B_r(x_0)}\frac{|L(\p_r)|^2}{\ep}\ dvol_{\p \B_r(x_0)}+ \frac{r}{2}\,\int_{\p \B_r(x_0)}{|dL\res\p_r|^2}\ dvol_{\p \B_r(x_0)}
\end{array}
\ee
We have
\[
dL\res\p_r=\iota_{\p \B_r(x_0)}^\ast \ast dL=\iota_{\p \B_r(x_0)}^\ast (A\cdot\hat{u}-\xi)
\]
we have also
\[
\ast L=(\ast) L\wedge dr+L(\p_r)\,\om_{S^2}
\]
We have moreover
\[
\iota^\ast_{\p \B_r(x_0)} \ast L=-\ep\, \iota^\ast_{\p \B_r(x_0)}d\ast dL=-\ep\, \iota^\ast_{\p \B_r(x_0)}d(A\cdot \hat{u}-\xi)
\]
Hence
\[
|L(\p_r)|^2=|\iota_{\p \B_r(x_0)}^\ast \ast L|^2=\ep^2\,|\iota^\ast_{\p \B_r(x_0)}d(A\cdot\hat{u}-\xi)|^2
\]
moreover
\[
\int_{\B_r(x_0)}|d L|^2+\frac{|L|^2}{\ep}\ dx^3=\int_{\p \B_r(x_0)} L\,\dot{\wedge} \,(A\cdot\hat{u}-\xi)\ dvol_{\p \B_r(x_0)}
\]
from which we deduce
\[
\begin{array}{l}
\ds\int_{\B_r(x_0)}|d L|^2+\frac{|L|^2}{\ep}\ dx^3\\[5mm]
\ds\le t\,r\, \int_{\p \B_r(x_0)} \frac{|\iota^\ast_{\p \B_r} L|^2}{\ep}\ \ dvol_{\p \B_r(x_0)}+\frac{4}{t}\,\frac{\ep}{r^2}\, r\, \int_{\p \B_r(x_0)} |\iota^\ast_{\p \B_r(x_0)}(A\cdot\hat{u}-\xi)|^2\ dvol_{\p \B_r(x_0)}
\end{array}
\]
thus, taking $t:=1/9$ and adding
\[
\ds \frac{9}{2}\,\int_{B_\rho(x_0)}|d L|^2+\,\frac{|L|^2}{\ep}\ dx^3
\]
to the above identity we obtain
\[
\begin{array}{l}
\ds 4\,\int_{\B_r(x_0)}\frac{|L|^2}{\ep}\ dx^3+{5}\,\int_{\B_r(x_0)}{|dL|^2}\ dx^3+\frac{r}{2}\int_{\p \B_r(x_0)}{|\iota_{\p \B_r(x_0)}^\ast dL|^2}\ dvol_{\p \B_r(x_0)}\\[5mm]
\ds\le\frac{r}{2}\int_{\p \B_r(x_0)}\ep\,|\iota^\ast_{\p \B_r(x_0)}d(A\cdot\hat{u}-\xi)|^2\ dvol_{\p \B_r(x_0)}\\[5mm]
\ds+\lf(\frac{1}{2}+4\times 9^2\,\frac{\ep}{r^2}\rg)\, r\, \int_{\p \B_r(x_0)} |\iota^\ast_{\p \B_r(x_0)}(A\cdot\hat{u}-\xi)|^2\ dvol_{\p B_r(x_0)}
\end{array}
\]
Thus, using (\ref{tie}), we have
\[
\begin{array}{l}
\ds d(A\cdot\hat{u}-\xi)=F_A\cdot\hat{u}-\dashint_{\p \B_r(x_0)}F_A\cdot\hat{u}\ dvol_{\p \B_r(x_0)}\qquad\mbox{ on }\p \B_r(x_0)\\[5mm]

\end{array}
\]
and this implies
\be
\label{estL}
\begin{array}{l}
\ds 4\,\int_{\B_r(x_0)}\frac{|L|^2}{\ep}\ dx^3+{5}\,\int_{\B_r(x_0)}{|dL|^2}\ dx^3+\frac{r}{2}\int_{\p \B_r(x_0)}{|\iota_{\p \B_r(x_0)}^\ast dL|^2}\ dvol_{\p \B_r(x_0)}\\[5mm]
\ds\le {r}\,\int_{\p \B_r(x_0)}\ep\,|\iota^\ast_{\p \B_r(x_0)}F_A\cdot\hat{u}-(F_A\cdot\hat{u})_{x_0,r}|^2\ dvol_{\p \B_r(x_0)}\\[5mm]
\ds+\lf(\frac{1}{2}+4\times 9^2\,\frac{\ep}{r^2}\rg)\, r\, \int_{\p \B_r(x_0)} |\iota^\ast_{\p \B_r(x_0)}(A\cdot\hat{u}-\xi)|^2\ dvol_{\p B_r(x_0)}
\end{array}
\ee
Observe
\[
\int_{\p \B_r(x_0)}|F_A^T- (F_A^T)_{x_0,r}|^2\ \om_{\p \B_r(x_0)}=\int_{\p \B_r(x_0)}|F_A^T|^2\, \om_{\p \B_r(x_0)}-4\pi\,r^2\, |(F_A^T)_{x_0,r}|^2\ .
\]
Hence using also (\ref{estxi})
\be
\label{estL}
\begin{array}{l}
\ds 4\,\int_{\B_r(x_0)}\frac{|L|^2}{\ep}\ dx^3+{5}\,\int_{\B_r(x_0)}{|dL|^2}\ dx^3+\frac{r}{2}\int_{\p \B_r(x_0)}{|\iota_{\p \B_r(x_0)}^\ast dL|^2}\ dvol_{\p \B_r(x_0)}\\[5mm]
\ds\le {r}\,\int_{\p \B_r(x_0)}\ep\,|\iota^\ast_{\p \B_r(x_0)}F_A\cdot\hat{u}|^2\ dvol_{\p \B_r(x_0)}\\[5mm]
\ds+\lf(\frac{1}{2}+4\times 9^2\,\frac{\ep}{r^2}\rg)\, r\, \int_{\p \B_r(x_0)} |\iota^\ast_{\p \B_r(x_0)}(A\cdot\hat{u})|^2\ dvol_{\p B_r(x_0)}\\[5mm]
\ds +C \lf(1+\frac{\ep}{r^2}\rg)\, r\, (Y'(r))^{1/2} \lf(Y'(r)+r^{1/3}\,(Y'(r))^{5/6}\rg)\\[5mm]
\ds+C \lf(1+\frac{\ep}{r^2}\rg)\, r\,\lf(Y'(r)+r^{1/3}\,(Y'(r))^{5/6}\rg)^2
\end{array}
\ee

We introduced  $G:=\ast(K+L)$ and  it satisfies
\be
\label{eqG}
\lf\{
\begin{array}{l}
\ds  d d^\ast G+\frac{G}{\ep}=\frac{1}{8}\,\check{u}\cdot [d\check{u}\wedge d\check{u}]+d\ti{\frak{e}}= d\xi\qquad\mbox{ in }\B_r(x_0)\\[5mm]
\ds d G=0\qquad\mbox{ in }\B_r(x_0)\\[5mm]
\ds \iota_{\p \B_r(x_0)}^\ast d^\ast G=\iota_{\p \B_r(x_0)}^\ast A\cdot\hat{u}\qquad\mbox{ on }\p \B_r(x_0)
\end{array}
\rg.
\ee
Combining (\ref{estK}) and (\ref{estL}) is giving assuming $1>r\ge \sqrt{\ep}$ and $Y'(r)\le 1$
\be
\label{estG}
\begin{array}{l}
\ds\int_{\B_r(x_0)}\frac{|G|^2}{\ep}\ dx^3+\int_{\B_r(x_0)}{|d^\ast G|^2}\ dx^3\\[5mm]
\ds\le \frac{r}{3}\,\int_{\p \B_r(x_0)}\ep\,|\iota^\ast_{\p \B_r(x_0)}F_A\cdot\hat{u}|^2\ dvol_{\p \B_r(x_0)}\\[5mm]
\ds+\lf(\frac{1}{6}+C\,\frac{\ep}{r^2}\rg)\, r\, \,\int_{\p \B_r(x_0)} |\iota^\ast_{\p \B_r(x_0)}(A\cdot\hat{u})|^2\ dvol_{\p B_r(x_0)}\\[5mm]
\ds+C\, r\, (Y'(r))^{3/2}+C\, r^{4/3}\, (Y'(r))^{4/3}
\end{array}
\ee

\medskip

\noindent{\bf Existence and estimates for $\eta$}

\medskip

\be
\label{neta}
\lf\{
\begin{array}{l}
\ds d^\ast\,d\eta+\frac{\eta}{\ep^2}=0\qquad\mbox{ in }\B_r(x_0)\\[5mm]
\ds d^\ast\eta=0\qquad\mbox{ in }\B_r(x_0)\\[5mm]
\ds\iota_{\p B_\rho(x_0)}^\ast \eta=- \frac{1}{4}[\hat{u},\iota_{\p \B_r(x_0)}^\ast d_A\hat{u}]\ .
\end{array}
\rg.
\ee
following the same arguments as the one used to deduce (\ref{monL}) we obtain
\be
\label{monet}
\begin{array}{l}
\ds-\frac{1}{2}\int_{\B_r(x_0)}\frac{|\eta|^2}{\ep}\ dx^3+\frac{r}{2}\int_{\p \B_r(x_0)}\frac{|\iota^\ast_{\p \B_r(x_0)}\eta|^2}{\ep}\ dvol_{\p \B_r(x_0)}\\[5mm]
\ds+\frac{1}{2}\int_{\B_r(x_0)}\ep\,{|d\eta|^2}\ dx^3+\frac{r}{2}\int_{\p \B_r(x_0)}\ep\,{|\iota^\ast_{\p \B_r(x_0)}d\eta|^2}\ dvol_{\p B_\rho(x_0)}\\[5mm]
\ds=\frac{r}{2}\int_{\p \B_r(x_0)}\frac{|\eta(\p_r)|^2}{\ep}\ dvol_{\p \B_r(x_0)}+ \frac{r}{2}\,\int_{\p \B_r(x_0)}\ep{|d\eta\res\p_r|^2}\ dvol_{\p \B_r(x_0)}
\end{array}
\ee
Multiplying (\ref{neta}) by $\eta$ and integrating on $\B_r(x_0)$ is giving
\[
\int_{\B_r(x_0)}\ep\,\eta\wedge d(\ast d\eta)+\int_{\B_r(x_0)}\frac{|\eta|^2}{\ep}\ dx^3=0
\]
Hence
\[
-\int_{\B_r(x_0)}\ep\, d\lf(\eta\wedge \ast d\eta\rg)+\int_{\B_r(x_0)}\ep d\eta\wedge \ast d\eta+\int_{\B_r(x_0)}\frac{|\eta|^2}{\ep}\ dx^3=0
\]
We deduce for any $t>0$ and using (\ref{monet})
\[
\begin{array}{l}
\ds\int_{\B_r(x_0)}\ep\,|d\eta|^2+\frac{|\eta|^2}{\ep}\ dx^3= \ep\,\int_{\p\B_r(x_0)}\eta\wedge \ast d\eta\\[5mm]
\ds\le\frac{t}{2}\int_{\p \B_r(x_0)}|\iota_{\p \B_r(x_0)}^\ast\eta|^2\ dvol_{\p \B_r(x_0)}+\frac{\ep^2}{2t}\,\int_{\p \B_r(x_0)}\,|d\eta\res\p_r|^2\ dvol_{\p \B_r(x_0)}
\end{array}
\]
From (\ref{monet}) we have
\[
\begin{array}{l}
\ds\frac{\ep^2}{2t}\,\int_{\p \B_r(x_0)}\,|d\eta\res\p_r|^2\ dvol_{\p \B_r(x_0)}=\frac{\ep}{rt}\,\frac{r}{2}\int_{\p \B_r(x_0)}\,\ep\,|d\eta\res\p_r|^2\ dvol_{\p \B_r(x_0)}\\[5mm]
\ds\le \frac{\ep}{rt}\,\frac{r}{2}\int_{\p \B_r(x_0)}\frac{|\iota^\ast_{\p \B_r(x_0)}\eta|^2}{\ep}\ dvol_{\p \B_r(x_0)}+ \frac{\ep}{rt}\,\frac{1}{2}\int_{\B_r(x_0)}\ep\,{|d\eta|^2}\ dx^3\\[5mm]
\ds+ \frac{\ep}{rt}\,\frac{r}{2}\int_{\p \B_r(x_0)}\ep\,{|\iota^\ast_{\p \B_r(x_0)}d\eta|^2}\ dvol_{\p B_\rho(x_0)}\ .
\end{array}
\]
This gives
\[
\begin{array}{l}
\ds\int_{\B_r(x_0)}\ep\,|d\eta|^2+\frac{|\eta|^2}{\ep}\ dx^3\le \lf(\frac{\ep\,t}{2r}+\frac{\ep}{2\,rt}\rg)\ r\,\int_{\p \B_r(x_0)}\frac{|\iota_{\p \B_r(x_0)}^\ast\eta|^2}{\ep}\ dvol_{\p \B_r(x_0)}\\[5mm]
\ds+ \frac{\ep}{rt}\,\frac{1}{2}\int_{\B_r(x_0)}\ep\,{|d\eta|^2}\ dx^3+ \frac{\ep}{rt}\,\frac{r}{2}\int_{\p \B_r(x_0)}\ep\,{|\iota^\ast_{\p \B_r(x_0)}d\eta|^2}\ dvol_{\p B_\rho(x_0)}\ .
\end{array}
\]
We choose $t=r/\ep$ and we get
\be
\label{etbo}
\begin{array}{l}
\ds \lf(  1- \frac{\ep^2}{2\,r^2}\rg)\ \int_{\B_r(x_0)}\ep\,|d\eta|^2\ dx^3+\int_{\B_r(x_0)}\frac{|\eta|^2}{\ep}\ dx^3\\[5mm]
\ds\le \lf(\frac{1}{2}+\frac{\ep^2}{2\,r^2}  \rg)\,r\,\int_{\p\B_r(x_0)}\frac{|\iota_{\p \B_r(x_0)}^\ast\eta|^2}{\ep}\ dvol_{\p \B_r(x_0)}\\[5mm]
\ds\quad+\frac{\ep^2}{2\,r^2}\, r\,\int_{\p \B_r(x_0)}\ep\,{|\iota^\ast_{\p \B_r(x_0)}d\eta|^2}\ dvol_{\p B_\rho(x_0)}\ .
\end{array}
\ee
Recall
\[
d[\hat{u},d_A\hat{u}]=d_A[\hat{u},d_A\hat{u}]-[A\wedge [\hat{u},d_A\hat{u}]]
\]
Hence using (\ref{dAsqu}) we obtain
\[
\begin{array}{l}
\ds d[\hat{u},d_A\hat{u}]=[d_A\hat{u}\wedge d_A\hat{u}]+[\hat{u},[F_A,\hat{u}]]-[A\wedge [\hat{u},d_A\hat{u}]]\\[5mm]
\ds=[d_A\hat{u}\wedge d_A\hat{u}]+(F_A)^T-[A\wedge [\hat{u},d_A\hat{u}]]
\end{array}
\]
This gives
\[
\begin{array}{l}
\ds \lf(  1- \frac{\ep^2}{2\,r^2}\rg)\ \int_{\B_r(x_0)}\ep\,|d\eta|^2\ dx^3+\int_{\B_r(x_0)}\frac{|\eta|^2}{\ep}\ dx^3\\[5mm]
\ds\le \lf(\frac{1}{2}+\frac{\ep^2}{2\,r^2}  \rg)\,r\,\int_{\p\B_r(x_0)}\frac{\ds\lf|  \frac{1}{4}[\hat{u},\iota_{\p \B_r(x_0)}^\ast d_A\hat{u}]   \rg|^2}{\ep}\ dvol_{\p \B_r(x_0)}\\[5mm]
\ds+\frac{\ep^2}{32\,r^2}\, r\,\int_{\p \B_r(x_0)}\ep\,{\lf|\,\iota^\ast_{\p \B_r(x_0)}[d_A\hat{u}\wedge d_A\hat{u}]- \iota^\ast_{\p \B_r(x_0)}(F_A)^T+ \iota^\ast_{\p \B_r(x_0)}[A\wedge [\hat{u},d_A\hat{u}]]  \rg|^2}\ dvol_{\p B_\rho(x_0)}\ .
\end{array}
\]
Thus
\[
\begin{array}{l}
\ds \lf(  1- \frac{\ep^2}{2\,r^2}\rg)\ \int_{\B_r(x_0)}\ep\,|d\eta|^2\ dx^3+\int_{\B_r(x_0)}\frac{|\eta|^2}{\ep}\ dx^3\\[5mm]
\ds\le \lf(\frac{1}{8}+\frac{\ep^2}{8\,r^2}  \rg)\,r\,\int_{\p\B_r(x_0)}\frac{\ds\lf| \iota_{\p \B_r(x_0)}^\ast (d_A\hat{u})^T  \rg|^2}{\ep}\ dvol_{\p \B_r(x_0)}\\[5mm]
\ds+\frac{\ep^2}{16\,r^2}\, r\,\int_{\p \B_r(x_0)}\ep\,|\iota^\ast_{\p \B_r(x_0)}(F_A)^T|^2\ dvol_{\p \B_r(x_0)}\\[5mm]
\ds +\frac{\ep^2}{16\,r^2}\, r\,\int_{\p \B_r(x_0)}\ep\,\lf|\,\iota^\ast_{\p \B_r(x_0)}[d_A\hat{u}\wedge d_A\hat{u}]\rg|^2\ dvol_{\p \B_r(x_0)}\\[5mm]
\ds +\frac{\ep^2}{16\,r^2}\, r\,\int_{\p \B_r(x_0)}\ep\,\lf|\,\iota^\ast_{\p \B_r(x_0)}[A\wedge [\hat{u},d_A\hat{u}]]  \rg|^2\ dvol_{\p \B_r(x_0)}
\end{array}
\]
Using the $L^\infty$ bound on $|\nabla_Au|$ (\ref{linf}) this finally gives
\be
\label{etbop}
\begin{array}{l}
\ds \lf(  1- \frac{\ep^2}{2\,r^2}\rg)\ \int_{\B_r(x_0)}\ep\,|d\eta|^2\ dx^3+\int_{\B_r(x_0)}\frac{|\eta|^2}{\ep}\ dx^3\\[5mm]
\ds\le \lf(\frac{1}{8}+C\,\frac{\ep^2}{r^2}  \rg)\,r\,\int_{\p\B_r(x_0)}\frac{\ds\lf| \iota_{\p \B_r(x_0)}^\ast (d_A\hat{u})^T  \rg|^2}{\ep}\ dvol_{\p \B_r(x_0)}\\[5mm]
\ds+\frac{\ep^2}{16\,r^2}\, r\,\int_{\p \B_r(x_0)}\ep\,|\iota^\ast_{\p \B_r(x_0)}(F_A)^T|^2\ dvol_{\p \B_r(x_0)}\\[5mm]
\ds +C\,\frac{\ep}{r^2}\,r\,\int_{\p \B_r(x_0)}|\iota^\ast_{\p \B_r(x_0)}A|^2\ dvol_{\p \B_r(x_0)}
\end{array}
\ee
Classical elliptic estimates give
\[
\|\eta\|^2_{L^6(\B_r(x_0))}\le C\,\lf(\|d\eta\|^2_{L^2(\B_r(x_0))}+\|d^\ast\eta\|^2_{L^2(\B_r(x_0))}+\|\iota_{\p\B_r(x_0)}^\ast\eta\|^2_{L^4(\p\B_r(x_0))}\rg)
\]
Hence
\be
\label{eta6}
\begin{array}{l}
\ds\lf(  \int_{\B_r(x_0)}\ep^3\,|\eta|^6\ dx^3   \rg)^{1/3}\le \ C\ \int_{\B_r(x_0)}\ep\,|d\eta|^2\ dx^3\\[5mm]
\ds\quad+C\, \lf(   \ep^2\,\int_{\p\B_r(x_0)}{\ds\lf| \iota_{\p \B_r(x_0)}^\ast (d_A\hat{u})^T  \rg|^4}\ dvol_{\p \B_r(x_0)}\rg)^{1/2}
\end{array}
\ee
This gives for $\ep<r^2$
\be
\label{eta6}
\begin{array}{l}
\ds\lf(  \int_{\B_r(x_0)}\ep^3\,|\eta|^6\ dx^3   \rg)^{1/3}\le \ C\ r\, Y'(r)+C\,\sqrt{\frac{\ep}{r}}\,\sqrt{r\,Y'(r)}
\end{array}
\ee
Littlewood inequality gives
\[
\|\eta\|_{L^4(\B_r(x_0))}\le \|\eta\|^{1/4}_{L^2(\B_r(x_0))}\ \|\eta\|^{3/4}_{L^6(\B_r(x_0))}=\frac{1}{\ep^{1/4}}\,\lf\|\frac{\eta}{\sqrt{\ep}}\rg\|^{1/4}_{L^2(\B_r(x_0))}\ \|\sqrt{\ep}\,\eta\|^{3/4}_{L^6(\B_r(x_0))}
\]
This gives
\be
\label{eta4}
\begin{array}{l}
\ds\int_{\B_r(x_0)}\ep\,|\eta|^4\ dx^3\le \sqrt{\int_{\B_r(x_0)}\frac{|\eta|^2}{\ep}\ dx^3}\ \ \sqrt{  \int_{\B_r(x_0)}\ep^3\,|\eta|^6\ dx^3    }\\[5mm]
\ds\le C\,\sqrt{r\,Y'(r)}\ \lf(r\, Y'(r)+\sqrt{\frac{\ep}{r}}\,\sqrt{r\,Y'(r)}\rg)^{3/2}
\end{array}
\ee

The connection 1-form $A$ is extended inside $\p\B_r(x_0)$ by
\be
\label{connex}
\check{A}:=-\frac{1}{4}\lf[\check{u},d\check{u}\rg]+{\eta}^T+d^\ast G\ \check{u}
\ee
This gives
\[
d_{\check{A}}\check{u}=d\check{u}+[\check{A},\check{u}]=d\check{u}-\frac{1}{4}\lf[\lf[\check{u},d\check{u}\rg],\check{u}\rg]+[\eta^T,\check{u}]=[\eta^T,\check{u}]\ .
\]
Moreover
\[
\begin{array}{l}
\ds F_{\check{A}}=d\check{A}+\check{A}\wedge \check{A}= d\check{A}+\frac{1}{2}[\check{A},\check{A}]=-\frac{1}{4}\,d([\check{u},d \check{u}])+d{\eta}^T+d\check{u}\wedge d^\ast G+\check{u}\,dd^\ast G\\[5mm]
\ds+\frac{1}{32}[[\check{u},d\check{u}]\wedge[\check{u},d\check{u}]]+\frac{1}{2}\,[{\eta}^T,{\eta}^T]-\frac{1}{4}\,[{\eta}^T, [\check{u},d\check{u}]]-\frac{1}{4}d^\ast G\wedge[\check{u},[\check{u},d\check{u}]]+d^\ast G\wedge [\check{u},\eta^T]\\[5mm]
\end{array}
\]
We have
\[
\begin{array}{l}
\ds-\frac{1}{4}\,d([\check{u},d \check{u}])+\frac{1}{32}[[\check{u},d\check{u}]\wedge[\check{u},d\check{u}]]\\[5mm]
\ds=-\frac{1}{2}\,[\p_{x_1}\check{u},\p_{x_2}\check{u}]\ dx_1\wedge dx_2+\frac{1}{16}[[\check{u},\p_{x_1}\check{u}]\wedge[\check{u},\p_{x_2}\check{u}]]\ dx_1\wedge dx_2
\end{array}
\]
Recall the observation
\[
[[{\bf i},{\bf j}],[{\bf i},{\bf k}]]=-\,[2\,{\bf k},2\,{\bf j}]=4\,[{\bf j},{\bf k}]
\]
Thus we obtain the identity valid for any $\check{u}\in W^{1,2}(\B_r(x_0),\S^2)$
\be
\label{curb}
\begin{array}{l}
\ds-\frac{1}{4}\,d([\check{u},d \check{u}])+\frac{1}{32}[[\check{u},d\check{u}]\wedge[\check{u},d\check{u}]]
\ds=-\frac{1}{4}\,[\p_{x_1}\check{u},\p_{x_2}\check{u}]\ dx_1\wedge dx_2=-\frac{1}{8}\,[d\check{u}\wedge d\check{u}]
\end{array}
\ee
We have also
\[
-\frac{1}{4}d^\ast G\wedge[\check{u},[\check{u},d\check{u}]]=d^\ast G\wedge d\check{u}
\]
This gives
\be
\label{courb}
\begin{array}{l}
\ds  F_{\check{A}}=-\frac{1}{8}\,[d\check{u}\wedge d\check{u}]+ \check{u}\,dd^\ast G+F_{\eta^T}+\lf[\eta^T, \check{u}\, d^\ast G+\frac{1}{4}\, [d\check{u},\check{u}] \rg]
\end{array}
\ee
This gives in particular
\be
\label{courb-proj}
\lf\{
\begin{array}{l}
\ds F_{\check{A}}\cdot\check{u}=-\frac{G}{\ep}+d\ti{\frak{e}}+\check{u}\cdot F_{\eta^T}+\frac{1}{4}\,\check{u}\cdot\,[\eta^T,[d\check{u},\check{u}] ]\\[5mm]
\ds (F_{\check{A}})^T=(F_{\eta^T})^T+[\eta^T,\check{u}]\wedge d^\ast G
\end{array}
\rg.
\ee
Finally we have by minimality
\[
\begin{array}{l}
\ds\,Y(r):=CYMH({u},{A},\B_r(x_0))\le CYMH(\check{u},\check{A},\B_r(x_0))\\[5mm]
\ds =\frac{1}{2}\int_{\B_r(x_0)} |\check{A}|^2+\frac{|d_{\check{A}} \ti{u}|^2}{\ep}+\ep\,|F_{\check{A}}|^2\ dx^3+\frac{1}{2\,\ep^3}(1-|\ti{u}|^2)^2\ dx^3\\[5mm]
\ds=\frac{1}{2}\int_{\B_r(x_0)} \lf|   \frac{1}{4}[\check{u},d\check{u}]+\eta^T\rg|^2+|d^\ast G|^2  +\frac{|\nabla|\ti{u}||^2}{\ep} +|\ti{u}|^2\,\frac{|[\check{u},\eta]|^2}{\ep}\ dx^3\\[5mm]
\ds+\frac{1}{2}\int_{\B_r(x_0)}\ep\,\lf|\frac{G}{\ep}-\check{u}\cdot F_{\eta^T}+d\ti{\frak{e}}-\frac{1}{4}\,\check{u}\cdot\,[\eta^T,[d\check{u},\check{u}] ] \rg|^2\ dx^3\\[5mm]
\ds+\frac{1}{2}\int_{\B_r(x_0)}\ep\,\lf|  (F_{\eta^T})^T+[\eta^T,\check{u}]\wedge d^\ast G\rg|^2+\frac{1}{2\,\ep^3} (1-|\ti{u}|^2)^2\ dx^3
\end{array}
\]
Observe that
\[
\check{u}\cdot d\eta^T=-d\check{u}\,\dot{\wedge}\,\eta^T
\]
Hence
\[
\begin{array}{l}
\ds\int_{\B_r(x_0)}\ep\,|\check{u}\cdot F_{\eta^T}|^2\ dx^3=\int_{\B_r(x_0)}\ep\,|-d\check{u}\,\dot{\wedge}\,\eta^T+\frac{1}{2}\check{u}\cdot [\eta^T\wedge\eta^T]|^2\ dx^3\\[5mm]
\ds\le 2\,\ep\, \lf(\int_{\B_r(x_0)}|\nabla\check{u}|^3\ dx^3\rg)^{2/3}\ \lf(\int_{\B_r(x_0)}|\eta^T|^6\ dx^3\rg)^{1/3}+2\,\ep\,\int_{\B_r(x_0)}\, |\eta^T\wedge\eta^T|^2\ dx^3
\end{array}
\]
Using respectively (\ref{A-03}), (\ref{eta6}) and (\ref{eta4}) we obtain
\be
\label{FetaT}
\ds\int_{\B_r(x_0)}\ep\,|\check{u}\cdot F_{\eta^T}|^2\ dx^3\le C\, r\,(Y'(r))^2+C\, r \,\sqrt{\frac{\ep}{r^2}}\, (Y'(r))^{3/2}+C\, \lf(  \frac{\ep}{r}\rg)^{3/4}\ (r\, Y'(r))^{5/4}
\ee
We have also obviously
\be
\label{retaT}
\begin{array}{l}
\ds\int_{\B_r(x_0)}\ep\,\lf|\check{u}\cdot\,[\eta^T,[d\check{u},\check{u}] ] \rg|^2\ dx^3\le 2\,\ep\, \lf(\int_{\B_r(x_0)}|\nabla\check{u}|^3\ dx^3\rg)^{2/3}\ \lf(\int_{\B_r(x_0)}|\eta^T|^6\ dx^3\rg)^{1/3}\\[5mm]
\ds\quad\le C\, r\,(Y'(r))^2+C\, r \,\sqrt{\frac{\ep}{r^2}}\, (Y'(r))^{3/2}
\end{array}
\ee
Combining respectively (\ref{IV.9}), (\ref{epifrakh}), (\ref{A-03}), (\ref{estG}), (\ref{etbop}), (\ref{FetaT}), (\ref{retaT}) and (\ref{A-04}) we obtain for $r>\sqrt{\ep}$ that
\[
\begin{array}{l}
\ds\frac{1}{2}\int_{\B_r(x_0)} |\check{A}|^2+\frac{|d_{\check{A}} \ti{u}|^2}{\ep}+\ep\,|F_{\check{A}}|^2\ dx^3+\frac{1}{2\,\ep^3}(1-|\ti{u}|^2)^2\ dx^3\\[5mm]
\ds\le \frac{1}{2}\int_{\B_r(x_0)}\frac{1}{4}|d\check{u}|^2+\frac{1}{2}\,[\check{u},d\check{u}]\cdot\eta^T +|\eta^T|^2+\frac{|\nabla|\ti{u}||^2}{\ep}+\frac{1}{2\,\ep^3} (1-|\ti{u}|^2)^2\ dx^3\\[5mm]
\ds+\frac{1}{2}\int_{\B_r(x_0)}4\,\frac{|\eta^T|^2}{\ep^2}+\ep\,|d\eta^T|^2+|d^\ast G|^2+\frac{|G|^2}{\ep}+C\, r\,(Y'(r))^{3/2}+C\, r\, (Y'(r))^{5/4}+C\,r^2\, (Y'(r))^{9/8}\\[5mm]
\ds\le \frac{r}{2}\int_{\p \B_r(x_0)}\frac{1-\nu}{4}|\nabla_T\hat{u}|^2+(1-\nu)\,|\nabla_T|u||^2+\frac{1}{6\,\ep^3}(1-|{u}|^2)^2\ dvol_{\p \B_r(x_0)}+\lf(\frac{1}{2}+C\,\frac{\ep^2}{r^2}\rg)\,\frac{|\iota_{\p\B_r(x_0)}^\ast (d_Au)|^2}{\ep}\ dvol_{\p \B_r(x_0)}\\[5mm]
\ds\frac{r}{2}\,\int_{\p \B_r(x_0)}\frac{2\ep}{3}\,|\iota^\ast_{\p \B_r(x_0)}F_A\cdot\hat{u}|^2+\frac{1}{6} |\iota^\ast_{\p \B_r(x_0)}(A\cdot\hat{u})|^2\ dvol_{\p B_r(x_0)}\\[5mm]
\ds+C\,\frac{\ep}{r^2}\,r\,Y'(r)+C\, r\,(Y'(r))^{3/2}+C\, r\, (Y'(r))^{5/4}+C\,r^2\, (Y'(r))^{9/8}\\[5mm]
\end{array}
\]
Observing that
\[
|\iota_{\p \B_r(x_0)}^\ast A|^2=\lf|\frac{1}{4}[\hat{u},d\hat{u}]-\frac{1}{4}[\hat{u},d_A\hat{u}]\rg|^2+|\iota^\ast_{\p \B_r(x_0)}(A\cdot\hat{u})|^2
\]
which implies
\[
\int_{\B_r(x_0)}|\iota_{\p \B_r(x_0)}^\ast A|^2\ dvol_{\p\B_r(x_0)}\ge \int_{\B_r(x_0)}\frac{1}{4}|\nabla_T\hat{u}|^2+|\iota^\ast_{\p \B_r(x_0)}(A\cdot\hat{u})|^2\ dvol_{\p B_r(x_0)}-C\, \sqrt{\ep}\,Y'(r)
\]
Hence finally we have proved for $r>\sqrt{\ep}$, if $Y'(r)<\delta$ for $\delta$ small enough then there holds 
\be
\label{inY}
Y(r)\le (1-\nu)\,r\,Y'(r)+C\,\sqrt{\frac{\ep}{r^2}}\,r\,Y'(r)+C\, r\,(Y'(r))^{3/2}+C\, r\, (Y'(r))^{5/4}+C\,r^2\, (Y'(r))^{9/8}\ .
\ee
Let $r_0>R\,\sqrt{\ep}$ where $R$ is going to be chosen below. Let $\delta>0$ to be fixed later.  Assume $Y(r_0)\le r_0\,\delta/2$ and assume that there exists $\rho\in (R\,\sqrt{\ep},r_0)$ such that $Y(\rho)>\rho\,\delta$. We consider
\[
\rho_0:=\inf\lf\{\rho>R\,\sqrt{\ep}\ ;\ \forall\ s\in(\rho,r_0)\quad Y(s)\le  s\,\delta\rg\}
\]
Assume $\rho_0>R\,\sqrt{\ep}$.  This implies in particular that $Y(\rho_0)=\rho_0\,\delta$. Let $\rho\in(\rho_0,r_0)$. Assume $\rho\,Y'(\rho)\le Y(\rho)$ then, since $Y(\rho)\le \delta\,\rho$ we have $Y'(\rho)\le\delta$ and from (\ref{inY}) there holds
\be
\label{inY2}
\begin{array}{l}
\ds Y(r)\le (1-\nu)\,r\,Y'(r)+C\,\sqrt{\frac{\ep}{r^2}}\,r\,Y'(r)+C\, r\,(Y'(r))^{3/2}+C\, r\, (Y'(r))^{5/4}+C\,r^2\, (Y'(r))^{9/8}\\[5mm]
\ds \le (1-\nu+C\,R^{-1} +C\,\sqrt{\delta}+C\,\delta^{1/4}+C\,r_0\,\delta^{1/8})\,r\,Y'(r)
\end{array}
\ee
By having chosen $R^{-1}$ and $\delta$ small enough so that $(1-\nu+C\,R^{-1} +C\,\sqrt{\delta}+C\,\delta^{1/4}+C\,r_0\,\delta^{1/8}) $ we obtain $Y(r)<r Y'(r)$ which is a contradiction. Hence  for any $r\in(\rho_0,r_0)$ there holds
\[
\frac{d}{dr}\lf( \frac{Y(r)}{r}  \rg)=\frac{r\,Y'(r)-Y(r)}{r^2}\ge 0
\]
This implies
\[
\frac{Y(\rho_0)}{\rho_0}\le \frac{Y(r_0)}{r_0}\le \frac{\delta}{2}
\]
which is again a contradiction. Hence we deduce
\[
\forall\ r\in(A\,\sqrt{\ep},r_0)\qquad Y(r):=CYMH({u},{A},\B_r(x_0))\le r\,\delta\ .
\]
This ends the proof of  Lemma~\ref{lm-sca}.\hfill $\Box$
\subsection{Improved $L^\infty$ estimates of $\nabla_Au$.}
The goal of this subsection is to prove the following lemma.

\begin{Lm}
\label{lm-impLinf}
There exists $\delta>0$ such that, if $(u,A)$ is a smooth critical point of $CYMH_{\ep}^{1,1}$ in $\B_1(0)$ and if on $\B_{\sqrt{\ep}}(x_0)\subset \B_1(0)$  assume there holds respectively
\[
\frac{1}{2\sqrt{\ep}}\int_{\B_{\sqrt{\ep}}(x_0)} |{A}|^2+\frac{|d_{{A}} {u}|^2}{\ep}+\ep\,|F_{{A}}|^2\ dx^3+\frac{1}{2\,\ep^3}(1-|{u}|^2)^2\ dx^3\le \delta
\]
and
\[
1\ge |u|\ge 1-\delta
\]
then there exists $C>0$ universal such that
\be
\label{covinfim}
\|\nabla_Au\|_{L^\infty(\B_{\sqrt{\ep}/2}(x_0))}\le C\,\ep^{-1/4}\ .
\ee
\hfill  $\Box$
\end{Lm}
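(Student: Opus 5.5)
The approach is a blow-up to the monopole scale $\ep$ followed by a Bochner--Weitzenb\"ock subsolution argument. Rescaling by $\ep$ is natural because under $y=(x-x_0)/\ep$ the Yang--Mills--Higgs part becomes the standard $\ep=1$ self-dual functional. The only non-invariant term $\int|A|^2$ then acquires a small coefficient: with $\ti A(y)=\ep A(x_0+\ep y)$ it becomes $\ep\int|\ti A|^2$ in the stationarity equations, i.e.\ a term $\ep^{-1}A$ that is harmless after rescaling. The hypothesis at scale $\sqrt\ep$ says that on $\B_{\sqrt\ep}(x_0)$ the total $YMH$ energy is at most $\delta\sqrt\ep$. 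Hence, in the rescaled ball of radius $\ep^{-1/2}$, the rescaled energy (which equals $\ep^{-1}\cdot$ the original energy, since $YMH$ carries a factor $\ep$) is at most $\delta\,\ep^{-1/2}$.

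The pointwise bound will come from the Bochner identity (\ref{bw-1}) for $e:=|\nabla_Au|^2/\ep$, together with a smallness of $L^1$-averages. By the monotonicity at scales between $\ep$ and $\sqrt\ep$ (or directly by Lemma~\ref{lm-sca} if applicable), the averages should satisfy
\[
\rho^{-1}\int_{\B_\rho(y)}e\le C\delta\qquad\mbox{for }\ep\le\rho\le\sqrt\ep/2 .
\]
A cruder route avoids this. Since $1-\delta\le|u|\le1$, the point is to absorb the bad terms in (\ref{bw-1}). The term $\ep^{-3}|[u,\nabla_Au]|^2\ge0$ is good. The term $-\la\ep^{-3}|\nabla_Au|^2(1-|u|^2)$ is bad but is bounded by $\ep^{-2}\delta\,e$. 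The curvature term $\ep^{-1}[\nabla_Au,\nabla_Au]\cdot F$ only sees the longitudinal part of $F$ paired with the transverse part of $\nabla_Au$. The term $\mu^{-1}\ep^{-2}[u,\nabla_Au]\cdot A$ is bounded by $\ep^{-3}|[u,\nabla_Au]|^2/2+C\ep^{-1}|A|^2$.

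This yields a differential inequality of the form
\[
-\Delta e\le C\,(|F|+\delta\ep^{-2})\,e+C\ep^{-1}|A|^2 .
\]
To control $|F|$ I would use Proposition~\ref{pr-bwt} together with (\ref{bw-2}). The transverse curvature $[u,F_A]$ is a subsolution of a Bessel-type operator with mass $\ep^{-1}$, so it decays at scale $\ep$. The longitudinal curvature is controlled through the Coulomb gauge: $\|F\|_{L^{3/2}}$ is small on $\B_{\sqrt\ep}$, because $\int_{\B_{\sqrt\ep}}\ep|F|^2\le\delta\sqrt\ep$ gives $\|F\|_{L^{3/2}(\B_{\sqrt\ep})}\le C\delta^{1/2}$. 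Theorem~\ref{th-Uhl} then gives a controlled gauge with $A^g$ small in $L^3$, which makes the potential $|F|$ subcritical for Moser iteration.

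Moser iteration (mean value inequality for subsolutions) applied to $e$ on balls $\B_\rho(y)$ with $y\in\B_{\sqrt\ep/2}(x_0)$ then gives
\[
e(y)\le C\rho^{-3}\int_{\B_\rho(y)}e+C\rho^{2}\sup(\ep^{-1}|A|^2).
\]
Choosing $\rho=\sqrt\ep/2$ and using the energy hypothesis, $\rho^{-3}\int e\le C\delta\,\ep^{-1/2}\rho^{-2}\cdot\rho\sim\delta\ep^{-1/2}$. This gives $|\nabla_Au|^2=\ep e\le C\ep^{1/2}$, which is even better than claimed. More conservatively, $e\le C\ep^{-3/2}$ yields $|\nabla_Au|\le C\ep^{-1/4}$, exactly (\ref{covinfim}).

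The difficulty, and the likely reason for the weaker exponent $-1/4$, is that the zeroth-order potential $\delta\ep^{-2}$ is critical at scale $\ep$, not at scale $\sqrt\ep$. Therefore the mean value inequality with potential $V\sim\delta\ep^{-2}$ can only be applied on balls of radius $\sim\ep$ (where $\rho^2V\le\delta$). At $\rho=\ep$, however, the energy bound gives only $\rho^{-3}\int_{\B_\ep}e\le\ep^{-3}\delta\sqrt\ep$, i.e.\ $|\nabla_Au|^2\le\delta\ep^{-3/2}$, which is again $|\nabla_Au|\le C\ep^{-3/4}$ — too weak.

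To reach $\ep^{-1/4}$ I would instead exploit the good term: decompose $\nabla_Au$ into its longitudinal part $\nabla|u|$-like and its transverse part $[u,\nabla_Au]$. The transverse part has the coercive mass $\ep^{-3}|[u,\nabla_Au]|^2\gtrsim\ep^{-2}\,e^T$, which dominates $\delta\ep^{-2}e$ for $\delta$ small. The longitudinal part is controlled by the equation for $1-|u|^2$,
\[
-\tfrac12\Delta(1-|u|^2)+\la\ep^{-2}|u|^2(1-|u|^2)=|\nabla_Au|^2 ,
\]
which gives $\int(1-|u|^2)/\ep^2\lesssim\int e$. This removes the critical potential and allows iteration at scale $\sqrt\ep$ with an $L^2$-type energy. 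I expect this absorption step to be the main obstacle.
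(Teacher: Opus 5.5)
Your argument does not close, and the step you leave open at the end is exactly the missing idea. In (\ref{bw-1}) you keep only $\ep^{-3}|[u,\nabla_Au]|^2$ as a good term and drop $\frac{1}{2\ep^3}|\nabla|u|^2|^2$. Since $u\cdot\nabla_Au=\frac12\nabla|u|^2$, that second term is precisely the coercive mass for the longitudinal part of $\nabla_Au$.

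The two nonnegative terms together dominate $c\,\ep^{-3}|u|^2|\nabla_Au|^2$ \emph{pointwise}, for a universal $c>0$. So for $|u|^2\ge 1-\sigma$ with $\sigma$ small, they absorb the bad term $\ep^{-3}(1-|u|^2)|\nabla_Au|^2\le\sigma\ep^{-3}|\nabla_Au|^2$ and still leave a positive mass of order $\ep^{-2}$. The critical potential $\delta\ep^{-2}$ that blocks your mean value inequality at scale $\sqrt\ep$ therefore never appears: a positive zeroth-order term only helps a subsolution.

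Your substitute, the equation for $1-|u|^2$, yields only an integrated bound. It cannot absorb a term pointwise inside a differential inequality, so as written the proof stops there. The intermediate claims are also unsupported. The inequality $e(y)\le C\rho^{-3}\int e+C\rho^2\sup(\ep^{-1}|A|^2)$ at $\rho=\sqrt\ep/2$ does not follow from anything before it, and no bound on $\sup|A|$ is available. Likewise, ``more conservatively $e\le C\ep^{-3/2}$'' is a non sequitur.

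Even with the mass restored, bounding the $A$-term by Young's inequality, as you propose, produces the source $\ep^{-1}|A|^2$. This is only in $L^1$ and gives no pointwise bound. The paper works with $|\nabla_Au|$ itself, using Kato's inequality and the $\sqrt{v^2+t^2}$ device, and obtains (\ref{Bochcovder}): $\Delta|\nabla_Au|\ge\frac{1}{2\ep^2}|\nabla_Au|-2|F_A||\nabla_Au|-\ep^{-1}|A|$. Its source $\ep^{-1}|A|$ is in $L^2$.

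The paper then argues in two steps.
\begin{itemize}
\item It solves $-\Delta w+(\frac{1}{2\ep^2}-2|F_A|)w=\ep^{-1}|A|$ in $\B_{\sqrt\ep}(x_0)$ with $w=0$ on the boundary. The mass confines $w$ to scale $\ep$, and elliptic estimates on $\ep$-balls give $\|w\|_{L^\infty(\B_{\sqrt\ep/2}(x_0))}\lesssim\delta+\ep^{-1/2}\|A\|_{L^2(\B_{\sqrt\ep}(x_0))}\lesssim\sqrt\delta\,\ep^{-1/4}$. This source term, not the potential, is where the exponent $-1/4$ comes from.
\item The remainder $(|\nabla_Au|-w)_+$ is a subsolution of $-\Delta+V$ with $V_-=2|F_A|$. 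The bound $\|F_A\|_{L^{(3/2,1)}(\B_{\sqrt\ep}(x_0))}\le C\ep^{1/4}\|F_A\|_{L^2(\B_{\sqrt\ep}(x_0))}\le C\sqrt\delta$ puts $V_-$ in the Kato class. The Harnack inequality at radius $\sqrt\ep$ then bounds the remainder by $C\ep^{-3/2}\int_{\B_{\sqrt\ep}(x_0)}|\nabla_Au|\le C\sqrt\delta$. This is Lemma~\ref{lm-linfty}.
\end{itemize}

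Your Coulomb-gauge detour gives smallness of $F_A$ in $L^{3/2}$. That is the critical case and does not give $L^\infty$ bounds through Moser iteration. No gauge fixing is needed in any case, since the potential $|F_A|$ is gauge invariant.
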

Before proving Lemma~\ref{lm-impLinf} we shall first prove the following lemma.
\begin{Lm}
\label{lm-linfty}
There exists $\sigma\in(0,1)$ independent of $\epsilon$ such that for any $\B_\rho(x_0)\subset \B_1(0)$ and any smooth critical point $(u,A)$ of $CYMH_\ep^{1,1}$
\[
1\ge|u|^2\ge 1-\sigma\qquad\mbox{ in }B_\rho(x_0)\quad\mbox{ and }\quad \|\sqrt{\ep}\, F_A\|_{L^2(\B_\rho(x_0))}+\|F_A\|_{L^{(3/2,1)}(B_\rho(x_0))}\le \sigma
\]
then
\be
\label{linfcovder}
\begin{array}{rl}
\ds\|\nabla_Au\|_{L^\infty(B_{\rho/2}(x_0))}&\ds\le C\ \rho^{-3}\ \int_{B_\rho(x_0)}|\nabla_Au|\ dx^3+ C\, \ep^{-1/2}\ \int_{\B_\rho(x_0)}\ep\,|F_A|^2+|A|^2\ dx^3\\[5mm]
\ds\qquad&\ds\quad+ C\, \ep^{-1/2}\ \sqrt{\int_{\B_\rho(x_0)}|A|^2\ dx^3}\ .
\end{array}
\ee
where $C$ is universal.
\hfill $\Box$
\end{Lm}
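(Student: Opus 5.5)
We would prove Lemma~\ref{lm-linfty} by working in a controlled Coulomb gauge on $\B_\rho(x_0)$, decomposing $d_Au$ into its longitudinal and transverse parts with respect to $\hat u:=u/|u|$, and treating the two parts separately.

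\emph{Gauge and equation for $\nabla_Au$.} Since $\|F_A\|_{L^{(3/2,1)}(\B_\rho(x_0))}\le\sigma$ and $\sigma<\delta_{3/2}$, Theorem~\ref{th-Uhl} in the weak-$L^{3/2}$ version of \cite{Riv-Tia} provides a gauge $g$ such that $d^\ast A^g=0$ and $\|A^g\|_{L^{(3,1)}(\B_\rho(x_0))}\le C\sigma$. Since $|\nabla_Au|$ is gauge invariant, we may work with $(u^g,A^g)$ and drop the superscript. We then take $d_A$ of the first equation of (\ref{CYMHS}), use (\ref{dAsqu}) and (\ref{cov-ex}), and substitute the second equation of (\ref{CYMHS}) for $d_A^\ast F_A$. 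This gives a covariant Hodge system for $B:=d_Au$:
\[
d_AB=[F_A,u],\qquad d_A^\ast B=\frac{1}{\ep^2}\,u\,(1-|u|^2).
\]
The Bochner identity (\ref{bw-1}) shows that the transverse part $B^T$ carries the massive term $\ep^{-2}|[u,\nabla_Au]|^2$. Here we use $|u|^2\ge1-\sigma$: it gives $|[u,B]|^2\ge 4(1-\sigma)|B^T|^2$, so $B^T$ obeys a Schr\"odinger-type inequality with potential of order $\ep^{-2}$. Its interior $L^\infty$ norm should therefore be bounded by the Riesz-type average $\rho^{-3}\int|\nabla_Au|$ plus source terms. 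The source terms are $2\ep^{-1}[\nabla_Au,\nabla_Au]\cdot F$ and $\mu^{-1}\ep^{-2}[u,\nabla_Au]\cdot A$. We absorb the first using $\|\sqrt\ep F_A\|_{L^2}\le\sigma$ and Young's inequality. For the second we apply Cauchy--Schwarz against $\ep^{-2}|[u,\nabla_Au]|^2$, which yields a contribution $C\ep^{-1/2}\sqrt{\int|A|^2}$ after rescaling.

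\emph{Longitudinal part.} For $B^L=\nabla|u|\,\hat u+|u|\,(\hat u\cdot d_A\hat u)\hat u$, where the second piece vanishes because $\hat u\cdot d_A\hat u=0$, we have $B^L=d|u|\,\hat u$. The modulus satisfies
\[
-\tfrac12\Delta(1-|u|^2)+\ep^{-2}|u|^2(1-|u|^2)=|\nabla_Au|^2 .
\]
Setting $w=1-|u|^2\in[0,\sigma]$, this is again a screened equation $-\Delta w+2\ep^{-2}(1-\sigma)w=2|\nabla_Au|^2$. We would apply the Bessel-potential estimate at scale $\ep$ to bound $\ep^{-2}w$, then interior gradient estimates for $-\Delta w+\ep^{-2}w=f$ to bound $|\nabla w|$ by $\ep\,\|f\|_\infty$ plus the average of $|\nabla w|$. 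Because $\|f\|_\infty\le 2\|\nabla_Au\|_\infty^2$ and $\ep\|\nabla_Au\|_\infty^2$ is controlled by $\sigma\|\nabla_Au\|_\infty$ via the smallness of the energy, this term is absorbed when $\sigma$ is small.

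\emph{Conclusion and main obstacle.} To close, we would run a standard interior bootstrap: apply the estimates on balls $\B_s(x_0)$ with $\rho/2\le s\le\rho$, bound $\sup_{\B_{s}}|\nabla_Au|$ by $(s'-s)^{-3}\int|\nabla_Au|$ plus the sources plus $\tfrac12\sup_{\B_{s'}}|\nabla_Au|$, and finish with the usual iteration lemma. We expect the main obstacle to be the transverse estimate. In particular, the term $[F_{lm},(\nabla_A)_lu]$ appearing in (\ref{X.1}) is only small in $L^{(3/2,1)}$, which is exactly critical for $L^\infty$ bounds. We would first try to handle it by rewriting the system in the Coulomb gauge as $\Delta B=\mathrm{div}(A^g\otimes B)+\dots$ and invoking Lorentz-space elliptic estimates, $L^{(3/2,1)}\to W^{1,(3,1)}\subset L^\infty$. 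This is the same technique as in Lemma~\ref{lm-linfty-b}, and it is this step where the smallness of $\sigma$ is genuinely needed.
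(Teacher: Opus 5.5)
\textbf{Gap.} The splitting $d_Au=B^T+B^L$ is where the argument breaks, in both halves.

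In the longitudinal step, absorbing $\ep\|f\|_\infty\le 2\ep\|\nabla_Au\|_{L^\infty}^2$ requires $\ep\|\nabla_Au\|_{L^\infty}\lesssim\sigma$. Nothing in the hypotheses gives this. They only provide $0\le 1-|u|^2\le\sigma$ and smallness of $F_A$. The only pointwise bound available in the paper, (\ref{linf}), is of size $C(M,\phi)/\ep$, with a constant that is neither small nor universal. From $1-|u|^2\le\sigma$ and the modulus equation you control only averages of $\ep^2|\nabla_Au|^2$ over $\ep$-balls. Upgrading that to a pointwise bound is precisely what the lemma asserts, so this step is circular.

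The transverse step does not close either. (\ref{bw-1}) is an identity for the full $|\nabla_Au|^2$. Its unfavourable terms, $-\ep^{-3}(1-|u|^2)|\nabla_Au|^2$ and $2\ep^{-1}\sum_{l,m}[(\nabla_A)_lu,(\nabla_A)_mu]\cdot F_{lm}$, involve the longitudinal part as well. A mass term acting on $B^T$ alone therefore cannot dominate them.

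Two further steps fail as written. First, the cubic curvature term cannot be absorbed pointwise using Young's inequality and $\|\sqrt{\ep}F_A\|_{L^2}$. Second, a Cauchy--Schwarz of the $A$-term against the mass term leaves a quadratic source $|A|^2$. Its screened Newtonian potential is not pointwise bounded by $\int_{\B_\rho(x_0)}|A|^2$.

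\textbf{What the paper does instead.} The missing observation is that (\ref{bw-1}) already contains a mass term for the longitudinal part, namely $\frac{1}{2\ep^3}|\nabla|u|^2|^2=\frac{2}{\ep^3}|u\cdot\nabla_Au|^2$. Hence $|[u,\nabla_Au]|^2+\frac12|\nabla|u|^2|^2\ge 2|u|^2|\nabla_Au|^2\ge 2(1-\sigma)|\nabla_Au|^2$. Combining this with Kato's inequality and the $\sqrt{v^2+t^2}$ device gives the scalar inequality $\Delta|\nabla_Au|\ge\frac{1}{2\ep^2}|\nabla_Au|-2|F_A|\,|\nabla_Au|-\ep^{-1}|A|$. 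The device divides by $|\nabla_Au|$, which keeps the $A$-source linear. No splitting and no gauge fixing are needed.

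Your gauge fixing is also delicate. The CYMH system is not gauge invariant: after the change of gauge the source is $g^{-1}Ag$, not $A^g$, and (\ref{linfcovder}) involves the original $A$. This is harmless pointwise, but you cannot simply drop the superscript.

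Next, the paper subtracts the solution $w$ of $-\Delta w+\frac{1}{2\ep^2}w-2|F_A|w=\ep^{-1}|A|$ in $\B_\rho(x_0)$, with $w=0$ on $\p\B_\rho(x_0)$. This $w$ is non-negative, and it is bounded on $\B_{\rho/2}(x_0)$ by an energy estimate followed by rescaling at scale $\ep$. This is where $\|\sqrt{\ep}F_A\|_{L^2}\le\sigma$ enters.

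Finally, the Aizenman--Simon Harnack inequality for $-\Delta+V$, with $V=\frac{1}{2\ep^2}-2|F_A|$, is applied to $(|\nabla_Au|-w)_+$. The negative part of $V$ is in the Kato class because $\sup_x\int_{\B_\rho(x_0)}|F_A(y)|\,|x-y|^{-1}dy\le C\|F_A\|_{L^{(3/2,1)}}\le C\sigma$. This is the correct way to use the critical $L^{(3/2,1)}$ hypothesis you singled out. It yields the scale-invariant term $\rho^{-3}\int_{\B_\rho(x_0)}|\nabla_Au|$ for every $\rho$, with the $\ep^{-2}$ term used for its favourable sign.

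By contrast, the perturbative Lorentz-space scheme of Lemma~\ref{lm-linfty-b} that you propose works at scale $\ep$. It only gives bounds of order $1/\ep$, with energy-dependent constants.
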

\noindent{\bf Proof of Lemma~\ref{lm-linfty}.} Recall for (\ref{bw-1}) the Bochner inequality
\be
\label{bw-01}
\begin{array}{l}
\ds\,\Delta \frac{|\nabla_A u|^2}{2\,\ep}=\frac{|(\nabla_A)(\nabla_A u)|^2}{\ep}+\frac{1}{\ep^3} |[u,(\nabla_A)u]|^2\\[5mm]
\ds+ \frac{2}{\ep}\sum_{l,m=1}^3[(\nabla_A)_lu,(\nabla_A)_mu]\cdot F_{lm}+\frac{1}{\ep^2}\,\sum_{m=1}^3[u,(\nabla_A)_mu]\cdot A_m\\[5mm]
\ds-\,\frac{1}{\ep^3}\, |\nabla_A u|^2\,(1-|u|^2)+\frac{1}{2\ep^3}\,|\nabla|u|^2|^2\ ,
\end{array}
\ee
We assume that on $B_\rho(x_0)$ there holds $|u|^2\ge 1-\sigma$ where $\sigma>0$ will be fixed later on. Let $t>0$ we have for any $v\ge 0$,  $v\in W^{1,2}$ and $v^2\in C^\infty$ 
\be
\label{sq-0}
\begin{array}{l}
\ds\Delta\sqrt{v^2+t^2}=\mbox{div}\lf[ \frac{v}{\sqrt{v^2+t^2}}\nabla v\rg]= \frac{v\,\Delta v}{\sqrt{v^2+t^2}}+\frac{|\nabla v|^2}{\sqrt{v^2+t^2}}-\frac{v^2\,|\nabla v|^2}{(v^2+t^2)^{3/2}}\\[5mm]
\ds=\frac{v\,\Delta v}{\sqrt{v^2+t^2}}+\frac{v^2\,|\nabla v|^2}{(v^2+t^2)^{3/2}}\ge\frac{1}{2}\frac{\Delta v^2-2\,|\nabla v|^2}{\sqrt{v^2+t^2}}
\end{array}
\ee
where $v\,\Delta v$ is a formal notation for $2^{-1}\,(\Delta v^2-2\,|\nabla v|^2)$
By Kato inequality we have 
\[
|(\nabla_A)(\nabla_A u)|^2\ge |\nabla|\nabla_Au||^2
\]
We deduce
\[
\begin{array}{l}
\ds\Delta\sqrt{|\nabla_Au|^2+t^2}\ge \frac{1}{2} \frac{\Delta|\nabla_A u|^2-2\,|\nabla|\nabla_Au||^2}{\sqrt{|\nabla_Au|^2+t^2}}\\[5mm]
\ds\ge \frac{1}{\ep^2} \frac{|[u,(\nabla_A)u]|^2}{\sqrt{|\nabla_Au|^2+t^2}}+\frac{1}{2\ep^2}\,\frac{|\nabla|u|^2|^2}{\sqrt{|\nabla_Au|^2+t^2}}-\,\frac{1}{\ep^2}\, \frac{|\nabla_A u|^2}{\sqrt{|\nabla_Au|^2+t^2}}\,(1-|u|^2)\\[5mm]
\ds +2\,\sum_{l,m=1}^3\frac{ [(\nabla_A)_lu,(\nabla_A)_mu]   }{\sqrt{|\nabla_Au|^2+t^2}}\cdot F_{lm}+\frac{1}{\ep}\,\sum_{m=1}^3\frac{[u,(\nabla_A)_mu]}{\sqrt{|\nabla_Au|^2+t^2}}\cdot A_m\
\end{array}
\]
Observe that
\[
|u|^2\,|\nabla_A u|^2=|[u,(\nabla_A)u]|^2+|u\cdot\nabla_Au|^2=|[u,(\nabla_A)u]|^2+4^{-1}\,|\nabla|u|^2|^2
\]
Hence
\[
|[u,(\nabla_A)u]|^2+\frac{1}{2}\,|\nabla|u|^2|^2\ge |u|^2\,|\nabla_A u|^2\ge (1-\sigma)\, |\nabla_Au|^2
\]
We have also
\[
 (1-|u|^2)\,|\nabla_Au|^2\le \sigma\, |\nabla_Au|^2
\]
We shall restrict to $1/6>\sigma>0$  in such a way that $(1-\sigma)\,\ge 2\,\sigma+2^{-1}$. Letting $t$ go to zero we obtain
\be
\label{Bochcovder}
\begin{array}{l}
\ds\Delta|\nabla_Au|\ge \frac{1}{2\,\ep^2}\, |\nabla_Au|-2\,|\nabla_A u|\, |F_A|-\frac{1}{\ep}\,|A|\ .
\end{array}
\ee
We claim that for  for $\|F_A\|_{L^{(3/2,\infty)}(B_\rho(x_0))}$ small enough  there exists a unique $w$ solving
\be
\label{eq-w}
\lf\{
\begin{array}{l}
\ds -\Delta w+\frac{1}{2\,\ep^2}\, w-2\,w\,|F_A|=\frac{1}{\ep}\,|A|\qquad\mbox{ in }\B_\rho(x_0)\\[5mm]
\ds w=0 \qquad\mbox{ on }\p \B_\rho(x_0)
\end{array}
\rg.
\ee
We have indeed the following a priori estimate
\[
\begin{array}{l}
\ds\int_{\B_\rho(x_0)}|\nabla w|^2+\frac{1}{2\ep^2}\, w^2\ dx^3\le 2\,\int_{\B_\rho(x_0)}|w|^2\,|F_A|\ dx^3+\int_{\B_\rho(x_0)}\frac{|w|}{\ep}\,|A|\ dx^3\\[5mm]
\ds\quad\le 2\, \||w|^2\|_{L^{(3,1)}(\B_\rho(x_0))}\ \|F_A\|_{L^{(3/2,\infty)}(\B_\rho(x_0))}+\lf(\frac{1}{\ep^2}\, \int_{\B_\rho(x_0)}\,w^2\ dx^3\rg)^{1/2}\, \|A\|_{L^2(\B_\rho(0))}
\end{array}
\]
We have
\[
\||w|^2\|_{L^{(3,1)}(B_\rho(x_0))}\le \|w\|^2_{L^{(6,2)}(\B_\rho(x_0))}\le C\ \int_{\B_\rho(x_0)}|\nabla w|^2\ dx^3
\]
Hence
\[
\begin{array}{l}
\ds\int_{\B_\rho(x_0)}|\nabla w|^2+\frac{1}{2\ep^2}\, w^2\ dx^3\le C\,   \int_{\B_\rho(x_0)}|\nabla w|^2\ dx^3\, \|F_A\|_{L^{(3/2,\infty)}(\B_\rho(0))}\\[5mm]
\ds +\int_{\B_\rho(x_0)}\frac{1}{4\,\ep^2}\, w^2\ dx^3+C\,\int_{\B_\rho(x_0)}|A|^2\ dx^3\\[5mm]
\end{array}
\]
We deduce for $\|F_A\|_{L^{(3/2,\infty)}(\B_\rho(0))}$ small enough the existence of $C>0$ such that 
\be
\label{bdnab}
\int_{\B_\rho(x_0)}|\nabla w|^2+\frac{1}{4\ep^2}\, w^2\ dx^3\le C\ \int_{\B_\rho(x_0)}|A|^2\ dx^3\ .
\ee
Let $x_1\in \B_\rho(x_0)$ such that $\B_{2\ep}(x_1)\subset \B_\rho(x_0)$ and denote $\ti{w}( y):=w(\ep\,y+x_1)$. We have
\[
-\Delta_y\ti{w}+ 2^{-1}\ \ti{w}=-\,\ep^2\,\Delta_xw(\ep\,y)+2^{-1}\ w(\ep\,y)=2\, w(\ep\,y)\,\ep^2\,|F_A|(\ep\,y)+\ep\,|A|(\ep\,y)
\]
We have by classical elliptic estimates 
\[
\begin{array}{l}
\ds\|\ti{w}\|_{L^\infty(\B_{1/2}(0))}\le C\,\|-\Delta_y\ti{w}+2^{-1}\ \ti{w}\|_{L^{(3/2,1)}(\B_{1}(0))}+C\,\|\ti{w}\|_{L^{2}(\B_{1}(0))}\\[5mm]
\ds\le C\,\ep^2\, \|w(\ep\,y)\|_{L^{(6,2)}(\B_1(0))}\ \|F_A(\ep\, y)\|_{L^2((\B_1(0))}+C\,\ep\, \|A(\ep\,y)\|_{L^2(\B_1(0))}+C\,\|\ti{w}\|_{L^{2}(\B_{1}(0))}\\[5mm]
\ds\le C\, \ep^2\, \ep^{-1/2}\, \|w\|_{L^{(6,2)}(\B_\ep(x_1))}\ \ep^{-2}\ \|\sqrt{\ep}\,F_A\|_{L^2(\B_\ep(x_1))}+C\,\ep^{-1/2}\,\|A\|_{L^2(\B_\ep(x_1))}+C\,\ep^{-1/2}\, \|\ep^{-1}\,w\|_{L^2(\B_\ep(x_1))}\\[5mm]
\ds\le C\,\ep^{-1/2}\, \lf(\|\nabla w\|_{L^2(B_\ep(x_1))}+\ep^{-1}\,\|w\|_{L^2(B_{2\ep}(x_1))}\rg)\ \|\sqrt{\ep}\,F_A\|_{L^2(\B_\ep(x_1))}+\ep^{-1/2}\,\|A\|_{L^2(\B_\rho(x_0))}\\[5mm]
\ds\le C\,\ep^{-1/2}\, \|A\|_{L^2(\B_\rho(x_0))}\ \|\sqrt{\ep}\,F_A\|_{L^2(\B_\ep(x_1))}+\ep^{-1/2}\,\|A\|_{L^2(\B_\rho(x_0))}
\end{array}
\]
This imples
\[
\ds |w(x_1)|\le C\, \ep^{-1/2}\ \int_{\B_\rho(x_0)}\ep\,|F_A|^2+|A|^2\ dx^3+ C\, \ep^{-1/2}\ \sqrt{\int_{\B_\rho(x_0)}|A|^2\ dx^3}\ .
\]
In particular
\be
\label{wlinf}
\|w\|_{L^\infty(\B_{\rho/2}(x_0))}\le  C\, \ep^{-1/2}\ \int_{\B_\rho(x_0)}\ep\,|F_A|^2+|A|^2\ dx^3+ C\, \ep^{-1/2}\ \sqrt{\int_{\B_\rho(x_0)}|A|^2\ dx^3}\ .
\ee
Let $w_-:=\min\{w,0\}$ multiplying (\ref{eq-w}) by $w_-$ and integrating by parts gives
\[
\begin{array}{l}
\ds\int_{\B_\rho(x_0)}|\nabla w_-|^2+\frac{1}{2\ep^2}\, w_-^2\ dx^3\le \sqrt{\ep}^{-1}\,\||w_-|^2\|_{L^2(\B_\rho(x_0))}\   \|\sqrt{\ep}\, F_A\|_{L^2(\B_\rho(x_0))}
\end{array}
\]
Littlewood inequality combined with Sobolev inequality is giving
\[
\begin{array}{l}
\ds\||w_-|^2\|_{L^2(\B_\rho(x_0))}=\|w_-\|^2_{L^4(\B_\rho(x_0))}\le \|w_-\|^{1/2}_{L^2(\B_\rho(x_0))}\ \|w_-\|^{3/2}_{L^6(\B_\rho(x_0))}\\[5mm]
\ds\le  C\, \|w_-\|^{1/2}_{L^2(\B_\rho(x_0))}\ \|\nabla w_-\|_{L^2(\B_\rho(x_0))}^{3/2}
\end{array}
\]
Thus
\[
\begin{array}{l}
\ds\int_{\B_\rho(x_0)}|\nabla w_-|^2+\frac{1}{2\ep^2}\, w_-^2\ dx^3\le \sqrt{\ep}^{-1}\,\||w_-|^2\|_{L^2(\B_\rho(x_0))}\   \|\sqrt{\ep}\, F_A\|_{L^2(\B_\rho(x_0))}\\[5mm]
\le C\, \|\ep^{-1}\,w_-\|^{1/2}_{L^2(\B_\rho(x_0))}\ \|\nabla w_-\|_{L^2(\B_\rho(x_0))}^{3/2}\ \|\sqrt{\ep}\, F_A\|_{L^2(\B_\rho(x_0))}\\[5mm]
\ds\le \int_{\B_\rho(x_0)}\frac{1}{4\ep^2}\, w_-^2\ dx^3+C\, \|\nabla w_-\|_{L^2(\B_\rho(x_0))}^{2}\ \|\sqrt{\ep}\, F_A\|^{4/3}_{L^2(\B_\rho(x_0))}
\end{array}
\]

For $\|\sqrt{\ep}\, F_A\|_{L^2(\B_\rho(x_0))}$ small enough we obtain $w_-\equiv 0$. Thus $w\ge 0$.
We have
\[
-\Delta (|\nabla_Au|-w)+\frac{1}{2\,\ep^2}\, (|\nabla_Au|-w)\le 2\,(|\nabla_Au|-w)\,|F_A|
\]
Let $v:=|\nabla_Au|-w$  there holds
\[
-\Delta_y{v}+\lf[\frac{1}{2\,\ep^2}-2\,|F_A|\rg]\ {v}\le 0
\]
Let $\chi_s(t)\in C^\infty_0({\R},{\R}_+)$ such that $\chi\equiv 1$ on $(s,+\infty)$ and $\chi\equiv 0$ on $(-\infty,-s)$ and such that $\chi'\ge 0$ and $\chi'\rightharpoonup 2^{-1}(sign(t)+1)$ as $s\rightarrow 0$. There holds
\[
-\chi_s\circ v\,\Delta v=-\mbox{div}(\chi_s\circ v\,\nabla v)+\chi'_s(v)\,|\nabla v|^2=-\Delta\lf(\int_0^{v(x)}\chi_s(t)\ dt\rg)+\chi'_s(v)\,|\nabla v|^2
\]
Hence we have
\[
-\Delta\lf(\int_0^{v(x)}\chi_s(t)\ dt\rg)+\chi'_s(v)\,|\nabla v|^2+\lf[\frac{1}{2\,\ep^2}-2\,|F_A|\rg]\ \chi_s\circ v\ {v}\le 0\ .
\]
Since $\chi'\ge 0$ we deduce
\[
-\Delta\lf(\int_0^{v(x)}\chi_s(t)\ dt\rg)+\lf[\frac{1}{2\,\ep^2}-2\,|F_A|\rg]\ \chi_s\circ v\ {v}\ .
\]
Passing to the limit as $s\rightarrow 0$ we finally get
\[
-\Delta v_++\lf[\frac{1}{2\,\ep^2}-2\,|F_A|\rg]\ v_+\le 0\ .
\]
Let
\[
V:=\lf[\frac{1}{2\,\ep^2}-2\,|F_A|\rg]\
\]
By assumption we have
\[
\|V_-\|_{L^{(3/2,1)}(B_\rho(x_0))}\le\sigma
\]
This gives that the negative part of the potential $V$ in the Schr\"odinger operator $-\Delta+V$ is in the {\it Kato Class} since
\[
\sup_{x\in \B_\rho(x_0)}\int_{\B_\rho(x_0)}\frac{|V_-(y)|}{|x-y|}\ dy^3\le \sup_{x\in \B_\rho(x_0)}\lf\|\frac{1}{|x-y|}\rg\|_{L^{(3,\infty)}\B_\rho(x_0)}\ \|F_A\|_{L^{(3/2,1)}(B_\rho(x_0))}\le\,C\,\sigma
\]
Using the main result in \cite{AiS} we obtain that the Harnack inequality holds and we have
\[
\|v_+\|_{L^\infty(B_{\rho/2}(x_0)}\le C\, \rho^{-3}\ \int_{B_\rho(x_0)}v_+\ dx^3
\]
Hence
\[
\|(|\nabla_Au|-w)_+\|_{L^\infty(B_{\rho/2}(x_0)}\le C\ \rho^{-3}\ \int_{B_\rho(x_0)}(|\nabla_Au|-w)_+\ dx^3\le C\ \rho^{-3}\ \int_{B_\rho(x_0)}|\nabla_Au|\ dx^3
\]
Combining this inequality with (\ref{wlinf}) we obtain
\be
\label{linka}
\begin{array}{rl}
\ds\|\nabla_Au\|_{L^\infty(B_{\rho/2}(x_0))}&\ds\le C\ \rho^{-3}\ \int_{B_\rho(x_0)}|\nabla_Au|\ dx^3+ C\, \ep^{-1/2}\ \int_{\B_\rho(x_0)}\ep\,|F_A|^2+|A|^2\ dx^3\\[5mm]
\ds\qquad&\ds\quad+ C\, \ep^{-1/2}\ \sqrt{\int_{\B_\rho(x_0)}|A|^2\ dx^3}\ .
\end{array}
\ee
This is (\ref{linfcovder}) and we have proved the  Lemma~\ref{lm-linfty}.\hfill $\Box$

\medskip

\noindent{\bf Proof of Lemma~\ref{lm-impLinf}. }
Assume 
\[
CYMH\lf({u},{A},\B_{\sqrt{\ep}}(x_0)\rg)\le \sqrt{\ep}\,\delta\ .
\]
Then in particular
\be
\label{cursma}
\frac{1}{\sqrt{\ep}}\int_{\B_{\sqrt{\ep}}(x_0)}\ep\,|F_A|^2\ dx^3<\delta\ .
\ee
We recall that the Lorentz norm $L^{(3/2,1)}$ on the ball $\B_r(x_0)$ of a function $f$ is equivalent to the following quasi-norm
\[
\int_0^{+\infty}\lf| \lf\{ x\in \B_r(x_0)\ ; \ |f(x)|>t  \rg\} \rg|^{2/3}\ dt
\]
It can be bonded by the $L^2$ norm in the following way. Let $M>0$ to be fixed later, there holds
\[
\begin{array}{l}
\ds \int_0^{+\infty}\lf| \lf\{ x\in \B_r(x_0)\ ; \ |f(x)|>t  \rg\} \rg|^{2/3}\ dt\\[5mm]
\ds\le \int_0^{M}\lf| \lf\{ x\in \B_r(x_0)\ ; \ |f(x)|>t  \rg\} \rg|^{2/3}\ dt+\int_M^{+\infty}\lf| \lf\{ x\in \B_r(x_0)\ ; \ |f(x)|>t  \rg\} \rg|^{2/3}\ dt\\[5mm]
\ds\le M\, \lf(\frac{4\pi}{3}\rg)^{2/3}r^2+ \lf(  \int_M^{+\infty}\frac{dt}{t^2} \rg)^{1/3}\, \lf(\int_M^{+\infty}t\ \lf| \lf\{ x\in \B_r(x_0)\ ; \ |f(x)|>t  \rg\} \rg|\ dt\rg)^{2/3}\\[5mm]
\ds\le C\, M\, r^2+ C\, M^{-1/3}\, \|f\|^{4/3}_{L^2(\B_r(x_0))}
\end{array}
\]
Choosing $M:= r^{-3/2}\,\|f\|_{L^2(\B_r(x_0))}$ one obtains
\be
\label{lore}
\|f\|_{L^{({3/2},1)}(\B_r(x_0))}\le C\, \sqrt{r}\, \|f\|_{L^2(\B_r(x_0))}\ .
\ee
Applying this inequality to $f:=|F_A|$ on the ball $B_r(x_0)$  we obtain
\be
\label{lorcur}
\|F_A\|_{L^{({3/2},1)}(\B_{\sqrt{\ep}}(x_0))}\le\ C\ \ep^{1/4}\, \|F_A\|_{L^{2}(\B_{\sqrt{\ep}}(x_0))}\le C\, \sqrt{\delta}\ .
\ee
Because of (\ref{cursma}) and (\ref{lorcur}), for $\delta$ small enough such that $C\, \sqrt{\delta}<\sigma$ where $\sigma>0$ is given by Lemma~\ref{lm-linfty}, and assuming $|u|\ge1-\sigma$ on $\B_{\sqrt{\ep}}(x_0)$ we have
\[
\begin{array}{l}
\ds\|\nabla_Au\|_{L^{\infty}(\B_{\sqrt{\ep}/2}(x_0))}\le C\ \ep^{-3/2}\,\int_{\B_{\sqrt{\ep}}(x_0)}|\nabla_Au|\ dx^3+C\,\sqrt{\delta}\ \ep^{-1/4}\\[5mm]
\ds\qquad\le C\, \ep^{-1/4}\,\lf(\int_{\B_{\sqrt{\ep}}(x_0)}\frac{|\nabla_Au|^2}{\ep}\ dx^3\rg)^{1/2}+C\,\sqrt{\delta}\ \ep^{-1/4}\le C\,\sqrt{\delta}\ \ep^{-1/4}
\end{array}
\]
This concludes the proof of Lemma~\ref{lm-impLinf}.\hfill $\Box$

\subsection{Monotonicity of the $L^2-$density of the normalised curvature $\sqrt{\ep}\,F_A$ under smallness assumptions.}

\begin{Lm}
\label{lm-longcur}{\bf [$L^2-$Density Control of $\sqrt{\ep}\,F_A$  for radii $r<\sqrt{\ep}$]}
There exists $\delta_0>0$ such that, if $(u,A)$ is a smooth critical point of $CYMH_{\ep}^{1,1}$ in $\B_1(0)$ and if on $\B_{\sqrt{\ep}}(x_0)\subset \B_1(0)$  then for any $0<\delta<\delta_0$ if
\[
\frac{1}{2\sqrt{\ep}}\int_{\B_{\sqrt{\ep}}(x_0)} |{A}|^2+\frac{|d_{{A}} {u}|^2}{\ep}+\ep\,|F_{{A}}|^2\ dx^3+\frac{1}{2\,\ep^3}(1-|{u}|^2)^2\ dx^3\le \delta
\]
and
\[
1\ge |u|\ge 1-\delta_0
\]
then there exists $C>0$ universal such that
\be
\label{transcur}
\|\sqrt{\ep}\,F_A\|_{L^{(3,\infty)}(\B_{\sqrt{\ep}/2}(x_0))}\le C\, \sqrt{\delta}\ .
\ee
In particular there holds 
\be
\label{montrans}
\forall \,r<\sqrt{\ep}\qquad\frac{1}{r}\int_{B_{r}(x_0)}\ep\,|F_A|^2\ dx^3\le    C\, \delta\ .
\ee
\hfill $\Box$
\end{Lm}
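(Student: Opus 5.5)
We rescale to unit size and then split the curvature into its transversal and longitudinal parts, controlling each separately. Set $y=(x-x_0)/\sqrt{\ep}$ and view the problem on $\B_1(0)$. The quantity $\|\sqrt{\ep}F_A\|_{L^{(3,\infty)}}$ is invariant under this rescaling, since $L^{(3,\infty)}$ of a 2-form scales like $r^{-1}$ times $r$. The second claim (\ref{montrans}) then follows from the first by H\"older in Lorentz spaces:
\[
\frac1r\int_{\B_r}\ep|F_A|^2\le \frac1r\,\|\sqrt{\ep}F_A\|^2_{L^{(3,\infty)}}\,\|1\|_{L^{(3,1)}(\B_r)}\le C\,\delta .
\]
So everything reduces to the weak-$L^3$ bound (\ref{transcur}). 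Following the decomposition of Section II, we write $F_A=(F_A\cdot \hat u)\,\hat u+F_A^T$ with $\hat u=u/|u|$. This is legitimate because $|u|\ge 1-\delta_0$.

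\textbf{Transversal part.} We use Proposition~\ref{pr-bwt}, rescaled with $\mu=1$. It gives a Schr\"odinger inequality for $f:=|[u,F_A]|$ whose mass term is $\frac{1}{2\ep^2}$:
\[
-\mbox{div}(|u|^{-4}\nabla f)+\frac{1}{2\ep^2}f\le 8|\nabla_Au|^2|F_A|+4|\nabla_Au||\nabla(F_A\cdot u)|+4\ep^{-2}|\nabla_Au|^2+2\ep^{-1}|A|^2+4|F_A|^2 .
\]
Lemma~\ref{lm-impLinf} gives $|\nabla_Au|\le C\ep^{-1/4}$ on $\B_{\sqrt\ep/2}(x_0)$. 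We then proceed as in the proof of Lemma~\ref{lm-linfty}. First compare $f$ with the solution of the linear problem carrying the right-hand side. Since the kernel of $-\Delta+\frac{1}{2\ep^2}$ decays exponentially on scale $\ep$, $f$ is pointwise bounded by a Bessel potential of the right-hand side at scale $\ep$. The term $\ep^{-2}|\nabla_Au|^2$ produces $f\lesssim |\nabla_Au|^2$ up to averages, and the other terms are handled similarly. The one delicate term is $|\nabla_Au||\nabla(F_A\cdot u)|$. We integrate it by parts against the Bessel kernel, moving the derivative onto the kernel, which costs a factor $\ep^{-1}$; we then need $F_A\cdot u$ bounded in $L^{(3,\infty)}$, which is the longitudinal estimate below. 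The result is $\sqrt\ep\,\|F_A^T\|_{L^{(3,\infty)}(\B_{\sqrt\ep/2})}\le C\sqrt\delta$.

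\textbf{Longitudinal part.} Write $\varphi:=F_A\cdot\hat u$, a real 2-form. By Bianchi (\ref{bian}) and the second equation of (\ref{CYMHS}),
\[
d\varphi=F_A\,\dot\wedge\, d_A\hat u,\qquad d^\ast\varphi=d^\ast_AF_A\cdot\hat u+\ldots=-\ep^{-2}[u,d_Au]\cdot\hat u-\ep^{-1}A\cdot\hat u+F_A\res d_A\hat u .
\]
Here $[u,d_Au]\cdot\hat u=0$, so the singular term $\ep^{-2}[u,d_Au]$ drops out of the longitudinal equation. This gives a first-order Hodge system
\[
d\varphi=O(|F_A||\nabla_Au|),\qquad d^\ast\varphi=-\ep^{-1}A\cdot\hat u+O(|F_A||\nabla_Au|)
\]
on $\B_{\sqrt\ep}(x_0)$. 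We estimate the pieces of $\sqrt\ep\,\varphi$ as follows.
\begin{itemize}
\item The source term: $\ep^{-1/2}(A\cdot\hat u)$ is controlled by the Riesz potential $I_1$, which maps $L^{(3/2,\infty)}$ into $L^{(3,\infty)}$. In rescaled variables the source has $L^2(\B_1)$ norm at most $\sqrt\delta$ by hypothesis, which suffices on the unit ball.
\item The quadratic terms: $\sqrt\ep\,|F_A||\nabla_Au|\le C\ep^{1/4}|F_A|$ gives, after rescaling, a small multiple of $\|\sqrt\ep F_A\|_{L^{(3,\infty)}}$. It can be absorbed once combined with the transversal estimate.
\item The harmonic part: bounded by the $L^2$ mean, i.e.\ by $\sqrt\delta$.
\end{itemize}

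\textbf{Closing the argument and main obstacle.} Adding the two parts gives $X\le C\sqrt\delta+C\ep^{1/4}X$, where $X$ is the $L^{(3,\infty)}$ norm on a slightly larger ball. A standard shrinking-balls argument closes this. The main obstacle is the coupling: the transversal estimate needs control of $\nabla(F_A\cdot u)$, and the longitudinal system contains $F_A^T$ inside the quadratic terms. We would handle this by running both estimates simultaneously on a family of nested balls. To absorb constants, we rely on the smallness $\ep^{1/4}$ coming from Lemma~\ref{lm-impLinf}, rather than on $\delta$ alone.
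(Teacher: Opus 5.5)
\textbf{The gap.} The step that fails is your treatment of the term $|\nabla_A u|\,|\nabla(F_A\cdot u)|$ in the transversal inequality (\ref{bwt}).

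You cannot integrate this term by parts against the Bessel kernel. It is a product of absolute values, i.e.\ a nonnegative function with no divergence structure, so there is no derivative to move onto the kernel. Returning to the signed identity before absolute values were taken does not help either: the derivative would then fall on $\nabla_A\nabla_A u$ and $\nabla_A F_A$, which you do not control.

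So an $L^{(3,\infty)}$ bound on $F_A\cdot u$ is not what this term needs. You need a bound on $\nabla(F_A\cdot\hat u)$ itself.

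\textbf{How to repair it.} Your own longitudinal Hodge system supplies this bound. Rescale by $x=x_0+\sqrt\ep\,y$, so that $\varphi$ becomes $\tilde\varphi=\ep\,\varphi$ and $A$ becomes $\tilde A=\sqrt\ep\,A$. Then $d\tilde\varphi$ and $d^\ast\tilde\varphi$ lie in $L^2$ with norm at most $C\sqrt\delta$. This uses $\|\tilde A\|^2_{L^2(\B_1)}\le 2\delta$, $\|\tilde F_A\|^2_{L^2(\B_1)}\le 2\delta$, and $\sqrt\ep\,|\nabla_A u|\le C\ep^{1/4}$ from Lemma~\ref{lm-impLinf}.

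Interior Calder\'on--Zygmund estimates then give $\|\nabla(F_A\cdot\hat u)\|_{L^1(\B_{\sqrt\ep/4}(x_0))}\le C\sqrt\delta$. The offending term becomes an $L^1$ source of size $C\delta\,\ep^{-1/4}$, which contributes only $C\delta\,\ep^{1/4}$ to $\|\sqrt\ep F_A\|_{L^{(3,\infty)}}$.

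With this repair, plus routine handling of the coefficient $|u|^{-4}$ and of the boundary layer, your scheme closes. The coupling you worry about is not actually there. The quadratic terms of the longitudinal system are already bounded by $\|\sqrt\ep F_A\|_{L^2}\,\|d_A\hat u\|_{L^6}\le C\ep^{1/4}\delta$, so no absorption is needed.

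\textbf{The paper's route.} The paper's proof is much shorter and avoids the splitting altogether. It applies the full Bochner identity (\ref{bw-2}) to $|F_A|^2$:
\begin{itemize}
\item the terms $\ep^{-1}|[F_A,u]|^2$ and $2\mu^{-1}|F_A|^2$ have the favourable sign and are discarded;
\item every remaining term is bounded by $|F_A|$ times $f:=\frac{C}{\ep}(\ep^{-1}|\nabla_A u|^2+|A|^2+\ep|F_A|^2)$;
\item Kato's inequality then gives $-\Delta|F_A|\le f$.
\end{itemize}
The hypothesis directly yields $\|f\|_{L^1(\B_{\sqrt\ep}(x_0))}\le C\delta/\sqrt\ep$. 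Hence the Dirichlet potential $w$ of $f$ satisfies $\|w\|_{L^{(3,\infty)}}\le C\delta/\sqrt\ep$. The mean-value inequality for the subharmonic function $|F_A|-w$ gives $|F_A|\le w+C\ep^{-1}\sqrt\delta$ on $\B_{\sqrt\ep/2}(x_0)$, which is the claimed bound.

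This argument uses no lower bound on $|u|$, no $L^\infty$ bound on $\nabla_A u$, no Hodge system and no Bessel kernel. The machinery you use — (\ref{bwt}) with its mass term $\frac{1}{2\ep^2}$ and the Hodge system for $F_A\cdot\hat u$ — is what the paper deploys later, in Lemma~\ref{lm-deltareg}, where $\ep$-independent bounds on $F_A$ itself are required.
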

\noindent{\bf Proof of Lemma~\ref{lm-longcur}}.
Assuming
From the Bochner-Weitzenb\"ock formula (\ref{bw-2})we obtain in $\B_{\sqrt{\ep}}(x_0)$, 
\[
\begin{array}{l}
\ds-\Delta\,|F_A|^2+2\, |\nabla_A F_A|^2\le  \frac{C}{\ep}\,\lf( \frac{|\nabla_A u|^2}{\ep}+\frac{|A|^2}{2}+\ep |F_A|^2\rg)\ |F_A|
\end{array}
\]
This implies using the identity (\ref{sq-0}) for $v=|F_A|$ this time and Kato inequality $|\nabla_AF_A|\ge |\nabla|F_A||$
\be
\label{bo-cu}
-\Delta\,|F_A|\le  \frac{C}{\ep}\,\lf( \frac{|\nabla_A u|^2}{\ep}+\frac{|A|^2}{2}+\ep |F_A|^2\rg)\ .
\ee
Let
\[
f:= \frac{C}{\ep}\,\lf( \frac{|\nabla_A u|^2}{\ep}+\frac{|A|^2}{2}+\ep |F_A|^2\rg)
\]
and observe 
\[
\int_{\B_{\sqrt{\ep}}(x_0)}f(x)\ dx^3=\frac{C}{\ep}\,\lf( \frac{|\nabla_A u|^2}{\ep}+\frac{|A|^2}{2}+\ep |F_A|^2\rg)\le C\,\frac{\delta}{\sqrt{\ep}}
\]
Let $w$ be the solution of
\be
\label{wsys}
\lf\{
\begin{array}{l}
\ds -\Delta w= f\qquad\mbox{ in }\B_{\sqrt{\ep}}(x_0)\\[5mm]
\ds w=0\qquad\mbox{ on }\p\B_{\sqrt{\ep}}(x_0)
\end{array}
\rg.
\ee
Classical Calderon Zygmund estimates is giving
\[
\|w\|_{L^{(3,\infty)}(\B_{\sqrt{\ep}}(x_0))}\le C\, \|f\|_{L^1(\B_{\sqrt{\ep}}(x_0))}\le  C\,\frac{\delta}{\sqrt{\ep}}
\]
and the maximum principle is implying $w\ge 0$. Since $|F_A|-w$ is sub-harmonic, Harnack inequality is giving
\[
\forall x\in \B_{\sqrt{\ep}/2}(x_0)\qquad (|F_A|(x)-w(x))^+\le C\, \ep^{-3/2}\ \int_{\B_{\sqrt{\ep}}(x_0))}||F_A|(x)-w(x)|\ dx^3
\]
Thus $\forall x\in \B_{\sqrt{\ep}/2}(x_0)$
\be
\label{pointcur}
\begin{array}{l}
\ds |F_A|(x)\le w(x)+C\, \ep^{-3/2}\ \int_{\B_{\sqrt{\ep}}(x_0))}|F_A|(x)\ dx^3+C\,  \ep^{-3/2}\ \int_{\B_{\sqrt{\ep}}(x_0))}w(x)\ dx^3\\[5mm]
\ds\le w(x)+C\,  \ep^{-1}\,\lf(\frac{1}{\sqrt{\ep}}\,\int_{\B_{\sqrt{\ep}}(x_0))} \ep\,|F_A|^2\ dx^3  \rg)^{1/2}+C\, \ep^{-1/2}\,\|w\|_{L^{(3,\infty)}(\B_{\sqrt{\ep}}(x_0))}\\[5mm]
\ds\le  w(x)+C\,  \ep^{-1}\,\sqrt{\delta}
\end{array}
\ee
Thus 
\[
\|F_A\|_{L^{(3,\infty)}(\B_{\sqrt{\ep}/2}(x_0))}\le\|w\|_{L^{(3,\infty)}(\B_{\sqrt{\ep}/2}(x_0))}+C\,\ep^{-1}\,\sqrt{\ep}\,\sqrt{\delta}\le  C\,\frac{\delta}{\sqrt{\ep}}
\]
This concludes the proof of Lemma~\ref{lm-longcur}.\hfill $\Box$
\subsection{Monotonicity of the $L^2-$density of the connexion $A$ and the normalised covariant derivative $\nabla_Au/\sqrt{\ep}$ under smallness assumptions.}

\begin{Lm}
\label{lm-longcur}{\bf [Control  of the $L^{(3,1)}-$norm of $\sqrt{\ep}\,F_A$ on $\sqrt{\ep}$ balls]}
There exists $\delta_0>0$ such that, if $(u,A)$ is a smooth critical point of $CYMH_{\ep}^{1,1}$ in $\B_1(0)$ and if on $\B_{2\,\sqrt{\ep}}(x_0)\subset \B_1(0)$  then for any $0<\delta<\delta_0$ if
\[
\frac{1}{2\sqrt{\ep}}\int_{\B_{2\,\sqrt{\ep}}(x_0)} |{A}|^2+\frac{|d_{{A}} {u}|^2}{\ep}+\ep\,|F_{{A}}|^2\ dx^3+\frac{1}{2\,\ep^3}(1-|{u}|^2)^2\ dx^3\le \delta
\]
and
\[
1\ge |u|\ge 1-\delta_0
\]
then there exists $C>0$ universal such that
\be
\label{longcur}
\|\sqrt{\ep}\,F_A\|_{L^{(3,1)}(\B_{\sqrt{\ep}}(x_0))}\le C\, \sqrt{\delta}\ .
\ee
In particular there holds
\be
\label{moncurerr}
\int_{B_{\sqrt{\ep}}(x_0)}\frac{1}{|x-x_0|}\ep\,|F_A|^2\ dx^3\le C\, \|\ep\,| F_A|^2\|_{L^{(3/2,1)}(  \B_{\sqrt{\ep}}(x_0))}\le    C\, \delta\ .
\ee
\hfill $\Box$
\end{Lm}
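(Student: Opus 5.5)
The plan is to upgrade the weak-type bound $\|\sqrt{\ep}F_A\|_{L^{(3,\infty)}}$ of the previous lemma to the Lorentz space $L^{(3,1)}$. I would reuse the same pointwise decomposition $|F_A|\le w+h$ on $\B_{\sqrt{\ep}}(x_0)$, but with a better handle on the source term. On each ball $\B_{\sqrt{\ep}}(y)$ with $y\in\B_{\sqrt{\ep}}(x_0)$ the hypothesis on $\B_{2\sqrt{\ep}}(x_0)$ gives smallness of the scaled energy. Hence the hypotheses of Lemma~\ref{lm-impLinf} and of the previous Lemma hold there. They give $\|\nabla_Au\|_{L^\infty(\B_{\sqrt{\ep}}(x_0))}\le C\sqrt{\delta}\,\ep^{-1/4}$ and the pointwise bound (\ref{pointcur}) $|F_A|\le w+C\ep^{-1}\sqrt{\delta}$.

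The first step is to rerun the Bochner subsolution inequality (\ref{bo-cu}), $-\Delta|F_A|\le f$ with
\[
f=\frac{C}{\ep}\lf(\frac{|\nabla_Au|^2}{\ep}+|A|^2+\ep|F_A|^2\rg).
\]
This time I would estimate $f$ not merely in $L^1$ but in $L^{(3/2,1)}$, or at least in a Morrey-type space. The term $\ep^{-2}|\nabla_Au|^2$ is bounded in $L^\infty$ by $C\delta\ep^{-5/2}$, so on a ball of radius $\sqrt{\ep}$ its $L^{(3/2,1)}$ norm is at most $C\delta\ep^{-5/2}\ep=C\delta\ep^{-3/2}$. The term $|F_A|^2$ is handled by (\ref{pointcur}) together with $w\in L^{(3,\infty)}$. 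This gives $\ep^{-1}\cdot\ep|F_A|^2\in L^{(3/2,\infty)}$ with norm $C\delta^2\ep^{-1}$. That is not quite $L^{(3/2,1)}$, so I would iterate once: feed the improved $w$ back in to obtain $w\in L^p$ for some $p>3$ locally, then interpolate. Alternatively I would use that $\|\ep F_A^2\|$ comes with a small factor and absorb it via a linear equation $-\Delta w-V w=\dots$ with $V=C|F_A|$, whose $L^{(3/2,\infty)}$ norm is small.

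For the term $\ep^{-1}|A|^2$ I would pass to the Coulomb gauge. Proposition~\ref{pr-coul} gives $d^\ast A=0$. The Euler--Lagrange equation $\ep^2 d^\ast_AF_A=-[u,d_Au]-\ep A$ then yields an elliptic system $-\Delta A=\ep^{-2}[u,\nabla_Au]+\dots$, from which $A\in L^\infty$ with a bound of size $\sqrt{\delta}\,\ep^{-1/4}$, similar to $\nabla_Au$.

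Once $f$ is controlled in $L^{(3/2,1)}$ with norm $C\sqrt{\delta}\,\ep^{-1/2}\cdot\ep^{-1}$, Calder\'on--Zygmund in Lorentz spaces gives $\nabla w\in L^{(3/2,1)}$, hence $w\in L^{(3,1)}$, with $\|\sqrt{\ep}w\|_{L^{(3,1)}}\le C\sqrt\delta$. The Harnack part is a constant $C\ep^{-1}\sqrt{\delta}$. Its $L^{(3,1)}$ norm on a ball of radius $\sqrt{\ep}$ is $C\ep^{-1}\sqrt{\delta}\cdot\sqrt{\ep}$, so after multiplying by $\sqrt{\ep}$ it is $C\sqrt{\delta}$. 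This proves (\ref{longcur}).

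For (\ref{moncurerr}) I would use Lorentz duality. We have $|x-x_0|^{-1}\in L^{(3,\infty)}$, and $\ep|F_A|^2=(\sqrt{\ep}|F_A|)^2$. The product of two $L^{(3,1)}$ functions lies in $L^{(3/2,1)}$ (Hölder for Lorentz spaces, $(3,1)\cdot(3,1)\subset(3/2,1/2)\subset(3/2,1)$), with norm at most $C\delta$.

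The main obstacle is the first step: obtaining a genuine $L^{(3/2,1)}$ bound on the $\ep|F_A|^2$ and $|A|^2$ contributions to $f$ rather than only weak-type or $L^1$ bounds. I expect the bootstrap or absorption argument there to be delicate, and the smallness $\delta_0$ is exactly what should make the absorption close.
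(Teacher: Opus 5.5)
\textbf{The closing step does not scale correctly.} On a ball of radius $R=\sqrt{\varepsilon}$, the estimate for $-\Delta w=f$, $w=0$ on $\partial B_R$, is scale invariant in the form $\|w\|_{L^\infty(B_R)}\le C\|f\|_{L^{(3/2,1)}(B_R)}$. Hence $\|\sqrt{\varepsilon}\,w\|_{L^{(3,1)}(B_R)}\le C\,\varepsilon\,\|f\|_{L^{(3/2,1)}(B_R)}$. To reach $C\sqrt{\delta}$ you therefore need $\|f\|_{L^{(3/2,1)}}\le C\sqrt{\delta}\,\varepsilon^{-1}$, not $\varepsilon^{-3/2}$. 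Your own bound for the term $\varepsilon^{-2}|\nabla_Au|^2$ is $C\delta\varepsilon^{-3/2}$, which only gives $\|\sqrt{\varepsilon}F_A\|_{L^{(3,1)}}\le C\delta\varepsilon^{-1/2}$.

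This is not a bookkeeping issue. The bound $|\nabla_Au|\le C\sqrt{\delta}\,\varepsilon^{-1/4}$ from Lemma~\ref{lm-impLinf} is off the natural scaling on $\sqrt{\varepsilon}$-balls; it comes from the $\varepsilon^{-1}|A|$ forcing in the Bochner inequality. Combining it with the $L^1$ bound $\|\varepsilon^{-2}|\nabla_Au|^2\|_{L^1}\le C\delta\varepsilon^{-1/2}$ does not help. The profile $g=\delta\varepsilon^{-5/2}\mathbf{1}_{B_s}$ with $s=\varepsilon^{2/3}$ satisfies both bounds, yet $\|\sqrt{\varepsilon}\,(-\Delta)^{-1}g\|_{L^{(3,1)}(B_{\sqrt{\varepsilon}})}$ is of order $\delta\log(1/\varepsilon)$.

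The claimed bound $|A|\le C\sqrt{\delta}\,\varepsilon^{-1/4}$ in Coulomb gauge is also unproved, and $|A|^2$ is gauge dependent. The equation $-\Delta A+\varepsilon^{-1}A=-\varepsilon^{-2}[u,d_Au]+\dots$, fed with your bound on $\nabla_Au$, gives at best $|A|\lesssim\sqrt{\delta}\,\varepsilon^{-5/4}$, and that is before handling the critical terms quadratic and cubic in $A$.

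By contrast, the term you single out as the main obstacle is harmless. $|F_A|^2$ lies in $L^{(3/2,\infty)}$ with norm $C\delta^2\varepsilon^{-1}$, and the Dirichlet inverse of $-\Delta$ maps $L^{(3/2,\infty)}$ on a ball into every $L^p$, $p<\infty$. After rescaling, its contribution is $C\delta^2$.

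\textbf{The missing ingredient is a Morrey-type decay of the full energy density below the scale $\sqrt{\varepsilon}$.} The paper obtains it from the stationarity identity (\ref{premon}), not from pointwise bounds. Fix $x_1\in B_{\sqrt{\varepsilon}}(x_0)$ and $r<\sqrt{\varepsilon}$, and differentiate $r^{-1/2}\int_{B_r(x_1)}\big(|A|^2+\varepsilon^{-1}|\nabla_Au|^2+\varepsilon|F_A|^2+\tfrac32\varepsilon^{-3}(1-|u|^2)^2\big)$. By (\ref{premon}), the only negative contribution is $-\tfrac32 r^{-3/2}\int_{B_r(x_1)}\varepsilon|F_A|^2$, which is $\ge -C\delta r^{-1/2}$ by (\ref{montrans}) from the preceding lemma. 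Integrating up to $\sqrt{\varepsilon}$ gives $r^{-1/2}\int_{B_r(x_1)}(\dots)\le C\delta\,\varepsilon^{1/4}$.

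Consequently the whole right-hand side $f$ of (\ref{bo-cu}), including the $|A|^2$ and $\varepsilon^{-2}|\nabla_Au|^2$ pieces, satisfies $r^{-1/2}\int_{B_r(x_1)}f\le C\delta\,\varepsilon^{-3/4}$. No gauge fixing and no pointwise bound are needed. Rescaling $B_{\sqrt{\varepsilon}}(x_0)$ to $B_1$ turns this into $\rho^{-1/2}\int_{B_\rho}\tilde f\le C\delta$. Adams' estimate for Riesz potentials of such measures then gives $\tilde w\in L^{(5,\infty)}(B_1)$. Scaling back yields $\|\sqrt{\varepsilon}\,w\|_{L^{(3,1)}(B_{\sqrt{\varepsilon}})}\le C\delta$, and (\ref{pointcur}) completes (\ref{longcur}).

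Your deduction of (\ref{moncurerr}) from (\ref{longcur}), via Lorentz--H\"older and duality with $|x-x_0|^{-1}\in L^{(3,\infty)}$, is fine.
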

\noindent{\bf Proof of Lemma~\ref{lm-longcur}}.

Let $x_1\in \B_{\sqrt{\ep}}(x_0)$ and $r<\sqrt{\ep}$. We recall the identity corresponding to (\ref{premon}) for $CYMH_\ep^{1,1}$
\[
\begin{array}{l}
\ds-\int_{\B_r}|A|^2+\frac{|\nabla_A u|^2}{\ep}\ dx^3+r\int_{\p \B_r}|A|^2+\frac{|\nabla_A u|^2}{\ep}\ dvol_{\p \B_r}\\[5mm]
\ds+\int_{\B_r}\ep\,|F_A|^2\ dx^3+r\int_{\p \B_r}\ep\,|F_A|^2\ dvol_{\p \B_r}\\[5mm]
\ds-\int_{\B_r}   \frac{3}{2}\frac{(1-|u|^2)^2}{\ep^3}\ dx^3         +r\,\frac{3}{2}\int_{\p\B_r}\frac{(1-|u|^2)^2}{\ep^3}\ dvol_{\p \B_r}\ge 0
\end{array}
\]
Now we compute
\[
\begin{array}{l}
\ds\frac{d}{dr}\lf(\frac{1}{\sqrt{r}}\,\int_{\B_r(x_1)} |A|^2+\frac{|\nabla_Au|^2}{\ep}+\ep\,|F_A|^2+\frac{3}{2}\,\frac{(1-|u|^2)^2}{\ep^3}\ \ dx_0\rg)\\[5mm]
\ds=-\frac{1}{2\,r^{3/2}}\,\int_{\B_r(x_1)} |A|^2+\frac{|\nabla_Au|^2}{\ep}+\ep\,|F_A|^2+\frac{3}{2}\,\frac{(1-|u|^2)^2}{\ep^3}\ \ dx_0\\[5mm]
\ds+\frac{1}{\sqrt{r}}\,\int_{\p\B_r(x_1)} |A|^2+\frac{|\nabla_Au|^2}{\ep}+\ep\,|F_A|^2+\frac{3}{2}\,\frac{(1-|u|^2)^2}{\ep^3}\ \ dvol_{\p \B_r(x_0)}\\[5mm]
\ds\ge \frac{1}{2\,r^{3/2}}\,\int_{\B_r(x_1)} |A|^2+\frac{|\nabla_Au|^2}{\ep}+\frac{3}{2}\,\frac{(1-|u|^2)^2}{\ep^3}\ dx^3-\frac{3}{2\,r^{3/2}}\,\int_{\B_r(x_1)} \ep\,|F_A|^2\ dx^3
\end{array}
\]
Using (\ref{montrans}) we have then
\[
\begin{array}{l}
\ds\frac{d}{dr}\lf(\frac{1}{\sqrt{r}}\,\int_{\B_r(x_1)} |A|^2+\frac{|\nabla_Au|^2}{\ep}+\ep\,|F_A|^2+\frac{3}{2}\,\frac{(1-|u|^2)^2}{\ep^3}\ \ dx_0\rg)\\[5mm]
\ds\ge -\,C\, r^{-1/2}\, \delta=-\frac{d}{dr}\lf(2\,C\,\delta\,r^{1/2}\rg)
\end{array}
\]
We deduce in particular the following Morrey norm control
\be
\label{morrey}
\sup_{x_1\in \B_{\sqrt{\ep}}(x_0)}\ \sup_{r<\sqrt{\ep}}\frac{1}{\sqrt{r}}\,\int_{\B_r(x_1)} |A|^2+\frac{|\nabla_Au|^2}{\ep}+\ep\,|F_A|^2\ dx^3\le C\,\delta\ \ep^{1/4}
\ee
Considering again (\ref{bo-cu}) and $w$ solving (\ref{wsys}) we have
\[
\sup_{x_1\in \B_{\sqrt{\ep}}(x_0)}\ \sup_{r<\sqrt{\ep}}\frac{1}{\sqrt{r}}\,\int_{\B_r(x_1)} f\ dx^3\le  C\,\delta\ \ep^{-3/4}
\]
Let $\ti{w}(y):=\ep\,w(\sqrt{\ep}\,y+x_0)$ and $\ti{f}(y):=\ep^2\, f(\sqrt{\ep}\,y+x_0)$. There holds
\[
\lf\{
\begin{array}{l}
\ds -\Delta_y \ti{w}= \ti{f}\qquad\mbox{ on }\B_1(0)\\[5mm]
\ds \ti{w}=0\qquad\mbox{ on }\p\B_1(0)
\end{array}
\rg.
\]
Moreover
\[
\sup_{y_1\in \B_1(0))}\ \sup_{\rho<1}\frac{1}{\sqrt{\rho}}\,\int_{\B_\rho(y_1)} \ti{f}\ dy^3\le  C\,\delta\ 
\]
Using a classical result by Adams (\cite{Ada} proposition 3.2 for $\la=\frac{5}{2}$ and $\al=2$) we have
\[
\|\ti{w}\|_{L^{(5,\infty)}(\B_1(0))}\le C\, \delta
\]
This gives
\[
\|w\|_{L^{(5,\infty)}(\B_{\sqrt{\ep}}(x_0))}\le C\, \delta\, \ep^{-7/10}\,
\]
We deduce 
\[
\|w\|_{L^{(3,1)}(\B_{\sqrt{\ep}}(x_0))}\le C\,\delta\,\sqrt{\ep}
\]
Injecting this information in (\ref{pointcur}) we finally obtain
\be
\label{L31cur}
\|F_A\|_{L^{(3,1)}(\B_{\sqrt{\ep}}(x_0))}\le C\,\delta\,\sqrt{\ep}
\ee
This concludes the proof of Lemma~\ref{lm-longcur}.\hfill $\Box$

\medskip

Finally we extend the energy density control given by Lemme~\ref{lm-sca} for radii larger than $\sqrt{\ep}$ to the whole range of radii.
\begin{Lm}
\label{lm-sca-wh} Let $0<r_1<1$. There exists $\delta_0>0$  independent of $\ep\in (0,1)$ but depending only on $r_1$ such that for any minimizer $(u,A)$ of $CYMH^{1,1}_\ep$ and any $\B_r(x_0)\subset\B_{r_1}(0)$  if for $r\ge\sqrt{\ep}$
\be
\label{hypdec}
\frac{1}{2\,r}\int_{\B_r(x_0)} |{A}|^2+\frac{|d_{{A}} {u}|^2}{\ep}+\ep\,|F_{{A}}|^2\ dx^3+\frac{1}{2\,\ep^3}(1-|{u}|^2)^2\ dx^3<\delta
\ee
and
\[
 1\ge |u|\ge 1-\delta_0\qquad\mbox{ in }\B_r(x_0)
\]
then, for any $0<\rho<r$ there holds
\be
\label{densc}
\frac{1}{2\,\rho}\int_{\B_\rho(x_0)} |{A}|^2+\frac{|d_{{A}} {u}|^2}{\ep}+\ep\,|F_{{A}}|^2\ dx^3+\frac{1}{2\,\ep^3}(1-|{u}|^2)^2\ dx^3<C\,\delta\ .
\ee
where $C>0$ is universal.
\hfill $\Box$
\end{Lm}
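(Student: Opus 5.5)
The plan is to split the range of radii at $\sqrt{\ep}$ and treat the two regimes separately. In the first regime, $\rho\in[R\sqrt{\ep},r)$, the bound \eqref{densc} with constant $2$ is exactly Lemma~\ref{lm-sca}, once $\delta<\delta_0$ is small enough for that lemma to apply. For $\rho\in[\sqrt{\ep},R\sqrt{\ep}]$ I will use the trivial comparison
\[
\frac{1}{\rho}\,Y(\rho)\le \frac{R\sqrt{\ep}}{\rho}\cdot\frac{1}{R\sqrt{\ep}}\,Y(R\sqrt{\ep})\le 2R\,\delta .
\]
Since $R$ is universal (for fixed $r_1$), this already gives \eqref{densc} for all $\rho\ge\sqrt{\ep}$ with a constant $C=2R$. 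In particular, at the scale $\rho=\sqrt{\ep}$ (and also $2\sqrt{\ep}$, if $r\ge 2\sqrt{\ep}$; otherwise the same comparison is used directly) the normalised energy is at most $C\delta$. Every ball $\B_{2\sqrt{\ep}}(x_1)$ with $x_1$ near $x_0$ is then in the smallness regime required by the lemmas of the previous subsections, with $\delta$ replaced by $C\delta$.

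In the second regime, $\rho<\sqrt{\ep}$, I will not use minimality but the $\sqrt r$-weighted monotonicity argument from the proof of the $L^{(3,1)}$ curvature lemma. The hypotheses $1\ge|u|\ge 1-\delta_0$ and the smallness at scale $\sqrt{\ep}$ make Lemma~\ref{lm-longcur} available, in both its $L^{(3,\infty)}$ and $L^{(3,1)}$ versions. From \eqref{moncurerr} I get
\[
\int_{\B_{\sqrt{\ep}}(x_0)}\frac{\ep\,|F_A|^2}{|x-x_0|}\,dx^3\le C\delta .
\]
I then return to the monotonicity identity \eqref{trumon} of Remark~\ref{rm-mon}. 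Its only bad term is the weight $\left(\frac{2r}{|x-x_0|}-1\right)\ep\,|F_A|^2$, and the positive part of that weight is controlled by the displayed integral. So the function
\[
\rho\mapsto \frac{1}{\rho}\int_{\B_\rho(x_0)}\Big(|A|^2+\frac{|\nabla_Au|^2}{\ep}+\frac{3}{2\ep^3}(1-|u|^2)^2\Big)dx^3
\]
is nondecreasing up to an additive error of $C\delta$. Integrating down from $\rho=\sqrt{\ep}$, where the value is at most $C\delta$, bounds the $A$, covariant-derivative and potential parts by $C\delta$ for every $\rho<\sqrt{\ep}$. The curvature part $\frac1\rho\int_{\B_\rho}\ep|F_A|^2$ is bounded separately by \eqref{montrans}. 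Adding the two pieces gives \eqref{densc} for all $\rho<\sqrt{\ep}$.

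The main obstacle is to make sure the lemmas of the preceding subsections really apply at scale $\sqrt{\ep}$ with the constant $C\delta$ coming from the first step, uniformly in $\ep$. Two points need care. First, the requirement $|u|\ge1-\delta_0$ must be given on the doubled ball $\B_{2\sqrt{\ep}}(x_0)$; it is, because $\rho\ge\sqrt{\ep}$ balls inside $\B_r(x_0)$ are covered by the hypothesis. Second, the improved $L^\infty$ bound of Lemma~\ref{lm-impLinf} is needed inside the Bochner estimate \eqref{bo-cu} that underlies \eqref{montrans}. If $r<2\sqrt{\ep}$, I will instead apply those lemmas on the ball $\B_{r/2}$ after rescaling, which costs only a universal factor.
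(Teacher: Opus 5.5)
Your proposal is correct and takes the paper's route. You use Lemma~\ref{lm-sca} for the large radii. For $\rho<\sqrt{\ep}$ you use the monotonicity identity \eqref{trumon}, with the curvature weight controlled by \eqref{moncurerr} and \eqref{montrans}. Your trivial comparison on $[\sqrt{\ep},R\sqrt{\ep}]$ fills a range the paper skips, since Lemma~\ref{lm-sca} only reaches down to $R\sqrt{\ep}$, at the price of a constant $2R$ that depends on $r_1$ rather than being universal.
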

\noindent{\bf Proof of Lemma~\ref{lm-sca-wh}.}  Because of Lemma~\ref{lm-sca} it suffices to consider the case where $\rho<\sqrt{\ep}$. We have
\[
\frac{1}{2\,\sqrt{\ep}}\int_{\B_{\sqrt{\ep}}(x_0)} |{A}|^2+\frac{|d_{{A}} {u}|^2}{\ep}+\ep\,|F_{{A}}|^2\ dx^3+\frac{1}{2\,\ep^3}(1-|{u}|^2)^2\ dx^3<\delta
\]
Recall from remark~\ref{rm-mon} that one recast the monotonicity formula~\ref{III.1} for $CYMH^{1,1}_\ep$ in the following way
\[
\begin{array}{l}
\ds\frac{d}{dr}\lf(   \frac{1}{2r}\int_{\B_r(x_0)}|A|^2+\frac{|\nabla_Au|^2}{\ep}+\frac{3}{2\,\ep^3}(1-|u|^2)^2+\lf(\frac{2r}{|x-x_0|}-1\rg)\,\ep\,|F_A|^2\ dx^3 \rg)\\[5mm]
\ds=\frac{1}{r}\,\int_{\p \B_r(x_0)}|A_r|^2+\frac{|(\nabla_Au)_r|^2}{\ep}+\frac{1}{2\ep^3}(1-|u|^2)^2+\ep\,|F_A\res\p_r|^2\ dvol_{\p \B_r(x_0)}\ge 0
\end{array}
\]
Because of (\ref{moncurerr}) we have 
\[
\int_{\B_{\sqrt{\ep}}(x_0)}\frac{2}{|x-x_0|}\,\ep\,|F_A|^2\ dx^3 \le C\, \delta\qquad\mbox{ and }\qquad\frac{1}{\rho}\int_{\B_{\rho}(x_0)}\ep\,|F_A|^2\ dx^3 \le C\, \delta\ .
\]
Then there holds
\[
\begin{array}{l}
\ds\frac{1}{2\rho}\int_{\B_\rho(x_0)}|A|^2+\frac{|\nabla_Au|^2}{\ep}+\frac{3}{2\,\ep^3}(1-|u|^2)^2\ dx^3\le \frac{1}{2\,\sqrt{\ep}}\int_{\B_{\sqrt{\ep}}(x_0)} |{A}|^2+\frac{|d_{{A}} {u}|^2}{\ep}+\frac{1}{2\,\ep^3}(1-|{u}|^2)^2\ dx^3\\[5mm]
\ds\qquad+\frac{1}{\rho}\int_{\B_\rho(x_0)}\ep\,|F_A|^2\ dx^3+\int_{\B_{\sqrt{\ep}}(x_0)}\frac{2}{|x-x_0|}\,\ep\,|F_A|^2\ dx^3\le C\, \delta
\end{array}
\]
and  Lemma~\ref{lm-sca-wh} is proved.\hfill $\Box$
\subsection{The $\delta-$regularity Theorem for $CYMH_\ep^{1,1}$ independent of $\ep>0$ for smooth critical points}
 
 We are now proving the second main ``instrument'' for passing to the limit in the equations as $\ep\rightarrow 0$.
 \begin{Lm} {\bf[$\delta$-regularity theorem independent of $\ep>0$]} 
 \label{lm-deltareg}
For any $M>0$ and $0<r_1<1$ there exists $\delta_0>0$  and $C>0$ such that for any smooth critical point $(u,A)$  to $CYMH_\ep^{1,1}$ and any $r> C\,\ep\,\log \ep^{-1}$ satisfying
\[
CYMH_\ep^{1,1}(u,A,\B_1(0))<M
\]
 and any $x_0\in \B_{r_1}(0)$ such that $\B_r(x_0)\subset \B_{r_1}(0)$, if
 \[
  \sup_{x\in \B_{r/2}(x_0)}\ \sup_{\rho<r/2}\frac{1}{2\,\rho}\int_{\B_\rho(x)} |{A}|^2+\frac{|d_{{A}} {u}|^2}{\ep}+\ep\,|F_{{A}}|^2\ dx^3+\frac{1}{2\,\ep^3}(1-|{u}|^2)^2\ dx^3<\delta
 \]
 and
 \[
 1\ge|u|\ge 1-\delta\qquad\mbox{ in }\B_r(x_0)
 \]
 where $\delta<\delta_0$ then for any $p<+\infty$ there holds
 \[
 \|u\|_{W^{2,p}(\B_{r/2}(x_0))}+\| \nabla_Au\|_{W^{1,p}(\B_{r/2}(x_0))}\le C_{M,p,r}
 \]
 where $C_{M,p,r}$ only depends on $M$, $p$ and $r$.\hfill $\Box$
 \end{Lm}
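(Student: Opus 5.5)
We prove the estimate by passing to a controlled Coulomb gauge and then splitting $u$ into its modulus and its phase $\hat u$. The modulus is driven exponentially close to $1$ at scale $\ep$. The phase solves an $\ep$-independent harmonic-map-type system, and small energy gives it $\ep$-uniform regularity.

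Throughout we work on balls $\B_\rho(x)$ with $x\in\B_{r/2}(x_0)$ and $\rho<r/2$, where the hypothesis gives a uniform small Morrey bound on the full energy density. The first consequence is for the curvature. By H\"older (as in (\ref{lore})),
\[
\|F_A\|_{L^{3/2}(\B_\rho(x))}\le C\,\Big(\rho^{-1}\int_{\B_\rho(x)}\ep\,|F_A|^2\Big)^{1/2}\Big(\frac{\rho}{\ep}\Big)^{1/2},
\]
which is only small when $\rho\lesssim\ep$. So the curvature must be handled separately at the two scales.

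At scales $\rho\ge\ep$ we use the Bochner inequality (\ref{Bochcovder}) and Proposition~\ref{pr-bwt}. With $|u|\ge 1-\delta$, they give Bessel-type inequalities $-\Delta\phi+\frac{1}{2\ep^2}\phi\le(\cdots)$ for $\phi=|\nabla_Au|$ and for the transversal curvature $\phi=|[u,F_A]|$. From these we aim to show that $|[u,F_A]|$, $(1-|u|^2)/\ep^2$ and $|\nabla_Au|^T$ are pointwise controlled by $\ep^{-1}$ times local averages plus terms decaying like $e^{-d/(C\ep)}$, where $d$ is the distance to $\p\B_r(x_0)$. The assumption $r>C\,\ep\log\ep^{-1}$ is exactly what makes these exponential tails $O(\ep^N)$ on $\B_{3r/4}(x_0)$.

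The remaining longitudinal curvature $F_A\cdot u$ then satisfies an abelian London-type system, a perturbation of
\[
\ep\, d\,d^\ast(F_A\cdot u)+\mu^{-1}(F_A\cdot u)\simeq \ep^{-1}\, u^\ast\omega-\text{lower order},
\]
obtained by projecting the second equation of (\ref{CYMHS}) on $u$. With the small Morrey norms, this yields $\|F_A\cdot u\|_{L^p}$ bounds uniform in $\ep$. Combining the transversal and longitudinal parts gives $\|F_A\|_{L^{3/2}}$ small uniformly on balls of radius $\sim r$. Theorem~\ref{th-Uhl} then provides a Coulomb gauge with $\|A^g\|_{W^{1,3/2}}$ small. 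The Coulomb condition of Proposition~\ref{pr-coul} shows that $g$ satisfies the Hodge-decomposed elliptic system (\ref{eq-g}), exactly as in the proof of Theorem~\ref{th-delreg}. The Morrey/BMO iteration there gives $\ep$-independent Morrey decay for $|A|^2$ and $|dg|^2$.

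Next we write $u=|u|\,\hat u$ and decompose $A=-\frac14[\hat u,d\hat u]+A^T_{\rm err}+(A\cdot\hat u)\hat u$, as done on spheres in Lemma~\ref{lm-sca}. The Euler--Lagrange equations then read as follows.
\begin{itemize}
\item[(i)] $\hat u$ satisfies the harmonic map equation into $\S^2$ with right-hand side $-d^\ast(\hat u\wedge d\hat u)=\mu\,\ast(\ast d(\ep F_A\cdot\hat u)\wedge d\hat u)+\ldots$. This is a Schr\"odinger system with antisymmetric potential in $L^2$ with small Morrey norm.
\item[(ii)] The error terms, namely $(d_Au)^T/\sqrt\ep$, the potential term and the longitudinal part $A\cdot\hat u$, are exponentially small away from $\p\B_r$ by the Bessel estimates above. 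The smallness of $A\cdot\hat u$ is the mechanism behind Remark~\ref{rm-longconnexion}.
\end{itemize}
Using Rivi\`ere's antisymmetric-potential regularity together with Morrey smallness, we get $\hat u\in C^{0,\alpha}$ and then $W^{2,p}$ by bootstrap. This gives $\|u\|_{W^{2,p}}$, since $|u|-1$ is exponentially small in $C^k$ at scale $\ep$. It also gives $\|\nabla_Au\|_{W^{1,p}}$, since $\nabla_Au=|u|(\nabla_A\hat u)+\hat u\,\nabla|u|$ and $\nabla_A\hat u$ is controlled by $d\hat u$ plus the exponentially small errors.

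The main obstacle is obtaining $\ep$-uniform control of the longitudinal curvature $F_A\cdot u$ at intermediate scales $\ep\ll\rho\ll r$. There, neither Uhlenbeck smallness nor the Bessel decay is available directly, and the energy $\int\ep|F_A|^2$ only controls $\sqrt\ep F_A$. We would first try to close it through the London equation above. That equation is linear in $F_A\cdot u$ with a mass term. Its source $u^\ast\omega$ lies in $L^1$ with small Morrey norm, and it is coupled back to (i). A joint bootstrap on $(\hat u, F_A\cdot\hat u)$ using Adams' Morrey--Riesz estimates should then give the uniform bounds.
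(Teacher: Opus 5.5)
\textbf{There is a genuine gap: your argument is circular at exactly the step you flag as the main obstacle, and the London equation does not break the circle.}

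\emph{Why the ordering fails.} Everything in your scheme is driven by $\ep$-uniform curvature bounds obtained first. But the Schr\"odinger inequality (\ref{Bochcovder}) for $|\nabla_Au|$ carries the potential $-2|F_A|$. That term can only be absorbed once $F_A$ is already small in a Kato-class norm such as $L^{(3/2,\infty)}$. Likewise, the right-hand side of (\ref{bwt}) contains $|\nabla_Au|\,|\nabla(F_A\cdot u)|$ and $|\nabla_Au|^2|F_A|$, which is precisely the longitudinal curvature you postpone. Also, with $L^1$ sources these Bessel inequalities give Lorentz-space bounds, not pointwise ones.

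\emph{Why the London equation does not help at this stage.} Project the second equation of (\ref{CYMHS}) on $\hat u$ and differentiate. With $\mu=1$ you get $\ep\,dd^\ast(\hat u\cdot F_A)+\hat u\cdot F_A=-\frac12\hat u\cdot[A^T\wedge A^T]-A^T\dot\wedge d_A\hat u+\ep\,d(d_A\hat u\dot{\res}F_A)$.
\begin{itemize}
\item The source is $O(1)$ and quadratic in $d\hat u$. It is not $\ep^{-1}u^\ast\omega$; that scaling would make the curvature of size $\ep^{-1}$.
\item Before anything is known about $\hat u$, the source lies only in $L^1$ with small Morrey norm.
\item $(1-\ep\Delta)^{-1}$ is a Bessel potential at scale $\sqrt\ep$ with kernel of unit mass, so it gains no integrability uniformly in $\ep$.
\end{itemize}
Hence you obtain no uniform $L^{(3/2,\infty)}$ or $L^p$, $p>1$, bound. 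The announced ``joint bootstrap'' is exactly the missing argument.

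\emph{A secondary error.} The errors in your item (ii) are not exponentially small. $\hat u\cdot A$ and $1-|u|^2$ solve massive equations with non-zero sources, so they are at best polynomially small in $\ep$. Only the influence of $\p\B_r(x_0)$ decays exponentially.

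\textbf{The missing idea: start from $\hat u$ and use only $A$, with no curvature input.}
\begin{itemize}
\item By Proposition~\ref{pr-coul}, $d^\ast A=0$ globally, so no Uhlenbeck gauge is needed. Regauging would even lose the hypothesis, which is a Morrey bound on $A$ itself in a non-gauge-invariant functional. The Morrey iteration of Theorem~\ref{th-delreg} is also not $\ep$-uniform.
\item The identity $-\nabla\hat u=[A,\hat u]+\frac14[\hat u,[\hat u,\nabla_A\hat u]]$ then gives (\ref{anti}): $-\Delta\hat u=\Omega\cdot\nabla\hat u+\mathrm{div}f$. Here $\Omega$ is antisymmetric and linear in $A$, so it is small in the $L^2$ Morrey norm directly by hypothesis.
\item $f$ is small in $L^3$, because Lemma~\ref{lm-linfty-b} gives $\int|\nabla_Au|^3\le C\int|\nabla_Au|^2/\ep$.
\item Your equation (i) is this in disguise: by the projected equation, $\mu\ep\,d^\ast(\hat u\cdot F_A)=-\hat u\cdot A$ up to $\mu\ep\,d_A\hat u\dot{\res}F_A$. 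But you did not see that it requires no curvature information, so it can and must come first.
\item Antisymmetric-potential regularity then gives $\nabla\hat u$, hence $A^T$, small in $L^3$.
\end{itemize}

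\textbf{Only after this does the curvature become tractable.}
\begin{itemize}
\item $\hat u\cdot A$ solves the massive equation (\ref{longA}). Its sources are $|A^T|^2\in L^{3/2}$ in divergence form and $\nabla_A\hat u\cdot F_A\in L^1$. This gives $\hat u\cdot F_A$ small in $L^{(3/2,\infty)}$.
\item The right-hand side of (\ref{bwt}) is then $O(\ep^{-1}\sqrt\delta)$ in $L^1$. Comparing with the kernel $|x|^{-1}e^{-|x|/2\ep}$ makes $[u,F_A]$ small in $L^{(3/2,\infty)}$; the condition $r>C\ep\log\ep^{-1}$ is what kills the boundary term.
\item Only now is the potential in (\ref{Bochcovder}) in the Kato class. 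This yields $\nabla_Au\in L^6$, and the bootstrap closes, with $[u,d_Au]$ handled through its $d$, $d^\ast$ system.
\end{itemize}
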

\noindent{\bf Proof of Lemma~\ref{lm-deltareg}.} We denote $\hat{u}:=u/|u|$ on $\B_{r}(x_0)$ and we write
 \[
 -\nabla\hat{u}=\frac{1}{4}[\hat{u},[\hat{u},\nabla\hat{u}]]=\frac{1}{4}[\hat{u},[\hat{u},\nabla_A\hat{u}]]-\frac{1}{4}[\hat{u},[\hat{u},[A,\hat{u}]]]=[A,\hat{u}]+\frac{1}{4}[\hat{u},[\hat{u},\nabla_A\hat{u}]]
 \]
 Taking the divergence of this identity and using the fact that $A$ is divergence free is giving
 \[
 -\Delta\hat{u}=\sum_{l=1}^3[A_l,\p_{x_l}\hat{u}]+\frac{1}{4}\mbox{div}\lf([\hat{u},[\hat{u},\nabla_A\hat{u}]]\rg)
 \]
 Observe that (using quaternionic multiplication)
 \[
 \begin{array}{l}
 \ds\sum_{l=1}^3[A_l,\p_{x_l}\hat{u}]^j=2\,\sum_{l=1}^3\lf(A^{\bf j}_l\,\p_{x_l}u^{\bf k}-A^{\bf k}_l\,\p_{x_l}u^{\bf j}\rg)\ {\bf i}+2\,\sum_{l=1}^3\lf(A^{\bf k}_l\,\p_{x_l}u^{\bf i}-A^{\bf i}_l\,\p_{x_l}u^{\bf k}\rg)\ {\bf j}\\[5mm]
 \ds\qquad+2\,\sum_{l=1}^3\lf(A^{\bf i}_l\,\p_{x_l}u^{\bf j}-A^{\bf j}_l\,\p_{x_l}u^{\bf i}\rg)\ {\bf k}
 \end{array}
 \]
 Hence, in the orthonormal basis $({\bf i},{\bf j},{\bf k})$ of $\frak{su}(2)$ one has
 \[
 [A,\nabla u]=\Om\cdot \nabla u
 \]
 where
 \[
 \Om:=\lf(
 \begin{array}{ccc}
 0& -2\,A^{\bf k}& 2\,A^{\bf j}\\[3mm]
2\,A^{\bf k}& 0&  -2\,A^{\bf i}\\[3mm]
 -2\,A^{\bf j}& 2\,A^{\bf i}& 0
 \end{array}
 \rg)\in so(3)\otimes {\R}^3\ ,
 \]
 and the system satisfied by $\hat{u}$ can be recasted in 
 \be
 \label{anti}
 -\Delta \hat{u}=\Om\cdot \nabla \hat{u}+\mbox{div}\, f
 \ee
 where $f:=4^{-1}\,[\hat{u},[\hat{u},\nabla_A\hat{u}]]\in L^3(\B_r(x_0))$. Moreover, from (\ref{mordel}) there holds
 \[
  \sup_{x\in \B_{r/2}(x_0)}\ \sup_{\rho<r/2}\frac{1}{2\,\rho}\int_{\B_\rho(x)}|\Om|^2+|\nabla\hat{u}|^2\le C\,\delta
 \]
 In \cite{Rivom} the author discovered the importance of antisymmetry for the regularity and compactness theory of critical Schr\"odinger systems. The main ideas in \cite{Rivom} have been adapted to supercritical dimensions under small Morrey norm hypothesis in \cite{RivSt} and in \cite{Mos2} in the presence of a source term\footnote{In \cite{Mos2} the source term div$\,f$ is considered to be in $L^q(\B_r(x_0)$ while in our case it is in $W^{-1,3}(\B_r(x_0)$ but exactly the same arguments as in \cite{Mos2} are giving the corresponding result in our case.  }. From these works we can then conclude that, for $\delta>0$ small enough
 \be
 \label{L3hatu}
 \int_{\B_{r/4}(x_0)}|\nabla\hat{u}|^3\ dx^2\le C\,\delta+C\, \int_{\B_{r/4}(x_0)}\lf|[\hat{u},\nabla_A\hat{u}]\rg|^3\ dx^2\le C\ \delta\ .
 \ee
 Thus we deduce
 \be
 \label{L3AT}
  \int_{\B_{r/4}(x_0)}|A^T|^3\ dx^2\le C\,\delta\ .
 \ee
 We have for any $l,m\in\{1,2,3\}$
\[
\begin{array}{l}
\ds \hat{u}\cdot F_{ml}=\hat{u}\cdot (\nabla_A)_m A_l-\hat{u}\cdot (\nabla_A)_l A_m+\hat{u}\cdot [A_l,A_m]\\[5mm]
\ds=\p_{x_m}(\hat{u}\cdot A_l)-\p_{x_l}(\hat{u}\cdot A_m)-(\nabla_A)_m\hat{u}\cdot A_l+(\nabla_A)_l\hat{u}\cdot A_m+\hat{u}\cdot [A_l,A_m]\ .
\end{array}
\]
Hence for any $l\in\{1,2,3\}$
\[
\begin{array}{l}
\ds \sum_{m=1}^3\p_{x_m}\lf(\hat{u}\cdot F_{ml}\rg)=\Delta(\hat{u}\cdot A_l)-\sum_{m=1}^3\p^2_{x_lx_m}(\hat{u}\cdot A_m)\\[5mm]
\ds- \sum_{m=1}^3\p_{x_m}\lf((\nabla_A)_m\hat{u}\cdot A_l\rg)+ \sum_{m=1}^3\p_{x_m}\lf((\nabla_A)_l\hat{u}\cdot A_m+ \hat{u}\cdot [A_l,A_m]  \rg)
\end{array}
\]
We have using the fact that $A$ is divergence free
\[
-\sum_{m=1}^3\p^2_{x_lx_m}(\hat{u}\cdot A_m)=-\sum_{m=1}^3\p_{x_l}\lf( (\nabla_A)_m\hat{u}\cdot A_m  \rg)
\]
Moreover from (\ref{CYMHS-co})
\[
 \sum_{m=1}^3\p_{x_m}\lf(\hat{u}\cdot F_{ml}\rg)=\sum_{m=1}^3(\nabla_A)_m\hat{u}\cdot F_{ml}+\ep^{-1}\, \hat{u}\cdot A_l
\]
Hence there holds  for any $l\in\{1,2,3\}$
\[
\begin{array}{l}
\ds-\Delta(\hat{u}\cdot A_l)+\frac{\hat{u}\cdot A_l}{\ep}=-\p_{x_l}\lf( (\nabla_A)_m\hat{u}\cdot A_m  \rg)-\sum_{m=1}^3(\nabla_A)_m\hat{u}\cdot F_{ml}\\[5mm]
\ds- \sum_{m=1}^3\p_{x_m}\lf((\nabla_A)_m\hat{u}\cdot A_l\rg)+ \sum_{m=1}^3\p_{x_m}\lf((\nabla_A)_l\hat{u}\cdot A_m\rg)+ \sum_{m=1}^3\p_{x_m}\lf(\hat{u}\cdot [A_l,A_m] \rg)
\end{array}
\]
We have respectively 
\[
\hat{u}\cdot[A_l,A_m]=\hat{u}\cdot [A_l^T,A_m^T]\qquad\mbox{ and }\qquad (\nabla_A)_m\hat{u}\cdot A_n=(\nabla_A)_m\hat{u}\cdot A_n^T\ .
\]
and we finally deduce
\be
\label{longA}
\begin{array}{l}
\ds-\Delta(\hat{u}\cdot A_l)+\frac{\hat{u}\cdot A_l}{\ep}=-\p_{x_l}\lf( (\nabla_A)_m\hat{u}\cdot A^T_m  \rg)-\sum_{m=1}^3(\nabla_A)_m\hat{u}\cdot F_{ml}\\[5mm]
\ds- \sum_{m=1}^3\p_{x_m}\lf((\nabla_A)_m\hat{u}\cdot A^T_l\rg)+ \sum_{m=1}^3\p_{x_m}\lf((\nabla_A)_l\hat{u}\cdot A^T_m\rg)+ \sum_{m=1}^3\p_{x_m}\lf(\hat{u}\cdot [A^T_l,A^T_m] \rg)
\end{array}
\ee
Using classical Calderon Zygmund theory we obtain
\[
\begin{array}{l}
\|\hat{u}\cdot A\|_{L^{(3,\infty)}(\B_{r/8}(x_0))}+\|\nabla(\hat{u}\cdot A)\|_{L^{(3/2,\infty)}(\B_{r/8}(x_0))}\le C\ \|\nabla_A\hat{u}\cdot A^T\|_{L^{3/2}(\B_{r/4}(x_0))}+C\,\||A^T|^2\|_{L^{3/2}(\B_{r/4}(x_0))}\\[5mm]
\ds\qquad+C\,\|F_A\cdot\nabla_A\hat{u}\|_{L^{1}(\B_{r/4}(x_0))}+C\,\sqrt{r}^{-1}\,\|\hat{u}\cdot A\|_{L^{2}(\B_{r/4}(x_0))}\le C\, \sqrt{\delta}
\end{array}
\]
Hence in particular there holds
\be
\label{L32infcurlon}
\begin{array}{l}
\|\hat{u}\cdot F_A\|_{L^{(3/2,\infty)}(\B_{r/8}(x_0))}\le\,\|d(\hat{u}\cdot A)\|_{L^{(3/2,\infty)}(\B_{r/8}(x_0))}+ \|d\hat{u}\cdot A\|_{L^{(3/2,\infty)}(\B_{r/8}(x_0))}\\[5mm]
\ds\qquad+\|\hat{u}\cdot [A^T,A^T]\|_{L^{(3/2,\infty)}(\B_{r/8}(x_0))}\le C\, \sqrt{\delta}
\end{array}
\ee
Regarding the Transverse part of the curvature, we have the following inequality from Proposition~\ref{pr-bwt}
\be
\label{bwt-bis}
\begin{array}{l}
\ds -\mbox{div}\lf(|u|^{-4}\,\nabla |[u,F_A]|\rg)+\frac{1}{2\ep^2}|[u,F_A]|\le 8\, |u|^{-6}\, |\nabla_A\,u|^2\,|F_A|+4\, |u|^{-4}\,|\nabla_Au|\,|\nabla(F_A\cdot u)|\\[5mm]
\ds\qquad+4\,\ep^{-2}|u|^{-4}\,|\nabla_A u|^2+2\,\ep^{-1}\,|u|^{-4}\,|A|^2+4\,|u|^{-4}\,|F_A|^2\ .
\end{array}
\ee
Let
\[
\begin{array}{l}
\ds f:= 8\, |u|^{-6}\, |\nabla_A\,u|^2\,|F_A|+4\, |u|^{-4}\,|\nabla_Au|\,|\nabla(F_A\cdot u)|\\[5mm]
\ds\qquad+4\,\ep^{-2}|u|^{-4}\,|\nabla_A u|^2+2\,\ep^{-1}\,|u|^{-4}\,|A|^2+4\,|u|^{-4}\,|F_A|^2\ .
\end{array}
\]
Observe that $F_A$ satisfies the system
\[
\lf\{
\begin{array}{l}
\ds d_AF_A=0\\[5mm]
\ds d^\ast_A F_A=-\frac{[u,d_Au]}{\ep^2}-\frac{A}{\ep}
\end{array}
\rg.
\]
Hence
\[
\lf\{
\begin{array}{l}
\ds d\lf(\hat{u}\cdot F_A\rg)=d_A\hat{u}\,\dot{\wedge}\, F_A\\[5mm]
\ds d^\ast(\hat{u}\cdot F_A)=d_A\hat{u}\,\dot{\res} \,F_A-\frac{\hat{u}\cdot A}{\ep}
\end{array}
\rg.
\]
Classical elliptic estimates give
\be
\label{longcurv}
\begin{array}{l}
\ds\|\hat{u}\cdot F_A\|_{L^{(3,3/2)}(\B_{r/16}(x_0))}+\|\nabla (\hat{u}\cdot F_A)\|_{L^{3/2}(\B_{r/16}(x_0))}\\[5mm]
\ds\le C\, \ep^{-1}\,  \|\hat{u}\cdot A\|_{L^{3/2}(B_{r/8}(x_0))}+ C\,\|\nabla_A\hat{u}\|_{L^{6}(B_{r/8}(x_0))} \ \|F_A\|_{L^{2}(B_{r/8}(x_0))} + C\, r^{-1}\,\|\hat{u}\cdot F_A\|_{L^{(3/2,\infty)}(\B_{r/8}(x_0))}\\[5mm]
\ds\quad\le C\, \ep^{-1}\, {r}\,\sqrt{\delta}+C\, \ep^{-1}\, \delta^{2/3}\, r^{2/3}+C\,\sqrt{\frac{\delta}{\ep}}\ 
\end{array}
\ee
Hence
 \[
 \begin{array}{l}
\ds\int_{\B_{r/16}(x_0)}f(x)\ dx^3\le C\, \ep^{-1}\,\|\sqrt{\ep}\,|\nabla_Au|^2\|_{L^2(\B_r(x_0))}\, \|\sqrt{\ep}\,F_A\|_{L^2(\B_r(x_0))}\\[5mm]
\ds\quad+ C\,\|\nabla_Au\|_{L^3(\B_{r/16}(x_0))}\, \|\nabla(u\cdot F_A)\|_{L^{3/2}(\B_{r/16}(x_0))}+C\,\ep^{-1}\int_{\B_{r/8}(x_0)}\frac{|\nabla_Au|^2}{\ep}+|A|^2+\ep\,|F_A|^2\ dx^3\,\\[5mm]
\ds\quad\le C\,\ep^{-1}\,\sqrt{\delta}\ .
\end{array}
\]
 Denote $\rho:=r/16$. Let $w$ be the solution of
 \[
 \lf\{
\begin{array}{l}
\ds-\mbox{div}\lf(|u|^{-4}\,\nabla w\rg)+\frac{1}{4\,\ep^2|u|^4}w=f\qquad\mbox{ in }\B_{\rho}(x_0)\\[5mm]
 \ds w=0\qquad\mbox{ on }\p\B_{\rho}(x_0)
\end{array} 
 \rg.
 \]
 Since $f\ge 0$ and $w=0$ on $\p\B_\rho(x_0)$, the maximum principle is implying $w\ge 0$. Denoting $\ti{w}:=w/|u|^4$ we have
  \[
 \lf\{
\begin{array}{l}
\ds-\Delta \ti{w}+\frac{1}{4\,\ep^2}\ti{w}=f+4\,\mbox{div}\lf( \frac{\nabla|u|}{|u|}\, \ti{w} \rg)\qquad\mbox{ in }\B_{\rho}(x_0)\\[5mm]
 \ds w=0\qquad\mbox{ on }\p\B_{\rho}(x_0)
\end{array} 
 \rg.
 \]
 We have the a-priori estimate
 \[
 \|\ti{w}\|_{L^{(3,\infty)}(\B_{\rho}(x_0))}+\|\nabla \ti{w}\|_{L^{(3/2,\infty)}(\B_{\rho}(x_0))}\le C\,\|f\|_{L^{1}(\B_{r/16}(x_0))}+C\, \|\nabla |u|\|_{L^3(\B_{\rho}(x_0))}\  \|\ti{w}\|_{L^{(3,\infty)}(\B_{\rho}(x_0))}\ .
 \]
 Since
 \[
 \|\nabla |u|\|_{L^3(\B_{\rho}(x_0))}\le  \|\nabla_Au\|_{L^3(\B_{\rho}(x_0))}\le C\,\lf(\int_{\B_r(x_0)}\frac{|\nabla_Au|^2}{\ep}\ dx^3\rg)^{1/3}\le C\, \delta^{1/3}
 \]
For $\delta$ chosen small enough we have
\be
\label{tiw3inf}
 \|\ti{w}\|_{L^{(3,\infty)}(\B_{\rho}(x_0))}+\|\nabla \ti{w}\|_{L^{(3/2,\infty)}(\B_{\rho}(x_0))}\le C\,\|f\|_{L^{1}(\B_{\rho}(x_0))}\le C\, \ep^{-1}\,\sqrt{\delta}
\ee
Let $X:=4\,\frac{\nabla|u|}{|u|}\, \ti{w}$. We have then
\[
\|X\|_{L^{(3/2,\infty)}(\B_{\rho}(x_0))}\le  C\, \ep^{-1}\ \delta^{1/3}\,\sqrt{\delta}\ .
\]
 Let $h\in L^{(3,1)}(B_\rho(x_0))$ and $\varphi$ solution to
\[
\lf\{
\begin{array}{l}
\ds-\Delta \varphi+\frac{\varphi}{4\,\ep^2}= h\qquad\mbox{ in }B_\rho(x_0)\\[5mm]
\ds \varphi=0\qquad\mbox{ on }\p B_\rho(x_0)
\end{array}
\rg.
\]
We have
\[
\|\varphi\|_{L^{(3,\infty)} \B_\rho(x_0)}\le \|h\|_{L^{1}(B_\rho(x_0))}
\]
We extend $h$ outside $\B_\rho(x_0)$ by $0$ and we denote $\ti{h}$ this extension. Let
\[
\ti{\varphi}(x):=\frac{1}{4\pi}\int_{\R^3}\frac{1}{|x-y|}\,e^{-\frac{|x-y|}{2\,\ep}}\ \ti{h}(y)\ dy^3
\]
Recall
\[
\frac{d}{dr}\lf[  r^{-1}e^{-r/2\,\ep}\rg]=-[r^{-2}+r^{-1}\,2^{-1}\ep^{-1}]\,   e^{-r/2\ep}<0\quad\forall \ r>0\ .
\]
Hence $r\rightarrow r^{-1}e^{-r/\ep}$ is a diffeomorphism from $(0,+\infty)$ into itself. For $t\ge 1$ we denote $\psi_\ep(t)$ such that
\[
\psi_\ep(t)^{-1}\,\exp(-\psi_\ep(t)/2\,\ep)=t\quad\Leftrightarrow\quad \psi_\ep(t)=2\,\ep\,\log(t^{-1})+2\,\ep\, \log(\psi_\ep(t)^{-1})
\]
We have 
\[
\lf|\lf\{ x\ ;\   r^{-1}e^{-r/2\,\ep}>t \rg\}\rg|= |B^3_{ \psi_\ep(t)}(0)|=\frac{4\pi}{3}\,\lf(\psi_\ep(t)\rg)^3\ .
\]
hence
\[
\begin{array}{l}
\ds\lf[\sup_{t>e^{-1/\ep}}t^{3/2}\,\lf|\lf\{ x\ ;\   r^{-1}e^{-r/\ep}>t \rg\}\rg|\rg]^{2/3}= \lf[\sup_{t>e^{-1/\ep}}t^{3/2}\, \frac{4\pi}{3}\,\lf(\psi_\ep(t)\rg)^3 |\rg]^{2/3}=\lf(\frac{4\pi}{3}\rg)^{2/3} \sup_{t>e^{-1/\ep}}\ t\ \psi_\ep^2(t)
\end{array}
\]
We have since $\psi_\ep$ is decreasing
\[
\sup_{t>e^{-1/2\ep}}\ t\ \psi_\ep^2(t)= \sup_{t>e^{-1/2\ep}}\psi_\ep(t)\,\exp(-\psi_\ep(t)/2\ep)=\sup_{u< 1}u\, \exp(-u/2\ep)=2\,\ep\ .
\]
Thus
\[
\sup_{x\in{\R}^3} \lf\| {\mathbf 1}_{|x-y|<1} \frac{1}{|x-y|}\,e^{-\frac{|x-y|}{2\,\ep}}\rg\|_{L^{(3/2,\infty)}({\R}^3)}\le C\, \ep\ ,
\]
and 
\[
\begin{array}{l}
\ds\sup_{x\in \B_\rho(x_0)} \lf\| {\mathbf 1}_{|x-y|>1} \frac{1}{|x-y|}\,e^{-\frac{|x-y|}{2\,\ep}}\rg\|_{L^{(3/2,\infty)}(\B_\rho)}\\[5mm]
\ds\le C\,  \rho^2\,\sup_{x\in \B_\rho(x_0)} \lf\| {\mathbf 1}_{|x-y|>1} \frac{1}{|x-y|}\,e^{-\frac{|x-y|}{2\,\ep}}\rg\|_{L^\infty(\B_\rho)}\le C\, \rho^2\,e^{-1/2\,\ep}\ .
\end{array}
\]
Assuming $\rho>\ep$ we obtain
\[
\|\ti{\varphi}\|_{L^\infty\lf(\B_\rho(x_0)\rg)}\le \,C\, \ep\, \|h\|_{L^{(3,1)}(\B_\rho(x_0))}
\]
We have
\be
\label{dif-tivar}
\lf\{
\begin{array}{l}
\ds-\Delta (\varphi-\ti{\varphi})+\frac{\varphi-\ti{\varphi}}{4\,\ep^2}= 0\qquad\mbox{ in }\B_\rho(x_0)\\[5mm]
\ds \varphi-\ti{\varphi}=-\ti{\varphi}\qquad\mbox{ on }\p \B_\rho(x_0)
\end{array}
\rg.
\ee
Hence, thanks to the maximum principle we have
\[
\|{\varphi}\|_{L^\infty\lf(\B_\rho(x_0)\rg)}\le \,C\, \ep\, \|h\|_{L^{(3,1)}\lf(\B_\rho(x_0)\rg)}
\]
We have also, thanks to the generalised Young inequality in Lorentz spaces (see \cite{Riv-Func})
\[
\begin{array}{l}
\ds\|\nabla\ti{\varphi}\|_{L^{(3,1)}({\R}^3)}\le \lf\|\nabla_x\lf(  \frac{1}{|x-y|}\,e^{-\frac{|x-y|}{2\,\ep}}\rg)\rg\|_{L^{1}({\R}^3)}\  \|h\|_{L^{(3,1)}(\B_\rho(x_0))}\\[5mm]
\ds\quad\le C\ \ep\ \|h\|_{L^{(3,1)}(\B_\rho(x_0))}\ .
\end{array}
\]
We have then $\|\ti{\varphi}\|_{W^{1-1/3,(3,1)}(\p\B_\rho(x_0))}\le  C\, \ep\, \|h\|_{L^{(3,1)}(\B_\rho(x_0))}$. Inserting this inequality in (\ref{dif-tivar}) is giving
\[
\ds\|\nabla\varphi\|_{L^{(3,1)}({\R}^3)}\le  C\ \ep\ \|h\|_{L^{(3,1)}(\B_\rho(x_0))}\ .
\]
We have
\[
\begin{array}{l}
\ds\int_{\B_\rho(x_0)}\ti{w}\,h\ dx^3=\int_{B^3_\rho(x_0)}\ti{w}\,[-\Delta\varphi+4^{-1}\,\ep^{-2}\varphi]\ dx^3\\[5mm]
\ds=\int_{\B_\rho(x_0)}[-\Delta \ti{w}+4^{-1}\,\ep^{-2}\ti{w}]\ \varphi \ dx^3=\int_{B^3_\rho(x_0)}f+\mbox{div}X \ \varphi \ dx^3\\[5mm]
\ds\le \|f\|_{L^1(B_\rho(x_0))}\ \|{\varphi}\|_{L^\infty\lf(\B_\rho(x_0)\rg)}+\|X\|_{L^{(3/2,\infty)}(\B_{\rho}(x_0))}\ \|\nabla\varphi\|_{L^{(3,1)}(\B_{\rho}(x_0))}\\[5mm]
\ds\le  \,C\, \ep\, \lf( \|f\|_{L^1(\B_\rho(x_0))}+\|X\|_{L^{(3/2,\infty)}(\B_{\rho}(x_0))} \rg)\ \|h\|_{L^{(3,1)}\lf(\B_\rho(x_0)\rg)}\ .
\end{array}
\]
Thus
\be
\label{w32}
\|{w}\|_{L^{(3/2,\infty)}(\B_\rho(x_0))}\le C\,\sqrt{\delta}
\ee
We have using the fact that $2\,|u|^4>1$ in $\B_\rho(x_0)$ we have
\[
\begin{array}{l}
\ds -\mbox{div}\lf(\frac{1}{|u|^{4}}\,\nabla \lf(|[u,F_A]|-w\rg)\rg)+\frac{1}{4\,\ep^2\,|u|^4}(|[u,F_A]|-w)\\[5mm]
\ds\quad\le -\frac{1}{2\ep^2}\,|[u,F_A]|+\frac{1}{4\,\ep^2|u|^4}|[u,F_A]|\le 0\ .
\end{array}
\]
This gives
\[
-\Delta\lf(\frac{|[u,F_A]|-w}{|u|^4}\rg)+\frac{1}{4\,\ep^2}\frac{(|[u,F_A]|-w)}{|u|^4}\le 4\,\mbox{div}\lf( \frac{\nabla |u|}{|u|}\, \frac{|[u,F_A]|-w}{|u|^4}  \rg)\ .
\]
Let $\sigma$ be the solution of
\[
\lf\{
\begin{array}{l}
\ds-\Delta \sigma+\frac{\sigma}{4\,\ep^2}=4\,\mbox{div}\lf( \frac{\nabla |u|}{|u|}\, \frac{|[u,F_A]|-w}{|u|^4}  \rg) \qquad\mbox{ in }\B_\rho(x_0)\ .\\[5mm]
\ds \sigma=0\qquad\mbox{ on }\p \B_\rho(x_0)\ .
\end{array}
\rg.
\]
Classical elliptic estimates are giving
\[
\begin{array}{l}
\ds\|\sigma\|_{L^{(3/2,\infty)}(\B_\rho(x_0))}\le C\ \lf\| \frac{\nabla |u|}{|u|}\, \frac{|[u,F_A]|-w}{|u|^4} \rg\|_{L^1(\B_\rho(x_0))}\le C \int_{\B_r(x_0)} |\nabla |u||\,\lf(|F_A|+ |w|\rg)\ dx^3\\[5mm]
\ds \le C\ \|\sqrt{\ep}^{-1}\,\nabla_Au\|_{L^2(\B_r(x_0))}\ \|\sqrt{\ep}\,\nabla_Au\|_{L^2(\B_r(x_0))}+C\ \|\nabla_Au\|_{L^{(3,1)}(\B_r(x_0))}\  \|w\|_{L^{(3/2,\infty)}(\B_r(x_0))}
\end{array}
\]
Observe that using $|\nabla_A u|(x)\le C\,\ep^{-1}$ for all $x$ in $\B_r(x_0)$ we have
\be
\label{L31cov}
\begin{array}{l}
\ds\|\nabla_Au\|_{L^{(3,1)}(\B_r(x_0))}\le \int_0^{C\,\ep^{-1}}\lf| \lf\{ x\in \B_r(x_0)\ :\ |\nabla_A u|(x)>t  \rg\}  \rg|^{1/3}\ dt\\[5mm]
\ds\quad= \int_0^{C\,\ep^{-1}}\lf| \lf\{ x\in \B_r(x_0)\ :\ \frac{|\nabla_A u|}{\sqrt{\ep}}(x)>\frac{t}{\sqrt{\ep}} \rg\}  \rg|^{1/3}\ dt\\[5mm]
\ds\quad=\sqrt{\ep}\int_0^{C\,\ep^{-3/2}}\lf| \lf\{ x\in \B_r(x_0)\ :\ \frac{|\nabla_A u|}{\sqrt{\ep}}(x)>s \rg\}  \rg|^{1/3}\ ds\\[5mm]
\ds\quad\le \sqrt{\ep}\, \lf(\int_0^{C\,\ep^{-3/2}} \frac{ds}{\sqrt{s}}\rg)^{2/3}\lf(\int_0^{+\infty}s\,\lf| \lf\{ x\in \B_r(x_0)\ :\ \frac{|\nabla_A u|}{\sqrt{\ep}}(x)>s \rg\}  \rg|\ ds\rg)^{1/3}\\[5mm]
\ds\quad\le  \|\sqrt{\ep}^{-1}\,|\nabla_Au|\|_{L^{2}(\B_r(x_0))}^{2/3}\le C\ \delta^{1/3}\ .
\end{array}
\ee
Hence
\[
\|\sigma\|_{L^{(3/2,\infty)}(\B_\rho(x_0))}\le C\, \delta^{5/6}\ .
\]
Let $\chi_\rho\in C^\infty(B_\rho(x_0), {\R}_+)$ such that $\chi_\rho\equiv 1$ on $B_{3\rho/4}(x_0)$ and $|\nabla^l \chi_\rho|\le C_l\,\rho^{-l}$. Denote
\[
v= \frac{|[u,F_A]|-w}{|u|^4}-\sigma \ .
\]
Let $x\in B_{\rho/2}(x_0)$ at which then $\chi_\rho(x)=1$, we have 
\[
\begin{array}{l}
\ds0\ge \frac{1}{4\pi}\int_{\B_\rho(x_0)}\chi_\rho(y)\, |x-y|^{-1}\,\exp(-|x-y|/2\,\ep)\, \lf(-\Delta_y v+\frac{v}{4\ep^2}\rg)(y)\ dy^3\\[5mm]
\ds=\frac{1}{4\pi}\int_{\B_\rho(x_0)}\nabla_y \chi_\rho(y)\,\, |x-y|^{-1}\,\exp(-|x-y|/2\ep)\, \nabla_y v\ dy^3\\[5mm]
\ds+\frac{1}{4\pi}\int_{\B_\rho(x_0)} \chi_\rho(y)\,\, \nabla_y(|x-y|^{-1}\,\exp(-|x-y|/2\ep))\, \nabla_y v\ dy^3\\[5mm]
\ds+\frac{1}{4\pi}\int_{\B_\rho(x_0)}\chi_\rho(y)\, |x-y|^{-1}\,\exp(-|x-y|/2\ep)\,\frac{v}{4\ep^2}(y)\ dy^3\\[5mm]
\ds=\frac{1}{2\pi}\int_{\B_\rho(x_0)} \nabla_y\chi_\rho(y)\,\, \nabla_y(|x-y|^{-1}\,\exp(-|x-y|/2\ep))\, v(y)\ dy^3\\[5mm]
\ds-\frac{1}{4\pi}\int_{\B_\rho(x_0)}\Delta_y \chi_\rho(y)\,\, |x-y|^{-1}\,\exp(-|x-y|/2\ep)\, v(y)\ dy^3
+v(x)
\end{array}
\]
We have respectively for any $x\in B_{\rho/2}(x_0)$ 
\[
\begin{array}{l}
\ds \lf|\frac{1}{2\pi}\int_{\B_\rho(x_0)} \nabla_y\chi_\rho(y)\,\, \nabla_y(|x-y|^{-1}\,\exp(-|x-y|/2\ep))\, v(y)\ dy^3\rg|\\[5mm]
\ds\quad\le C\,(\rho^{-3}+\rho^{-2}\ep^{-1})\ \exp(-\rho/8\,\ep)\ \|v\|_{L^1(\B_\rho(x_0))}\ ,
\end{array}
\]
and
\[
\begin{array}{l}
\ds \lf|\frac{1}{4\pi}\int_{\B_\rho(x_0)}\Delta_y \chi_\rho(y)\,\, |x-y|^{-1}\,\exp(-|x-y|/2\ep)\, v(y)\ dy^3\rg|\\[5mm]
\ds\quad\le C\,\rho^{-3}\ \exp(-\rho/8\,\ep)\ \|v\|_{L^1(\B_\rho(x_0))}\ .
\end{array}
\]
We have
\[
\|v\|_{L^1(\B_\rho(x_0))}\le C\,\sqrt{\delta}+C\,\sqrt{\ep}^{-1}\,\|\sqrt{\ep}\,|F_A|\|_{L^2(\B_\rho(x_0))}\le C\,\sqrt{\ep}^{-1}\,\rho^{3/2}\,\sqrt{\delta}\ .
\]Thus
\[
\forall\ x\in\B_{\rho/2}(x_0)\qquad |v(x)|\le C\, \rho^{-1/2}\ep^{-3/2}\,\sqrt{\delta}\ \exp(-\rho/8\,\ep)\ 
\]
 Hence assuming $r>C_\delta\, \ep\,\log1/\ep$ for some $C_\delta>0$ depending on $\delta$ we deduce
 \be
 \label{L32infcurtra}
 \|[F_A,u]\|_{L^{(3/2,\infty)}(\B_{r/32}(x_0))}\le C\, \sqrt{\delta}\ .
 \ee 
 Combining this fact with (\ref{L32infcurlon}) we obtain
 \be
 \label{L32infcur}
 \|F_A\|_{L^{(3/2,\infty)}(\B_{r/32}(x_0))}\le C\, \sqrt{\delta}\ .
 \ee
 Recall from (\ref{Bochcovder})
 \[
\begin{array}{l}
\ds\Delta|\nabla_Au|\ge \frac{1}{2\,\ep^2}\, |\nabla_Au|-2\,|\nabla_A u|\, |F_A|-\frac{1}{\ep}\,|A|\ .
\end{array}
\]
Denote $\rho:= r/32$. We have seen that $\|F_A\|_{L^{(3/2,\infty)}(B_\rho(x_0))}$ small enough  there exists a unique $w$ solving
\be
\label{eq-w}
\lf\{
\begin{array}{l}
\ds -\Delta w+\frac{1}{2\,\ep^2}\, w-2\,w\,|F_A|=\frac{1}{\ep}\,|A|\qquad\mbox{ in }\B_\rho(x_0)\\[5mm]
\ds w=0 \qquad\mbox{ on }\p \B_\rho(x_0)
\end{array}
\rg.
\ee
it is non negative and it satisfies
\be
\label{estw}
\|w\|_{L^{(6,2)}\B_\rho(x_0)}^2+\int_{\B_\rho(x_0)}|\nabla w|^2+\frac{1}{4\ep^2}\, w^2\ dx^3\le C\ \int_{\B_\rho(x_0)}|A|^2\ dx^3\ .
\ee
We have also
\[
-\Delta(|\nabla_Au|-w)+\lf(\frac{1}{2\,\ep^2}-2\,|F_A|\rg)\,(|\nabla_Au|-w)\le 0
\]
Denote $\zeta:=(|\nabla_Au|-w)$. Let $\chi_\rho\in C^\infty(B_\rho(x_0), {\R}_+)$ such that $\chi_\rho\equiv 1$ on $B_{3\rho/4}(x_0)$ and $|\nabla^l \chi_\rho|\le C_l\,\rho^{-l}$. We have
\be
\label{chirhov}
-\Delta(\chi_\rho\,\zeta)+\lf(\frac{1}{2\,\ep^2}-2\,|F_A|\rg)\,(\chi_\rho\,\zeta)\le-\Delta\chi_\rho\ \zeta-2\,\nabla\chi_\rho\,\nabla \zeta=\Delta\chi_\rho\ \zeta-2\,\mbox{div}\lf( \nabla\chi_\rho\, \zeta\rg)\ .
\ee
Let $x_1\in \B_{\rho/2}(x_0)$. We consider Let $G_{\ep,x_1}$ be the solution to
\be
\label{Gep}
-\Delta G_{\ep,x_1}+\lf(\frac{1}{2\,\ep^2}-2\,|F_A|\,{\bf 1}_{\B_r(x_0)}\rg)\,G_{\ep,x_1}=\delta_{x_1}\ .
\ee
The existence and uniqueness of $G_\ep$ in $L^2({\R}^3)$ is the conclusion of the following a-priori bound and and a fixed point argument : we write
\[
G_{\ep,x_1}:=\frac{1}{4\pi}\frac{e^{-|x|/\ep}}{|x|}\star \lf(\delta_{x_1}+2\,|F_A|\,{\bf 1}_{\B_r(x_0)}\, G_{\ep,x_1}\rg)
\]
and we deduce
\[
\begin{array}{l}
\ds\|G_{\ep,x_1}\|_{L^2({\R}^3)}\le \frac{1}{4\pi}\ \lf\|  \frac{e^{-\frac{|x|}{\ep}}}{|x|} \rg\|_{L^2({\R}^3)}\, \lf(1+2\,\|F_A\|_{L^2(\B_r(x_0))}\,\|G_{\ep,x_1}\|_{L^2({\R}^3)}\rg)\\[5mm]
\ds\qquad\le C\, \sqrt{\ep}\, \lf(1+2\,\|\sqrt{\ep}\, F_A\|_{L^2(\B_r(x_0))}\,\sqrt{\ep}^{-1}\,\|G_{\ep,x_1}\|_{L^2({\R}^3)}\rg)\\[5mm]
\ds\qquad\le C\, \sqrt{\ep}\, \lf(1+C\,\sqrt{\delta}\,\sqrt{\ep}^{-1}\,\|G_{\ep,x_1}\|_{L^2({\R}^3)}\rg)
\end{array}
\]
Hence for $\delta$ small enough there is a unique solution to this equation in $L^2({\R}^3)$ and there holds
\[
\lf\|\frac{G_{\ep,x_1}}{\sqrt{\ep}}  \rg\|_{L^2({\R}^3)}\le C
\]
We have also by Young inequality
\[
\|\nabla G_{\ep,x_1}\|_{L^{(3/2,\infty)}(\R^3)}\le \frac{1}{4\pi}\lf\|\nabla\lf(\frac{e^{-|x|/\ep}}{|x|}\rg)\rg\|_{L^{(3/2,\infty)}(\R^3)}\  \lf(1+2\,\lf\||F_A|\,{\bf 1}_{\B_r(x_0)}\, G_{\ep,x_1}\rg\|_{L^1(\B_r(x_0))}\rg)\le C\,(1+\sqrt{\delta})
\]
Observe also that from (\ref{Gep}) we have $G_{\ep,x_1}\ge 0$. We multiply (\ref{chirhov}) by $G_{\ep,x_1}$ and after integration by parts we obtain
\[
(|\nabla_Au|-w)(x_1)=\zeta(x_1)\le C\,\rho^{-2}\,\|G_{\ep,x_1}\, \zeta\|_{L^1(\B_\rho(x_0))}+C\,\rho^{-1}\,\|\nabla G_{\ep,x_1}\, \zeta\|_{L^1(\B_\rho(x_0))}\ .
\]
Using (\ref{L31cov}) and (\ref{estw}) we deduce
\[
\forall\, x\in B_{\rho/2}(x_0)\ \qquad |\nabla_A u|(x)\le w(x)+C\,\sqrt{\ep}\ \delta^{1/3}\,\rho^{-3/2}+C\,\rho^{-1}\,\delta^{1/3}\ .
\]
Combining this inequality with (\ref{estw}) one finally gets
\[
\|\nabla_Au\|_{L^6(\B_{r/64}(x_0))}\le C\, \delta^{1/3}
\]
Bootstrapping this information in the whole ``circuit'' starting from (\ref{anti}) one gets ultimately $L^\infty$ bounds on $A$, $\nabla_Au$ and $F_A$ on a smaller ball $\B_{tr}(x_0)$ where $0<t<1$ is universal.
Regarding the derivatives  we observe firs that the equation on $|u|$ 
\[
-2^{-1}\,\Delta(1-|u|^2)+\frac{|u|^2}{\ep^2}(1-|u|^2)=|\nabla_Au|^2
\]
is implying a uniform bound independent of $\ep$ of $\|1-|u|^2\|_{W^{2,p}(\B_{tr/2}(x_0))}$ for any $p<+\infty$. Then the first equation in the E.L. system (\ref{CYMHS-co}) is giving
\[
\sum_{l=1}^3(\nabla_A)_l[u,(\nabla_A)_lu]=0
\]
Recall also that
\[
d_A(d_Au)=[F_A,u]
\]
Thus
\[
d_A[u,d_Au]=[u,[F_A,u]]+[d_Au\wedge d_Au]
\]
This implies the system
\be
\label{sysdAu}
\lf\{
\begin{array}{l}
\ds d^\ast [u,d_Au]=-[A\res d_Au]\\[5mm]
\ds d[u,d_Au]=-[A\,\wedge\, d_Au]+[u,[F_A,u]]+[d_Au\wedge d_Au]
\end{array}
\rg.
\ee
We deduce from this system and the $L^\infty$ bounds on $A$, $F_A$, and $d_Au$ a uniform bound independent of $\ep$ of $\|[u,d_Au]\|_{W^{1,p}(\B_{tr/2}(x_0))}$ for any $p<+\infty$.  Injecting this bounds in (\ref{anti}) is giving that $\nabla^2 \hat{u}$ is uniformly bounded in $W^{2,p}(\B_{tr/2}(x_0))$ for any $p<+\infty$. This concludes the proof of Lemma~\ref{lm-deltareg} is proved.\hfill $\Box$
\subsection{Concentration Compactness for $CYMH_\ep^{\la,\mu}$ minimizers in the limit $\ep\rightarrow 0$.}
This subsection is devoted to the proof of Theorem~\ref{th-cons-comp}.

\medskip

 \noindent{\bf Proof of Theorem~\ref{th-cons-comp}.} To simplify notations we take $\la=1$ and $\mu=1$ in the proof. Let  $\phi$ in $C^2(\p\B_1(0),{\S}^2)$ with zero degree. Denote by ${\mathcal E}_\phi$ the subspace of ${\mathcal E}$ given by elements (\ref{matE}) such that $u$ equals to $\phi$ at the boundary.  Let $(u^k,A^k)$ be a sequence of minimizers of $CYMH_{\ep_k}^{1,1}$ in ${\mathcal E}_\phi$ for some sequence $\ep_k\rightarrow 0$ .The existence of $(u_k,A_k)$ is given by  Proposition~\ref{pr-II.1}.   From Lemma~\ref{lm-bd} we have the existence of $M>0$ such that
 \[
 \sup_{k\in{\N}} \ CYMH_{\ep_k}^{1,1}(u^k,A^k)\le M\ .
 \]
 From theorem~\ref{th-reg} we have that $(u^k,A^k)$ is smooth inside $\B_1(0)$.

 \medskip
 
Let $r_1<1$ and $\delta_0>0$ given by Lemma~\ref{lm-sca-wh} and Lemma~\ref{lm-deltareg}. Because of Lemma~\ref{lm-zero-set}, modulo extraction of a subsequence that we keep denoting
 $(u^k,A^k)$, there exists a fixed number $Q$ a fixed family of points $(a_1\cdots a_Q)\in \ov{\B}_{r_1}(0)$ and a finite sequence of $Q$ points $x_1^k\cdots x_Q^k$ such that for any $i=1\cdots Q$ we have 
 \[
 \forall \,i=1\cdots Q\qquad x_i^k\rightarrow a_i\in \B_{r_1}(0)\qquad.
 \]
 Moreover
 \[
 \forall x\,\in  \B_{r_1}(0)\setminus \bigcup_{i=1}^Q \B_{\ep_k}(x_i^k)\qquad   \qquad   |u^k|(x)\ge 1-\delta_0\ .
 \]
 We introduce the following Radon measure
 \[
 \mu_k:=\lf(|{A}^k|^2+\frac{|d_{{A}^k} {u}^k|^2}{\ep}+\ep\,|F_{{A^k}}|^2\ +\frac{1}{2\,\ep^3}(1-|{u}^k|^2)^2\rg)\ dx^3
 \]
 We extract a subsequence such that
 \[
 \mu_k\rightharpoonup \mu_\infty\qquad\mbox{ in Radon measures.} 
 \]
 Let $\B_r(x_0)\subset \B_{r_1}(0)$ such that
 \[
\frac{1}{2\,r}\mu_\infty(\B_r(x_0))<\frac{\delta}{2}\ ,
\]
where $\delta_0>\delta>0$ is given by Lemma~\ref{lm-sca-wh}. The we have  for any $ x\in \B_{r/2}(x_0)$ and $k$ large enough (independent of $x\in  \B_{r/2}(x_0)$)
\[
\begin{array}{l}
\ds\frac{1}{r}\int_{\B_{r/2}(x)} |{A^k}|^2+\frac{|d_{{A}^k} {u}^k|^2}{\ep}+\ep\,|F_{{A}^k}|^2\ dx^3+\frac{1}{2\,\ep_k^3}(1-|{u}^k|^2)^2\ dx^3<{\delta}\\[5mm]
\ds\le 2\ \frac{1}{2\,r}\int_{\B_r(x_0)} |{A}^k|^2+\frac{|d_{{A}^k} {u}^k|^2}{\ep}+\ep\,|F_{{A}^k}|^2\ dx^3+\frac{1}{2\,\ep_k^3}(1-|{u}^k|^2)^2\ dx^3<\delta\ .
\end{array}
\]
 Applying Lemma~\ref{lm-sca-wh} we then have
 \be
 \label{mordel}
 \sup_{x\in \B_{r/2}(x_0)}\ \sup_{\rho<r/2}\frac{1}{2\,\rho}\int_{\B_\rho(x)} |{A}^k|^2+\frac{|d_{{A}^k} {u}^k|^2}{\ep}+\ep\,|F_{{A}^k}|^2\ dx^3+\frac{1}{2\,\ep_k^3}(1-|{u}^k|^2)^2\ dx^3<C\,\delta
 \ee
for some universal $C>0$. Applying Lemma~\ref{lm-deltareg} we have then modulo extraction of a subsequence the fact that
\[
A^k\rightarrow A^\infty\quad,\quad \nabla u^k\rightarrow \nabla u^\infty \quad\mbox{ and }\nabla_{A^k}u^k\rightarrow 0\qquad\mbox{ in }L^\infty(\B_{r/2}(x_0))
\]
for some limiting connection moreover $\nabla_{A^k}$ being uniformly bounded in $L^2(\B_{r/2}(x_0))$ we have
\[
|(A^k)^T|^2+\ep^k\,|F_{A^k}|^2\ dx^3 \rightharpoonup 4^{-1}\,|\nabla u^\infty|^2\ dx^3\qquad\mbox{ in }{\mathcal M}(\B_{r/2}(x_0))\ .
\]
From (\ref{Bochcovder}) we then obtain that
\[
\begin{array}{l}
\ds -\Delta |\nabla_A^k u^k|^2+\frac{1}{2\ep_k^2}\,|\nabla_{A^k} u^k|^2\le f^k+\ep_k^{-1}g^k
\end{array}
\]
where both $f^k$ and $g^k$ are uniformly bounded in $L^p(\B_{r/2}(x_0))$ for any $p<+\infty$. This implies using the arguments from the previous subsection that 
\[
\forall x\in B_{r/4}(x_0)\quad |\nabla_{A^k}u^k|^2(x)\le w^k(x)+ C_r\ \ep_k^{-3/2}\ e^{-r/8\ep_k}
\]
where
\[
\ds -\Delta w^k+\frac{1}{2\ep_k^2}\,w^k= (f^k+\ep_k^{-1}g^k)\ {\mathbf 1}_{\B_{r/2}(x_0)}\quad\mbox{ in }{\R}^3
\]
Hence
\[
w^k:=\frac{1}{4\pi}\frac{e^{-|x|/\ep_k}}{|x|}\star (f+\ep_k^{-1}g)\ {\mathbf 1}_{\B_{r/2}(x_0)}
\]
and we have for any $p<+\infty$
\[
\|w^k\|_{L^p({\R}^3)}\le \frac{1}{4\pi} \lf\|\frac{e^{-|x|/\ep_k}}{|x|}\rg\|_{L^1({\R}^3)}\ \lf(\|f^k\|_{L^p(\B_{r/2}(x_0))}+\ep_k^{-1}\,\|g^k\|_{L^p(\B_{r/2}(x_0))}\rg)=O(\ep_k)
\]
We deduce for any $p<+\infty$
\[
\int_{\B_{r/4}(x_0)}|\nabla_{A^k}u^k|^{2p}(x)\ dx^3\le O(\ep_k^p)
\]
Recall that 
\[
-\frac{1}{2}\,\Delta(1-|u^k|^2)+\frac{|u^k|^2}{\ep_k^2}(1-|u^k|^2)=|\nabla_{A^k}u^k|^2
\]
Multiplying the equation by $\ep_k^{-1}\,\chi_r(x) (1-|u_k|)$ where $\chi_r\in C^\infty_0(\B_{r/4}(x_0),{\R}_+)$ and $\chi_r\equiv 1$ on ${\B}_{r/8}(x_0)$. integrating by parts is giving
\[
\begin{array}{l}
\ds\int_{B_r(x_0)}\ \chi_r\ \frac{|\nabla|u^k|^2|^2}{\ep_k}+\chi_r\ \frac{|u^k|^2}{\ep_k^3}(1-|u^k|^2)^2\ dx^3\le \int_{B_r(x_0)}\ \chi_r\ \frac{|\nabla_{A^k}u^k|^2}{\ep_k}\ (1-|u^k|^2)\\[5mm]
\ds+\frac{1}{4}\int_{B_r(x_0)}\ \nabla\chi_r\cdot \frac{1}{\ep_k}\,\nabla(1-|u^k|^2)^2\ dx^3\\[5mm]
\ds\le \lf( \int_{B_r(x_0)}\ \chi_r\ \frac{|\nabla_{A^k}u^k|^4}{\ep^2_k} \ dx^3\rg)^{1/2}\ \lf( \int_{B_r(x_0)}\ \chi_r\ (1-|u|^2)^2\ dx^3 \rg)^{1/2}\\[5mm]
\ds+\frac{C}{r}\ \lf( \int_{B_r(x_0)}\ \ \frac{|\nabla|u^k|^2|^2}{\ep_k}  \rg)^{1/2}\ \lf( \int_{B_r(x_0)}\ \ \frac{(1-|u|^2)^2}{\ep_k}\ dx^3 \rg)^{1/2}=o_k(1)
\end{array}
\]
Hence
\[
|\nabla|u^k||^2+\frac{1}{2\ep_k^3}(1-|u^k|^2)^2\ dx^3\rightharpoonup 0\qquad\mbox{ in }{\mathcal M}(\B_{r/8}(x_0))
\]
Multiplying (\ref{longA}) by $\chi_\rho\ A^k\cdot \hat{u}^k$ and integrating by parts is giving 
\[
\begin{array}{l}
\ds\int_{B_r(x_0)}\ \chi_r\ |\nabla(\hat{u}^k\cdot A^k)|^2+\frac{|\hat{u}^k\cdot A^k|^2}{\ep}\le \int_{B_r(x_0)}\ \nabla \chi_r\ \hat{u}^k\cdot A^k\cdot\nabla(\hat{u}^k\cdot A^k) \ dx^3\\[5mm]
\ds+\int_{B_r(x_0)}\ \nabla\chi_r\,\nabla_{A^k}u^k\cdot A^k-\chi_r\, \nabla_{A^k}u^k\res F_{A^k}+\sum_{m=1}^3\p_{x_m}\chi_r\,(\nabla_{A^k})_mu^k\cdot A^k\ dx^3\\[5mm]
\ds-\int_{B_r(x_0)}\ \sum_{m=1}^3\p_{x_m}\chi_r\,(\nabla_{A^k})u^k\cdot A^k_m-\int_{B_r(x_0)}\ \sum_{m=1}^3\p_{x_m}\chi_r\,\hat{u}\cdot [A^k,A^k_m] =O(1)
\end{array}
\]
Hence
\[
\int_{\B_{r/8}(x_0)}\frac{|A^k\cdot\hat{u}^k|^2}{\ep}=O(1)
\]
This implies
\[
|A^k\cdot\hat{u}^k|^2\ dx^3\ \rightharpoonup\ 0\qquad\mbox{ in }{\mathcal M}(\B_{r/8}(x_0))
\]
and we have also  the ``transversal condition''
\[
A^\infty\cdot u^\infty=0\ .
\]
We write
\[
F_{A^k}=d((A^k)^T)+(A^k)^T\wedge (A^k)^T+d(A^k\cdot\hat{u}^k\,\hat{u}^k)+2\,A^k\cdot\hat{u}^k\wedge [u^k,(A^k)^T]
\]
Hence
\[
\begin{array}{l}
[\hat{u}^k,F_{A^k}]=[\hat{u}^k,d((A^k)^T)]+A^k\cdot\hat{u}^k\wedge [\hat{u}^k, d\hat{u}^k]-8\,A^k\cdot\hat{u}^k\wedge (A^k)^T\\[5mm]
=d\lf([\hat{u}^k,(A^k)^T]\rg)-[d\hat{u}^k\wedge (A^k)^T]+A^k\cdot\hat{u}^k\wedge [\hat{u}^k, d\hat{u}^k]-8\,A^k\cdot\hat{u}^k\wedge (A^k)^T\ .
\end{array}
\]
We have respectively
\[
[\hat{u}^k,(A^k)^T]=-d_{A^k}\hat{u}^k+d\hat{u}^k\qquad\mbox{ and }\qquad-[d\hat{u}^k\wedge (A^k)^T]=-[d_{A^k}\hat{u}^k\wedge (A^k)^T]+[[u^k,(A^k)^T]\wedge (A^k)^T]
\]
and, using Jacobi
\[
\begin{array}{l}
\ds[[u^k,(A^k)^T]\wedge (A^k)^T]=\sum_{lm=1}^3[[u^k,(A^k)_l^T], (A^k)_m^T]-[[u^k,(A^k)_m^T], (A^k)_l^T]\ dx_l\wedge dx_m\\[5mm]
\ds\quad=\sum_{lm=1}^3[[(A^k)_m^T, (A^k)_l^T],u^k]\ dx_l\wedge dx_m=0
\end{array}
\]
where we used that $[(A^k)_m^T, (A^k)_l^T]$ is parallel to $u^k$. Thus
\be
\label{transcur}
[\hat{u}^k,F_{A^k}]=-d\lf(  d_{A^k}\hat{u}^k  \rg)-[d_{A^k}\hat{u}^k\wedge (A^k)^T]+A^k\cdot\hat{u}^k\wedge [\hat{u}^k, d\hat{u}^k]-8\,A^k\cdot\hat{u}^k\wedge (A^k)^T\ .
\ee
This implies in particular
\be
\label{bdtranscur}
\limsup_{k\rightarrow +\infty}\lf\|\frac{[\hat{u}^k,F_{A^k}]}{\sqrt{\ep}}\rg\|_{H^{-1}+L^2(\B_{r/8}(x_0))}<+\infty
\ee
Modulo extraction of a subsequence there exists $F$ and $\al$ such that
\be
\label{weakcurv}
\frac{[\hat{u}^k,F_{A^k}]}{\sqrt{\ep}}\rightharpoonup \om\qquad\mbox{ in }\qquad H^{-1}+L^2(\B_{r/8}(x_0))
\ee
and
\[
\frac{[\hat{u}^k,d_{A^k}\hat{u}^k]}{\sqrt{\ep}}\rightharpoonup \al\quad\mbox{ weakly in }L^2(\B_{r/8}(x_0))
\]
Taking into account that
\[
d|u^k|\ \longrightarrow\ 0 \qquad\mbox{ strongly in }L^2(\B_{r/8}(x_0))\ ,
\]
from (\ref{sysdAu}) there holds
\be
\label{sysdAulim}
\lf\{
\begin{array}{l}
\ds d^\ast_{A^\infty} \al=0\\[5mm]
\ds d_{A^\infty}\al=\om\\[5mm]
\ds \al\cdot u^\infty=0
\end{array}
\rg.
\ee
From (\ref{CYMHS-co}) we have
\[
-[\hat{u}^k,A^k]=\ep_k\,[\hat{u}^k,d^\ast_{A^k}F_{A_k}]+\frac{1}{\ep_k}\,[\hat{u}^k,[u^k,d_{A^k}u^k]]=\ep_k\,d^\ast_{A^k}[\hat{u}^k,F_{A_k}]-\ep_k\,[d_{A^k}\hat{u}^k\res F_{A_k}]+\frac{1}{\ep_k}\,[\hat{u}^k,[u^k,d_{A^k}u^k]]
\]
Hence
\[
\begin{array}{l}
\ds\frac{1}{|u^k|^2}\lf[\hat{u}^k,\frac{[\hat{u}^k,d_{A^k}\hat{u}^k]}{\sqrt{\ep_k}}\rg]=\lf[\hat{u}^k,\frac{[u^k,d_{A^k}u^k]}{\sqrt{\ep_k}}\rg]\\[5mm]
\ds=-\sqrt{\ep_k}\,[\hat{u}^k,A^k]-\ep^{3/2}_k\,d^\ast[\hat{u}^k,F_{A_k}]- \ep^{3/2}_k\,[A^k\res[\hat{u}^k,F_{A_k}]]+\ep^{3/2}_k\,[d_{A^k}\hat{u}^k\res F_{A_k}]\rightarrow 0\quad\mbox{in }L^1+H^{-1}(\B_r(x_0))
\end{array}
\]
We deduce
\[
[u^\infty,\al]=0
\]
Since $u^\infty\cdot\al =0$ we deduce $\al=0$ and $\om=0$. 
From (\ref{sysdAu}) we have then
\be
\label{comp-dAu}
\lf\{
\begin{array}{l}
\ds d^\ast\lf(\frac{[\hat{u}^k,d_{A^k}\hat{u}^k]}{\sqrt{\ep}}\rg)\rightharpoonup 0\qquad\mbox{ weakly in }L^2(\B_{r/8}(x_0))\\[5mm]
\ds \frac{[u^k,[\hat{u}^k,F_{A^k}]]}{\sqrt{\ep}}-d\lf(\frac{[u^k,d_{A^k}\hat{u}^k]}{\sqrt{\ep}}\rg)\rightharpoonup 0\qquad\mbox{ weakly in }L^2(\B_{r/8}(x_0))
\end{array}
\rg.
\ee
Multiplying (\ref{bwt}) by $\ep\,\chi_r\,|[u^k,F_{A^k}]|$ (where we recall that  $\chi_r\in C^\infty_0(\B_{r/4}(x_0),{\R}_+)$ and $\chi_r\equiv 1$ on ${\B}_{r/8}(x_0)$) and integrating by parts is giving
\[
\begin{array}{l}
\ds\int_{B_r(x_0)}\ \chi_r\ \frac{\ep_k}{|u^k|^4}\lf|  \nabla{|[u^k,F_{A^k}]|}\rg|^2+\frac{1}{2\ep_k}|[u^k,F_{A^k}]|^2\le \int_{B_r(x_0)}\ \frac{\ep_k}{2|u^k|^4}\,\nabla\chi_r\cdot\nabla |[u^k,F_{A^k}]|^2\ dx^3\\[5mm]
\ds\qquad+\int_{B_r(x_0)}\ \chi_r \ |[u^k,F_{A^k}]|\ \lf(8\, |u^k|^{-6}\, \ep_k\,|\nabla_{A^k}\,u^k|^2\,|F_{A^k}|+4\, |u^k|^{-4}\,|\nabla_{A^k}u^k|\,\ep_k\,|\nabla(F_{A^k}\cdot u^k)|\rg)\ dx^3\\[5mm]
\ds\qquad+\int_{B_r(x_0)}\ \chi_r \ |[u^k,F_{A^k}]|\, \lf(4\,\ep_k^{-1}|u^k|^{-4}\,|\nabla_{A^k} u^k|^2+2\,|u^k|^{-4}\,|A^k|^2+4\,\ep_k\,|u^k|^{-4}\,|F_{A^k}|^2\rg)\ dx^3=O(1)

\end{array}
\]
where we have used (\ref{longcurv}) and the fact that
\[
\int_{B_r(x_0)}\ \frac{\ep_k}{2|u^k|^4}\,\nabla\chi_r\cdot\nabla |[u^k,F_{A^k}]|^2\ dx^3=-\int_{B_r(x_0)}\ \mbox{div}\lf(\frac{\ep_k}{2|u^k|^4}\,\nabla\chi_r\rg)\ |[u^k,F_{A^k}]|^2\ dx^3\ .
\]
Thus we deduce that
\[
\limsup_{k\rightarrow +\infty}\lf\| \frac{[u^k,[\hat{u}^k,F_{A^k}]]}{\sqrt{\ep}}\rg\|_{L^2(\B_{r/8}(x_0))}<+\infty
\]
This implies that the weak convergence in (\ref{weakcurv}) holds in $L^2(\B_{r/8}(x_0))$ and we have from (\ref{comp-dAu}) and the fact that $\om=0$
\[
\lf\{
\begin{array}{l}
\ds d^\ast\lf(\frac{[\hat{u}^k,d_{A^k}\hat{u}^k]}{\sqrt{\ep}}\rg)\rightharpoonup 0\qquad\mbox{ weakly in }L^2(\B_{r/8}(x_0))\\[5mm]
\ds d\lf(\frac{[u^k,d_{A^k}\hat{u}^k]}{\sqrt{\ep}}\rg)\rightharpoonup 0\qquad\mbox{ weakly in }L^2(\B_{r/8}(x_0))
\end{array}
\rg.
\]
Using Rellich Kondrachov we deduce that $\frac{[\hat{u}^k,d_{A^k}\hat{u}^k]}{\sqrt{\ep}}$ is pre-compact in $L^2(\B_{r/16}(x_0))$ and there holds
\[
\frac{|[u^k,d_{A^k}u^k]|^2}{\ep_k}\ dx^3\rightharpoonup 0\qquad\mbox{ in }{\mathcal M}(\B_{r/8}(x_0))
\]
Hence we have proved that 
\be
\label{muinf}
\frac{\mu_\infty(\B_{r}(x_0))}{r}<\delta\qquad\Longrightarrow\qquad \mu_\infty\res \B_{r/16}(x_0)=\frac{1}{4}\,|\nabla u^\infty|^2\ dx_3\res\B_{r/16}(x_0)
\ee
Let $\sigma>0$ and introduce
\[
K_\sigma:=\lf\{x\in \B_{r_1}(0)\setminus\bigcup_{i=1}^Q\B_\sigma(a_i)\ ;\ \liminf_{r\rightarrow 0} \frac{\mu_\infty(\B_{r}(x))}{r}\ge\delta\rg\}
\]
Let $x\in \B_{r_1}(0)\setminus\lf(K_\sigma\cup\bigcup_{i=1}^Q\B_\sigma(a_i)\rg)$ then there exists $\rho>0$ such that
\[
\frac{\mu_\infty(\B_{\rho}(x))}{\rho}<\delta\ .
\]
Using (\ref{muinf}) we have that
\[
\B_{\rho/16}(x_0)\cap K_\sigma=\emptyset\ .
\]
Hence $K_\sigma$ is relatively closed in $\B_{r_1}(0)\setminus\bigcup_{i=1}^Q\B_\sigma(a_i)$ moreover
\[
{\mathcal H}^1(K_\sigma)\le C\,\delta^{-1}\ \mu_\infty(\B_{r_1}(0))\ .
\]
where $C$ is universal. From the work of S.Eilenberg and O.G. Harrold we know that compact connected sets of finite one dimensional Hausdorff measure are rectifiable (\cite{EH} theorem 2 page 139). Let
\[
(K^i_\sigma)_{i\in I}
\] 
be the connected components of $K_\sigma$. Since $K_\sigma$ is compact each of its connected components are compact.  Let $d_i$
\[
d_i:=\dist\lf\{ K_\sigma^i,\bigcup_{j\ne i}K_\sigma^j\rg\}
\]
It is clear since $K_\sigma^i$ and $\bigcup_{j\ne i}K_\sigma^j$ are disjoint compact sets that $d_i>0$. Then, by definition the open sets
\[
U_i:=\{x\in \B_1(0)\quad;\quad \mbox{dist}(x, K_i)<d_i\}
\]
are disjoint to each other then there can only be a countable union of them and $I$ is at most countable. We deduce that each of the $K_\sigma$ are 1-rectifiable.
This concludes the proof of theorem~\ref{th-cons-comp}.\hfill $\Box$
\subsection{Overloading and Clearing out Monopoles.}
In this subsection we are going to increase the coupling constant $\mu$ in front of the $YMH$ part of the energy in order to penalise the possible formation of Monopoles.
We obtain a result according to which there is a threshold for $\la\mu$ independent of $\ep$ above which no Monopole can be formed in the asymptotic $\ep\rightarrow 0$ with $\la$ and $\mu$ fixed. In this regime the minimal $CYMH$ energy is converging towards the Brezis Coron Lieb relaxed energy. Below this threshold Monopole with non zero topological charge can be formed\footnote{The topological singularities for the Higgs field is characterised by the local integral of $F_A\cdot d_Au$ (see \cite{JT})} and at the limit the harmonic map extension of $\phi$ given by $u^\infty$ may have singularities which are not penalised and ''connected'' by $\mu_\infty$.  This is the object of Theorem~\ref{th-clearout} that we are now proving.

\medskip

\noindent{\bf Proof of theorem~\ref{th-clearout} }From lemma~\ref{lm-bd} we have that  that for any $\phi$ of degree 0 and  $\phi\in C^2(\p \B^3,\S^2)$,  there exists a constant $C_\phi>0$ independent of $\la\ge 1$ and $\mu\ge 1$ such that
\[
\limsup_{\ep\rightarrow 0}\min\lf\{CYMH_\ep^{\la,\mu}(u,A)\ ;\ (u,A)\in {\mathcal E}_\phi\rg\}<C_\phi
\]
From Lemma~\ref{lm-zero-set} we choosing $\la/\mu$ bounded, there exists $N_\phi$ such that for any $\la\mu>\La_\phi$ and $\ep$ small enough
\[
1\ge |u|\ge 1/2\qquad\mbox{in }\ov{\B_1(0)}
\]
moreover for any $1>t>1/2$
\[
\lf| \lf\{ x\in  \B_1(0) \ ;\ |u(x)|<t \rg\}\rg|\le  \frac{C_{M,\phi}}{\la\mu}\  \ep^3\ t^{-5}\ \lf(1+\sqrt{\frac{\la}{\mu}}\rg)^3\ .
\]
Denote on $\B_1(0)$ $\hat{u}:=u/|u|$. There holds
\[
\begin{array}{l}
\ds\frac{1}{2}\,\int_{\B_1(0)}|A|^2\ dx^3\ge \frac{1}{2}\,\int_{\B_1(0)}|A^T|^2\ dx^3= \frac{1}{2}\,\int_{\B_1(0)}\frac{1}{4}\lf|[\hat{u},A]\rg|^2\ dx^3\\[5mm]
\ds\ge \frac{1}{8}\,\int_{\B_1(0)}\frac{1}{4}\lf|d\hat{u}-(d_A\hat{u})^T\rg|^2\ dx^3
\end{array}
\]
Let $t<1$, we have 
\[
\begin{array}{l}
\ds\frac{1}{8}\,\int_{\B_1(0)}\lf|d\hat{u}-(d_A\hat{u})^T\rg|^2\ dx^3\ge\frac{1}{8}\,\int_{\B_1(0)}\lf|d\hat{u}\rg|^2-2\,d\hat{u}\cdot (d_A\hat{u})^T+|d_A\hat{u}|^2\\[5mm]
\ds\ge \frac{1}{8}\,\int_{\B_1(0)}(1-t^2)\,\lf|d\hat{u}\rg|^2- \frac{1}{t^2}\, |(d_A\hat{u})^T|^2\ dx^3\ge \frac{1}{8}\,\int_{\B_1(0)}(1-t^2)\,\lf|d\hat{u}\rg|^2- \frac{1}{|u|^2\,t^2}\, |d_A{u}|^2\ dx^3
\end{array}
\]
Thus combining the two previous inequalities we have
\[
\frac{1}{2}\,\int_{\B_1(0)}|A|^2+\mu\,\frac{|d_Au|^2}{\ep}\ dx^3\ge \frac{1}{8}\,\int_{\B_1(0)}(1-t^2)\,\lf|d\hat{u}\rg|^2+\lf(\frac{\mu}{\ep}- \frac{4}{\,t^2}\rg)\, |d_A{u}|^2\ dx^3\ .
\]
Choosing $t^2:=4\,\ep/\mu$ we have finally
\[
CYMH_\ep^{\la,\mu}(u,A)\ge \frac{1}{8}\,\int_{\B_1(0)}\lf(1-4\,\frac{\ep}{\mu}\rg)\,\lf|d\hat{u}\rg|^2\ dx^3\ .
\]
Because of theorem~\ref{th-reg} we have $\hat{u}\in W^{1,2}\cap C^\infty_{loc}(\B^3,{\S}^2)$, and $\hat{u}=\phi$ on $\p \B^3_1(0)$. This implies in particular
\[
d(\hat{u}^\ast\om_{S^2})=0\qquad\mbox{ in }{\mathcal D}'(\B^3)\ .
\]
where $\om_{S^2}$ is the volume form on $\S^2$. From \cite{BCL}, \cite{GMS-1}, \cite{GMS-2} and \cite{BBC} we have
\be
\label{BCLGMS}
\inf\lf\{ \frac{1}{8}\,\int_{\B_1(0)}|du|^2\ dx^3+\pi\, L_\phi(u) \  ;\  u\in W_\phi^{1,2}(\B^3,\S^2)\rg\}=\inf\lf\{ \frac{1}{8}\,\int_{\B_1(0)}|du|^2\ dx^3 ;\  u\in C^1_\phi(\B^3,\S^2)\rg\}
\ee
where 
\[
L_\phi(u):=\frac{1}{4\pi}\sup\lf\{ \int_{\B^3}d\xi\wedge u^\ast\om_{S^2} -\int_{\p\B^3}\xi\ \phi^\ast \om_{S^2}\ ;\ \|d\xi\|_\infty\le 1\rg\}\ .
\]
Hence we have proved
\[
\liminf_{\ep\rightarrow 0}\min\lf\{CYMH_\ep^{\la,\mu}(u,A)\ ;\ (u,A)\in {\mathcal E}_\phi\rg\}\ge\inf\lf\{ \frac{1}{8}\,\int_{\B_1(0)}|du|^2\ dx^3+\pi\, L_\phi(u) \  ;\  u\in W_\phi^{1,2}(\B^3,\S^2)\rg\}
\]
Using now (\ref{fadeev}) and the proof of Lemma~\ref{lm-bd} we have for any smooth extension $u\in C^2(\B^3,\S^2)$ of $\phi$ in $\B^3$
\[
\liminf_{\ep\rightarrow 0}\min\lf\{CYMH_\ep^{\la,\mu}(u,A)\ ;\ (u,A)\in {\mathcal E}_\phi\rg\}\le\frac{1}{8}\,\int_{\B_1(0)}|du|^2\ dx^3
\]
Then because of (\ref{BCLGMS}) we have the reverse inequality
\[
\inf\lf\{ \frac{1}{8}\,\int_{\B_1(0)}|du|^2\ dx^3+\pi\, L_\phi(u) \  ;\  u\in W_\phi^{1,2}(\B^3,\S^2)\rg\}\ge \liminf_{\ep\rightarrow 0}\min\lf\{CYMH_\ep^{\la,\mu}(u,A)\ ;\ (u,A)\in {\mathcal E}_\phi\rg\}\ .
\]
This concludes the proof of Theorem~\ref{th-clearout}.\hfill $\Box$
\section{ The Asymptotic when  $\mu=\tau/\ep$  for $\tau=\tau_0$ fixed and $\ep\rightarrow 0$ }
\reset
In this section we study sequences of minimizers of
\[
CYMH^{1,\tau/\ep}_\ep(u,A):=\frac{1}{2}\int_{\B_1(0)}|A|^2\ dx^3+\frac{\tau}{2}\int\,\frac{|\nabla_Au|^2}{\ep^2}+|F_A|^2+\frac{1}{2\ep^4}(1-|u|)^2\ dx^3
\]
when $\tau$ is fixed and $\ep\rightarrow 0$.
\subsection{Uniform convergence of $|u|$ to 1 and an a-priori $L^\infty$ estimate on $\nabla_Au$.}

\begin{Lm}
\label{lm-apestinf}
Let $\phi\in C^2(\B^3,\S^2)$, $\tau\in(0,1)$,$\la\ge 1$ and let $\ep_k\rightarrow 0$. Let $(u^k,A^k)$ be a sequence of minimizers of $CYMH_{\ep^k}^{\la,\tau/\ep_k}$ in ${\mathcal E}_\phi$. Then we have
\be
\label{prel-bd-B}
\|\nabla_{A^k}  u^k\|_{L^{\infty}(\B_1(0)} \le C\,\la\,\sqrt{\frac{\,M}{\ep\,\tau}}\qquad\mbox{ and }\qquad \|1-|u^k|\|_{L^\infty(\B_1(0)} \le C\,\sqrt{\frac{\ep\, \la\,M}{\tau}}\ .
\ee
\hfill $\Box$
\end{Lm}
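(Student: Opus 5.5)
The plan is to rerun the argument of Lemma~\ref{lm-linfty-b} in the new scaling $\mu=\tau/\ep$, and then to get the bound on $1-|u|$ from the bound on the covariant derivative, exactly as in Lemma~\ref{lm-zero-set}.

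First, the energy bound. By Lemma~\ref{lm-bd}, with $\mu\ep=\tau<1$, we have $M:=\sup_k CYMH_{\ep_k}^{\la,\tau/\ep_k}(u^k,A^k)\le C_{\phi,1}$. In this regime the energy controls
\[
\tau\int\frac{|\nabla_Au|^2}{\ep^2}+\tau\int|F_A|^2+\frac{\la\tau}{2\ep^4}\int(1-|u|^2)^2 .
\]
So $\|F_A\|_{L^2}^2\le M/\tau$ is bounded independently of $\ep$, $\|\nabla_Au\|_{L^2}^2\le M\ep^2/\tau$, and $\|1-|u|^2\|_{L^2}^2\le 2M\ep^4/(\la\tau)$. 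The maximum principle argument giving (\ref{linfty-module}) applies unchanged, so $|u|\le 1$.

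Second, the gradient bound. Fix $x_0$ and extract on $\B_{\al\ep}(x_0)$ an Uhlenbeck Coulomb gauge (Theorem~\ref{th-Uhl}). This is even easier than before, since $\|F_A\|_{L^{3/2}(\B_{\al\ep})}\le C(\al\ep)^{1/2}\sqrt{M/\tau}$ is small for every $\al\le1$.

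In this gauge, Lemma~\ref{lm-linfty-b} bounds $A^g$ in $L^\infty$ through the Yang--Mills equation. Here that equation reads
\[
\ep\,d^*_AF_A+\ep^{-1}[u,d_Au]+A\ep/\tau=0 .
\]
Its sources $\ep^{-2}[u,d_Au]$ and $\tau^{-1}A$ are estimated in $L^{(3/2,1)}$ on the $\al\ep$-ball using H\"older. Then the Higgs equation (\ref{eq-ug}) is bootstrapped with the potential term
\[
\frac{\la}{\ep^2}\,u^g(1-|u|^2),\qquad \Bigl\|\frac{\la}{\ep^2}(1-|u|^2)\Bigr\|_{L^2(\B_{\al\ep})}\le \frac{\la}{\ep^2}\cdot\frac{C\ep^2}{\sqrt{\la\tau}}\sqrt M=C\sqrt{\la M/\tau}.
\]
Following the Calder\'on--Zygmund chain $L^2\to L^{(3/2,1)}\to W^{1,(3,1)}\to W^{2,(3,1)}\hookrightarrow W^{1,\infty}$ on shrinking balls, the scalings combine to give $\|\nabla_{A^g}u^g\|_{L^\infty(\B_{\al\ep/16})}\le C\la\sqrt{M/(\ep\tau)}$. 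The extra powers of $\la$ are kept crudely. Near $\p\B_1(0)$ we argue on half balls using $\|\phi\|_{C^2}$, as in Lemma~\ref{lm-linfty-b}.

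Third, the bound on $1-|u|$. By Kato, $|\nabla|u||\le L:=C\la\sqrt{M/(\ep\tau)}$. Suppose $1-|u(x_0)|=t$. Then $1-|u|\ge t/2$ on $\B_{t/(2L)}(x_0)$, and hence
\[
\frac{\la\tau}{\ep^4}\,t^2\,(t/L)^3\lesssim M .
\]
This gives the bound on $1-|u|$. I expect the bookkeeping of the exponents here to be the main obstacle. The target $t\lesssim\sqrt{\ep\la M/\tau}$ is what a potential-only estimate gives at the monopole scale $\ep$ (the ball of radius $\sim\ep$). If the crude gradient-based estimate is not sharp enough, I would instead use the subsolution inequality $-\tfrac12\Delta(1-|u|^2)+\la\ep^{-2}|u|^2(1-|u|^2)=|\nabla_Au|^2$. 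Inserting the $L^\infty$ gradient bound into the maximum principle, in the region $|u|\ge1/2$, then yields $1-|u|^2\lesssim \ep^2\la^{-1}\|\nabla_Au\|_\infty^2$.

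The remaining care is to make the $\al\ep$-scale Coulomb gauge estimates consistent with the new factor $\tau/\ep$ in front of $|F_A|^2$. This factor improves the curvature smallness, but it also changes the coefficient of the $A$ term in the Yang--Mills equation, to $\ep/\tau$ in place of $1/\mu$.
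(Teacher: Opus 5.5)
Your Step 2 does not give the rate $\ep^{-1/2}$, and once repaired with your own Step 3 the argument becomes circular. Track the last link $W^{1,(3,1)}\to W^{2,(3,1)}\hookrightarrow W^{1,\infty}$ on $\B_{\ep/4}(x_1)$. The terms coming from $A^g$ are harmless: with $\|A^g\|_{L^\infty}\lesssim\sqrt{M/(\ep\tau)}$ and $\|\nabla u^g\|_{L^{(3,1)}(\B_{\ep/2})}\lesssim\sqrt{\ep\la M/\tau}$ they contribute $O(\ep^{-1/2})$. What remains is $\frac{\la}{\ep^2}\|1-|u|^2\|_{L^{(3,1)}(\B_{\ep/4})}$. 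Lemma~\ref{lm-linfty-b} handles this term with the trivial bound $|1-|u|^2|\le 1$, which here gives $\la\ep^{-1}$. Interpolating your $L^2$ bound $\|1-|u|^2\|_{L^2}\lesssim \ep^2\sqrt{M/(\la\tau)}$ with that trivial $L^\infty$ bound gives only $\ep^{-2/3}$. To reach $\ep^{-1/2}$ you need $\|1-|u|^2\|_{L^\infty(\B_{\ep/4})}\lesssim\sqrt{\ep}$. That is exactly the second estimate of the lemma, and your Step 3 derives it \emph{from} the gradient bound. Your remark that only powers of $\la$ are kept crudely hides a genuine loss in the power of $\ep$.

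The paper breaks the circle by inserting the modulus estimate \emph{between} the two bootstrap stages. First it obtains $\|\nabla_Au\|_{L^{(3,1)}(\B_{\ep/2}(x_1))}\lesssim\sqrt{\ep\la M/\tau}$, from $\|\nabla u^g\|_{L^{(3,1)}}$ together with $\|A^g\|_{L^{(3,1)}(\B_{\ep/2})}\lesssim\sqrt{\ep M/\tau}$. Then the scalar equation (\ref{moduinf}), with right-hand side $|\nabla_Au|^2\in L^{(3/2,1)}$, gives $\|1-|u|^2\|_{L^\infty(\B_{\ep/4})}\lesssim \frac{\ep\la M}{\tau}+\sqrt{\frac{\ep\la M}{\tau}}$. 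You could get the same from $|\nabla|u|^2|\le 2|\nabla_Au|$ and $W^{1,(3,1)}\hookrightarrow L^\infty$ together with the $L^2$ bound. Interpolating this $L^\infty$ bound with the $L^2$ bound yields $\frac{\la}{\ep^2}\|1-|u|^2\|_{L^{(3,1)}}\lesssim\la\ep^{-1/2}\sqrt{M/\tau}$, which is what produces (\ref{Del31}). Only after that does the $L^\infty$ gradient bound follow, so the bound on $1-|u|$ comes first.

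Your Step 3 computation is itself correct once a gradient bound is available: it gives $t\lesssim\la^{2/5}\sqrt{\ep M/\tau}$. One could even close the loop algebraically, since $T^5\lesssim\ep^4L^3$ together with $L\lesssim\ep^{-1/2}+\ep^{-2/3}T^{1/3}$ forces $T\lesssim\ep^{1/2}$. That route needs a priori finiteness of $\sup_{\B_1}|\nabla_Au|$ up to the boundary, however, and it is not the argument you wrote.
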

\noindent{\bf Proof of Lemma~\ref{lm-apestinf}.} We present the proof in the case when $\B_r(x_0)\subset \B^3$. A similar proof can be adapted to the case where $x_0\in \p\B^3$ using elliptic estimates up to the boundary and using the assumptions on $\phi$ (i.e. $\|\phi\|_{C^2(\p\B^3)}<+\infty$). 
We first observe that thanks to lemma~\ref{lm-bd}
\[
\limsup_{k\rightarrow +\infty}CYMH^{\la,\tau/\ep_k}_{\ep_k}(u^k,A^k)=\limsup_{k\rightarrow +\infty}\inf\lf\{CYMH^{1,\tau/\ep_k}_{\ep_k}(u,A)\ ;\ (u,A)\in{\mathcal E}_\phi\rg\}<+\infty\ .
\]
Let
\[
M:=\sup_{k\in {\N}}CYMH^{\la,\tau/\ep_k}_{\ep_k}(u^k,A^k)\ .
\]
From now on we shall omit the subscript $k$ when there is no ambiguity. We have
\[
\begin{array}{l}
\ds\int_{\B_r(x_0)}|F_A|^{3/2}\ dx^3\le C\, (r^3)^{1/4}\ \lf(\int_{B_r(x_0)}|F_A|^{2}\ dx^3\rg)^{3/4}\\[5mm]
\ds\le C\, \lf(\frac{r}{\tau}\rg)^{3/4}\ \lf(\tau\,\int_{B_r(x_0)}\,|F_A|^{2}\ dx^3\rg)^{3/4}\le C\ \lf(M\,\frac{r}{\tau}\rg)^{3/4}\
\end{array}
\]
Hence there exists $\al\in (0,1)$ independent of $\ep$ but only on $M/\tau$ such that
\[
\lf(\int_{\B_{\al}(x_0)}|F_A|^{3/2}\ dx^3\rg)^{2/3}<\min\{\delta_{3/2},\delta_2\}:=\delta\ .
\]
where $\delta_{3/2}$ and $\delta_2>0$ are given by (\ref{II.0}). 
\be
\label{al}
\al=C\, \delta^{4/3}\,\frac{\tau}{M}\ .
\ee
We extract a Coulomb Gauge $A^g:=g^{-1}dg+g^{-1}\,A\, g$ on $\B_{\al}(x_0)$. We apply (\ref{II.1}) for $p=3/2$ and $p=2$ and we get
\be
\label{Ag}
\lf\{
\begin{array}{l}
\|A^g\|_{L^{(3,3/2)}(\B_{\al}(x_0))}+\|\nabla(A^g)\|_{L^{3/2}(\B_{\al}(x_0))}\le C\ \|F_{A}\|_{L^{3/2}(\B_{\al}(x_0))}\le C\, \delta\\[5mm]
\ds\|A^g\|_{L^{(6,2)}(\B_{\al}(x_0))}+\|\nabla(A^g)\|_{L^2(\B_{\al}(x_0))}\le C\ \|F_{A}\|_{L^2(\B_{\al}(x_0))}\le C \sqrt{\frac{M}{\tau}}\ .
\end{array}
\rg.
\ee
We deduce in particular thanks to the generalised Littlewood inequality
\[
\begin{array}{l}
\ds\|A^g\|^2_{L^{(3,1)}(B_{\al}(x_0))}\le \|A^g\|_{L^2(B_{\al}(x_0))}\, \|A^g\|_{L^{(6,2)}(B_{\al}(x_0))}\\[5mm]
\ds\quad\le\, C\, \sqrt{\al}  \|A^g\|_{L^3(B_{\al}(x_0))}\, \|A^g\|_{L^{(6,2)}(B_{\al}(x_0))}\le C\, \sqrt{\al}\, \delta\,  \sqrt{\frac{M}{\tau}}=C\, \delta^{5/3}
\end{array}
\]
Observe that $F_{A^g}$ satisfies in $B_{\ep^p}(x_1)$ for any $x_1\in B_{\al/2}(x_0)$ and $p>0$
\[
\lf\{
\begin{array}{l}
\ds d F_{A^g}=-A^g\wedge F_{A^g}\\[5mm]
\ds d^\ast F_{A^g}=A^g\res F_{A^g}-\, g^{-1}\frac{[u,\nabla_A u]}{\ep^2}\,g-\frac{1}{\tau}\,g^{-1}\,A\,g\ .
\end{array}
\rg.
\]
This gives
\[
-\,\Delta A^g+d^\ast\lf(  A^g\wedge A^g  \rg)-A^g\res dA^g-A^g\res(A^g\wedge A^g)=-\, \frac{[u^g,\nabla_{A^g} u^g]}{\ep^2}-\frac{1}{\tau}\,g^{-1}\,A\,g
\]
Let $B\in W^{1,2}(\wedge^1B_{2\ep^p}(x_0),\frak{su}(2))$ and $ D\in L^2(\wedge^2B_{2\ep^p}(x_0),\frak{su}(2))$ solving
\[
-\,\Delta B+d^\ast\lf(  A^g\wedge B  \rg)-A^g\res dB-A^g\res(A^g\wedge B)= D
\]
Using the fact that $\|A^g\|_{L^{(3,1)}(B_{\al}(x_1))}$ is small enough, classical elliptic estimates give
\[
\|B\|_{L^\infty(B_{\ep^p}(x_1))}+\|\nabla B\|_{L^{(3,1)}(B_{\ep^p}(x_1))}\le C\, \|D\|_{L^{(3/2,1)}(B_{2\,\ep^p}(x_1))}+C\, \ep^{-p}\,\|B\|_{L^3(B_{2\,\ep^p}(x_1))}\ .
\]
Hence
\[
\begin{array}{l}
\ds\|A^g\|_{L^\infty(B_{\ep^p}(x_1))}+\|\nabla A^g\|_{ L^{(3,1)}(B_{\ep^p}(x_1)) }\\[5mm]
\ds\qquad\le C\, \ep^{-2}\,\|\nabla_Au\|_{L^{(3/2,1)}(B_{2\,\ep^p}(x_1))}+ C\, \tau^{-1}\,\|A\|_{L^{(3/2,1)}(B_{2\ep^p}(x_1))}+C\, \ep^{-p}\,\|A^g\|_{L^3(B_{2\ep^p}(x_1))}\ .
\end{array}
\]
We have using H\"older inequalities in Lorentz spaces
\[
\|A^g\|_{L^3(B_{2\,\ep^p}(x_1))}\le C\, \ep^{p/2}\,\|A^g\|_{L^6(B_{2\,\ep^p}(x_1))}\ ,
\]
and
\[
\lf\{
\begin{array}{l}
\ds\|\nabla_Au\|_{L^{(3/2,1)}(B_{2\,\ep^p}(x_0))}\le C\, \sqrt{\ep^p}\ \|\nabla_Au\|_{L^{2}(B_{2\ep^p}(x_0))}\\[5mm]
\ds\|A\|_{L^{(3/2,1)}(B_{2\,\ep^p}(x_0))}\le C\ \sqrt{\ep^p}\ \|A\|_{L^{2}(B_{2\ep^p}(x_0))}\ .
\end{array}
\rg.
\]
Thus
\[
\begin{array}{l}
\ds\|A^g\|_{L^\infty(B_{\ep^p}(x_1))}+\|\nabla A^g\|_{ L^{(3,1)}(B_{\ep^p}(x_1)) }\\[5mm]
\ds\quad\le C\, \ep^{-1+p/2}\ \|\ep^{-1}\,\nabla_Au\|_{L^{2}(B_{2\,\ep^p}(x_1))}+C \tau^{-1}\ \ep^{p/2}\ \|A\|_{L^{2}(B_{2\,\ep^p}(x_1))}+C\, \ep^{-p/2}\, \|A^g\|_{L^6(B_{2\,\ep^p}(x_1))}\ .
\end{array}
\]
Choosing $-1+p/2=-p/2$ that is $p=1$ we obtain

\be
\label{Aglinf}
\|A^g\|_{L^\infty(B_{\al/2}(x_0))}\le C\, \sqrt{\frac{M}{\ep\,\tau}}\ .
\ee
\medskip

The Euler Lagrange equation (\ref{CYMHS-co})
 is implying
\be
\label{moduinf}
-2^{-1}\Delta(1-|u|^2)+\,\la\,\frac{|u|^2}{\ep^2}\, (1-|u|^2)=|\nabla_Au|^2\ge 0 .
\ee
Since we are assuming $|u|\equiv 1$ on $\p \B_1(0)$, thanks to the maximum principle we have
\be
\label{linfty-module2}
\forall x\in \B_1(0)\qquad|u|(x)\le 1\ .
\ee
We have also
\[
\begin{array}{l}
\ds\sum_{l=1}^3(\nabla_A^g)_l((\nabla_A^g)_l u^g)=\sum_{l=1}^3\p_{x_l}(\p_{x_l}u^g+[A^g_l,u])+[A^g_l, \p_{x_l}u^g+[A^g_l,u^g]]\\[5mm]
\ds=\Delta u^g+2\,\sum_{l=1}^3[A_l^g,\p_{x_l}u^g]+[A_l^g[A_l^g,u]] 
\end{array}
\]
thus
\be
\label{eq-ug2}
\Delta u^g=-2\,[A^g;\nabla u^g]-[A^g[A^g,u^g]]-\frac{\la}{\ep^2}u^g\,(1-|u|^2)\ .
\ee
We deduce using classical Calderon Zygmund theory for any $p\ge 0$ and $x_1\in B_{\al/2}(x_0)$
\[
\begin{array}{l}
\ds\|\Delta u^g\|_{L^2\lf(B_{\ep^p}(x_1)\rg)}\le C\ \|A^g\|_{L^\infty(B_{\ep^p}(x_1))}\,\|\nabla u^g\|_{L^2\lf(B_{\ep^p}(x_1)\rg)}\\[5mm]
\ds\quad+C\,\|A^g\|^2_{L^\infty\lf(B_{\ep^p}(x_1)\rg)} \, \ep^{3p/2}+C\,\frac{\la}{\ep^{2}}\, \lf( \int_{B_{\ep^p}(x_1)}(1-|u|^2)^2\ dx^3  \rg)^{1/2}+C\, \ep^{-p}\,\|\nabla u^g\|_{L^2\lf(B_{\ep^p}(x_1)\rg)}
\end{array}
\]
Then we bound
\[
\|\nabla u^g\|_{L^2\lf(B_{\ep^p}(x_1)\rg)}\le\lf[\|\nabla_Au\|_{L^2\lf(B_{\ep^p}(x_1)\rg)}+\|A^g\|_{L^2\lf(B_{\ep^p\al}(x_0)\rg)}\rg]\
\]
We have
\[
\|A^g\|_{L^2\lf(B_{\ep^p}(x_1)\rg)}\le C\,\ep^{p}\ \|A^g\|_{L^6\lf(B_{\ep^p}(x_1)\rg)}\le C\,\ep^p\,\sqrt{\frac{M}{\tau}}\ .
\]
We deduce
\[
\|\nabla u^g\|_{L^2\lf(B_{\ep^p}(x_1)\rg)}\le\, C\, \ep\,\sqrt{\frac{M}{\tau}}+C\,\ep^p\,\sqrt{\frac{M}{\tau}}\ .
\]
Hence
\[
\|\Delta u^g\|_{L^{2}\lf(B_{\ep^p}(x_1)\rg)}\le C\, {\frac{M}{\tau}}\ \lf( \sqrt{\ep}+\ep^{p-1/2}  \rg)+C\, {\frac{M}{\tau}}\ \ep^{3p/2-1}+C\, \sqrt{\frac{\la\,M}{\tau}}+C\, \sqrt{\frac{M}{\tau}}\, (1+\ep^{1-p})
\]
This implies for $p=1$
\[
\|\Delta u^g\|_{L^{(3/2,1)}\lf(\B_{\ep}(x_1)\rg)}\le C\,\sqrt{\ep}\, \|\Delta u^g\|_{L^{2}\lf(\B_{\ep}(x_1)\rg)}\le C\,\sqrt{\frac{\ep\, \la\,M}{\tau}}
\]
This gives
\[
\begin{array}{l}
\ds\|\nabla u^g\|_{L^{(3,1)}\lf(B_{\ep/2}(x_1)\rg)}\le\, C\,\|\Delta u^g\|_{L^{(3/2,1)}\lf(\B_{\ep}(x_1)\rg)}+C\, \ep^{-1/2}\,\|\nabla u^g\|_{L^2\lf(B_{\ep}(x_1)\rg)}\le C\,\sqrt{\frac{\ep\, \la\,M}{\tau}}
\end{array}
\]
We have also from (\ref{Aglinf})
\[
\ds\|A^g\|_{L^{(3,1)}\lf(B_{\ep/2}(x_1)\rg)}\le  C\,\sqrt{\frac{\ep\, \,M}{\tau}}
\]
Thus
\[
\|\nabla_{A} u\|_{L^{(3,1)}\lf(B_{\ep/2}(x_1)\rg)}=\|\nabla_{A^g} u^g\|_{L^{(3,1)}\lf(B_{\ep/2}(x_1)\rg)}\le C\,\sqrt{\frac{\ep\, \la\,M}{\tau}}
\]
Inserting this bound in the r.h.s of (\ref{moduinf}) we obtain
\[
\begin{array}{l}
\ds\|1-|u|^2\|_{L^{\infty}\lf(B_{\ep/4}(x_1)\rg)}\le C\, \||\nabla_{A} u|^2\|_{L^{(3/2,1)}\lf(B_{\ep/2}(x_1)\rg)}+\sqrt{\dashint_{B_{\ep/2}(x_1)}(1-|u|)^2\ dx^3}\\[5mm]
\ds\qquad\le C\, \frac{\ep\, \la\,M}{\tau}+C\,\sqrt{\frac{\ep\, \la\,M}{\tau}}
\end{array}
\]
Injecting the previous estimates in (\ref{eq-ug2}) is giving
\[
\begin{array}{l}
\ds\|\Delta u^g\|_{L^{(3,1)}\lf(\B_{4^{-1}\,\ep}(x_1)\rg)}\le C\ \|A^g\|_{L^\infty(B_{2^{-1}\,\ep}(x_1))}\,\|\nabla u^g\|_{L^{(3,1)}\lf(B_{2^{-1}}(x_1)\rg)}\\[5mm]
\ds\quad+C\,\ep\,\|A^g\|^2_{L^\infty\lf(B_{2^{-1}\,\ep}(x_0)\rg)} +C\,\frac{\la}{\ep^2}\,\lf( \int_{B_{4^{-1}\ep}(x_1)}(1-|u|^2)^2\ dx^3  \rg)^{1/3}\,\|1-|u|^2\|_{L^{\infty}\lf(B_{\ep/4}(x_1)\rg)}^{1/3}
\end{array}
\]
Thus
\be
\label{Del31}
\|\Delta u^g\|_{L^{(3,1)}\lf(\B_{4^{-1}\,\ep}(x_1)\rg)}\le \frac{M}{\tau}\,(1+\sqrt{\la})+C\,\frac{\la}{\ep^{1/2}}\,\sqrt{\frac{ \,M}{\tau}}
\ee
Calderon Zygmund estimates implies
\[
\begin{array}{l}
\|\nabla^2  u^g\|_{L^{(3,1)}\lf(\B_{8^{-1}\,\ep}(x_1)\rg)}\le C\ \|\Delta u^g\|_{L^{(3,1)}\lf(B_{4^{-1}\ep}(x_1)\rg)}+C\,\ep^{-1}\,\|\nabla u^g\|_{L^{(3,1)}\lf(B_{4^{-1}\ep}(x_1)\rg)}\\[5mm]
\ds\quad\le\,  \frac{M}{\tau}\,(1+\sqrt{\la})+ C\,\la\,\sqrt{\frac{\,M}{\ep\,\tau}}
\end{array}
\]
Hence
\[
\begin{array}{l}
\ds\|\nabla  u^g\|_{L^{\infty}\lf(\B_{8^{-1}\,\ep}(x_1)\rg)}\le C\, \|\nabla^2  u^g\|_{L^{(3,1)}\lf(B_{8^{-1}\ep}(x_1)\rg)}+C\, \ep^{-1}\, \|\nabla u^g\|_{L^{(3,1)}\lf(B_{4^{-1}\,\ep}(x_1)\rg)}\\[5mm]
\ds\quad\le   C\,\la\,\sqrt{\frac{\,M}{\ep\,\tau}}
\end{array}
\]
We have then proved 
\be
\label{prel-bd}
\|\nabla_A  u\|_{L^{\infty}\lf(\B_{2^{-1}\al}(x_0)\rg)} \le C\,\la\,\sqrt{\frac{\,M}{\ep\,\tau}}\qquad\mbox{ and }\qquad \|1-|u|\|_{L^\infty(\B_{2^{-1}\al}(x_0))} \le C\,\sqrt{\frac{\ep\, \la\,M}{\tau}}
\ee
This holds for any $x_0\in\B_1(0)$ such that $\B_{\ep\al}(x_0)\subset \B_1(0)$ . Thus we deduce (\ref{linf}) in the $\ep\al-$interior of $\B_1(0)$ and taking then $x_0\in\p \B^3(0)$ we can implement the same argument
on the quasi half ball $\B_{\ep}(x_0)\cap\B^3_1(0)$ taking into account the $C^2$ norm control of $u=\phi$ on $\p\B^3_1(0)\cap \B_{\al\ep}(x_0)$ and lemma~\ref{lm-linfty-b} is proved. \hfill $\Box$

\medskip
\subsection{ The Limiting Fields for Minimizers of $CYMH^{\la,\tau/\ep_k}_{\ep_k}(u,A)$ as $\ep_k\rightarrow 0$.}
This subsection is devoted to the proof of Theorem~\ref{th-harmflu}.

\medskip

\noindent{\bf Proof of theorem~\ref{th-harmflu}.} The previous Lemma is implying in particular 
\[
\begin{array}{l}
\ds\|d_Au\|_{L^6(\B_1(0))}\le \lf(\int_{\B_1(0)}|d_Au|^6\ dx^3\rg)^{1/6}\le \lf(\|d_Au\|^4_{L^\infty(\B_1(0))}\,\int_{\B_1(0)}|d_Au|^2\ dx^3\rg)^{1/6}\\[5mm]
\ds\quad\le C\ \lf(\la^4\,\lf(\frac{\,M}{\tau}\rg)^2\,\int_{\B_1(0)}\frac{|d_Au|^2}{\ep^2}\ dx^3\rg)^{1/6}\le C\ \la^{2/3}\ \sqrt{\frac{M}{\tau}}
\end{array}
\]

We keep denoting $\hat{u}:=u/|u|$. We have
\[
\tau\,\hat{u}\cdot d_A^\ast F_A=-\hat{u}\cdot A\qquad\mbox{in }\B_1(0)\ .
\]
Hence there holds
\[
\lf\{
\begin{array}{l}
\ds d\lf(  \hat{u}\cdot  F_A \rg)=d_A\hat{u}\,\dot{\wedge} \,F_A\qquad\mbox{in }\B_1(0)\\[5mm]
\ds d^\ast \lf(\hat{u}\cdot F_A\rg)=d_A\hat{u}\ \dot{\res} \ F_A-\tau^{-1}\,\hat{u}\cdot A\qquad\mbox{in }\B_1(0)\\[5mm]
\ds\iota_{\p\B_1(0)}\ast \hat{u}\cdot F_A=0
\end{array}
\rg.
\]
Hence classical elliptic estimates are giving
\[
\begin{array}{l}
\ds\|\hat{u}\cdot F_A\|_{L^{3,3/2}(\B_1(0))}+\|\nabla\lf(\hat{u}\cdot F_A\rg)\|_{L^{3/2}(\B_1(0))}\le C\ \|d_A\hat{u}\|_{L^6(\B_1(0))}\ \|F_A\|_{L^2(\B_1(0))}+C\ \tau^{-1}\ \|A\|_{L^2(\B_1(0))}\\[5mm]
\ds\quad\le   C\ \la^{2/3}\ \frac{M}{\tau}+C\, \frac{\sqrt{M}}{\tau}
\end{array}
\]
From (\ref{bwt}) we have the existence of $f$ such that $\|f\|_{L^1(\B_1(0))}$ is uniformly bounded and
\[
-\mbox{ div}\lf(\frac{1}{|u|^4}\nabla||[u,F_A]|  \rg)+\frac{1}{2\ep^2}|[u,F_A]|\le f\ .
\]
We argue similarly as for the proof of (\ref{L32infcur}) and we first introduce $w$ solution to
\[
\lf\{
\begin{array}{l}
\ds-\mbox{ div}\lf(\frac{1}{|u|^4}\nabla w  \rg)+\frac{1}{2\ep^2|u|^4}w= f\qquad\mbox{ in }\B_1(0)\\[5mm]
\ds w=0\qquad\mbox{ in }\p\B_1(0)
\end{array}
\rg.
\]
We first obtain the counterpart of (\ref{w32}) which is in the present case
\[
\|w\|_{L^{3,\infty}(\B_1(0))}=O(1)\qquad\mbox{ and }\qquad\|w\|_{L^{(3/2,1)}(\B_1(0))}=O(\ep^{-1})
\]
from which we also deduce by interpolation
\[
\|w\|_{L^{2}(\B_1(0))}=O(\ep^{-1/2})
\]
Then we compare $w$ and $|[u,F_A]|$ and we obtain for any ball $\B_{r_1}(0)$ where $r_1<1$
\[
\|[F_A,u]\|_{L^{3,\infty}(\B_{r_1}(0))}=O(1)\quad,\quad\|[F_A,u]\|_{L^{2}(\B_{r_1}(0))}=O(\ep^{-1/2})\quad\mbox{ and }\quad\|[F_A,u]\|_{L^{(3/2,1)}(\B_{r_1}(0))}=O(\ep^{-1})\ .
\]
Now we recall from (\ref{bw-1}) and using (\ref{sq-0}) similarly as in the proof of (\ref{Bochcovder})
\[
-\Delta|\nabla_Au|+\lf(\frac{1}{2\ep^2}-2\,|[F_A,u]|\rg) \ |\nabla_Au|\le 2\,|\nabla_Au|\,|F_A\cdot u|+\frac{|A|}{\tau}
\]
We have
\[
\||\nabla_Au|\,|F_A\cdot u|\|_{L^2(\B_1(0))}\le \|\nabla_Au\|_{L^6(\B_1(0))}\ \|F_A\cdot u|\|_{L^3(\B_1(0))}=O(1)
\]
We introduce this time $w$ such that
\[
\lf\{
\begin{array}{l}
\ds -\Delta\ti{w}+\lf(\frac{1}{2\ep^2}-2\,|[F_A,u]|\rg) \ \ti{w}= 2\,|\nabla_Au|\,|F_A\cdot u|+\frac{|A|}{\tau}\qquad\mbox{ in }\B_{r_1}(0)\\[5mm]
\ds \ti{w}=0\qquad\mbox{ on }\p \B_{r_1}(0)\ .
\end{array}
\rg.
\]
Since $\|[F_A,u]\|_{L^{(3/2,1)}(\B_{r_1}(0))}=o(1)$ we have that $\ti{w}$ is unique and non negative in $\B_{r_1}(0)$ (see (\ref{bdnab}). Moreover multiplying by $\ti{w}/\ep^2$ and integrating by parts is giving, using the smallness of $|[F_A,u]\|_{L^{(3/2,\infty)}(\B_{r_1}(0))}$ as in the proof of (\ref{bdnab}),
\[
\begin{array}{l}
\ds\int_{\B_{r_1}(0)}|\nabla \ti{w}|^2+\frac{1}{2\ep^2}|\ti{w}|^2\le \int_{\B_{r_1}(0)}\ 2\,|[F_A,u]| \ |\ti{w}|^2\ dx^3+\int_{\B_{r_1}(0)}\ \lf(2\,|\nabla_Au|\,|F_A\cdot u|+\frac{|A|}{\tau}\rg)\ \ti{w}\ dx^3\\[5mm]
\ds \le\, 2\,\||[F_A,u]|\|_{L^{(3/2,\infty)}(\B_{r_1}(0))} \ \||\ti{w}|^2\|_{L^{(3,1)}(\B_{r_1}(0))}+\|\ti{w}\|_{L^{2}(\B_{r_1}(0))}\ \lf\|2\,|\nabla_Au|\,|F_A\cdot u|+\frac{|A|}{\tau}\rg\|_{L^{2}(\B_{r_1}(0))}\\[5mm]
\ds\le o(1)\,\int_{\B_{r_1}(0)}|\nabla \ti{w}|^2\ dx^3+O(1)\,\|\ti{w}\|_{L^{2}(\B_{r_1}(0))}\
\end{array}
\]
We deduce
\[
\int_{\B_{r_1}(0)}|\nabla \ti{w}|^2+\frac{1}{2\ep^2}|\ti{w}|^2=O(\ep^2)
\]
Moreover for $V:={\mathbf 1}_{B_{r_1}(0)}\,\lf(\frac{1}{2\ep^2}-2\,|[F_A,u]|\rg)$ there holds
\[
\sup_{x\in \B_{r_1}(0)}\int_{\B_{r_1}(0)}\frac{|V_-(y)|}{|x-y|}\ dy^3\le \sup_{x\in \B_{r_1}(0)}\lf\|\frac{1}{|x-y|}\rg\|_{L^{(3,\infty)}\B_{r_1}(0)}\ \|[F_A,u]\|_{L^{(3/2,1)}(B_{r_1}(0))}\le\,O(\ep)
\]
This gives that the negative part of the potential $V$ in the Schr\"odinger operator $-\Delta+V$ is in the {\it Kato Class}. Using the main result in \cite{AiS} we obtain that the Harnack inequality holds and we have for any $r_2<r_1$
\[
\|\ti{w}_+\|_{L^\infty(B_{r_2}(x_0)}\le C\, \ \int_{B_{r_1}(0)}\ti{w}_+\ dx^3
\]
Hence arguing as for getting (\ref{linka}) we finally obtain for any radius $r_1<1$
\be
\label{decnab}
\int_{B_{r_1}(0)}\frac{|\nabla|\nabla_Au||^2}{\ep^2}+\frac{|\nabla_Au|^2}{\ep^4}\ dx^3=O(1)\ .
\ee
We recall that for $\al$ given by (\ref{al}) and any ball $\B_r(x_0)\subset \B_1(0)$ and $r<\al$ there exists a Coulomb gauge $A^g$ satisfying (\ref{Ag}) and 
\[
\lf\{
\begin{array}{l}
\ds d F_{A^g}=-A^g\wedge F_{A^g}\\[5mm]
\ds d^\ast F_{A^g}=A^g\res F_{A^g}-\,\frac{[u^g,\nabla_{A^g} u^g]}{\ep^2}\,-\frac{1}{\tau}\,g^{-1}\,A\,g\ .
\end{array}
\rg.
\]
This gives
\[
\|\nabla (F_{A^g})\|_{L^2(\B_{r/2}(x_0))}+\|F_A\|_{L^6(\B_{r/2}(x_0))}=O(1)
\]
In particular $F_{A^g}$ is pre-compact in $L^p(\B_{r/2}(x_0))$ for any $p<6$. Since $dg$ is uniformly bounded in $L^2(\B_{r/2}(x_0))$, using Rellich Kondrachov theorem we deduce that $F_A:=g\,F_{A^g}\, g^{-1}$ is pre-compact in $L^p(\B_{r/2}(x_0))$ for any $p<6$..

\medskip

We extract a subsequence from $(u^k,A^k)$ such that
\[
u^k\rightharpoonup u^\infty\qquad\mbox{ weakly in }W^{1,2}_\phi(\B_1(0),{\S}^2)\qquad\mbox{ and }\qquad A^k\rightharpoonup A^\infty\qquad\mbox{ weakly in }L^2(\Om^1(\B_1(0))\ .
\]
We also extract a subsequence such that
\[
F_{A^k}=dA^k+\frac{1}{2}[A^k,A^k]\rightharpoonup F^\infty\qquad\mbox{ weakly in }L^2(\Om^2(\B_1(0))\ .
\]
Because of the above we have in fact
\[
F_{A^k}\ \longrightarrow\ F^\infty\qquad\mbox{ strongly in }L^p_{loc}(\Om^2(\B_1(0))\qquad\forall p<6\ .
\]
Observe that on any ball $\B_r(x_0)\subset \B_1(0)$ and $r<\al$, the gauge $(A^k)^{g^k}$ is bounded in $W^{2,2}(\B_{r/2}(x_0))$ hence modulo extraction of a subsequence we have for any $p<6$
\[
(A^k)^{g^k}\ \longrightarrow\ B^\infty\qquad\mbox{ strongly in }W^{1,p}(\Om^1(\B_{r/2}(x_0)))
\]
and 
\[
 F_{(A^k)^{g^k}}=(g^{k})^{-1}\,F_{A^k}\,g^k\longrightarrow\ F_{B^\infty}\qquad\mbox{ strongly in }L^p(\Om^1(\B_{r/2}(x_0)))
\]
Since $dg^k$ is uniformly bounded in $L^2$ one can extract a subsequence such that
\[
g^k\ \rightharpoonup\  g^\infty \qquad\mbox{ weakly in }W^{1,2}(\B_r(x_0))
\]
By Rellich Kondrachov we have that $g^k$ is converging strongly to $g^\infty$ in $L^p(B_r(x_0))$. Hence there holds
\[
F_{B^\infty}=(g^\infty)^{-1}\,F^\infty\, g^\infty\ .
\]
We have also
\[
(A^k)^{g^k}=(g^{k})^{-1}\,A^k\,g^k+(g^k)^{-1}dg^k\ \rightharpoonup\ (g^\infty)^{-1}\ A^\infty\, g^\infty+(g^\infty)^{-1}\,dg^\infty\qquad\mbox{ in }{\mathcal D}'(\B_{r/2}(x_0))
\]
Thus
\[
B^\infty=(g^\infty)^{-1}\ A^\infty\, g^\infty+(g^\infty)^{-1}\,dg^\infty
\]
which implies finally
\[
F_{A^\infty}=F^\infty
\]
and we then have\footnote{In a funny way we have been able to prove the strong convergence of the curvature of $A^k$ to the curvature of $A^\infty$ in $L^2_{loc}(\Om^1(\B_1(0))\otimes \frak{su}(2))$ but we only know at this stage that
$A^k$ is weakly converging to $A^\infty$ in $L^2(\B_1(0)\otimes \frak{su}(2))$.}
\[
F_{A^k}\ \longrightarrow\ F_{A^\infty}\qquad\mbox{ strongly in }L^2_{loc}(\Om^2(\B_1(0))\ .
\]

\medskip

\noindent{\bf The strong convergence of $A^k$ to $A^\infty$ in $W^{1,2}_{loc}(\B_1(0))$.} We claim now that the convergence of $A^k$ towards $A^\infty$ is strong in $L^2_{loc}(\Om^1(\B_1(0))$.

For any radius $r_1<1$, covering $B_{r_1}(0)$ by balls a uniformly bounded number of balls $\B_r(x_0)\subset \B_1(0)$ and $r<\al$ we finally obtain
\[
\|F_A\|_{L^6(\B_{r_1}(0))}=O(1)\ .
\]
This gives in particular for any $\B_r(x_0)\subset \B_{r_1}(0)$
\be
\label{curvmon}
\int_{B_r(x_0)}\frac{1}{|x-x_0|}|F_A|^2\ dx^3\le r\ \|F_A\|^2_{L^6(\B_{r_1}(0))}\le C\, r\ .
\ee
Using (\ref{trumon}) we have
\be
\label{sss}
\begin{array}{l}
\ds\frac{d}{dr}\lf(   \frac{1}{2r}\int_{\B_r(x_0)}|A|^2+\tau\,\frac{|\nabla_Au|^2}{\ep^2}+\frac{3\la\tau}{2\ep^4}(1-|u|^2)^2+\lf(\frac{2r}{|x-x_0|}-1\rg)\tau\,|F_A|^2\ dx^3 \rg)\\[5mm]
\ds=\frac{1}{r}\,\int_{\p \B_r(x_0)}|A_r|^2+\tau\frac{|(\nabla_Au)_r|^2}{\ep^2}+\frac{\la\,\tau}{2\ep^4}(1-|u|^2)^2+\,\tau\,|F_A\res\p_r|^2\ dvol_{\p \B_r(x_0)}
\end{array}
\ee
Writing $$F(r):=\frac{1}{2r}\int_{\B_r(x_0)}\lf(\frac{2r}{|x-x_0|}-1\rg)\tau\,|F_A|^2\ dx^3$$
we have
\[
0\le F(r)\le C\,r
\]
and
\[
r\longrightarrow F(r)+\frac{1}{2r}\int_{\B_r(x_0)}|A|^2+\tau\,\frac{|\nabla_Au|^2}{\ep^2}+\frac{3\la\tau}{2\ep^4}(1-|u|^2)^2\ dx^3
\]
is an increasing function of $r\in [0,r_1-|x_0|]$ for any $x_0\in \B_{r_1}(0)$ and $r_1<1$ is arbitrary.
 Then we prove the following Lemma
 \begin{Lm}
 \label{lm-delreg}
 Let $r_1<1$. There exists $\delta>0$ and $r_0>0$ such that for any $r<r_0$ and any $x_0\in \B_{r_1}(0)$, if $\B_r(x_0)\subset \B_{r_1}(0)$ and if
 \be
 \label{smf}
 \frac{1}{2r}\int_{\B_r(x_0)}|A|^2+\tau\,\frac{|\nabla_Au|^2}{\ep_k^2}+\frac{3\la\tau}{2\ep_k^4}(1-|u|^2)^2\ dx^3<\delta
 \ee
 then
 \[
 \|\nabla A^k\|_{L^2(\B_{r/2}(x_0))}\le C_r
 \]
 where $C_r$ is independent of $k$.\hfill $\Box$.
 \end{Lm}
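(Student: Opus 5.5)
The plan is to transpose the proof of Theorem~\ref{th-delreg} (the regularity $\delta$-lemma for fixed parameters) to the present regime, using the $\ep$-independent bounds now available. Those bounds are $\|F_{A^k}\|_{L^6(\B_{r_1}(0))}=O(1)$, $\|d_{A^k}u^k\|_{L^6}\le C\la^{2/3}\sqrt{M/\tau}$, and (\ref{decnab}), which gives $\|\nabla_{A^k}u^k\|_{L^2}=O(\ep_k^2)$. The first step is to propagate the smallness (\ref{smf}) to all smaller balls, in the spirit of Lemma~\ref{lm-smal}. The almost-monotone quantity in (\ref{sss}) contains $F(r)$, which satisfies $0\le F(r)\le C\,r$ by (\ref{curvmon}). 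Choosing $r_0$ with $C r_0<\delta$, the monotonicity formula then gives
\[
\frac{1}{\rho}\int_{\B_\rho(y)}|A^k|^2\,dx^3\le C\,\delta
\]
for every $y\in\B_{r/2}(x_0)$ and every $\rho<r/4$. This Morrey smallness of $A^k$ is uniform in $k$.

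The second step works on $\B_r(x_0)$ with $r<\al$. By Proposition~\ref{pr-coul} we have $d^\ast A^k=0$, and Uhlenbeck's Theorem~\ref{th-Uhl} gives the decomposition $A^k=(g^k)^{-1}dg^k+(g^k)^{-1}d^\ast\xi^k\,g^k$. Since $F_{A^k}$ is bounded in $L^6$ uniformly, (\ref{II.1}) with $p=6$ yields $d^\ast\xi^k$ bounded in $W^{1,6}$, hence in $L^\infty$, uniformly in $k$. All the $k$-dependence is therefore carried by $g^k$, with $\rho^{-1}\int_{\B_\rho}|dg^k|^2\le C\delta$ and consequently $\|g^k\|_{BMO}\le C\sqrt\delta$ by (\ref{bmo}). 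I then repeat the Hodge decomposition $(g^k)^{-1}dg^k=d\sigma^k+d^\ast\zeta^k$ from (\ref{min})--(\ref{hodge}), together with the system (\ref{eq-g}). The comparison with harmonic replacements $\ti g,\ti\sigma$ and the iteration $a_{j+1}\le 4^{-1}a_j+8^{-1}a_{j-1}$ use only the BMO and Morrey smallness and the $L^\infty$ bound on $d^\ast\xi^k$, and all of these are uniform in $k$. The outcome is a uniform Morrey decay $\rho^{-1-\beta}\int_{\B_\rho}|dg^k|^2\le C$. Adams' estimate then upgrades this to $dg^k$, and hence $A^k$, bounded in $L^p_{loc}$ for some $p>2$.

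The third step bootstraps to $\nabla A^k\in L^2$. Since $d^\ast A^k=0$, we have $-\Delta A^k=d^\ast dA^k=d^\ast F_{A^k}-\tfrac12 d^\ast[A^k\wedge A^k]$. I bound $d^\ast F_{A^k}$ in $H^{-1}$ by $\|F_{A^k}\|_{L^2}$, and $[A^k\wedge A^k]$ in $L^{p/2}$ with $p>2$. Alternatively I can use the Euler--Lagrange equation (\ref{CYMHS-co}), namely $\tau\,d^\ast_A F_A=-\ep^{-2}[u,d_Au]-A$, whose right-hand side is controlled by $\ep^{-2}\|\nabla_Au\|_{L^2}=O(1)$ from (\ref{decnab}) together with $\|A\|_{L^2}$. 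Elliptic estimates give a first gain. Iterating through $L^p$ improvements until $[A^k\wedge A^k]\in L^2$, that is $A^k\in L^4$, yields $\|\nabla A^k\|_{L^2(\B_{r/2}(x_0))}\le C_r$.

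I expect the main obstacle to be the second step. The Morrey-decay iteration for $g^k$ must be verified to be genuinely independent of $k$. In particular, the term $d\sigma^k$ in (\ref{eq-sigma}) is controlled through $[g^{-1}dg\res g^{-1}d^\ast\xi g]$, and one must check that the $L^\infty$ bound on $d^\ast\xi^k$ and the smallness of $r_0$ absorb it without any $\ep_k$-dependent constant entering.
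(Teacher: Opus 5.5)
Your first two steps are essentially the paper's argument. The paper also chooses $r_0$ with $r_0\|F_A\|^2_{L^6}\le\delta$ to neutralise the curvature term in (\ref{sss}), propagates the Morrey smallness of $A^k$, writes $A^k$ in the form $(g^k)^{-1}dg^k+(g^k)^{-1}d^\ast\xi^k g^k$ with $d^\ast\xi^k$ uniformly bounded, and obtains $\rho^{-1-\beta}\int_{\B_\rho}|dg^k|^2\le C\delta$ ``arguing exactly like in the proof of Theorem~\ref{th-delreg}''. The $k$-independence you flag as the main obstacle is in fact automatic. That argument only uses the Morrey/BMO smallness and bounds on $F_{A^k}$, and these are uniform because $\tau$ is fixed.

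\textbf{The gap is in your third step.} In dimension three the equation $-\Delta A^k=d^\ast F_{A^k}-\frac12 d^\ast[A^k\wedge A^k]$ is critical at $L^3$. From $A^k\in L^p$ you get $[A^k\wedge A^k]\in L^{p/2}$, hence $\nabla A^k\in L^{p/2}$, and Sobolev gives back $A^k\in L^{3p/(6-p)}$. But $3p/(6-p)<p$ for every $p<3$. Adams' estimate only yields some $p>2$ (namely $p=(2-\beta)/(1-\beta)$, close to $2$ when $\beta$ is small). So your Lebesgue-space iteration loses integrability at every step and never reaches $A^k\in L^4$. Appealing to (\ref{CYMHS-co}) does not help either. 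It controls $F_{A^k}$, which is already bounded in $L^6$, whereas the real obstruction to $\nabla A^k\in L^2$ is the quadratic term $[A^k\wedge A^k]$.

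\textbf{How to close it.} The subcriticality lives in the Morrey decay, so you must keep using it rather than trading it for a Lebesgue exponent.

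\emph{The paper's route.} The decay $\rho^{-1-\beta}\int_{\B_\rho}|dg^k|^2\le C\delta$ gives a uniform $C^{0,\beta/2}$ bound on $g^k$. Then $d^\ast(g^k\,d\,\cdot)$ can be treated as a linear elliptic operator with continuous coefficients. With this, (\ref{quahar}) becomes a \emph{linear} equation for $d(g^k)^{-1}$: the quadratic gradient term is absorbed into the divergence-form operator, and the right-hand side is linear in $dg^k$ times the uniformly bounded $d^\ast\xi^k$. Finitely many elliptic iterations then bound $\|dg^k\|_{W^{1,2}(\B_{tr}(x_0))}$ independently of $k$, and the bound on $\nabla A^k$ follows.

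\emph{Keeping your formulation.} Alternatively, you can stay with the Yang--Mills equation but bootstrap in Morrey spaces. Up to smoother localisation terms, $|A^k|\lesssim I_1(|A^k|^2)$. Adams' estimate for Riesz potentials on Morrey spaces then turns $\int_{\B_\rho}|A^k|^2\le C\rho^{1+\beta}$ (at all centres) into $\int_{\B_\rho}|A^k|^2\le C\rho^{1+2\beta}$. Iterating this improvement of the decay exponent, not of the Lebesgue exponent, eventually makes $A^k$ locally bounded uniformly in $k$. Then $[A^k\wedge A^k]\in L^2$ and $\nabla A^k\in L^2$.
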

 \noindent{\bf Proof of Lemma~\ref{lm-delreg}.} We choose $r_0<2^{-1}-2^{-1}\,r_0$ such that 
 \[
 \sup_{x_1\in \B_{r_1}(0)}\int_{B_{r_0}(x_1)}\frac{1}{|x-x_1|}|F_A|^2\ dx^3\le r_0\ \|F_A\|^2_{L^6(\B_{2^{-1}(r_1+1)}(0))}\le \delta
 \]
 where $\delta>0$ shall be fixed later on. Because of  (\ref{sss}) we have
 \[
 \max_{x\in {\B}_{r_0/2}(x_0)}\max_{\rho<r_0/4}\frac{1}{\rho}\int_{\B_\rho(x)}|A|^2\ dx^3<4\delta
 \]
 Recall moreover that choosing $r_0<\al$ where $\al$ is given by (\ref{al}) there exists a Gauge change and a one form form $d^\ast \xi$ whose norm is uniformly bounded in $W^{1,2}(\B_{r_0}(x_0))$ independently of $k$ such that
 \[
 g\,dg^{-1}=-A+g\,d^\ast \xi \,g^{-1}
 \] 
 and the Coulomb condition is implying
 \be
 \label{quahar}
 d^\ast(g\,d g^{-1})=dg\wedge d\ast \xi\,g^{-1}+g\, d\ast\xi\wedge dg^{-1}
 \ee
 Arguing exactly like in the proof of Theorem~\ref{th-delreg} we obtain the existence of $\beta>0$ such that
 \[
  \max_{x\in {\B}_{r_0/2}(x_0)}\max_{\rho<r_0/4}\frac{1}{\rho^{1+\beta}}\int_{\B_\rho(x)}|dg|^2\ dx^3<C\, \delta
 \]
this gives in particular a uniform $C^{0,\beta/2}$ bound of $g$ in ${\B}_{r_0/2}(x_0)$ controlled by $\sqrt{\delta}$. This permits to treat the operator $d^\ast g\,d\cdot$ as an elliptic operator. One observes then that the equation (\ref{quahar}) is becoming sub-critical for this Morrey norm and, since $d^\ast\xi$ is uniformly bounded in $W^{1,2}(\B_{r_0}(x_0))$
we obtain, after iterating a finite amount of steps in the elliptic system (\ref{quahar}),  for some $t>0$ independent of $k$ and $r$
\[
\|dg\|_{W^{1,2}(\B_{tr}(x_0))}\le C\ \|d^\ast\xi\|_{W^{1,2}(\B_{r_0}(x_0))}+r^{-1}\|dg\|_{L^2(\B_{r}(x_0))}
\]
Hence we have proved
\[
\|A^k\|_{W^{1,2}(\B_{tr}(x_0))}<C_r
\]
where $C_r$ is independent of $k$. Having chosen $\delta$ small enough one can always assume that $t=1/2$. This concludes the proof of Lemma~\ref{lm-delreg}.\hfill $\Box$

\medskip

We shall now prove the following lemma

\begin{Lm}
\label{lm-0dens}
Under the above notations we have for any $x_0\in \B_1(0)$
\be
\label{dens-0}
\lim_{\rho\rightarrow 0}\limsup_{k\rightarrow +\infty}\frac{1}{\rho}\int_{\B_\rho(x)}|A^k|^2\ dx^3=0\ .
\ee
and $A^k$ is uniformly bounded in $W^{1,2}(\B_{r_1}(0))$ for any $1>r_1>0$.
\hfill$\Box$
\end{Lm}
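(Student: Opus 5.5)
Our plan is to argue by contradiction, using the blow-up and gauge-replacement strategy from the proof of theorem~\ref{th-reg}; Lemma~\ref{lm-delreg} then supplies the uniform $W^{1,2}$ bound.

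\textbf{Step 1: the potential term is negligible.} By the monotonicity (\ref{sss}), together with the bound (\ref{curvmon}) for the curvature term $F(r)\le Cr$, the quantity
\[
\Theta_k(x,\rho):=\frac{1}{2\rho}\int_{\B_\rho(x)}|A^k|^2+\tau\frac{|\nabla_{A^k}u^k|^2}{\ep_k^2}+\frac{3\la\tau}{2\ep_k^4}(1-|u^k|^2)^2\ dx^3
\]
is almost monotone in $\rho$, uniformly in $k$. The Higgs part is harmless. Indeed, (\ref{decnab}) and (\ref{prel-bd-B}) give $\int_{\B_{r_1}}\ep_k^{-2}|\nabla_{A^k}u^k|^2=O(\ep_k^2)$. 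For the potential term, we multiply (\ref{moduinf}) by $\ep_k^{-4}(1-|u^k|^2)$ times a cutoff. Since $|\nabla_{A^k}u^k|^2\ep_k^{-4}$ is bounded in $L^1$, the potential energy is also $O(\ep_k^2)$ locally. So only the density $\rho^{-1}\int_{\B_\rho(x)}|A^k|^2$ matters.

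\textbf{Step 2: contradiction setup.} Assume (\ref{dens-0}) fails at $x_0$. Then there is $\theta>0$ with
\[
\limsup_k\rho^{-1}\int_{\B_\rho(x_0)}|A^k|^2\ge\theta
\]
for every small $\rho$. Almost monotonicity lets us pick $\rho_k\to0$ slowly, with $\rho_k\gg\ep_k$, such that the density stays in $[\theta/2,C]$ on all scales between $\rho_k$ and a fixed $r_0$. On $\B_{r_0}(x_0)$ we write
\[
A^k=g_k^{-1}dg_k+g_k^{-1}d^\ast\xi_k\,g_k
\]
as in Lemma~\ref{lm-delreg}, with $d^\ast\xi_k$ bounded in $W^{1,2}$ (hence in $L^6$) uniformly in $k$. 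Then $\rho^{-1}\int_{\B_\rho}|d^\ast\xi_k|^2\le C\rho$. The density is therefore carried by $dg_k$.

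\textbf{Step 3: blow-up.} We rescale $\tilde g_k(y):=g_k(x_0+\rho_ky)$. The rescaled Coulomb condition gives $d^\ast(d\tilde g_k\,\tilde g_k^{-1})\to0$. Exactly as in the proof of theorem~\ref{th-reg}, the radial part of the monotonicity identity, $\int|A_r|^2/|x-x_0|<\infty$, forces $\partial_r\tilde g_\infty=0$. Hence the weak limit $\tilde g_\infty$ is a $0$-homogeneous weakly harmonic map into $SU(2)\simeq\S^3$ with $\int_{\B_1}|d\tilde g_\infty|^2\ge c\theta$. We next use the Luckhaus lemma together with gauge replacement. Replacing $g_k$ near balls away from the origin changes only $\int|A|^2$, since $YMH$ is gauge invariant. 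Minimality of $(u^k,A^k)$ then gives small-density regions, and Lemma~\ref{lm-delreg} upgrades these to strong $W^{1,2}$ convergence. We conclude that $\tilde g_\infty$ is energy minimizing for its boundary values. By \cite{SU3} (proposition 1.2), homogeneous minimizing maps into $\S^3$ are constant, which contradicts $\int|d\tilde g_\infty|^2\ge c\theta$. This proves (\ref{dens-0}).

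\textbf{Step 4: conclusion.} Given (\ref{dens-0}), for every $x_0$ there is $\rho(x_0)$ such that (\ref{smf}) holds for all large $k$. Covering $\B_{r_1}(0)$ by finitely many such balls and applying Lemma~\ref{lm-delreg} gives the uniform $W^{1,2}(\B_{r_1}(0))$ bound.

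The main obstacle is Step~3, in particular the strong convergence and minimality of the blow-up limit. The competitor must be built at scale $\rho_k$ with $\ep_k\ll\rho_k$, and we must check that all $\ep_k$-dependent error terms disappear in the rescaling.
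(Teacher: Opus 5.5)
Your outline is the paper's proof. You argue by contradiction, split $A^k=g_k^{-1}dg_k+g_k^{-1}d^\ast\xi_k\,g_k$, blow up $g_k$ at $x_0$, and show the limit is a homogeneous weakly harmonic map into $\S^3$. Luckhaus' lemma with pure gauge replacements and the minimality of $(u^k,A^k)$ then give strong convergence and minimality of the limit, Schoen--Uhlenbeck makes it constant, and Lemma~\ref{lm-delreg} with a covering gives the $W^{1,2}$ bound. Working with an arbitrary $\theta>0$, instead of first showing the density is at least $\delta$, is fine, since strong convergence keeps $\int_{\B_1}|d\tilde g_\infty|^2\ge c\theta$. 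The obstacle you flag at the end is harmless. The competitor is a gauge change, so all $\ep_k$-dependent terms are untouched. By your Step~1 the Higgs and potential part of the density on a ball of radius $s$ is $O(\ep_k^2/s)$, so the hypothesis of Lemma~\ref{lm-delreg} holds at scales comparable to $\rho_k\gg\ep_k$.

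The step that does not go through as written is the homogeneity $\partial_r\tilde g_\infty=0$. In theorem~\ref{th-reg} there is a single solution, and $\int_{\B_\rho(x_0)}|A_r|^2/|x-x_0|\,dx^3\to0$ as $\rho\to0$ because it is the tail of a finite integral. Here each $(u^k,A^k)$ is smooth, so for fixed $k$ its density at $x_0$ tends to $0$ as $s\to0$. Integrating (\ref{sss}) from $0$ to $R\rho_k$ then only bounds $\int_{\B_{R\rho_k}(x_0)}|A^k_r|^2/|x-x_0|\,dx^3$ by $\Theta_k(x_0,R\rho_k)+CR\rho_k$, which is of order $\theta$. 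The radial energy sits at the $k$-dependent scale where the density of the $k$-th solution drops from about $\theta$ to $0$. So knowing only that the density stays in $[\theta/2,C]$ between $\rho_k$ and $r_0$ does not make the rescaled radial energy small.

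What you need is pinching of the density on the blow-up window. By your almost monotonicity, $L:=\lim_{\rho\to0}\limsup_k\Theta_k(x_0,\rho)$ exists. Along a subsequence you can choose $\rho_k\to0$ and $R_k\to\infty$ with $R_k\rho_k\to0$ such that $\Theta_k(x_0,s)\to L$ uniformly for $s\in[\rho_k/R_k,R_k\rho_k]$. To do this, bound $\Theta_k$ from below at the left endpoint and from above at the right endpoint, and use almost monotonicity in between. Integrating (\ref{sss}) over this window gives $\int_{\B_{R_k\rho_k}(x_0)\setminus\B_{\rho_k/R_k}(x_0)}|A^k_r|^2/|x-x_0|\,dx^3\to0$. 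Since $|\partial_rg_k|^2\le2|A^k_r|^2+2|d^\ast\xi_k|^2$, and the $d^\ast\xi_k$ term contributes $O(R_k\rho_k)$ by the $L^6$ bound, you get $\int_{\B_R\setminus\B_{1/R}}|\partial_r\tilde g_k|^2/|y|\,dy^3\to0$ for every fixed $R$. That is what yields $\partial_r\tilde g_\infty=0$.

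The paper sets this up by first passing to the limit measure $\mu_\infty$ and using that its density at $x_0$ exists. The repair is necessary, because homogeneity is what makes $\tilde g_\infty$ smooth away from the origin, which the Luckhaus construction uses. With it, the rest of your argument goes through.
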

\noindent{\bf Proof of Lemma~\ref{lm-0dens}.} Assume the result would not be true. Hence there would be a point $x_0\in\B_1(0)$ such that
\[
\lim_{\rho\rightarrow 0}\limsup_{k\rightarrow +\infty}\frac{1}{\rho}\int_{\B_\rho(x)}|A^k|^2\ dx^3>0
\]
Modulo extraction of a subsequence we can assume
\[
\mu_k:=|A^k|^2+\tau\,\frac{|\nabla_{A^k}u^k|^2}{\ep_k^2}+\frac{3\la\tau}{2\ep_k^4}(1-|u_k|^2)^2\ \rightharpoonup\ \mu_\infty\ \qquad\mbox{ in }{\mathcal M}(\B_1(0))
\]
Because of (\ref{decnab}) there holds 
\[
\frac{|\nabla_{A^k}u^k|^2}{\ep_k^2}\ \rightharpoonup\ 0 \qquad\mbox{ in }{\mathcal M}(\B_{r_1}(0))
\]
for any $r_1<1$. Let $\chi_{r_1}$ be a smooth function such that $\chi_{r_1}\equiv 1$ on $\B_{r_1}(0)$ and $\chi_{r_1}\in C^\infty_0(\B_{(r_1+1)/2}(0))$. From (\ref{moduinf}) we have 
\[
-\int_{\B_{r_1}(0)} \chi_{r_1}(x) \frac{(1-|u|^2)}{\ep^2}\,\Delta\, (1-|u|^2)+\la\,\chi_{r_1}\, \frac{|u|^2}{\ep^4}\ (1-|u|^2)^2\ dx^3=\int_{\B_{r_1}(0)} \chi_{r_1}(x) \ \frac{|\nabla_Au|^2}{\ep^2}\,(1-|u|^2)\ dx^3
\]
Hence
\[
\begin{array}{l}
\ds\int_{\B_{1}(0)} \chi_{r_1}(x)  \frac{|\nabla(1-|u|^2)|^2}{\ep^2}+\la\,\chi_{r_1}\, \frac{|u|^2}{\ep^4}\ \ (1-|u|^2)^2\ dx^3\\[5mm]
\ds\le \int_{\B_{1}(0)} \chi_{r_1}(x) \ \frac{|\nabla_Au|^2}{\ep^2}\,(1-|u|^2)\ dx^3+\frac{1}{2}\, \int_{\B_{r_1}(0)} \Delta\chi_{r_1}(x) \ \frac{(1-|u|^2)^2}{2\,\ep^2}\ dx^3\\[5mm]
\ds\le \|1-|u|^2\|_{L^\infty(\B_{(r_1+1)/2}(0))}\ \int_{\B_{(r_1+1)/2}(0)}  \frac{|\nabla_Au|^2}{\ep^2}\ dx^3+\ep^2\  \int_{\B_{1}(0)} \ \frac{(1-|u|^2)^2}{2\,\ep^4}\ dx^3=o(1)
\end{array}
\]
Hence in particular
\[
\frac{3\la\tau}{2\ep_k^4}(1-|u_k|^2)^2\ \rightharpoonup\ 0 \qquad\mbox{ in }{\mathcal M}(\B_{r_1}(0))
\]
Observe that thanks to (\ref{sss}) and (\ref{curvmon}) 
\[
\frac{d}{dr}\lf(  \frac{1}{2r}\mu_\infty(B_r(x_0))+\frac{1}{2r} \ \int_{\B_r(x_0)}\,\lf(\frac{2r}{|x-x_0|}-1\rg)\tau\,|F_{A^\infty}|^2\ dx^3\rg)\ge 0
\]
Since $F_{A^\infty}\in L^6(\B_{r_1}(0))$ we have  for any $r>0$
\[
\int_{B_{r}(x_0)}\frac{1}{|x-x_1|}|F_{A^\infty}|^2\ dx^3\le r\ \|F_A\|^2_{L^6(\B_{r_1}(0))}
\]
Hence the following limit
\[
\lim_{\rho\rightarrow 0} \frac{\mu_\infty(B_\rho(x_0))}{2\,\rho}\qquad\mbox{ exists.}
\]
Assume
\[
\lim_{\rho\rightarrow 0} \frac{\mu_\infty(B_\rho(x_0))}{2\,\rho}<\delta
\]
Because of Lemma~\ref{lm-delreg} we have that there exists a fixed radius $\rho$ such that $\|\nabla A^k\|_{W^{1,2}(\B_\rho(x_0))}$ is uniformly bounded. Hence, if (\ref{dens-0}) is not true then we have
\be
\label{lowden}
\lim_{\rho\rightarrow 0}\lim_{k\rightarrow +\infty}\frac{1}{2\,\rho}\int_{\B_\rho(x)}|A^k|^2\ dx^3=\lim_{\rho\rightarrow 0}\frac{\mu_\infty(B_\rho(x_0))}{2\,\rho}\ge \delta\ .
\ee
Recall the existence of a Coulomb gauge $d^\ast\xi^k$ on a fixed ball $\B_{r_0}(x_0)$ such that
\[
A^k=(g^k)^{-1}\,dg^k+(g^k)^{-1}\,d^\ast\xi^k\,g^k\quad\mbox{ and }\quad\limsup_{k\rightarrow +\infty}\|d^\ast\xi^k\|_{W^{1,6}(\B_{r_0}(x_0))}<+\infty\ .
\]
Hence there exists $\rho_k\rightarrow 0$ such that $\ti{g}^k(y):=g^k(\rho_k\,y+x_0)$ satisfies
\[
\ti{g}^k\rightharpoonup \ti{g}^\infty\quad\mbox{ weakly in }W^{1,2}_{loc}({\R}^3,SU(2))\qquad\mbox{ and }\qquad \lim_{k\rightarrow +\infty}\int_{\B_1(0)}|d\ti{g}^k|^2\ dy^3\ge \delta\ .
\]
Moreover for any $R>0$
\[
\lim_{k\rightarrow +\infty}\int_{\B_R(0)}\frac{1}{|y|}\lf|\frac{\p \ti{g}^k}{\p r}\rg|^2\ dy^3=0\ .
\]
Now one can argue word by word as in the proof of theorem~\ref{th-reg} to obtain a contradiction and this concludes the proof of Lemma~\ref{lm-0dens}.\hfill $\Box$

\medskip

\noindent{\bf End of the proof of Theorem~\ref{th-harmflu}.} Thanks to the previous lemma we have that $\|A^k\|_{W^{1,2}(\B_{r_1}(0))}$ is uniformly bounded for any $r_1<1$ and a classical bootstrap arguments leads to the strong convergence to $A^k$ and $u^k$ in any $C^l_{loc}(\B_1(0))$. 
We have moreover $|u^\infty|=1$ on $\B^3$ and
\[
d u^k+[A^k,u^k]\rightarrow 0\qquad\mbox{ in } L^2({\B}_1(0))
\]
Thus the transverse part to $A^\infty$ with respect to $u^\infty$ is given by
\[
(A^\infty)^T=-\frac{1}{4}[u^\infty,[u^\infty,A^\infty]]=-\frac{1}{4}[u^\infty,du^\infty]\ .
\]
Moreover $d^\ast A^\infty=0$
which implies
\[
d^\ast\lf(\frac{1}{4}[u^\infty,du^\infty]\rg)=d^\ast((A^\infty)\cdot u^\infty\ u^\infty)
\]
Hence
\[
\begin{array}{l}
\ds F_{A^\infty}=-\frac{1}{4}d\lf([u^\infty,du^\infty]\rg)+d\lf(A^\infty\cdot u^\infty\,u^\infty\rg)+\frac{1}{32}\lf[[u^\infty,du^\infty]\,\wedge\,[u^\infty,du^\infty]\rg]\\[5mm]
\ds-\frac{1}{8}\lf[ [u^\infty, du^\infty],u^\infty\rg]\wedge A^\infty\cdot u^\infty- \frac{1}{8}\,A^\infty\cdot u^\infty\wedge [u^\infty,[u^\infty, du^\infty]]

\end{array}
\]
Recall from (\ref{curb})
\[
\begin{array}{l}
\ds-\frac{1}{4}\,d([{u}^\infty,d{u}^\infty])+\frac{1}{32}[[{u}^\infty,d{u}^\infty]\wedge[{u}^\infty,d{u}^\infty]]=-\frac{1}{8}\,[d{u}^\infty\wedge d{u}^\infty]
\end{array}
\]
This gives
\[
\begin{array}{rl}
\ds F_{A^\infty}&\ds=-\frac{1}{8}\,[d{u}^\infty\wedge d{u}^\infty] +d\lf(A^\infty\cdot u^\infty\,u^\infty\rg) +A^\infty\cdot u^\infty\wedge du^\infty\\[5mm]
 &\ds=-\frac{1}{8}\,[d{u}^\infty\wedge d{u}^\infty] +d\lf(A^\infty\cdot u^\infty\rg)\,u^\infty\
\end{array}
\]
We deduce in particular
\[
[u^\infty,F_{A^\infty}]=0\qquad\mbox{ and }\qquad d\lf(u^\infty\cdot F_{A^\infty}\rg)=0\ .
\]
We had
\[
u^k\cdot d^\ast_{A^k}F_{A^k}=-\frac{1}{\tau} u^k\cdot A^k
\]
which also gives
\[
d^\ast\lf(u^k\cdot F_{A^k}\rg)-d_{A^k}u^k\,\dot{\res}\,F_{A^k} =-\frac{1}{\tau} u^k\cdot A^k
\]
Passing to the limit is giving
\[
d^\ast\lf(u^\infty\cdot F_{A^\infty}\rg)=-\frac{1}{\tau} u^\infty\cdot A^\infty
\]
Let $G^\infty$ be the real valued 2-form given by $2^{-1}\,G^\infty:=-u^\infty\cdot F_{A^\infty}$. We have
\[
d^\ast\lf(\frac{1}{4}[u^\infty,du^\infty]\rg)+\tau\,d^\ast\lf(\frac{1}{2}\,d^\ast G^\infty\ u^\infty\rg)=0
\]
Moreover
\[
G^\infty=\frac{1}{4}\,u^\infty\cdot[d{u}^\infty\wedge d{u}^\infty]-\tau\,d\,d^\ast G^\infty
\]
Observe that we have in general for any map in $W^{1,2}(\B^3_1(0),\S^2)$
\[
\begin{array}{l}
\ds\frac{1}{4}\,u\cdot[d{u}\,\wedge\, d{u}]=\frac{1}{2} u\cdot[\p_{x_1}{u}\,\wedge\, \p_{x_2}{u}]\ dx_1\wedge dx_2\\[5mm]
\ds= u^{\bf i} \lf(\p_{x_1}u^{\bf j}\,\p_{x_2}u^{\bf k}-\p_{x_1}u^{\bf k}\,\p_{x_2}u^{\bf j}\rg)\ dx_1\wedge dx_2 -u^{\bf j} \lf(\p_{x_1}u^{\bf k}\,\p_{x_2}u^{\bf i}-\p_{x_1}u^{\bf i}\,\p_{x_2}u^{\bf k}\rg)\ dx_1\wedge dx_2\\[5mm]
\ds\ +u^{\bf k} \lf(\p_{x_1}u^{\bf i}\,\p_{x_2}u^{\bf j}-\p_{x_1}u^{\bf j}\,\p_{x_2}u^{\bf i}\rg)\ dx_1\wedge dx_2= u^\ast \om_{\S^2}
\end{array}
\]
where $\om_{\S^2}$ is the volume form on $\S^2$. We have also
\[
\frac{1}{2}[u,du]=\lf(u^{\bf j}\, du^{\bf k}-u^{\bf k}\, du^{\bf j}\rg)\ {\bf i}+\lf(u^{\bf k}\, du^{\bf i}-u^{\bf i}\, du^{\bf k}\rg)\ {\bf j}+\lf(u^{\bf i}\, du^{\bf j}-u^{\bf j}\, du^{\bf i}\rg)\ {\bf k}\ .
\]
which gives
\[
\frac{1}{4}[du\,\wedge\, du]=du\wedge du
\]
Hence
\[
F_{A^\infty}=-\frac{1}{2}du\wedge du+\frac{\tau}{2}\,dd^\ast G^\infty\ u^\infty=-\frac{1}{2} \,\lf(u^\ast\om_{\S^2}-\tau\,dd^\ast G^\infty\rg)\ u^\infty
\]
Finally\footnote{Recall that on one forms $\beta$  there holds 
\[
d^\ast \beta=-\ast d\ast\beta
\]
while for two forms $F$ there holds
\[
d^\ast F=\ast d\ast F
\]}
 we have prove that $(u^\infty, G^\infty)$ satisfy  the system 
\[
\lf\{
\begin{array}{l}
\ds -d\ast\lf(u^\infty\wedge d u^\infty\rg)=\tau\, d\ast G^\infty\wedge du^\infty\qquad\mbox{ in }\B_1(0)\\[5mm]
\ds \tau\,d\,d^\ast G^\infty+G^\infty=(u^\infty)^\ast\om_{S^2}\qquad\mbox{ in }\B_1(0)\\[5mm]
\ds\iota^\ast_{\p\B_1(0)}\ast G^\infty=0
\end{array}
\rg.
\]
and $F_{A^\infty}=-\,2^{-1}\,G^\infty\ u^\infty$. Implementing the above arguments at balls centred at the boundary we obtain that $(u^\infty,A^\infty)$ are smooth up to the boundary.

\medskip

In order to complete the proof of theorem~\ref{th-harmflu} it remains to show that $(u^\infty, \eta^\infty=\tau\,d^\ast G^\infty)$ is an absolute minimizer of ${\mathcal R}_\tau$ under the only constraint $u^\infty=\phi$ on $\p B^3_1(0)$.
We clearly have
\[
\inf\lf\{ {\mathcal R}_\tau(u,\eta)\ ;\ u\in C^1(\B^3_1(0),\S^2)\cap W^{1,2}_\phi(\B^3,\S^2)\quad,\eta\in C^1(\Om_1(\B_1^3(0))\rg\}\le {\mathcal R}_\tau(u^\infty,\eta^\infty)\
\]
Let $u\in W^{1,4}_\phi(\B^3,\S^2)$ and let $\eta\in L^2(\Om_1(\B^3_1(0)))$ such that $d\eta\in L^2(\Om_2(\B^3_1(0)))$. and We consider
\[
A:= -\frac{1}{4}\,[u,du]+\frac{\eta}{2}\, u\ .
\]
We compute
\[
\begin{array}{rl}
\ds F_A&\ds =-\frac{1}{4}\,d[u,du]+\frac{1}{2}d\eta\, u-\eta\wedge du+\frac{1}{32}\,\lf[ [u,du]\,\wedge\,[u,du]  \rg]\\[5mm]
\ds&\ds\quad -\frac{1}{16}\,[[u,du],u]\,\wedge \eta-\frac{1}{16}\,\eta\wedge [u,[u,du]]\\[5mm]
\ds&\ds\quad=-\frac{1}{8}\,[du\,\wedge du]+\frac{1}{2}\,d\eta\, u=\frac{1}{2}\,\lf(d\eta-u^\ast\om\rg)\ u
\end{array}
\]
Hence for any $\ep>0$ there holds
\[
\frac{1}{2}\int_{\B^3_1(0)}|A|^2+\tau\,|F_A|^2+\tau\,\frac{|\nabla_A u|^2}{\ep^2}+\tau\la\frac{(1-|u|^2)}{\ep^4}\ dx^3=\frac{1}{8}\int_{\B^3_1(0)}\lf(|du|^2+|\eta|^2\rg)+\tau\,\lf|d\eta-u^\ast\om\rg|^2\ dx^3
\]
Hence
\[
\begin{array}{l}
\ds{\mathcal R}_\tau(u^\infty,\eta^\infty)=\frac{1}{2}\int_{\B^3_1(0)}|A^\infty|^2+\tau\,|F_{A^\infty}|^2\ dx^3\le \limsup_{k\rightarrow +\infty}CYMH_{\ep_k}^{\la,\tau/\ep^k}(u^k,A^k)\\[5mm]
\ds \le CYMH_{\ep_k}^{\la,\tau/\ep^k}(u,A)\le {\mathcal R}_\tau(u,\eta)
\end{array}
\]
This concludes   the proof of theorem~\ref{th-harmflu}.\hfill $\Box$
\section{The Study of ${\mathcal R}_\tau$ and the asymptotics $\tau\rightarrow +\infty$ and $\tau\rightarrow 0$.}
\reset
\subsection{Some fundamental facts on ${\mathcal R}_\tau$.}
\begin{Lm}
\label{lm-hopf}
Let $u\in W^{1,3}(\B_1^3(0),\S^2)$ 
and let $\eta\in L^3(\Om_1(\B_1(0)))$ such that
\[
d\eta:=u^\ast \om_{\S^2}\ .
\]
Then there exists $\check{u}\in W^{1,3}(\B^3,\S^3)$ such that $u=\frak{h}\circ u$ where $\frak{h}$ is the Hopf fibration and 
\[
|du|^2+|\eta|^2=4\,|d\hat{u}|^2 \qquad\mbox{ almost everywhere}\ .
\]
\hfill $\Box$
\end{Lm}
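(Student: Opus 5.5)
The plan is to realise $\check{u}$ as a horizontal lift corrected by a phase. Here $\frak{h}:\S^3\to\S^2$ is the Hopf map $g\mapsto g\,{\bf i}\,g^{-1}$ (viewing $\S^3=SU(2)$) and $\check u$ is the map to be constructed (the statement's $u=\frak{h}\circ u$ and $d\hat u$ should read $u=\frak{h}\circ\check u$ and $d\check u$). First we fix the normalisation of the connection form. For $g\in W^{1,3}(\B^3,\S^3)$ with $u=g\,{\bf i}\,g^{-1}$, write $g^{-1}dg=\theta\,{\bf i}+\beta$, where $\beta$ takes values in the span of ${\bf j},{\bf k}$. A direct quaternionic computation gives $du=g[\beta,{\bf i}]g^{-1}$, hence $|du|^2=4|\beta|^2$. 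The Maurer--Cartan equation $d(g^{-1}dg)+g^{-1}dg\wedge g^{-1}dg=0$, projected on ${\bf i}$, gives $d\theta=c\,u^\ast\om_{\S^2}$ for a universal constant $c$ (with the paper's conventions, $F_A=\frac38[du\wedge du]$ in Lemma~\ref{lm-bd} suggests $|c|=1/2$). Then $|dg|^2=|g^{-1}dg|^2=|\theta|^2+|\beta|^2=|\theta|^2+\frac14|du|^2$. So the identity $|du|^2+|\eta|^2=4|d\check u|^2$ holds as soon as $\theta=\pm\frac12\eta$ pointwise, and the constants are consistent with $d\eta=u^\ast\om$.

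The construction proceeds in three steps.

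\textbf{Step 1: a local lift.} Since $u\in W^{1,3}$ in dimension $3$ has VMO, locally small oscillation, we can cover $\B^3$ by small balls on each of which $u$ stays close to a point of $\S^2$. On each ball $u$ lifts through a local section of the Hopf bundle, giving $g_i\in W^{1,3}$.

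\textbf{Step 2: gluing.} Alternatively, and better, we use that $W^{1,3}$ maps on the ball are approximable by smooth maps and that the pulled-back bundle $u^\ast\S^3$ is trivial over the contractible ball. Triviality holds because its Chern class is $[u^\ast\om]/4\pi$ up to a constant, which vanishes since $u^\ast\om=d\eta$ is exact (and $H^2(\B^3)=0$ anyway). This produces a global lift $g\in W^{1,3}(\B^3,\S^3)$. The cleanest route is through the Hélein-type Coulomb frame used earlier in the paper: take $e_1,e_2$ with $u=e_1\wedge e_2$, so that $g$ corresponds to $(e_1,e_2,u)\in SO(3)$ lifted to $SU(2)$; the lift to the double cover exists because $\B^3$ is simply connected.

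\textbf{Step 3: phase correction.} Given $g$ with connection form $\theta$, we have $d(\theta\mp\frac12\eta)=0$. On the simply connected ball we therefore write $\theta\mp\frac12\eta=d\varphi$ with $\varphi\in W^{1,3}$, by the Poincaré lemma in Sobolev spaces. Setting $\check u:=g\,e^{-\varphi{\bf i}}$ preserves $\frak{h}\circ\check u=u$, since $e^{-\varphi{\bf i}}$ commutes with ${\bf i}$. It changes the connection form to $\theta-d\varphi=\pm\frac12\eta$ and leaves $\beta$ unchanged up to rotation by $e^{\varphi{\bf i}}$, which preserves its norm. Moreover $\check u\in W^{1,3}$, since $\theta,\eta\in L^3$.

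The main obstacle is Step 2, the existence of a global Sobolev lift $g$ with the right integrability. I would handle it via the frame: $u\in W^{1,3}\subset$ VMO, so $u^\ast T\S^2$ admits a $W^{1,3}$ orthonormal frame (on a ball, a Coulomb frame obtained by minimising $\int|\langle de_1,e_2\rangle|^2$, with $L^3$ estimates from the Hodge system). Alternatively, we approximate $u$ strongly in $W^{1,3}$ by smooth maps, which is possible in the critical dimension by Schoen--Uhlenbeck/VMO density, lift these smoothly, and pass to the limit using the uniform bound $|dg_k|^2=\frac14|du_k|^2+|\theta_k|^2$ after fixing $\theta_k$ to be co-closed with controlled $L^3$ norm. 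The sign and constant bookkeeping in $d\theta=c\,u^\ast\om$ is the other point to verify carefully.
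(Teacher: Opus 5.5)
Your proposal is correct and is essentially the paper's argument. Lift $u$ through the Hopf fibration (the paper reduces to smooth $u$ by density and takes a smooth lift $\tilde u$), observe that the fibre connection form ($\tilde u^\ast\alpha$ in the paper, your $\theta$, and your constant $|c|=\frac12$ is indeed right) differs from $\pm\frac12\eta$ by a closed $L^3$ form, and rotate the fibre phase by a primitive $\varphi$ of that form; this preserves $\frak{h}\circ\check u=u$ and gives $4|d\check u|^2=|du|^2+|\eta|^2$. Your density-plus-Coulomb-gauge version of Step~2 (uniform $L^3$ bounds on $dg_k$, weak limit, then phase correction at the Sobolev level) makes explicit the limiting argument that the paper only asserts with ``we can assume $u$ smooth''. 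Drop Step~1, however: VMO does not give pointwise closeness of $u$ to a point of $\S^2$.
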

\noindent{\bf Proof of Lemma~\ref{lm-hopf}.} The Hopf fibration $\frak{h}$ is the map given by
\[
\frak{h}\ :\ (w_1,w_2)\in \S^3\subset {\C}^2\quad\longrightarrow\quad (2 w_1\,\ov{w}_2\,,\,|w_1|^2-|w_2|^2)
\]
It is transversally conformal that is the restriction to every horizontal plane given by the Kernel of $\al$ where
\[
\al:=z_1\,dz_2-z_2\,dz_1+z_3\,dz_4-z_4\,dz_1
\]
is a conformal linear map moreover for any horizontal vector $X^H\in \mbox{Ker}(\al)$ there holds
\[
|\frak{h}_\ast X^H|_{S^2}= 2\, |X^H|_{S^3}
\]
in such a way that $|\nabla \frak{h}|\equiv 2\,\sqrt{2}$ on $S^3$ (see \cite{Riv}). Observe that the $1-$form $\al$ satisfies $|\al|_{S^3}\equiv 1$. We have also from \cite{Riv}
\[
d\al =2\, dx_1\wedge dx_2+2\, dx_3\wedge dx_4=\frac{1}{2}\, \frak{h}^\ast\om_{\S^2}\ .
\]
Since smooth maps are dense in $W^{1,3}(\B^2,\S^2)$ we can assume that $u$ is $C^\infty$. Let $u$ be a smooth map from $\B^3_1(0)$ into $\S^2$ and let $\eta$ such that
\[
d\eta=u^\ast\om_{\S^2}
\]
Since $u$ is smooth from $\B^3_1(0)$ into $\S^2$ there exists a smooth lift $\ti{u}$ from $\B^3_1(0)$ into $\S^3$ such that $\frak{h}\circ \ti{u}=u$ and there holds
\[
d(\ti{u}^\ast\al)=\ti{u}^\ast d\al=\frac{1}{2}\,\ti{u}^\ast \frak{h}^\ast\om_{\S^2}=\frac{1}{2}\,u^\ast\om_{\S^2}\ .
\]
Hence
\[
d(\ti{u}^\ast\al-\frac{1}{2}\,\eta)=0\ .
\]
Let $\varphi$ such that $d\varphi=\ti{u}^\ast\al-\frac{1}{2}\,\eta$. Consider the map $\check{u}$ into $S^3$ such that $\check{u}:=e^{-i\varphi}\,\ti{u}$. Observe that we still have $\frak{h}\circ\check{u}=u$. For any $g\,:\,\B^3\rightarrow \S^3$ there holds
\[
g^\ast\al= g_1\,dg_2-g_2\,dg_1+g_3\,dg_4-g_4\,dg_1=\Im\lf( (g_1-ig_2)\, d(g_1+ig_2)+(g_3-ig_4)\, d(g_3+ig_4)\rg)
\]
Hence writing $\ti{u}:=(a,b)$ we have
\[
\check{u}^\ast\al=(e^{-i\varphi}\,\ti{u})^\ast\al= \Im\lf( e^{i\varphi}\, \ov{a}\,d( e^{-i\varphi}\, {a})+e^{i\varphi}\, \ov{b}\,d( e^{-i\varphi}\, {b})  \rg)=-d\varphi+\ti{u}^\ast\al=2^{-1}\,\eta\ .
\]
Let $(e_1^\ast, e_2^\ast)$ be an orthonormal basis of $H_z$ the horizontal plane at $z\in\S^3$. Thus  $(e_1^\ast(z), e_2^\ast(z),\al(z))$ is an orthonormal basis of $T^\ast\S^3$ and we have 
\[
|\nabla \check{u}|^2=\sum_{l=1}^3 |\p_{x_l}\check{u}|^2=\sum_{l=1}^3 <e_1^\ast(\check{u}),\p_{x_l}\check{u}>^2+<e_1^\ast(\check{u}),\p_{x_l}\check{u}>^2+<\al(\check{u}),\p_{x_l}\check{u}>^2
\]
We have for instance
\[
|\check{u}^\ast\al|^2=\sum_{l=1}^3<\check{u}^\ast\al,\p_{x_l}>^2=\sum_{l=1}^3<\al,\check{u}_\ast\p_{x_l}>^2=\sum_{l=1}^3<\al(\check{u}),\p_{x_l}\check{u}>^2
\]
Hence
\[
|\nabla \check{u}|^2=|\check{u}^\ast e_1^\ast|^2+|\check{u}^\ast e_2^\ast|^2+|\check{u}^\ast\al|^2=|\check{u}^\ast e_1^\ast|^2+|\check{u}^\ast e_2^\ast|^2+\frac{1}{4}|\eta|^2
\]
Since $\frak{h}$ is conformal from $H_z$ into $T_{\frak{h}(z)}\S^2$ with conformal factor equal to $2$ we have that $(e^\ast_1,e^\ast_2)=\frac{1}{2}\,\frak{h}^\ast (f_1^\ast, f_2^\ast)$ where $(f_1^\ast, f_2^\ast)$ is an orthonormal basis of $T_{\frak{h}(z)}\S^2$. This implies
\[
|\check{u}^\ast e_1^\ast|^2+|\check{u}^\ast e_2^\ast|^2=\frac{1}{4}\, \lf(|u^\ast f_1^\ast|^2+|u^\ast f_2^\ast|^2\rg)=\frac{1}{4}|\nabla u|^2\ .
\]
Hence we have proved the lemma~\ref{lm-hopf}.\hfill$\Box$

\medskip

It would be interesting to extend this result to maps in $W^{1,2}(\B^3,\S^2)$  which are strongly approximable by smooth maps  - which is equivalent to $d(u^\ast\om)=0$ (\cite{Be1}) - and for which there exists a 1-form in $L^2$ such that $d\eta=u^\ast\om$ (see open problems ~\ref{ope-lift} and \ref{ope-lift0}). This is also closely related to the following question considered in \cite{AK} : Let $A\in L^2(\Om_1(\B^3_1(0))\otimes \frak{su}(2))$ such that
\[
F_A=dA+\frac{1}{2}[A\,\wedge A]=0
\]
does there exists $g\in W^{1,2}(\B^3_1(0),SU(2))$ such that
\[
A=g^{-1}\, dg\  ?
\]
We now prove the following proposition.
\begin{Prop}
\label{pr-crit-refa}
$(u,\eta)$ is a smooth critical point of
\[
{\mathcal R}_\tau(u,\eta):=\frac{1}{8}\int_{\B^3_1(0)}|du|^2+|\eta|^2+\tau\,|d\eta-u^\ast\om|^2\ dx^3
\]
among any choice of one form $\eta$ and  smooth $u\in \S^2$ satisfying $u=\phi$ on $\p\B^3_1(0)$. Then $\eta$ satisfies the Coulomb condition
\[
d^\ast\eta =0
\]
and $(u,G)$ where $G:=u^\ast\om_{\S^2}-d\eta$ satisfy (\ref{lond}) that is
\[
\lf\{
\begin{array}{l}
\ds- d\ast\lf(u\times d u\rg)=\tau\, d\ast G\wedge du=-\,\tau\,d\lf(  d\ast G\ u  \rg)\qquad\mbox{ in }\B_1(0)\\[5mm]
\ds \tau\,d\,d^\ast G+G=u^\ast\om_{S^2}\qquad\mbox{ in }\B_1(0)\\[5mm]
\ds dG=0\qquad\mbox{ in }\B_1(0)\\[5mm]
\ds\iota^\ast_{\p\B_1(0)}\ast G=0
\end{array}
\rg.
\]
We have in particular
\[
\eta=\tau \,d^\ast G\ \qquad\mbox{ and }\quad{\mathcal R}_\tau(u,\eta):=\frac{1}{8}\int_{\B^3_1(0)}|du|^2+\tau^2\,|d^\ast G|^2+\tau\,|G|^2\ dx^3
\]
Let $B$ such that
\[
dB=-\ast\lf(u\times d u\rg)+\tau \,d\ast G\ u 
\]
then $u$ satisfies
\be
\label{antisy}
\ast du=u\times dB\ .
\ee
\hfill $\Box$
\end{Prop}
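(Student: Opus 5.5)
The plan is to derive the Euler--Lagrange equations of ${\mathcal R}_\tau$ separately in $\eta$ and in $u$, and then rewrite them in terms of $G:=u^\ast\om_{\S^2}-d\eta$. Note the sign convention: with this $G$ one has $d\eta-u^\ast\om_{\S^2}=-G$, consistent with (\ref{lond}), where $F_A=-2^{-1}G\,u$.

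\textbf{Variation in $\eta$.} Since $\eta$ is unconstrained at the boundary, take $\eta_t=\eta+t\beta$ with $\beta$ an arbitrary smooth one form. Criticality gives
\[
\int_{\B^3}\eta\cdot\beta-\tau\,G\cdot d\beta\ dx^3=0 .
\]
For compactly supported $\beta$ this yields $\eta=\tau\,d^\ast G$ in the interior. Applying $d^\ast$ then gives the Coulomb condition $d^\ast\eta=\tau\,d^\ast d^\ast G=0$. Allowing $\beta$ nonzero on $\p\B_1(0)$, the boundary term $\int_{\p\B}\iota^\ast\beta\wedge\ast G$ must vanish, which gives $\iota^\ast_{\p\B_1(0)}\ast G=0$.

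Closedness $dG=0$ follows from $d(u^\ast\om)=u^\ast d\om=0$, valid since $u$ is smooth with values in $\S^2$. Substituting $\eta=\tau d^\ast G$ into $G=u^\ast\om-d\eta$ gives $\tau\,dd^\ast G+G=u^\ast\om_{\S^2}$. Plugging $\eta=\tau d^\ast G$ and $d\eta-u^\ast\om=-G$ into ${\mathcal R}_\tau$ gives the stated energy identity.

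\textbf{Variation in $u$.} Take $u_t=(u+tv)/|u+tv|$ with $v\in C^\infty_0$. The Dirichlet part gives $-\Delta u-|du|^2u$, equivalently $d^\ast(u\times du)$ after taking the cross product with $u$. For the coupling term, use that the first variation of $u^\ast\om$ is $d\big((u\times v)\cdot du\big)$, up to sign, via the Cartan formula on $\S^2$ with $v$ tangent. This gives
\[
\int -\tau\,G\cdot d\big(\iota_v\om(du)\big)=-\tau\int d^\ast G\cdot (u\times v)\cdot du .
\]
Hence the tangential Euler--Lagrange equation is
\[
d^\ast(u\times du)=\tau\,(u\times)\big(d^\ast G\,\lrcorner\, du\big)
\]
up to orientation sign. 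Using $\ast$ this becomes $-d\ast(u\times du)=\tau\,d\ast G\wedge du$. Since $d(\ast G)=d\ast G$ and $d(d\ast G\,u)=-d\ast G\wedge du$ (because $dd\ast G=0$), this also equals $-\tau d(d\ast G\,u)$. I expect the main obstacle to be this step, namely keeping the Hodge star and orientation signs consistent. I would verify them by comparing with the limiting system obtained in the proof of Theorem~\ref{th-harmflu}, which has the same form.

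\textbf{Consequences.} The first equation states that the one form $-\ast(u\times du)+\tau\,d\ast G\,u$ is closed on the ball, so by Poincaré's lemma some $B$ exists with $dB$ equal to it. Taking $u\times dB$ and using $u\times u=0$, $u\times(u\times\xi)=-\xi$ for $\xi\perp u$, and $u\cdot du=0$, we get
\[
u\times dB=-\ast\big(u\times(u\times du)\big)=\ast du ,
\]
which is (\ref{antisy}).
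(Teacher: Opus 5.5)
Your route is the paper's: free variations in $\eta$, tangential variations in $u$, then rewriting in terms of $G$. The $\eta$-part is correct and more complete than the paper's proof, which never writes out $d^\ast\eta=0$, $dG=0$, the energy identity or the derivation of (\ref{antisy}). Your final step $u\times dB=-\ast\big(u\times(u\times du)\big)=\ast du$ is also fine, although $-\ast(u\times du)+\tau\,d\ast G\,u$ is an $\R^3$-valued two-form, not a one form.

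The gap is in the $u$-equation, and its sign is not cosmetic. $B$ exists only if $d\big(-\ast(u\times du)+\tau\,d\ast G\,u\big)=0$. With the opposite sign in the Euler--Lagrange equation this would equal $2\tau\,d\ast G\wedge du$, which is nonzero in general.

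Nothing needs to be left ``up to sign''. With $\om(X,Y)=u\cdot(X\times Y)$ and $v$ tangent, one has exactly $\delta(u^\ast\om)=d\big((u\times v)\cdot du\big)$. Since $d\eta-u^\ast\om=-G$, the coupling term contributes
\[
+\frac{\tau}{4}\int d^\ast G\cdot\big((u\times v)\cdot du\big)=\frac{\tau}{4}\int v\cdot(W\times u),\qquad W:=d^\ast G\lrcorner du,
\]
against $\frac14\int du\cdot dv$ from the Dirichlet term. Hence $-\Delta u+\tau\,W\times u$ is parallel to $u$. Crossing the \emph{whole} expression with $u$ gives $d^\ast(u\times du)=-\tau\,W$. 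Your intermediate equation, with $\tau\,u\times W$, crosses only the Dirichlet part; that is a rotation in $T_u\S^2$, not a sign. The correct equation is the paper's line $d^\ast(u\times du)+\tau\,du\wedge d\ast G=0$, i.e.
\[
-d\ast(u\times du)=-\tau\,d\ast G\wedge du=-\tau\,d(d\ast G\,u).
\]

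Your minus sign in front of $\tau\int d^\ast G\cdot(u\times v)\cdot du$ and your identity $d(d\ast G\,u)=-d\ast G\wedge du$ are two errors that happen to cancel. The graded Leibniz rule gives $d(d\ast G\,u)=d(d\ast G)\,u+d\ast G\wedge du=+\,d\ast G\wedge du$. So the middle term $\tau\,d\ast G\wedge du$ in the displayed statement is a sign misprint. The last term is the correct one, and it is the one you need for $B$. Checking against (\ref{lond}) in Theorem~\ref{th-harmflu} would not catch this, since that display carries the same misprint.
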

\noindent{\bf Proof of Proposition~\ref{pr-crit-refa}.} Assume $(u,\eta)$ is a smooth critical point of
\[
{\mathcal R}_\tau(u,\eta):=\frac{1}{8}\int_{\B^3_1(0)}|du|^2+|\eta|^2+\tau\,|d\eta-u^\ast\om|^2\ dx^3=\frac{1}{8}\int_{\B^3_1(0)}|du|^2+|\eta|^2+\tau\,\lf|d\eta\,u- \frac{1}{2}\,du\wedge du\rg|^2\ dx^3
\]
where $u\in \S^2$ prescribing only $u=\phi$ on $\p\B^3_1(0)$. Then for any $\zeta\in C^\infty(\Om_1(\B^3))$ and $v\in C^\infty_0(\B^3_1(0),{\R}^3)$ such that $v\cdot u=0$ at every point there holds
\[
\begin{array}{l}
\ds 0=\int_{\B^3_1(0)} du\cdot dv+\eta\cdot\zeta+\tau \lf(d\eta\, u- \frac{1}{2}\,du\wedge du\rg)\cdot \lf( d\zeta\ u-dv\wedge du+d\eta\, v\rg)\ dx^3\\[5mm]
\ds=\int_{\B^3_1(0)} d^\ast du\cdot v+\eta\cdot\zeta+\tau\,d^\ast (d\eta-u^\ast\om)\cdot \zeta+\tau\,(u^\ast\om-d\eta)\, u\cdot dv\wedge du\\[5mm]
\ds+\int_{\p \B_1^3(0)} \ast(d\eta-u^\ast\om)\wedge \zeta
\end{array}
\]
We have using the fact $v\cdot du\wedge du=0$
\[
\begin{array}{rl}
\ds u\cdot dv\wedge du&\ds=\sum_{i=1}^3 u^i\, (dv^{i+1}\wedge du^{i-1}-dv^{i-1}\wedge du^{i+1})=\sum_{i=1}^3 d\lf(u^i\, (v^{i+1}\, du^{i-1}-v^{i-1}\, du^{i+1})\rg)\\[5mm]
&\ds\quad-\sum_{i=1}^3 v^{i}\,du^{i-1}\,\wedge du^{i+1}+ \sum_{i=1}^3v^{i}\,du^{i+1}\wedge du^{i-1}\\[5mm]
 &\ds=\sum_{i=1}^3 d\lf(u^i\, (v^{i+1}\, du^{i-1}-v^{i-1}\, du^{i+1})\rg)\ .
\end{array}
\]
Thus
\[
\int_{\B^3_1(0)} \tau\,(u^\ast\om-d\eta)\, u\cdot dv\wedge du+\tau =\int_{\B^3_1(0)} \tau\,d^\ast(u^\ast\om_{\S^2}-d\eta)\,\cdot\sum_{i=1}^3 v_i\,\lf( u^{i-1}\, du^{i+1}-u^{i+1}\,du^{i-1}  \rg)
\]
This gives respectively
\[
\lf\{
\begin{array}{l}
\ds\tau\,d^\ast d\eta+\eta=\tau\,d^\ast(u^\ast\om_{\S^2})\qquad\mbox{ in }\B^3_1(0)\\[5mm]
\ds\iota_{\p \B^3}^\ast\ast(d\eta-u^\ast\om)=0
\end{array}
\rg.
\]
and
\[
u\times d^\ast du-\tau\,d^\ast(u^\ast\om_{\S^2}-d\eta)\cdot u\times\lf(u\times du\rg)=0
\]
This gives
\[
d^\ast (u\times du)+\tau\, du\wedge\,d\ast((u^\ast\om_{\S^2})-d\eta)=0
\]
Let $G:=u^\ast\om_{\S^2}-d\eta$, we have
\[
\tau\,d^\ast G=\eta
\]
and
\[
\tau\,dd^\ast G=d\eta=-G+u^\ast\om_{\S^2}
\]
This concludes the proof  of Proposition~\ref{pr-crit-refa}.\hfill $\Box$

\subsubsection{Gauge change in ${\mathcal R}_\tau$.}

\begin{Lm}
\label{lm-gaucha}
Denote
\[
A(u,\eta):= -\frac{1}{4}\,[u,du]+\frac{\eta}{2}\, u\ .
\]
With this notation one has
\[
F_{A(u,\eta)}:=\frac{1}{2}\,\lf(d\eta-u^\ast\om\rg)\ u
\]
and there holds
\[
|A(u,\eta)|^2+\tau\,|F_{A(u,\eta)}|^2=\frac{1}{4}\lf(|du|^2+|\eta|^2+\tau\,\lf|d\eta-u^\ast\om\rg|^2\rg)
\]
Moreover for any map $g$ into $SU(2)$ there holds
\be
\label{gaucha}
(A(u,\eta))^g=A(u^g, \eta^g)\
\ee
where
\[
u^g:=g^{-1}\,u\,g\qquad\mbox{ and }\qquad\eta^g:=\eta+2\,<dg\,g^{-1},u>\ .
\]
\hfill $\Box$
\end{Lm}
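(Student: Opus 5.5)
The argument is a direct computation with three parts, done in order: the curvature formula, the pointwise energy identity, and the gauge covariance (\ref{gaucha}).

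\textbf{Curvature.} Split $A(u,\eta)=A_0+\frac12\eta\,u$ with $A_0:=-\frac14[u,du]$. The proof of Lemma~\ref{lm-bd} and identity (\ref{curb}) give $F_{A_0}=-\frac18[du\wedge du]$. For the cross term, use
\[
F_{A_0+B}=F_{A_0}+dB+[A_0\wedge B]+\tfrac12[B\wedge B],\qquad B=\tfrac12\eta\,u .
\]
Since $B$ takes values in the line $\R u$, we have $[B\wedge B]=0$. Next,
\[
dB=\tfrac12\,d\eta\,u-\tfrac12\,\eta\wedge du ,
\]
and $[A_0\wedge B]$ reduces to $\frac18\,\eta\wedge[u,[u,du]]$ up to sign conventions. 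Using $[u,[u,du]]=-4\,du$ (valid because $du\perp u$, by (\ref{LoTra})), this cancels $-\frac12\eta\wedge du$. This is exactly the computation already sketched at the end of the proof of Theorem~\ref{th-harmflu}. Finally $-\frac18[du\wedge du]=-\frac12(u^\ast\om)\,u$ by the identity $\frac14 u\cdot[du\wedge du]=u^\ast\om$ shown there, together with $[du\wedge du]\parallel u$. This gives $F_{A(u,\eta)}=\frac12(d\eta-u^\ast\om)\,u$.

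\textbf{Energy identity.} The two summands of $A(u,\eta)$ are pointwise orthogonal: $[u,du]\perp u$ and $\eta\,u\parallel u$. Moreover $|[u,du]|=2|du|$, because $|[u,v]|=2|v|$ for $v\perp u$ with $|u|=1$. Hence $|A|^2=\frac14|du|^2+\frac14|\eta|^2$. The curvature formula with $|u|=1$ gives $|F_A|^2=\frac14|d\eta-u^\ast\om|^2$, and the identity follows.

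\textbf{Gauge covariance.} Compute
\[
A^g=g^{-1}dg+g^{-1}A\,g=g^{-1}dg-\tfrac14 g^{-1}[u,du]g+\tfrac12\eta\,u^g .
\]
From $du^g=g^{-1}du\,g+[u^g,g^{-1}dg]$ we get
\[
g^{-1}[u,du]g=[u^g,du^g]-[u^g,[u^g,g^{-1}dg]] .
\]
Decompose $g^{-1}dg$ into its longitudinal and transverse parts along $u^g$. Applying (\ref{LoTra}) with $e=u^g$,
\[
\tfrac14[u^g,[u^g,g^{-1}dg]]=-(g^{-1}dg)^T .
\]
Therefore
\[
A^g=-\tfrac14[u^g,du^g]+(g^{-1}dg)-(g^{-1}dg)^T+\tfrac12\eta\,u^g=-\tfrac14[u^g,du^g]+\Big(\tfrac12\eta+\langle g^{-1}dg,u^g\rangle\Big)u^g .
\]
Finally $\langle g^{-1}dg,g^{-1}ug\rangle=\langle dg\,g^{-1},u\rangle$ by Ad-invariance of the scalar product. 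Thus $\eta^g=\eta+2\langle dg\,g^{-1},u\rangle$, which is (\ref{gaucha}).

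The main obstacle is purely bookkeeping: keeping track of the sign and normalisation conventions for $[\cdot\wedge\cdot]$ and for $(g^{-1}dg)^T$. In particular, one must check that the sign in $[u,[u,\cdot]]=-4(\cdot)^T$ is used consistently in both the curvature step and the gauge step. The safest way is to verify each identity on the frame $u=\mathbf{i}$ with $\mathbf{j},\mathbf{k}$ using the quaternionic bracket conventions fixed in Section II.
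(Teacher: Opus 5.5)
Your proof is correct and follows the paper's route: the curvature and energy identities come from the same direct computation the paper performs at the end of the proof of Theorem~\ref{th-harmflu}, and your gauge step is exactly the paper's, via $g^{-1}[u,du]g=[u^g,du^g]-[u^g,[u^g,g^{-1}dg]]$ together with (\ref{LoTra}) for $e=u^g$. Two small bookkeeping points: the cross term is precisely $[A_0\wedge B]=-\frac18\,\eta\wedge[u,[u,du]]=\frac12\,\eta\wedge du$, which is what cancels the $-\frac12\,\eta\wedge du$ in $dB$; and you should take $F_{A_0}=-\frac18[du\wedge du]$ from (\ref{curb}) alone, since the proof of Lemma~\ref{lm-bd} records the inconsistent value (\ref{curv}).
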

\noindent{\bf Proof of Lemma~\ref{lm-gaucha}}
The first identities have been proved above. It remains to prove the gauge change formula. Let $g$ be a map into $SU(2)$, there holds
\[
(A(u,\eta))^g=g^{-1}\,A(u,\eta)\, g+g^{-1}\,dg= -\frac{1}{4}\,g^{-1}[u,du]\,g+\frac{\eta}{2}\, g^{-1}\,u g+g^{-1}\,dg\ .
\]
Denote $u^g:=g^{-1}\,u\,g$ and observe that
\[
\begin{array}{l}
\ds [u^g,du^g]=[g^{-1}\,u\,g, g^{-1}\,du\,g+dg^{-1}\, u\, g+g^{-1}\,u\, dg]=g^{-1}[u,du]\,g+[u^g, [u^g,g^{-1}\,dg]]\\[5mm]
\ds=g^{-1}[u,du]\,g-4\, g^{-1}\,dg+4 <g^{-1}\,dg,u^g>\ u^g
\end{array}
\]
Thus
\[
(A(u,\eta))^g=-\frac{1}{4}\,[u^g,du^g]+\frac{1}{2}\,(\eta+2\,<g^{-1}\,dg,u^g> )\, u^g=A(u^g, \eta+2\,<dg\,g^{-1},u>)\ .
\]
This concludes the proof of Lemma~\ref{lm-gaucha}.\hfill $\Box$
\subsubsection{Monotonicity for Stationary critical Points of ${\mathcal R}_\tau$.}

\begin{Lm}
\label{lm-moncalR}
Let $u\in C^1(\B^3,\S^2)$ and $\eta\in C^1(\Om_1(\B^3))$ be a  critical point to ${\mathcal R}_\tau$. Then  for any $x_0\in \B_1^3(0)$ and $r>0$ such that $\B_r(x_0)\subset \B_1(0)$
\be
\label{moncalR}
\begin{array}{l}
\ds\frac{d}{dr}\lf(\frac{1}{r}\int_{\B_r(x_0)}|d u|^2+|\eta|^2\ dx^3+\lf(\frac{2}{|x-x_0|}-1\rg)\tau\,|d\eta-u^\ast\om_{\S^2}|^2\ dx^3\rg)\\[5mm]
\ds=\frac{2}{r}\int_{\p \B_r}\lf|\p_ru\rg|^2+\lf|\eta\res\p_r\rg|^2\ dvol_{\p \B_r}+\frac{2}{r}\, \int_{\B_r}\lf|(d\eta-u^\ast\om_{\S^2})\res\p_r\rg|^2\ dvol_{\p \B_r}
\end{array}
\ee
\hfill $\Box$
\end{Lm}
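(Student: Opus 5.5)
The plan is to derive (\ref{moncalR}) from the stationarity of ${\mathcal R}_\tau$ under domain variations, exactly as in the proof of Lemma~\ref{lm-mono}. The quickest route is through Lemma~\ref{lm-gaucha}: with $A=A(u,\eta)=-\frac14[u,du]+\frac{\eta}{2}u$ we have $d_Au=0$ (as in Lemma~\ref{lm-bd}), $4|A|^2=|du|^2+|\eta|^2$ and $4|F_A|^2=|d\eta-u^\ast\om_{\S^2}|^2$. Thus ${\mathcal R}_\tau(u,\eta)=\frac12\int|A|^2+\tau|F_A|^2$, which is the $CYMH$ energy with the $d_Au$ and potential terms absent. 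I would rather not invoke Lemma~\ref{lm-mono} directly, since $(u,\eta)\mapsto A$ is a constrained parametrisation. Instead I would repeat its first-variation computation for the pair $(u,\eta)$.

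Take $X^\delta=(x-x_0)\eta_\delta(|x-x_0|)$ with $\eta_\delta$ a mollified indicator of $[0,r]$, and set $\phi_t=x+tX^\delta$. Pullback commutes with $d$, and $(\phi_t^\ast u)^\ast\om=\phi_t^\ast(u^\ast\om)$. Hence $(\phi_t^\ast u,\phi_t^\ast\eta)$ is an admissible variation, compactly supported, with $\phi_t^\ast u=\phi$ on the boundary. So $\frac{d}{dt}{\mathcal R}_\tau(\phi_t^\ast u,\phi_t^\ast\eta)|_{t=0}=0$.

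I would justify this vanishing from criticality as follows. The variation splits into a tangential variation of $u$ (namely $X\cdot\nabla u$, which is tangent to $\S^2$) and a variation of $\eta$. Both are allowed in the Euler--Lagrange equations of Proposition~\ref{pr-crit-refa}, and since $(u,\eta)$ is $C^1$ one can integrate by parts. This step deserves some care: $C^1$ regularity combined with a distributional Euler--Lagrange equation suffices because the integrands only involve first derivatives of $u$ and $\eta$.

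Next I compute the derivative term by term, reusing the formulas from the proof of Lemma~\ref{lm-mono}. For the 1-form $du$ and the 1-form $\eta$, the limit $\delta\to0$ gives
\[-\frac12\int_{\B_r}|\cdot|^2+\frac r2\int_{\p\B_r}|\cdot|^2-r\int_{\p\B_r}|\cdot\res\p_r|^2 .\]
For the 2-form $H=d\eta-u^\ast\om$ it gives
\[\frac12\int_{\B_r}|H|^2+\frac r2\int_{\p\B_r}|H|^2-r\int_{\p\B_r}|H\res\p_r|^2 .\]
These three pieces yield the analogue of (\ref{premon}) without potential terms. Dividing by $r^2$ and rearranging then gives
\[\frac{d}{dr}\Big(\frac1r\int_{\B_r}|du|^2+|\eta|^2\Big)+\frac{\tau}{r^2}\int_{\B_r}|H|^2+\frac{\tau}{r}\int_{\p\B_r}|H|^2=\frac2r\int_{\p\B_r}|\p_ru|^2+|\eta\res\p_r|^2+\tau|H\res\p_r|^2 .\]
(I expect the statement's missing $\tau$ on the last term to be a typo.)

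Finally, following Remark~\ref{rm-mon}, I rewrite the curvature terms as a total derivative:
\[\frac{d}{dr}\Big(\frac1r\int_{\B_r}\Big(\frac{2r}{|x-x_0|}-1\Big)|H|^2\Big)=\frac1{r^2}\int_{\B_r}|H|^2+\frac1r\int_{\p\B_r}|H|^2 .\]
To see this, note that $\frac{d}{dr}\int_{\B_r}\frac{2}{|x-x_0|}|H|^2=\frac2r\int_{\p\B_r}|H|^2$, and $\frac{d}{dr}\big(-\frac1r\int_{\B_r}|H|^2\big)=\frac1{r^2}\int_{\B_r}|H|^2-\frac1r\int_{\p\B_r}|H|^2$. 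This produces (\ref{moncalR}), with the weight written as $\frac{2r}{|x-x_0|}-1$; the factor $r$ is presumably dropped in the statement.

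The main obstacle is the sign bookkeeping in the 2-form term. The coefficient $+\frac12\int_{\B_r}|H|^2$, as opposed to $-\frac12$ for the 1-forms, comes from $\mathrm{div}X=3$ against the two contractions $\p_kX_l$, and it has to be tracked carefully exactly as in Lemma~\ref{lm-mono}.
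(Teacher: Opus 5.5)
Your proposal is correct and follows essentially the same route as the paper: the domain variation $\phi_t=x+tX^\delta$, the term-by-term computation from Lemma~\ref{lm-mono} giving the Pohozaev-type identity, and the rewriting of Remark~\ref{rm-mon}. Your corrections to the statement are also right: the missing $\tau$ in front of $|(d\eta-u^\ast\om_{\S^2})\res\p_r|^2$, the weight $\frac{2r}{|x-x_0|}-1$, and $\p\B_r$ in the last integral.

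The only soft spot is the stationarity step for merely $C^1$ fields. Testing the Euler--Lagrange equations against $X\cdot\nabla u$ and $\mathcal{L}_X\eta$ does involve second derivatives, so ``only first derivatives appear'' is not why it works. You should either assume $(u,\eta)$ smooth, as the paper does in its proof and as Proposition~\ref{pr-crit-refa} (which you invoke) requires, or first upgrade $C^1$ critical points to smooth ones by elliptic regularity.
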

\noindent{\bf Proof of Lemma~\ref{lm-moncalR}.} Since $(u,\eta)$ is assumed to be smooth and critical for ${\mathcal R}_\tau$ it fulfils the stationarity condition. Let $X\in C^\infty_0({\B^3},{\R}^3)$ and denote $\varphi_t(x):=x+t\,X(x)$. We have 
\[
\lf.\frac{d}{dt}\lf(\varphi_t^\ast \eta\rg)\rg|_{t=0}=\lf.\frac{d}{dt}\lf(\varphi_t^\ast \lf(\sum_{l=1}^3 \eta_l\ dx_l\rg)\rg)\rg|_{t=0}=\sum_{k=1}^3\sum_{l=1}^3\frac{\p \eta_l}{\p x_k}\ X_k\ dx_l+\sum_{k=1}^3\sum_{l=1}^3\eta_l\frac{\p X_l}{\p x_k}\ dx_k
\]
moreover
\[
\begin{array}{l}
\ds\lf.\frac{d}{dt}\lf(d(u\circ\varphi_t\rg)\rg|_{t=0}=\lf.\frac{d}{dt}\lf(\varphi_t^\ast du\rg)\rg|_{t=0}=\lf.\frac{d}{dt}\lf(\varphi_t^\ast \lf(\sum_{l=1}^3\p_{x_l}u\, dx_l\rg)\rg)\rg|_{t=0}\\[5mm]
\ds=\sum_{k=1}^3\sum_{l=1}^3\frac{\p}{\p x_k}\lf(\p_{x_l}u\rg)\ X_k\ dx_l+\sum_{k=1}^3\sum_{l=1}^3(\p_{x_l}u) \frac{\p X_l}{\p x_k}\ dx_k
\end{array}
\]
and, denoting $F_A:=2^{-1}\,(d\eta-u^\ast\om_{\S^2})$ we have
\[
\begin{array}{l}
\ds\frac{d}{dt}\lf(\phi_t^\ast F_A\rg)=\frac{d}{dt}\lf(\phi_t^\ast\sum_{l<m} (\p_{x_l}A_m-\p_{x_m}A_l+[A_l,A_m])\ dx_l\wedge dx_m\rg)\\[5mm]
\ds=\sum_{k=1}^3\sum_{l<m} \frac{\p}{\p x_k}(\p_{x_l}A_m-\p_{x_m}A_l+[A_l,A_m])\ X_k\ dx_l\wedge dx_m\\[5mm]
\ds\quad+\sum_{k=1}^3\sum_{l<m} F_{lm}\, \lf(\frac{\p X_l}{\p x_k}\ dx_k\wedge dx_m+\frac{\p X_m}{\p x_k}\ dx_l\wedge dx_k\rg)
\end{array}
\]
Following step by step the same arguments as in the proof of Lemma~\ref{lm-mono} we obtain for any $x_0\in \B_1^3(0)$ and $r>0$ such that $\B_r(x_0)\subset \B_1(0)$
\[
\begin{array}{l}
\ds-\frac{1}{2}\int_{\B_r}|d u|^2+|\eta|^2\ dx^3+\frac{r}{2}\int_{\p \B_r}|d u|^2+|\eta|^2\ \ dvol_{\p \B_r}\\[5mm]
\ds+\frac{1}{2}\int_{\B_r}\tau\,|d\eta-u^\ast\om|^2\ dx^3+\frac{r}{2}\int_{\p \B_r}\tau\,|d\eta-u^\ast\om_{\S^2}|^2\ dvol_{\p \B_r}\\[5mm]
\ds=r\,\int_{\p \B_r}\lf|\p_ru\rg|^2\ dvol_{\p B_r}+r\,\int_{\p \B_r}\lf|\eta\res\p_r\rg|^2\ dvol_{\p \B_r}+r\, \int_{\p\B_r}\tau\,\lf|(d\eta-u^\ast\om_{\S^2})\res\p_r\rg|^2\ dvol_{\p \B_r}
\end{array}
\]
This gives lemma~\ref{lm-moncalR}.\hfill $\Box$

\subsection{$\delta-$regularity for minimizers of ${\mathcal R}_\tau$ independent of $\tau\in(0,1)$}

We first establish the following Lemma.
\begin{Lm}
\label{lm-montau}
Let $0<r_1<1$. There exists $\delta_0>0$ and $R>1$ independent of $\tau\in (0,1)$ but depending only on $r_1$ such that for any minimizer $(u,\eta)$ of ${\mathcal R}_\tau$ and any $\B_r(x_0)\subset\B_{r_1}(0)$ and $r>R\,\sqrt{\tau}$ if
\be
\label{smuet-1}
\frac{1}{r}\int_{\B_r(x_0)} |du|^2+|\eta|^2+\tau\,|d\eta -u^\ast\om|^2\ dx^3<\delta
\ee
 for any $\delta<\delta_0$. Then, for any $R\sqrt{\tau}<\rho<r$ there holds
\be
\label{smuet-2}
\frac{1}{\rho}\int_{\B_\rho(x_0)} |du|^2+|\eta|^2+\tau\,|d\eta -u^\ast\om|^2\ dx^3<2\,\delta
\ee
\hfill $\Box$
\end{Lm}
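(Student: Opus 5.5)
The plan is to transpose the proof of Lemma~\ref{lm-sca} to the functional ${\mathcal R}_\tau$, with $\tau$ playing the role of $\ep$. By Lemma~\ref{lm-gaucha}, ${\mathcal R}_\tau(u,\eta)$ equals $\frac12\int|A|^2+\tau|F_A|^2$ with $A=A(u,\eta)$, so ${\mathcal R}_\tau$ has the same scaling structure as $CYMH$ without the Higgs potential: $\tau|F_A|^2$ scales like $\ep|F_A|^2$, and the transverse part of $A$ is $-\frac14[u,du]$, exactly. Writing
\[
Y(\rho):=\frac{1}{8}\int_{\B_\rho(x_0)}|du|^2+|\eta|^2+\tau|d\eta-u^\ast\om|^2\,dx^3 ,
\]
the goal is a differential inequality
\[
Y(r)\le \Bigl(1-\nu+C\sqrt{\tau}\,r^{-1}\Bigr)\,r\,Y'(r)+C\,r\,(Y'(r))^{1+\gamma}
\]
for some $\gamma>0$, valid whenever $Y'(r)\le\delta$ and $r>\sqrt\tau$. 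Once this is available, the contradiction argument at the end of the proof of Lemma~\ref{lm-sca} applies verbatim. One considers the infimum $\rho_0$ of radii above which $Y(s)\le s\,\delta$ holds. On $(\rho_0,r)$ either $\rho Y'\le Y$, which forces $Y'\le \delta$ and then $Y<\rho Y'$, a contradiction; or $\rho Y' > Y$, so that $Y/\rho$ is increasing. This gives (\ref{smuet-2}) with constant $2\delta$, after choosing $R$ large and $\delta_0$ small.

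To obtain the inequality I would build a competitor on $\B_r(x_0)$ from the boundary data and invoke minimality. Since $|u|\equiv1$, the cut-off $\frak{h}$ and the potential term disappear, which simplifies matters. The steps are as follows.

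\begin{itemize}
\item[(i)] Extend $u|_{\p\B_r}$ by the small-energy harmonic extension $\check u$ of Lemma~\ref{lm-ext}. This gives the epiperimetric gain $\int_{\B_r}|d\check u|^2\le(1-\nu)\,r\int_{\p\B_r}|\nabla_Tu|^2$, together with the $L^3$ bound (\ref{IV.7}).
\item[(ii)] Extend $\eta$ by $\check\eta:=\tau\,d^\ast G$, where $G=\ast(K+L)$ solves the $\tau$-analogue of (\ref{sG}):
\[
\tau\,dd^\ast G+G=\check u^\ast\om+d\ti{\frak e},\qquad dG=0,\qquad \iota^\ast_{\p\B_r}(\tau d^\ast G)=\iota^\ast_{\p\B_r}\eta .
\]
Here $K$ absorbs the source $\check u^\ast\om$ through the primitive $\ti{\frak f}$ (built with the Hélein frame on the sphere), and $L$ absorbs the boundary data. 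The error $\frak e$ is now much simpler than before: $\eta$ is scalar, and there is no $d_A\hat u$ term, so on $\p\B_r$ one has $d(\iota^\ast\eta)=\iota^\ast(d\eta-u^\ast\om)+\iota^\ast u^\ast\om$. Since $\int_{\p\B_r}u^\ast\om=0$, the mean-value correction $(F_A\cdot\hat u)_{x_0,r}$ is controlled by $C r^{-2}Y'(r)$.
\item[(iii)] For $L$, reproduce estimate (\ref{estL}) using the Pohozaev identity (\ref{monL}). This gives the constants $\frac13$ and $\frac16+C\tau/r^2$ in front of the boundary terms $r\int_{\p\B_r}\tau|\iota^\ast(d\eta-u^\ast\om)|^2$ and $r\int_{\p\B_r}|\iota^\ast\eta|^2$.
\item[(iv)] Assemble the pieces as in the final step of the proof of Lemma~\ref{lm-sca}. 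The error $\check u^\ast\om-\check u^\ast\om$ vanishes, since the curvature of the competitor is exactly $\frac12(d\check\eta-\check u^\ast\om)$, and $\tau\,|d\check\eta-\check u^\ast\om|^2=|G|^2/\tau$ up to the $d\ti{\frak e}$ error. No $\eta^T$ term and no analogue of (\ref{etbop}) are needed.
\end{itemize}

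Comparing with the full boundary energy, in which the tangential parts enter with coefficient $1$, yields the factor $\max(1-\nu,\frac13,\frac16)+C\tau/r^2$ plus higher-order terms in $Y'(r)$.

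The main obstacle I expect is controlling the $K$-part for $r\gg\sqrt\tau$. The estimate (\ref{estK}) gives $\int|dK|^2+|K|^2/\tau\lesssim \tau\|\xi\|_{L^2}^2$. The $\ti{\frak f}$ contribution to $\xi$ is quadratic in $\|\nabla_T u\|_{L^2(\p\B_r)}^2\le Y'(r)$, which yields $\tau\,r\,(Y'(r))^2$, and this is acceptable. The boundary primitive $\frak e$, however, must be checked to produce only a term of order $r(Y')^{1+\gamma}$ or $\sqrt\tau\,Y'$. I would first try to bound $\frak e$ in $L^4(\p\B_r)$ via the mean-value correction and $\|u\|_{W^{1,2}(\p\B_r)}$. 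If this fails, I would replace $\xi$ by the harmonic extension of $\iota^\ast\eta$ with the corrected exact part, and absorb the remainder using $r>R\sqrt\tau$.
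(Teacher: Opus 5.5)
Your plan is correct and uses the same ingredients as the paper's proof:
\begin{itemize}
\item the harmonic extension of Lemma~\ref{lm-ext} with its epiperimetric gain;
\item the H\'elein-frame primitive $v$ on $\p\B_r(x_0)$;
\item a Proca-type extension of the tangential trace of $\eta$, controlled by the energy identity together with the Pohozaev identity;
\item the ODE argument that closes Lemma~\ref{lm-sca}.
\end{itemize}
The only difference is how the $\eta$-extension is packaged.

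The paper does not transpose the $K+L$ and $\frak e$ machinery. It takes $\ti\eta=\xi+\frak f$, where $\frak f$ is the co-closed primitive of $\ti u^\ast\om$ with $\iota^\ast\frak f=(\ast)dv$, and $\xi$ solves $\tau d^\ast d\xi+\xi=0$, $d^\ast\xi=0$, $\iota^\ast\xi=\iota^\ast\eta-(\ast)dv$. The competitor's curvature is then exactly $d\xi$. Moreover $\iota^\ast d\xi=\iota^\ast(d\eta-u^\ast\om)$ holds identically on $\p\B_r(x_0)$. So no error form $\frak e$ ever appears, and the obstacle you single out at the end does not arise. The mean-value correction is in fact exactly zero, since $\int_{\p\B_r}d\eta=\int_{\p\B_r}u^\ast\om=0$.

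In your scheme:
\begin{itemize}
\item the $L$-part of $\check\eta$ solves precisely the paper's $\xi$-problem;
\item the $K$-part is a Proca-relaxed version of $\frak f$ with the same trace;
\item the sum $\check\eta$ is the minimizer of $\int_{\B_r}|\zeta|^2+\tau|d\zeta-\check u^\ast\om|^2$ under $\iota^\ast\zeta=\iota^\ast\eta$.
\end{itemize}
Hence your competitor is at least as good as the paper's, and the paper's estimate for $\xi+\frak f$ bounds it directly. The cost is only extra bookkeeping.

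Two minor slips:
\begin{itemize}
\item The energy identity for $K$ reads $\int|dK|^2+|K|^2/\tau=\int dK\wedge\ti{\frak f}$. It bounds the $K$-contribution by $\|\ti{\frak f}\|^2_{L^2(\B_r)}$, not by $\tau\|\ti{\frak f}\|^2_{L^2}$. This is still $\le C\,r\,(Y'(r))^2$, which is all you need.
\item With your normalization $\check\eta=\tau d^\ast G$, the curvature term is $\tau|G|^2$, not $|G|^2/\tau$.
\end{itemize}
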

Let $\delta>0$ to be fixed later on. Let $x_0\in \B_1(0)$ and $\B_r(x_0)\subset \B_1(0)$. Assume
\[
\int_{\p \B_r(x_0)}|du|^2+|\eta|^2+\tau\,|d\eta -u^\ast\om|^2\ dvol_{\p \B_r(x_0)}<\delta
\]
We assume $\delta$ small enough so that one can apply Lemma~\ref{lm-ext} to $u$ and we call $\ti{u}$ the harmonic extension of Thanks to \cite{Hel} Lemma 5.1.4, if $\delta<8\pi/3$ then there exists $e_1,e_2\in W^{1,2}(\p\B_r(x_0), \S^2)$ such that $e_1\cdot e_2=0$ and $u=e_1\times e_2$ moreover
\[
\int_{\p \B_r(x_0)} |\nabla_T e_i|^2\ dvol_{\p  \B_r(x_0)}\le C\, \int_{\p \B_r(x_0)} |\nabla_T u|^2\ dvol_{\p  \B_r(x_0)}
\]
and it satisfies $d^{(\ast)}(e_1\cdot de_2)=0$. We have on $\p\B_r(x_0)$, $u^\ast\om_{\S^2}=de_1\wedge de_2$. Let $v\in W^{1,2}(\p\B_r(x_0), \S^2)$ be the solution of average $0$ on $\p\B_r(x_0)$ of
\[
d(\ast)\, dv=u^\ast\om_{\S^2}=de_1\wedge de_2\qquad\mbox{ on }\p \B_r(x_0)\ .
\]
We have
\[
\int_{\p \B_r(x_0)} |\nabla_T v|^2\ dvol_{\p  \B_r(x_0)}\le C\, \lf(\int_{\p \B_r(x_0)} |\nabla_T u|^2\ dvol_{\p  \B_r(x_0)}\rg)^2
\]
where $C>0$ is universal. Let $\frak{f}$ be the 1-form solving
\[
\lf\{
\begin{array}{l}
d\frak{f}=\ti{u}^\ast\om\qquad\mbox{ in }\B_r(x_0)\\[5mm]
d^\ast\frak{f}=0\qquad\mbox{ in }\B_r(x_0)\\[5mm]
\iota_{\p\B_r(x_0)}^\ast \frak{f}=(\ast) dv
\end{array}
\rg.
\]
Calderon Zygmund estimates are giving
\[
\begin{array}{rl}
\ds\| d\frak{f}\|_{L^{3/2}(\B_r(x_0))}+\|\frak{f}\|_{L^3(\B_r(x_0))}&\ds\le C\, \|\ti{u}^\ast\om\|_{L^{3/2}(\B_r(x_0))}+C\, \|dv\|_{L^2(\p\B_r(x_0))}\\[5mm]
\ds\qquad&\ds\le C\int_{\p \B_r(x_0)}|\nabla_Tu|^2\ dvol_{\p \B_r(x_0)}
\end{array}
\]
Let $\xi$ be the one form minimizing
\[
\inf\lf\{  \int_{\B_r(x_0)}\tau\,|d\xi|^2+|\xi|^2\ dx^3\quad;\quad d^\ast\xi=0\quad\mbox{ and }\quad \iota_{\p \B_r(x_0)}^\ast\xi=\eta-(\ast) dv \rg\}
\]
It satisfies
\be
\label{xi}
\lf\{
\begin{array}{l}
\ds \tau d^\ast d\xi+\xi=0\qquad\mbox{ in }\quad \B_r(x_0)\\[5mm]
\ds d^\ast\xi=0\qquad\mbox{ in }\quad \B_r(x_0)\\[5mm]
\ds  \iota_{\p \B_r(x_0)}^\ast\xi=\eta-(\ast) dv
\end{array}
\rg.
\ee
and we choose to extend $\eta$ by $\ti{\eta}:=\xi+\frak{f}$. Multiplying the first equation of (\ref{xi}) by $\xi$ and integrating by parts is giving for any $t>0$ that we are going to fix later on
\be
\label{idxi}
\begin{array}{l}
\ds\int_{\B_r(x_0)}\tau\,|d\xi|^2+|\xi|^2\ dx^3=\tau\,\int_{\p \B_r(x_0)} \xi\wedge \ast d\xi=\tau\,\int_{\p \B_r(x_0)} (\eta-(\ast) dv)\,\wedge\, \ast d\xi\\[5mm]
\ds\le t^2\, \frac{\tau}{2}\, \int_{\p \B_r(x_0)} \lf|\iota^\ast_{\p \B_r(x_0)}(\eta-(\ast) dv)\rg|^2 \ dvol_{\p \B_r(x_0)}+t^{-2}\, \frac{\tau}{2}\, \int_{\p \B_r(x_0)} \lf|d\xi\res\p_r\rg|^2 \ dvol_{\p \B_r(x_0)}
\end{array}
\ee
Using the corresponding identity to (\ref{monet}) we have
\[
\begin{array}{l}
\ds-\frac{1}{2}\int_{\B_r(x_0)}{|\xi|^2}\ dx^3+\frac{r}{2}\int_{\p \B_r(x_0)}{|\iota^\ast_{\p \B_r(x_0)}\xi|^2}\ dvol_{\p \B_r(x_0)}\\[5mm]
\ds+\frac{\tau}{2}\int_{\B_r(x_0)}\,{|d\xi|^2}\ dx^3+\frac{r}{2}\,\tau\,\int_{\p \B_r(x_0)}\,{|\iota^\ast_{\p \B_r(x_0)}d\xi|^2}\ dvol_{\p B_\rho(x_0)}\\[5mm]
\ds=\frac{r}{2}\int_{\p \B_r(x_0)}{|\xi(\p_r)|^2}\ dvol_{\p \B_r(x_0)}+ \frac{r}{2}\,\tau\,\int_{\p \B_r(x_0)}\,{|d\xi\res\p_r|^2}\ dvol_{\p \B_r(x_0)}
\end{array}
\]
This gives
\be
\label{pohoxi}
\begin{array}{l}
\ds t^{-2}\, \frac{\tau}{2}\, \int_{\p \B_r(x_0)} \lf|d\xi\res\p_r\rg|^2 \ dvol_{\p \B_r(x_0)}\le\, t^{-2}\, \frac{1}{2}\int_{\p \B_r(x_0)}{|\iota^\ast_{\p \B_r(x_0)}(\eta-(\ast) dv)|^2}\ dvol_{\p \B_r(x_0)}\\[5mm]
\ds \quad+ t^{-2}\,\frac{\tau}{2 r}\, \int_{\B_r(x_0)}\,{|d\xi|^2}\ dx^3+ t^{-2}\,\frac{\tau}{2}\int_{\p \B_r(x_0)}\,{|\iota^\ast_{\p \B_r(x_0)}(d\eta-u^\ast\om)|^2}\ dvol_{\p B_\rho(x_0)}
\end{array}
\ee
Combining (\ref{idxi}) and (\ref{pohoxi}) is giving
\[
\begin{array}{l}
\ds\lf(1-t^{-2}\,\frac{1}{2 r}\rg)\, \int_{\B_r(x_0)}\tau\,|d\xi|^2+|\xi|^2\ dx^3\le  \frac{1}{2}\,\lf(t^2\, {\tau}+t^{-2}\,\rg)\, \int_{\p \B_r(x_0)} \lf|\iota^\ast_{\p \B_r(x_0)}(\eta-(\ast) dv)\rg|^2 \ dvol_{\p \B_r(x_0)}\\[5mm]
\ds\qquad+ t^{-2}\,\frac{\tau}{2}\int_{\p \B_r(x_0)}\,{|\iota^\ast_{\p \B_r(x_0)}(d\eta-u^\ast\om)|^2}\ dvol_{\p B_\rho(x_0)}
\end{array}
\]
We choose $t^2=r/\tau$ and we restrict to $r^2>2\,\tau$ and we obtain
\be
\label{bdxi}
\begin{array}{l}
\ds \frac{3}{4}\int_{\B_r(x_0)}\tau\,|d\xi|^2+|\xi|^2\ dx^3\le\frac{r}{2}\,\lf(1+\frac{\tau}{r^2}\rg)\, \int_{\p \B_r(x_0)} \lf|\iota^\ast_{\p \B_r(x_0)}(\eta-(\ast) dv)\rg|^2 \ dvol_{\p \B_r(x_0)}\\[5mm]
\ds+\frac{\tau}{2\,r^2}\,r\,\int_{\p \B_r(x_0)}\,\tau\,{|\iota^\ast_{\p \B_r(x_0)}(d\eta-u^\ast\om)|^2}\ dvol_{\p B_\rho(x_0)}
\end{array}
\ee
We have 
\[
\begin{array}{l}
\ds\int_{\B_r(x_0)}|d\ti{u}|^2+|\ti{\eta}|^2+\tau\,|d\ti{\eta}-\ti{u}^\ast\om_{\S^2}|^2\ dx^3\le r\,(1-\nu)\int_{\p \B_r(x_0)}\,|du|^2\ dvol_{\p B_\rho(x_0)}\\[5mm]
\ds + \int_{\B_r(x_0)}|\xi+\frak{f}|^2+\tau\,|d\xi|^2\ dx^3
\end{array}
\]
We bound
\[
\begin{array}{l}
\ds\int_{\B_r(x_0)}|\xi+\frak{f}|^2+\tau\,|d\xi|^2\ dx^3\le \frac{6}{5}\int_{\B_r(x_0)}|\xi|^2+\tau\,|d\xi|^2\ dx^3+6\, \int_{\B_r(x_0)}|\frak{f}|^2\ dx^3\\[5mm]
\ds\le \frac{4}{5}\, \lf(1+\frac{\tau}{r^2}\rg)\, r\,\int_{\p \B_r(x_0)} \lf|\iota^\ast_{\p \B_r(x_0)}(\eta-(\ast) dv)\rg|^2 \ dvol_{\p \B_r(x_0)}\\[5mm]
\ds\quad+\frac{4}{5}\,\frac{\tau}{r^2}\,r\,\int_{\p \B_r(x_0)}\,\tau\,{|\iota^\ast_{\p \B_r(x_0)}(d\eta-u^\ast\om)|^2}\ dvol_{\p B_\rho(x_0)}
+C\, r\, \lf(\int_{\p \B_r(x_0)}\,|du|^2\ dvol_{\p B_\rho(x_0)}\rg)^2
\end{array}
\]
Combining the two previous identities and denoting
\[
Y(r):=\int_{\B_r(x_0)}|d{u}|^2+|{\eta}|^2+\tau\,|d\eta-{u}^\ast\om_{\S^2}|^2\ dx^3
\]
using the minimality, whenever $Y'(r)<\delta$ there holds
\be
\label{Ydec}
Y(r)\le \lf(\max\lf\{1-\nu\,,\,\frac{4}{5}\rg\}+C\, \delta+\frac{4}{5}\,\frac{\tau}{r^2}\,\rg)\, r\,Y'(r)
\ee
Then, arguing as in the end of the proof  of  Lemma~\ref{lm-sca} we finally conclude the proof of Lemma~\ref{lm-montau}.\hfill $\Box$

\medskip

\begin{Lm}
\label{lm-mont}
Let $0<r_1<1$. There exists $\delta_0>0$ and $C>0$ independent of $\tau\in (0,1)$ but depending only on $r_1$ such that for any minimizer $(u,\eta)$ of ${\mathcal R}_\tau$ and any $\B_r(x_0)\subset\B_{r_1}(0)$ if
\be
\label{smuet-3}
\frac{1}{r}\int_{\B_r(x_0)} |du|^2+|\eta|^2+\tau\,|d\eta -u^\ast\om|^2\ dx^3<\delta
\ee
 for any $\delta<\delta_0$, then for any  $0<\rho<r$ there holds
\be
\label{smuet-4}
\frac{1}{\rho}\int_{\B_\rho(x_0)} |du|^2+|\eta|^2+\tau\,|d\eta -u^\ast\om|^2\ dx^3<C\,\delta
\ee
\hfill $\Box$
\end{Lm}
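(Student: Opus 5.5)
The plan is to combine Lemma~\ref{lm-montau}, which propagates the density bound down to the scale $R\sqrt{\tau}$, with the monotonicity formula of Lemma~\ref{lm-moncalR} for the remaining scales $0<\rho<R\sqrt{\tau}$, in the same way Lemma~\ref{lm-sca-wh} was derived from Lemma~\ref{lm-sca} together with the curvature control of Lemma~\ref{lm-longcur}. If $r\le R\sqrt{\tau}$, only the second step is needed.

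\emph{Step 1.} For $\rho\in(R\sqrt\tau,r)$, Lemma~\ref{lm-montau} gives the density bound directly with constant $2\delta$. In particular the hypothesis holds at the radius $\rho_0:=\min\{r,R\sqrt\tau\}$ with $2\delta$.

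\emph{Step 2.} For $\rho<\rho_0$, Lemma~\ref{lm-moncalR} shows that
\[
\frac{1}{\rho}\int_{\B_\rho(x_0)}|du|^2+|\eta|^2-\tau|d\eta-u^\ast\om|^2\,dx^3+\int_{\B_\rho(x_0)}\frac{2\tau}{|x-x_0|}|d\eta-u^\ast\om|^2\,dx^3
\]
is nondecreasing. The positive term at $\rho_0$ and the negative term at $\rho$ are harmless only if
\[
\frac{1}{\rho}\int_{\B_\rho}\tau\,|d\eta-u^\ast\om|^2\lesssim\delta
\qquad\text{and}\qquad
\int_{\B_{\rho_0}}\frac{\tau}{|x-x_0|}\,|d\eta-u^\ast\om|^2\lesssim\delta .
\]
Both follow once we know $\|\sqrt\tau\,G\|_{L^{(3,\infty)}(\B_{\rho_0/2})}\le C\sqrt{\delta/\rho_0}$, or better a bound in $L^{(3,1)}$, where $G=u^\ast\om-d\eta$.

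\emph{Step 3 (main obstacle).} To get this, we would use the rescaled London equation from Proposition~\ref{pr-crit-refa}: $\tau\,dd^\ast G+G=u^\ast\om$ with $dG=0$, so $-\tau\Delta G+G=u^\ast\om$. Rescaling $\B_{\rho_0}$ to unit size, the operator becomes $-\Delta+(\rho_0^2/\tau)$ with $\rho_0^2/\tau\le R^2$ bounded. The right-hand side $u^\ast\om$ has size $|du|^2$, whose $L^1$ norm on $\B_{\rho_0}$ is bounded by $\delta\rho_0$. Green function (Bessel potential) estimates, as in Lemma~\ref{lm-longcur}, then give $\|G\|_{L^{(3,\infty)}}\lesssim\delta\rho_0/\tau\lesssim\delta/\rho_0$ on the half ball. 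The boundary contribution is handled by the $L^2$ bound $\tau\int G^2\le\delta\rho_0$ and Harnack/mean-value control, which gives the $\sqrt{\delta}$-type estimate.

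To upgrade to $L^{(3,1)}$, as needed for the weighted $1/|x-x_0|$ integral, we would first derive a Morrey bound $\rho^{-1/2}\int_{\B_\rho}(|du|^2+|\eta|^2)\lesssim\delta\rho_0^{1/2}$ from a weaker monotonicity, using the $L^{(3,\infty)}$ bound, exactly as in the proof of the second Lemma~\ref{lm-longcur}. Then Adams' estimate applied to $u^\ast\om$ gives $G\in L^{(5,\infty)}\subset L^{(3,1)}$.

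Inserting these curvature bounds into the monotonicity of Step 2 yields the density bound with $C\delta$, $C$ universal, for all $\rho<r$.
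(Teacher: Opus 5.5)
Your proposal is correct in outline, and its architecture is the paper's: Lemma~\ref{lm-montau} brings you down to scale $\sim\sqrt\tau$, and then the monotonicity of Lemma~\ref{lm-moncalR} finishes. The only real work is bounding $\int_{\B_{\rho_0}(x_0)}\tau|G|^2/|x-x_0|\,dx^3$, with $G=u^\ast\om-d\eta$.

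You differ from the paper in how you control $G$. You use the second-order London equation $-\tau\Delta G+G=u^\ast\om$, whose source is only in $L^1$. This forces a long chain: Bessel-potential bounds in $L^{(3,\infty)}$, a mean-value estimate for the homogeneous part, an auxiliary $\rho^{-1/2}$-monotonicity giving a Morrey bound on $|du|^2$, and then Adams' theorem. In effect you replay the CYMH curvature argument of Section~IV. This chain does close:
\begin{itemize}
\item the particular part satisfies $\sqrt\tau\|G_1\|_{L^{(3,\infty)}}\lesssim\delta\rho_0/\sqrt\tau\le R\delta$;
\item the homogeneous part is bounded by $\rho_0^{-1}\sqrt{\delta/\tau}$ on the half ball;
\item the $\rho^{-1/2}$-monotonicity gives $\rho^{-1/2}\int_{\B_\rho}|du|^2\lesssim\delta\rho_0^{1/2}$;
\item Adams then yields $\sqrt\tau\|G_1\|_{L^{(3,2)}}\lesssim R\delta$.
\end{itemize}

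The paper instead uses the \emph{first-order} system $dG=0$, $d^\ast G=\eta/\tau$, which comes from $\eta=\tau\,d^\ast G$ in Proposition~\ref{pr-crit-refa}. Its source $\eta$ is already in $L^2$, with $\tau^{-1/2}\int_{\B_{2\sqrt\tau}}|\eta|^2\lesssim\delta$. The argument is then three lines:
\begin{itemize}
\item interior Hodge estimates give $\|G\|_{L^{(6,2)}}+\|\nabla G\|_{L^2}\lesssim\tau^{-3/4}\sqrt\delta$ on $\B_{\sqrt\tau}$;
\item interpolation with $\|G\|_{L^2}\lesssim\tau^{-1/4}\sqrt\delta$ gives $\tau\|G\|^2_{L^{(3,2)}}\lesssim\delta$;
\item $L^{(3/2,1)}$--$L^{(3,\infty)}$ duality then bounds the weighted integral at once.
\end{itemize}
So the paper exploits the abelian first-order structure and needs no bootstrap. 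Your route uses less structure (only $|u^\ast\om|\le|du|^2$ and the second-order equation), but it is much heavier here.

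Three slips in your write-up should be fixed.
\begin{itemize}
\item The scale-invariant target is $\sqrt\tau\|G\|_{L^{(3,\infty)}}\lesssim\sqrt\delta$, not $C\sqrt{\delta/\rho_0}$. The latter only yields $\rho^{-1}\int_{\B_\rho}\tau|G|^2\lesssim\delta/\rho_0$, which degenerates as $\tau\to0$.
\item You should not weaken $\delta\rho_0/\tau$ to $\delta/\rho_0$; it is the first bound that gives $\sqrt\tau\|G_1\|\lesssim R\delta$.
\item Your claim that \emph{both} required bounds follow from an $L^{(3,\infty)}$ estimate is false for the weighted term: $|G|\sim|x-x_0|^{-1}$ produces a logarithmic divergence. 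What is needed is $G\in L^{(3,2)}$ (your $L^{(3,1)}$ is more than enough), and your Adams step, or the paper's interpolation, supplies exactly this.
\end{itemize}
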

{\bf Proof of Lemma~\ref{lm-mont}.} Because of the previous lemma we can assume $\rho<\delta$ and
\[
\frac{1}{\sqrt{\tau}}\int_{\B_{\sqrt{\tau}}(x_0)}\tau\,|d\eta-u^\ast \om_{\S^2}|^2\ dx^3<\delta
\]
Then we have
\[
\|F_{A(u,\eta)}\|_{L^{3/2}(\B_{\sqrt{\tau}}(x_0)}\le C\, \tau^{1/4}\, \|F_{A(u,\eta)}\|_{L^{2}(\B_{\sqrt{\tau}}(x_0)}\le C\,\sqrt{\delta}\ .
\]
We keep in mind that $A(u,\eta)$ is a connection form associated to a pair $(u,\eta)$ and we  simply write $A$ for $A(u,\eta)$. We have
\[
F_A=2^{-1}\,G\, u
\]
where $G=u^\ast\om-d\eta$. Then we have the following system
\[
\lf\{
\begin{array}{l}
\ds dG=0\qquad\mbox{ in }\B_{\sqrt{\tau}}(x_0)\\[5mm]
\ds d^\ast G=\frac{1}{\tau}\eta\qquad\mbox{ in }\B_{\sqrt{\tau}}(x_0)\ .
\end{array}
\rg.
\]
Classical elliptic estimates give
\[
\begin{array}{l}
\ds\|G\|_{L^{(6,2)}(\B_{\sqrt{\tau}}(x_0))}+\|\nabla G\|_{L^2(\B_{\sqrt{\tau}}(x_0))}\le C\, \tau^{-1}\, \|\eta\|_{L^2(\B_{2\,\sqrt{\tau}}(x_0))}+C\, \sqrt{\tau}^{-1}\, \|G\|_{L^2(\B_{2\,\sqrt{\tau}}(x_0))}\\[5mm]
\ds\quad\le C\,{\tau}^{-3/4}\ \lf(\frac{1}{\sqrt{\tau}}\, \int_{\B_{2\,\sqrt{\tau}}(x_0)}|\eta|^2+\tau\,|u^\ast\om-d\eta|^2\ dx^3\rg)^{1/2}
\end{array}
\]
Interpolation estimates is giving
\[
\begin{array}{l}
\ds\|G\|^2_{L^{(3,2)}(\B_{\sqrt{\tau}}(x_0))}\ds\le \|G\|_{L^{2}(\B_{\sqrt{\tau}}(x_0))}\ \|G\|_{L^{(6,2)}(\B_{\sqrt{\tau}}(x_0))}\\[5mm]
\ds \quad\le C\, \tau^{-1}\, \frac{1}{\sqrt{\tau}}\, \int_{\B_{2\,\sqrt{\tau}}(x_0)}|\eta|^2+\tau\,|u^\ast\om-d\eta|^2\ dx^3
\end{array}
\]
Hence we have
\be
\label{estG}
\begin{array}{l}
\ds\int_{\B_{\sqrt{\tau}}(x_0))}\frac{1}{|x-x_0|}\,\tau\,|G|^2\ dx^3\le \lf\|\frac{1}{|x-x_0|}\rg\|_{L^{(3,\infty)}(\B_{\sqrt{\tau}}(x_0))}\, \tau\,\||G|^2\|_{L^{(3/2,1)}(\B_{\sqrt{\tau}}(x_0))}\\[5mm]
\ds\qquad\le C\,\tau \|G\|^2_{L^{(3,2)}(\B_{\sqrt{\tau}}(x_0))}\le \frac{C}{\sqrt{\tau}}\, \int_{\B_{2\,\sqrt{\tau}}(x_0)}|\eta|^2+\tau\,|u^\ast\om-d\eta|^2\ dx^3\le C\,\delta
\end{array}
\ee
The stationarity condition (\ref{moncalR}) consequence of the minimality of $(u,\eta)$ with respect to ${\mathcal R}_\tau$ is giving the identity
\[
\begin{array}{l}
\ds-\frac{1}{2}\int_{\B_\rho(x_0)}{|du|^2+|\eta|^2}\ dx^3+\frac{\rho}{2}\int_{\p \B_\rho(x_0)}{|\iota^\ast_{\p \B_\rho(x_0)}du|^2+|\iota^\ast_{\p \B_\rho(x_0)}\eta|^2}\ dvol_{\p \B_\rho(x_0)}\\[5mm]
\ds+\frac{\tau}{2}\int_{\B_\rho(x_0)}\,{|G|^2}\ dx^3+\frac{\rho}{2}\,\tau\,\int_{\p \B_\rho(x_0)}\,{|\iota^\ast_{\p \B_\rho(x_0)}G|^2}\ dvol_{\p B_\rho(x_0)}\\[5mm]
\ds=\frac{\rho}{2}\int_{\p \B_\rho(x_0)}{|\p_ru|^2+|\eta(\p_r)|^2}\ dvol_{\p \B_\rho(x_0)}+ \frac{\rho}{2}\,\tau\,\int_{\p \B_\rho(x_0)}\,{|G\res\p_r|^2}\ dvol_{\p \B_\rho(x_0)}
\end{array}
\]
from which we deduce
\be
\label{moncalR}
\frac{d}{d\rho}\lf( \frac{1}{\rho}\int_{\B_\rho(x_0)}  |du|^2+|\eta|^2 dx^3+\frac{1}{\rho}\int_{\B_\rho(x_0)}\lf(\frac{2\,\rho}{|x-x_0|}-1\rg)\,\tau\,|G|^2\ dx^3 \rg)\ge 0\ .
\ee
Combining (\ref{estG}) and (\ref{moncalR}) is giving
\[
\forall \rho\le \sqrt{\tau}\qquad  \frac{1}{r}\int_{\B_r(x_0)}  |du|^2+|\eta|^2 dx^3\le C\,\delta\ .
\]
This concludes the proof of Lemma~\ref{lm-mont}.\hfill $\Box$
\begin{Th}
\label{th-delregt}
Let $0<r_1<1$. There exists $\delta_0>0$ and $C>0$ independent of $\tau\in (0,1)$ but depending only on $r_1$ such that for any minimizer $(u,\eta)$ of ${\mathcal R}_\tau$ and any $\B_r(x_0)\subset\B_{r_1}(0)$ if
\be
\label{smuet-5}
\frac{1}{r}\int_{\B_r(x_0)} |du|^2+|\eta|^2+\tau\,|d\eta -u^\ast\om|^2\ dx^3<\delta
\ee
Then  
\be
\label{concth}
\|\nabla\eta\|_{L^\infty(\B_{r/2}(x_0))}+\|\nabla u\|_{L^\infty(\B_{r/2}(x_0))}\le C_{r}\ .
\ee
where $C_r>0$ only depends on $r$ and not on $\tau$.
\hfill $\Box$
\end{Th}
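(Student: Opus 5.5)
The plan is to treat this as the ${\mathcal R}_\tau$ analogue of Lemma~\ref{lm-deltareg}. The first step is Lemma~\ref{lm-mont}, applied at every point of $\B_{r/2}(x_0)$ after shrinking radii by a universal factor. It gives the Morrey-type bound
\[
\sup_{x\in \B_{r/2}(x_0)}\sup_{\rho<r/2}\frac{1}{\rho}\int_{\B_\rho(x)}|du|^2+|\eta|^2+\tau\,|d\eta-u^\ast\om|^2\ dx^3\le C\,\delta ,
\]
uniformly in $\tau\in(0,1)$. From the Euler--Lagrange system of Proposition~\ref{pr-crit-refa} we have $d^\ast\eta=0$ and $\eta=\tau\,d^\ast G$, with $G=u^\ast\om-d\eta$. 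The map $u$ then satisfies
\[
-\Delta u=|du|^2u+\tau\, u\times(d\ast G\,\dot{\res}\, du),
\]
which is an equation of the form $-\Delta u=\Omega\cdot\nabla u$ with $\Omega$ antisymmetric. The antisymmetric potential is built from $u\times du$ (the harmonic map part) and from $\tau\,\ast d\ast G\,u=\eta\, u$, where $\eta\in L^2$ carries a small Morrey norm. Equivalently, we work with the connection $A=A(u,\eta)$ of Lemma~\ref{lm-gaucha}, whose $L^2$ Morrey norm is small.

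The second step invokes the Rivi\`ere--Struwe theory for antisymmetric Schr\"odinger systems under small Morrey norm, exactly as in the proof of Lemma~\ref{lm-deltareg}, equation (\ref{anti}). It yields a H\"older/Morrey decay $\rho^{-1-\beta}\int_{\B_\rho}|du|^2\le C\delta$ and an $L^p$ improvement $du\in L^p$ for some $p>2$, uniformly in $\tau$.

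The third and main step is to control $\eta$ uniformly as $\tau\to 0$, where the equation $\tau\,d^\ast d\eta+\eta=\tau\, d^\ast(u^\ast\om)$ is singularly perturbed. Writing $G$ as the solution of the London-type equation $\tau\,dd^\ast G+G=u^\ast\om$, $dG=0$, we use the Bessel-kernel representation $\frac{e^{-|x|/\sqrt\tau}}{4\pi\tau|x|}$, as in Lemma~\ref{lm-deltareg}, together with cutoffs and exponentially small boundary errors. This gives $\|G\|_{L^q}\le C\|u^\ast\om\|_{L^q}$ and $\|\eta\|_{L^q}=\tau\|d^\ast G\|_{L^q}\le C\sqrt\tau\,\|u^\ast\om\|_{L^q}$ with constants independent of $\tau$.

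Bootstrapping then runs between the $u$-equation (right-hand side in $L^{p/2}$, improved through the antisymmetric structure and Calder\'on--Zygmund estimates) and the $G$-equation. The difficulty is reaching $\nabla\eta\in L^\infty$: one has $d\eta=u^\ast\om-G$ and $d^\ast\eta=0$, so $\nabla\eta$ is controlled by $u^\ast\om-G$. Uniform $W^{1,q}$ bounds on $G$ are obtained by differentiating the London equation, using $\|\nabla (\text{Bessel kernel})\|_{L^1}\le C$, so that $\nabla G$ is bounded by $\nabla(u^\ast\om)$. Hence one needs $u\in W^{2,q}$ first, then $W^{3,q}$, obtained by differentiating the $u$-equation, whose coupling term $\tau\,d\ast G\wedge du=\eta\wedge du$ has a $\tau$-uniform bound.

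I expect this last loop to be the obstacle: the coupling term $\eta\wedge du$ must be shown to be of lower order uniformly in $\tau$. The first thing to try is to exploit $\|\eta\|\le C\sqrt\tau$ together with small Morrey norms, so that the coupling is absorbed perturbatively. After finitely many iterations on shrinking balls, the radius is rescaled back to $r/2$.
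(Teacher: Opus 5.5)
Your first two steps coincide with the paper's. Lemma~\ref{lm-mont} gives the uniform Morrey smallness. Your antisymmetric potential built from $u\times du$ and $\eta\,u$ is equivalent to the paper's conservation-law form $\ast du=u\times dB$ in (\ref{antisy}) and (\ref{sys-4}), to which Rivi\`ere--Struwe is applied.

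The gap is the third step, exactly the loop you flag, and it is structural rather than technical: an $L^q$ bootstrap through the London equation sits at the critical exponent and never starts. Suppose $du\in L^p$ with $p$ only slightly above $2$, which is what Rivi\`ere--Struwe plus Adams give while $\eta$ is only known in $L^2$. Then $u^\ast\om\in L^{p/2}$, and your estimates return $G,\nabla\eta\in L^{p/2}$. Hence $\eta\in L^{3p/(6-p)}$, which for $p<12/5$ is worse than the a priori $L^2$. Conversely, since $|\Delta u|\lesssim |du|^2+|\eta|\,|du|$, the coupling term with $\eta\in L^s$ improves $du\in L^p$ only if $s>3$, and the harmonic-map term only if $p>3$. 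The factor $\sqrt\tau$ does not help: it controls size, not integrability, and $\tau\in(0,1)$ need not be small. Separately, for the Bessel kernel $K_\tau(x)=e^{-|x|/\sqrt\tau}/(4\pi\tau|x|)$ one has $\|\nabla K_\tau\|_{L^1}\simeq\tau^{-1/2}$, which is not bounded. The uniform bound on $\nabla G$ instead comes from $\nabla G=K_\tau\ast\nabla(u^\ast\om)$ and $\|K_\tau\|_{L^1}=1$.

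What you are missing is a supercritical entry point for $du$. The paper takes it from the antisymmetric step, asserting $\nabla u\in W^{1,p}$ for all $p<2$ and hence $\nabla u\in L^4(\B_{r/2}(x_0))$; after that no kernel is needed. Testing (\ref{lond}) with $\chi_r G$, and absorbing the cut-off term by the global bound $\tau\int_{\B_1}|G|^2\le C$, gives $\int_{\B_{r/4}(x_0)}|G|^2+\tau|d^\ast G|^2\le C_r$. Then $d\eta=u^\ast\om-G$, $d^\ast\eta=0$ gives $\eta\in L^6$, so $dB\in L^4$ and the bootstrap in (\ref{sys-4}) is supercritical.

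If you want to keep your kernel route, run the loop in Morrey spaces with a pointwise bound rather than $L^q$ bounds. This is in fact what is needed to justify such a supercritical bound, since Rivi\`ere--Struwe by itself gives only H\"older continuity and a small integrability gain while $\eta$ is merely $L^2$-Morrey. The key observation is
$|\tau\nabla K_\tau(x)|=\frac{e^{-|x|/\sqrt\tau}}{4\pi|x|^2}\bigl(1+\frac{|x|}{\sqrt\tau}\bigr)\le\frac{1}{4\pi|x|^2}$, uniformly in $\tau$.
Hence on an inner ball $|\eta|\le C\,I_1(\chi|du|^2)+C_r$, with $I_1$ the Riesz potential of order one. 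The cut-off errors are bounded by $C_r$ using $\|\eta\|_{L^2}+\sqrt\tau\|G\|_{L^2}\le C$. Adams' theorem then turns the Rivi\`ere--Struwe decay $\int_{\B_\rho}|du|^2\le C\rho^{1+2\beta}$ into $\int_{\B_\rho}|\eta|^2\le C\rho^{1+4\beta}$, and through the $u$-equation into the same decay for $du$. Iterating, the excess exponent doubles at each step, so after finitely many steps $du\in L^p$ with $p>3$, uniformly in $\tau$, and the rest of the bootstrap is routine.
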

\noindent{\bf Proof of Theorem~\ref{th-delregt}.} Equation~\ref{antisy} is implying
\be
\label{sys-4}
\Delta u^i= du^{i+1}\wedge d B^{i-1}-du^{i-1}\wedge d B^{i+1}
\ee
This system can be recasted in a Schr\"odinger system with antisymmetric potential $-\Delta u=\Om\cdot\nabla u$ and the smallness assumption (\ref{smuet-5}) together with  Lemma~\ref{lm-mont} we obtain
\[
\sup_{x_1\in \B_{r/2}(x_0)}\ \sup_{\rho<r/4}\frac{1}{\rho}\int_{\B_\rho(x_1)}|du|^2+|\Om|^2\ dx^3\le C\,\delta\ .
\]
Applying the main theorem of \cite{RivSt} we obtain that there exists $C_r>0$ only depends on $r$ and not on $\tau$ such that for any $p<2$
\[
\|\nabla u\|_{W^{1,p}(\B_{r/2}(x_0))}\le C_r
\]
This implies in particular
\[
\|\nabla u\|_{L^4(\B_{r/2}(x_0))}\le C_r
\]
Let $\chi_r\in C^0_\infty(\B_{r/2}(x_0),{\R}_+)$ such that $\chi_r\equiv 1$ on $\B_{r/4}(x_0)$. One multiplies the London equation (\ref{lond}) by $\chi_r\, G$. This gives, integrating by parts and using the fact that
$dG=0$ which is implying $-\Delta G=dd^\ast G$
\[
\begin{array}{l}
\ds\int_{\B_{r/2}(x_0)}\chi_r\,\tau\,|d^\ast G|^2+\chi_r\, |G|^2\ dx^3= \int_{\B_{r/2}(x_0)}\chi_r\ G\cdot\,u^\ast\om-\tau \int_{\B_{r/2}(x_0)}\nabla \chi_r\, G\cdot\nabla G\ dx^3\\[5mm]
\ds\quad\le \frac{1}{2}\int_{\B_{r/2}(x_0)}\chi_r\,|G|^2\ dx^3+ \frac{1}{2}\, \int_{\B_{r/2}(x_0)}\chi_r\,|u^\ast\om|^2\ dx^3+ \frac{1}{2}\,\int_{\B_{r/2}(x_0)}\Delta \chi_r\, \tau\,|G|^2\ dx^3
\end{array}
\]
Since 
\[
\int_{\B_1^3(0)}\tau\,|G|^2\, dx^3\le C\, \inf\lf\{{\mathcal R}_\tau(u,\eta)\ ;\ u=\phi\ \mbox{ on }\p\B_1(0)\rg\}\ ,
\]
we obtain that there exists a constant $C_r$ independent of $\tau$ such that
\[
\int_{\B_{r/4}(x_0)}\tau\,|d^\ast G|^2+ |G|^2\ dx^3\le C_r
\]
Since
\[
\lf\{
\begin{array}{l}
\ds d(\tau d^\ast G)=-G+u^\ast\om\qquad\mbox{ in }\B_{r/4}(x_0)\\[5mm]
\ds d^\ast(\tau d^\ast G)=0
\end{array}
\rg.
\]
we have
\[
\|\tau d^\ast G\|_{L^6(\B_{r/8}(x_0)}\le C\,\|G\|_{L^2(\B_{r/4}(x_0))}+C\,\|u^\ast\om\|_{L^2(\B_{r/4}(x_0))}+C\, r^{-1}\|\tau d^\ast G\|_{L^2(\B_{r/8}(x_0)}\
\]
Recall that
\[
dB=\ast\lf(u\times d u\rg)-\tau \,d\ast G\ u \ .
\]
We deduce that the $L^4$ norm of $dB$ is uniformly bounded in $\B_{r/8}(x_0)$ which permits to bootstrap further in the equation (\ref{sys-4}) and after a finite number of steps we obtain (\ref{concth}) and Theorem~\ref{th-delregt} is proved.
\hfill $\Box$

\appendix
\section{Appendix}
\reset
\subsection{Controlled $W^{1,3}(\B^3,{\S}^3)$ harmonic map extensions under small Dirichlet Energy Assumption at the Boundary}
The goal of this subsection is to establish the following Lemma
\begin{Lma}
\label{lm-ext} {\bf [Harmonic Map extensions with Epiperimetric Inequality]}
There exists $\delta>0$ such that for any map $u\in W^{1,2}(\p \B^3,\S^2)$ satisfying
\be
\label{A-sma}
\int_{\p\B^3}|\nabla_Tu|^2\ dvol_{\p \B^3}<\delta
\ee
where $\nabla_T$ denotes the tangential derivative of $u$ along $\p \B^3$, then there exists $\ti{u}\in W^{1,3}(\B^3,{\S}^2)$ satisfying
\begin{itemize}
\item[i)]
\[
\ti{u}=u\qquad\mbox{ on }\p \B^3\ ,
\]
\item[ii)]
\be
\label{harm-1}
-\Delta\ti{u}=\ti{u}\,|\nabla\ti{u}|^2\qquad\mbox{ in }{\mathcal D}'(\B^3)\ ,
\ee
\item[iii)]
\be
\label{A-03}
\|\nabla\ti{u}\|_{L^3(\B^3)}\le C\, \|\nabla_T u\|_{L^2(\p \B^3)}\ ,
\ee
where $C>0$ is a universal constant and
\item[iv)]
\be
\label{A-04}
\int_{\B^3}|\nabla \ti{u}|^2\ dx^3\le (1-\nu)\ \int_{\p \B^3}|\nabla_T {u}|^2\ dvol_{\p \B^3}\ .
\ee
where $\nu\in(0,1)$ is universal.
\end{itemize}
\hfill $\Box$
\end{Lma}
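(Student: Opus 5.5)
The extension will be built by a continuity method ``\`a la Uhlenbeck'', as announced in the organisation of the paper. Small energy of $u$ on $\p\B^3$ will let me deform $u$ to a constant through boundary data of small energy, and I will follow the small solution of the harmonic map system along this deformation.

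\textbf{Step 1: the linear model.} Let $\ov{u}$ be the average of $u$ on $\p\B^3$ and let $h$ be the harmonic (vector valued) extension of $u$. By trace theory, $W^{1,2}(\p\B^3)\hookrightarrow W^{1,3}(\B^3)$ for harmonic extensions, with $\|\nabla h\|_{L^3(\B^3)}\le C\,\|\nabla_T u\|_{L^2(\p\B^3)}$. This is the $H^{1/2}$-gain: the harmonic extension of a $W^{1,2}$ trace lies in $W^{3/2,2}\subset W^{1,3}$. The Sobolev embedding on the sphere also gives $\|u-\ov u\|_{BMO}$ and the $L^p$ oscillation small.

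\textbf{Step 2: nonlinear existence.} I look for $\ti u=h+w$ with $w\in W^{1,3}_0\cap W^{2,3/2}$ solving $-\Delta w=\ti u\,|\nabla \ti u|^2$. The right-hand side is quadratic in $\nabla\ti u\in L^3$, hence lies in $L^{3/2}$. Calderon--Zygmund gives $\|\nabla w\|_{L^3}\le C\|\nabla^2 w\|_{L^{3/2}}\le C\|\nabla \ti u\|_{L^3}^2$, with $|\ti u|$ bounded. A fixed point in a small ball of $W^{1,3}_0$ then produces $w$ with $\|\nabla w\|_{L^3}\le C\|\nabla_T u\|^2_{L^2}$, and this gives (\ref{A-03}).

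The only real point is that $\ti u$ must take values in $\S^2$. For this I run the continuity argument. I set $u_t:=\pi(\ov u+t(u-\ov u))$ with $\pi$ the nearest point projection, which is defined since the oscillation is small, or alternatively I deform $u$ through a homotopy to a constant. The set of $t$ for which the small solution is $\S^2$-valued is open by the implicit function theorem. It is closed by the uniform a priori bounds together with the uniqueness of small solutions. For the constraint itself, I observe that $\rho:=1-|\ti u|^2$ satisfies $-\Delta\rho=2|\nabla\ti u|^2\rho$ with $\rho=0$ on $\p\B^3$. The potential $|\nabla\ti u|^2$ is small in $L^{3/2}$, so the Sobolev inequality forces $\rho\equiv0$. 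This last observation actually makes the continuity path optional, and (\ref{harm-1}) follows.

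\textbf{Step 3: the epiperimetric inequality (\ref{A-04}).} I copy the argument given for $\frak h$ in the proof of Lemma~\ref{lm-sca}, adding the nonlinear error. Stationarity (Pohozaev) of the harmonic map, which is smooth by small energy regularity, gives
\[
\int_{\B^3}|\nabla\ti u|^2\,dx^3+\int_{\p\B^3}|\p_r\ti u|^2\,dvol_{\p\B^3}=\int_{\p\B^3}|\nabla_T u|^2\,dvol_{\p\B^3}.
\]
Testing the equation with $\ti u-\ov u$ gives
\[
\int_{\B^3}|\nabla\ti u|^2\,dx^3=\int_{\p\B^3}(u-\ov u)\cdot\p_r\ti u\,dvol_{\p\B^3}+\int_{\B^3}(\ti u-\ov u)\cdot\ti u\,|\nabla\ti u|^2\,dx^3.
\]
The boundary term is bounded via Poincar\'e by $C\|\nabla_Tu\|_{L^2}\|\p_r\ti u\|_{L^2}$. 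The error term is bounded by $\|\ti u-\ov u\|_{L^\infty}\int|\nabla\ti u|^2$, and $\|\ti u-\ov u\|_{L^\infty}\le C\sqrt\delta$ by the $W^{1,3}$ bound and the maximum principle for $h$. Absorbing it for $\delta$ small and combining as in (\ref{epifrakh}) yields a universal $\nu$.

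\textbf{Main obstacle.} I expect the main obstacle to be the sharp $L^3$ estimate in Step 1, namely ensuring that the harmonic extension of a trace only in $W^{1,2}(\p\B^3)$ lands in $W^{1,3}$ with a linear bound. I would justify it through the Besov scale, $B^{3/2}_{2,2}\subset W^{1,3}$ in dimension $3$, or by an explicit Poisson kernel estimate. The $L^\infty$ control of $\ti u-\ov u$ needed in Step 3 also deserves care. If necessary I would replace it by the $BMO$ smallness, paired with the $\mathcal{H}^1$ structure of $\ti u\cdot\ti u|\nabla\ti u|^2=|\nabla\ti u|^2$ after using $\ti u\cdot\nabla\ti u=0$.
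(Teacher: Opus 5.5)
Two steps fail as written. Both can be repaired, and once they are, your overall strategy works.

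\textbf{Existence.} Your construction differs from the paper's. The paper first builds an $\S^3\simeq SU(2)$-valued harmonic extension by a continuity method, and then shows that its fourth component vanishes. In that continuity method, the small-energy boundary class is path-connected via the harmonic map flow. The set of data admitting controlled extensions is closed by weak compactness. It is open by the implicit function theorem combined with the a priori estimate of Lemma~\ref{lm-ap}, which rests on the Hodge decomposition $g^{-1}dg=d\sigma+d^\ast\zeta$ and Lemma~\ref{lm-in}.

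You instead run a direct contraction around the harmonic extension $h$. You then use $-\Delta\rho=2|\nabla\ti u|^2\rho$ for $\rho=1-|\ti u|^2\in W^{1,2}_0$, which is the same absorption argument as the paper's $\ti g_4\equiv0$ step. This route is simpler and needs neither the heat flow nor $H^{3/2}$ boundary data.

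However, the fixed point does not close in $W^{1,3}_0\cap W^{2,3/2}$. In dimension three neither space embeds into $L^\infty$, so $\ti u=h+w$ is not known to be bounded, and $\ti u\,|\nabla\ti u|^2$ need not lie in $L^{3/2}$. Work at the Lorentz level instead:
\begin{itemize}
\item $\nabla h\in H^{1/2}(\B^3)\subset L^{(3,2)}$, with norm at most $C\|\nabla_Tu\|_{L^2(\p\B^3)}$;
\item $L^{(3,2)}\cdot L^{(3,2)}\subset L^{(3/2,1)}$;
\item the Dirichlet Laplacian maps $L^{(3/2,1)}$ into $X:=W^{2,(3/2,1)}\cap W^{1,2}_0\subset W^{1,(3,1)}\subset C^0$.
\end{itemize}
Then $\|Tw\|_X\le C(1+\|w\|_X)(\|\nabla h\|_{L^{(3,2)}}+\|w\|_X)^2$, together with the analogous Lipschitz bound, gives the contraction. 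With $\ti u$ bounded, $\rho\in W^{1,2}_0\cap L^\infty$ is legitimate.

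Drop the continuity path entirely. The path $\pi(\ov u+t(u-\ov u))$ is not defined in general, because small Dirichlet energy on a two-dimensional sphere does not make the oscillation of $u$ small.

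\textbf{Epiperimetric inequality.} The same fact breaks Step~3: the bound $\|\ti u-\ov u\|_{L^\infty(\B^3)}\le C\sqrt\delta$ is false. Take $u=(\cos f,\sin f,0)$ with $f=\epsilon\log\log(1/|x|)$ near a point of $\p\B^3$, shifted so that it vanishes outside a small disc. Its energy is $O(\epsilon^2)$, yet it covers the whole equator. Since $\ti u=u$ on $\p\B^3$, $\ti u-\ov u$ cannot be uniformly small. The maximum principle for $h$ would only propagate a boundary $L^\infty$ bound, and you do not have one.

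Instead, do as the paper does: use H\"older in $L^3\times L^{3/2}$, the Poincar\'e inequality, and (\ref{A-03}). Poincar\'e holds with the boundary average $\ov u$ by the trace inequality. This gives
\[
\Bigl|\int_{\B^3}(\ti u-\ov u)\cdot\ti u\,|\nabla\ti u|^2\,dx^3\Bigr|\le\|\ti u-\ov u\|_{L^3(\B^3)}\,\|\nabla\ti u\|_{L^3(\B^3)}^2\le C\,\|\nabla\ti u\|_{L^3(\B^3)}^3\le C\sqrt\delta\,\|\nabla_Tu\|_{L^2(\p\B^3)}^2 .
\]
Absorb this error into $\|\nabla_Tu\|^2_{L^2(\p\B^3)}$ after substituting the Pohozaev identity, not into $\int_{\B^3}|\nabla\ti u|^2$. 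This yields $\|\nabla_Tu\|^2_{L^2(\p\B^3)}\le C_0\|\p_r\ti u\|^2_{L^2(\p\B^3)}$, and then (\ref{A-04}).

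The paper tests with the volume average and corrects the boundary term by the difference of the two averages, which amounts to the same thing. Your $BMO$/$\mathcal H^1$ fallback is unnecessary.
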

We will first prove the existence of an $\S^3$ valued harmonic map extension of $u$ satisfying i)...iv). To that aim we shall be using a continuity method introduced in \cite{PeR}. First we establish the following lemma which is an a-priori estimate.
\begin{Lma}
\label{lm-ap}
There exists $\delta>0$  and $C>0$ such that for any harmonic map $g\in H^{3/2}(\B^3,\S^3)$ satisfying 
\begin{itemize}
\item[i)]
\[
d^\ast\lf(g^{-1}\,dg\rg)=0\quad\mbox{ in }{\mathcal D}'(\B^3)
\]
\item[ii)]
\[
 \|dg\|_{H^{1/2}(\B^3)}<\delta
\]
\end{itemize}
then
\be
\label{A-1}
\|dg\|_{H^{3/2}}\le C\, \|\iota_{\p \B^3}^\ast d g\|_{L^2(\p B^3)}\ .
\ee
Moreover, if $g\in W^{2,2}(\B^3)$, for the same $\delta>0$ there holds
\be
\label{A-01}
\|dg\|_{W^{1,2}}\le C\, \|\iota_{\p \B^3}^\ast d g\|_{H^{1/2}(\p B^3)}\ .
\ee
\hfill $\Box$
\end{Lma}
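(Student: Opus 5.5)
The plan is to write the harmonic map equation in divergence form with the Coulomb-type structure given by i), and then run linear elliptic estimates in which the nonlinearity is absorbed by the smallness hypothesis ii). Set $\Omega:=g^{-1}dg$, an $\frak{su}(2)$-valued one form (identifying $\S^3\simeq SU(2)$). The flatness of the pure gauge gives $d\Omega=-\Omega\wedge\Omega$, and i) gives $d^\ast\Omega=0$. Hence $\Omega$ satisfies the first order elliptic system
\[
d\Omega+\frac12[\Omega\wedge\Omega]=0,\qquad d^\ast\Omega=0\quad\mbox{ in }\B^3 .
\]
Since $|\Omega|=|dg|$ and $\nabla\Omega=g^{-1}\nabla dg+\nabla g^{-1}\,dg$, controlling $\Omega$ in $H^{3/2}$ (resp. $W^{1,2}$) is equivalent to controlling $dg$ in the same space, up to quadratic terms that are handled by the smallness of $\|dg\|_{H^{1/2}}$.

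The first step is the boundary value problem for the Hodge system. The tangential part $\iota^\ast_{\p\B^3}\Omega=g^{-1}\iota^\ast_{\p\B^3}dg$ is prescribed, with $|\iota^\ast\Omega|=|\iota^\ast dg|$, so its $L^2(\p\B^3)$ norm equals $\|\iota^\ast dg\|_{L^2}$. For the $H^{1/2}(\p\B^3)$ version, the product with $g^{-1}$ is estimated using $g\in H^{3/2}\cap L^\infty$ (boundedness by $1$ together with the smallness of $dg$). I will use the classical estimate for one forms with $d\Omega=f$, $d^\ast\Omega=0$ and prescribed tangential trace on the ball, which has trivial cohomology:
\[
\|\Omega\|_{H^{s+1}(\B^3)}\le C\big(\|f\|_{H^{s}(\B^3)}+\|\iota^\ast\Omega\|_{H^{s+1/2}(\p\B^3)}\big)
\]
for $s=-1/2$ and $s=0$. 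The case $s=-1/2$ is obtained by interpolation or duality between the $L^2$-trace and $H^{1/2}$-trace cases; this is the Gaffney/Calderón-Zygmund type estimate already used around (\ref{gaffney}). Note that $\|\Omega\|_{H^{1/2}}$ (resp. $H^1$) controls $\|dg\|_{H^{3/2}}$ in the paper's notation after the correction just described, so the estimate is applied at the level $s+1=1/2$ and $s+1=1$ for $\Omega$.

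The second step is to absorb the quadratic term $f=-\frac12[\Omega\wedge\Omega]$. In three dimensions, $H^{1/2}\hookrightarrow L^3$, so $\|\Omega\wedge\Omega\|_{L^{3/2}}\le C\|\Omega\|_{H^{1/2}}^2$. Moreover $L^{3/2}\hookrightarrow H^{-1/2}$ by duality with $H^{1/2}_0\hookrightarrow L^3$. This yields
\[
\|\Omega\|_{H^{1/2}}\le C\delta\|\Omega\|_{H^{1/2}}+C\|\iota^\ast dg\|_{L^2},
\]
and for $\delta$ small this gives (\ref{A-1}). For (\ref{A-01}), I use $\|\Omega\wedge\Omega\|_{L^2}\le\|\Omega\|_{L^3}\|\Omega\|_{L^6}\le C\delta\|\Omega\|_{W^{1,2}}$, absorb again, and then convert back from $\Omega$ to $dg$.

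The main obstacle I expect is the endpoint linear estimate at regularity $H^{1/2}$ with only $L^2$ tangential boundary data, which is not a standard integer-order estimate. I would first try to obtain it by interpolating the $W^{1,2}$ estimate (with $H^{1/2}$ boundary data) against the dual $L^2$ estimate (with $H^{-1/2}$ boundary data). If that fails, I would fall back on writing $\Omega=d\sigma+d^\ast\zeta$ and treating the harmonic part through Poisson-kernel bounds on the ball.
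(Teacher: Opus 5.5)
Your architecture matches the paper's: the Coulomb condition i) plus flatness of $\Omega=g^{-1}dg$ give a first-order elliptic system, the quadratic term is put in $L^{3/2}$ through $H^{1/2}\hookrightarrow L^3$ and absorbed using ii), and one passes between $\Omega$ and $dg$ by a product estimate. At the $H^{1/2}$ level that product estimate must be a fractional one, as in Lemma~\ref{lm-in} with $b=g\in L^\infty\cap W^{1/2,6}$, not the pointwise Leibniz rule you wrote. The $W^{1,2}$ bound (\ref{A-01}) via the Gaffney estimate is fine.

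The gap is in (\ref{A-1}). Its whole weight rests on
\[
\|\Omega\|_{H^{1/2}(\B^3)}\le C\big(\|d\Omega\|_{H^{-1/2}(\B^3)}+\|\iota^\ast_{\p\B^3}\Omega\|_{L^2(\p\B^3)}\big),\qquad d^\ast\Omega=0 .
\]
This is not a classical estimate you can quote. $H^{1/2}$ is precisely the exceptional index at which the trace $H^{1/2}(\B^3)\to L^2(\p\B^3)$ is not even defined.

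Interpolating between the $W^{1,2}$ and $L^2$ estimates does not come for free either. The data $(d\Omega,\iota^\ast\Omega)$ are coupled by $\iota^\ast_{\p\B^3}d\Omega=d(\iota^\ast_{\p\B^3}\Omega)$, so you would have to interpolate constrained data spaces. Half-integer indices are exactly where such interpolation identities break down (compare $[H^1_0,L^2]_{1/2}=H^{1/2}_{00}\neq H^{1/2}$).

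Your fallback has the same defect as long as the source is only measured in $H^{-1/2}$. The non-harmonic piece $d^\ast\zeta$ then lies only in $H^{1/2}$, which has no $L^2(\p\B^3)$ trace. So the boundary data $\iota^\ast d\sigma=\iota^\ast\Omega-\iota^\ast d^\ast\zeta$ of the harmonic piece cannot be bounded.

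The missing idea is to never degrade $f=-\Omega\wedge\Omega$ to $H^{-1/2}$, but to keep it in $L^{3/2}$. This gives exactly the extra room needed to take traces, and it is what the paper does:
\begin{itemize}
\item Write $g^{-1}dg=d\sigma+d^\ast\zeta$ with $dd^\ast\zeta=dg^{-1}\wedge dg$, $d\zeta=0$, $\iota^\ast_{\p\B^3}\ast\zeta=0$.
\item $L^{3/2}$ Calder\'on--Zygmund theory gives $\|\nabla^2\zeta\|_{L^{3/2}}+\|\nabla\zeta\|_{L^3}\le C\|dg\|^2_{L^3}\le C\delta\|dg\|_{H^{1/2}}$.
\item Hence $d^\ast\zeta\in W^{1,3/2}(\B^3)\hookrightarrow H^{1/2}(\B^3)$. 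By the trace theorem, $\iota^\ast d^\ast\zeta\in W^{1/3,3/2}(\p\B^3)\hookrightarrow L^2(\p\B^3)$, with the same bound.
\item Since $d^\ast(g^{-1}dg)=0$, $\sigma$ is harmonic. The standard estimate $\|d\sigma\|_{H^{1/2}}\le C\|\iota^\ast d\sigma\|_{L^2(\p\B^3)}\le C\big(\|\iota^\ast dg\|_{L^2(\p\B^3)}+\delta\|dg\|_{H^{1/2}}\big)$, together with Lemma~\ref{lm-in}, closes the argument after absorption.
\end{itemize}
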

\noindent{\bf Proof of Lemma~\ref{lm-ap}} We first consider the minimizer of the following problem considered also in (\ref{min})
\[
\inf\lf\{  \int_{\B^3}|g^{-1}\,dg-d^\ast\zeta |^2\ dx^3 ;\ d\zeta=0\quad\mbox{ and }\quad \iota_{\p B^3}^\ast \ast\zeta=0 \rg\}
\]
The existence of a unique minimizer is explained in section III. Moreover in the same section we proved that
\[
d\lf(g^{-1}\,dg-d^\ast\zeta\rg)=0
\]
Hence there exists $\sigma\in W^{1,2}(\B^3,\frak{su}(2))$ such that
\[
g^{-1}\,dg=d\sigma+d^\ast\zeta
\]
$\zeta$ satisfies
\[
\lf\{
\begin{array}{l}
\ds d d^\ast\zeta=dg^{-1}\wedge d g\quad\mbox{ in }{\mathcal D}'(\B^3)\\[5mm]
\ds d\zeta=0\quad\mbox{ in }{\mathcal D}'(\B^3)\\[5mm]
\ds\iota_{\p B^3}^\ast\ast\zeta=0
\end{array}
\rg.
\]
Classical elliptic theory gives then using hypothesis ii)
\[
\|\nabla^2\zeta\|_{L^{3/2}(\B^3)}+\|\nabla \zeta\|_{L^{3}(\B^3)}\le C\, \|dg\|_{L^3(\B^3)}^2\le C\, \delta\,\|dg\|_{H^{1/2}(\B^3)}
\]
Then we have that $d^\ast\zeta$ is a $W^{1,3/2}(\B^3,\frak{su}(2))$. Using trace theorem $W^{1,3/2}(\B^3)\ \hookrightarrow\ W^{1-2/3,3/2}(\p\B^3)$ we have
\[
\|\iota_{\p B^3}^\ast d\zeta\|_{W^{1/3,3/2}(\p\B^3)}\le C\, \delta\,\|dg\|_{H^{1/2}(\B^3)}
\]
Using Sobolev embedding $W^{1/3,3/2}(\p\B^3)\ \hookrightarrow\ L^2(\B^3)$ we have
\[
\|\iota_{\p B^3}^\ast d\zeta\|_{ L^2(\B^3)}\le C\, \delta\,\|dg\|_{H^{1/2}(\B^3)}\ .
\]
The function $\sigma$ is harmonic in $\B^3$. Hence we have
\[
\begin{array}{l}
\ds\|d\sigma\|_{H^{1/2}(\B^3)}\le C\, \|\iota_{\p B^3}^\ast d\sigma\|_{L^2(\B^3)}\le C\, \|\iota_{\p B^3}^\ast  dg\|_{ L^2(\B^3)}+\|\iota_{\p B^3}^\ast d\zeta\|_{ L^2(\B^3)}\\[5mm]
\ds\quad\le C\, \|\iota_{\p B^3}^\ast  dg\|_{ L^2(\B^3)}+ C\, \delta\,\|dg\|_{H^{1/2}(\B^3)}\ .
\end{array}
\]
Using lemma~\ref{lm-in} to  $a=$ and $b=g^{-1}\p_{x_k}g$ for any $k=1,2,3$ we obtain using the hypothesis ii)
\[
\begin{array}{l}
\ds\|dg\|_{H^{1/2}(\B^3)}\le C\ (1+\|g\|_{\dot{W}^{1/2,6}(\B^3)}\ )\ \|g^{-1}dg\|_{H^{1/2}(\B^3)}\\[5mm]
\ds\qquad\le C\ (1+\|dg\|_{H^{1/2}(\B^3)})\ \lf(\|d\sigma\|_{H^{1/2}(\B^3)}+\|d^\ast\zeta\|_{H^{1/2}(\B^3)}\rg)\\[5mm]
\ds\qquad\le C\ (1+\|dg\|_{H^{1/2}(\B^3)})\ \lf(\|\iota_{\p B^3}^\ast  dg\|_{ L^2(\B^3)}+ C\, \delta\,\|dg\|_{H^{1/2}(\B^3)}\rg)
\end{array}
\]
Using the smallness assumption ii), for $\delta$ small enough we deduce (\ref{A-1}). The proof of (\ref{A-01}) is following the same lines as the proof of (\ref{A-1}) by taking the stronger norms 
$W^{1,2}(\B^3)$ for $dg$ and $H^{1/2}(\p B^3)$ for the restriction to $\p B^3$ of $dg$, still assuming the smallness of $\|dg\|_{H^{1/2}(\B^3)}$ and lemma~\ref{lm-ap} is proved.\hfill $\Box$

\medskip

\noindent{\bf Proof of Lemma~\ref{lm-ext}} First we prove the counterpart to the lemma~\ref{lm-ext} for $\S^3$ valued map. 

\medskip

For $\delta>0$ and $C>0$ we introduce the following two spaces 
\[
{\mathcal G}_\delta:=\lf\{ g\in H^{3/2}(\p B^3,SU(2))\ ;\ \int_{\p B^3}|\iota_{\p B^3}^\ast  dg|^2\ dvol_{\p B^3}<\delta\rg\}
\]
and
\[
{\mathcal F}_{\delta,C}=
\lf\{
\begin{array}{c}
\ds g\in {\mathcal G}_\delta\ ;\ \exists\,\ti{g}\in W^{2,2}(\B^3,SU(2))\quad\mbox{ s. t. }d^\ast(\ti{g}^{-1}\,d\ti{g})=0\\[5mm]
\ds \ti{g}=g\quad\mbox{ on }\p B^3\\[5mm]
\ds\|d\ti{g}\|_{H^{3/2}(\B^3)}\le C\, \|\iota_{\p \B^3}^\ast d g\|_{L^2(\p B^3)}\\[5mm]
\ds \|d\ti{g}\|_{H^1(\B^3)}\le C\, \|dg\|_{H^{1/2}(\p B^3)}
\end{array}
\rg\}
\]
The first goal is to show
\be
\label{A-2}
\exists \,\delta>0\quad\exists\, C>0\qquad\mbox{ s. t. }\qquad{\mathcal G}_\delta={\mathcal F}_{\delta,C}\ .
\ee
We first claim that for $\delta$ small enough ${\mathcal G}_\delta$ is path connected. To that aim we solve the harmonic map flow equation on $\p B^3$ with initial data
an arbitrary element $g\in {\mathcal G}_\delta$
\[
\lf\{
\begin{array}{l}
\ds\p_t\hat{g}+\Delta_{\p B^3}\hat{g}=\hat{g}\,|d\hat{g}|_{\p \B^3}^2\qquad\mbox{ on }\p \B^3\\[5mm]
\ds \hat{g}=g\qquad\mbox{ at }t=0
\end{array}
\rg.
\]
where $\Delta_{\p B^3}$ is the positive Laplace Beltrami operator on $\p \B^3$. Thanks to the main result of \cite{LinL}, there exists $\delta>0$ such that for any initial data $g$ satisfying 
\[
\int_{\p B^3}|\iota_{\p B^3}^\ast  dg|^2\ dvol_{\p B^3}<\delta
\]
then there exists a unique energy decreasing  solution to the flow and it uniformly converges (in any ``strong norm'') to a constant map in $SU(2)$. Since $SU(2)$ is itself path connected we deduce  the path connectedness of ${\mathcal G}_\delta$.

\medskip

In order to prove the claim (\ref{A-2}) we shall now prove that for some well chosen $C>0$ the subset ${\mathcal F}_{\delta,C}$ is both closed and open within ${\mathcal G}_\delta$ for the $H^{3/2}$ topology.

\medskip

First we prove that ${\mathcal F}_{\delta,C}$ is relatively closed in ${\mathcal G}_\delta$. Assume $u_k\in  {\mathcal F}_{\delta,C}$ strongly converges in $H^{3/2}$ to $u\in {\mathcal G}_\delta$. We consider a corresponding sequence of harmonic extensions $\ti{g}_k$. Because of the norm control given by the membership to ${\mathcal F}_{\delta,C}$ we deduce that we can extract a subsequence such that
\[
\ti{g}_{k'}\ \rightharpoonup\ \ti{g}_\infty\qquad\mbox{ weakly in }W^{2,2}(\B^3)\ .
\]
From Rellich Kondrachov we deduce the strong convergence of $d\ti{g}_{k'}$ to $d\ti{g}_{\infty}$in $L^p(\B^3)$ for any $p<6$ and then one can pass to the limit in the harmonic map equation as well as in the trace.
Using the lower semicontinuity
of the $H^{1/2}(\B^3)$ (resp. $H^1(\B^3)$) norms for the differentials of the sequence of extensions $d\ti{g}_{k'}$ we deduce that the pairs $(g_\infty,\ti{g}_\infty)$ satisfies the inequalities
\[
\ds\|d\ti{g}_\infty\|_{H^{3/2}(\B^3)}\le C\, \|\iota_{\p \B^3}^\ast d g_\infty\|_{L^2(\p B^3)}\qquad\mbox{ and }\qquad \|d\ti{g}_\infty\|_{H^1(\B^3)}\le C\, \|dg_\infty\|_{H^{1/2}(\p B^3)}\ .
\]
for the same constant $C>0$. Hence $g_\infty\in {\mathcal F}_{\delta,C}$ .

\medskip

Finally we prove that ${\mathcal F}_{\delta,C}$ is relatively open in ${\mathcal G}_\delta$. Let $g_0\in {\mathcal F}_{\delta,C}$. and $\ti{g}_0$ the harmonic map extensiongiven by the  We consider the following map
\[
L_{g_0}\circ\exp:\ v\in H^{3/2}(\p \B^3,\frak{su}(2))\ \longrightarrow\ g=g_0\,\exp(v)\ ,
\]
where $L_{g_0}$ is the left multiplication by $g_0$. Since $H^{3/2}(\p \B^3,SU(2))\ \hookrightarrow\ C^0(\p B^3, SU(2))$ we have the existence of $\sigma>0$ such that, if
\[
\|g-g_0\|_{H^{3/2}}<\sigma
\]
then $g_0^{-1}\,g$ takes values in a neighbourhood of the identity where $\exp^{-1}$ realizes a $C^\infty$ diffeomorphism and
\[
v: =\exp^{-1}(g_0^{-1}\,g)\in H^{3/2}(\p \B^3,\frak{su}(2))\ .
\]
Hence the image of an open neighborhood of $0$ in $H^{3/2}(\p \B^3,\frak{su}(2))$ is an open neighbourhood of the identity in $H^{3/2}(\p \B^3,SU(2))$.
To any $v\in H^{3/2}(\p \B^3,\frak{su}(2))$ we denote by $\ti{v}$ the harmonic extension of $v$
\[
\lf\{
\begin{array}{l}
\ds\Delta\ti{v}=0\qquad\mbox{ in }\B^3\\[5mm]
\ds\ti{v}=v\qquad\mbox{ on }\p \B^3
\end{array}
\rg.
\]
We now consider the map
\[
\begin{array}{l}
{\mathcal N}\ :\ (v, \ti{w})\in H^{3/2}(\p \B^3,\frak{su}(2))\times H^2_0(\B^3, \frak{su}(2))\\[5mm]
\qquad \longrightarrow\ d^\ast\lf(\exp(-\ti{w})\,\exp(-\ti{v})\,\ti{g}_0^{-1}\,\,d\lf(\ti{g}_0\,\exp(\ti{v})\,\exp(\ti{w})\,  \rg)\rg)\in L^2(\B^3,\frak{su}(2))\ .
\end{array}
\]
Using the fact that $H^2(\B^3)$ is an algebra, the nonlinear operator ${\mathcal N}$ is $C^1$ and the partial derivative at the origin along the $\ti{w}$ direction is equal to
\[
\frac{\p{\mathcal N}}{\p \ti{w}}(0,0)(0,\ti{W})=d^\ast d\ti{W}+d^\ast\lf([\ti{g}_0^{-1} d\ti{g}_0,\ti{W}]\rg)=-\Delta\ti{W}+[\ti{g}_0^{-1} d\ti{g}_0;d\ti{W}]
\]
where we have used that $d^\ast (\ti{g}_0^{-1}\,d\ti{g}_0)=0$. Observe that we have
\[
\begin{array}{l}
\ds\|[\ti{g}_0^{-1} d\ti{g}_0;d\ti{W}]\|_{L^2(\B^3)}\le \|d\ti{g}_0\|_{L^3(\B^3)}\ \|d\ti{W}]\|_{L^6(\B^3)}\le C\, \|d\ti{g}_0\|_{H^{1/2}(\B^3)}\ \|d\ti{W}\|_{W^{1,2}}\\[5mm]
\ds\qquad\le C\, \|dg_0\|_{L^2(\p \B^3)}\  \|\ti{W}\|_{H^2_0}\le C\,\sqrt{\delta}\  \|\ti{W}\|_{H^2_0}\ .
\end{array}
\]
Hence, having chosen $\delta>0$ sufficiently small we have that 
\[
\frac{\p{\mathcal N}}{\p \ti{w}}(0,0)\ W^{2,2}_0(\B^3)\ \longrightarrow\ L^2(\B^3)
\]
realizes an isomorphism. Using the implicit function theorem, there exists a $C^1$ map defined in the neighborhood of the origin in $H^{3/2}(\p \B^3,\frak{su}(2))$
\[
\Xi\ :\ H^{3/2}(\p \B^3,\frak{su}(2))\ \longrightarrow \ W^{2,2}_0(\B^3)
\]
such that
\[
{\mathcal N}(v,\Xi(v))=0
\]
Let
\[
\ti{g}_v:=\ti{g}_0\,\exp(\ti{v})\,\exp(\Xi(v))
\]
It satisfies $d^\ast(\ti{g}_v^{-1}\,d\ti{g}_v)=0$ and, choosing $\ti{v}$ small enough, since 
\[
\|d\ti{g}_0\|_{H^{1/2}(\B^3)}\le C\, \|\|dg_0\|_{L^2(\B^3)}\le C\, \delta
\]
by taking $\delta$ small enough we can ensure that the hypothesis ii) of Lemma~\ref{lm-ap} is fulfilled so that we have (\ref{A-1}) and (\ref{A-01}) holding for $\ti{g}_v$. If we then chose the constant $C$ to be the one given by lemma~\ref{lm-ap}, we have proved that ${\mathcal F}_{\delta,C}$ is open.

Thus for these choices of $\delta$ and $C>0$ we have that ${\mathcal F}_{\delta,C}$ is both open and closed in ${\mathcal G}_\delta$ which is path connected and we deduce
\[
{\mathcal F}_{\delta,C}={\mathcal G}_\delta\ .
\]
This proves the counterpart of Lemma~\ref{lm-ext} for $\S^3$ valued maps. Lets consider now $\S^2\subset\S^3$ as being the intersection of $\S^3$ with the hyperplane $z_4=0$ and assume
\[
u\in W^{1,2}(\p B^3,{\S}^2)\qquad\mbox{ s. t. }\qquad \int_{\p \B^3}|du|^2\ dvol_{\p \B^3}le \delta\ ,
\]
where $\delta>0$ is the small constant obtained in the first part of the proof which ensures ${\mathcal F}_{\delta,C}={\mathcal G}_\delta$. Let $\ti{g}$ the $\S^3$ harmonic map extension of $u$
given by this first part of the proof of lemma~\ref{lm-ext}. Then the fourth coordinate $\ti{g}_4$ of $\ti{g}$ satisfies
\[
\lf\{
\begin{array}{l}
\ds -\Delta \ti{g}_4=\ti{g}_4\, |\nabla\ti{g}|^2\qquad\mbox{ in }\B^3\\[5mm]
\ds\ti{g}_4=0\qquad\mbox{ on }\p\B^3
\end{array}
\rg.
\]
Multiplying by $\ti{g}_4$ the equation and integrating by parts is giving, using H\"older and then Sobolev embedding
\[
\int_{\B^3}|\nabla\ti{g}_4|^2\ dx^3=\int_{\B^3}|\ti{g}_4|^2\, |\nabla\ti{g}|^2\ dx^3\le \|\ti{g}_4\|^2_{L^6(\B^3)}\ \|\nabla\ti{g}\|_{L^3(\B^3)}^2\le C\, \delta \, \int_{\B^3}|\nabla\ti{g}_4|^2\ dx^3
\]
where we have used the embedding $H^{1/2}(\B^3)\ \hookrightarrow\ L^3(\B^3)$. Hence for $\delta$ small enough we have ensured $\ti{g}_4\equiv 0$ and $\ti{u}:=\ti{g}$ is an $\S^2$ valued harmonic map fulfilling
(\ref{A-03}). Recall how to derive the monotonicity formula for smooth harmonic maps : We have for any $r<1$
\[
\begin{array}{l}
\ds0=\int_{\B_r(0)}\sum_{j,k=1}^3x_j\,\p_{x_j}\ti{u}\cdot\p^2_{x_k^2}\ti{u}\ dx^3\\[5mm]
\ds=\int_{\p \B_r(0)} r\,|\p_r\ti{u}|^2\ dvol_{\p \B_r(0)}-\int_{\B_r(0)}|\nabla\ti{u}|^2\ dx^3-\frac{1}{2}\int_{\B_r}\sum_{j=1}^3x_j\,\p_{x_j}|\nabla\ti{u}|^2\ dx^3\\[5mm]
\ds=\int_{\p \B_r(0)} r\,|\p_r\ti{u}|^2\ dvol_{\p \B_r(0)}+\frac{1}{2}\int_{\B_r(0)}|\nabla\ti{u}|^2\ dx^3-\frac{r}{2}\,\int_{\p \B_r(0)}|\nabla u|^2\ dvol_{\p \B_r(0)}
\end{array}
\]
Observe $|\nabla u|^2=|\p_r u|^2+|\nabla_Tu|^2$. This implies then
\be
\label{monha}
\int_{\B_r(0)}|\nabla\ti{u}|^2\ dx^3+\int_{\p \B_r(0)} r\,|\p_r\ti{u}|^2\ dvol_{\p \B_r(0)}= {r}\,\int_{\p \B_r(0)}|\nabla_T \ti{u}|^2\ dvol_{\p \B_r(0)}\ .
\ee

\medskip

In order to establish (\ref{A-04}) we proceed as follows. Poincar\'e inequality is giving
\[
\dashint_{\B_1}\lf|\ti{u}(x)-\dashint_{\B_1} \ti{u}(y)\ dy^3 \rg|^3\ dx^3\le C\, \int_{\B_1}|\nabla \ti{u}|^3\ dx^3\le C\,\delta^{3/2}
\]
Multiplying the equation (\ref{harm-1}) by $\ti{u}(x)-\dashint_{\B_1(0)} \ti{u}(y)\ dy^3$ and integrating by parts is giving
\[
\begin{array}{l}
\ds\int_{\B_1}|\nabla \ti{u}|^2\ dx^3=\int_{\p\B_1}\lf( \ti{u}(x)-\dashint_{\B_1} \ti{u}(y)\ dy^3  \rg)\cdot\frac{\p\ti{u}}{\p r}\ dvol_{\p\B_1}\\[5mm]
\ds\quad+\int_{\B_r(0)}\ti{u}\cdot \lf( \ti{u}(x)-\dashint_{\B_r(0)} \ti{u}(y)\ dy^3  \rg)\,|\nabla\ti{u}|^2\ dx^3
\end{array}
\]
We have
\[
\begin{array}{l}
\ds\lf|\dashint_{\B_1)} \ti{u}(y)\ dy^3-\dashint_{\p\B_1} \ti{u}(y)\ dvol_{\p\B_1}\rg|=\lf|\frac{3}{4\pi}\int_{\B_1} \ti{u}(y)\ dy^3-\frac{1}{4\,\pi}\int_{\p\B_1} \ti{u}(y)\ dvol_{\p\B_1}\rg|\\[5mm]
\ds=\frac{1}{4\,\pi}\lf|\int_0^1\lf(\int_{\p\B_1} 3\,{\rho^2}\,\ti{u}(\rho\,y)\ dvol_{\p\B_1}-\int_{\p\B_1} \ti{u}(y)\ dvol_{\p\B_1} \rg)\ d\rho\rg|\\[5mm]
\ds=\frac{1}{4\,\pi}\lf|\int_0^1\lf(\int_{\p\B_1} 3\,{\rho^2}\,\ti{u}(\rho\,y)\ dvol_{\p\B_1}-\int_{\p\B_1}3\,{\rho^2}\, \ti{u}(y)\ dvol_{\p\B_1} \rg)\ d\rho\rg|\\[5mm]
\ds=\frac{1}{4\,\pi}\lf|\int_0^1 3\,{\rho^2}\, \lf(\int_{\p\B_1}  (\ti{u}(\rho\,y)-\ti{u}(y))\ dvol_{\p\B_1} \rg)\ d\rho\rg|\\[5mm]
\ds\le \frac{1}{4\,\pi}\, \int_{\p\B_1}\int_0^1 3\,{\rho^2}\, \int^1_\rho\lf|\frac{\p\ti{u}}{\p r}\rg|(t, y/|y|)\ dt\ dvol_{\p\B_1(0)} \ d\rho\\[5mm]
\ds\le \frac{3}{4\,\pi}\, \int_0^1 dt\int_{\p\B_t}\lf|\frac{\p\ti{u}}{\p r}\rg|\ dvol_{\p\B_t(0)}=\frac{3}{4\,\pi}\, \int_{\B_1(0)}\lf|\frac{\p\ti{u}}{\p r}\rg|\ dx^3\le \sqrt{\frac{3}{4\pi}}\, \lf(\int_{\B_1}\lf|\frac{\p\ti{u}}{\p r}\rg|^2\ dx^3\rg)^{1/2}
\end{array}
\]
We then deduce from the two previous identities using (\ref{A-03})
\[
\begin{array}{l}
\ds\int_{\B_1}|\nabla \ti{u}|^2\ dx^3=\int_{\p\B_1}\lf( \ti{u}(x)-\dashint_{\p\B_1} \ti{u}(y)\ dvol_{\p\B_1}  \rg)\cdot\frac{\p\ti{u}}{\p r}\ dvol_{\p\B_1}\\[5mm]
\ds\quad+\int_{\p\B_1}\lf( \dashint_{\p\B_1}\ti{u}(y)\ dvol_{\p\B_1}  -\dashint_{\B_1} \ti{u}(y)\ dy^3  \rg)\cdot\frac{\p\ti{u}}{\p r}\ dvol_{\p\B_1}+\int_{\B_1}\ti{u}\cdot \lf( \ti{u}(x)-\dashint_{\B_1} \ti{u}(y)\ dy^3  \rg)\,|\nabla\ti{u}|^2\ dx^3\\[5mm]
\ds\le\lf\|  \ti{u}(x)-\dashint_{\p\B_1} \ti{u}(y)\ dvol_{\p\B_1}   \rg\|_{L^2(\p\B_1)}\ \lf\| \frac{\p\ti{u}}{\p r}  \rg\|_{L^2(\p\B_1)}+ \sqrt{\frac{3}{4\pi}}\,\lf\| \frac{\p\ti{u}}{\p r}  \rg\|_{L^2(\B_1)}\ \ \lf\| \frac{\p\ti{u}}{\p r}  \rg\|_{L^2(\p\B_1)}\\[5mm]
\ds+\lf\|  \ti{u}(x)-\dashint_{\B_1} \ti{u}(y)\ dvol_{\p\B_1}   \rg\|_{L^3(\B_1)}\ \||\nabla\ti{u}|^2\|_{L^{3/2}(\B_1)}\\[5mm]
\ds\le\ C\, \lf\| \nabla_T\hat{u}  \rg\|_{L^2(\p\B_1)}\ \lf\| \frac{\p\ti{u}}{\p r}  \rg\|_{L^2(\p\B_1)}+\|\nabla \ti{u}\|_{L^2(\B_1)}\ \lf\| \frac{\p\ti{u}}{\p r}  \rg\|_{L^2(\p\B_1)}+C\, \sqrt{\delta}\, \|\nabla_T\hat{u}\|^2_{L^2(\p\B_1)}\\[5mm]
\ds\le \ C\, \lf\| \nabla_T\hat{u}  \rg\|_{L^2(\p\B_1)}\ \lf\| \frac{\p\ti{u}}{\p r}  \rg\|_{L^2(\p\B_1)}+\frac{1}{2}\int_{\B_1}|\nabla \ti{u}|^2\ dx^3+\frac{1}{2}\, \lf\| \frac{\p\ti{u}}{\p r}  \rg\|^2_{L^2(\p\B_1)}+C\, \sqrt{\delta}\, \|\nabla_T\hat{u}\|^2_{L^2(\p\B_1)}
\end{array}
\]
Combining this identity with (\ref{monha}) for $r=1$ we obtain
\[
\lf(\frac{1}{2}-C\,\sqrt{\delta}\rg)\, \int_{\p\B_1}\lf|\nabla_T \hat{u}\rg|^2\ dvol_{\p\B_1}\le C\, \lf\| \nabla_T\hat{u}  \rg\|_{L^2(\p\B_1)}\ \lf\| \frac{\p\ti{u}}{\p r}  \rg\|_{L^2(\p\B_1)}+\lf\| \frac{\p\ti{u}}{\p r}  \rg\|^2_{L^2(\p\B_1)}
\]
Choosing $C\,\sqrt{\delta}<1/4$ and using Cauchy-Schwartz we obtain the existence of $C_0>0$ such that
\[
\int_{\p\B_1}\lf|\nabla_T \hat{u}\rg|^2\ dvol_{\p\B_1}\le C_0\, \int_{\p\B_1}\lf| \frac{\p\ti{u}}{\p r} \rg|^2\ dvol_{\p\B_1}
\]
Inserting this identity in (\ref{monha}) for $r=1$ we obtain for this special constant $C_0$
\[
\begin{array}{l}
\ds C_0\,\int_{\p \B_1}|\nabla_T \ti{u}|^2\ dvol_{\p \B_1}=C_0\,\int_{\B_1}|\nabla\ti{u}|^2\ dx^3+C_0\int_{\p \B_1} \lf| \frac{\p\ti{u}}{\p r} \rg|^2\ dvol_{\p \B_1}\\[5mm]
\ds\ge C_0\,\int_{\B_1}|\nabla\ti{u}|^2\ dx^3+\int_{\p\B_1}\lf|\nabla_T \hat{u}\rg|^2\ dvol_{\p\B_1}
\end{array}
\]
This gives (\ref{A-04}) for $1-\nu= \frac{C_0}{C_0+1}$  and Lemma~\ref{A-04} is proved.\hfill $\Box$

\subsection{ A Product Inequality}
\begin{Lma}
\label{lm-in} There exists a constant $C>0$ such that for any $a\in H^{1/2}(\B^3)$ and $b\in L^\infty\cap W^{1/2,6}(\B^3)$ we have
\[
\|a\,b\|_{H^{1/2}(\B^3)}\le C\, \|a\|_{H^{1/2}(\B^3)}\, \lf(\|b\|_{L^\infty((\B^3)}+\|b\|_{W^{1/2,6}(\B^3)}\rg)
\]

\end{Lma}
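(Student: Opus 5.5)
The plan is to reduce to $\R^3$ and use the Gagliardo (Slobodeckij) characterisation of $H^{1/2}$. First I fix a bounded linear extension operator $E$ from $\B^3$ to $\R^3$, which is simultaneously bounded $L^2\to L^2$, $H^{1/2}\to H^{1/2}$, $L^\infty\to L^\infty$ and $W^{1/2,6}\to W^{1/2,6}$. Reflection across $\p\B^3$ followed by multiplication by a cutoff works for all these spaces at once, since each of them is given by intrinsic difference quotients of order $1/2$. Because $(Ea)(Eb)$ restricts to $ab$ on $\B^3$, it suffices to prove the inequality for compactly supported $a,b$ on $\R^3$. Alternatively, one can work directly on $\B^3$ with the intrinsic seminorm
\[
[f]^2_{\dot H^{1/2}(\B^3)}=\int_{\B^3}\int_{\B^3}\frac{|f(x)-f(y)|^2}{|x-y|^{4}}\,dx\,dy ,
\]
which avoids extension entirely. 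I would choose this second route.

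The $L^2$ part is immediate: $\|ab\|_{L^2}\le\|b\|_{L^\infty}\|a\|_{L^2}$. For the seminorm, I write
\[
a(x)b(x)-a(y)b(y)=b(x)\bigl(a(x)-a(y)\bigr)+a(y)\bigl(b(x)-b(y)\bigr).
\]
The first term contributes at most $\|b\|_{L^\infty}^2[a]^2_{\dot H^{1/2}}$. The whole difficulty lies in the second term,
\[
I=\int\!\!\int\frac{|a(y)|^2|b(x)-b(y)|^2}{|x-y|^4}\,dx\,dy=\int |a(y)|^2\,D_b(y)^2\,dy,\qquad D_b(y)^2:=\int\frac{|b(x)-b(y)|^2}{|x-y|^4}\,dx .
\]
Applying H\"older in $y$ with exponents $3/2$ and $3$ gives $I\le\|a\|^2_{L^3}\,\|D_b\|^2_{L^6}$. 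Sobolev embedding $H^{1/2}(\B^3)\hookrightarrow L^3(\B^3)$, which the paper already uses, controls $\|a\|_{L^3}$ by $C\|a\|_{H^{1/2}}$.

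The main obstacle is to bound $\|D_b\|_{L^6}$ by $C\bigl(\|b\|_{L^\infty}+\|b\|_{W^{1/2,6}}\bigr)$. I would split the $x$-integral into $|x-y|\ge1$ and $|x-y|<1$. The far part is bounded pointwise by $C\|b\|_{L^\infty}^2$, and since $\B^3$ is bounded this part is harmless. For the near part, the square function $D_b$ with $L^2$ averaging in $x$ is a Triebel--Lizorkin type quantity: $\|D_b\|_{L^6}\simeq\|b\|_{\dot F^{1/2}_{6,2}}$. This is where I would not try to compare directly with the $W^{1/2,6}=F^{1/2}_{6,6}$ Gagliardo norm, because $F^{1/2}_{6,6}\not\subset F^{1/2}_{6,2}$ and a naive H\"older step would lose the gap between $q=2$ and $q=6$.

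My first attempt at that step is to interpret $W^{1/2,6}$ as the Bessel potential space $H^{1/2,6}=F^{1/2}_{6,2}$, in which case the square-function characterisation of Stein/Strichartz gives $\|D_b\|_{L^6}\le C\|b\|_{H^{1/2,6}}$ directly. This interpretation is consistent with how the lemma is used in Lemma~\ref{lm-ap}, where $b=g^{-1}\p_kg$ lies in $H^{1/2}$ of higher regularity. If the Slobodeckij interpretation is forced, I would instead use an $\epsilon$ of extra smoothness in the application, noting that $W^{1/2+\epsilon,6}\subset F^{1/2}_{6,2}$. A more robust alternative is a paraproduct proof: decompose $ab=T_ab+T_ba+R(a,b)$. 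Then $\|T_ba\|_{H^{1/2}}\lesssim\|b\|_{L^\infty}\|a\|_{H^{1/2}}$, while $T_ab$ and $R(a,b)$ are bounded by $\|a\|_{L^3}\|b\|_{\dot B^{1/2}_{6,\infty}}$ up to the endpoint summation, which again requires the $q=2$ version of the norm of $b$. Adding the three pieces yields the claimed estimate.
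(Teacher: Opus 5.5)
Your proof is correct, with $W^{1/2,6}$ read as the Bessel potential space $H^{1/2,6}=F^{1/2}_{6,2}$, and it takes a genuinely different route from the paper's.

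\emph{The paper's route.} It extends to $\R^3$ and splits $ab=\sum_i a^ib_i+\sum_i a_ib^i+\sum_{|i-j|\le 2}a_ib_j$ into Littlewood--Paley pieces. The low--high term is bounded by $\|\sup_i|a^i|\|_{L^3}\,\|(\sum_i 2^i|b_i|^2)^{1/2}\|_{L^6}$, using the maximal function and $H^{1/2}\hookrightarrow L^3$. The high--low term is bounded by $\|b\|_{L^\infty}\|a\|_{H^{1/2}}$, and the high--high term by the same quantity through duality with $H^{-1/2}$.

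\emph{Your route.} You stay in physical space with the intrinsic Gagliardo norm on $\B^3$. Your term $b(x)(a(x)-a(y))$ does the work of the paper's high--low and high--high terms in one line. Your term $a(y)(b(x)-b(y))$ is the counterpart of the low--high paraproduct, with Stein's difference square function $D_b$ in place of the Littlewood--Paley square function. The H\"older pairing $L^3\times L^6$ is the same in both proofs.

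\emph{What each buys.} Yours is shorter and needs no frequency localisation of the product and no duality step. It also works with intrinsic norms on the ball: for $b$ the restriction definition of $H^{1/2,6}(\B^3)$ suffices, since $D_b$ computed on $\B^3$ is dominated by $D_{\tilde b}$ computed on $\R^3$ for any extension $\tilde b$. The price is that all the harmonic analysis sits in the cited theorem $\|D_b\|_{L^6}\lesssim\|b\|_{H^{1/2,6}}$, valid here since $6>2n/(n+1)=3/2$. The paper's decomposition is self-contained modulo standard Littlewood--Paley theory, and it makes explicit that only the low--high interaction sees the regularity of $b$.

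\emph{On the interpretation.} Your remark is right, and the Bessel reading is in fact forced, not merely convenient. The paper's own proof silently bounds $\|(\sum_i 2^i|b_i|^2)^{1/2}\|_{L^6}$ by $\|b\|_{W^{1/2,6}}$, so it also uses the $F^{1/2}_{6,2}$ norm. With the Slobodeckij norm the lemma is false. Taking $a\equiv 1$ turns it into the inclusion $L^\infty\cap W^{1/2,6}(\B^3)\subset H^{1/2}(\B^3)$. Now consider the lacunary functions $b_N=\sum_{k\le N}2^{-k/2}k^{-1/2}\cos(2^kx_1)$. Their modulus of continuity is $\lesssim t^{1/2}(\log(1/t))^{-1/2}$, so their $L^\infty$ and Gagliardo $W^{1/2,6}$ norms stay uniformly bounded. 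Yet $\|b_N\|^2_{H^{1/2}(\B^3)}\gtrsim\log N$. So your $W^{1/2+\epsilon,6}$ fallback would change the statement rather than prove it.

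\emph{A minor point on the application.} In Lemma~\ref{lm-ap} the roles are $a=g^{-1}\partial_{x_k}g$ and $b=g$; note the factor $\|g\|_{\dot W^{1/2,6}}$ there. Since $g\in H^{3/2}\hookrightarrow H^{1/2,6}$, your consistency check still goes through with the correct $b$.
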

\noindent{\bf Proof of Lemma~\ref{lm-in}} This result is a well known fact but we give a proof below for the convenience of the readers. Thanks to \cite{Zhu} we can replace $\B^3$ by ${\R}^3$. Let 
\[
a=\sum_{i\in {\N}}a_i\quad\mbox{ and }b=\sum_{i\in {\N}}b_i\
\]
in-homogeneous Littlewood Paley decompositions respectively of $a$ and $b$ that is Supp$(\hat{a}_0)\subset \B_2(0)$ and Supp$(\hat{a}_i)\subset \B_{2^{i+1}}(0)\setminus \B_{2^{i-1}}(0)$.
We also denote
\[
a^i:=\sum_{j\le i-3} a_j
\]
so that  Supp$(\hat{a}^i)\subset \B_{2^{i-2}}(0)$. We proceed to the classical high$\times $low+low$\times $high +high$\times $high decomposition
\be
\label{hllhhh}
a\,b=\sum_{i\ge 0} a^i\,b_i+\sum_{i\ge 0} a_i\,b^i+\sum_{|i-j|\le2}a_i\,b_j\ .
\ee
Observe that Supp$\,{\mathcal F}({a}^i\, b_i)\subset \B_{9\times\, 2^{i-2}}(0)\setminus \B_{3\times\, 2^{i-2}}(0)$. We have
\[
\lf\|\sum_{i\ge 0} a^i\,b_i\rg\|^2_{H^{1/2}({\R}^3)}\simeq\int_{\R^3}\sum_{i\ge 0} 2^{i}\,|a^i|^2\,|b_i|^2\ dx^3
\]
Since $H^{1/2}({\R}^3)\hookrightarrow L^3(\R^3)$ we have thanks to the Marcikiewicz Maximal function theorem
\[
\|M(a)\|_{L^3({\R}^3)}\le C\ \|a\|_{L^3({\R}^3)}\le C\ \|a\|_{H^{1/2}({\R}^3)}
\]
from which one deduces
\[
\lf(\int_{\R^3}\sup_{i\in I}|a^i|^3\ dx^3\rg)^{1/3}\le  C\ \|a\|_{H^{1/2}({\R}^3)}
\]
Hence we bound
\[
\begin{array}{l}
\ds\int_{\R^3}\sum_{i\ge 0} 2^{i}\,|a^i|^2\,|b_i|^2\ dx^3\le \lf(\int_{\R^3}\sup_{i\in I}|a^i|^3\ dx^3\rg)^{2/3}\ \lf(\int_{\R^3}\lf|\lf(\sum_{i\ge 0} 2^{i}\,|b_i|^2\rg)^{1/2}\rg|^6\ dx^3\rg)^{2/3}\\[5mm]
\ds\quad\le C\, \|a\|^2_{H^{1/2}(\B^3)}\,\|b\|^2_{W^{1/2,6}(\B^3)}
\end{array}
\]
For the second term in (\ref{hllhhh}) we have
\[
\lf\|\sum_{i\ge 0} a_i\,b^i\rg\|^2_{H^{1/2}({\R}^3)}\simeq\int_{\R^3}\sum_{i\ge 0} 2^{i}\,|a_i|^2\,|b^i|^2\ dx^3\le \|a\|^2_{H^{1/2}(\B^3)}\,\|b\|^2_{L^\infty(\B^3)}
\]
Finally, for the last term we have taking $j=i$
\[
\lf\|\sum_{i\ge 0} a_i\,b_i\rg\|^2_{H^{1/2}({\R}^3)}=\sup_{\|h\|_{H^{-1/2}({\R}^3)}\le 1}\int_{\R^3}\sum_{i\ge 0} a_i\,b_i\, h\ dx^3
\]
Observe that we have
\[
\begin{array}{l}
\ds\int_{\R^3} \sum_{i\ge 0}\,a_i\,b_i\, h\ dx^3= \int_{\R^3} \sum_{i\ge 0}\,a_i\,b_i\, h^{i+6}\ dx^3\le \|b\|_\infty\,\int_{\R^3} \sum_{i\ge 0}\,2^{i/2}\,|a_i|\, 2^{-i/2}|h^{i+6}|\ dx^3\\[5mm]
\ds\quad\le  \|b\|_\infty\,\lf(\int_{\R^3} \sum_{i\ge 0}\,2^{i}\,|a_i|^2\ dx^3\rg)^{1/2}\,\lf(\int_{\R^3} \sum_{i\ge 0}\,2^{-i}\,|h^{i+6}|^2\ dx^3\rg)^{1/2}\\[5mm]
\ds\quad\le C\,  \|b\|_{L^\infty(\R^3)} \,\|a\|_{H^{1/2}(\R^3)}\,\|h\|_{H^{-1/2}(\R^3)}\ .
\end{array}
\]
This concludes the proof of the lemma~\ref{lm-in}.\hfill $\Box$ 
 
\end{document}